\definecolor{BROWN}{gray}{0.9}
\newtheorem{theorem}{Theorem}[section]
 \newtheorem{corollary}[theorem]{Corollary}
 \newtheorem{lemma}[theorem]{Lemma}
 \newtheorem{proposition}[theorem]{Proposition}
 \theoremstyle{definition}
 \newtheorem{definition}[theorem]{Definition}
 \theoremstyle{remark}
 \newtheorem{remark}[theorem]{Remark}
  \newtheorem{ex}[theorem]{Example}
   \newtheorem{notation}[theorem]{Notation}
 \numberwithin{equation}{section}
\def \bC {\mathbb C}
\def \bN {\mathbb N}
\def \bP {\mathbb P}
\def \bR {\mathbb R}
\def \bT {\mathbb T}
\def \bV {\mathbb V}
\def \bX {\mathbb X}
\def \bY {\mathbb Y}
\def \bZ {\mathbb Z}
\def \cA {\mathcal A}
\def \cC {\mathcal C}
\def \cD {\mathcal D}
\def \cF {\mathcal F}
\def \cG {\mathcal G}
\def \cH {\mathcal H}
\def \cK {\mathcal K}
\def \cR {\mathcal R}
\def \cS {\mathcal S}
\def \cT {\mathcal T}
\def \cV {\mathcal V}
\def \fg {\mathfrak g}
\def \sL{\mathscr L}
\def \sE{\mathscr E}
\def \sU{\mathscr U}
\def \ad {{\rm ad}}
\def \id {\text{\rm I}}
\def \Op {{\rm Op}}
\def \AOp {{\rm AOp}}
\def \supp{{{\rm supp}}}
\def \Gh {{\widehat G}}
\def \eps {\varepsilon}
\def \Exp {{\rm Exp}}
\def \Ln  {{\rm Ln}}
\def \gr {{\rm gr}}
\def \jac {{\rm jac}}
\def \dom {{\rm dom}}
\def \DO {{\rm DO}}
\def \princ {{\rm Princ}}
\def\IM  {\text {\rm Im}}
\begin{document}
\author{Clotilde Fermanian-Kammerer, V\'eronique Fischer and Steven Flynn}

\title{Quantization  on filtered manifolds}

\medskip

\subjclass[2020]{47G30, 43A85, 53C30, 58J40, 35H20, 35A27}
%35P05 General topics in linear spectral theory of PDEs
%43A85 Haronic analysis on homogeneous spaces 
%47G30 Psudo diff op 
%53C30 Differential geometry on homogeneous spaces
%58J40 Pseudo diff and FIO on manifolds
%35A27 Microlocal methods and methods of sheaf theory and homological algebra applied to PDEs
%35H20 Subelliptic equations
%58J50, 58J35, 35K08, 35P05, 53C17, 43A85

\keywords{Pseudodifferential theory, filtered manifolds and subelliptic operators, microlocal analysis.}

\begin{abstract}
In this article, we develop a pseudodifferential calculus on a general filtered manifold $M$. The symbols are fields of operators $\sigma(x,\pi)$ parametrised by $x\in M$ and the unitary dual $\Gh_x M$  of the osculating Lie group $G_x M$. 
We define classes of symbols and a local quantization  formula associated to a local frame adapted to the filtration. 
We prove that the collection of operators on $M$ coinciding locally with the quantization of symbols 
enjoys the essential properties of a pseudodifferential calculus:
composition, adjoint, parametrices, continuity on adapted Sobolev spaces. Moreover, we show that the polyhomogeneous subcalculus coincides with the calculus constructed by  van Erp and Yuncken via groupoids.
\end{abstract}
\maketitle

\makeatletter
\renewcommand\l@subsection{\@tocline{2}{0pt}{3pc}{5pc}{}}
\makeatother

\tableofcontents

\section{Introduction}

\subsection{Symbolic pseudodifferential calculus and applications}
%This work is motivated by the  theory of pseudodifferential operators. 
Starting from about  1960, 
the seminal works of H\"ormander  \cite{ho67,ho} 
provided a  flexible framework for studying elliptic operators through pseudodifferential calculi, along with remarkable characterisations and properties for hypoellipticity.
For the former, many other contemporaneous contributions also played a significant role in this development, see e.g. 
\cite{KohnNirenberg65,Nirenberg68}. 
The latter was the motivation behind 
the research programme led by Folland and Stein and their collaborators in the 70's and 80's (see eg \cite{folland75,folland77,RoSt}) 
on subelliptic operators
modelled via nilpotent Lie groups.
Nowadays, 
the geometric contexts for these ideas
are called subRiemannian.
The main method revolved around the construction of parametrices of these operators, and more generally of algebras of operators adapted to these settings. 
Usually, the pseudodifferential calculi constructed then were in effect much closer to algebras of singular integral operators described via their integral kernels (in a similar spirit, see Melin's unpublished preprint \cite{melin} and Street's monographs \cite{Street2014,Street2023}); indeed, these calculi didn't use notions of symbols as in the Riemannian or Euclidean settings for elliptic operators.
Furthermore,
they were set on contact manifolds and their generalisations (e.g. Heisenberg manifolds)
\cite{Beals_Greiner,Epstein_Melrose,melin} or on nilpotent Lie groups
\cite{CGGP}. 
These ideas and methods were further deployed in the 90's in the studies of spectral or functional problems of operators such as subLaplacians on stratified nilpotent Lie groups in harmonic analysis (see the seminal paper  \cite{MullerRicciStein}).

\medskip 

In the past twenty years,  a pseudodifferential theory based on Alain Connes’ idea of tangent groupoid~\cite{Connes} emerged from works by Ponge \cite{Ponge} and by van Erp \cite{VanErp} on Heisenberg  and contact manifolds. 
It was then realised that the context to generalise these considerations were manifolds equipped with filtrations of the tangent bundle, i.e. filtered manifolds, see the works  of Choi and Ponge \cite{Choi_Ponge_groupoid,Choi_Ponge_1,Choi_Ponge_2} and  of van Erp and Yuncken \cite{VeY1,VeY2}.
Van Erp and Yuncken's results  can be considered as the development in the context of filtered manifolds of  Debord and Skandalis' works~\cite{DS_14,DS_15} which provided the first abstract characterisation of the classical pseudodifferential operators in terms of actions  on Connes' tangent groupoid. 
While the class of filtered manifolds contains equiregular subRiemannian manifolds, 
recent progress suggests that these ideas can  be further generalised to  manifolds whose space of vector fields admits a filtration
\cite{AMY}, encompassing also singular subRiemannian manifolds and quasi-subRiemannian settings. 
It is worth noting that the latter calculi constructed within the tangent groupoid framework are included in the more general construction developed by Street~\cite{Street2014,Street2023}.

The pseudodifferential theory via groupoids was devised by Alain Connes around 1990 as a framework to show index theorems. Since then, it has  become an efficient tool to study  hypoellipticity and Fredholmness in many contexts.
However, at the time of writing, it does not show the same breadth and depth of applications to PDE's as  H\"ormander's calculus. One reason is that H\"ormander's pseudodifferential operators enjoy an explicit symbolic calculus related to the symplectic structure of the cotangent bundle with an explicit quantization procedure; this  make it flexible to deal with solutions of PDEs and quantum dynamical problems via classical dynamics. Moreover, H\"ormander's calculus allows for  $(\rho,\delta)$-generalisations and   semiclassical calculus including symbolic smoothing operators \cite{FKLR} which aren't yet developed with the groupoid approach.
  
  One motivation for this work is to construct a noncommutative counterpart of H\"ormander’s symbolic pseudodifferential calculus on filtered manifolds, formulated in terms of the natural noncommutative cotangent bundle.
We show in Section \ref{sec:groupoid} that, upon restriction to the polyhomogeneous subcalculus, it agrees with the construction of van Erp and Yuncken~\cite{VeY1,VeY2}.
Moreover, we believe that this approach is as flexible as H\"ormander’s calculus, allowing for the incorporation of 
$(\rho,\delta)$-type generalizations and the development of a corresponding semiclassical pseudodifferential calculus.

\medskip 

In the mean time, the last decades have seen a rising interest of the PDE community for subelliptic operators~\cite{Arnaiz_Riviere,beauchard,Burq_Sun,CdVHT,CdV_Letrouit,Letrouit,Riviere}. 
The methods underlying these works have involved mainly Riemannian  tools based on 
H\"ormander's calculus. These tools are  very powerful and are  the basis for microlocal and semiclassical analysis with  various domains of applications, see for instance the seminal papers in control theory~\cite{BLR:92}, spectral geometry~\cite{CdV,zeldi}, homogeneisation~\cite{gerard,tartar}.
However, recent works \cite{Arnaiz_Riviere,CdVHT,Riviere} suggest that they can be  fruitful  for subelliptic or subRiemannian problems only  in the context of very low-dimensional contact manifolds.
Beyond this, 
the non-commutativity inherent to subRiemannian and filtered settings seems to make any semiclassical or microlocal question  rapidly intractable when using the traditional (Riemannian) H\"ormander calculus. However, other approaches for instance via groupoids have recently given impressive results such as the proof in full generality of the Helffer-Nourrigat conjecture~\cite{HN,AMY}. This reinforces the motivation for the construction of an adapted {\it symbolic} pseudodifferential calculus, with the goal of providing the tools 
%underlying future analysis 
for dealing with questions from PDEs   in subRiemannian and subelliptic contexts.

\smallskip

%Our approach to pseudodifferential calculus is {\it symbolic}. 
A notable exception to the literature of the non-symbolic subelliptic pseudodifferential calculi of the late twentieth century is Taylor's monograph \cite{Taylor}:  he proposed to define pseudodifferential operators   symbolically
using  the operator-valued group Fourier transform, focusing mainly on the Heisenberg group (see also \cite{BFG}). Using also this group Fourier transform together with many tools and methods from  harmonic analysis  (singular integral theory and spectral properties of subLaplacians), 
the recent monograph~\cite{R+F_monograph} 
laid the foundations of a symbolic and global pseudodifferential calculus on any graded nilpotent Lie group. 
We  call these pseudodifferential calculi {\it noncommutative} because, unlike H\"ormander's, their  symbols do not commute.

The pseudodifferential  calculus in~\cite{R+F_monograph}  has allowed for results similar to  the problems mentioned above in the Euclidean or Riemannian  settings. For instance,  the article~\cite{FF1} proves a div-curl Lemma in the context of nilpotent graded Lie groups, in the same spirit as the Euclidean work~\cite{gerard}. The article \cite{FFF1} emphasises invariance properties of quantum limits while~\cite{FFF2} deals with spectral geometry for subLaplacians on  2-step nilmanifolds. The article~\cite{FL} is a control result. 
These applications underscore the relevance of the noncommutative analogue of the cotangent bundle arising from the group Fourier transform. It also highlights the necessity of incorporating smoothing symbols when high-frequency phenomena appear and semiclassical analysis becomes essential.
Motivated by these considerations, we follow the framework developed in~\cite{R+F_monograph} to construct a \emph{noncommutative symbolic pseudodifferential calculus} in the more geometric context of filtered manifolds.
Our study further establishes 
the construction of parametrices for 
symbols which are invertible at high frequency
(Rockland-type conditions). We also introduce corresponding \emph{Sobolev spaces} of arbitrary real order.

\subsection{Strategy of the construction}
Let us sketch  now our strategy  - the rigorous definitions of the objects involved and their subsequent analysis will be  done later in the core of the paper. The readers will find in Section~\ref{subsec:organisation}  information about where to find  definitions and statements about each of the items mentioned here. 
 
\smallskip 

To the filtered manifold $M$, we associate its osculating Lie group~$GM$  and osculating Lie algebra bundles~$\mathfrak gM$. The symbols we introduce are fields of operators 
$$
\sigma = \{\sigma (x,\pi) :\ x\in M, \ \pi\in \Gh_x M\}
$$
with $\sigma(x,\pi)$ densely defined on the space of the representation $\pi$. 
The noncommutative analogue to the cotangent bundle 
$$
\Gh M:=\{(x,\pi) :\ x\in M, \ \pi\in \Gh_x M\},
$$
parametrises our symbols.
We define symbol classes, as well as a local  quantization associating an operator to one of these symbols. 
This local quantization uses the geometric exponentiation of vector fields and  allows for the identification of a point $y$ near $x\in M$ as $y=\exp_x^\bX v$, $v\in \bR^n$, once  a convenient frame~$\mathbb X$ has been chosen  and  $T_xM$ identified with $\bR^n$. This exponential map has to be considered in contrast with the exponential map $\Exp^\bX_x$ of the group $G_xM$. The upper index $\bX$ recalls that all these objects depend on a chosen frame, and the identifications it induces.

More technically,  we associate to a symbol $\sigma$ an operator defined via the following local quantization. Given a local frame~$\bX$ on an open set~$U$ adapted to the filtration of $M$ and a cut-off function~$\chi$,  the operator acts on $f\in C^\infty_c(M)$ by
\begin{equation}\label{eq_QX_intro}
	\Op^{\bX,  \chi}(\sigma)f(x):=
\int_{v\in \bR^n} \kappa_x^\bX (v) \ (\chi_x f)(\exp^\bX_x(- v )dv, \quad x\in U.
\end{equation}
In this definition, the distribution  $\kappa^\bX$ is called  the {\it convolution kernel} associated with $\sigma$ in the $\bX$-coordinates and corresponds to the group Fourier transform inverse of $\sigma$ in the following sense: 
$$
\sigma(x,\pi) 
= \int_{\bR^n} \kappa^\bX_x (v) \pi(\Exp_x^\bX v)^* d v.
$$
We define symbol classes $S^m(\widehat GM)$, $m\in \bR$, 
on which the  local quantization process~\eqref{eq_QX_intro} makes sense.
Moreover, the resulting operators enjoy symbolic properties for composition and adjoint amongst others. 
Ensuring that these objects are rigorously and intrinsically defined 
poses many regularity questions.
In particular, the convolution kernels $\kappa_x^\bX(v)$ display singularity in $v=0$ and the operator in~\eqref{eq_QX_intro} is of Calderon-Zygmund's type. This is the main difficulty of the work and it highly relies on previous works on nilpotent Lie groups of the second author~\cite{R+F_monograph}.

\smallskip

The passage from the local quantization in~\eqref{eq_QX_intro} to a global calculus follows classical lines \cite{Hintz,Zwobook}. We  define pseudodifferential operators as being locally described by the local quantization defined above. 
We show that they are described equivalently by a global quantization on~$M$ of our symbols via adapted  atlases up to smoothing operators.
To sum-up, 
we obtain classes of operators
\[
\Psi^\infty(M):=\cup_m \Psi^m(M),
\]
which forms a \emph{pseudodifferential calculus} in the sense that it
satisfies: 
\begin{enumerate}
\item 
\label{item_def_pseudodiff_calculus_inclusion}
The continuous inclusions
$\Psi^m (M)\subset \Psi^{m'}(M)$ hold for any $m\leq m'$.
\item
\label{item_def_pseudodiff_calculus_product}
$\Psi^\infty(M)$ is an algebra of operators.
Furthermore if $T_1\in \Psi^{m_1}(M)$, $T_2\in \Psi^{m_2}(M)$, 
then $T_1T_2\in \Psi^{m_1+m_2}(M)$,
and the composition is continuous as a map
$\Psi^{m_1}(M)\times \Psi^{m_2}(M)\to \Psi^{m_1+m_2}(M)$.
\item
\label{item_def_pseudodiff_calculus_adjoint}
$\Psi^\infty(M)$ is stable under taking the adjoint.
Furthermore if $T\in \Psi^{m}(M)$  then $T^*\in \Psi^{m}(M)$,
and taking the adjoint is continuous as a map
$\Psi^{m}(M)\to \Psi^{m}(M)$.
 \end{enumerate}
The calculus also makes it possible to construct parametrices and to introduce local Sobolev spaces $L^2_{s,loc}(M)$, $s\in \bR$, on which the pseudodifferential operators act in the following way:  if $T\in \Psi^{m}(M)$, then $T\in\mathcal L( L^2_{s+m,loc}(M),L^2_{s,loc}(M))$. 
We will also show that its polyhomogeneous subcalculus  coincides with the calculus developed by van Erp and Yuncken \cite{VeY1,VeY2}.
 
 \smallskip

The construction of this pseudodifferential calculus paves the way for new approaches to the study of PDEs on filtered manifolds. These include the analysis of the propagation of singularities and the investigation of compactness defects for square-integrable families on~$M$, among other applications.
A development in this direction is presented in~\cite{benedetto}, where the calculus introduced therein provides an alternative proof of the quantum ergodicity result established on three-dimensional contact manifolds in~\cite{CdVHT}, while also extending it to higher-dimensional contact manifolds endowed with a compatible complex structure.
The overarching motivation of our work lies in \emph{spectral geometry on filtered manifolds}, including contact, quasi-contact, and Engel manifolds, with particular emphasis on conditions ensuring the quantum ergodicity of sub-Laplacians or more general subelliptic operators, and on the geometric interpretation of the coefficients appearing in the Weyl trace formula.

 \subsection{Important technical ingredients of the proofs} 
 \label{subsec_ingredients}
 The strategies behind the proofs of  properties  such as $L^2$-boundedness, composition and adjoint follow established lines in  pseudodifferential theory, see e.g.~\cite{AG,R+F_monograph,Ste93}.
However, their implementations in the  case of filtered manifolds are technically highly challenging.

In the Riemannian case, pseudodifferential operators on manifolds are defined as being described through 
coordinates patches as  pseudodifferential operators  on $\bR^n$. 
This cannot be the case on a filtered manifold, as  natural coordinates do not exist - although privileged ones have been studied  \cite{ABB,Bellaiche,Choi_Ponge_1,Choi_Ponge_2} since the inception of subRiemannian geometry. 
This is a fundamental obstacle that we overcome with  the method of moving frames:  
the time-1 flow  of vector fields allows us to define 
 the exponential map $\exp^\bX$ with respect to a given local frame $\bX$.
This map can be compared with the bisubmersions of~\cite{AMY}.

A crucial milestone in this paper is the analysis of  the composition of the exponential map $\exp^\bX$  with respect to a local frame $\bX$ adapted to the filtration.
As expected from ideas dating back from the work of Nagel, Stein and Wainger in 
\cite{NagelSteinWainger1985}, we show that this composition is connected to the osculating group structure.
However, in order to describe precisely this connection, we introduce a new technical concept of algebraic nature, namely   terms that we call {\it higher order}. This concept is also useful in other situations, 
for instance to compare  the product laws of two osculating groups $G_xM$ and $G_yM$ above two points $x,y\in M$ close enough, or  to describe the effect of changing adapted frames in exponential maps.

As can be anticipated from the group case \cite{R+F_monograph}, 
our proofs of the calculus properties rely fundamentally on the analysis of the integral kernels of the operators through the 
convolution kernels $\kappa^\bX$ of their symbols.
Introducing the notion of amplitudes corresponding to our symbols separates steps in the proofs for composition and adjoint. 
It is  remarkable that our analysis of the  impact of the geometric setting then boils down to understanding the two following  situations: 
\begin{enumerate}
    \item  perturbing the variable of a convolution kernel $\kappa^\bX$ with  a term of higher order,
    \item  perturbing the group convolution of two convolution kernels with another function.
\end{enumerate}

\subsection{Organisation of the paper}\label{subsec:organisation}
The paper is structured linearly. In Section~\ref{sec:groups}, we recall results about symbol classes on graded Lie groups that will be useful for constructing the symbol classes $S^m(\widehat G M)$, $m\in M$.
 In particular, we revisit standard estimates in this setting with a special concern with the constants. Indeed, the symbol $\sigma(x,\cdot)$ will be related with the nilpotent graded group $\widehat G_x M$, which depends on $x\in M$. It is thus important to analyse how constants depend on the gradation of the group, the filtration of the Lie algebra $\mathfrak g_xM$ above each point $x\in M$, 
  and on its Lie structure, i.e. the norm of the Lie bracket $[\cdot,\cdot]_{\mathfrak g_xM}$, viewed as a bilinear form on $\mathfrak g_x M$. 
\smallskip 

Section~\ref{sec:manifold} recalls definitions about filtered manifolds and Section~\ref{sec:geom_maps} focuses on the second crucial tool of our analysis, namely the geometric exponential map $\exp^\bX$. It is here that  we use the notion of higher order   to  study the composition of exponential map.

\smallskip 

With all these tools in hands, we define the symbol classes $S^m(\widehat GM)$, $m\in\bR$, in Section~\ref{sec:symbols}, we study their properties and give examples. A special attention is devoted to kernel estimates in Section~\ref{sec:kernel_estmates} and to the analysis of convolution kernels obtained by the two perturbations mentioned at the end of Section \ref{subsec_ingredients}.
 The results in Section~\ref{sec:kernel_estmates} are used when quantizing the symbols for studying their actions on smooth functions on~$M$, or on elements of $L^2(M)$ (for symbols of non-positive order), and for proving properties of symbolic calculus.  

\smallskip 

The local quantization is presented in  
Section~\ref{sec:quantization}, where we  also study the boundedness on $L^2(M)$ of local pseudodifferential operators of order 0 and the dependence of the local quantization with respect to the frame~$\bX$ and the cut-off function~$\chi$. 
Theorems establishing the symbolic calculus  are proved in Section~\ref{sec:symb_cal}: determination of the symbol of the adjoint of a pseudodifferential operator, and of the composition of two pseudodifferential operators. These results are obtained by an approach based on amplitudes, in a classical manner for pseudodifferential calculus, adapted to filtered manifolds. 
\smallskip 

The concept of local quantization allows us to define a  pseudodifferential calculus on the manifold $M$ in Section~\ref{sec:global_sobolev}. It comprises  operators that are locally given as the local quantization of a symbol, or that are smoothing (i.e. have a smooth integral kernel). In this section, we also  analyse the characterisation of these pseudodifferential operators in terms of atlases, and introduce the adapted hierarchy of local Sobolev spaces. 
Finally, 
we  conclude our  study with the analysis of the links between the calculus we have constructed  and  van Erp and Yuncken's \cite{VeY1,VeY2}. 
%This requires the introduction of the notion of homogeneous symbols and of classical pseudodifferential operators in the context of filtered manifolds. 
%Two appendices\footnote{V should we swapp the two appendices?} devoted to technical points terminate this text. 

\medskip 

\noindent {\bf Acknowledgements.} 
The authors thank Robert Yuncken for several insightful discussions that played an important role in shaping their ideas, and Lino Benedetto for his helpful remarks and comments during the preparation of this text. They are also grateful to the anonymous referees for their thoughtful suggestions, which greatly contributed to improving the introduction.

The three authors acknowledge the support of The Leverhulme Trust via Research Project Grant RPG 2020-037.
The first author also benefits from the support of  the R\'egion Pays de la Loire via the Connect Talent Project HiFrAn 2022 07750,  the grant  ANR-23-CE40-0016 (OpART), and the France 2030 program, Centre Henri Lebesgue ANR-11-LABX-0020-01.

\section{Symbols classes on  graded Lie groups}\label{sec:groups}

In this section, 
we recall the definitions for the objects appearing in the analysis on  graded Lie groups 
and the natural  symbol classes in this context. This gives us the opportunities to introduce the notation used throughout the paper.
In addition, we  present their main properties, paying a special attention 
to how  estimates depend on  the group structure and the gradation.
 The material essentially follows \cite{R+F_monograph}, see also \cite{folland75,folland+stein_82} and \cite{FF0}.

 \subsection{Definitions for graded Lie groups and algebras}
 Here we recall some definitions and set some notation for the rest of the paper. References for this material include \cite{R+F_monograph}
 and \cite{folland+stein_82}.
 
 \subsubsection{Graded Lie algebra}
 
\begin{definition}
\label{def_gradedvs}
The (finite dimensional real) vector space $\cV$  \emph{admits the gradation}  by the sequence of vector subspaces $\cV_j$, $j\in \bN$, 
when $\cV = \cV_1 \oplus \ldots \oplus \cV_j \oplus \ldots$.  
\end{definition} 

In the above definition, as $\cV$ is finite dimensional,  all but a finite number of the $\cV_j$'s are~$\{0\}$.
Continuing with the setting of this definition, we define the linear map 
$$
A:\cV\to \cV \quad\mbox{via}\quad A(v) = j v, \ v\in \cV_j
\ \mbox{and}\ j\in \bN,
$$
and the  linear maps 
$$
\delta_r :=\Exp ((\ln r) A) = \sum_{k=0}^\infty \frac 1{k!} ((\ln r)A)^k, 
\quad r>0,
$$
or, in other words, the linear maps given via $\delta_r v = r^j v$, $v\in \cV_j$ and $r>0$. 

\begin{definition}
\label{def_dilation_gradedV}
Let $\cV = \oplus_{j=1}^\infty \cV_j $ be  a graded finite dimensional space.
\begin{enumerate}
	\item The maps $\delta_r$, $r>0$, defined above  are  called the \emph{dilations} of the graded vector space~$\cV$. 
\item 	The eigenvalues of the map  $A$ defined above are called the \emph{weights} of the dilations; they are the integers $j\in \bN$ such that $\cV_j\neq\{0\}$.
A basis of $\cV$ made up of eigenvectors for $A$ is called an \emph{adapted basis}.

By convention, if $\bV$ is an adapted basis, we will always assume that they are indexed so that 
the weights are increasing, that is, $\bV=(V_1,\ldots,V_n)$ satisfies
$$
A V_i = \upsilon_i V_i, 
\ i=1,\ldots, n=\dim \cV, 
\qquad \mbox{with}\quad 
0<\upsilon_1 \leq \ldots\leq \upsilon_n.
$$
\item The number of  distinct $\upsilon_j$, $j\geq 1$, is called the step of the gradation.
\item The \emph{homogeneous dimension} of the gradation is given by 
$$
Q:= \sum_{j=1}^\infty j \dim \cV_j = \upsilon_1+\ldots+\upsilon_n. 
$$

\item 	A function $f:\cV\to \bC$ is $m$-\emph{homogeneous} with respect to the $\delta_r$-dilations when $f(\delta_r v) =r^m f(v)$ for all $v\in \cV$ and $r>0$. This extends to measurable functions and distributions on $\cV$. A differential operator $T$ on $\cV$ is $m$-homogeneous with respect to the $\delta_r$-dilations when $T(f\circ \delta_r) = r^m (Tf)\circ \delta_r$ for all $f\in C_c^\infty(\cV)$ and $r>0$. This also extends to more general linear operators, e.g. $T:\cS(\cV)\to \cS'(\cV)$ etc.

\item A \emph{quasinorm} on $\cV$ is a continuous  map  $|\cdot|:\cV \to [0,+\infty)$ that is symmetric, definite and 
1-homogeneous with respect to the $\delta_r$-dilations, i.e. we have for any $v\in \cV$ and $r>0$:
$$
|-v|=|v|\geq0, \qquad |v|=0\Longleftrightarrow v=0, \qquad 
|\delta_r v|=r|v|.
$$ 
\end{enumerate}
\end{definition}

\begin{ex}
\label{ex_quasinorm}
	If $\bV=(V_1,\ldots,V_n)$ is an adapted basis of the graded vector space $\cV$, then fixing $\alpha>0$, the following defines a quasinorm. 
	$$
	|v|_{\alpha , \bV} = \left(\sum_{i=1}^n |v_i|^{\alpha / \upsilon_i}\right)^{1/\alpha},\qquad v=\sum_{i=1}^n v_i V_i.
	$$	
\end{ex}

 \begin{remark}
    \label{rem_eq_quasinorm} 
Any two quasinorms $|\cdot|$ and $|\cdot|'$ on the same graded space $\cV$ are equivalent in the sense that 
$$
\exists C>0, \quad 
\forall v\in \cV,\qquad
C^{-1} |v| \leq | v|' \leq C |v| .
$$
 \end{remark}

\begin{definition}
	A (real and finite dimensional) Lie algebra $\fg$ is \emph{graded} when the vector space $\fg$ admits a gradation 
	$\fg = \oplus_{j=1}^\infty  \fg_j $ satisfying 
$$
[\fg_i,\fg_j]\subset \fg_{i+j}
\quad\mbox{for all}\ i,j\in\bN_0,
$$
with the convention that $\fg_0=\{0\}$ is trivial.
	 \end{definition} 

We readily check that a Lie algebra admitting a gradation is graded if and only if  the dilations $\delta_r$, $r>0$, are morphisms of the Lie algebra $\fg$. 

\begin{remark}\label{rem:steps} If a Lie algebra is graded, then it is nilpotent
 and the step of the Lie algebra, i.e. its degree of nilpotency for the Lie bracket, is less or equal to the step of the gradation, (the number of distinct $\upsilon_j$). 
 \end{remark}

The dilations extend naturally to its universal enveloping Lie algebra $\sU(\fg)$.
Keeping the same notation for  the resulting maps
$$
\delta_r :  \sU(\fg)\to \sU(\fg),\quad r>0,
$$
these maps
are morphisms of the algebra $\sU(\fg)$. 
They allow us to define a notion of homogeneity in the universal enveloping Lie algebra:
an element $T\in \sU(\fg)$ is homogeneous of degree $\nu$ when $\delta_r T = r^\nu T $ for any $r>0$.
\begin{notation}
 The subspace of $\sU(\fg)$ comprised of elements of homogeneous degree equal to $\nu$
is denoted by 
$\sU_{\nu}(\fg)$.   
\end{notation}
If $\bV=(V_1,\ldots,V_n)$ is an adapted basis of $\fg$, then $\sU_{N}$ consists of all the linear combination of 
$$
\bV^\alpha:= V_1^{\alpha_1}\ldots V_n^{\alpha_n},\qquad \alpha=(\alpha_1,\ldots,\alpha_n)\in \bN_0^n,
$$
with $[\alpha]=N$, where
 $$
[\alpha]:= \alpha_1 \upsilon_1+\ldots +\alpha_n \upsilon_n.
$$
Above, we have adopted the convention $\bV^0=1$ for $\alpha=0$.

\subsubsection{Graded Lie groups}
Recall that there is a one-to-one correspondence between nilpotent Lie algebras and connected simply connected nilpotent Lie groups.
This leads to the following definition:

\begin{definition}
	A connected simply connected Lie group is \emph{graded} when its Lie algebra is graded. 
\end{definition}

Any graded Lie group is therefore nilpotent. 
By convention, in this paper, all the nilpotent Lie groups are assumed to be connected and simply connected. 
The exponential mapping 
$$
\Exp_G : \fg \to G
$$
of a nilpotent Lie group $G$ with Lie algebra $\fg$ is a bijection and furthermore a global diffeomorphism. 
This allows us to  define the dilations on a graded Lie group:

\begin{definition}
	The \emph{dilations} of a graded Lie group $G$ are the mappings  defined via
	$$
	\delta_r :G\to G, \qquad \delta_r \Exp_G V = \Exp_G (\delta_r V),\ V\in \fg,\ r>0.
	$$ 
\end{definition}

The dilations of a graded Lie group are morphisms of the Lie group. 
With the exponential mapping $\Exp_G$ and a choice of adapted basis on the graded Lie algebra $\fg$, it is possible to identify the spaces of functions on $G$ with spaces of functions on the vector space underlying~$\fg$. 
The dilations then lead naturally to the notion of homogeneity of a function on $G$ and operators acting on functional spaces on $G$ as in Definition \ref{def_dilation_gradedV}.

\begin{ex}\label{ex_right_left}
	If $\bV=(V_1,\ldots,V_n)$ is an adapted basis of the graded Lie algebra $\fg$, 
 then identifying elements of $\fg$ with left-invariant vector fields, 
 the differential operators $V_j$, $j=1,\ldots,n$, 
 and more generally the differential operators 
 $\bV^\alpha=V_1^{\alpha_1}\ldots V_n ^{\alpha_n}$
 are  homogeneous for the dilations $\delta_r$, $r>0$, with respective degrees $\upsilon_j$ 
and $[\alpha]$.
We can also consider the right-invariant vector fields $\widetilde V_j$, $j=1,\ldots,n$, associated with $\bV$, and more generally the corresponding right-invariant differential operator $\widetilde \bV^\alpha$, which will also be $[\alpha]$-homogeneous.
\end{ex}

\begin{notation}
\label{notation_right_left}
If $T\in \sU(\fg)$, we may use the same notation for the corresponding left-invariant differential operator, and we denote by  $\tilde T$  the corresponding right-invariant differential operator.
We will use this notation throughout this article.   
\end{notation}

In all this section, we are concerned with the dependence of constants with respect to the structure of the graded group, in particular its gradation and its Lie structure. We consider  the {\it structural constant} $\|[,]\|_{\bV}$ defined as follows:
If $T:\cV\times \cV \to \cV$ is a map and $\bV$ a basis of $\cV$, 
	we set
	$$
	\|T\|_\bV := \max_{1\leq i,j,k\leq n} |T_{i,j,k}|
	\qquad \mbox{where}\qquad 
	T(V_i,V_j) = \sum_{k=1}^n T_{i,j,k} V_k.
	$$
This defines a  norm on the space of bilinear maps $\cV\times \cV \to \cV$.
And $ \|[\cdot ,\cdot ]_\fg\|_{\bV}$ denotes taking this norm for the Lie bracket $ [\cdot ,\cdot ]_\fg$ of $\fg$ which is a bilinear map $\cV\times \cV \to \cV$. Most of the constants we will see below depends on the Lie structure only via  this structural constant $\|[,]\|_{\bV}$.

\subsubsection{Baker-Campbel-Hausdorff formula}
\label{subsubsec_BCH}

We will describe the law of the group in coordinates via the \emph{Baker-Campbel-Hausdorff formula},  abbreviated here to BCH, 
the exponential mapping $\Exp_G$ and a choice of a basis $\bV=(V_1,\ldots,V_n)$ of $\fg$ adapted to the gradation.

\smallskip 

The BCH formal writes:
if $a$ and $b$ are two non-commutative indeterminates,
we have in the sense of formal power series of $a$ and $b$ that $e^a e^b =e^c$ where 
\begin{equation}
	\label{eq_fBCH}
	c=\sum_{k=1}^\infty \frac{(-1)^{k+1} }k 
\sum_{\alpha_i+\beta_i\neq 0} 
\frac{1}{\alpha!\beta!|\alpha+\beta|}\left( \ad\,  a\right)^{\alpha_1}\left( \ad\,  b\right)^{\beta_1}
\ldots \left( \ad\,  a\right)^{\alpha_k}\left( \ad\,  b\right)^{\beta_k-1}b.
\end{equation}
When $\beta_k=0$, the  term should be modified so that 
the last factor is $\left( \ad\,  a\right)^{\alpha_k-1}a$.
Above,
$\ad\, a$ denotes the commutator with $a$, i.e. $\ad\, a (f) = af-fa,$ and similarly for $b$.
In particular,
BCH describes, on the one hand, the group product of a nilpotent Lie group
(aka Dynkin formula, via the group exponential after a choice of a basis of the Lie algebra)
and, on the other hand, the composition of flows of vector fields (see the appendix of  \cite{NagelSteinWainger1985}).
We will use the BCH formula in various ways later on for Theorem \ref{lem_Cq1BCH}.

\begin{notation}
\label{notation_BCH}
 The truncated BCH formula  with a sum with $|\alpha|+|\beta|\leq N$ is denoted by:
\begin{align*}
&{\rm BCH}_N(a,b)\\
\nonumber
&\;= \sum_{k=1}^\infty \frac{(-1)^{k+1} }k 
\sum_{\alpha_i+\beta_i\neq 0,\,|\alpha|+|\beta| \leq N} 
\frac{1}{\alpha!\beta!|\alpha+\beta|}\left( \ad\,  a\right)^{\alpha_1}\left( \ad\,  b\right)^{\beta_1}
\ldots \left( \ad\,  a\right)^{\alpha_k}\left( \ad\,  b\right)^{\beta_k-1} b.
\end{align*}
Various noncommutative setting may be used for the BCH formula, and we may write ${\rm BCH}_N(a,b;\ad)$ to specify which commutator structure $\ad$ is used.   
\end{notation}

\subsubsection{Quasinorms and Taylor formula}

Quasinorms were defined on graded vector space in Definition \ref{def_dilation_gradedV}, and in particular on graded Lie algebra.
Using the exponential mapping $\Exp_G$, we may identify the quasinorms on a graded group $G$ and on its Lie algebra $\fg$. 
Then a quasinorm $|\cdot|$ on $G$ satisfies the triangle inequality up to a constant:
\begin{equation}
    \label{eq_TriangleIneq}
    \exists C>0\qquad 
\forall x,y\in G\qquad 
|xy| \leq C(|x|+|y|).
\end{equation}

\begin{ex}
\label{ex_quasinormGX}
We define $|x|_\bV = |\Exp_G^{-1} \cdot |_{\alpha,\bV}$
with $\bV$ an adapted basis, $|\cdot|_{\alpha,\bV}$ defined in Example \ref{ex_quasinorm} and  $\alpha =2M_0$ where $M_0$ is a common multiple of the weights $\upsilon_1,\ldots,\upsilon_n$, the least one to fix the idea.  
Then $|\cdot|_\bV$ is a  quasinorm defined on $G$.
\end{ex}

\begin{notation}
  We call the quasinorm $|\cdot|_\bV$ of Example~\ref{ex_quasinormGX} the \emph{standard quasinorm associated to $\bV$}.
It is smooth away from 0 and $|\cdot|_\bV^{2M_0}$ is smooth, even at 0. We will use this notation all along the article.  
\end{notation}

\begin{remark}
\label{rem_Cst_trineq}
 In the triangle inequality  \eqref{eq_TriangleIneq} for the quasinorm $|\cdot|_\bV$ associated to $\bV$ of Example \ref{ex_quasinormGX}, we can choose the constant $C$ so that it  is an increasing function of $\|[,]_{\fg}\|_\bV$ depending on the gradation of $\fg$ and on the choice of adapted basis $\bV$, but otherwise not on the Lie structure, i.e. the full description of the Lie bracket $[\cdot, \cdot]_\fg: \fg\times \fg \to \fg$ (up to isomorphism of Lie algebras).
\end{remark}

Using the exponential mapping $\Exp_G$ and a fixed  basis $\bV=(V_1,\ldots,V_n)$ of $\fg$ adapted to the gradation,
 we will often identify a point $x$ in $G$ with the coordinates of $\Exp_G^{-1} x$ in $\fg$ with respect to $\bV$.
 As already mentioned, having fixed an adapted basis $\bV$, 
	we set 
$$
x= \Exp_G (x_1 V_1+\ldots+x_n V_n)\in G.
$$
A function on $G$ can then be realized as a function of the variables  $(x_1,\cdots ,x_n)\in\bR^n$. For example,   the monomials with respect to $\bV$ are defined as follows:

\begin{ex}
\label{ex_monomialX}
Having fixed an adapted basis $\bV$, 
	we set 
$$
x^\alpha := x_1^{\alpha_1}\ldots x_n^{\alpha_n},
\qquad x= \Exp_G (x_1 V_1+\ldots+x_n V_n)\in G,
\qquad \alpha=(\alpha_1,\ldots,\alpha_n)\in \bN_0^n.
$$
The functions $x^\alpha$, $\alpha\in \bN_0^n$
are called the monomials with respect to $\bX$.
The monomial $x^\alpha$ is $[\alpha]$-homogeneous for the dilations $\delta_r$, $r>0$.
\end{ex}

Our analysis will require Taylor estimates adapted to graded groups and due to Folland and Stein \cite{folland+stein_82}, see also   Theorem 3.1.51 in~\cite{R+F_monograph}.

\begin{theorem}
\label{thm_MV+TaylorG}
Let $G$ be a graded Lie group.
We fix a basis $\bV$ of $\fg$ adapted to the gradation and  a quasinorm $|\cdot|$ on $G$.
\begin{itemize}
    \item[(1)] {\rm Mean value theorem}. There exists $C_0>0$ and $\eta>1$ such that for any $f\in C^1(G)$ we have
    $$
    \forall x,y\in G,\qquad |f(xy) - f(x)| \leq C_0 \sum_{j=1}^n |y|^{\upsilon_j} \sup_{|z| \leq \eta |y|} |(V_j f)(xz)|.
    $$
    \item[(2)] {\rm Taylor estimate}. 
    More generally, 
    with the constant $\eta$ of point (1), 
    for any $N\in \bN_0$, 
    there exists $C_N>0$ 
    such that for any $f\in C^{\lceil N\rfloor}(G)$ we have
    \begin{equation*}\label{taylor_1}
    \forall x,y\in G,\qquad |f(xy) - \bP_{G,f,x,N} (y)| 
    \leq C_N \sum_{\substack{|\alpha|\leq \lceil N\rfloor+1\\ [\alpha]>N} }|y|^{[\alpha]}
    \sup_{|z| \leq \eta^{\lceil N \rfloor+1} |y|} |(\bV^\alpha f)(xz)|.
    \end{equation*}
Above,  $\lceil N\rfloor$ denotes  $ \max\{|\alpha| : \alpha\in \bN_0^n$ with $[\alpha]\leq N\}$
and
$\bP_{G,f,x,N}$ denotes the Taylor polynomial of $f$ at $x$ of order $N$ for the graded group $G$, i.e. the unique linear combination of monomials of homogeneous degree $\leq N$ satisfying 
$\bV^\beta\bP_{G,f,x,N} (0)=\bV^\beta f(x)$  for any $\beta\in \bN_0^n$ with $[\beta]\leq N$.
\end{itemize}
\end{theorem}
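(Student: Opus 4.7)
The plan is to prove part (1) directly by the fundamental theorem of calculus along a one-parameter subgroup, and then deduce part (2) by induction on $\lceil N\rfloor$, using part (1) to peel off one derivative at a time.

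For part (1), fix $x\in G$ and write $y = \Exp_G(Y)$ with $Y = \sum_{j=1}^n y_j V_j \in \fg$. Identifying each $V_j\in\fg$ with the corresponding left-invariant vector field on $G$, the integral curve $\gamma(t) := \Exp_G(tY)$, $t\in [0,1]$, joining $e$ to $y$ satisfies
\[
\tfrac{d}{dt} f(x \gamma(t)) \;=\; (Yf)(x\gamma(t)) \;=\; \sum_{j=1}^n y_j \,(V_jf)(x\gamma(t)),
\]
so that
\[
f(xy) - f(x) \;=\; \sum_{j=1}^n y_j \int_0^1 (V_jf)(x\gamma(t))\,dt.
\]
From the explicit formula for the standard quasinorm $|\cdot|_\bV$ of Example~\ref{ex_quasinormGX} one reads off $|y_j| \leq |y|_\bV^{\upsilon_j}$ for each $j$, and $|\gamma(t)|_\bV \leq |y|_\bV$ for $t\in[0,1]$; the latter follows from $t^{2M_0/\upsilon_j}\leq 1$. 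Switching to a general quasinorm via Remark~\ref{rem_eq_quasinorm} absorbs multiplicative constants into $C_0$ and inflates the supremum radius by a factor $\eta > 1$.

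For part (2), proceed by induction on $\lceil N\rfloor$. The base case $\lceil N\rfloor = 0$ (equivalently $N<\upsilon_1$) coincides with part (1): $\bP_{G,f,x,N} = f(x)$, and the multi-indices $\alpha$ with $|\alpha|\leq 1$, $[\alpha] > N$ are exactly $\alpha = e_j$ for $j=1,\ldots, n$. For the inductive step, apply part (1) to the remainder $R_N(y) := f(xy) - \bP_{G,f,x,N}(y)$, which vanishes at $y = 0$ by construction, yielding
\[
|R_N(y)| \;\leq\; C_0 \sum_{j=1}^n |y|^{\upsilon_j} \sup_{|z|\leq \eta|y|} |V_j R_N(z)|.
\]
The key algebraic identity
\[
V_j \bP_{G,f,x,N} \;=\; \bP_{G,\, V_j f(x\cdot),\, e,\, N-\upsilon_j}
\]
then allows one to apply the inductive hypothesis to $V_jf(x\cdot)$ at order $N-\upsilon_j$. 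This identity follows from the PBW theorem together with the gradedness of $\sU(\fg)$: any reordering $\bV^\beta V_j = \sum_\gamma c^\gamma_{\beta,j}\,\bV^\gamma$ involves only $\gamma$ with $[\gamma] = [\beta]+\upsilon_j$ and $|\gamma| \leq |\beta|+1$, so both sides satisfy the same defining relations. Substituting back and re-indexing via the same PBW reordering produces multi-indices $\beta$ with $[\beta] = [\alpha']+\upsilon_j > N$ and $|\beta|\leq |\alpha'|+1 \leq \lceil N-\upsilon_j\rfloor+2 \leq \lceil N\rfloor+1$, where the last inequality comes from the fact that $|\alpha_0+e_j| = |\alpha_0|+1$ whenever $|\alpha_0|=\lceil N-\upsilon_j\rfloor$ and $[\alpha_0]\leq N-\upsilon_j$. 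Nesting the two supremum radii produces $\eta\cdot \eta^{\lceil N-\upsilon_j\rfloor+1} \leq \eta^{\lceil N\rfloor+1}$, which completes the inductive step.

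The main obstacle is not analytic but combinatorial: verifying the Taylor polynomial identity above and carefully tracking how the PBW reordering controls the parameters $|\beta|$ and $[\beta]$. Once this algebraic bookkeeping is in place, part (2) becomes a straightforward iteration of part (1).
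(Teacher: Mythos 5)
Your proof is correct, and it follows the standard Folland--Stein iteration scheme. The paper itself does not prove this theorem; it cites \cite{folland+stein_82} and \cite[Theorem 3.1.51]{R+F_monograph}, so there is no ``paper's proof'' to compare against line by line, but the approach you take -- fundamental theorem of calculus along $t\mapsto\Exp_G(tY)$ for the mean value theorem, then induction on $\lceil N\rfloor$ via the commutation identity $V_j\bP_{G,f,x,N}=\bP_{G,V_jf(x\cdot),e,N-\upsilon_j}$ -- is exactly what those references do, adapted to the graded (rather than only stratified) setting.

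A few small points worth noting. Your verification that $|\gamma(t)|_\bV\leq|y|_\bV$ for $t\in[0,1]$ uses $t^{2M_0/\upsilon_j}\leq 1$, which is correct, and the passage from $|\cdot|_\bV$ to a general quasinorm by Remark~\ref{rem_eq_quasinorm} does yield $\eta=C^2$ for the equivalence constant $C$. In the inductive step you implicitly rely on two facts that you state but are worth making sure one has: (i) when $N<\upsilon_j$, the identity degenerates to $V_j\bP_{G,f,x,N}=0$ (both sides vanish, since $V_j$ lowers homogeneous degree by $\upsilon_j$ and there are no monomials of negative degree), and the surviving term $\alpha=e_j$ already satisfies $|e_j|=1\leq\lceil N\rfloor+1$, $[e_j]>N$; (ii) the inequality $\lceil N-\upsilon_j\rfloor+1\leq\lceil N\rfloor$, established by taking a maximiser $\alpha_0$ for $\lceil N-\upsilon_j\rfloor$ and observing that $\alpha_0+e_j$ competes for $\lceil N\rfloor$. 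Your PBW argument -- reordering $\bV^\gamma V_j$ into $\sum_\delta c_\delta\bV^\delta$ with $[\delta]=[\gamma]+\upsilon_j$ and $|\delta|\leq|\gamma|+1$ -- is the right tool for both the Taylor-polynomial identity and for re-indexing the final sum. Finally, the regularity $f\in C^{\lceil N\rfloor}$ only guarantees that $\bV^\alpha f$ exists for $|\alpha|\leq\lceil N\rfloor$, so the terms with $|\alpha|=\lceil N\rfloor+1$ on the right-hand side may be $+\infty$; the inequality is still valid, and this is consistent with the way the theorem is stated.
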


\begin{remark}
\label{remthm_MV+TaylorG}
If we choose the quasinorm $|\cdot| = |\cdot|_\bV$ associated with $\bV$ of Example \ref{ex_quasinormGX},
the constants $\eta$ and $C_N$ can be chosen so that they are increasing functions of $\|[,]_{\fg}\|_\bV$ depending on the gradation of~$\fg$ and on the choice of adapted basis $\bV$, but otherwise not on the Lie structure.
\end{remark}

In the sequel it will be useful to describe  the differentiation of the composition of functions. We denote by $*_G$ the  law of the group $G$ and, once given a basis $\bV$ of $\fg$ adapted to the gradation, 
we identify $G$ with $\bR^n$.

\begin{lemma}
\label{lem_comp_der_groupe}
Let $G, G'$ be two graded Lie groups.
We fix a basis $\bV$ (resp. $\bV'$) of $\fg$ (resp. of~$\fg'$) adapted to the gradation.
If $f: G'\to \bR$ and $g: G \to G'$ are smooth, then with the convention above,
$$
V_{j} (f\circ g)(v) = \sum_{\ell=1}^{\dim G'} h_\ell (v)
 \ V_\ell' f (g(v)),
 \quad\mbox{where}\ 
h_\ell(v):=\partial_{t=0} \left[(-g(v)) *_{G'} g (v*_G te_j)\right]_\ell.
$$
Above $(e_1,\ldots,e_{\dim G})$ denotes the canonical basis of $\bR^{\dim G}$, and $[v']_\ell$ the coefficient of $v'\in \bR^{\dim G'}$.
\end{lemma}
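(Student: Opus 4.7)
The plan is to apply the chain rule after rewriting the inner function $g(v *_G t e_j)$ as a left translate of $g(v)$ in $G'$, so that left-invariance on $G'$ converts ordinary coordinate derivatives into the invariant vector fields $V_\ell$.

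First, I would recall the basic fact, valid under our identification of $G$ with $\bR^n$ via $\Exp_G$ and $\bV$, that the left-invariant vector field $V_j$ acts on any smooth $F: G\to \bR$ by
\[
V_j F(v) = \partial_{t=0} F(v *_G t e_j),
\]
and similarly $V_\ell f(w) = \partial_{s=0} f(w *_{G'} s e_\ell)$ for any smooth $f:G'\to \bR$ and $w\in G'$. Applying this to $F = f\circ g$ gives immediately
\[
V_j(f\circ g)(v) = \partial_{t=0} f\bigl(g(v *_G t e_j)\bigr).
\]

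Next, the key manipulation: set $\gamma(t) := (-g(v)) *_{G'} g(v *_G t e_j)$, so that $\gamma(0) = 0$ and
\[
g(v *_G t e_j) = g(v) *_{G'} \gamma(t).
\]
Then by the ordinary (Euclidean) chain rule applied to the smooth map $t \mapsto f(g(v) *_{G'} \gamma(t))$, expanded in the coordinates of $\bR^{\dim G'}$,
\[
\partial_{t=0} f\bigl(g(v) *_{G'} \gamma(t)\bigr) = \sum_{\ell=1}^{\dim G'} [\gamma'(0)]_\ell \,\partial_{w_\ell}\bigr|_{w=0} f\bigl(g(v) *_{G'} w\bigr).
\]
The coefficient $[\gamma'(0)]_\ell$ is by definition $h_\ell(v)$. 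For the partial derivative, since $\partial_{w_\ell}|_{w=0}$ coincides with the directional derivative $\partial_{s=0}(\,\cdot\,*_{G'} s e_\ell)$ at $w=0$, left-invariance on $G'$ yields
\[
\partial_{w_\ell}\bigr|_{w=0} f\bigl(g(v) *_{G'} w\bigr) = \partial_{s=0} f\bigl(g(v) *_{G'} s e_\ell\bigr) = V_\ell f(g(v)).
\]

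Combining the three displays gives the claimed formula. The lemma is essentially a bookkeeping statement: the only subtlety is to check the identification between $V_\ell$ (intrinsically defined) and the $\ell$-th coordinate partial at the origin in exponential coordinates adapted to $\bV'$, which is immediate from the definition of left-invariant extension. I do not anticipate any genuine obstacle; this result serves as a formal device that will be combined later with an analysis of $h_\ell$ via the BCH formula to track homogeneous orders in the filtered-manifold estimates.
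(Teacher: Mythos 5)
Your proof is correct and follows the same route as the paper's: you write $V_j(f\circ g)(v) = \partial_{t=0} f(g(v*_G te_j))$, insert the identity $g(v*_G te_j) = g(v)*_{G'}\gamma(t)$ with $\gamma(t) = (-g(v))*_{G'}g(v*_G te_j)$, and apply the Euclidean chain rule; the paper compresses the final chain-rule step into a single line while you spell out the identification $\partial_{w_\ell}\big|_{w=0} f(g(v)*_{G'} w) = V_\ell f(g(v))$, but the argument is identical.
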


\begin{proof}[Proof of Lemma \ref{lem_comp_der_groupe}]
This follows readily from writing 
\begin{align*}
V_{j} (f\circ g)(v) &=  \partial_{t=0}\left(f (g(v*_G te_j)) \right)\\
& =  \partial_{t=0}f \left( g(v) *_{G'}\big ( (-g(v)) *_{G'} g (v*_G te_j)\big ) \right)\\
&= \sum_{\ell=1}^{\dim G'} \partial_{t=0} \left[(-g(v)) *_{G'} g (v*_G te_j)\right]_\ell
 \ V_\ell' f (g(v)),
 \end{align*}
\end{proof}

\subsection{Group Fourier transform on a nilpotent Lie group}
\label{subsec_cFG}

Let $G$ be a  nilpotent Lie group. 
Any representation $(\pi,\cH_\pi)$ of $G$ we consider in this paper is assumed to be unitary, strongly continuous and its Hilbert space $\cH_\pi$ separable. 
We follow \cite{Dixmier_C*,R+F_monograph}.

\subsubsection{Definition on $L^1(G)$}
\label{subsubsec_FL1}
We fix a Haar measure $dx$ on $G$. 
The \emph{group Fourier transform} of a function $\kappa\in L^1(G)$ at a representation $\pi$ of $G$ is  the operator 
$$
\int_G \kappa(x) \pi(x)^* dx \in \sL(\cH_\pi),
$$
denoted by 
$$
\cF_G \kappa (\pi) = \widehat \kappa(\pi) = \pi(\kappa) 
$$
Note that if $\pi_1$ and $\pi_2$ are equivalent, i.e. $U\pi_1 = \pi_2 U$ for some unitary map $U$ between their Hilbert spaces, then $U \cF_G \kappa (\pi_1) = \cF_G \kappa (\pi_2)U$.
Therefore, we may consider the group Fourier transform of  $\kappa\in L^1(G)$ at the equivalence class of the representation $\pi$ of $G$. 

The \emph{unitary dual} $\widehat G$ is the set of irreducible representations of $G$ modulo equivalence equipped with the hull kernel topology. 
It is a standard Borel set and the group Fourier transform of  $\kappa\in L^1(G)$ may be viewed as the measurable field of equivalence classes of  bounded operators 
$$
\cF_G \kappa = \{\pi(\kappa) \in \sL(\cH_\pi), \pi\in \Gh\}.
$$
We denote by $1_{\widehat G}$ (the class of) the trivial representation of $G$.  

Note that 
\begin{equation}\label{eq_convolution}
\mathcal  F_G (f)\circ \mathcal F_G(g)= 
\mathcal F(g\star_Gf),\;\; g\star_G f= \int_G
g(y )f(y^{-1} x) dy
=\int_G
g(xy^{-1} )f(y) dy.
\end{equation}

\subsubsection{Extension to $L^2(G)$}
\label{subsubsec_FL2}
There exists a measure $\mu_\Gh$ on $\widehat G$ such that 
$$
\forall f\in L^1(G)\cap L^2(G),\qquad
\int_G |f(x)|^2 dx = \int_{\widehat G} \|\pi(f) \|^2_{HS(\cH_\pi)} d\mu_\Gh(\pi), 
$$
where $ \|\cdot  \|_{HS(\cH_\pi)} $ denotes the Hilbert-Schmidt norm on $\cH_\pi$;
the measure $\mu_\Gh$ is unique once the Haar measure $dx = \mu_G$ has been fixed.
The measure $\mu_\Gh$ and the above formula are called the \emph{Plancherel measure} and the \emph{Plancherel formula} (see~\cite{Co_Gr,Kirilov} or \cite{Dixmier_C*,Dixmier_VN} for abstract version of Plancherel theorem in a general context).

We denote by $L^2(\widehat G)$ the Hilbert space of measurable fields of Hilbert-Schmidt operators $\{\sigma(\pi):\cH_\pi\to \cH_\pi$ Hilbert-Schmidt$\}$ 
(up to $\mu_\Gh$-a.e. equivalence and unitary equivalence)
such that the following quantity is finite
$$
\|\sigma\|_{L^2(\widehat G) } := \sqrt{\int_{\widehat G} \|\sigma \|  ^2_{HS(\cH_\pi)} d\mu_\Gh(\pi)}.
$$
This allows for a unitary extension of the group Fourier transform $\cF_G : L^2(G)\to L^2(\widehat G)$.

\subsubsection{Extension to distributions, the set of bounded invariant symbols}\label{subsec:distrib}
\label{subsubsec_FKLinfty}
We start with defining  the set of bounded invariant symbols $L^\infty(\widehat G)$:

\begin{definition}
	A \emph{bounded invariant symbol} of $G$ is a  measurable field of equivalence classes of bounded operators  $\sigma= \{\sigma (\pi)\in \sL(\cH_\pi) : \pi\in \widehat {G}\}$ on $\widehat G$ such that
	the following quantity is finite 
$$
\|\sigma\|_{L^\infty(\Gh)}:=\sup_{\pi\in \Gh} \|\sigma(\pi)\|_{\sL(\cH_\pi)}.
$$
The supremum above is understood as the essential supremum with respect to the Plancherel measure $\mu_\Gh$
and the notion of measurable fields of bounded operators 
is up to $\mu_\Gh$-a.e. equivalence and unitary equivalence.
The space of bounded invariant symbols of $G$ is  denoted by $L^\infty(\widehat G)$. 
\end{definition}

Note that differing from~\cite{R+F_monograph} and related papers, the symbols $\sigma$ in this section are invariant in the sense that they do not depend on~$x\in G$.
\smallskip

We check readily
$$
\forall \kappa\in L^1(G),\qquad
\| \pi( \kappa) \|_{L^\infty(\widehat G)}\leq \|\kappa\|_{L^1(G)},
\qquad\mbox{so}\qquad \cF_G(L^1(G))\subset L^\infty(\widehat G).
$$
However,  there are many other bounded invariant symbols.
In fact, it follows from Dixmier's full Plancherel theorem \cite[\S 18]{Dixmier_C*} that
$L^\infty(\widehat G)$ is  a von Neumann algebra (sometimes called the von Neumann algebra of the group) isomorphic to the subspace 
$$
\sL(L^2(G))^G
$$
of bounded operators on $L^2(G)$ that  commutes with left-translation of the group.
Indeed, if $\sigma\in L^\infty(\widehat G)$, then it is straightforward to check that the Fourier multiplier defined via $f \mapsto \cF_G^{-1} (\sigma \widehat f)$ is in $\sL(L^2(G))^G$ and its symbol is $\sigma$.
Conversely,  
 if $T\in \sL(L^2(G))^G$,
there exists a unique bounded invariant symbol $\widehat T\in L^\infty(\Gh)$ such that 
$\cF_G (Tf) = \widehat T \widehat f$ for any $f\in L^2(G)$.
Moreover  $\|\widehat T\|_{L^\infty(\widehat G)} = \|T\|_{\sL(L^2(G))}$.

From the Schwartz kernel theorem, 
it follows that if $T\in \sL(L^2(G))^G$, there exists a unique distribution $\kappa\in \cS'(G)$ such that $Tf = f\star_G\kappa $ for any $f\in \cS(G)$.
The distribution $\kappa$ is called the (right) convolution kernel of $T$. We set $\cF_G \kappa := \widehat T$ and we check easily that this extends the  group Fourier transform to 
\begin{align*}
	\cK(G) := \{\kappa\in \cS'(G) : (f\mapsto f\star_G\kappa) \in \sL(L^2(G))\},
\end{align*}
and that  $\cF_G : \cK \to L^\infty(\widehat G)$ is injective, hence bijective.  
We readily check the following property:
\begin{lemma}
\label{lem_compwmorph}
    Let $\theta$ be an automorphism of the nilpotent Lie group $G$.
    We keep the same notation for the corresponding automorphism of its Lie algebra $\fg$. 
    The Jacobian determinant of $\theta$ on $G$ is then constant and equal to $\det \theta$.
For any $T\in \sL(L^2(G))^G$, the operator $T_\theta$ defined by
$$
T_\theta f= (T (f\circ \theta^{-1})) \circ \theta 
$$
 is in $ \sL(L^2(G))^G$
with 
$$
\|T_\theta\|_{\sL(L^2(G))}=\|T\|_{\sL(L^2(G))}.
$$
Moreover, denoting by $\kappa$ and $\sigma$ the convolution kernel and symbol of $T$, the convolution kernel and symbol of $T_\theta$ are given by 
$$
\kappa_\theta:=(\det \theta )\, \kappa\circ \theta \quad\mbox{and} \quad
\theta_* \sigma(\pi)=\sigma (\pi\circ \theta^{-1}),\quad \pi\in \Gh,
$$
with 
$\|\theta_*\sigma\|_{L^\infty (\Gh)}
=\|\sigma\|_{L^\infty (\Gh)}.$
\end{lemma}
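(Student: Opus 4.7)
The plan is to introduce the pull-back operator $U_\theta f := f\circ \theta$ on $L^2(G)$, observe that $T_\theta = U_\theta\,T\,U_{\theta^{-1}}$, and derive each assertion by a change of variable once the Jacobian of $\theta$ is known. Since $G$ is a connected simply connected nilpotent Lie group, $\Exp_G$ is a global diffeomorphism and the Haar measure on $G$ corresponds to Lebesgue measure on $\fg$ in exponential coordinates. The automorphism property $\theta\circ \Exp_G = \Exp_G\circ \theta_\fg$, where $\theta_\fg$ is the differential of $\theta$ at the identity, implies that $\theta$ is linear in those coordinates; its Jacobian is therefore the constant $\det\theta_\fg$, which we simply denote by $\det\theta$, so that $\theta^* dx = (\det\theta)\,dx$.

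For the norm equality and left-invariance, a direct change of variable gives $\|U_\theta f\|_{L^2(G)}^2 = (\det\theta)^{-1}\|f\|_{L^2(G)}^2$, so that $U_\theta$ is a scalar multiple of a unitary; the two scaling factors in $T_\theta = U_\theta\,T\,U_{\theta^{-1}}$ then cancel, yielding $\|T_\theta\|_{\sL(L^2(G))}=\|T\|_{\sL(L^2(G))}$. Left-invariance of $T_\theta$ follows from the intertwining identity $L_g\circ U_\theta = U_\theta\circ L_{\theta(g)}$, itself a formal consequence of $\theta(gx)=\theta(g)\theta(x)$: since $T$ commutes with every $L_{\theta(g)}$, a direct conjugation yields $L_g T_\theta = T_\theta L_g$, so that $T_\theta\in \sL(L^2(G))^G$.

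For the convolution kernel I would write $Tf = \int f(y)\kappa(y^{-1}x)\,dy$, so that
\[
T_\theta f(x) = \int f(\theta^{-1}y)\,\kappa(y^{-1}\theta x)\,dy;
\]
substituting $y=\theta z$ and using $\theta(z)^{-1}\theta(x) = \theta(z^{-1}x)$, this becomes $f\star_G\bigl((\det\theta)\,\kappa\circ\theta\bigr)$, hence $\kappa_\theta = (\det\theta)\,\kappa\circ\theta$. A further substitution $y=\theta x$ in the defining integral of $\cF_G\kappa_\theta$ gives
\[
\cF_G\kappa_\theta(\pi) = (\det\theta)\int \kappa(\theta x)\pi(x)^*\,dx = \int \kappa(y)\,\pi(\theta^{-1}y)^*\,dy = \sigma(\pi\circ\theta^{-1}),
\]
which is the stated symbol formula. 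Finally, $\|\theta_*\sigma\|_{L^\infty(\widehat G)}=\|\sigma\|_{L^\infty(\widehat G)}$ follows at once from the isometric isomorphism $\sL(L^2(G))^G\simeq L^\infty(\widehat G)$ applied to the operator norm identity already proved between $T$ and $T_\theta$. The only point to monitor is the sign of $\det\theta$, which is handled by interpreting it consistently as the determinant of the induced linear map on $\fg$ in exponential coordinates; I do not anticipate any deeper obstacle, as the whole lemma reduces to careful bookkeeping of Jacobian factors under two changes of variable.
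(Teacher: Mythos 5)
Your proof is correct and supplies exactly the routine verification the paper leaves implicit (it introduces the lemma with "We readily check the following property" and gives no proof). The conjugation $T_\theta = U_\theta T U_{\theta^{-1}}$, the identification of $\theta$ as linear in exponential coordinates, and the two changes of variable are the intended argument; the only point to be explicit about, as you already flag, is that the change-of-variable factor is $|\det\theta|$ rather than $\det\theta$ (the paper's convention tacitly assumes this is positive, e.g. for the block-diagonal transition maps $\mathrm{diag}(T_x)$ to which the lemma is applied, and one should write $|\det\theta|^{-1}$ in the norm computation for $U_\theta$).
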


\subsubsection{Invariant symbols}

\begin{definition}
An \emph{invariant symbol} $\sigma$ on $G$ is a  measurable field of operators $\sigma= \{\sigma (\pi):\cH_\pi^{+\infty} \to \cH_\pi^{-\infty} : \pi\in \widehat G\}$ over $\widehat G$.
Here, $\cH_\pi^{+\infty}$ and $\cH_\pi^{-\infty}$ denote the spaces of smooth and distributional vectors in $\cH_\pi$.
\end{definition}

We identify the Lie algebra $\fg$  of $G$ with the space of real left-invariant vector fields on $G$ and the universal enveloping Lie algebra $\sU(\fg)$ with the space of left-invariant differential operators on $G$. 
If $\pi$ is a representation of $G$, we keep the same notation for the corresponding  representations of $\fg$ and $\sU(\fg)$. 

\begin{ex}
\label{ex_piTonH+-}
For any $T\in \sU(\fg)$, $\widehat T=\{\pi(T), \pi\in \widehat G\}$ is an invariant symbol.
In fact, 	each $\pi(T)$ acts on $\cH_\pi^{+\infty}$ and, by duality, on $\cH_\pi^{-\infty}$.
\end{ex}

\begin{remark}\label{rem_comp_symb}
    With this general definition, two invariant symbols may not be composable, 
although an invariant symbol can always be composed (on the right and on the left)  with $\widehat T$, $T\in \sU(\fg)$.
\end{remark}

In Section \ref{subsec_InvSymbClasses} below, we will restrict ourselves to smaller subspaces of invariant symbols which will contain $\{\pi(T), \pi\in \Gh\}$, $T\in \sU(\fg)$.  
To define these subspaces, 
we will consider the Sobolev spaces on $G$
whose definition is recalled in Section~\ref{subsubsec_L2sG}  below.
This  relies on the notion of Rockland operator recalled in Section \ref{subsec_R} and on the graded Lie structure of the group $G$
while the theory recalled above in Sections \ref{subsubsec_FL1}, \ref{subsubsec_FL2} and  \ref{subsubsec_FKLinfty}  only requires $G$ to be 
a locally compact unimodular group $G$ of type 1.

\subsection{Rockland operators and Sobolev spaces}
\label{subsec_R}

As in~\cite{R+F_monograph}, we adopt the following conventions:
\begin{definition}
A left-invariant differential operator $\cR\in \sU(\fg)$  on $G$ is \emph{Rockland} when it is homogeneous and for any $\pi\in \widehat G\setminus \{1_{\widehat G}\}$, 
$\pi(\cR)$ is injective on $\cH_\pi^\infty$. 

It is positive when 
$$
\forall f\in \cS(G),\qquad 
\int_G \cR f(x) \overline {f(x)} dx \geq 0.
$$
\end{definition}

\begin{ex}
\label{ex_RG}
	A positive Rockland operator always exists, for instance
\begin{equation}\label{def:R_introduction}
	\cR = \sum_j (-1)^{\frac {M_0}{\upsilon_j}}  V_{j}^{2\frac{M_0} {\upsilon_j}}, 
\end{equation}
having fixed a basis $\bV=(V_1,\ldots,V_n)$ adapted to the gradation and a multiple $M_0$ of the resulting weights $\upsilon_1, \ldots, \upsilon_n$. 
Its homogeneous  degree is $2M_0$.
For $M_0$ the least common multiple of the weights $\upsilon_1,\ldots,\upsilon_n$, 
	we denote by $\cR_\bV$ the resulting positive Rockland operator.
\end{ex}

The Helffer-Nourrigat theorem \cite{HN0} states that 
a Rockland operator is hypoelliptic (this result valid on graded groups was further investigated on manifolds by the same authors in \cite{HN} in the 80's, leading to the Helffer-Nourrigat conjecture proved recently in \cite{AMY}).
Moreover, if $\cR$ is  symmetric,  then $\cR$ and $\pi(\cR)$ for any $\pi\in \widehat G$ are essentially self-adjoint on $\cS(G)\subset L^2(G)$ and $\cH_\pi^\infty\subset \cH_\pi$ respectively;
we keep the same notation for their self-adjoint extensions. 
Denoting by $E_{\cR}$ and $E_{\pi(\cR)}$ the spectral resolutions of $\cR$ and $\pi(\cR)$, 
\begin{equation}
    \label{eq_spectralmeas}
    \mbox{i.e.}\ \cR = \int_\bR \lambda\, dE_{\cR}(\lambda)
\quad\mbox{and}\quad 
\pi(\cR) = \int_\bR \lambda\, dE_{\pi(\cR)}(\lambda),
\end{equation}
then for any interval $I\subset \bR$, we have
$$
\cF_G(E_{\cR}(I))=
\{E_{\pi(\cR)}(I), \pi\in \widehat G\},
$$
or in other words
\[
\forall f\in\mathcal S(G),\;\;
\mathcal F_G(E_{\cR}(I) f )= E_{\pi(\cR)}(I) \mathcal F_G f. 
\]
The spectral resolutions of $\cR$ and $\pi(\cR)$ allows us to consider spectral multipliers in these operators.

If in addition of being Rockland, $\cR$ is positive then $\pi(\cR)$ is also positive.
In fact, there is an equivalence between $\cR$ being positive and $\pi(\cR)$ positive for any $\pi\in \Gh$.
In this case, 
for each $\pi\in \widehat G\setminus\{1_{\widehat G}\}$, 
the operator
$\pi(\cR)$ has a discrete spectrum in $(0,+\infty)$ and 
each eigenvalue has finite multiplicity \cite{hul}.

 \subsubsection{Inhomogeneous Sobolev spaces $L^2_s(G)$  on $G$}
\label{subsubsec_L2sG}

For any $s\in \bR$ 
and  $\cR$ a positive Rockland operator of homogeneous degree $\nu$,
 the domain of the spectral multiplier $(\id+\cR)^{s}$  contains $\cS(G)$,
and 
$\pi(\id+\cR)^{s}$, $\pi\in \widehat G$, acts on $\cH_\pi^\infty$ and on $\cH_\pi^{-\infty}$.	
We denote by $L^2_{s,\cR}(G)$ on $G$ the closure of $\cS(G)$ in $\cS'(G)$ for the norm 
$$
\|f\|_{L^2_s(G),\cR}:=
\|(\id+\cR)^{s/\nu}f\|_{L^2(G)}.
$$
\begin{proposition}
\label{prop_sob}
The spaces $L^2_{s,\cR}(G)$ enjoy the following properties:
\begin{enumerate}
	\item Each $L^2_{s,\cR}(G)$ is a Hilbert space	and for $s=0$, $L^2_{0,\cR}(G)=L^2(G)$.
	\item For any $s_1 \geq s_2$, we have the continuous inclusions $L^2_{s_1,\cR}(G)\subseteq L^2_{s_2,\cR}(G)$.
 \item The following interpolation property holds
	$$
	\|f\|_{L^2_{t,\cR}(G)} \leq \|f\|_{L^2_{s,\cR}(G)}^{\theta} \|f\|_{L^2_{u,\cR}(G)}^{1-\theta}
	$$
for $	t\in (s,u)$ with $s<u$ and $t= \theta s +(1-\theta)u$, $\theta\in (0,1)$. 

	\item The spaces $L^2_{s,\cR}(G)$ and $L^2_{-s,\bar \cR}(G)$ are dual to each other.
	\item If $\bV$ is a basis of $\fg$ adapted to the gradation and $s\in \bN$ is a common multiple of the weights of the gradation, then $L^2_{s,\cR}(G)$ is the space of $f\in L^2(G)$ such that $\bV^\alpha f\in L^2(G)$ for any multi-index $\alpha$ with $[\alpha]=s$. Furthermore, a norm equivalent to the norm $\|\cdot\|_{L^2_{s}(G),\cR}$ is given by 
	$$
	f\longmapsto \|f\|_{L^2(G)} + \sum_{[\alpha]=s}\|\bV^\alpha f\|_{L^2(G)}.
	$$	
	\item\label{prop_sob_R1R2} 
	If $\cR_1$ and $\cR_2$ are two positive Rockland operators, then 
for any $s\in \bR$, 
$$
\exists C=C_{s,G,\cR_1,\cR_2}>0,\qquad
  \forall f\in \cS(G),\qquad
  \|f \|_{L^2_s(G),\cR_1}
  \leq C \|f \|_{L^2_s(G),\cR_2}.
	$$
 	\item\label{prop_sob_emb} {\rm Sobolev embedding.}
For $s>Q/2$ the inclusion   $L^2_{s,\cR} \subset C_b (G)$ is continuous; here $C_b(G)$ is the Banach space of continuous bounded function on $G$ equipped with the norm  $\|\cdot\|_{L^\infty(G)}$. 
\end{enumerate}
\end{proposition}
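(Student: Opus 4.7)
The core idea is to exploit the spectral calculus of the positive self-adjoint operator $\cR$ (and of $\pi(\cR)$ for $\pi\in\widehat G$) together with the Plancherel formula; the delicate points are the comparison with vector-field norms in item~(5), the change of Rockland operator in item~(6), and the Sobolev embedding in~(7).

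For items~(1)--(4), everything follows from elementary spectral theory. The map $f\mapsto (\id+\cR)^{s/\nu}f$ is, on the Fourier side, pointwise multiplication by the positive invertible operator $\pi(\id+\cR)^{s/\nu}$, hence $\|\cdot\|_{L^2_s(G),\cR}$ is a Hilbertian norm and $L^2_{0,\cR}(G)=L^2(G)$, giving~(1). Since $\pi(\cR)\geq 0$, the multiplier $\pi(\id+\cR)^{(s_2-s_1)/\nu}$ has operator norm $\leq 1$ when $s_2\leq s_1$, giving the continuous inclusion~(2). The three-lines lemma applied to the analytic family $z\mapsto (\id+\cR)^{z/\nu}f$ yields the interpolation inequality~(3). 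For~(4), the pairing $\langle f,g\rangle = \int_G f\bar g$ extends to $L^2_{s,\cR}(G)\times L^2_{-s,\bar\cR}(G)\to\bC$; using Plancherel and the fact that $\pi(\id+\cR)^{s/\nu}$ and $\bar\pi(\id+\bar\cR)^{-s/\nu}$ are adjoint on each $\cH_\pi$, one identifies the dual of $L^2_{s,\cR}(G)$ with $L^2_{-s,\bar\cR}(G)$.

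The heart of the proof is items~(5) and~(6), which say that the scale is independent of $\cR$ and recovers the classical Sobolev spaces at integer exponents adapted to the gradation. First I would establish~(6): given two positive Rockland operators $\cR_1,\cR_2$ of degrees $\nu_1,\nu_2$, it suffices to show that the multiplier $m(\lambda_1,\lambda_2):=(1+\lambda_1)^{s/\nu_1}(1+\lambda_2)^{-s/\nu_2}$ of $(\cR_1,\cR_2)$ (which commute after taking the symbols $\pi(\cR_j)$) defines a bounded operator on $L^2(G)$; this reduces, via functional calculus and Plancherel, to a uniform bound on each $\pi(\id+\cR_1)^{s/\nu_1}\pi(\id+\cR_2)^{-s/\nu_2}$ of the invariant symbol on $\widehat G$, and follows from the Hulanicki-type multiplier theorem for positive Rockland operators. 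For~(5), once~(6) is in hand, one can take the specific Rockland operator $\cR_\bV$ of Example~\ref{ex_RG}: for $s\in\bN$ a common multiple of the weights, the operator $\bV^\alpha (\id+\cR_\bV)^{-s/(2M_0)}$ with $[\alpha]=s$ is a left-invariant Fourier multiplier, and its $L^2$-boundedness (hence that of its inverse on the appropriate subspace) comes again from a spectral multiplier argument applied to the symbol $\pi(\bV)^\alpha\pi(\id+\cR_\bV)^{-s/(2M_0)}$. The equivalence of norms then follows by summing over $\alpha$ with $[\alpha]=s$ and using that this family spans $\sU_s(\fg)$ modulo lower-degree terms. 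The hardest step here is the Mihlin-H\"ormander-style multiplier bound; I would invoke the version proved for graded groups in \cite{R+F_monograph} rather than reprove it.

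Finally, for the Sobolev embedding~(7), the strategy is to write $f=(\id+\cR)^{-s/\nu}g$ with $g\in L^2(G)$, so that
\[
f(x)=(g\star_G B_s)(x),\qquad B_s:=\cF_G^{-1}\!\bigl(\pi(\id+\cR)^{-s/\nu}\bigr),
\]
and to show that $B_s\in L^2(G)$ when $s>Q/2$. This $L^2$-membership is proved via the heat semigroup representation $(\id+\cR)^{-s/\nu}=\frac{1}{\Gamma(s/\nu)}\int_0^\infty t^{s/\nu-1}e^{-t}e^{-t\cR}\,dt$ together with homogeneity: the heat kernel $p_t$ of $\cR$ satisfies $\|p_t\|_{L^2(G)}\asymp t^{-Q/(2\nu)}$ by scaling and the Plancherel formula, and the time integral converges at $0$ iff $s>Q/2$. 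Cauchy--Schwarz then gives $\|f\|_{L^\infty(G)}\leq \|B_s\|_{L^2(G)}\|g\|_{L^2(G)}$, and continuity of $f$ follows from density of $\cS(G)$ in $L^2_{s,\cR}(G)$ and uniform convergence.
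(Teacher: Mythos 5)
The paper does not actually prove Proposition~\ref{prop_sob}; it is a recalled result, with the reader referred to~\cite{R+F_monograph} (notably Section~4.4 therein). So the comparison is against the standard literature proof rather than against a proof in this paper.

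Your arguments for items~(1)--(4) and~(7) are sound and essentially the standard ones. In particular, the Sobolev embedding via the Bessel kernel $B_s=\cF_G^{-1}(\pi(\id+\cR)^{-s/\nu})$, the Gamma-function integral representation, and the exact $L^2$ scaling $\|p_t\|_{L^2}=t^{-Q/(2\nu)}\|p_1\|_{L^2}$ (valid because the heat kernel of a homogeneous Rockland operator is self-similar on the nose) is exactly how this is done, and the convergence condition $s>Q/2$ comes out correctly.

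However, there is a genuine gap in your treatment of the crucial items~(5) and~(6). For~(6) you write that one should study the multiplier $m(\lambda_1,\lambda_2)=(1+\lambda_1)^{s/\nu_1}(1+\lambda_2)^{-s/\nu_2}$ of the pair $(\cR_1,\cR_2)$ ``(which commute after taking the symbols $\pi(\cR_j)$)''. This parenthetical claim is false: two positive Rockland operators $\cR_1,\cR_2$ on a nonabelian group generally do \emph{not} commute, nor do their images $\pi(\cR_1),\pi(\cR_2)$ in any irreducible representation. There is therefore no joint functional calculus, and the proposed bivariate multiplier does not define an operator. Similarly, for~(5) you say that the boundedness of $\bV^\alpha(\id+\cR_\bV)^{-s/\nu}$ ``comes again from a spectral multiplier argument applied to the symbol $\pi(\bV)^\alpha\pi(\id+\cR_\bV)^{-s/\nu}$''. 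But this operator is a left-invariant operator (a group Fourier multiplier), not a spectral multiplier in $\cR_\bV$: $\pi(\bV)^\alpha\pi(\id+\cR_\bV)^{-s/\nu}$ is not a function of $\pi(\cR_\bV)$ alone, so Mihlin--H\"ormander-type multiplier theorems for a single positive Rockland operator do not apply. The actual proof of~(5) in~\cite{R+F_monograph} establishes the a priori subelliptic estimate $\|\bV^\alpha f\|_{L^2}\lesssim\|f\|_{L^2}+\|\cR f\|_{L^2}$ (for $[\alpha]\leq\nu$, $\nu$ the degree of~$\cR$) by a careful interpolation/iteration argument relying on hypoellipticity of $\cR$, then bootstraps to all integer multiples of the weights, and finally obtains~(6) for all $s\in\bR$ from~(5) via interpolation~(3) and duality~(4), not the other way round. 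Your outline inverts this logical order and replaces the delicate subelliptic step by an inapplicable spectral-multiplier claim; as written, the argument for~(5)--(6) would not go through.
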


It follows from  Proposition \ref{prop_sob} that for each $s\in \bR$, the space $L^2_{s,\cR}(G)$ is independent of~$\cR$ or $\bV$,
and all the norms $\|f\|_{L^2_s(G),\cR}$ for various $\cR$ are equivalent.
Moreover, $L^2_{s,\cR}(G)$ depends only on $s\in \bR$ and on the connected simply connected nilpotent Lie group $G$.
We then omit the index $\cR$ to denote the space by $L^2_s(G)$.
This space is called the Sobolev space on $G$.

We can be more precise about the constant in Proposition \ref{prop_sob} \eqref{prop_sob_R1R2}:

\begin{remark}\label{rem_constant_RS}
Assume that  the gradation of the vector space underlying $\fg$ as well as an adapted  basis $\bV$ are fixed. 
Then, the constant $C$ may be chosen as
$$
C= \widetilde C (\max (s, \|[\cdot ,\cdot ]_\fg\|_{\bV}, C_\bV(\cR_1), C_\bV(\cR_2))),
$$
where  $\widetilde C:[0,\infty)\to (0,\infty)$ is an increasing function, and  the constants $C_\bV(\cR_i)$, $i=1,2$, are defined as follows:
let $\cR$ be a positive Rockland operator on~$G$, then it can be written as $\cR = \sum_{[\alpha]=\nu} c_\alpha \bV^\alpha$ (here $\nu$ is the homogeneous degree of $\cR$), and we set  
 $C_\bV (\cR):=
\max_{[\alpha]=\nu } |c_{\alpha}|.$
\end{remark}

\subsubsection{Further extensions of the group Fourier transform}

We set
$$
\cK_{a,b}(G) := \{\kappa\in \cS'(G), (f\mapsto f\star_G\kappa) \in \sL(L^2_a(G),L^2_b(G))\}.
$$
Fixing a positive Rockland operator $\cR$ of homogeneous degree $\nu$, if $\sigma$ is an invariant symbol, then 
so is the symbol
$$
(\id+\widehat \cR)^{b/\nu}  \sigma (\id+\widehat \cR)^{-a/\nu} 
=\{(\id+\pi (\cR))^{b/\nu}  \sigma(\pi) (\id+\pi (\cR))^{-a/\nu} : \pi\in \widehat G\}.
$$
We then define the following subspace of invariant symbols
$$
\sigma\in L^\infty_{a,b}(\widehat G)\;\;\mbox{if and only if}\;\;
(\id+\widehat \cR)^{b/\nu}  \sigma\, (\id+\widehat \cR)^{-a/\nu} \in L^\infty(\widehat G).
$$
Equipped with the norm 
\begin{equation}\label{def:norm_introduction}
\|\sigma\|_{L^\infty_{a,b}(\widehat G),\cR}
:= \|(\id+\widehat \cR)^{b/\nu}  \sigma \,(\id+\widehat \cR)^{-a/\nu}\|_{L^\infty(\widehat G)}, 
\end{equation}
it is a Banach space, isomorphic and isometric to the subspace of left-invariant operators in $\sL(L^2_a(G),L^2_b(G))$. 
The properties  of Rockland operators (see  Proposition~\ref{prop_sob}) imply that $L^\infty_{a,b}(\widehat G)$ is independent of the choice of a positive Rockland operator $\cR$. 
Moreover, 
the Schwartz kernel theorem allows us to extend the group Fourier transform into a bijection
$\cF_G :\cK_{a,b}(G) \to L^\infty_{a,b}(G)$ (see Proposition~5.1.29 in~\cite{R+F_monograph}). 
 
\subsubsection{Homogeneous Sobolev spaces}
\label{subsubsec_homL2sG}
For any $s\in \bR$ and any  positive Rockland operator $\cR$ of homogeneous degree $\nu$, 
the map 
$$
f\mapsto \|\cR^{s/\nu}f\|_{L^2(G)}:=\|f\|_{\dot L^2_s(G),\cR}
$$
defines a norm on $\cS(G)\cap \dom(\cR^{s/\nu})$. 
Moreover (see \cite[Section 4.4]{R+F_monograph}), if $\cR_1$ and $\cR_2$ are two positive Rockland operators, we obtain as in Proposition \ref{prop_sob} \eqref{prop_sob_R1R2},  
$$
\exists C=C_{s,G,\cR_1,\cR_2}>0,\qquad
  \forall f\in \cS(G),\qquad
  \|f \|_{\dot L^2_s(G),\cR_1}
  \leq C \|f \|_{\dot L^2_s(G),\cR_2}.
	$$
with a similar remark as in Remark \ref{rem_constant_RS} regarding the constant $C$ above.
Therefore, the subspace of tempered distributions obtained as the completion of  $\mathcal S(G)\cap \dom (\mathcal R^{s/\nu})$ for the norm $\|\cdot\|_{\dot L^2_s(G),\cR}$ is independent of a choice of a positive Rockland operator $\cR$.
It is called the homogeneous Sobolev subspace $\dot L^2_s(G)$.
Moreover  \cite[Section 4.4]{R+F_monograph}, the inclusion $\dot L^2_s(G)\subset \cS'(G)$ is continuous.
In the case when $s\in \bN$ is a common multiple of the weights of the gradation, fixing a basis $\bV$ of $\fg$ adapted to the gradation, 
 $\dot L^2_{s}(G)$ 
 contains $\cS(G)$ and 
 is the closure
  of $\cS(G)$
  in $\cS'(G)$ for $f\mapsto \sum_{[\alpha]=s}\|\bV^\alpha f\|_{L^2(G)}.$

\subsection{Invariant symbol classes}
\label{subsec_InvSymbClasses}

\subsubsection{Definitions}
We start with the definition of difference operators:

\begin{definition}\label{def:diff_op}
	Let $q\in C^\infty(G)$ and let $\sigma$ be an invariant symbol on $\widehat G$. 
	We say that $\sigma$ is $\Delta_q$-differentiable when $\sigma\in L^\infty_{a,b}(\widehat G)$ and $q \cF_G^{-1}\sigma\in \cK_{c,d}(G)$ for some $a,b,c,d\in \bR$. In this case, we set
	$$
	\Delta_q \sigma = \cF_G (q \cF_G^{-1}\sigma).
	$$
\end{definition}

We can now define our classes of symbols.
\begin{definition}
	An invariant symbol $\sigma$ on $\widehat G$ is of order $m$ if 
	$\sigma \in L^\infty_{0,-m}(\widehat G)$ and moreover, 	$\Delta_{x^\alpha} \sigma \in L^\infty _{0,-m +[\alpha]}(\widehat G)$ for any indices $\alpha\in \bN_0^n$. 
\end{definition}

In the above definition, 
 the $x^\alpha$'s are the monomials defined with respect to a basis $\bV$ adapted to the gradation (see Example \ref{ex_monomialX}). 
It is independent of the choice of adapted basis $\bV$.

\begin{notation}
  We denote by $S^m (\widehat G)$ the space of invariant symbols on $\widehat G$ of order $m$.  
\end{notation}

The space $S^m (\widehat G)$ is naturally equipped with a Fr\'echet structure given by the semi-norms
\begin{align}
\label{def_norm_symbol_1}
\|\sigma\|_{S^m (\widehat G), N,\bV,\cR}
&:=\|\sigma\|_{S^m (\widehat G), N,\cR}:=
\max_{[\alpha]\leq N}
\|\Delta_{x^\alpha} \sigma\|_{L^\infty_{0, -m +[\alpha]}(\widehat G),\cR}
\\
&=\max_{[\alpha]\leq N}
\|(\id+\cR)^{\frac{[\alpha]-m}\nu } 
\Delta_{x^\alpha} \sigma \|_{L^\infty(\Gh)}
\nonumber
\end{align}
with $N\in \bN_0$, 
having fixed a positive Rockland operator $\cR$ on $G$ (of homogeneous order~$\nu$) and the Haar measure induced by $\bV$.
With a Haar measure fixed, 
 the distribution $\kappa:=\cF_G^{-1} \sigma\in \cS'(G)$ is called the convolution kernel of $\sigma$.

\begin{ex}
\label{ex_XalphainSGh}
If $\bV$ is an adapted basis of $\fg$, 
then for any $\alpha\in \bN_0^n$,  the symbol $$
\widehat \bV^\alpha =\{\pi(\bV)^\alpha , \pi\in \widehat G\}
$$
is in $S^{[\alpha]}(\widehat G)$.
\end{ex}

We have the continuous inclusions:
\begin{equation}
\label{eq_inclusionSmGh}
   m_1 \leq m_2 \Longrightarrow
S^{m_1}(\widehat G)
\subset 
S^{m_2}(\widehat G). 
\end{equation} 
Moreover, the properties of interpolation of Sobolev spaces imply natural properties of interpolation for the space $S^m(\Gh).$ 

\subsubsection{Symbolic properties}
At the symbolic level, the symbol classes form a $*$-algebra:
\begin{theorem}
\label{thm_SmGh} Assume that an adapted basis $\bV$ of $\fg$ has been fixed. 
\begin{enumerate}
	\item If $\sigma\in S^m(\widehat G)$, then 
	$\Delta_{x^\alpha} \sigma \in L^\infty _{\gamma, -m +[\alpha]+\gamma}$ for any $ \alpha\in \bN_0^n$ and $\gamma\in \bR$. 
	Moreover, the following map is a continuous semi-norm on $S^m(\widehat G)$:
$$
\sigma\longmapsto\max_{[\alpha]\leq N}
\|\Delta_{x^\alpha} \sigma\|_{L^\infty_{\gamma, -m +[\alpha]+\gamma}(\widehat G),\cR}, \qquad N\in \bN_0, \, \gamma\in \bR.
$$
\item The composition and adjoint maps
$$
\left\{\begin{array}{rcl}
S^{m_1}(\widehat G) \times S^{m_2}(\widehat G)
&\longrightarrow & S^{m_1+m_2}(\widehat G)
\\
(\sigma_1,\sigma_2)& 	\longmapsto& \sigma_1\sigma_2
\end{array}
\right.
\quad\mbox{and}\quad
\left\{\begin{array}{rcl}
S^{m}(\widehat G) 
&\longrightarrow & S^{m}(\widehat G)
\\
\sigma& 	\longmapsto& \sigma^*
\end{array}
\right. ,
$$
are continuous. 
Moreover, denoting by $\kappa_1,\kappa_2$ 
the convolution kernels of $\sigma_1,\sigma_2$, their convolution product 
$\kappa_2 \star_G \kappa_1$ makes sense and is the convolution kernel of $\sigma_1\sigma_2$.
Denoting by $\kappa$ the convolution kernel of $\sigma$, 
then the convolution kernel of $\sigma^*$ is given by $v\mapsto \bar \kappa (v^{-1})$.
\end{enumerate}
\end{theorem}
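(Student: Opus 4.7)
The theorem has two parts: Part~(1) is a self-strengthening of the definition of $S^m(\widehat G)$ enlarging the admissible Sobolev shifts, and Part~(2) follows from Part~(1) via a symbolic Leibniz rule which I derive first since it is purely algebraic.

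By~\eqref{eq_convolution}, the product $\sigma_1\sigma_2$ has convolution kernel $\kappa_2\star_G\kappa_1$, and by Definition~\ref{def:diff_op}, $\Delta_{x^\alpha}$ acts on symbols by multiplying kernels by $x^\alpha$. Writing $x=zy$ with $z=xy^{-1}$ inside $(\kappa_2\star_G\kappa_1)(x)$, the BCH formula (\S\ref{subsubsec_BCH}) expresses each coordinate of $x$ as a polynomial in $(y,z)$; combined with joint $[\alpha]$-homogeneity, this yields a finite decomposition $(zy)^\alpha = \sum_{[\beta]+[\gamma]=[\alpha]} c^\alpha_{\beta,\gamma}\,z^\beta y^\gamma$. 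Applying $\cF_G$ gives the Leibniz identity
\begin{equation*}
\Delta_{x^\alpha}(\sigma_1\sigma_2) = \sum_{[\beta]+[\gamma]=[\alpha]} c^\alpha_{\beta,\gamma}\,(\Delta_{x^\gamma}\sigma_1)(\Delta_{x^\beta}\sigma_2).
\end{equation*}

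The technical core is Part~(1). My plan is first to establish, via the spectral multiplier theory of Hulanicki--Folland--Stein for positive Rockland operators, that the Bessel-type symbols $(\id+\widehat{\mathcal R})^{s/\nu}$ satisfy $\Delta_{x^\alpha}[(\id+\widehat{\mathcal R})^{s/\nu}]\in L^\infty_{\gamma',\,s-[\alpha]+\gamma'}(\widehat G)$ for every $\alpha\in\bN_0^n$ and $\gamma'\in\bR$: their convolution kernels are smooth off $0$ with homogeneous singularity of degree $-Q-s$ there, and multiplication by the monomial $x^\alpha$ lowers this order by $[\alpha]$. Granted this, one reduces Part~(1) to showing $\sigma\,(\id+\widehat{\mathcal R})^{-\gamma/\nu}\in L^\infty_{0,-m+\gamma}(\widehat G)$, which by definition is equivalent to $\sigma\in L^\infty_{\gamma,-m+\gamma}$. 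The Leibniz rule applied to this product produces a finite sum of terms $(\Delta_{x^\delta}\sigma)(\Delta_{x^\beta}[(\id+\widehat{\mathcal R})^{-\gamma/\nu}])$ whose $L^\infty_{\cdot,\cdot}$ indices, by the Bessel-symbol bounds just stated, chain correctly to land in $L^\infty_{0,-m+\gamma}$. The case $\alpha\neq 0$ is entirely analogous, and continuity of the seminorm follows since all estimates are linear in the $S^m$-seminorms of $\sigma$.

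Part~(2) then follows. For composition: by the Leibniz rule, each term $(\Delta_{x^\gamma}\sigma_1)(\Delta_{x^\beta}\sigma_2)$ with $[\beta]+[\gamma]=[\alpha]$ must land in $L^\infty_{0,-m_1-m_2+[\alpha]}(\widehat G)$. Applying Part~(1) to $\sigma_1$ with shift $-m_2+[\beta]$ gives $\Delta_{x^\gamma}\sigma_1\in L^\infty_{-m_2+[\beta],\,-m_1-m_2+[\alpha]}$; its composition with $\Delta_{x^\beta}\sigma_2\in L^\infty_{0,-m_2+[\beta]}$ chains correctly into $L^\infty_{0,-m_1-m_2+[\alpha]}$. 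Continuity of the bilinear map comes from tracking seminorms through this expansion, and the kernel identity $\kappa_{\sigma_1\sigma_2}=\kappa_2\star_G\kappa_1$ is immediate from~\eqref{eq_convolution} plus associativity of convolution. For the adjoint: a direct computation using unimodularity of $G$ gives $\kappa^*(v)=\overline{\kappa(v^{-1})}$, and since $x^{-1}=-x$ in exponential coordinates on a nilpotent group one has $(x^{-1})^\alpha=(-1)^{|\alpha|}x^\alpha$, whence $\Delta_{x^\alpha}\sigma^*=(-1)^{|\alpha|}(\Delta_{x^\alpha}\sigma)^*$. Combining the duality $(L^2_a)^*\cong L^2_{-a}$ from Proposition~\ref{prop_sob} with Part~(1) applied at shift $\gamma=m-[\alpha]$ gives $\Delta_{x^\alpha}\sigma^*\in L^\infty_{0,-m+[\alpha]}$, so $\sigma^*\in S^m(\widehat G)$ with continuous dependence. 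The main obstacle throughout is the precise kernel analysis of the Rockland powers $(\id+\widehat{\mathcal R})^{s/\nu}$ for both positive and negative $s$ that underlies Part~(1), requiring the full Hulanicki--Folland--Stein machinery rather than merely the spectral theorem.
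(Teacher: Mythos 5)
Your Leibniz rule, the kernel formula $\kappa^*(v)=\bar\kappa(v^{-1})$, and the deduction of Part~(2) from Part~(1) are all correct, and the chaining of $L^\infty_{a,b}$ indices in that last step checks out numerically. The paper itself does not include a proof, citing Theorems~5.2.20 and~5.5.22 of the Fischer--Ruzhansky monograph, so I compare your sketch against that argument.

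There is a genuine gap in your treatment of Part~(1). You reduce $\sigma\in L^\infty_{\gamma,-m+\gamma}$ to $\sigma\,(\id+\widehat{\mathcal R})^{-\gamma/\nu}\in L^\infty_{0,-m+\gamma}$, correctly, and then assert that the indices "chain correctly" by the Bessel-symbol bounds. They do not. What you know about $\sigma$ is $\sigma\in L^\infty_{0,-m}$; what you know about the Bessel symbol is $(\id+\widehat{\mathcal R})^{-\gamma/\nu}\in L^\infty_{a,a+\gamma}$ for any $a$. Chaining $L^\infty_{a_1,b_1}\cdot L^\infty_{a_2,b_2}\subset L^\infty_{a_2,b_1}$ requires $a_1=b_2$, i.e. $\sigma\in L^\infty_{a_1,\cdot}$ with $a_1=a+\gamma\ne 0$ unless $a=-\gamma$, and with $a=-\gamma$ the product lands in $L^\infty_{-\gamma,-m}$, not in $L^\infty_{0,-m+\gamma}$. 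The two spaces share the same shift $b-a$ but different anchoring, and identifying them is precisely the content of Part~(1) for $\sigma$. In other words, the argument is circular: the shift $\gamma'\mapsto(\gamma',\gamma')$ in the anchoring of $\sigma$ is exactly what you are trying to prove, and no amount of algebraic Leibniz bookkeeping on the Bessel symbol side removes this.

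The missing ingredient is a commutator lemma of the type used in the monograph: for an element $T\in\mathscr U_N(\mathfrak g)$ of homogeneous degree $N$, the symbol $\widehat T\,\sigma - \sigma\,\widehat T$ (or, more usefully, $\widehat T\,\sigma$ itself) can be re-expanded as a finite sum of terms of the form $\Delta_{x^\beta}\sigma\cdot\widehat T_\beta$ with $T_\beta$ of lower homogeneous degree. This identity comes from the relation between left- and right-invariant vector fields on a homogeneous group (see Folland--Stein, or \cite[Prop.\ 1.26]{folland+stein_82}), namely $\widetilde X_j=X_j+\sum_{\upsilon_k>\upsilon_j}p_{j,k}(x)X_k$ with $p_{j,k}$ a polynomial of homogeneous degree $\upsilon_k-\upsilon_j$; applied at the level of convolution kernels this produces the $\Delta_{x^\beta}$ factors. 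With this commutator structure in hand one can pass powers $(\id+\widehat{\mathcal R})^{\pm\gamma/\nu}$ through $\sigma$, paying with difference operators that are controlled by the defining $S^m$ seminorms, which is what actually closes Part~(1). Your plan should be rewritten to isolate and prove (or cite) this commutator lemma before invoking the Bessel-symbol bounds; once that is in place, the rest of your argument, in particular all of Part~(2), goes through as written.
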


The proof of Theorem \ref{thm_SmGh} may be found in~\cite{R+F_monograph}, see Theorems~5.2.20 and 5.5.22 therein (with the difference that the symbols in~\cite{R+F_monograph} also depend on $x\in G$).

\begin{remark}
\label{rem_thm_SmGh}
The proof of Theorem \ref{thm_SmGh} implies in particular that the constants describing the continuity of the linear map for the adjoint and the bilinear map for the composition depend on  the Lie structure only via $\| [\cdot,\cdot]\|_\bV$ after a choice of a basis $\bV$ adapted to the gradation.
Let us explain what this means for the continuity of the adjoint. We fix 
 a basis $\bV$ adapted to the gradation.
Then 
for any $N\in \bN_0$ and $m\in \bR$, there exist $C>0$ and $N'\in \bN$
 such that 
$$
\forall \sigma\in S^m(\widehat G),\qquad
\|\sigma^*\|_{S^m(\widehat G), N, \cR_\bV}
\leq
C \|\sigma\|_{S^m(\widehat G), N', \cR_\bV}.
$$
We could have taken any positive Rockland operator but we choose $\cR_\bV$ the one  associated with $\bV$ (see Example \ref{ex_RG}) to make the constant structural. 
The constants $C$ and $N'$ can be written as  increasing functions   of $\| [\cdot,\cdot]\|_\bV$. Naturally, these increasing functions depend on what has been fixed, that is, beside $m$ and $N$, on the gradation of the vector space underlying~$\fg$ and the adapted basis $\bV$.  
\end{remark}

\subsubsection{Asymptotic sums of symbol}

\begin{definition}
\label{def_asympexp}
    Let $\sigma\in S^m(
    \Gh)$ and $\sigma_j$, $j\in \bN_0$, be a sequence of symbols such that $\sigma_j\in S^{m_j}(\Gh)$ with 
    $m_0=m$ and $m_j$ strictly decreasing to $-\infty$. 
    We say that $\sum_j \sigma_j$ is an asymptotic expansion for $\sigma$ and we write
    $$
    \sigma\sim \sum_j \sigma_j,
    $$
    when we have for any $N\in \bN_0$,
    $$
    \sigma - \sum_{j=0}^N \sigma_j\in S^{m_{N+1}}(\Gh).
    $$
\end{definition}

Given a sequence $(\sigma_j)_{j\in\bN_0}$ as in Definition \ref{def_asympexp}, we can construct a symbol $\sigma\in S^{m_0}(\Gh)$  following the classical ideas due to Borel  (see e.g. \cite[Section 5.5.1]{R+F_monograph}), that will satisfy $\sigma\sim \sum_j \sigma_j$.
Moreover, $\sigma$ is unique up to $S^{-\infty}(\Gh)$.

\subsubsection{Kernel estimates}

The  convolution kernel associated with a symbol in some $S^m(\widehat G)$ will be Schwartz away from the origin 0, but may have a singularity at 0 \cite[Theorem 5.4.1]{R+F_monograph}
\begin{theorem}
\label{thm_kernelG}
	Let $\sigma\in S^m(\widehat G)$ and denote its convolution kernel by $\kappa =\cF_G^{-1} \sigma$.
	Then $\kappa$ is smooth away from the origin. Moreover,  fixing a quasinorm $|\cdot|$ on $G$, we have the following kernel estimates:
	\begin{enumerate}
		\item The convolution kernel $\kappa$ decays faster than any polynomial away from the origin:
	\begin{align*}
	    	\forall N\in \bN_0, \quad 
	\exists C=C_{\sigma, N}>0:\quad \forall y\in G, \\ |y|\geq 1\Longrightarrow
	|\kappa(y)|\leq C |y|^{-N}.
	\end{align*}
	\item 
	If $Q+m<0$ then $\kappa$ is continuous and bounded on $G$:
 $$
 \exists C=C_\sigma>0, \qquad 
 \sup_{y\in G} |\kappa(y)| \leq C.
 $$

 \item If $Q+m>0$, then 
	\begin{align*}
	\exists C=C_\sigma>0:
        \quad \forall y\in G , \qquad 0<|y|\leq 1\Longrightarrow
	|\kappa(y)|\leq  C|y|^{-(Q+m) }.
	\end{align*}

 \item If $Q+m=0$, then 
	\begin{align*}
	\exists C=C_\sigma>0:
        \quad \forall y\in G , \qquad 0<|y|\leq 1/2\Longrightarrow
	|\kappa(y)|\leq -  C \ln |y|.
	\end{align*}
	\end{enumerate}

In all the estimates above, 
 the constant $C=C_\sigma$ may be chosen of the form 
$$
C = C_1 \|\sigma\|_{S^m(\widehat G), N', \cR},
$$
where $\cR$ is a positive Rockland operator;  the constant $C_1$ and the semi-norm 
$\|\cdot \|_{S^m(\widehat G), N', \cR}$ are 
independent of $\sigma$. 
\end{theorem}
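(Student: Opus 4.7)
The plan is to follow the classical dyadic decomposition (noncommutative Littlewood--Paley) argument, adapted here to the graded setting via a positive Rockland operator $\cR$ on $G$ of homogeneous degree $\nu$. Fix a smooth dyadic partition $\{\eta_j\}_{j\ge 0}$ of unity on $[0,\infty)$ with $\eta_0$ supported near zero and $\eta_j(\lambda)=\eta_0(2^{-j\nu}\lambda)-\eta_0(2^{-(j-1)\nu}\lambda)$ for $j\ge 1$, so $\eta_j$ is supported where $\lambda\sim 2^{j\nu}$. Decompose
\[
\sigma=\sum_{j\ge 0}\sigma_j,\qquad \sigma_j:=\sigma\,\eta_j(\widehat\cR),\qquad \kappa_j:=\cF_G^{-1}\sigma_j.
\]
Because $\cR$ is $\nu$-homogeneous, the spectral multiplier $\eta_j(\widehat\cR)$ corresponds, at the kernel level, to a $\delta_{2^j}$-rescaling of a single Schwartz function, so each $\kappa_j$ inherits good scaling properties.

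For the smoothness and rapid decay claim in (1), I would exploit that $\cF_G(x^\gamma\kappa)=\Delta_{x^\gamma}\sigma\in L^\infty_{0,-m+[\gamma]}(\widehat G)$ by the $S^m$-hypothesis, and that composition on left/right with $\widehat{\bV^\alpha}$, $\widehat{\bV^\beta}$ corresponds to applying the left-invariant and right-invariant derivatives $\bV^\beta$, $\widetilde\bV^\alpha$ to the kernel (Notation~\ref{notation_right_left}, Example~\ref{ex_XalphainSGh}). Choosing $[\gamma]$ large enough that $-m+[\gamma]-[\alpha]-[\beta]>Q/2$, the Sobolev embedding of Proposition~\ref{prop_sob}(\ref{prop_sob_emb}) yields that $x^\gamma\widetilde\bV^\alpha\bV^\beta\kappa$ is continuous and bounded on $G$. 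Letting $[\gamma]$ range freely gives polynomial decay away from $0$; letting $\alpha,\beta$ range freely gives smoothness on $G\setminus\{0\}$.

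The core quantitative estimate underlying (2)–(4) is the dyadic bound
\[
\|\sigma_j\|_{L^\infty_{0,s}(\widehat G),\cR}\ \lesssim\ 2^{j(m+s)}\,\|\sigma\|_{S^m(\widehat G),0,\cR},\qquad s\in\bR,\ j\ge 1,
\]
obtained by writing $\sigma_j=\bigl[\sigma(\id+\widehat\cR)^{m/\nu}\bigr]\cdot\bigl[(\id+\widehat\cR)^{-m/\nu}\eta_j(\widehat\cR)\bigr]\cdot(\id+\widehat\cR)^{s/\nu}\cdot(\id+\widehat\cR)^{-s/\nu}$ and using that $(\id+\widehat\cR)^{(s-m)/\nu}\eta_j(\widehat\cR)$ is bounded by $2^{j(s-m)}$ on the spectral support of $\eta_j$. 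Applied with $s>Q/2$ and combined with Sobolev embedding this gives $\|\kappa_j\|_{L^\infty(G)}\lesssim 2^{j(Q+m)}\|\sigma\|_{S^m,N_0,\cR}$. Applying the same machinery to $\Delta_{x^\gamma}\sigma_j$ (which again lies in the spectral image of $\eta_j(\widehat\cR)$ up to controllable commutators) yields the off-diagonal decay $|x^\gamma\kappa_j(x)|\lesssim 2^{j(Q+m-[\gamma])}\|\sigma\|_{S^m,N_\gamma,\cR}$.

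With these estimates in hand, for $0<|y|\le 1$ I would cut the sum at $j_0\simeq\log_2(1/|y|)$, bounding the low-frequency part $j\le j_0$ by $\|\kappa_j\|_{L^\infty}$ and the high-frequency part by the $x^\gamma$-decay estimate with $[\gamma]$ large. The three regimes fall out from the geometric sum $\sum_{j\le j_0}2^{j(Q+m)}$: it is uniformly bounded when $Q+m<0$, grows like $|y|^{-(Q+m)}$ when $Q+m>0$, and like $\log(1/|y|)$ when $Q+m=0$. The main obstacle is the dyadic bound for $\Delta_{x^\gamma}\sigma_j$: the difference operator $\Delta_{x^\gamma}$ does not commute with the spectral cutoff $\eta_j(\widehat\cR)$, and controlling the commutators requires the Leibniz-type rules for $\Delta_{x^\gamma}$ together with uniform symbol bounds on $\eta_j(\widehat\cR)$ in the classes $S^0(\widehat G)$ after $\delta_{2^j}$-rescaling, both established in~\cite{R+F_monograph}. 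Once this is in place, the final form of the constant $C=C_1\|\sigma\|_{S^m,N',\cR}$ is bookkeeping.
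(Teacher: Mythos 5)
You have chosen the right overall strategy — the paper itself does not prove Theorem~\ref{thm_kernelG}, it simply cites \cite[Theorem~5.4.1]{R+F_monograph}, and the dyadic Littlewood--Paley decomposition via spectral multipliers of a positive Rockland operator is indeed the standard mechanism behind such kernel estimates in the noncommutative setting. Your part~(1) argument (rapid decay and smoothness away from $0$ from the fact that $\Delta_{x^\gamma}\widehat\bV^\alpha\sigma\widehat\bV^\beta$ has negative enough order once $[\gamma]$ is large) is also the correct reduction. The gap is in the central step from the dyadic symbol bound to the kernel bound, and it is not merely bookkeeping.

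The quantity $\|\sigma_j\|_{L^\infty_{0,s}(\widehat G),\cR}$ controls $\sigma_j$ as an operator $L^2(G)\to L^2_s(G)$, i.e. it is an \emph{operator-norm} (essential supremum) bound on the field $\{(\id+\pi(\cR))^{s/\nu}\sigma_j(\pi)\}$. What the Sobolev embedding $L^2_s\hookrightarrow C_b$ (for $s>Q/2$) demands, in order to control $\|\kappa_j\|_{L^\infty}$, is a bound on $\|\kappa_j\|_{L^2_s}$, i.e. on the \emph{Hilbert--Schmidt} quantity
\[
\|\kappa_j\|_{L^2_s(G)}^2=\int_{\widehat G}\bigl\|(\id+\pi(\cR))^{s/\nu}\sigma_j(\pi)\bigr\|_{HS(\cH_\pi)}^2\,d\mu_{\widehat G}(\pi),
\]
by Plancherel. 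These two norms are genuinely different objects. Passing from one to the other uses the compact dyadic spectral support and an explicit estimate of the Plancherel weight of the annulus, which is precisely where the extra factor $\|\eta_j(\widehat\cR)\|_{L^2(\widehat G)}\asymp 2^{jQ/2}$ enters. Your display $\|\sigma_j\|_{L^\infty_{0,s}}\lesssim 2^{j(m+s)}$ combined with ``Sobolev embedding'' does not by itself deliver $\|\kappa_j\|_{L^\infty}\lesssim 2^{j(Q+m)}$: the $Q$ has to come from this Plancherel computation, which is the missing step.

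There is a second issue once the HS computation is done correctly: one gets $\|\kappa_j\|_{L^2_s}\lesssim 2^{j(m+s+Q/2)}$, and applying the inhomogeneous Sobolev embedding with $s>Q/2$ gives $\|\kappa_j\|_{L^\infty}\lesssim 2^{j(Q+m+\varepsilon)}$ with $\varepsilon=s-Q/2>0$. Summing $\sum_{j\le j_0}2^{j(Q+m+\varepsilon)}$ then produces $|y|^{-(Q+m)-\varepsilon}$ in case~(3), which is weaker than the stated estimate, and the clean $\log(1/|y|)$ bound at the critical value $Q+m=0$ does not fall out of a geometric series with a strictly positive exponent. To remove the $\varepsilon$ one needs an interpolation (Gagliardo--Nirenberg type) inequality $\|f\|_{L^\infty}\lesssim\|f\|_{\dot L^2_{s_1}}^{1/2}\|f\|_{\dot L^2_{s_2}}^{1/2}$ with $s_1<Q/2<s_2$ and $\tfrac12(s_1+s_2)=Q/2$ (available in \cite[Section~4.4]{R+F_monograph}), applied with both $s_i$ evaluated via the same HS/Plancherel computation, which then yields the sharp $2^{j(Q+m)}$ and hence the exact exponents $|y|^{-(Q+m)}$ and the logarithmic borderline. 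Both issues are repairable within your framework, but as written the proposal asserts the crucial dyadic $L^\infty$ estimate without supplying the Plancherel step and without noticing the endpoint loss from straight Sobolev embedding.
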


\begin{remark}
\label{rem_thm_kernelG}
\begin{enumerate}
    \item We can be even more precise. Having fixed  a basis $\bV$, consider  the associated quasinorm $|\cdot|_\bV$ of Example \ref{ex_quasinormGX} and the positive Rockland operator $\cR_\bV$ of Example \ref{ex_RG}.
The constant $C_1$ in each item of Theorem \ref{thm_kernelG} is an increasing function of $\|[\cdot,\cdot]\|_\bV$. Furthermore,
these increasing functions depend on  $m$ and,  in Part (1), also on $N$, as well as the gradation of the underlying subspace of the Lie algebra $\fg$ and on the adapted basis $\bV$, but not on the Lie structure.

\item Note that if $\kappa\in \cS'(G)$ is the convolution kernel of $\sigma\in S^m(\widehat G)$, then $x^\alpha \bV^{\beta_1} \widetilde\bV^{\beta_2}\kappa $ is the convolution kernel of the symbol $\Delta_{x^\alpha} \widehat \bV^{\beta_1} \sigma \widehat \bV^{\beta_2}$ 
which is in $S^{m-[\alpha]+[\beta_1]+[\beta_2]}(\widehat G)$ by Theorem~ \ref{thm_SmGh}. Recall that the left-invariant  differential operators $\bV^\alpha$ (resp. right-invariant $\widetilde\bV^\alpha$) have been introduced in Example~\ref{ex_right_left}.
\end{enumerate}
 \end{remark}

\begin{corollary}
\label{cor_thm_kernelG}
Assume that we have fixed a basis $\bV$ of $\fg$ adapted to the gradation and the corresponding Haar measure.
Let $\sigma\in S^m(\widehat G)$ with $m\in \bR$. 
\begin{enumerate}
    \item  If
$\alpha\in \bN_0^n$, $\beta\in \bN_0^n$, $p\in [1,\infty]$ with
 $m -[\alpha] +[\beta]<Q(\frac 1p -1)$ (with the convention $\frac 1\infty =0$), then the quantity 
$$
\|\sigma\|_{\alpha,\beta,p}
:=
\|  x^\alpha \bV^\beta  \kappa \|_{L^p(G)}, 
\qquad \sigma =\cF_G \kappa
$$
is finite. Moreover, the map $\sigma\mapsto \|\sigma\|_{\alpha,\beta,p}$ is a continuous semi-norm on $S^m(\widehat G)$.
\item If $N,M_1\in \bN_0$ such that 
 $-m+N \leq M_1$  with $M_1\in \bN_0$ a common multiple of the dilations' weights,
 then there exists a constant $C>0$ such that 
$$
   \|\sigma\|_{S^m (\widehat G), N,\cR}
   \leq C \max_{[\alpha']+[\beta']\leq N+M_1}
\|\sigma\|_{\alpha',\beta',1},
$$
for any $\sigma \in S^m(\widehat G)$ such that the right-hand side is finite.
\item Let $\sigma\in S^m(\widehat G)$.
Its convolution kernel $\kappa =\cF_G^{-1} \sigma$
is a distribution smooth  away from the
origin. At the origin, its order is  less than or equal  to the integer 
$$
p_\sigma:=\max\{|\alpha|: \alpha \in \bN_0^n , [\alpha] \leq s'\}
$$
where $s'$ is the smallest non-negative  integer multiple of the dilation weights such that $s'>Q/2+m$. 
Moreover, 
we have for any compact neighbourhood $\cC_0$ of $0$ in $\bR^n$, 
  $$
  \exists C>0 ,\;\;
  \forall \phi \in C_c^\infty (\cC_0 ), \quad 
   \left|\int_{\bR^n} \kappa (v) \phi(v) dv \right|
  \leq C \max_{|\alpha | \leq p_\sigma}  \| \partial^\alpha \phi\|_{L^\infty(\cC_0)}.
  $$
Above,   $C$ may be chosen of the form 
$$
C = C_1 \|\sigma\|_{S^m(\widehat G),N, \cR},
$$
with $C_1>0$, $N\in\mathbb N_0$.
\end{enumerate}
\end{corollary}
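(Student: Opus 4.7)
My plan is to prove the three parts in turn, relying on Theorem \ref{thm_kernelG} (kernel estimates on $G$), Theorem \ref{thm_SmGh} (symbolic continuity), and Remark \ref{rem_thm_kernelG}(2), which identifies $x^\alpha \bV^\beta \kappa$ as the convolution kernel of $\Delta_{x^\alpha} \widehat\bV^\beta \sigma \in S^{m-[\alpha]+[\beta]}(\widehat G)$.

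\textbf{Part (1).} With $m' := m - [\alpha] + [\beta]$, I split the $L^p$ integral of $|x^\alpha\bV^\beta\kappa|$ at $|y| = 1$. Theorem \ref{thm_kernelG}(1) yields polynomial decay of arbitrary order on $\{|y|\geq 1\}$, so the tail is $L^p$-finite. On $\{|y|\leq 1\}$, Theorem \ref{thm_kernelG}(3) gives (when $Q+m'>0$) the singular bound $|y|^{-(Q+m')}$, which is locally $L^p$-integrable exactly when $p(Q+m')<Q$, the stated hypothesis; the cases $Q+m'\leq 0$ and $p=\infty$ are handled by Theorem \ref{thm_kernelG}(2),(4). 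Continuity of $\sigma\mapsto \|\sigma\|_{\alpha,\beta,p}$ follows because the constants in Theorem \ref{thm_kernelG} are dominated by seminorms on $S^{m'}(\widehat G)$, and $\sigma\mapsto\Delta_{x^\alpha}\widehat\bV^\beta\sigma$ is continuous $S^m(\widehat G)\to S^{m'}(\widehat G)$ by Theorem \ref{thm_SmGh}.

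\textbf{Part (2).} For any invariant symbol $\tau$, $\|\tau\|_{L^\infty(\widehat G)}$ coincides with the operator norm on $L^2(G)$ of the convolution $T_\tau f = f\star_G \kappa_\tau$. Fix $\alpha$ with $[\alpha]\leq N$ and set $\tau_\alpha := (I+\widehat\cR)^{([\alpha]-m)/\nu}\Delta_{x^\alpha}\sigma$; then $T_{\tau_\alpha}f = (I+\cR)^{([\alpha]-m)/\nu}(f\star_G x^\alpha\kappa)$, whence $\|T_{\tau_\alpha}f\|_{L^2} = \|f\star_G x^\alpha\kappa\|_{L^2_{[\alpha]-m}}$. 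Since $[\alpha]-m\leq N-m\leq M_1$ and $M_1$ is a common multiple of the weights, Proposition \ref{prop_sob}(2),(5) gives
\begin{equation*}
\|g\|_{L^2_{[\alpha]-m}}\leq C\Big(\|g\|_{L^2}+\sum_{[\beta]=M_1}\|\bV^\beta g\|_{L^2}\Big).
\end{equation*}
Applying this with $g=f\star_G x^\alpha\kappa$, commuting the left-invariant $\bV^\beta$ past the convolution ($\bV^\beta(f\star_G h) = f\star_G \bV^\beta h$), and invoking Young's inequality $\|f\star_G h\|_{L^2}\leq \|f\|_{L^2}\|h\|_{L^1}$ yields
\begin{equation*}
\|T_{\tau_\alpha}\|_{\sL(L^2(G))}\leq C\Big(\|x^\alpha\kappa\|_{L^1}+\sum_{[\beta]=M_1}\|\bV^\beta(x^\alpha\kappa)\|_{L^1}\Big).
\end{equation*}
Expanding $\bV^\beta[x^\alpha\kappa]$ via the Leibniz rule produces a finite linear combination of $x^{\alpha'}\bV^{\beta'}\kappa$ with $[\alpha']+[\beta']\leq [\alpha]+[\beta]\leq N+M_1$ (using that each $V_j$ is homogeneous of degree $\upsilon_j$ so $\bV^{\gamma_1}x^\alpha$ is a polynomial of homogeneous degree $[\alpha]-[\gamma_1]$); taking the maximum over $[\alpha]\leq N$ gives the claim.

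\textbf{Part (3).} Set $\tilde\sigma := (I+\widehat\cR)^{-s'/\nu}\sigma\in S^{m-s'}(\widehat G)$; by Theorem \ref{thm_SmGh}, a seminorm of $\tilde\sigma$ is dominated by a seminorm of $\sigma$ in $S^m(\widehat G)$. The hypothesis $s'>Q/2+m$ gives $2(Q+m-s')<Q$, so combining Theorem \ref{thm_kernelG}(3) near the origin (bound $|v|^{-(Q+m-s')}$) with Theorem \ref{thm_kernelG}(1) at infinity (faster-than-polynomial decay) shows $\tilde\kappa\in L^2(G)$ with $\|\tilde\kappa\|_{L^2}$ controlled by a seminorm of $\sigma$. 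By Plancherel and the symbolic characterization of Sobolev spaces, $\|\kappa\|_{L^2_{-s'}(G)} = \|\tilde\kappa\|_{L^2}$, and the duality between $L^2_{s'}$ and $L^2_{-s'}$ (Proposition \ref{prop_sob}(4)) gives $|\langle\kappa,\phi\rangle|\leq \|\kappa\|_{L^2_{-s'}}\|\phi\|_{L^2_{s'}}$. Proposition \ref{prop_sob}(5) (applicable since $s'$ is a common multiple of the weights by definition) bounds $\|\phi\|_{L^2_{s'}}$ by $\|\phi\|_{L^2}+\sum_{[\beta]=s'}\|\bV^\beta\phi\|_{L^2}$; writing each $V_j$ in exponential coordinates as a first-order operator with polynomial coefficients, $\bV^\beta$ is a polynomial-coefficient partial-differential operator of Euclidean order $|\beta|\leq \max\{|\gamma|:[\gamma]\leq s'\} = p_\sigma$, so the compactness of $\cC_0$ bounds its coefficients and controls $L^2(\cC_0)$-norms by $|\cC_0|^{1/2}$ times $\max_{|\gamma|\leq p_\sigma}\|\partial^\gamma\phi\|_{L^\infty}$. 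The main obstacle common to Parts (2) and (3) is precisely the requirement that the Sobolev index be a common multiple of the weights so that Proposition \ref{prop_sob}(5) applies; this is guaranteed for $M_1$ by assumption and for $s'$ by its very definition, which is what allows the argument to avoid any assumption about $M_1/\nu$ or $s'/\nu$ being integers.
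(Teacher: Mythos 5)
Your proofs of Parts (1) and (2) are correct and take essentially the same route as the paper's. In (1) you split at $|y|=1$ and invoke Theorem~\ref{thm_kernelG}; the paper instead inserts the harmless inclusion $S^m(\widehat G)\subset S^{m+\epsilon}(\widehat G)$ so as to read the local $L^p$ bound off Theorem~\ref{thm_kernelG}(3) in every sub-case at once, but this is only a cosmetic difference. In (2), both arguments rest on $-m+[\alpha]\leq M_1$, Proposition~\ref{prop_sob}(5) for the common multiple $M_1$, the Leibniz rule, and the fact that $\|\widehat\bV^{\beta}\Delta_{x^{\alpha}}\sigma\|_{L^\infty(\widehat G)}$ is controlled by $\|\sigma\|_{\alpha,\beta,1}$ (Young's inequality); you merely spell out the $L^2$-operator-norm characterization of $\|\cdot\|_{L^\infty(\widehat G)}$ where the paper works with it implicitly. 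Your Part (3) is also correct but is a genuine variant. You work partly on the physical side: you bound $\|\tilde\kappa\|_{L^2(G)}=\|\kappa\|_{L^2_{-s'}(G)}$ by applying the kernel estimates of Theorem~\ref{thm_kernelG} to $\tilde\sigma=(\id+\widehat\cR)^{-s'/\nu}\sigma\in S^{m-s'}(\widehat G)$, and then close with the $L^2_{s'}/L^2_{-s'}$ duality of Proposition~\ref{prop_sob}(4). The paper instead stays on the Fourier side: it applies the Sobolev embedding $\|g\|_{L^\infty}\lesssim\|g\|_{L^2_s}$ (Proposition~\ref{prop_sob}(7)) to $g=\phi\star_G\kappa$, passes the (right-invariant) Rockland operator through the convolution onto $\phi$, and closes the estimate by Plancherel, so that no kernel estimate for $\kappa$ itself is invoked. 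Both routes depend on the same structural facts ($s'$ a common multiple of the weights, $s'>Q/2+m$, $|\beta|\leq p_\sigma$ whenever $[\beta]\leq s'$) and yield the same constant; your duality version is arguably more direct while the paper's convolution version bypasses the kernel estimates entirely. Both are valid.
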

\begin{proof}
   (1) The first part  follows readily from the kernel estimates in Theorem \ref{thm_kernelG} and the second point in Remark \ref{rem_thm_kernelG}. 
    Indeed, only the singularity in $0$ has to be considered when  $m-[\alpha]+[\beta]\geq -Q$. Observing that $S^m(\widehat G)\subset S^{m+\epsilon}(\widehat G)$ for all $\epsilon>0$, we deduce from Theorem \ref{thm_kernelG} (3):
    $$
    0<|y|\leq 1, \;\;|y|^{[\alpha]} \bV^\beta \kappa(y)|\leq C |y|^{-(Q+m+\epsilon - [\alpha]+[\beta]}\in L^p(G) \;\; \mbox{for}\;\;p(Q+m+\epsilon-[\alpha]+[\beta])>Q.
    $$
    Note that it is possible to find such an $\epsilon$ as soon as $p(Q+m-[\alpha]+[\beta])>Q$, which concludes the proof.

   (2) For the second part, the properties of  Sobolev spaces (see \cite{FRSob} or  Proposition \ref{prop_sob})
 \begin{align*}
    \|\sigma\|_{S^m (\widehat G), N,\cR}
    &=
\max_{[\alpha]\leq N}
\|\Delta_{x^\alpha} \sigma\|_{L^\infty_{0,-m +[\alpha]}(\widehat G),\cR}
=\max_{[\alpha]\leq N}
 \| (I+\widehat \cR)^{\frac{-m+[\alpha]}\nu} \sigma\| _{L^\infty(\widehat G)}
\end{align*}
Then since
$-m+[\alpha]\leq M_1
<0$ and by boundedness of $(I+\widehat{\cR})^{-a/\nu}$ for $a>0$, we have
\begin{align*}
    \| (I+\widehat \cR)^{\frac{-m+[\alpha]}\nu} \sigma\| _{L^\infty(\widehat G)}\lesssim  \| (I+\widehat \cR)^{\frac{M_1}{\nu}}\sigma\|_{L^\infty(\widehat G)}\lesssim
  \sum_{[\beta]\leq M_1} 
\|\widehat \bV^\beta\Delta_{x^\alpha} \sigma\|_{L^\infty(\widehat G)}.
\end{align*}
Finally it is easy to see that
\begin{align*}
    \|\widehat \bV^\beta\Delta_{x^\alpha} \sigma\|_{L^\infty(\widehat G)}\lesssim \max_{[\alpha]\leq N}\|\sigma\|_{\alpha, \beta, 1}
\end{align*}
whence the result.

(3) If $m<-Q$, then by Theorem \ref{thm_kernelG}, $\kappa$ is a continuous bounded function on $G$, so it is a distribution of order 0.
Assume $m\geq -Q$.
For any $\phi\in C_c^\infty(G)$, we have by the Sobolev embeddings (see Proposition \ref{prop_sob} \eqref{prop_sob_emb}) with $s>Q/2$
$$
\|\phi * \kappa \|_{L^\infty(G)}
\lesssim \|(\id+\tilde \cR)^{s/\nu}(\phi * \kappa) \|_{L^2(G)};
$$
above $\tilde \cR$ denotes the right-invariant  operator corresponding to a fixed Rockland operator $\cR$ of homogeneous degree $\nu$.
The properties of the convolution yield:
$$
(\id+\tilde \cR)^{s/\nu}(\phi * \kappa) 
=((\id+\tilde \cR)^{\frac s\nu}\phi )* \kappa 
=((\id+\tilde \cR)^{\frac {s+m} \nu}\phi )* ((\id+\cR)^{-\frac {m} \nu} \kappa) ,
$$
so by the Plancherel formula, 
$$
\|(\id+\tilde \cR)^{s/\nu}(\phi * \kappa) \|_{L^2(G)}
\leq\|(\id+\widehat \cR)^{-\frac {m} \nu}\sigma \|_{L^\infty(\Gh)}\|(\id+\tilde \cR)^{\frac {s+m} \nu}\phi \|_{L^2(G)}.
$$
The properties of the Sobolev spaces (see Proposition \ref{prop_sob}) imply 
$$
\|(\id+\tilde \cR)^{\frac {s+m} \nu}\phi \|_{L^2(G)}\lesssim  \sum_{[\alpha]\leq s'} \|\tilde X^\alpha \phi\|_{L^2(G)}
\lesssim \sum_{|\alpha|\leq p_\sigma} \|\partial^\alpha \phi\|_{L^\infty(G)}, 
$$
having chosen $s$ very close to $Q/2$
and $s'$ and $p_\sigma$ as in the statement,  with the last implicit constant depending on the support of $\phi$.
This shows Part (3) for $m\geq -Q$.
\end{proof}

\begin{remark}
\label{rem_cor_thm_kernelG}
The proof of Corollary \ref{cor_thm_kernelG} implies in particular that the constant $C$ and the constants describing the continuity of $\sigma\mapsto \|\sigma\|_{\alpha,\beta,p}$ depend on  the Lie structure only via $\| [\cdot,\cdot]\|_\bV$ after a choice of a basis $\bV$ adapted to the gradation and the choice of the Rockland operator $\cR=\cR_\bV$, beside $m,\alpha,\beta,p$ for the first part and $m,N,M_1$ for the second.
\end{remark}

The semi-norms $\|\cdot\|_{ \alpha,\beta,p}$ defined in Corollary \ref{cor_thm_kernelG}
may not generate the topology on $S^m(\widehat G)$, but they have the advantage of being homogeneous in the following sense.
If $\sigma = \cF_G \kappa$ then $\sigma(r\cdot \pi) = \cF_G \kappa^{(r)}$ with $\kappa^{(r)}(x) = r^{-Q}\kappa(r^{-1}x)$ and we have
$$
\|\sigma(r \, \cdot ) \|_{\alpha,\beta,p}
= r^{Q(\frac{1}{p}-1)-[\beta]+[\alpha]}\|\sigma\|_{ \alpha,\beta,p}.
$$
Moreover, the inequality in the second part of Corollary \ref{cor_thm_kernelG} together with judicious expansions are often used to recover the membership to $S^m(\widehat G)$ as well as other properties.

\subsubsection{Spectral multipliers}
\label{subsubsec_specmultG}
Another important example of classes of symbols are given by the multipliers in the symbols of a positive Rockland operator. 

\begin{notation}\label{notation_Gm}
Let $\cG^m(\bR)$ be the space of smooth functions $\phi:\bR \to \bC$ growing at rate $m\in \bR$ in the sense that   
$$
\forall k\in \bN_0,\qquad \exists C=C_{k,\phi},\qquad
\forall \lambda\in \bR, \qquad |\partial_\lambda^k \phi(\lambda)|\leq C (1+|\lambda|)^{m-k}.
$$    
\end{notation}

The set $\cG^m(\bR)$ is a Fr\'echet space when equipped with the semi-norms given by
$$
\|\phi\|_{\cG^m, N} := \max_{k=0,\ldots, N} \sup_{\lambda\in \bR} 
(1+|\lambda|)^{-m+k}|\partial_\lambda^k \phi(\lambda)|, 
\qquad N\in \bN_0. 
$$
Many spectral multipliers in positive Rockland operators are in our symbol classes
\cite[Proposition 5.3.4]{R+F_monograph}:
\begin{theorem}
\label{thm_phi(R)}
Let $\phi\in \cG^m (\bR)$ and let $\cR$ be a positive Rockland operator on $G$.
	Then $\phi(\widehat \cR)\in S^{m\nu}(\widehat G)$ where $\nu$ is the  homogeneous degree of $\cR$. 
	Moreover, the map 
	$$ \cG^m (\bR)\ni \phi\mapsto \phi(\widehat \cR)\in S^{m\nu}(\widehat G)$$
	is continuous. 
\end{theorem}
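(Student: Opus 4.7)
The plan is to combine Hulanicki's theorem—which asserts that $\psi\mapsto \cF_G^{-1}\psi(\widehat\cR)$ is continuous from $\cS(\bR)$ to $\cS(G)$—with a dyadic decomposition of $\phi$ in the spectral variable and the homogeneity of $\cR$. The first property of membership, $\phi(\widehat\cR)\in L^\infty_{0,-m\nu}(\widehat G)$, is immediate: by functional calculus applied fiberwise to each $\pi(\cR)$, the $L^\infty(\widehat G)$-norm of $(\id+\widehat\cR)^{-m}\phi(\widehat\cR)$ is bounded by $\sup_{\lambda\geq 0}(1+\lambda)^{-m}|\phi(\lambda)|\leq \|\phi\|_{\cG^m,0}$. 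The substantive work is to control the difference operators $\Delta_{x^\alpha}\phi(\widehat\cR)$ for $\alpha\in\bN_0^n$.

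I fix bumps $\eta\in C_c^\infty(\bR)$ with $\eta=1$ near $0$ and $\tilde\eta(\lambda):=\eta(\lambda/2)-\eta(\lambda)$ supported in a fixed annulus, giving the decomposition $\phi=\phi_0+\sum_{j\geq 1}\phi_j$ with $\phi_0:=\phi\eta$ and $\phi_j(\lambda):=\phi(\lambda)\tilde\eta(2^{-j}\lambda)$. The low-frequency piece $\phi_0\in C_c^\infty(\bR)$ yields by Hulanicki a Schwartz convolution kernel, hence $\phi_0(\widehat\cR)\in S^{-\infty}(\widehat G)$, with continuous dependence on $\phi$. For each $j\geq 1$ I rescale by setting $\psi_j(\mu):=2^{-jm}\phi_j(2^j\mu)$. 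The defining estimates of $\cG^m(\bR)$ ensure that the family $\{\psi_j\}_{j\geq 1}$ is bounded in $C_c^\infty(\bR)$ with supports in a fixed annulus and $C^k$-norms controlled by a finite number of $\|\phi\|_{\cG^m,k}$; Hulanicki's theorem then produces Schwartz convolution kernels $\tilde\kappa_j:=\cF_G^{-1}\psi_j(\widehat\cR)$ with uniformly bounded Schwartz seminorms. The homogeneity of $\cR$ of degree $\nu$ under the group dilations gives the conjugation identity $\psi(r^{-\nu}\cR)(f\circ\delta_{r^{-1}})=(\psi(\cR)f)\circ\delta_{r^{-1}}$ for any bounded Borel $\psi$; applied with $r:=2^{j/\nu}$ it yields
$$\phi_j(\cR)=2^{jm}\psi_j(2^{-j}\cR),\qquad \kappa^{\phi_j}(x)=2^{jm}\,r^Q\,\tilde\kappa_j(\delta_r x).$$

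It then remains to show that the series $\sum_j\phi_j(\widehat\cR)$ converges in $S^{m\nu}(\widehat G)$ to $\phi(\widehat\cR)-\phi_0(\widehat\cR)$. I would bound the seminorms $\|(\id+\widehat\cR)^{-m+[\alpha]/\nu}\Delta_{x^\alpha}\phi_j(\widehat\cR)\|_{L^\infty(\widehat G)}$ by rewriting them as $\cL(L^2(G))$-operator norms of right-convolution by $(\id+\cR)^{-m+[\alpha]/\nu}(x^\alpha\kappa^{\phi_j})$. Using the commutation $(\id+\cR)^s(g\circ\delta_r)=\bigl((\id+r^\nu\cR)^s g\bigr)\circ\delta_r$, the substitution $y=\delta_r x$ turning $x^\alpha$ into $r^{-[\alpha]}y^\alpha$, and the spectral localization of $\psi_j$ in a region where $r^\nu\cdot\lambda$ is of order $1$, the prefactors $2^{jm}$, $r^{-[\alpha]}$ and $r^{\nu(-m+[\alpha]/\nu)}=2^{-jm+j[\alpha]/\nu}$ combine to produce a bound depending only on the Schwartz seminorms of $\tilde\kappa_j$, hence uniform in $j$ and controlled by finitely many $\|\phi\|_{\cG^m,k}$. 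An almost-orthogonality argument, exploiting the nearly-disjoint spectral supports of the $\phi_j$'s in $\cR$ together with the fact that $\Delta_{x^\alpha}$ only mildly disturbs spectral localization at the kernel level, then upgrades this uniform control into actual convergence of the series in $S^{m\nu}(\widehat G)$. Continuity of $\phi\mapsto\phi(\widehat\cR)$ follows automatically from the bookkeeping, since every constant appearing in the argument is an increasing function of finitely many $\|\phi\|_{\cG^m,k}$.

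The principal obstacle is that the difference operators $\Delta_{x^\alpha}$ do not commute with the functional calculus of $\widehat\cR$, so the spectral localization of $\phi_j$ cannot be invoked naively to bound $\Delta_{x^\alpha}\phi_j(\widehat\cR)$; one must pass through the explicit kernel formula for $\kappa^{\phi_j}$ and use the Bessel-type identity $(\id+\cR)^s(g\circ\delta_r)=\bigl((\id+r^\nu\cR)^s g\bigr)\circ\delta_r$ in combination with Hulanicki's theorem. This is precisely where the structure of $\cR$ as a \emph{positive Rockland} operator—not merely a self-adjoint operator on $L^2(G)$—enters the proof in an essential way.
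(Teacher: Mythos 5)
Your proposal follows the natural strategy for this result---and it is close in spirit to the proof cited in the paper (\cite[Proposition 5.3.4]{R+F_monograph}; the paper itself only states the theorem with that citation) and to the paper's proof of the manifold analogue, Proposition~\ref{prop:fct_de_R}. The dyadic decomposition of $\phi$, the rescaling to a family $\psi_j$ bounded in $C_c^\infty$ and supported in a fixed annulus, the invocation of Hulanicki's theorem to produce uniformly Schwartz kernels for the pieces, and the computation that the prefactors $2^{jm}$, $r^{-[\alpha]}$, $r^{\nu(-m+[\alpha]/\nu)}$ with $r=2^{j/\nu}$ cancel to yield uniform $S^{m\nu}$-seminorm bounds on each $\phi_j(\widehat\cR)$ are all sound. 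Where the monograph and Proposition~\ref{prop:fct_de_R} use heat-kernel bounds or the Helffer--Sj\"ostrand formula for the basic piece estimate, you use Hulanicki; these are interchangeable at this point.

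The gap is in the final summation. Uniform $S^{m\nu}$-bounds on the $\phi_j(\widehat\cR)$ do not by themselves give convergence of $\sum_j\phi_j(\widehat\cR)$ in $S^{m\nu}$, and the clause ``$\Delta_{x^\alpha}$ only mildly disturbs spectral localization at the kernel level'' is carrying the entire argument without being substantiated. The difficulty is precisely the one you flag as the ``principal obstacle'': $\Delta_{x^\alpha}$ does not commute with the functional calculus of $\widehat\cR$. Although $\psi_i(2^{-i}\widehat\cR)\,\psi_j(2^{-j}\widehat\cR)=0$ for $|i-j|$ large, the operator $\bigl(\Delta_{x^\alpha}\psi_i(2^{-i}\widehat\cR)\bigr)^{*}\bigl(\Delta_{x^\alpha}\psi_j(2^{-j}\widehat\cR)\bigr)$ need not vanish---the Leibniz rule (Lemma~\ref{lem_LeibnizG}) controls $\Delta_{x^\alpha}(\sigma_1\sigma_2)$, not $\Delta_{x^\alpha}\sigma_1\cdot\Delta_{x^\alpha}\sigma_2$. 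What Cotlar--Stein actually requires is \emph{off-diagonal decay} of $\|T_i^*T_j\|$, and uniform single-piece bounds provide none. In the proof of Proposition~\ref{prop:fct_de_R} the paper extracts this decay by a specific device: bound one of the two factors in the $S^{2m\nu}$-seminorm rather than $S^{m\nu}$, which via Lemma~\ref{lem_prop:fct_de_R}~(3) costs an additional $2^{-2jm}$ and turns the prefactor $2^{(i+j)m}$ into $2^{(i-j)m}$ (hence, after symmetrising in $i,j$, into $2^{-|i-j||m|}$, summable for $m<0$, with the case $m\geq -1$ handled by multiplying by a power of $\id+\widehat\cR$). Something playing this role is needed in your argument and is currently missing.
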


\begin{remark}
\label{rem_thm_phi(R)}
	Again, the proof of the theorem shows that the constants describing the continuity of the linear map $ \cG^m (\bR)\ni \phi\mapsto \phi(\widehat \cR)\in S^{m\nu}(\widehat G)$ 
	depend on  the Lie structure only via $\| [\cdot,\cdot]\|_\bV$ after a choice of a basis $\bV$ adapted to the gradation.
\end{remark}

\subsection{Smoothing symbols}
\label{subsec_smoothingG}

\begin{definition}
The class of {\it smoothing symbols}
$$
S^{-\infty}(\widehat G):= \cap_{m\in \bR} S^m(\widehat G).
$$
is equipped  with the induced topology of projective limit.	
\end{definition}
The topology is well defined thanks to 
the continuous inclusions \eqref{eq_inclusionSmGh}.

Then, the results recalled above, or equivalently \cite[Lemme 5.4.13 and Theorem 5.4.9]{R+F_monograph}, imply the following:

\begin{proposition}
\label{prop_smoothing}
	 The group Fourier transform is an isomorphism between the topological vector spaces $\cS(G)$ and $S^{-\infty}(\widehat G)$.
\end{proposition}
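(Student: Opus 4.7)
My plan is to establish the two inclusions separately, controlling seminorms in each direction, and then invoke the already known injectivity of $\cF_G$ on distributions to conclude bijectivity.

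For the forward direction $\cF_G(\cS(G)) \subset S^{-\infty}(\widehat G)$ with continuity, the natural tool is Corollary \ref{cor_thm_kernelG} Part (2): given $\kappa\in \cS(G)$, all the seminorms
\[
\|\sigma\|_{\alpha,\beta,1} = \| x^\alpha \bV^\beta \kappa\|_{L^1(G)}
\]
are finite and continuous seminorms on $\cS(G)$ (with respect to the coordinates induced by any adapted basis $\bV$ on $\fg$). For any $m\in \bR$ and $N\in \bN_0$, choosing $M_1$ to be a common multiple of the weights with $-m+N\leq M_1$, the corollary then bounds $\|\sigma\|_{S^m(\widehat G),N,\cR}$ by a finite combination of these Schwartz seminorms of $\kappa$. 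Since $m$ and $N$ are arbitrary, this shows $\sigma\in S^{-\infty}(\widehat G)$ and that $\cF_G: \cS(G)\to S^{-\infty}(\widehat G)$ is continuous.

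For the reverse direction, take $\sigma\in S^{-\infty}(\widehat G)$. Since $\sigma\in L^\infty(\widehat G)$, Section~\ref{subsec:distrib} gives a well-defined convolution kernel $\kappa=\cF_G^{-1}\sigma\in \cS'(G)$. The key observation is Remark~\ref{rem_thm_kernelG}(2): for any multi-indices $\alpha,\beta_1,\beta_2$, the distribution $x^\alpha \bV^{\beta_1}\widetilde\bV^{\beta_2}\kappa$ is the convolution kernel of the symbol $\Delta_{x^\alpha}\widehat \bV^{\beta_1}\sigma\,\widehat \bV^{\beta_2}$, which belongs to $S^{m-[\alpha]+[\beta_1]+[\beta_2]}(\widehat G)$ for every $m$, hence to $S^{-\infty}(\widehat G)$. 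Picking $m$ small enough so that the order is less than $-Q$, Theorem~\ref{thm_kernelG} Part (2) applies and yields a continuous bounded kernel with a sup-norm estimate controlled by a symbol seminorm of $\sigma$:
\[
\|x^\alpha \bV^{\beta_1}\widetilde\bV^{\beta_2}\kappa\|_{L^\infty(G)} \leq C\,\|\sigma\|_{S^{m'}(\widehat G),N',\cR},
\]
for some $m'$ and $N'$ depending on $\alpha,\beta_1,\beta_2$.

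The final step is to promote these weighted $L^\infty$ bounds on left- and right-invariant derivatives to membership in $\cS(G)$. Identifying $G$ with $\bR^n$ via $\Exp_G$ and an adapted basis $\bV$, each $V_j$ is a polynomial differential operator in the standard $\partial_i$'s, and conversely, since $(V_1,\ldots,V_n)$ spans the tangent space at every point and the BCH formula produces polynomial coordinate expressions, each $\partial_i$ is a polynomial combination of $V_1,\ldots,V_n$. Iterating and combining with polynomial multiplication, the standard Schwartz seminorms $\sup_{x\in G}|x^\alpha \partial^\beta \kappa(x)|$ are dominated by finite linear combinations of the seminorms $\|x^{\alpha'}\bV^{\beta'}\kappa\|_{L^\infty(G)}$, and hence by symbol seminorms of $\sigma$. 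Thus $\kappa\in \cS(G)$ and $\cF_G^{-1}: S^{-\infty}(\widehat G)\to \cS(G)$ is continuous. The main technical nuisance is this last translation between left-invariant and coordinate derivatives; everything else is a direct invocation of the kernel estimates of Theorem \ref{thm_kernelG} and the symbolic calculus of Theorem \ref{thm_SmGh}.
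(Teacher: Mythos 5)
Your proof is correct and implements, with explicit seminorm estimates, exactly what the paper alludes to when it says the result follows from "the results recalled above" (Theorem~\ref{thm_kernelG}, Remark~\ref{rem_thm_kernelG}, Corollary~\ref{cor_thm_kernelG}); the paper itself simply cites the monograph and gives no argument. Two small remarks: in the forward direction, Corollary~\ref{cor_thm_kernelG}~(2) is technically phrased for $\sigma$ already known to lie in $S^m(\widehat G)$, but its proof only requires the right-hand side to be finite and $\sigma\in L^\infty(\widehat G)$, so your use of it on $\sigma=\cF_G\kappa$, $\kappa\in\cS(G)$, is legitimate; in the reverse direction, the passage between $\partial^\beta$ and the left-invariant $\bV^\gamma$ with polynomial coefficients is precisely the triangular relation $\partial_i = V_i+\sum_{\upsilon_{i'}<\upsilon_i} p_{i,i'} V_{i'}$ (and its inverse) recorded in Folland--Stein and invoked elsewhere in the paper, so it is a standard fact rather than a gap.
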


\begin{remark}
	\label{rem_prop_smoothing}
Again, the proof of Proposition \ref{prop_smoothing} shows that the constants describing the continuity of the linear maps $\cF_G :\cS(G)\to S^{-\infty}(\widehat G)$ 
depend on  the Lie structure only via $\| [\cdot,\cdot]\|_\bV$ after a choice of a basis $\bV$ adapted to the gradation.
\end{remark}

\smallskip

In many circumstances,
it will be useful to handle the convolution kernels of symbols as if they were smooth functions and not distributions. 
The following property will allow us to do so. 
\smallskip

We fix a positive Rockland operator $\cR$ and a function  $\chi\in \cD(\bR)$ valued in $[0,1]$ and with $\chi=1$ on $[0,1]$.
We define the  symbols 
\begin{equation}\label{def:sigmaell}
\sigma_\ell := \sigma\, \chi (\ell^{-1} \widehat \cR), \quad \ell\in \bN.
\end{equation}
By Theorem \ref{thm_phi(R)}, the symbols 
$\chi (\ell^{-1} \widehat \cR)$ are smoothing, 
and so is $\sigma_\ell$ by  the algebraic symbolic properties of Theorem \ref{thm_SmGh} (2). 

\begin{proposition}
\label{prop_densitysmoothing}
Let $\sigma\in S^m(\Gh)$ with $m\in \bR$.
The sequence of smoothing symbols $(\sigma_\ell)_{\ell\in \bN}\subset S^{-\infty}(\Gh)$ defined in~\eqref{def:sigmaell} satisfies the following properties:

	\begin{enumerate}
		\item For any $m_1>m$, we have the convergence 
$$
\lim_{\ell\to \infty} \sigma_\ell	= \sigma \quad\mbox{in}\ S^{m_1}(\Gh).
$$		
In other words, 
the space 
$S^{-\infty}(\Gh)$ equipped with the $S^{m_1}(\widehat G)$-topology is dense in $S^{m}(\widehat G)$.
\item 
For any semi-norm $\|\cdot\|_{S^m(\Gh),N}$ of $S^m(\Gh)$, there exist a constant $C>0$ and $N'\geq N$ such that 
$$
\forall \ell\in \bN,\qquad  \|\sigma_\ell-\sigma\|_{S^m(\Gh),N}
\leq C \|\sigma\|_{S^m(\Gh),N'}.
$$	
Consequently, 
$$
\forall \ell\in \bN,\qquad  \|\sigma_\ell\|_{S^m(\Gh),N}
\leq (1+C) \|\sigma\|_{S^m(\Gh),N'}.
$$

		\item 
		Denoting by $\kappa$ and $\kappa_\ell$ the convolution kernels of $\sigma$ and $\sigma_\ell$ respectively,
	we
 have the convergences 
$
\lim_{\ell\to \infty}  \kappa_\ell = \kappa $ in $\cS'(G)$ and in  $C^\infty (U)$
for any open subset
$U\subset G\setminus \{0\}$.
\item We have the following relation between $S^m(\Gh)$-semi-norms: 
\[
\liminf_{\ell \to\infty}  \|\sigma_\ell\|_{S^m(\Gh),N} \geq \|\sigma\|_{S^m(\Gh),N} .
\]

	\end{enumerate}
\end{proposition}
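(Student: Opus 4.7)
The plan is to write $\sigma - \sigma_\ell = \sigma\,\psi_\ell(\widehat\cR)$ with $\psi_\ell(\lambda):=1-\chi(\lambda/\ell)$, to analyse $\psi_\ell$ in the scalar classes $\cG^s(\bR)$ of Notation~\ref{notation_Gm}, and then to transfer the conclusions to symbol classes via Theorem~\ref{thm_phi(R)} and the bilinear continuity of composition in Theorem~\ref{thm_SmGh}~(2).

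For Parts~(1) and~(2), I first check by a direct computation that $\{\psi_\ell\}_\ell$ is bounded in $\cG^0(\bR)$ and that $\psi_\ell \to 0$ in $\cG^\epsilon(\bR)$ for every $\epsilon>0$. Indeed $\psi_\ell$ vanishes on $[0,\ell]$ and its $k$-th derivative $-\ell^{-k}\chi^{(k)}(\cdot/\ell)$ is supported near $\lambda\sim\ell$, which gives the uniform bound $\sup_\lambda (1+|\lambda|)^{k-\epsilon}|\partial^k\psi_\ell(\lambda)|\lesssim \ell^{-\epsilon}$. Theorem~\ref{thm_phi(R)} then provides that $\psi_\ell(\widehat\cR)$ is bounded in $S^0(\widehat G)$ and converges to $0$ in $S^{\epsilon\nu}(\widehat G)$. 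Applying the continuity of composition in Theorem~\ref{thm_SmGh}~(2) to the product $\sigma\,\psi_\ell(\widehat\cR)$ yields the uniform bound $\|\sigma-\sigma_\ell\|_{S^m(\widehat G),N,\cR}\lesssim \|\sigma\|_{S^m(\widehat G),N',\cR}$, which is Part~(2); choosing $\epsilon=(m_1-m)/\nu$ upgrades this to convergence to $0$ in $S^{m_1}(\widehat G)$, which is Part~(1).

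For Part~(3), convergence in $\cS'(G)$ follows from Part~(1) combined with continuity of the inverse Fourier transform from $S^{m_1}(\widehat G)$ into $\cS'(G)$, a quantitative version of which is the pairing bound in Corollary~\ref{cor_thm_kernelG}~(3). For $C^\infty$-convergence on an open $U\subset G\setminus\{0\}$, I will fix $\beta\in\bN_0^n$, choose $\alpha$ with $[\alpha]$ large enough that $m+[\beta]-[\alpha]<-Q$, and use Remark~\ref{rem_thm_kernelG}~(2) to identify $x^\alpha\bV^\beta(\kappa_\ell-\kappa)$ as the convolution kernel of $\Delta_{x^\alpha}\widehat\bV^\beta(\sigma_\ell-\sigma)\in S^{m+[\beta]-[\alpha]}(\widehat G)$. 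By Part~(1) this symbol tends to $0$ in a slightly perturbed class whose order is still less than $-Q$, so Theorem~\ref{thm_kernelG}~(2) gives uniform convergence of its kernel to $0$ on $G$. Choosing the components of $\alpha$ so that $|x^\alpha|$ stays bounded below on compacts of $U$ then yields $\bV^\beta(\kappa_\ell-\kappa)\to 0$ uniformly on such compacts, and iterating on $\beta$ provides the required $C^\infty$-convergence.

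Part~(4) is the most delicate. The strategy I expect to use is based on lower semi-continuity of the operator norm under strong-operator convergence of measurable fields. A Leibniz-type expansion of $\Delta_{x^\alpha}(\sigma\,\chi(\ell^{-1}\widehat\cR))$ into a finite sum of products $\Delta_{x^{\alpha_1}}\sigma\cdot\Delta_{x^{\alpha_2}}\chi(\ell^{-1}\widehat\cR)$, combined with a rescaling computation on the convolution kernel of $\chi(\ell^{-1}\widehat\cR)$ (which is an approximate identity dilated to scale $\ell^{1/\nu}$), shows that for each irreducible $\pi$ one has $\Delta_{x^\alpha}\sigma_\ell(\pi)\to\Delta_{x^\alpha}\sigma(\pi)$ strongly on $\cH_\pi^\infty$. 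Composing on the left with the spectral multiplier $(\id+\pi(\cR))^{(-m+[\alpha])/\nu}$, which commutes with $\chi(\ell^{-1}\pi(\cR))$ and is independent of $\ell$, preserves this strong convergence; lower semi-continuity of the operator norm then yields the pointwise inequality $\|(\id+\pi(\cR))^{(-m+[\alpha])/\nu}\Delta_{x^\alpha}\sigma(\pi)\|_{\cL(\cH_\pi)} \leq \liminf_{\ell}\|(\id+\pi(\cR))^{(-m+[\alpha])/\nu}\Delta_{x^\alpha}\sigma_\ell(\pi)\|_{\cL(\cH_\pi)}$, and taking essential supremum over $\widehat G$ and maximum over $[\alpha]\leq N$ finishes the argument. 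The main obstacle I expect is making the Leibniz expansion rigorous together with the commutation of $\Delta_{x^\alpha}$ with the unbounded spectral weight $(\id+\widehat\cR)^{(-m+[\alpha])/\nu}$ at the level of unbounded operators on $\cH_\pi^{\pm\infty}$.
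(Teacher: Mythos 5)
Parts~(1)--(3) of your proposal follow essentially the same route as the paper's proof: factor $\sigma-\sigma_\ell=\sigma\,(1-\chi)(\ell^{-1}\widehat\cR)$, estimate $\|(1-\chi)(\ell^{-1}\cdot)\|_{\cG^{\epsilon},N}\lesssim\ell^{-\epsilon}$, and push this through Theorem~\ref{thm_phi(R)} and the bilinear continuity of the product in Theorem~\ref{thm_SmGh}~(2); for Part~(3) the paper simply invokes the kernel estimates of Theorem~\ref{thm_kernelG} together with the $S^{m_1}$-convergence, which is what your more explicit argument does. (One small caution in your Part~(3): a single monomial $x^\alpha$ vanishes on large portions of $G\setminus\{0\}$ as soon as one component $\alpha_j>0$, so "$|x^\alpha|$ stays bounded below on compacts of $U$" requires either a local cover or using a polynomial power of the quasi-norm instead of one monomial.)

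For Part~(4) your strategy — strong-operator convergence plus lower semicontinuity of the operator norm, interchanged with the essential supremum over $\widehat G$ via a Fatou-type inequality — is in the right spirit, but the key step as you have written it has a real gap. The claim that "composing on the left with the spectral multiplier $(\id+\pi(\cR))^{(-m+[\alpha])/\nu}$ preserves this strong convergence" is not a valid move: strong convergence of $T_\ell\to T$ on $\cH_\pi^\infty$ is not preserved under left composition with an unbounded operator, regardless of any commutation with $\chi(\ell^{-1}\pi(\cR))$. You acknowledge an obstacle at this point, but the obstacle is not the commutation of $\Delta_{x^\alpha}$ with the spectral weight; it is the ordering. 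The fix, implicit in the Leibniz expansion you invoke, is to regroup: by Lemma~\ref{lem_LeibnizG}, the $\alpha_2=0$ term gives
$$(\id+\pi(\cR))^{\frac{-m+[\alpha]}{\nu}}\,\Delta_{x^\alpha}\sigma_\ell(\pi)
=\bigl[(\id+\pi(\cR))^{\frac{-m+[\alpha]}{\nu}}\Delta_{x^\alpha}\sigma(\pi)\bigr]\,\chi(\ell^{-1}\pi(\cR))
+\text{(terms with }\alpha_2\neq 0\text{)},$$
where the bracketed operator is bounded because $\Delta_{x^\alpha}\sigma\in S^{m-[\alpha]}(\widehat G)$, and the remainder terms involve $\Delta_{x^{\alpha_2}}\chi(\ell^{-1}\widehat\cR)$ with $\alpha_2\neq 0$ whose symbol norm is $O(\ell^{-[\alpha_2]/\nu})$, so they vanish in operator norm. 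After this regrouping, strong convergence really does follow (a fixed bounded operator composed with $\chi(\ell^{-1}\pi(\cR))\to\id$ strongly), and lower semicontinuity applies. The paper's proof sidesteps the strong-convergence machinery altogether: for $\alpha=0$ it observes directly that restricting the norm to the increasing spectral subspaces $1_{[0,\ell]}(\pi(\cR))\cH_\pi$ exhausts $\cH_\pi$, giving $\lim_\ell\|(\id+\widehat\cR)^{-m/\nu}\sigma\,\chi(\ell^{-1}\widehat\cR)\|_{L^\infty}=\|(\id+\widehat\cR)^{-m/\nu}\sigma\|_{L^\infty}$, and for general $\alpha$ it bounds the seminorm from below by $E_1-E_2$ with $E_2=O(\ell^{-[\alpha]/\nu})$ and $E_1$ treated by the $\alpha=0$ case applied to $\Delta_{x^\alpha}\sigma$. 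This avoids any need to discuss unbounded operators or strong convergence and is the shorter path; your route can be completed but only after regrouping as above.
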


\begin{proof}[Proof of Proposition \ref{prop_densitysmoothing}]

Parts 1 and 2.  By the symbolic properties of symbols in Theorem \ref{thm_SmGh} (2), given a semi-norm $\|\cdot \|_{S^{m_1}(\Gh),N}$, 
 there exist $C>0$ and $N'',N'\geq N$ such that 
	$$
\|	\sigma - \sigma_\ell\|_{S^{m_1}(\Gh),N}
=\left\| \sigma \ (1-\chi)(\ell^{-1} \widehat \cR)
\right\|_{S^{m_1}(\Gh),N}\leq C \| \sigma \|_{S^m(\Gh),N'}\|(1-\chi)(\ell^{-1} \widehat \cR)
\|_{S^{m_1-m}(\Gh),N''}.
$$
If $m_1\geq m$, 
by Theorem \ref{thm_phi(R)}, we have for some $N_1\in \bN$
 $$
\|(1-\chi)(\ell^{-1} \widehat \cR)
\|_{S^{m_1-m}(\Gh),N''} \lesssim \|(1-\chi)(\ell^{-1} \cdot )\|_{\cG^{\frac{m_1-m}\nu},N_1},
$$
and we check 
$$
\forall \ell\in \bN,\qquad 
\|(1-\chi)(\ell^{-1} \cdot )\|_{\cG^{\frac{m_1-m}\nu},N_1}
\lesssim  \ell ^{\frac{m-m_1}\nu}
.
$$
This shows Part (1) and the first estimate in Part (2), the second being obtained by the triangle inequality.

Part 3. The kernel estimates in Theorem \ref{thm_kernelG} and the estimate of the $S^{m_1}(\Gh)$-semi-norm of $\sigma-\sigma_\ell$ for $m_1>m$ gives the convergence in $C^\infty (U)$ for any open subset $U$ of $G\setminus\{0\}$.
Recall that $\cF^{-1}S^{m_1}(\Gh) \subset \cK_{m_1,0}$ and that the inclusion $\cK_{a,b}\subset \cS'(G)$ is continuous \cite[Proposition 5.1.17(3)]{R+F_monograph}.
This and Part (1) imply the convergence $\kappa_\ell \to \kappa$ in $\cS'(G)$.

Part 4. To prove Part (4), we  observe that by functional analysis, we have
\begin{align*}
\|(\id+\widehat \cR)^{-\frac m\nu} \sigma\|_{L^\infty(\Gh)}
&\geq 
\|(\id+\widehat \cR)^{-\frac m\nu} \sigma \, \chi(\ell^{-1}\widehat \cR)\|_{L^\infty(\Gh)}\\
&\qquad \qquad \geq \sup_{\pi\in \Gh}\sup_{\substack{v\in 1_{[0,\ell]}(\pi(\cR))\\ |v|_{\cH_\pi}=1}}
|(\id +\pi(\cR))^{-\frac m\nu} \sigma (\pi)v|_{\cH_\pi} ,
\end{align*}
so,
\begin{equation}
    \label{eq_FAchiel}
    \lim_{\ell\to \infty}
\|(\id+\widehat \cR)^{-\frac m\nu} \sigma \, \chi(\ell^{-1}\widehat \cR)\|_{L^\infty(\Gh)}
= \|(\id+\widehat \cR)^{-\frac m\nu} \sigma \|_{L^\infty(\Gh)}.
\end{equation}

For $|\alpha|=1$, the Leibniz property implies
$$
\Delta_\alpha \sigma_\ell = (\Delta_\alpha \sigma) \chi(\ell^{-1}\widehat \cR) +  \sigma (\Delta_\alpha \chi(\ell^{-1}\widehat \cR)),
$$
so 
$$
   \|(\id+\widehat \cR)^{-\frac {m-[\alpha]}\nu} \Delta_\alpha\sigma_\ell\|_{L^\infty(\Gh)}
  \geq   E_1 -E_2,
$$
where 
\begin{align*}
    E_1&:=\|(\id+\widehat \cR)^{-\frac {m-[\alpha]}\nu} (\Delta_\alpha \sigma) \chi(\ell^{-1}\widehat \cR) \ \|_{L^\infty(\Gh)} \\
    E_2 &:= \|(\id+\widehat \cR)^{-\frac {m-[\alpha]}\nu}  \sigma \ (\Delta_\alpha\chi(\ell^{-1}\widehat \cR))\|_{L^\infty(\Gh)}\\
    &\lesssim \|\sigma\|_{S^m,N_1} \|\chi(\ell^{-1}\widehat \cR)\|_{S^{[\alpha]}(\Gh),N_2}
  \lesssim_{\sigma,\chi} \ell^{-\frac {[\alpha]}\nu},
\end{align*}
 for some integers $N_i$'s,
having used the symbolic properties for composition  and for spectral multipliers (see Theorems \ref{thm_SmGh} and \ref{thm_phi(R)}).
Applying  \eqref{eq_FAchiel} to $E_1$, we obtain, 
$$
\liminf_{\ell \to \infty} \|(\id+\widehat \cR)^{-\frac {m-[\alpha]}\nu} \Delta_\alpha\sigma_\ell\|_{L^\infty(\Gh)}
  \geq 
\|(\id+\widehat \cR)^{-\frac {m-[\alpha]}\nu} (\Delta_\alpha \sigma)\|_{L^\infty(\Gh)} , 
$$
    for $|\alpha|=1$. The same ideas and method yield inductively the case of any index $\alpha$, proving Part (4).
\end{proof}

\begin{remark}
	\label{rem_prop_densitysmoothing}
Again, the proof of Proposition \ref{prop_densitysmoothing} shows that the constants involved in the statement 
depend on  the Lie structure only via $\| [\cdot,\cdot]\|_\bV$ after a choice of a basis $\bV$ adapted to the gradation.
\end{remark}

\subsection{Homogeneous and polyhomogeneous symbol classes}

\subsubsection{Homogeneous classes of symbols}
\label{subsubsec_invdotSm}
Recall that the dilations act on $\widehat G$ via 
\begin{equation}
\label{def:dil_pi}
r\cdot \pi (x)= \pi(rx), 
\quad \pi\in \widehat G, \ x\in G, \ r>0.	
\end{equation}
\begin{definition}
\label{def_hominvsymbolG}
	An invariant symbol $\sigma$ on $\widehat G$ is $m$-homogeneous when $\sigma(r\cdot \pi) = r^m \sigma(\pi)$ for almost all $\pi\in \widehat G$ and $r>0$. 
\end{definition}

\begin{ex}
    Let $\cR$ be a positive Rockland operator. We denote by $\nu$ its homogeneous degree.
    For any $s\in \bR$, the properties of homogeneous Sobolev spaces (Section \ref{subsubsec_homL2sG}) imply that the field of  spectrally defined operators 
    $\widehat \cR^{m/\nu}=\{\pi(\cR)^{m/\nu}: \pi\in \Gh\}$
    is a well-defined invariant symbol, 
    and we check readily that it 
      is  $m$-homogeneous.
\end{ex}

We define the following subspace of invariant symbols
$$
\sigma\in \dot L^\infty_{a,b}(\widehat G)
\quad \text{if and only if}\quad \sigma_{a,b}:=\widehat \cR^{b/\nu}  \sigma \, \widehat \cR^{-a/\nu}\in L^\infty(\widehat G);
$$
by this, we mean that there exists a unique element $\sigma_{a,b}$ of $L^\infty(\widehat G)$ that is composable on the left and the right by 
$\widehat \cR^{-b/\nu}$ and $\widehat \cR^{a/\nu}$  respectively such that $\sigma=\widehat \cR^{-b/\nu} \sigma_{a,b} \cR^{a/\nu}$.
Equipped with the norm 
$$
\|\sigma\|_{L^\infty_{a,b}(\widehat G),\cR}
:= \|\widehat \cR^{b/\nu}  \sigma \, \widehat \cR^{-a/\nu}\|_{L^\infty(\widehat G)}, 
$$
the space $\dot L^\infty_{a,b}(\widehat G)$ becomes a Banach space.

Since the inclusion $\dot L^2_s (G) \subset \cS'(G)$ is continuous for any $s\in \bR$, by the Schwartz kernel theorem,
 any symbol in $L^\infty_{0,s}(\widehat G)$ has a convolution kernel; 
more generally,  if $M_0\in \bN_0$ is a multiple of the dilation's weights,
any symbol in $L^\infty_{M_0,s}(\widehat G)$ has a convolution kernel. We can therefore apply at least formally a difference operator to these symbols. This allows us to consider the following definition:

\begin{definition}
\label{def_reghominvsymbolG}
An $m$-homogeneous symbol $\sigma$ is said to be \emph{regular} when $\sigma \in \dot L^\infty_{0, -m}(\widehat G)$ and furthermore
$\Delta_{x^\alpha} \sigma \in \dot L^\infty_{0, -m+[\alpha]}(\widehat G)$ for  all $\alpha \in \bN_0^n$.
We denote by $\dot S^m(\widehat G)$ the space of $m$-homogeneous regular symbols.
\end{definition}

The space $\dot S^m(\widehat G)$ is Fr\'echet when equipped with the semi-norms given by
\begin{equation}
\label{def:semi_norm_hom_group}
\|\sigma\|_{\dot S^m(\widehat G),N,\cR}:=
\max_{[\alpha]\leq N}
\|\Delta_{x^\alpha} \sigma\|_{\dot L^\infty_{0,-m+[\alpha]}(\widehat G),\cR},
\end{equation}
having fixed a positive Rockland operator $\cR$.

In \cite{FF0}, we proved the following property. We shall say 
 that 
a function $\psi:\bR\to \bR$ satisfies $\psi\equiv 1$ on a neighbourhood of $+\infty$ when there exists $\Lambda>0$ such that $\psi\equiv 1$ on $(\Lambda,+\infty)$. 

\begin{proposition}
\label{prop_dotS0Gh}
	Let $\sigma$ be a $m$-homogeneous invariant symbol on $\widehat G$. 
	The following properties are equivalent:
	\begin{enumerate}
		\item For all $\alpha \in \bN_0^n$, $\Delta_{x^\alpha} \sigma \in \dot L^\infty_{0, -m+[\alpha]}(\widehat G)$.
		\item For all $\gamma\in \bR$ and  $\alpha \in \bN_0^n$, $\Delta_{x^\alpha} \sigma \in \dot L^\infty_{\gamma, -m+[\alpha]+\gamma}(\widehat G)$.
		\item For one (and then any) $\psi\in C^\infty(\bR)$ with 
		$\psi\equiv 0$ on a neighbourhood of 0 and $\psi\equiv 1$ on a neighbourhood of $+\infty$ and for one (and then any)	positive Rockland operator $\cR$, $\psi(\widehat \cR)\, \sigma \in S^m(\widehat G)$ and $ \sigma\, \psi(\widehat \cR)\in S^m(\widehat G)$.
	\end{enumerate}
In this case, the following map
$$
 \sigma \longmapsto  \Delta_{x^\alpha} \sigma, 
 \qquad \dot S^0(\widehat G)\longrightarrow \dot L^\infty_{\gamma, -m+[\alpha]+\gamma,\gamma}(\widehat G),
$$
is continuous for each $\gamma\in \bR$ and $\alpha\in \bN_0^n$.
And in the case $m=0$, the maps
 $$
\sigma \longmapsto  \psi(\widehat \cR) \sigma, 
\quad\mbox{and}\quad
\sigma \longmapsto   \sigma \psi(\widehat \cR), 
$$
are also continuous $\dot S^0(\widehat G)\rightarrow S^0(\widehat G)$ and injective. 
\end{proposition}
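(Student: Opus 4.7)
\medskip

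\noindent\textbf{Proof proposal.} The implication $(2)\Rightarrow(1)$ is immediate by specialising to $\gamma=0$. I would establish the remaining content via the cycle $(1)\Rightarrow(3)\Rightarrow(2)$. The guiding intuition is that multiplication by $\psi(\widehat\cR)$ excises a neighbourhood of the trivial representation $1_{\widehat G}$ (where $\widehat\cR$ is small), which is exactly where a homogeneous symbol can fail to lie in the inhomogeneous class $S^m(\widehat G)$; the homogeneity of $\sigma$ then transports information between the ``large $\widehat\cR$'' region and the rest of $\widehat G$.

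For $(1)\Rightarrow(3)$, the plan is to expand $\Delta_{x^\alpha}(\psi(\widehat\cR)\sigma)$ via the noncommutative Leibniz formula for difference operators on a product of symbols (as developed in \cite{R+F_monograph}), which writes it as a finite sum of terms of the form $\Delta_{x^{\alpha_1}}\psi(\widehat\cR)\cdot \Delta_{x^{\alpha_2}}\sigma$ with $[\alpha_1]+[\alpha_2]\leq [\alpha]$, plus BCH correction terms of analogous shape. For $\alpha_1\neq 0$, the convolution kernel of $\Delta_{x^{\alpha_1}}\psi(\widehat\cR)=-\Delta_{x^{\alpha_1}}(1-\psi)(\widehat\cR)$ is $x^{\alpha_1}$ times the Schwartz kernel of the smoothing operator $(1-\psi)(\widehat\cR)$, and is itself Schwartz, so $\Delta_{x^{\alpha_1}}\psi(\widehat\cR)\in S^{-\infty}(\widehat G)$. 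For $\alpha_1=0$, rewriting the prefactor as $(\id+\widehat\cR)^{(-m+[\alpha])/\nu}\psi(\widehat\cR)=\phi(\widehat\cR)\,\widehat\cR^{(-m+[\alpha])/\nu}$ with $\phi(\lambda)=\bigl((1+\lambda)/\lambda\bigr)^{(-m+[\alpha])/\nu}\psi(\lambda)\in\cG^0(\bR)$ (bounded at infinity and vanishing near $0$) puts $\phi(\widehat\cR)\in S^0$ by Theorem~\ref{thm_phi(R)}; combining this with the bound on $\widehat\cR^{(-m+[\alpha])/\nu}\Delta_{x^\alpha}\sigma$ supplied by $(1)$ yields $\Delta_{x^\alpha}(\psi(\widehat\cR)\sigma)\in L^\infty_{0,-m+[\alpha]}$. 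The right-multiplication case $\sigma\psi(\widehat\cR)$ is symmetric.

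For $(3)\Rightarrow(2)$, Theorem~\ref{thm_SmGh}(1) upgrades $\psi(\widehat\cR)\sigma\in S^m$ to $\Delta_{x^\alpha}(\psi(\widehat\cR)\sigma)\in L^\infty_{\gamma,-m+[\alpha]+\gamma}$ for every $\gamma\in\bR$. Reading the same Leibniz expansion in reverse, and absorbing the correction terms by induction on $[\alpha]$ (each correction pairs a smoothing factor $\Delta_{x^{\alpha_1}}\psi(\widehat\cR)$ with a strictly lower-order difference of $\sigma$ already controlled inductively), I would derive $\psi(\widehat\cR)\Delta_{x^\alpha}\sigma\in L^\infty_{\gamma,-m+[\alpha]+\gamma}$. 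The final step is to remove the cutoff: the symbol $\widehat\cR^{(-m+[\alpha]+\gamma)/\nu}\Delta_{x^\alpha}\sigma\,\widehat\cR^{-\gamma/\nu}$ is $0$-homogeneous, so its operator norm at any $\pi\in\widehat G\setminus\{1_{\widehat G}\}$ equals the norm at $r\cdot\pi$ for every $r>0$; choosing $r$ large enough that $\mathrm{spec}(r^\nu\pi(\cR))$ lies in the region where $\psi\equiv 1$ reduces the bound to the one already obtained for $\psi(\widehat\cR)\Delta_{x^\alpha}\sigma$.

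The continuity statements follow by tracking the semi-norm estimates through the Leibniz expansions above and invoking the Fr\'echet continuity of composition, spectral multipliers, and difference operators from Theorems~\ref{thm_SmGh} and~\ref{thm_phi(R)}. For $m=0$, injectivity of $\sigma\mapsto\psi(\widehat\cR)\sigma$ follows from the same homogeneity-plus-cutoff argument: if $\psi(\widehat\cR)\sigma=0$ then $\sigma(r\cdot\pi)=\psi((r\cdot\pi)(\cR))\sigma(r\cdot\pi)=0$ for $r$ large, and $0$-homogeneity forces $\sigma\equiv 0$. The hard part will be the careful bookkeeping of the BCH correction terms in the Leibniz rule together with the inductive control of the accompanying semi-norms; the homogeneity/cutoff transfer itself is essentially formal once symbols are interpreted pointwise as fields of (possibly unbounded) operators over $\widehat G\setminus\{1_{\widehat G}\}$, using that $\pi(\cR)$ has discrete spectrum bounded away from $0$ for every non-trivial $\pi$.
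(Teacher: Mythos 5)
The paper itself does not reprove Proposition~\ref{prop_dotS0Gh} but defers to~\cite{FF0}, and points (in Remark 2.29 and Section~\ref{subsubsec_hom+inhomsymb}, especially the proof of Proposition~\ref{prop_homsigmapsi}) to a strategy based on a dyadic decomposition of $[0,\infty)$ in the spectral variable of $\widehat\cR$. Your route is genuinely different: you combine the Leibniz rule for $\Delta_{x^\alpha}$, a factorisation $(\id+\widehat\cR)^a\psi(\widehat\cR)=\phi(\widehat\cR)\widehat\cR^a$ with $\phi\in\cG^0$ for the main term, and a scaling/homogeneity argument (letting $r\to\infty$ in $\pi\mapsto r\cdot\pi$) to remove the spectral cutoff. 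The cutoff-removal step in your $(3)\Rightarrow(2)$ direction is in fact correct and elegant: for fixed $\pi\neq 1_{\widehat G}$ the spectrum of $\pi(\cR)$ is discrete and bounded away from $0$, so $\psi(r^\nu\pi(\cR))$ and the inhomogeneous-to-homogeneous conjugating factors $\phi_j(r^\nu\pi(\cR))$ converge in operator norm to $\id$ as $r\to\infty$, while $\|\tau(r\cdot\pi)\|=\|\tau(\pi)\|$ by $0$-homogeneity.

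However, I think the handling of the Leibniz correction terms is not merely ``careful bookkeeping'': there is a genuine gap there. In both the $(1)\Rightarrow(3)$ and the $(3)\Rightarrow(2)$ directions, the corrections pair a smoothing factor $\Delta_{x^{\alpha_1}}\psi(\widehat\cR)$ (for $\alpha_1\neq 0$) with $\Delta_{x^{\alpha_2}}\sigma$, and you try to close the estimate by peeling off the bound coming from $(1)$ (or the inductive hypothesis), i.e.\ writing $\Delta_{x^{\alpha_2}}\sigma=\widehat\cR^{(m-[\alpha_2])/\nu}\tau$ with $\tau\in L^\infty(\widehat G)$, and then absorbing $\widehat\cR^{(m-[\alpha_2])/\nu}$ into the smoothing factor. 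This fails when $m<[\alpha_2]$: a smoothing symbol $T$ gives boundedness of $(\id+\widehat\cR)^aT(\id+\widehat\cR)^b$ for all $a,b$, but $T\widehat\cR^{-c}$ for $c>0$ is not automatically bounded, because the homogeneous power $\widehat\cR^{-c}$ blows up at the trivial representation and this is not compensated by $(\id+\widehat\cR)$-smoothing. Concretely, $\widehat\cR^{-c}(\id+\widehat\cR)^{c}=\bigl((\id+\widehat\cR)/\widehat\cR\bigr)^c$ is unbounded for $c>0$, so you cannot trade $\widehat\cR^{-c}$ for $(\id+\widehat\cR)^{-c}$ the way you traded $\widehat\cR^{c}$ for $(\id+\widehat\cR)^{c}$ in the main term. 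The induction on $[\alpha]$ does not fix this: the inductive hypothesis gives control of $\Delta_{x^{\alpha_2}}\sigma$ in homogeneous $\dot L^\infty_{\gamma',\cdot}$ norms, but the conjugating exponent $\gamma'$ you would need to make the right-hand factor $\widehat\cR^{\gamma'/\nu}(\id+\widehat\cR)^{-\gamma/\nu}$ bounded is incompatible with the constraint needed for the left-hand factor when $\gamma<0$ or $[\alpha_2]>m+\gamma$, so one cannot pick a single $\gamma'$ that closes both sides. The dyadic decomposition on $[0,\infty)$ that the paper uses in the proof of Proposition~\ref{prop_homsigmapsi} is precisely the device designed to resolve this mismatch between homogeneous and inhomogeneous weights (each dyadic band is treated at its own scale, where the two agree up to constants), and your proposal needs either this mechanism or a comparable localisation in the spectral variable to make the correction-term estimates go through.
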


\begin{remark}
\begin{enumerate}
\item An improved presentation of the proof showing that the property in Proposition \ref{prop_dotS0Gh} (3) is independent of a choice of a positive Rockland operator $\cR$ and a function $\psi$ is obtained by considering the proof in Section \ref{subsubsec_hom+inhomsymb}. 
    \item  The constants describing the continuity properties in Proposition \ref{prop_dotS0Gh} depend on  the Lie structure only via $\| [\cdot,\cdot]\|_\bV$ after a choice of a basis $\bV$ adapted to the gradation.	  
\end{enumerate}
\end{remark}

In \cite{FF0},
we also showed that
the classes of regular homogeneous symbols enjoy the algebraic properties for composition and adjoint as described for the inhomogeneous case in Theorem~ \ref{thm_SmGh}. Furthermore, a remark regarding constants similar to Remark \ref{rem_thm_SmGh} is true.

\begin{ex}
\label{ex_XalphainSGhh}
If $\bV$ is an adapted basis of $\fg$, 
then for any $\alpha\in \bN_0^n$,  the symbol $$
\widehat \bV^\alpha =\{\pi(\bV)^\alpha , \pi\in \widehat G\}
$$
is in $\dot S^{[\alpha]}(\widehat G)$.
\end{ex}

\subsubsection{Polyhomogeneous expansion and symbol classes}\label{subsubsec_poly_exp}

\begin{definition}
\label{def_asympexph}
A symbol $\sigma \in S^m(\widehat G)$ admits the polyhomogeneous expansion 
$$
\sigma \sim_h \sum_{j=0} ^\infty \sigma_{m-j}
$$
when 
	each $\sigma_{m-j}$ is homogeneous and in $\dot S^m(\widehat G)$,  and $\sigma$ admits the inhomogeneous expansion (in the sense of Definition \ref{def_asympexp})
	$$
 \sigma \sim  \sum_{j=0}^\infty  \sigma_{m-j}\psi(\widehat \cR),
 $$
 for some positive Rockland operator $\cR$ and  $\psi\in C^\infty(\bR)$ with 
		$\psi\equiv 0$ on a neighbourhood of 0 and $\psi\equiv 1$ on a neighbourhood of $+\infty$.
\end{definition}

By the symbolic calculus on groups, this definition is independent of the choice of the Rockland operator $\cR$  (see \cite[Proposition~4.6 ]{FF0}
or Proposition \ref{prop_dotS0Gh} above)
and we have an equivalent definition considering $\sum_{j=0}^\infty \psi(\widehat \cR) \sigma_{m-j}$.

Unlike
the expansion of a general  symbol of order $m$ (in the sense of Definition~\ref{def_asympexp}) that  may be perturbed by any term of lower order, 
if $\sigma\in S^m(\Gh)$ admits a polyhomogeneous expansion $
 \sigma \sim  \sum_{j=0}^\infty  \sigma_{m-j}\psi(\widehat \cR),
 $ then the homogenenous symbols $\sigma_{m-j}$, $j=0,1,2,\ldots$ are unique. 
 This follows readily from  the injectivity of the maps mentioned in Proposition \ref{prop_dotS0Gh}.

\begin{definition}
\label{def:SmphGh}
    The polyhomogeneous symbol classes of order $m\in \bR$ is the space $S^m_{ph}(\Gh)$ of symbols $\sigma\in S^m(\Gh)$ admitting a polyhomogeneous expansion.
\end{definition}

The properties of the homogeneous and inhomogeneous symbol classes imply that 
the product of a symbol in $S^{m_1}_{ph}(\Gh)$
with a symbol in $S^{m_2}_{ph}(\Gh)$ is in $S^{m_1+m_2}_{ph}(\Gh)$.
Similarly, the adjoint of a symbol in $S^{m}_{ph}(\Gh)$ is in $S^{m}_{ph}(\Gh)$.

\begin{ex}
\label{ex_I+Rmnuph}
For any  positive Rockland operator  $\cR$, 
we have 
$(\id+\widehat \cR)^{m/\nu} \in S^m_{ph}(\Gh)$
where $\nu$ is the homogeneous degree of $\cR$.
Indeed, if 
$\psi\in C^\infty(\bR)$ satisfies 
		$\psi\equiv 0$ on a neighbourhood of 0 and $\psi\equiv 1$ on a neighbourhood of $+\infty$, 
		by spectral calculus
		\begin{align*}
		(\id+\widehat \cR)^{m/\nu} \psi(\widehat \cR) 
		&= 
		  (\id+\widehat \cR^{-1})^{m/\nu}\psi(\widehat \cR)  \widehat \cR ^{m/\nu}\\
		&\sim   \widehat \cR ^{m/\nu} \psi(\widehat \cR) + \frac m \nu \widehat \cR^{-1 + m/\nu}\psi(\widehat \cR)  + \ldots ,
		\end{align*}
		having used the Taylor series of $(1+x)^{m/\nu}$.
\end{ex}

\subsection{Difference operators and Leibniz property}
An important property of the difference operator $\Delta_{x^\alpha}$ introduced in Definition~\ref{def:diff_op} is that they satisfy a form of Leibniz property which is a direct consequence of properties regarding monomials:
\begin{lemma}
\label{lem_LeibnizG}
Let $G$ be a graded Lie group and $\bV$ an adapted basis.
\begin{enumerate}
    \item 
For any $\alpha\in \bN_0^n$, we have the following relations between $\bV$-monomials:
   $$
(xy)^\alpha = \sum_{[\alpha_1]+[\alpha_2]=[\alpha]} 
c^{(\alpha)}_{\alpha_1,\alpha_2} x^{\alpha_1}y^{\alpha_2}
$$ 
for unique coefficients $c^{(\alpha)}_{\alpha_1,\alpha_2}\in \bR$ 
satisfying 
$$
c^{(\alpha)}_{\alpha_1,0} =\left\{ \begin{array}{cc}
1&\mbox{if} \ \alpha_1 =\alpha,\\
0&\mbox{otherwise}
\end{array}\right.
\qquad\mbox{and}\qquad
c^{(\alpha)}_{0,\alpha_2} =\left\{ \begin{array}{cc}
1&\mbox{if} \ \alpha_2 =\alpha,\\
0&\mbox{otherwise}
\end{array}\right.
$$
Moreover, the coefficients $c^{(\alpha)}_{\alpha_1,\alpha_2}$ are universal polynomial expressions in the structural constants of $G$ for $\bV$, i.e. in the constant $c_{i,j,k}$ defined via $[V_i,V_k]=\sum_k c_{i,j,k} V_k$.
\item For any $\sigma_1,\sigma_2\in S^{-\infty}(\widehat G)$, 
$$
\Delta_{x^\alpha} (\sigma_1\sigma_2) =
\sum_{[\alpha_1]+[\alpha_2]=[\alpha]} 
c^{(\alpha)}_{\alpha_1,\alpha_2} \
\Delta_{x^{\alpha_1}} \sigma_1\
\Delta_{x^{\alpha_2}} \sigma_2. 
$$
\end{enumerate}
\end{lemma}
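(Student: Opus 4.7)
The plan is to establish part (1) from the Baker--Campbell--Hausdorff formula together with the action of the dilations, and then to derive part (2) as an algebraic consequence via the convolution-to-product correspondence of the group Fourier transform.

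\textbf{Part (1).} Identify $G$ with $\bR^n$ through $\Exp_G$ and the adapted basis $\bV$. Dynkin's form of the Baker--Campbell--Hausdorff formula (Section~\ref{subsubsec_BCH}) expresses each coordinate $(x *_G y)_j$ of the group product as a polynomial in $x_1, \ldots, x_n, y_1, \ldots, y_n$ whose coefficients are universal polynomial expressions in the structure constants $c_{i,j,k}$ of $\fg$ with respect to~$\bV$ (these are precisely the coefficients that appear through iterated commutators). Since the dilations are group automorphisms, $\delta_r(x *_G y) = \delta_r(x) *_G \delta_r(y)$, so the $j$-th coordinate $(x *_G y)_j$ is $\upsilon_j$-homogeneous under the simultaneous dilation $(x, y) \mapsto (\delta_r x, \delta_r y)$. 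Consequently, the polynomial
\begin{equation*}
(xy)^\alpha = \prod_{j=1}^n (x *_G y)_j^{\alpha_j}
\end{equation*}
is homogeneous of total weight $[\alpha]$ under this diagonal dilation. Expanding it in the basis of monomials $x^{\alpha_1} y^{\alpha_2}$ of $\bR[x, y]$, only the terms with $[\alpha_1] + [\alpha_2] = [\alpha]$ can survive, which gives coefficients $c^{(\alpha)}_{\alpha_1, \alpha_2}$ polynomial in the structure constants. The boundary values are immediate: setting $y = 0$ leaves $(xy)^\alpha = x^\alpha$, forcing $c^{(\alpha)}_{\alpha_1, 0} = \delta_{\alpha_1, \alpha}$, and symmetrically for $x = 0$.

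\textbf{Part (2).} Set $\kappa_i := \cF_G^{-1} \sigma_i$, which lies in $\cS(G)$ by Proposition~\ref{prop_smoothing}. By Theorem~\ref{thm_SmGh} and equation~\eqref{eq_convolution}, the convolution kernel of $\sigma_1 \sigma_2$ is $\kappa_2 \star_G \kappa_1$, so by the very definition of $\Delta_{x^\alpha}$,
\begin{equation*}
\Delta_{x^\alpha}(\sigma_1 \sigma_2) = \cF_G\!\left( v \mapsto v^\alpha \int_G \kappa_2(y)\, \kappa_1(y^{-1} v)\, dy \right).
\end{equation*}
Performing the change of variable $w := y^{-1} v$ inside the convolution integral (so $v = yw$) and applying part~(1) to $(yw)^\alpha$ rewrites $v^\alpha \, (\kappa_2 \star_G \kappa_1)(v)$ as a finite linear combination of convolutions of the form $(y^{\alpha_1} \kappa_2) \star_G (w^{\alpha_2} \kappa_1)$ with coefficients $c^{(\alpha)}_{\alpha_1, \alpha_2}$. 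Taking Fourier transforms and using \eqref{eq_convolution} once more converts each such convolution into a composition of difference operators acting on $\sigma_1$ and $\sigma_2$, producing the claimed Leibniz-type identity after a relabeling of the summation indices.

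The only point requiring care is the bookkeeping caused by the non-commutativity of $\star_G$, which dictates which $\Delta_{x^{\alpha_i}}$ gets paired with which $\sigma_j$; this is why the index labelling in the resulting sum has to be tracked explicitly. All interchanges of integrals and Fourier transforms are justified by the Schwartz class membership of $\kappa_1$ and $\kappa_2$, so there is no analytic obstacle at all: the substance of the lemma lies entirely in the combinatorial identity of part~(1), and in particular in the homogeneity argument that forces the sum to be restricted to $[\alpha_1] + [\alpha_2] = [\alpha]$.
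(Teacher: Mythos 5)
Your proof is correct. For Part (2) you follow exactly the paper's line of argument: express the convolution kernel of $\sigma_1\sigma_2$ as $\kappa_2 \star_G \kappa_1$, insert the group factorization $v = y\,(y^{-1}v)$ into $v^\alpha$, apply Part (1), and convert back to symbols via the group Fourier transform. For Part (1) the paper simply cites \cite[Proposition 5.2.3 (4)]{R+F_monograph}, whereas you supply the underlying argument in full: BCH gives the coordinate functions of the product as polynomials in the structure constants, homogeneity of each coordinate $(x*_Gy)_j$ under the diagonal dilation follows from $\delta_r$ being a group automorphism, and evaluation at $y=0$ and $x=0$ pins down the boundary coefficients. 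That is precisely the content of the cited proposition, so the two approaches are the same in substance; yours is just more self-contained.

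One small terminological quibble: in Part (2), introducing $w := y^{-1}v$ is not a change of variable in the integral (the integration variable is $y$, and $v$ is fixed); it is simply a notation for the second argument of the convolution kernel, enabling the algebraic identity $v^\alpha = (yw)^\alpha$. Calling it a change of variable could mislead a reader into thinking a Jacobian or a new measure is involved. Your observation that the non-commutativity of $\star_G$ forces careful tracking of which multi-index attaches to which symbol — and that a relabeling of $(\alpha_1,\alpha_2)$ is needed after taking Fourier transforms — is accurate and worth spelling out; the paper's proof leaves that relabeling implicit.
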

\begin{proof}
   Part (1) follows from  \cite[Proposition 5.2.3 (4)]{R+F_monograph}. 
   For Part (2), the convolution kernel of $\sigma_1\sigma_2$ is $\kappa_2 * \kappa_1$ where $\kappa_i$ is the convolution kernel of $\sigma_i$, $i=1,2$.
   We have
\begin{align*}
   x^\alpha \kappa_2 * \kappa_1 (x) 
&=\int_G (y (y^{-1} x) )^\alpha  \kappa_2 (y) \kappa_1(y^{-1}x)  dy
\\&= \sum_{[\alpha_1]+[\alpha_2]=[\alpha]} 
c^{(\alpha)}_{\alpha,\alpha_2} 
\int_G y^{\alpha_1}   \kappa_2 (y)\ (y^{-1} x) ^{\alpha_2}\kappa_1(y^{-1}x)  dy 
\end{align*}
The conclusion follows by taking the group Fourier transform.
\end{proof}

From the definition of the symbol classes, 
it follows that $\Delta_{x^\alpha}$ is a continuous operator $S^m(\widehat G)\to S^{m-[\alpha]}(\widehat G)$ for any $m\in \bR\cup\{-\infty\}$ and any $\alpha \in \bN_0^n$. 
But we can also show the continuity of any difference operator as defined in  Definition~\ref{def:diff_op}:
\begin{proposition}
\label{prop_DeltaqG}
	Let $q\in C_c^\infty(G)$. 
	Then $\Delta_q$ is a continuous operator $S^m(\widehat G)\to S^{m}(\widehat G)$ for any $m\in \bR\cup\{-\infty\}$.
 Moreover, 
 writing the Taylor series of $q$ at 0 as $\bT_0 q (x)\sim \sum_\alpha c_\alpha x^\alpha$, 
the map 
	$$
\sigma \longmapsto	\Delta_q \sigma  - \sum_{[\alpha]\leq N} c_\alpha \Delta_{x^\alpha} \sigma
$$
is continuous $S^m(\widehat G)\to S^{m-(N+1)}(\widehat G)$  for any $N\in \bN_0$.
\end{proposition}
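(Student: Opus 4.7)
Denote by $\kappa=\cF_G^{-1}\sigma$ the convolution kernel of $\sigma$, so that $\Delta_q\sigma=\cF_G(q\kappa)$ by Definition~\ref{def:diff_op}. Applying the graded Taylor formula of Theorem~\ref{thm_MV+TaylorG} at $0$ to the smooth compactly supported function $q$, one writes $q=P_N+R_N$ with $P_N(x)=\sum_{[\alpha]\le N}c_\alpha x^\alpha$ the graded Taylor polynomial of $q$ at $0$ and $R_N$ the remainder. Hence
\begin{equation*}
  \Delta_q\sigma-\sum_{[\alpha]\le N}c_\alpha\Delta_{x^\alpha}\sigma=\cF_G(R_N\kappa).
\end{equation*}
The whole proof reduces to showing that this symbol lies in $S^{m-(N+1)}(\widehat G)$ with continuous seminorm control in $\sigma$. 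Part (1) of the proposition then follows by taking $N=0$, since $\Delta_q\sigma=q(0)\sigma+\cF_G(R_0\kappa)\in S^m(\widehat G)+S^{m-1}(\widehat G)\subset S^m(\widehat G)$ continuously.

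The first stage is to prove the required bound under the stronger assumption $\sigma\in S^{-\infty}(\widehat G)$, so that $\kappa\in\cS(G)$ by Proposition~\ref{prop_smoothing} and every computation is classical. By iterated Leibniz, $\bV^\gamma(R_N\kappa)$ is a linear combination of products $(\bV^{\gamma_1}R_N)(\bV^{\gamma_2}\kappa)$ with $[\gamma_1]+[\gamma_2]=[\gamma]$. The Taylor remainder estimate of Theorem~\ref{thm_MV+TaylorG} provides $|\bV^{\gamma_1}R_N(x)|\lesssim|x|^{N+1-[\gamma_1]}$ on $|x|\le 1$ when $[\gamma_1]\le N$, while for $[\gamma_1]>N$ one has $\bV^{\gamma_1}R_N=\bV^{\gamma_1}q\in C_c^\infty$; in all cases the growth at infinity is polynomial. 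By Remark~\ref{rem_thm_kernelG}(2), $\bV^{\gamma_2}\kappa$ is the convolution kernel of a symbol in $S^{m+[\gamma_2]}(\widehat G)$, so Theorem~\ref{thm_kernelG} yields $|\bV^{\gamma_2}\kappa(x)|\lesssim|x|^{-(Q+m+[\gamma_2])}$ for $0<|x|\le 1$ (with the usual logarithmic or continuous corrections in the critical cases) and super-polynomial decay at infinity. Splitting each integral into $\{|x|\le1\}$ and $\{|x|\ge1\}$, these bounds combine into
\begin{equation*}
  \|x^\beta\bV^\gamma(R_N\kappa)\|_{L^1(G)}\le C_{\beta,\gamma,q}\,\|\sigma\|_{S^m(\widehat G),N''}
\end{equation*}
for every $(\beta,\gamma)$ with $[\gamma]-[\beta]<N+1-m$, which is exactly the integrability range predicted by Corollary~\ref{cor_thm_kernelG}(1) for symbols of order $m-(N+1)$. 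Since $R_N\kappa\in\cS(G)$, $\cF_G(R_N\kappa)\in S^{-\infty}(\widehat G)\subset S^{m-(N+1)}(\widehat G)$, and Corollary~\ref{cor_thm_kernelG}(2) upgrades these $L^1$ estimates into the seminorm bound $\|\cF_G(R_N\kappa)\|_{S^{m-(N+1)},N'}\le C\|\sigma\|_{S^m,N''}$.

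For arbitrary $\sigma\in S^m(\widehat G)$, I approximate by the smoothing family $\sigma_\ell=\sigma\chi(\ell^{-1}\widehat\cR)\in S^{-\infty}(\widehat G)$ from Proposition~\ref{prop_densitysmoothing}. The previous step applied to each $\sigma_\ell$, together with the uniform bound $\|\sigma_\ell\|_{S^m,N''}\lesssim\|\sigma\|_{S^m,N''}$ of Proposition~\ref{prop_densitysmoothing}(2), shows that $\cF_G(R_N\kappa_\ell)$ remains bounded in $S^{m-(N+1)}(\widehat G)$. Simultaneously, Proposition~\ref{prop_densitysmoothing}(3) gives $\kappa_\ell\to\kappa$ in $\cS'(G)$; multiplication by the polynomially bounded smooth function $R_N$ is continuous on $\cS'(G)$, so $R_N\kappa_\ell\to R_N\kappa$ in $\cS'(G)$ and $\cF_G(R_N\kappa_\ell)\to\cF_G(R_N\kappa)$ in the weaker topology of $S^{m_1-(N+1)}(\widehat G)$ for any $m_1>m$. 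A lower-semicontinuity argument for the operator-valued seminorms, in the spirit of Proposition~\ref{prop_densitysmoothing}(4), transfers the uniform bound to the limit, placing $\cF_G(R_N\kappa)\in S^{m-(N+1)}(\widehat G)$ with the required continuous dependence on $\sigma$.

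The delicate point of the proof is precisely this transfer of the seminorm bound from the smoothing approximants to the limit: Proposition~\ref{prop_densitysmoothing}(1) only guarantees convergence of $\sigma_\ell$ to $\sigma$ in the strictly weaker topology $S^{m_1}(\widehat G)$ for $m_1>m$, whereas the target $S^{m-(N+1)}(\widehat G)$ is strictly stronger. The pointwise kernel analysis of the first stage is essentially routine bookkeeping built on Theorems~\ref{thm_MV+TaylorG} and~\ref{thm_kernelG}; the non-trivial step is reconciling uniform $S^{m-(N+1)}$-boundedness of the approximants with convergence in a weaker topology in order to read off membership in the strong space for the limit through semicontinuity.
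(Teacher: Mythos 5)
Your proof follows essentially the same route as the paper's: reduce to the $L^1$-type kernel seminorms of Corollary~\ref{cor_thm_kernelG}, estimate the remainder $q-\bP_{G,q,0,N}$ near $0$ via the Folland--Stein Taylor estimate and away from $0$ via the Schwartz decay of the kernel, verify the estimate first for smoothing symbols where the kernel is in $\cS(G)$, and then pass to general $\sigma\in S^m(\widehat G)$ by the density and semicontinuity machinery of Proposition~\ref{prop_densitysmoothing}. The only cosmetic difference is that the paper introduces an explicit cut-off $\chi$ to split the error into a term $\rho_1=\kappa\chi(q-\bP_{G,q,0,N})$ (localized near $0$ and handled by Taylor plus kernel estimates) and a term $\rho_2=(\chi-1)\kappa\bP_{G,q,0,N}$ (handled by the Schwartz decay of $\kappa$ against polynomial growth), whereas you split the integral into $\{|x|\le1\}$ and $\{|x|\ge1\}$ and deal directly with $R_N=q-\bP_{G,q,0,N}$; these are equivalent. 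One small caution in your last step: asserting that $\cF_G(R_N\kappa_\ell)\to\cF_G(R_N\kappa)$ in $S^{m_1-(N+1)}(\widehat G)$ for $m_1>m$ does not follow directly from $\cS'$-convergence of the kernels, and the ``lower-semicontinuity in the spirit of Proposition~\ref{prop_densitysmoothing}(4)'' you invoke really needs a short weak-$*$ compactness argument in the von Neumann algebra $L^\infty(\widehat G)$ (uniform boundedness plus testing against Schwartz functions); the paper is also terse on this point, so this is a shared, minor omission rather than a genuine gap in your reasoning.
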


\begin{proof}[Proof of Proposition \ref{prop_DeltaqG}]
By Corollary \ref{cor_thm_kernelG}, 
it suffices to prove that for any $m\in \bR$, for any multi-indices $\alpha_0,\beta_0\in \bN_0^n$, 
 there exists $N_0\in \bN_0$ 
 such that for any $N\in \bN_0$ with $N\geq N_0$,    we have
$$
\forall \sigma\in S^{-\infty}(\widehat G),
\qquad
\|\Delta_q \sigma  - \sum_{[\alpha]\leq N} c_\alpha \Delta_{x^\alpha} \sigma\|_{\alpha_0,\beta_0,1}
\leq C_{q,N,\alpha_0,\beta_0}
\|\sigma\|_{S^m(\widehat G), N',\cR_\bV},
$$
for some (large) integer $N'\in \bN_0$ depending on $N$ and other constants but not $\sigma$.
Moreover, by modifying $\sigma$ and $m$, it suffices to show the case $\alpha_0=\beta_0=0$.
Hence, we are led to analyse 
$$
\|\Delta_q \sigma  - \sum_{[\alpha]\leq N} c_\alpha \Delta_{x^\alpha} \sigma\|_{0,0,1} = \|q\kappa - \sum_{[\alpha]\leq N}  c_\alpha x^\alpha \kappa \|_{L^1(G)}.
$$

We fix a function $\chi\in C_c^\infty(G)$  such that $\chi=1$ on a neighbourhood of $\supp \, q$ and  
of 0.
Let $\kappa\in \cS(G)$.
We may write
$$
q\kappa - \sum_{[\alpha]\leq N}  c_\alpha x^\alpha \kappa  
= \rho_1 +\rho_2,
\quad\mbox{where}\quad	
\rho_1 := \kappa \chi (q- \bP_{G,q,0,N}), 
  \quad
	\rho_2 := (\chi-1) \kappa\bP_{G,q,0,N}   .
$$
As $\kappa$ is Schwartz away from 0 (see Theorem \ref{thm_kernelG}), the $L^1$-norm of  $\rho_2$ is finite.
When choosing $N_0$ large enough, this is also the case for $\rho_1$
from  the kernel estimates near 0 
(see Theorem~\ref{thm_kernelG}) 
together with  Taylor's estimate (see Theorem \ref{thm_MV+TaylorG}).
Moreover, quantitatively,  each $\|\rho_i\|_{L^1(G)}$, $i=1,2$, is bounded up to a constant by an $S^m(\widehat G)$-semi-norm in $\sigma$.
The conclusion follows. 
\end{proof}

\begin{remark}
	\label{rem_prop_DeltaqG}
	Again, the proof of Proposition \ref{prop_DeltaqG} shows that the constants describing the continuity of the linear map above depend on  the Lie structure only via $\| [\cdot,\cdot]\|_\bV$ after a choice of a basis $\bV$ adapted to the gradation.
	\end{remark}

A direct consequence  of Proposition \ref{prop_DeltaqG} is the following asymptotic expansion: 
$$
\Delta_q  \sigma  \sim \sum_{[\alpha]\leq N} c_\alpha \Delta_{x^\alpha} \sigma.
$$

\subsection{Transformation of symbols by morphisms}

 By a {\it morphism    $\theta$ of the graded nilpotent Lie group}, 
 we mean a  morphism of the nilpotent Lie group whose associated Lie algebra morphism (for which we keep the same notation) respects the gradation of the Lie algebra in the sense that it maps each $\fg_j$ on itself.
Using the notation of Lemma \ref{lem_compwmorph}, we have:

\begin{proposition}
\label{prop_compwmorph}
    Let $\theta$ be an automorphism of the graded nilpotent Lie group $G$. 
If $\sigma\in S^m(\Gh)$, then  $\theta_* \sigma \in S^m(\Gh)$
while if $\sigma\in \dot S^m(\Gh)$, then  $\theta_* \sigma \in \dot S^m(\Gh)$
 Moreover, the map 
$\sigma\mapsto \theta_* \sigma $  is an automorphism  of the topological vector space  $S^m(\Gh)$
and also of the topological vector space  $\dot S^m(\Gh)$.
\end{proposition}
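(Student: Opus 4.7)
The plan rests on the fact that a graded morphism commutes with the dilations $\delta_r$, which has three key consequences. First, homogeneity is preserved under $\theta_*$: for $\sigma$ an $m$-homogeneous symbol, the identity $(r\cdot\pi)\circ\theta^{-1} = r\cdot(\pi\circ\theta^{-1})$ (which follows from $\theta\circ\delta_r = \delta_r\circ\theta$) gives $(\theta_*\sigma)(r\cdot\pi) = \sigma(r\cdot(\pi\circ\theta^{-1})) = r^m (\theta_*\sigma)(\pi)$, so $m$-homogeneity of $\theta_*\sigma$ follows immediately.

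Second, I would show that $\theta_*$ acts continuously on $L^\infty_{a,b}(\widehat G)$ and $\dot L^\infty_{a,b}(\widehat G)$. If $\cR$ is a positive Rockland operator of homogeneous degree $\nu$, then $\cR_\theta:=\theta^{-1}\cR\in\sU(\fg)$ is also a positive Rockland operator of degree $\nu$: it is $\nu$-homogeneous because $\theta^{-1}$ preserves the gradation, and it is conjugate to $\cR$ via the bijection $f\mapsto f\circ\theta$ on $L^2(G)$, so positivity and the Rockland condition carry over. Using the identity $(\pi\circ\theta^{-1})(\cR) = \pi(\cR_\theta)$, one verifies
\[
\theta_*\bigl((\id+\widehat\cR)^{b/\nu}\sigma(\id+\widehat\cR)^{-a/\nu}\bigr) = (\id+\widehat{\cR_\theta})^{b/\nu}\,\theta_*\sigma\,(\id+\widehat{\cR_\theta})^{-a/\nu},
\]
which, combined with the isometry $\|\theta_*\tau\|_{L^\infty(\widehat G)}=\|\tau\|_{L^\infty(\widehat G)}$ from Lemma~\ref{lem_compwmorph}, gives $\|\theta_*\sigma\|_{L^\infty_{a,b},\cR_\theta} = \|\sigma\|_{L^\infty_{a,b},\cR}$. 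The independence of $L^\infty_{a,b}(\widehat G)$ on the choice of Rockland operator then yields continuity of $\theta_*$ on $L^\infty_{a,b}(\widehat G)$. The same argument with $\widehat\cR^{s/\nu}$ in place of $(\id+\widehat\cR)^{s/\nu}$ handles $\dot L^\infty_{a,b}(\widehat G)$.

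Third, I would compute how the difference operator $\Delta_{x^\alpha}$ intertwines with $\theta_*$. By Lemma~\ref{lem_compwmorph}, $\theta_*\sigma$ has convolution kernel $(\det\theta)\,\kappa\circ\theta$. Because $\theta^{-1}$ preserves the gradation, each coordinate $(\theta^{-1}y)_i$ is a linear combination of coordinates $y_j$ with $\upsilon_j=\upsilon_i$, hence
\[
(\theta^{-1}y)^\alpha = \sum_{[\beta]=[\alpha]} c^{(\theta)}_{\alpha,\beta}\,y^\beta
\]
for some constants $c^{(\theta)}_{\alpha,\beta}$. The change of variables $y=\theta x$ in the kernel of $\Delta_{x^\alpha}\theta_*\sigma$ then yields
\[
\Delta_{x^\alpha}\theta_*\sigma \;=\; \sum_{[\beta]=[\alpha]} c^{(\theta)}_{\alpha,\beta}\,\theta_*(\Delta_{x^\beta}\sigma).
\]

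Assembling these three ingredients concludes the argument. For $\sigma\in S^m(\widehat G)$, steps two and three combine to give the bound $\|\Delta_{x^\alpha}\theta_*\sigma\|_{L^\infty_{0,-m+[\alpha]},\cR}\lesssim_\theta \max_{[\beta]=[\alpha]}\|\Delta_{x^\beta}\sigma\|_{L^\infty_{0,-m+[\beta]},\cR}$, so $\theta_*\sigma\in S^m(\widehat G)$ with continuous dependence on $\sigma$. The identical argument with $\dot L^\infty_{\bullet,\bullet}$ in place of $L^\infty_{\bullet,\bullet}$ yields continuity on $\dot S^m(\widehat G)$. Bijectivity follows because $(\theta^{-1})_*$ is a continuous inverse. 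I expect the difference-operator identity to be the main technical point: the graded hypothesis on $\theta$ is essential here, since a non-graded automorphism would mix coordinates of different weights and the identity would pick up contributions of lower homogeneous degree, causing the estimate to fail.
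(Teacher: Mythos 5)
Your proof is correct and follows essentially the same route as the paper's: introduce the conjugate Rockland operator $\cR_\theta$, observe the isometry of the $L^\infty_{a,b}$-norms between $\cR$ and $\cR_\theta$, intertwine $\theta_*$ with the difference operators by using that $\theta^{-1}$ preserves the gradation so the monomial $x^\alpha$ pulls back to a sum of monomials of the same homogeneous degree, and then close via the equivalence of Rockland-dependent seminorms. The only cosmetic difference is your convention $\cR_\theta=\theta^{-1}\cR$ versus the paper's $\cR_\theta=\theta(\cR)$ (both work, reflecting which direction the change of variables $\pi\mapsto\pi\circ\theta^{-1}$ is applied). You also spell out the homogeneity-preservation step for $\dot S^m$, which the paper leaves implicit in "the homogeneous case is similar."
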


As a consequence of the continuity of $\sigma\mapsto \theta_* \sigma $ ,  once we have chosen a Rockland operator~$\mathcal R$ and $N\in\bN$, there exists a constant $C>0$ such that for all $\sigma \in S^m(\Gh)$,
\[
\| \theta_*\sigma\|_{S^m(\widehat G), N, \mathcal R} \leq C \| \sigma\|_{S^m(\widehat G), N, \mathcal R} .
\]
The proof below will show that 
the constant $C$ depends on various quantities of the group. 

\begin{remark}\label{rem_constant_theta}
Assume as in Remark~\ref{rem_constant_RS} that  the gradation of the vector space  underlying~$\fg$ as well as an adapted  basis $\bV$ are fixed. 
Then, the constant $C$ may be chosen as
$$
C= \widetilde C (\max (s, \|[\cdot ,\cdot ]_\fg\|_{\bV}, C_\bV(\cR), \| \theta\|_{\mathcal L(\mathfrak g)}))
$$
where  $\widetilde C:[0,\infty)\to (0,\infty)$ is an increasing function  of the structural constant $\|[,]\|_{\bV}$, of the constant $C_\bV(\cR)$ introduced in Remark~\ref{rem_constant_RS}, and of the norm of $\theta$ as a linear map of $\mathfrak g$.
\end{remark}

\begin{proof}
We keep the same notation $\theta$ for the automorphism of the graded nilpotent Lie group~$G$, and the corresponding automorphisms of the Lie algebra $\fg$ and of its enveloping algebra $\sU(\fg)$. 
We observe that if $\cR$ is a Rockland operator viewed as an element of $\sU(\fg)$,
then $\cR_\theta:=\theta(\cR)$ is also a Rockland operator since $\pi(\cR_\theta) = \pi_1(\cR)$ with $\pi_1=\pi\circ \theta$ for any $\pi\in \Gh$.
For instance, if $\cR$ is as in Example \ref{ex_RG},
$$	
	\cR = \sum_j (-1)^{\frac {M_0}{\upsilon_j}}  V_{j}^{2\frac{M_0} {\upsilon_j}}, 
 \quad\mbox{then}\quad
\cR_\theta = \sum_j  (-1)^{\frac {M_0}{\upsilon_j}} (\theta( V_{j}))^{2\frac{M_0} {\upsilon_j}}.
$$
If in addition of being Rockland, $\cR$ is also positive, then $\pi(\cR_\theta) = \pi_1(\cR)$ is positive for any $\pi\in \Gh$, so $\cR_\theta$ is also positive.
We check readily 
$$
\|\theta_* \sigma\|_{L_{a,b}^\infty(\Gh),\cR}
= \|\sigma\|_{L_{a,b}^\infty(\Gh),\cR_\theta}
\quad\mbox{and}\quad
\Delta_q (\theta_*\sigma)
= \theta_*( \Delta_{q\circ \theta^{-1}}\sigma ).
$$
As 
$x_j \circ \theta^{-1}$ is a linear combination of $x_k$'s of the same degree as $x_j$, we have
$$
\|\Delta^\alpha \theta_* \sigma \|_{L_{a,b}^\infty(\Gh),\cR}
\asymp
\sum_{[\beta] =[\alpha]}
\|\Delta^\alpha \sigma \|_{L_{a,b}^\infty(\Gh),\cR_\theta}.
$$
We deduce the relation about the semi-norms in the inhomogeneous case. 
The homogeneous case is similar. 
\end{proof}

%%%%%%%%%%%%%%%%%%%%%%%%%%%%%%%%%%%%%%%%%%%%%%%%%%%%%%%%%

\section{Filtered manifolds} \label{sec:manifold}

In this section, we recall properties of filtered manifolds, and discuss their   frames, geometric exponential maps,  differential operators, and group exponentiation.

\subsection{Filtered manifolds and their osculating bundles}\label{sec:def_filt_manifold}

\begin{definition}
\label{def_filteredM}
	A {\it filtered manifold} is  a smooth  manifold $M$ whose tangent bundles admits a nested sequence of sub-bundles 
$\{0\}=H^0\subseteq H^1\subseteq\ldots\subseteq H^r=TM$
 that respects the Lie bracket of vector fields:
$$
[\Gamma(H^i), \Gamma(H^j)]\subseteq \Gamma(H^{i+j}), \quad 0\leq i,j \leq r,
$$
with the convention $H^\ell=H^r$ whenever $\ell\geq r$.
\end{definition}

By convention, in this paper, a smooth manifold is also assumed to be
second countable, paracompact Hausdorff.

\begin{ex}\label{ex:subrie}
A fundamental example consists in   a manifold $M$ 
equipped with a distribution $\cD$ generating $TM$ in the sense that any vector field may be obtained by iterated commutator brackets of elements of $\Gamma (H_1)$ and that  the filtration is obtained as follows. We set $H_1:=\cD$, and for each $j>1$,  $H_j$ the subspace of $TM$ such that 
 $\Gamma (H_j)$ is the $C^\infty(M)$-module generated by  
$[V_1,[V_2,\ldots,[V_\ell, \ldots]\ldots]] $, with $V_1,\ldots, V_\ell\in \Gamma (H_1)$, $\ell=2,\ldots,j$.
When the rank $\dim H_j$ of the vector bundle $H_j$ is constant for each $j$, 
$M$ is a filtered manifold. 
When $H_1$ generates $TM$,    $r=2$ and $\dim H_2=1$, the manifold is {\it contact.} 
When  $H_1$ generates $TM$ and  is equipped with a metric $g$, $M$ is an  {\it equiregular subRiemannian manifold
  $(M, H_1, g)$} (references about subRiemannian manifold include \cite{ABB} for example). 
\end{ex}

From now on, we fix a filtered manifold $M$ as in Definition \ref{def_filteredM}. We set 
$$
n=\dim M.
$$
For each $x\in M$, the vector space
$$\gr(T_x M) := \oplus_{i\geq 1} \left(H^i_x / H^{i-1}_x\right)
$$
is naturally 
graded and the gradation of the underlying vector space is independent of $x\in M$. 
Its {\it weights} (see Definition~\ref{def_dilation_gradedV})  are denoted by 
$$
0<\upsilon_1\leq \ldots \leq \upsilon_n,
$$ 
 counted with multiplicity, and, when counted without multiplicity,
$$
0<\upsilon_1 = w_1<\ldots < w_s = \upsilon_n,
$$
where we denote by $s$ the {\it step of the gradation} ($s\leq r$).
These weights, counted with multiplicity or not, are constant functions on $ M$.
We denote the dimension of each of these subspaces  in the decomposition $
\gr(T_x M) = \oplus_{i=1}^s H^{w_i} / H^{w_i-1},
$
by 
$$
d_i := \dim H^{w_i} / H^{w_i-1}, \qquad i=1,\ldots, s;
$$
these are independent of $x\in M$.

For each $x\in M$, the direct sum of vector spaces
$\gr(T_x M) = \oplus_i \left(H^i_x / H^{i-1}_x\right)$
is naturally 
equipped with a {\it Lie bracket}
that we denote $[\cdot,\cdot ]_{\fg_x M}$. Indeed, take  $U, V\in \gr(T_xM)$ and consider  $X\in \Gamma(H^{w_i})$ such that $U=X(x) \mod H_x^{w_i}$ and $Y\in \Gamma(H^{w_j})$ such that $V= Y(x) \mod H^{w_j}_x$. Then 
$$[U, V]_{\fg_xM}:=[X, Y]_x \mod H^{w_i+w_j}_x$$
is independent of the choice of representatives $X$ and $Y$ of $U, V\in \gr(T_xM)M$.

When $\gr(T_x M)$ is equipped with this Lie bracket, 
we denote the resulting {\it Lie algebra} 
$$
\fg_x M = \left (\gr(T_x M), [\cdot,\cdot ]_{\fg_x M}\right).
$$
 It is naturally graded, and the gradation of the underlying vector space is independent of $x\in M$.

\smallskip

For each $x\in M$, 
we denote by $G_x M$ the corresponding  {\it nilpotent Lie group}, by\ $\sU(\fg_x M)$ the corresponding {\it universal enveloping Lie algebra} to  $\fg_x M$. 
The unions
$$
G M :=\cup_{x\in M} G_x M
  \qquad\mbox{and}\qquad 
\fg M :=\cup_{x\in M}\fg_x M
$$
are naturally equipped with a structure of smooth vector bundles; 
they are called  the {\it bundles of osculating groups and Lie algebras} over $M$.
Moreover,
the \textit{group bundle exponential map
$$
\Exp: \fg M\to GM, \qquad 
\Exp_x :=\Exp_{G_x M} : \fg_x M \to G_x M,
$$
is smooth and bijective.
We also denote by
$$
\sU(\fg M) := \cup_{x\in M} \sU(\fg_x M)
$$
the bundle of universal enveloping Lie algebras over $M$, 
and by
$$
\sU_{N}(\fg M) := \cup_{x\in M} \sU_{N}(\fg_x M),
\qquad \mbox{and}  \qquad
\sU_{\leq N}(\fg M) := \cup_{x\in M} \sU_{\leq N}(\fg_x M),
$$
the linear sub-bundles of $\sU(\fg)$ of elements of homogeneous degree equal to $N$
and of  linear combination of elements of homogeneous degree at most equal to $N$ respectively.
They can be described in terms of adapted local frames, as we will see in the next section.
This yields a natural smooth structure for the  vector bundles  $\sU_{N}(\fg M)$ and 
$\sU_{\leq N}(\fg M)$ of finite rank. 
Moreover, the natural inclusions 
$$
\sU_{\leq N_1}(\fg M) \hookrightarrow  \sU_{\leq N_2}(\fg M),\qquad  N_1 \leq N_2,
$$
are smooth bundle morphisms. Hence, their direct limit 
$$
\sU(\fg M) = \varinjlim_N \sU_{\leq N}(\fg M)
$$
inherits a structure of smooth vector  bundle (of infinite rank).}

\subsection{Local frames adapted to the filtration}
\label{subsec_X}

\subsubsection{Generalities}
\label{subsubsec_generality_frame}
Recall that a frame $\mathbb{X}:=(X_1, \ldots, X_n)$ on an open subset $U$ of a manifold~$M$ yields a  local trivialization of the tangent space on $U$ set via the linear map
$$
\mathbb{R}^n\longrightarrow TM, \quad v=(v_1,\cdots,v_n)\longmapsto 
\sum_{i=1}^n v_iX_i.
$$
Local frames exist on open neighbourhoods of any points of the manifold $M$.
Although $M$ can always be covered by such open sets equipped with frames,  
$M$
may not admit  global frames.

If  $\bX$ and $\bY$ are two frames on the same open subset $U\subset M$, then we may write $$	X_i(x)=\sum _{1\leq k\leq n}T_{k,i}(x)Y_k(x),\qquad 1\leq i\leq n,$$
for uniquely defined smooth maps $x\mapsto T_{k,i}(x)$ on $U$, $1\leq k,i\leq n$.
These define the 
change of frame matrix $T=
T^{\bY,\bX} = (T_{i,j})_{1\leq i,j\leq n} $
as a smooth map $U\to \textrm{GL}(n,\bR)$.
Viewing $\bX$ and $\bY$   as column matrices,
we have
$\bX=T^{{\rm t}}\bY.$
If $\bZ$ is another  adapted frame on $U$, then $T^{\bZ, \bX}_x=T^{\bZ, \bY}_xT^{\bY, \bX}_x$.

\subsubsection{Frames adapted to the filtration}
\label{subsubsec_adaptedFrame}
When the manifold $M$ is filtered, 
a  frame $\mathbb{X}:=(X_1, \ldots, X_n)$ of an open subset $U\subset M$ is said to be \emph{adapted to the filtration} $H$ when for every $x\in U$, 
$X_{1,x},\ldots, X_{d_1,x}$ is a basis of $H^{w_1}_x $, 
$X_{1,x},\ldots, X_{d_1+d_2,x}$ is a basis of $H^{w_2}_x$, and so on. 
In this case, we  say that $\mathbb{X}=(X_1, \ldots, X_n)$  is a  \emph{local adapted  frame} or an \emph{adapted frame} on~$U$.
 Such local frames exist in neighbourhoods of any points of the manifold $M$. Thus, every smooth manifold $M$ has a covering by open sets equipped with   adapted frames. 
\smallskip

 We also associate to an adapted frame a basis of the osculating Lie algebras. More precisely, if $\bX$ is an adapted frame on $U$, 
we set for each $x\in U$, 
\begin{align*}
\langle X_i\rangle_x 
&:= X_i(x) \ \mbox{mod} \ H^{w_1-1}_x, \quad i=1,\ldots, d_1, 
\\
\langle X_i\rangle_x 
&:= X_i(x) \ \mbox{mod} \ H^{w_2-1}_x, \quad i=d_1+1,\ldots, d_1+d_2,
\quad \mbox{etc.}  
\end{align*}
Note that each $\langle X_i\rangle_x$ has weight $\upsilon_i$ in the graded Lie algbera $\fg_x M$, i.e. $\delta_r \langle X_i\rangle_x = r^{\upsilon_i} \langle X_i\rangle_x$.
In other words, the basis $\langle X_1\rangle_x, \ldots,\langle X_n\rangle_x $ is adapted to the gradation of $\mathfrak g_x M$, as defined in Definition~\ref{def_dilation_gradedV}. 
Moreover, each $\langle X_i \rangle \in \Gamma (\sU_{\upsilon_i}(\fg M|_U))$ is a smooth section over the bundle of osculating universal Lie algebras.
More generally, 
$$
\langle\bX\rangle^\alpha := \langle X_1\rangle^{\alpha_1} \ldots \langle X_n\rangle^{\alpha_n}, \quad\alpha=(\alpha_1,\ldots, \alpha_n)\in \bN_0^n,
$$
defines an element of $\Gamma(\sU_{[\alpha]}(\fg M|U))$
where 
$$
[\alpha] := \upsilon_1 \alpha_1 + \ldots + \upsilon_n \alpha_n.
$$
The element of $\Gamma(\sU_{0}(\fg M|U))$ corresponding to  $\alpha=0$ is
$\langle\bX\rangle^\alpha = 1 $.
Any element in $\Gamma(\sU(\fg M|U))$ may be written in a unique way as a finite linear combination over $C^\infty(U)$ of $\langle\bX\rangle^\alpha$. In other words, 
 $\langle\bX\rangle^\alpha$, $\alpha\in \bN_0^n$, is a free basis of the $C^\infty(U)$-module $\Gamma(\sU(\fg M|U))$.
Furthermore, $\Gamma(\sU_{N}(\fg M|U))$ is a submodule with basis $\langle\bX\rangle^\alpha$, $[\alpha]=N$.

\subsubsection{Transition between two adapted frames}
Let $BU(d_1, \dots, d_{s})$ be the group of invertible real block-upper-triangular matrices for which the $j$th diagonal block has size $d_j\times d_j$.

\begin{lemma}\label{old_lem_XY}
Let  $\bX$ and $\bY$ be two adapted frames on an open subset $U\subset M$.

\begin{enumerate}
    \item The transition matrix $T=T^{\bY,\bX}$ (see Section \ref{subsubsec_generality_frame}) is valued in $BU(d_1, \ldots d_{s})$.  
\item 
We  have
$$
\langle \bX\rangle_x=
{\rm diag}(T_x)^{\rm t}\langle \bY\rangle_x,
$$
	where ${\rm{diag}}(T_x)$ is the block-diagonal part of $T_x\in BU(d_1, \ldots d_{s})$.
\item
We may write $\langle \bX \rangle^\alpha $ as:
$$
\langle \bX \rangle^\alpha =\sum_{[\beta]= [\alpha] }\tilde T_{\alpha,\beta}\, \langle \bY \rangle^\beta,
\qquad \tilde T_{\alpha,\beta}\in C^\infty(U).
$$
Then the coefficients  $\tilde T_{\alpha,\beta}$ are determined via 
$$
({\rm diag}(T_x) v)^\alpha =
\sum_{[\beta]= [\alpha]} \tilde T_{\alpha,\beta}(x) v^\beta
\quad x\in U, \ v\in \bR^n,
$$
where $v^\alpha= v_1^{\alpha_1} \ldots v_n^{\alpha_n}$.
\end{enumerate}
\end{lemma}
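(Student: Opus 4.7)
The plan is to prove the three claims in sequence, feeding each into the next. The key structural inputs are the inclusions $H^{w_\ell}\subseteq H^{w_j-1}$ for $\ell<j$ and the fact that, for each $j$, both $(X_1,\ldots,X_{d_1+\cdots+d_j})$ and $(Y_1,\ldots,Y_{d_1+\cdots+d_j})$ are $C^\infty(U)$-frames of $\Gamma(H^{w_j}|_U)$ because $\bX$ and $\bY$ are adapted.

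For part (1), I would fix $i$ in the $j$-th block (so $d_1+\cdots+d_{j-1}<i\le d_1+\cdots+d_j$), note that $X_i\in\Gamma(H^{w_j}|_U)$ by adaptedness, and expand it in the $\bY$-frame of $H^{w_j}|_U$. Linear independence of $Y_1,\ldots,Y_{d_1+\cdots+d_j}$ as a local frame of $H^{w_j}|_U$ (and the fact that $Y_k\notin\Gamma(H^{w_j}|_U)$ for $k>d_1+\cdots+d_j$) forces $T_{k,i}=0$ for such $k$. This is exactly block upper-triangularity with block sizes $d_1,\ldots,d_s$.

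For part (2), I would project the identity $X_i(x)=\sum_k T_{k,i}(x)Y_k(x)$ modulo $H_x^{w_j-1}$, where $j$ is the block of $i$ (so $\upsilon_i=w_j$). Contributions with $k$ in a lower block $\ell<j$ have $Y_k(x)\in H^{w_\ell}_x\subseteq H^{w_j-1}_x$ and therefore vanish; higher-block contributions are absent by part (1). The diagonal-block entries survive and, in this range, $Y_k(x)\bmod H^{w_j-1}_x=\langle Y_k\rangle_x$. Arranging the resulting scalar identities into a single matrix equation yields $\langle\bX\rangle_x={\rm diag}(T_x)^{\rm t}\langle\bY\rangle_x$.

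For part (3), existence and uniqueness of the expansion come from PBW: each $\langle X_i\rangle$ has weight $\upsilon_i$ in $\fg_xM$, so $\langle\bX\rangle^\alpha\in \sU_{[\alpha]}(\fg_xM)$, a finite-dimensional subspace with basis $\{\langle\bY\rangle^\beta:[\beta]=[\alpha]\}$; smoothness of $\tilde T_{\alpha,\beta}$ in $x$ follows from smoothness of the entries of $T$. To obtain the formula, I would substitute the part-(2) expression for each $\langle X_i\rangle$ into $\prod_i\langle X_i\rangle^{\alpha_i}$ and expand; since within a block the vectors $\langle Y_k\rangle$ all share the same weight, formally treating $\langle Y_k\rangle$ as a commuting indeterminate $v_k$ reproduces the polynomial identity $({\rm diag}(T_x)v)^\alpha=\sum_{[\beta]=[\alpha]}\tilde T_{\alpha,\beta}(x)\,v^\beta$.

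The hard part will be controlling the non-commutativity of $\langle Y_k\rangle$'s when reordering to PBW-normal form. Brackets $[\langle Y_k\rangle,\langle Y_\ell\rangle]$ stay in $\sU_{[\alpha]}$ because the grading is preserved by the Lie bracket, so the weight condition $[\beta]=[\alpha]$ is respected, but they contribute to coefficients $\tilde T_{\alpha,\beta}$ with strictly smaller $|\beta|$ that are invisible to the purely commutative computation. The natural way to handle this is to read the commutative identity as giving the leading-degree coefficients (those with $|\beta|=|\alpha|$) and to regard the remaining coefficients as universal polynomial corrections in the entries of $T$ and the structure constants of $\fg_xM$; alternatively one passes to the associated graded symmetric algebra where the two sides coincide tautologically.
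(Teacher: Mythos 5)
Your proofs of parts (1) and (2) match the paper's own argument: expand $X_i$ in the $\bY$-frame using adaptedness to get block upper-triangularity, then quotient by $H^{w_j-1}_x$ to extract the diagonal block.

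For part (3) you have put your finger on a real subtlety that the paper treats cavalierly (its entire proof of (3) is the sentence ``Part (3) is an easy consequence of Part (2)''). In fact the statement of part (3), taken literally, is incorrect: the identity $({\rm diag}(T_x)v)^\alpha = \sum_{[\beta]=[\alpha]}\tilde T_{\alpha,\beta}(x)v^\beta$ is an identity between polynomials of homogeneous degree $|\alpha|$ in $v$, so it forces $\tilde T_{\alpha,\beta}=0$ whenever $[\beta]=[\alpha]$ but $|\beta|<|\alpha|$; these are exactly the indices that acquire nonzero coefficients from the commutator corrections when you reorder $\prod_i\langle X_i\rangle^{\alpha_i}$ into PBW-normal form. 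A Heisenberg example makes this concrete: take $\fg_x M$ of step $2$ with $\langle Y_1\rangle,\langle Y_2\rangle$ of weight $1$, $\langle Y_3\rangle=[\langle Y_1\rangle,\langle Y_2\rangle]$ of weight $2$, and $\langle X_1\rangle=\langle Y_2\rangle$, $\langle X_2\rangle=\langle Y_1\rangle$, $\langle X_3\rangle=\langle Y_3\rangle$. Then for $\alpha=(1,1,0)$ one has $\langle\bX\rangle^\alpha=\langle Y_2\rangle\langle Y_1\rangle=\langle Y_1\rangle\langle Y_2\rangle-\langle Y_3\rangle=\langle\bY\rangle^{(1,1,0)}-\langle\bY\rangle^{(0,0,1)}$, so $\tilde T_{(1,1,0),(0,0,1)}=-1$, while the commutative formula gives $0$ there. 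Your proposed repair --- reading the commutative identity as determining only the top-length coefficients $|\beta|=|\alpha|$, or equivalently working in the associated graded symmetric algebra of $\sU(\fg_x M)$ --- is the correct interpretation and is what is actually needed downstream (Lemma \ref{lem_DOXY} Part (2) uses only the identity in $\sU_N(\fg_x M)$, not the explicit commutative formula for the coefficients). In short, your caution is well placed; the paper's one-line ``proof'' of (3) hides precisely the gap you flagged.
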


\begin{proof}[Proof of Lemma~\ref{old_lem_XY}]
For each $i=1, \ldots, n$, 
let $j\in \{1, \ldots, s\}$ be the level of the gradation for which $d_{j-1}<i\leq d_j$, i.e. for which $w_j=\upsilon_i$, then 
	\begin{align}\label{ironman}
	X_i(x)=\sum _{1\leq k\leq d_{j}}T_{k,i}(x)Y_k(x), 
	\quad\mbox{or in other words}\ 
	X_i(x)=\sum _{\upsilon_k \leq \upsilon_i}T_{k,i}(x)Y_k(x).
	\end{align}
 We deduce the structure of the matrix $T$, and Part (1) follows.
 
	Quotienting~\eqref{ironman} by $H^{w_{j-1}}$  gives 
$$
	\langle X_i\rangle_x=\sum_{d_{j-1}<k\leq d_j }T_{k,i}(x)\langle Y_k\rangle_x
	= \sum_{\upsilon_k = \upsilon_i}T_{k,i}(x)\langle Y_k\rangle_x
	\quad \mbox{in} \ \mathfrak{g}M|_U,
	$$
	so 
$	\langle \bX\rangle_x={\rm diag}(T_x)^{\rm t}\langle \bY\rangle_x.$
This is Part (2). 
Part (3) is an easy consequence of Part (2).
\end{proof}

\subsection{Geometric exponentiation}
\label{subsec_expX}
 In this section, we recall the definition of geometric exponentiation for any local frame on any smooth manifold $M$ as well as some of its properties. This notion does not require the manifold $M$ to be filtered or the local frame to be adapted to the filtration, and the results hold in this general context.

\subsubsection{Definitions of $\exp$ and $\exp^\bX$}
The exponential map is defined as follows. 
If $X$ is a vector field on a smooth manifold $M$ and $x\in M$, then $\exp_x X$ is the time-one flow, when defined, of the vector field $X$, that is, the solution  at time 1 of the Ordinary Differential Equation (ODE) defined by $X$ and starting from $x$: 
$$
\dot \gamma(t) =X(\gamma(t)), \quad \gamma(0)=x,
\qquad \exp_x X:=\gamma(1).
$$
Given a local  frame  $(\mathbb X, U)$ on a manifold, 
we set
$$
\exp^\bX( x,v):=\exp^\bX_x(v):= \exp_x \sum_j v_j X_j, 
\qquad x\in U, \ v\in \bR^n,
$$
when defined. 
This  yields a {\it geometric exponential map}
$\exp^\bX$ associated with the frame $(\bX,U)$. It is defined on a subset of $ U\times\mathbb{R}^n$.
In fact, 
there exists an open neighbourhood   ${U}_{\mathbb X} \subset U \times \mathbb{R}^n$ of the zero section $ U\times \{0\}$  on which the map
\begin{equation}
\label{eq_mapxexpxv}
\exp^{\mathbb X}:\left\{
\begin{array}{rcl}
		{U}_{\mathbb X} & \longrightarrow &	M\times M\\
		 (x,v) &\longmapsto&  \left(x, \exp^\bX_x(v)\right)\end{array}\right.
\end{equation}
is a $C^\infty$-diffeomorphism onto its image in $U\times U$. 

Without
 restriction, we may assume that $U_\bX$ is such that for any $(x,v)\in U_\bX$, there exists a neighbourhood $I$ of $[0,1]$ such that  $(x,tv)\in U_\bX$ 
   for $t\in I$. 
We will always assume so for technical reasons that will become apparent below. 

\subsubsection{Some fundamental properties}
The theory of ODEs has the following two  implications. 
Firstly,  
for any $x,y\in M$ such that $\ln^\bX_x y$ and $\ln^\bX_y x$ exist, we have 
\begin{equation}
    \label{eq_ODEln}
    \ln^\bX_xy = - \ln^\bX_y x,
\end{equation}
and also that for any $x\in U$, we have:
$$
\partial_{v_j} \exp_x^\bX v\Big|_{v=0} = X_j, \quad j=1,\ldots,n.
$$
In particular $\jac_0 \exp_x^\bX =1 = \jac_x \ln_x^\bX$ when we take the volume form obtained by the pullback of $dv$ from $\bR^n$ to $U\subset M$.

Secondly,
the definition of the geometric exponential implies 
\begin{equation}
\label{eq_ddtfexpXxtv}
\frac{d}{dt} f(\exp_x^\bX tv)
= 
\bigl(\sum_i v_i  X_i f\bigr) (\exp_x^\bX tv),
\qquad 
f\in C^\infty(U), \  (x,v)\in U_\bX, 
\end{equation}
for any $t$ in a neighbourhood of $[0,1]$.

\subsubsection{Some Taylor estimates}
Using inductively the derivation in \eqref{eq_ddtfexpXxtv} will allow us to obtain the following (Euclidean) Taylor estimates: 

\begin{lemma}
\label{lem_Taylorexpxu}
Let $\bX$ be a frame on an open set $U\subset M$.
Let $f\in C^\infty (U)$ and  $N\in \bN$. For any  $(x,v)\in U_\bX$, we have:
$$
\Bigl|f(\exp^\bX_x v)  -\sum_{k\leq N} \frac 1{k!} \bigl(\sum_i v_i  X_i\bigr)^k f(x) \Bigr| \leq 
\frac 1{(N+1)!} \sup_{t\in [0,1] } \Bigl| 
\Big(\big(\sum_i v_i  X_i\big  )^{N+1} f\Big) 
(\exp^\bX_{x} t v)\Bigr|.
$$
\end{lemma}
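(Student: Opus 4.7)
The plan is to reduce the statement to the classical one-dimensional Taylor estimate with Lagrange remainder applied to the real-valued function
$$
g(t) := f(\exp^\bX_x tv),
$$
defined on a neighbourhood $I$ of $[0,1]$ as furnished by the standing assumption on $U_\bX$. Writing $Y := \sum_i v_i X_i$ for the vector field on $U$ encoded by $v$, the key identity is that for each integer $k \geq 0$
$$
g^{(k)}(t) = (Y^k f)(\exp_x^\bX tv), \qquad t \in I.
$$
This will be the main computation, and once it is established the lemma will follow immediately.

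To prove the identity I will proceed by induction on $k$. The case $k=0$ is the definition of $g$, and the case $k=1$ is precisely \eqref{eq_ddtfexpXxtv} applied to $f$. For the inductive step, suppose the formula holds for some $k \geq 1$. The function $h := Y^k f$ lies in $C^\infty(U)$ since $\bX$ is a smooth frame on $U$, so \eqref{eq_ddtfexpXxtv} may be reapplied with $f$ replaced by $h$, giving
$$
\frac{d}{dt}\bigl(h(\exp_x^\bX tv)\bigr) = (Y h)(\exp_x^\bX tv) = (Y^{k+1} f)(\exp_x^\bX tv),
$$
which together with the inductive hypothesis yields the claim for $k+1$.

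With the derivative identity in hand, the classical Taylor theorem on the real line, applied to $g$ on $I \supset [0,1]$, supplies some $\xi \in (0,1)$ with
$$
g(1) = \sum_{k=0}^N \frac{g^{(k)}(0)}{k!} + \frac{g^{(N+1)}(\xi)}{(N+1)!}.
$$
Substituting $g^{(k)}(0) = (Y^k f)(x)$ for $0 \leq k \leq N$ and bounding the remainder by the supremum of $|g^{(N+1)}|$ on $[0,1]$ delivers the claimed inequality.

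The only nontrivial point is the inductive derivative formula; everything else is formal. That step is not really an obstacle once one notices that \eqref{eq_ddtfexpXxtv} holds for any smooth function on $U$, so it may be iterated against the $C^\infty$ functions $Y^k f$; the choice of $U_\bX$ ensures the curve $t \mapsto \exp^\bX_x tv$ remains in $U$ for $t$ in a neighbourhood of $[0,1]$, so no issue of domain arises.
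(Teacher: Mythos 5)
Your proof is correct and takes the same route as the paper: define $F(t) = f(\exp_x^\bX tv)$, establish $F^{(k)}(t) = (Y^k f)(\exp_x^\bX tv)$ by iterating \eqref{eq_ddtfexpXxtv}, and apply the one-dimensional Taylor estimate on $[0,1]$. Your write-up merely makes the induction step explicit where the paper leaves it implicit.
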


\begin{proof}[Proof of Lemma \ref{lem_Taylorexpxu}]
Set $F(t) : = f(\exp_x^\bX tv)$ for $t$ in a neighbourhood of $[0,1]$.
We have
$$
F^{(k)}(t) 
= \Bigl(\bigl(\sum_i  v_i  X_i\bigr)^k f\Bigr) (\exp_x^\bX tv), 
$$
for $k=1$ by \eqref{eq_ddtfexpXxtv}, and inductively for any $k\in \bN$. 
We conclude with 
Taylor's estimate for $F$ at order $N$ from $t=0$ to $t=1$.
\end{proof}

Using again \eqref{eq_ddtfexpXxtv}, we can also obtain   Taylor expansions related to  changes of frames:
\begin{lemma}
\label{lem_uYX}
Let  $\bX$ and $\bY$ be two  frames on an open subset $U\subset M$.
Let $U_{\bX,\bY}$ be an open neighbourhood of $U\times \{0\}$ in $U_{\bX}\cap U_{\bY}\subset U\times \bR^n$ 
such that for any $(x,v)\in U_{\bX,\bY}$, the points $(x,tv)$   lie in $U_{\bX,\bY}$ for $t$ in some neighbourhood of $[0,1]$. 
Consider 
 the function $u := u^{\bY,\bX}$ defined via
$$
(x,u(x,v)) = \ln^\bY (\exp^\bX (x,v)), 
\qquad 
(x,v)\in U_{\bX,\bY}.
$$
Then the map $u = u^{\bY,\bX}:U_{\bX,\bY}\to \bR^n$ is smooth and given via
\begin{equation}\label{def:uxvk}
 u(x,v):= u_x(v) = \ln^\bY_x\circ \exp^\bX_x(v), 
 \qquad (x,v) \in U_{\bX,\bY}.
\end{equation}
It satisfies 
$$
u_x(0)=0
\quad\mbox{and}\quad 
D_0u_x(v)=\frac{d}{dt}\bigg|_{t=0}u_x(tv)=T_xv, 
$$
where $T$ is their change of frame matrix  as in Section \ref{subsubsec_generality_frame}.
Moreover, the Taylor series at $v\sim 0$ of $u_x(v)$ is given by 
$$
 u_x(v)\sim  T_xv + \sum_{k\geq 2} \frac{1}{k!}\Big (\sum_j v_j X_j\Big )^{k-1}T_xv.
  $$
\end{lemma}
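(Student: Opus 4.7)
The plan is to verify in sequence the four assertions of the lemma: smoothness together with the explicit formula, $u_x(0)=0$, the linearisation $D_0u_x=T_x$, and finally the Taylor expansion. The first three are immediate from the definitions, while the Taylor expansion is the technical step and will be reduced to Lemma~\ref{lem_Taylorexpxu}.

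Smoothness of $u = \ln^\bY \circ \exp^\bX$ (restricted to the appropriate open neighborhood $U_{\bX,\bY}$) follows because $\exp^\bX$ is a smooth diffeomorphism onto its image by~\eqref{eq_mapxexpxv} and $\ln^\bY=(\exp^\bY)^{-1}$ is likewise smooth on its domain; the explicit formula $u(x,v) = \ln^\bY_x\circ \exp^\bX_x(v)$ is just an unpacking of the definition. For $u_x(0)=0$, use $\exp^\bX_x(0)=x$ (the constant integral curve of the zero vector field) and $\ln^\bY_x(x)=0$. For $D_0u_x$, apply the chain rule: $D_0\exp^\bX_x(v) = \sum_i v_i X_i(x)$ by the ODE characterisation \eqref{eq_ddtfexpXxtv}, while $D_x\ln^\bY_x = (D_0\exp^\bY_x)^{-1}$ reads off the coordinates in the basis $(Y_k(x))_k$, so the composition sends $v$ to the coordinates of $\sum_i v_i X_i(x)$ in $(Y_k(x))_k$, which is exactly $T_xv$ by the definition of the transition matrix recalled in Section~\ref{subsubsec_generality_frame}.

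For the Taylor expansion, the natural strategy is to apply Lemma~\ref{lem_Taylorexpxu} componentwise to the smooth $\bR^n$-valued function $f(y):=\ln^\bY_x(y)$ with the base point $x$ fixed. This produces the formal asymptotic
$$
u_x(v) = f(\exp^\bX_x v) \sim \sum_{k\geq 0}\tfrac{1}{k!}\,V^k f(x),\qquad V:=\sum_j v_jX_j,
$$
viewed as a vector field on $M$. The $k=0$ term is $\ln^\bY_x(x)=0$, the $k=1$ term is $Vf(x) = D_x\ln^\bY_x(V(x)) = T_xv$ from the linearisation above, and for $k\geq 2$ we peel off one factor of $V$, writing $V^k f(x) = V^{k-1}(Vf)(x)$. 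Here $Vf(y) = D_y\ln^\bY_x(V(y))$ is an $\bR^n$-valued function of $y$ whose value at $y=x$ coincides with $T_xv$, matching $y\mapsto T_yv$ at the base point.

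The main obstacle is precisely this final identification: replacing the fixed-base-point function $y\mapsto D_y\ln^\bY_x(V(y))$ by the moving-base-point function $y\mapsto T_yv$ under iterated $V$-differentiation at $x$. I would carry this out by induction on $k$, using the flow property of $\exp^\bX$ together with the ODE $\dot F(t) = D_{\gamma(t)}\ln^\bY_x(V(\gamma(t)))$ for $F(t):=u_x(tv)$ along $\gamma(t)=\exp^\bX_x(tv)$, comparing the successive derivatives $F^{(k)}(0)$ order by order with the Taylor expansion of $t\mapsto T_{\gamma(t)}v$ provided by Lemma~\ref{lem_Taylorexpxu} applied to the smooth matrix-valued function $y\mapsto T_y$, and peeling off one copy of $V$ at each step. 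Rigorous tracking of the remainders in Lemma~\ref{lem_Taylorexpxu} then converts this formal manipulation into the claimed asymptotic expansion.
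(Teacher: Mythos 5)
Your first three claims (smoothness, $u_x(0)=0$, $D_0u_x=T_x$) are fine and proved the same way the paper tacitly proves them. For the Taylor series you have also correctly isolated the crux: whether iterated $V$-derivatives of $Vf$, with $f=\ln^\bY_x$, can be identified with iterated $V$-derivatives of $y\mapsto T_yv$ at $y=x$. But the induction you sketch to close this gap — matching $F^{(k)}(0)=V^kf(x)$ with the $(k-1)$-th Taylor coefficient of $t\mapsto T_{\gamma(t)}v$ — is exactly the assertion $V^kf(x)=V^{k-1}_z(T_zv)|_{z=x}$, i.e.\ the paper's \eqref{eq_ddtkuxtv} at $t=0$. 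This holds for $k=1,2$ but fails for $k\geq3$ whenever $\bY$ is not a commuting coordinate frame. What would drive the induction is $\dot u_x(tv)=T_{\gamma(t)}v$ for all small $t$, but $D_y\ln^\bY_x$ inverts the $\bY$-frame at $y$ only at $y=x$ and in the radial direction of the $\bY$-chart: the identity $D_w\exp^\bY_x(w)(e_k)=Y_k(\exp^\bY_xw)$, which is also the chain rule the paper's displayed proof of \eqref{eq_ddtkuxtv} invokes, fails off $w=0$. Consequently $\dot u_x(tv)-T_{\gamma(t)}v=O(t^2)$ but is not identically zero, and the induction cannot pass $k=2$.

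A concrete test: take $M=\bR^2$, $Y_1=\partial_1$, $Y_2=y_1\partial_1+\partial_2$, $X_1=Y_1$, $X_2=y_2Y_1+Y_2$, so that $T_y=\begin{pmatrix}1&y_2\\0&1\end{pmatrix}$. At $x=0$, $v=(0,v_2)$, one computes $u_0(v)_1=v_2-v_2^2/(e^{v_2}-1)=v_2^2/2-v_2^3/12+O(v_2^4)$, whereas $(T_zv)_1=z_2v_2$ gives $V_z(T_zv)_1=v_2^2$, a constant, so $V^2_z(T_zv)_1=0$ and the displayed series predicts $v_2^2/2+0\cdot v_2^3+\cdots$. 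The cubic coefficients disagree. So your strategy cannot be completed as written, and the displayed formula, under the natural reading of $\bigl(\sum_jv_jX_j\bigr)^{k-1}T_xv$ as $V$-differentiation of $z\mapsto T_zv$ followed by evaluation at $z=x$, does not hold. What the downstream use in Proposition~\ref{prop_T_Taylor} actually needs is only that $u_x(v)-T_xv$ is of higher order, and that should be argued differently from the coefficient-by-coefficient identification you (and the paper's \eqref{eq_ddtkuxtv}) propose.
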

\begin{proof}[Proof of Lemma \ref{lem_uYX}]
We check readily that the map $u$ is smooth, 
given by \eqref{def:uxvk} and that it satisfies $u_x(0)=0$. 
Below, we will show that for any $(x,v)\in U_{\bX,\bY}$ and $t$ in a neighbourhood of $[0,1]$, we have:
\begin{equation}
	\label{eq_ddtkuxtv}
		\frac{d^k}{dt^k}u_x(tv)
    =
    \left(\sum_{j=1}^n v_j X_{j, z}\right)^{k-1}T_zv \bigg|_{z=\exp^{\bX}_x(tv)} \qquad  k=1,2,3 \ldots
\end{equation}
From this, we  can obtain the Taylor series of $u_x(tv)$ at $t\sim 0$, and deduce the Taylor series of $u_x(v)$ at $v\sim 0$; hence, showing \eqref{eq_ddtkuxtv} will  conclude the proof. 

On the one hand, we have the equality in \eqref{eq_ddtfexpXxtv}	
	and on the other, since $\exp_x^\bX tv = \exp_x^\bY u(x,tv)$, the left-hand side in \eqref{eq_ddtfexpXxtv} is also equal to 
	$$
	\frac{d}{dt} f(\exp_x^\bY u(x,tv)) =
	\left(\sum_k \left(\frac{d}{dt} u(x,tv)\right)_k  Y_k \right) f(\exp_x^\bY u(x,tv)).
	$$
	We have  $\sum_j v_j X_j =\sum_k (Tv)_k Y_k$ in \eqref{eq_ddtfexpXxtv}, and 
	these equalities are valid for any $f\in  C^\infty(U)$. It allows us to identify the first $t$-derivative of $u_x(tv)$ and we obtain  \eqref{eq_ddtkuxtv} for $k=1$.
We then obtain \eqref{eq_ddtkuxtv} 
inductively on $k=1,2,\ldots$
\end{proof}
 
\subsection{Homogeneous order and  principal part of a differential operator}
\label{subsec:diff_op}
A differential
operator on a manifold $M$ is a uniformly
 finite linear combination over $C^\infty(M)$ of products of vector fields and the identity.
If $\bX = (X_1,\ldots,X_n)$ is a frame over an open subset $U$ of a manifold~$M$, then any differential operator $P$ on $U$ may be written as a  non-commutative polynomial in the $X_j$'s, 
that is, as a linear combination over $C^\infty (U)$ of $X_{i_1} X_{i_2}\ldots X_{i_\ell}$, $1\leq i_1,\ldots,i_\ell \leq n$. 
This writing is not unique.  
We say that $P$ is of degree at most $d$, if $P$ coincides on $U$ with a linear combination of  $X_{i_1} X_{i_2}\ldots X_{i_\ell}$
with $1\leq i_1,\ldots,i_\ell \leq n$ and $\ell\leq d$.
The smallest of these $d$ over all the local frames $(\bX,U)$ of $M$ defines   the degree of $P$.

\smallskip 

We now go back to the case of a filtered manifold $M$ and an adapted frame $(\bX,U)$. 
    We can express the differential operators 
    in terms of the ordered products
$$
\bX^\alpha := X_1^{\alpha_1} \ldots X_n^{\alpha_n},\qquad
\alpha=(\alpha_1,\ldots,\alpha_n)\in \bN_0^n,
$$
with the convention $\bX^0 =\id$ for $\alpha=0$ as in the preceding section. Moreover, this writing will have the advantage of being unique:
    
\begin{lemma}
\label{lem_PulcXalpha}
Let $\bX$ be an adapted frame on the open subset $U\subset M$.
Then any   differential operator on $M$
 may be uniquely  written on $U$ as 
  a finite linear combination over $C^\infty (U)$ of $\bX^\alpha$, $\alpha\in \bN_0^n$, on $U$.
\end{lemma}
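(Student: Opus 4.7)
The proof splits into existence and uniqueness.

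For existence, I would first reduce to handling a single product of frame vector fields. Any differential operator on $M$ is, by definition, a uniformly finite $C^\infty(M)$-linear combination of products $V_1\cdots V_\ell$ of vector fields and the identity. Restricting to $U$, each vector field $V_i$ is a $C^\infty(U)$-linear combination of $X_1,\ldots, X_n$ since $\bX$ is a frame on $U$. Expanding $V_1\cdots V_\ell$ by $C^\infty(U)$-bilinearity of composition (Leibniz) reduces the task to writing each product $X_{j_1}\cdots X_{j_\ell}$ with $j_i\in\{1,\ldots,n\}$ as a $C^\infty(U)$-linear combination of the $\bX^\alpha$. I would then reorder indices by induction on the product length: the identity $X_i X_j = X_j X_i + [X_i,X_j]$ together with the fact that the commutator $[X_i,X_j]$ is again a $C^\infty(U)$-linear combination of $X_1,\ldots,X_n$ lets each adjacent swap trade an unordered product for an ordered one plus a product of strictly smaller length, handled by the induction hypothesis. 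The outcome is a finite combination of the $\bX^\alpha$ as desired.

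For uniqueness, suppose $P = \sum_\alpha c_\alpha \bX^\alpha = 0$ on $U$ for some $c_\alpha\in C^\infty(U)$, finitely many non-zero. Assume for contradiction that some $c_\alpha$ is not identically zero, pick $x\in U$ where this happens, and set $N := \max\{|\alpha| : c_\alpha(x)\neq 0\}$. The classical principal symbol (in the Riemannian/H\"ormander sense, forgetting the filtration) of $X_i$ at $x$ is the linear function $\xi_i$ on $T_x^* M$ in the coordinates dual to $\bX_x$, so the principal symbol of $\bX^\alpha$ of order $|\alpha|$ is the monomial $\xi^\alpha$. Extracting the part of the principal symbol of $P$ at $x$ of homogeneous degree $N$ yields the polynomial $\sum_{|\alpha|=N} c_\alpha(x)\,\xi^\alpha$, which must vanish identically in $\xi$ since $P\equiv 0$. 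Linear independence of distinct monomials forces $c_\alpha(x)=0$ for every $|\alpha|=N$, contradicting the choice of $N$. Hence all $c_\alpha\equiv 0$.

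The bulk of the work is the existence half, which is routine: nothing about the filtration or the adaptedness of $\bX$ plays a role beyond $\bX$ being a frame and commutators of vector fields being vector fields. The only subtle point is verifying that the principal symbol used in the uniqueness step is well-defined on an abstract differential operator (independent of the chosen expression), which is standard. No step looks like a genuine obstacle.
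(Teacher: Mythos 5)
Your existence argument coincides with the paper's (reorder by commutators, induct on length). The uniqueness argument, however, is genuinely different: the paper constructs, for each $\alpha$ and base point $x_0$, explicit test functions $f_{\alpha,x_0}$ built from the components of $\ln_{x_0}^{\bX}$, with the normalization $\bX^\beta f_{\alpha,x_0}(x_0)=\delta_{\alpha\beta}$ imposed for $\beta$ with $[\beta]\le[\alpha]$ -- note the \emph{weighted} degree $[\cdot]$, which is the natural filtered invariant -- and then isolates coefficients by evaluating on these functions. You instead forget the filtration entirely and use the classical H\"ormander principal symbol; this is simpler and, as you observe, correctly shows that adaptedness of $\bX$ is irrelevant for the statement as written. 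Your pointwise choice $N=\max\{|\alpha|:c_\alpha(x)\neq 0\}$ does work, but the phrase ``principal symbol of order $N$'' deserves a word of caution, since the representation may have terms of order $>N$ away from $x$: the clean justification is to compute the degree-$N$ homogeneous part of the \emph{full} symbol of $\sum_\alpha c_\alpha\bX^\alpha$ in fixed local coordinates near $x$; the contributions from $|\alpha|>N$ are multiplied by $c_\alpha(x)=0$, those from $|\alpha|<N$ are absent for degree reasons, and only $\sum_{|\alpha|=N}c_\alpha(x)\,\eta^\alpha$ survives (with $\eta$ the frame-dual coordinates), so it vanishes and you conclude. The paper's route is more elaborate but produces test functions calibrated to the weighted degree $[\alpha]$, which aligns with the subsequent definition of homogeneous order and principal part; your route is the faster proof of the lemma in isolation.
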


\begin{proof}[Proof of Lemma \ref{lem_PulcXalpha}]
We observe that 
given $i_1,\ldots,i_\ell\in \{1,\ldots,n\}$, 
we can define the associated multi-index
$\alpha =(\alpha_1,\ldots,\alpha_n)$ 
with $\alpha_k = |\{j : i_j = k\}|$
and 
we have
$$
X_{i_1} X_{i_2}\ldots X_{i_\ell}
= 
\bX^\alpha + 
\sum_{|\beta|<|\alpha|}
c_\beta \bX^\beta, \qquad c_\beta\in C^\infty(U),
$$    
This is trivially true for the length $\ell=1$.
For the length $\ell=2$, 
this is a consequence of the frame $\bX$ being adapted to the filtration since 
    $X_i X_j = X_j X_i + [X_i,X_j]$
    and $[X_i,X_j] \in \Gamma( H^{\upsilon_i +\upsilon_j}|_U)$
    may be written as a linear combination of $X_k$, $k\leq   \dim H^{\upsilon_i +\upsilon_j}$.
    We then obtain the property inductively over the length $\ell$. 
This implies that  any differential operator 
may be written as 
  a finite linear combination over $C^\infty (U)$ of $\bX^\alpha$, $\alpha\in \bN_0^n$.
  
 The uniqueness of the writing  is equivalent to  the property that if such a finite linear combination coincides with the zero operator, then all the coefficients are equal to zero. For this, it suffices to construct for each $x_0\in U$ and each $\alpha\in \bN_0^n$ a smooth function $f_{\alpha,x_0}$ in a neighbourhood of $x_0$ satisfying
  $$
  \bX^\beta f_{\alpha,x_0} (x_0)=
  \left\{\begin{array}{ll}
  0 & \mbox{if} \ [\beta]\leq [\alpha] \  \mbox{with}\ \beta\neq \alpha,\\
  1 & \mbox{if} \ \beta = \alpha.
  \end{array}\right.
  $$
  The natural choices are 
 $f_{0,x_0}=1$ for $\alpha=0$ and 
 $$
f_{j_0,x_0} := \left[\ln_{x_0}^\bX x\right]_{j_0}
\quad\mbox{for}\ |\alpha|=1.
$$
We check inductively on $[\alpha]$ that the functions
$$
f_{\alpha,x_0}
:= f_{1,x_0}^{\alpha_1}\ldots f_{n,x_0}^{\alpha_n},
\quad \alpha=(\alpha_1,\ldots,\alpha_n)\in \bN_0,
$$
satisfy the properties described above.
\end{proof}

Lemma \ref{lem_PulcXalpha}
 allows us to define the notion of {\it homogeneous order at most $N$} for differential operator ($N\in\bN$).
 
\begin{definition}
\label{def_homorder}
    A differential operator on $M$ is of {\it homogeneous order at most $N$} when 
    it can be written as a linear combination over $C^\infty(U)$ of $\bX^\alpha$, $[\alpha]\leq N$, with respect to any adapted frame $(\bX,U)$.
\end{definition}

The following statement shows that 
if a differential operator is written 
as a $C^\infty(U)$-linear combination of $\bX^\alpha$, $[\alpha]\leq N$, with respect to one  frame $(\bX,U)$, then it will be so for any frame $\bY$ on $U$.

\begin{lemma}
\label{lem_DOXY}
Let $\bX$ and $\bY$ be two adapted frames on the same  open subset $U\subset M$.
\begin{enumerate}
    \item 
For any non-zero multi-index  $\alpha\in \bN_0^n$, 
we have for any $x\in U$
$$
\bX^\alpha_x 
=\sum_{[\beta]=[\alpha]}
\tilde T_{\alpha,\beta} \bY^\beta
+
\sum_{[\beta]<[\alpha]} r_\beta(x) \bY^\beta,
$$
where the coefficients $\tilde T_{\alpha,\beta}$, $[\beta]=N$, were defined in Lemma \ref{old_lem_XY}, 
and $r_\beta\in C^\infty(U)$ are some functions also depending on $\alpha$.
\item Consider a differential operator on $P$ written as $P=\sum_{[\alpha]\leq N} c_\alpha ^\bX (x)\bX^\alpha$ with respect to the frame $\bX$.
Then $P$ is written as $P=\sum_{[\beta]\leq N} c_\beta^\bY(x) \bY^\beta$ with respect to the frame $\bY$ for some unique coefficients $c_\beta^\bY$, and we have above any $x\in U$
 $$
 \sum_{[\alpha]=N} c_\alpha^\bX(x) \langle \bX\rangle_x^\alpha = 
 \sum_{[\beta]=N} c_\beta^\bY(x) \langle \bY\rangle_x^\alpha \quad \mbox{in}\ 
  \sU_{N}(\fg_x M).
 $$
\end{enumerate}
\end{lemma}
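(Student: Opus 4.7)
My plan is to prove Part~(1) by induction on $|\alpha|$ and to deduce Part~(2) from it by linearity. The base case $\alpha=e_i$ is immediate: the change-of-frame relation $X_i=\sum_k T_{k,i}Y_k$ together with $T\in BU(d_1,\ldots,d_s)$ forces $T_{k,i}=0$ for $\upsilon_k>\upsilon_i$, so $X_i=\sum_{\upsilon_k=\upsilon_i}T_{k,i}Y_k+\sum_{\upsilon_k<\upsilon_i}T_{k,i}Y_k$; the leading sum has weight $\upsilon_i=[\alpha]$ with coefficients matching the $\tilde T_{e_i,e_k}$ from Lemma~\ref{old_lem_XY}(3), while the second sum lies in the lower-weight remainder.

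For the inductive step I would factor $\bX^\alpha=\bX^{\alpha-e_j}X_j$, taking $j$ to be the largest index with $\alpha_j>0$ so that the ordering is preserved. Applying the induction hypothesis to $\bX^{\alpha-e_j}$ and substituting $X_j=\sum_{\upsilon_k\leq\upsilon_j}T_{k,j}Y_k$, the expression becomes a combination of products $\bY^\gamma\cdot Y_k$, which I reorder to PBW form using the commutation relations $[Y_l,Y_k]=\sum_m c_{l,k,m}Y_m$. Since $\bY$ is adapted, $c_{l,k,m}=0$ unless $\upsilon_m\leq\upsilon_l+\upsilon_k$, so a weight-and-length bookkeeping shows that each reordering produces terms $\bY^{\beta'}$ with $[\beta']\leq[\gamma]+\upsilon_k$, and hence every term in the final expansion satisfies $[\beta]\leq[\alpha]$.

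The main obstacle is to identify the leading ($[\beta]=[\alpha]$) coefficients with the $\tilde T_{\alpha,\beta}$ of Lemma~\ref{old_lem_XY}(3), since weight-preserving commutators contribute nontrivially and one cannot read these coefficients off naively. To handle this cleanly I would introduce, for a fixed adapted frame $\bY$, the symbol map $\mathrm{symb}^\bY$ sending $P=\sum_{[\beta]\leq N}c_\beta\bY^\beta$ to $\sum_{[\beta]=N}c_\beta\langle\bY\rangle^\beta\in\Gamma(\sU_N(\fg M|_U))$. The key property to verify is multiplicativity modulo strictly lower weight: $\mathrm{symb}^\bY(AB)=\mathrm{symb}^\bY(A)\,\mathrm{symb}^\bY(B)$ in $\sU_{m+n}(\fg M|_U)$ for $A,B$ of homogeneous orders $m,n$. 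This in turn reduces to two observations: commutations of $\bY^\gamma$ past coefficient functions via the Leibniz rule strictly lower the weight, and the PBW reordering of $\bY^\beta\bY^\gamma$ (with $[\beta]=m$, $[\gamma]=n$) reproduces exactly the product $\langle\bY\rangle^\beta\langle\bY\rangle^\gamma$ in the universal enveloping algebra of the graded Lie algebra $\fg M|_U$. Granted multiplicativity, iterating gives $\mathrm{symb}^\bY(\bX^\alpha)=\prod_i\langle X_i\rangle^{\alpha_i}=\langle\bX\rangle^\alpha$. Comparing with the expansion $\langle\bX\rangle^\alpha=\sum_{[\beta]=[\alpha]}\tilde T_{\alpha,\beta}\langle\bY\rangle^\beta$ from Lemma~\ref{old_lem_XY}(3), and using that $\{\langle\bY\rangle^\beta:[\beta]=[\alpha]\}$ is a PBW basis of $\sU_{[\alpha]}(\fg_xM)$, pins down the leading coefficients as $\tilde T_{\alpha,\beta}$.

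For Part~(2), applying Part~(1) to each $\bX^\alpha$ in the expansion $P=\sum_{[\alpha]\leq N}c_\alpha^\bX\bX^\alpha$ produces a representation in the $\bY$-frame, and the uniqueness from Lemma~\ref{lem_PulcXalpha} guarantees that the resulting coefficients $c_\beta^\bY$ are well-defined. Terms of weight exactly $N$ receive contributions only from those $\bX^\alpha$ with $[\alpha]=N$, so summing and re-applying Lemma~\ref{old_lem_XY}(3) yields $\sum_{[\beta]=N}c_\beta^\bY\langle\bY\rangle^\beta=\sum_{[\alpha]=N}c_\alpha^\bX\sum_{[\beta]=N}\tilde T_{\alpha,\beta}\langle\bY\rangle^\beta=\sum_{[\alpha]=N}c_\alpha^\bX\langle\bX\rangle^\alpha$, which is the desired identity in $\sU_N(\fg_xM)$.
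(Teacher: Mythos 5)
Your proposal is correct and takes essentially the same route as the paper's proof, which expands $\bX^\alpha=\bigl[T^{\rm t}\bY\bigr]_1^{\alpha_1}\cdots\bigl[T^{\rm t}\bY\bigr]_n^{\alpha_n}$, invokes the Leibniz rule, and then appeals to Lemma~\ref{old_lem_XY}(3) to identify the top-weight coefficients. The induction on $|\alpha|$ and the auxiliary symbol map $\mathrm{symb}^\bY$ with its multiplicativity modulo lower weight are a careful elaboration of the Leibniz-plus-reordering bookkeeping that the paper's terse proof leaves implicit; the central observation in both arguments is identical, namely that weight-preserving commutators of the $Y_k$'s reproduce the Lie bracket of $\fg_x M$ while all other terms drop weight.
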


\begin{proof}
Let $T$ be the transition matrix as in Lemma \ref{old_lem_XY}. We have $\bX=T^{{\rm t}}\bY$, so
$$
\bX^\alpha 
= 
[T^{{\rm t}}\bY]_1^{\alpha_1}
\ldots
[T^{{\rm t}}\bY]_n^{\alpha_n}.
$$
Part (1) follows by the Leibniz property of vector fields and Lemma \ref{old_lem_XY} (3). Part (2) follows by linearity.
\end{proof}

We denote by $\DO(M)$
    the space of differential operators on $M$, 
    and by $\DO^{\leq N}(M)$ the subspace of differential operators of homogeneous order at most $N$.
    Lemma \ref{lem_PulcXalpha} and \ref{lem_DOXY} show that
    if $(\bX,U)$ is an adapted frame, then the set of monomials $\bX^\alpha$, $\alpha\in \bN_0^n$, is a  basis of 
    the $C^\infty(U)$-module   
$\DO(U)$ while the set of monomials $\bX^\alpha$, $[\alpha]\leq N$, is a basis of 
    the submodule   
$\DO^{\leq N}(U)$.
\smallskip 

Let us open a brief parenthesis for the case of $H_1$ generating $TM$ as in  Example \ref{ex:subrie}:

\begin{corollary} \label{corlem_PulcXalphasubRie}
Assume that $H_1$ generates $TM$.
Let $\bX$ be an adapted frame on the open subset $U\subset M$.
Then $\DO(U)$ is the $C^\infty(U)$-module generated by 
$$
X_{i_1}\ldots X_{i_\ell}, \qquad   1\leq i_1,\ldots,i_\ell\leq d_1=\dim H_1, 
\quad \ell\in \bN_0.
$$
Moreover, $\DO^{\leq N}(U)$ is the $C^\infty(U)$-module generated by $X_{i_1}\ldots X_{i_\ell}$,  $1\leq i_1,\ldots,i_\ell\leq d_1$, $\ell=0,\ldots,N$.
\end{corollary}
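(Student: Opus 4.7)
The plan is to prove the statement about $\DO^{\leq N}(U)$; the claim for $\DO(U)$ then follows by taking the union over $N \in \bN_0$. Let $\cM_N$ denote the $C^\infty(U)$-module generated by the products $X_{i_1}\cdots X_{i_\ell}$ with $\ell \leq N$ and $1 \leq i_j \leq d_1$; the goal is to show $\cM_N = \DO^{\leq N}(U)$.

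For the inclusion $\cM_N \subseteq \DO^{\leq N}(U)$, I would reuse the normal-form computation from the proof of Lemma~\ref{lem_PulcXalpha}. Given a generator $X_{i_1}\cdots X_{i_\ell}$ with all $i_j \leq d_1$, hence each $\upsilon_{i_j}=1$, successive commutations $X_iX_j = X_jX_i + [X_i,X_j]$ put this product in normal form $\bX^\alpha + \sum_{|\beta|<|\alpha|} c_\beta \bX^\beta$. The key observation is that each correction $[X_i,X_j]$ lies in $\Gamma(H^{\upsilon_i+\upsilon_j})$ and therefore expands as a $C^\infty(U)$-combination of frame fields $X_k$ with $\upsilon_k \leq \upsilon_i+\upsilon_j$; the swap preserves the total homogeneous weight. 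Induction on $\ell$ then shows that every $\beta$ appearing satisfies $[\beta] \leq [\alpha] = \ell \leq N$, so the whole operator lies in $\DO^{\leq N}(U)$.

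For the reverse inclusion $\DO^{\leq N}(U) \subseteq \cM_N$, Lemma~\ref{lem_PulcXalpha} reduces the problem to showing each $\bX^\alpha$ with $[\alpha] \leq N$ lies in $\cM_N$. For this, it is enough to prove the following key claim: every frame field $X_i$ is a $C^\infty(U)$-combination of products $X_{j_1}\cdots X_{j_m}$ with $j_k \leq d_1$ and $m \leq \upsilon_i$. Granting this, multiplying out $\bX^\alpha = X_1^{\alpha_1}\cdots X_n^{\alpha_n}$ by substituting each factor writes $\bX^\alpha$ as a $C^\infty(U)$-combination of products of $X_1,\ldots,X_{d_1}$ of length at most $\sum\alpha_i\upsilon_i=[\alpha] \leq N$; any Leibniz corrections arising from composing $C^\infty$-combinations only shorten the products, so they remain in $\cM_N$.

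It remains to prove the claim. Since $H_1$ generates $TM$, the description in Example~\ref{ex:subrie} gives that $\Gamma(H^{\upsilon_i})$ is the $C^\infty(U)$-module generated by iterated brackets of length at most $\upsilon_i$ of elements of $\Gamma(H^1)$. Using the derivation identity $[fA,B] = f[A,B] - (Bf)A$ inductively, any bracket of $C^\infty$-combinations of $X_1,\ldots,X_{d_1}$ reduces to a $C^\infty$-combination of \emph{pure} iterated brackets of length $\leq \upsilon_i$ in the frame fields $X_1,\ldots,X_{d_1}$, the Leibniz corrections yielding strictly shorter brackets that are handled by induction on length. Finally, expanding $[A,B]=AB-BA$ recursively rewrites each pure bracket of length $\ell$ as a signed sum of products of $X_1,\ldots,X_{d_1}$ of length exactly $\ell$, proving the claim. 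The only step requiring careful bookkeeping is the reduction from brackets of arbitrary sections of $H^1$ to pure brackets of frame fields; the rest follows immediately from previous results.
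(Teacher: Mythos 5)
Your proof is correct and takes essentially the same approach as the paper's: both rest on Lemma~\ref{lem_PulcXalpha} together with the fact (which you call the ``key claim'') that every $X_j$ is a $C^\infty(U)$-combination of products of length at most $\upsilon_j$ in the generating fields $X_1,\dots,X_{d_1}$. The paper organizes the substitution step as an induction on $N$ via the factorization $\bX^\alpha = X_j\bX^{\alpha'}$ with $[\alpha]=\upsilon_j+[\alpha']$, while you perform the substitution all at once and then track the lengths through the Leibniz corrections; these are equivalent bookkeeping schemes. You are also more explicit than the paper in two spots the paper treats as immediate: the bracket-reduction proof of the key claim, and the easy inclusion $\cM_N\subseteq\DO^{\leq N}(U)$ (which does require the observation, implicit in the proof of Lemma~\ref{lem_PulcXalpha}, that the commutation swaps do not increase the total homogeneous weight).
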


In other words, 
any differential operator of degree $\leq N$ may be written on $U$ as a combination over $C^\infty(U)$ of 
$X_{i_1}\ldots X_{i_\ell}$,  $1\leq i_1,\ldots,i_{\ell}\leq d_1$, $\ell=0,\ldots,N$; 
however, this writing may not be unique.
\begin{proof}[Proof of Corollary \ref{corlem_PulcXalphasubRie}]
The first claim of the statement follows readily from
Lemma \ref{lem_PulcXalpha} and
any $X_j$, $j>d_1$ being obtained as a $C^\infty(U)$-combination of nested bracket of $X_i$, $i=1,\ldots, d_1$.

As $H_1$ generates $TM$ (see Example \ref{ex:subrie}),  each 
$X_j$, $j=1,\ldots, n$ may be written as a $C^\infty$-combination of 
$X_{i_1}\ldots X_{i_\ell}$ with $\ell \leq \upsilon_j$.
This allows us to prove the rest of the statement inductively on $N$ by writing for every $\alpha\in \bN_0\setminus\{0\}$, $\bX^\alpha = X_j \bX^{\alpha'}$ for some $j=1,\ldots, n$ and $\alpha'\in \bN_0^n$ uniquely determined by $\alpha$ and satisfying $[\alpha] = \upsilon_j + [\alpha']$.
\end{proof}

\smallskip 

If $P\in \DO^{\leq N}(M)$ and $(\bX,U)$ is an adapted frame, then
for each $x\in U$, we define  the element of $\sU_{N}(\fg_x M)$
\begin{equation}\label{def:princ_symbol}
\princ_{N,x} (P):=\sum_{[\alpha]=N} c_\alpha(x) \langle \bX\rangle_x^\alpha,
\end{equation}
where the coefficients $c_\alpha$ come from the unique expression $P=\sum_{[\alpha]\leq N} c_\alpha  \bX^\alpha$ on $U$.
By Lemma~\ref{lem_DOXY} (2), 
this defines an element of $\sU_{N}(\fg_x M)$ that does not depend on  the choice of the adapted frame $\bX$ and that is smooth in $x$. 
This allows us to define the  principal part of a differential operator of homogeneous order at most $N$ as the morphism of $C^\infty(M)$-module:
$$
\princ_N : \DO^{\leq N}(M) \to \Gamma(\sU_{N}(\fg M)).
$$

We may say that $P\in \DO^{\leq N}(M)$ is {\it of homogeneous order $N$} when its principal part $\princ_N(P)$ is non-zero in $\Gamma(\sU_{N}(\fg M))$.
Note that the degree of $P$ as a differential operator is less or equal to its homogeneous order $N$; it is usually much smaller than $N$.

\subsection{Group exponentiation and BCH formula}
\label{sec:exp_map}

If an adapted frame $(\bX,U)$ is fixed, we  use the adapted basis $\langle \bX\rangle_x$ to  write the \emph{group exponentiation}:
\begin{equation}
\Exp^\bX_x (v):=
	\Exp_x\left(\sum_{i}v_i\langle X_i\rangle_x\right) \in G_x M.
  \label{expCoordonates}
\end{equation}
This yields the smooth mapping 
$$
\Exp^\bX:\left\{\begin{array}{rcl}
 U \times \mathbb{R}^n
&\longrightarrow& {G}M|_U\\
(x,v)&\longmapsto& \Exp_x\left(\sum_{i=1}^n v_i\langle X_i\rangle_x\right)
\end{array}\right. 
$$
with inverse 
$$
\Ln^\bX := (\Exp^\bX)^{-1} \ : \  \Exp^\bX(U\times \bR^n) \longrightarrow U\times \bR^n.
$$
This  allows us  to extend the definition of the dilations $\delta_r$ to $U\times \bR^n$. 
Moreover, the Lebesgue density $|dv|$ on $\mathbb{R}^n$ gives us a frame-dependent
choice of volume density on $TM|_U$ and a  Haar system for ${G}M|_U$ by pushing forward through the trivialization.

\smallskip

The next lemma explains what happens 
when the adapted frame is changed:

\begin{lemma}\label{lem_XYExp}
Let $\bX$ and $\bY$ be  two adapted frames  $\bX$ and $\bY$ on an open subset $U\subset M$, and~$T$ the matrix of change of frame as in Lemma \ref{old_lem_XY}.
Then, the filtration preserving map ${\rm{diag}}(T_x)$ satisfies
$$
\forall (x, v)\in U\times \bR^n\qquad
	\Ln^{\bY}_x\circ \Exp^{\bX}_x(v)={\rm{diag}}(T_x).
$$
\end{lemma}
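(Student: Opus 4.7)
The plan is to reduce the statement to a purely linear-algebraic identity on the osculating Lie algebra $\fg_x M$, and then invoke Lemma~\ref{old_lem_XY}(2). First I would observe that, despite appearances, the right-hand side is to be read as the linear map $v \mapsto {\rm diag}(T_x)v$ evaluated on $v$, so what is being claimed is the equality
$$
 \Ln^{\bY}_x \circ \Exp^{\bX}_x(v) = {\rm diag}(T_x)\, v, \qquad (x,v)\in U\times \bR^n .
$$

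Next I would unfold the definitions. By \eqref{expCoordonates} and the definition of $\Ln^\bY_x$ as the inverse of $\Exp^\bY_x$, the equation $\Ln^\bY_x\circ \Exp^\bX_x(v)=w$ for $w\in \bR^n$ is equivalent to
$$
\Exp_x\Bigl(\sum_{i=1}^n v_i \langle X_i\rangle_x\Bigr)
=\Exp_x\Bigl(\sum_{k=1}^n w_k \langle Y_k\rangle_x\Bigr) \quad \text{in } G_x M.
$$
Since the nilpotent Lie group exponential $\Exp_x:\fg_x M \to G_x M$ is a bijection (recalled in Section~\ref{sec:def_filt_manifold}), this is in turn equivalent to the linear identity
$$
\sum_{i=1}^n v_i \langle X_i\rangle_x \;=\; \sum_{k=1}^n w_k \langle Y_k\rangle_x
\quad \text{in } \fg_x M .
$$

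Finally I would apply Lemma~\ref{old_lem_XY}(2), which expresses the adapted basis coming from $\bX$ in terms of the one coming from $\bY$ through the block-diagonal part of the transition matrix: $\langle \bX\rangle_x = {\rm diag}(T_x)^{\rm t}\langle \bY\rangle_x$, i.e.\ $\langle X_i\rangle_x = \sum_k {\rm diag}(T_x)_{k,i}\langle Y_k\rangle_x$. Substituting into the previous display and identifying coefficients in the basis $\langle Y_1\rangle_x,\ldots,\langle Y_n\rangle_x$ of $\fg_x M$ gives $w_k = \sum_i {\rm diag}(T_x)_{k,i}\, v_i$, that is, $w={\rm diag}(T_x)v$, which is what was to be proved.

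There is no real obstacle; the only point needing care is the verification that the equation on $G_xM$ indeed lifts to an equation in $\fg_x M$, which is exactly the injectivity of $\Exp_x$ for nilpotent simply connected $G_xM$. I would also briefly note, for the reader, that ${\rm diag}(T_x)$ is filtration-preserving because $T_x\in BU(d_1,\ldots,d_s)$ by Lemma~\ref{old_lem_XY}(1), so the right-hand side ${\rm diag}(T_x)v$ lives in $\bR^n$ with the same graded interpretation on both sides.
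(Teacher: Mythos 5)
Your proof is correct and follows essentially the same route as the paper's: both reduce the claim to the linear identity $\langle \bX\rangle_x = \mathrm{diag}(T_x)^{\mathrm t}\langle \bY\rangle_x$ from Lemma~\ref{old_lem_XY}(2) at the level of $\fg_x M$, then transfer it to $G_xM$ via the bijectivity of $\Exp_x$. The only (harmless) difference is presentational — you solve for $w=\Ln^\bY_x\Exp^\bX_x(v)$ by going backwards through $\Exp_x$, whereas the paper computes $\sum_i v_i\langle X_i\rangle_x = \sum_k(\mathrm{diag}(T_x)v)_k\langle Y_k\rangle_x$ forward and reads off $\Exp^\bX_x v=\Exp^\bY_x(\mathrm{diag}(T_x)v)$.
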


\begin{proof}[Proof of Lemma~\ref{lem_XYExp}]
Since $\langle \bX\rangle_x ={\rm diag}(T_x)^{\rm t}\langle \bY\rangle_x $, we have
$$
\sum_i v_i \langle X_i\rangle_x  = 
\sum_i v_i \sum _{\upsilon_i = \upsilon_k}T_{ki}(x)\langle Y_k(x)\rangle_x=
\sum_k ({\rm diag}(T_x) v)_k \langle Y_k\rangle_x ,
$$
so
$
\Exp_x^\bX v 
= \Exp_x^\bY ( {\rm diag}(T_x) v).$ 
\end{proof}

\smallskip

The basis $\langle X_1\rangle_x, \ldots,\langle X_n\rangle_x $ together with the  group exponential map ${\rm Exp}_x^{\bX}$
allows for a precise group isomorphism between  the group fiber ${G}_xM$ with  $\bR^n$.
 
\begin{remark}\label{rem_quasinormbX}
We have already given natural examples of  quasinorms on graded groups associated with an adapted basis, see Example \ref{ex_quasinormGX}.
 With the above isomorphism in mind, it is natural to consider the    quasinorm on $\bR^n$
 given by 
 $$
 |v|_\bX = 
  \left(\sum_{i=1}^n |v_i|^{\alpha / \upsilon_i}\right)^{1/\alpha},
  \quad v=(v_j)_{i=1}^n\in \bR^n,
  $$
   $\alpha =2M_0$ where $M_0$ is a common multiple of the weights $\upsilon_1,\ldots,\upsilon_n$, the least one to fix the idea.
  We will use this quasinorm  throughout the rest of the work.
 \end{remark}

 To describe more precisely the group law inherited by $\bR^n$, 
 we introduce the map given at every point $x$ by the adjoint map $\ad_{\fg_x M}$ in the $\langle \bX\rangle$-coordinates:
\begin{definition}
\label{def_adxbX}
   Let $\bX$ be an adapted frame on an open set $U$ of the filtered manifold $M$.  
   For each $x\in U$, 
and for any $v,w\in \bR^n$, we denote by 
   $\ad_x^\bX (v) (w)$  the vector in $\bR^n$ whose coordinates $(u_1,\ldots, u_n)$ satisfy 
 $$
  \sum_{k=1}^n u_k \langle X_k\rangle_x := \left[\sum_{i=1}^n v_i \langle X_i\rangle_x\,,\, \sum_{j=1}^n w_j \langle X_j\rangle_x\right]_{\fg_x M}.  
$$  
This defines a skew-symmetric map $\ad_x^\bX:\bR^n\times \bR^n \to \bR^n$ we call  the {\it adjoint map in the $\langle \bX\rangle$-coordinates}.
\end{definition}
This equips $\bR^n$ with a graded Lie algebra structure  isomorphic to the one of $\fg_x M$, the isomorphism being given by the choice of basis~$\langle \bX\rangle_x$.
Moreover, the map 
$$
\ad^\bX : x\longmapsto \ad_x^\bX,
\quad U\longrightarrow C^\infty (U , \sL (\bR^n)),
$$
is smooth.
It describes  the  group law of $G_x M$
in exponential coordinates \eqref{expCoordonates}:
\begin{lemma}
\label{lem_BCHGxM}
Let $\bX$ be an adapted frame on an open set $U$ of the filtered manifold $M$.  For any $x\in U$ and $v,w\in \bR^n$, we have:
 $$
 \Exp^\bX_x (v)\  \Exp^\bX_x (w)
=
\Exp^\bX_x (z(x;v,w))
=
\Exp_x \left(\sum_i z_i(x;v,w)\langle X_i\rangle_x\right)
$$  
where $z(x;v,w)= (z_k(x;v,w))_{k=1}^n$ is given by 
$$
z(x;v,w)= {\rm BCH}_{s_0}(v,w;\ad_x^\bX),   \quad x\in U, \ v,w\in \bR^n,
$$
 where  $s_0\geq s $,  $s$ being the step of 
 every $\fg_x M$.
 For each $k=1,\ldots, n$, 
 the function $z_k(x;v,w)$
 is polynomial in $v,w$ with coefficients depending smoothly on $x\in U$. 
 Moreover, $z_k(x;v,w)$ is 
 $\upsilon_k$-homogeneous, in the sense that 
$z_k(\delta_r v,\delta_r w;x)= r^{\upsilon_k} z_k(x;v,w)$.
\end{lemma}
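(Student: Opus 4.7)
The proof is essentially an unpacking of the Baker–Campbell–Hausdorff formula (recalled in Section~\ref{subsubsec_BCH}) in the basis $\langle\bX\rangle_x$ of $\fg_x M$. My plan is the following. Fix $x\in U$ and $v,w\in\bR^n$, and set $V=\sum_i v_i\langle X_i\rangle_x$ and $W=\sum_i w_i\langle X_i\rangle_x$ in $\fg_xM$. Since $G_xM$ is a nilpotent (in particular simply connected) Lie group with Lie algebra $\fg_xM$, the exponential map $\Exp_x:\fg_xM\to G_xM$ is a global diffeomorphism and Dynkin's formula (i.e.~the BCH formula applied in $\fg_xM$, with commutator $\ad_{\fg_xM}$) yields
\[
\Exp_x(V)\,\Exp_x(W)=\Exp_x\!\left({\rm BCH}(V,W;\ad_{\fg_xM})\right).
\]
By Remark~\ref{rem:steps}, the nilpotency step of $\fg_xM$ is bounded by the step $s$ of the gradation, which is independent of $x$. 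Hence every iterated bracket in the BCH series involving more than $s$ letters vanishes, so the infinite series truncates: for any $s_0\geq s$,
\[
{\rm BCH}(V,W;\ad_{\fg_xM})={\rm BCH}_{s_0}(V,W;\ad_{\fg_xM}).
\]

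Next I would translate this identity into the coordinates defined by $\langle\bX\rangle_x$. By definition of $\ad_x^\bX$ (Definition~\ref{def_adxbX}), if we identify $V,W$ with their coordinate vectors $v,w\in\bR^n$, then the iterated brackets $(\ad V)^{\alpha_1}(\ad W)^{\beta_1}\cdots$ appearing in ${\rm BCH}_{s_0}$ correspond, coordinate by coordinate, to $(\ad_x^\bX v)^{\alpha_1}(\ad_x^\bX w)^{\beta_1}\cdots$. Writing the resulting vector as $z(x;v,w)=(z_k(x;v,w))_{k=1}^n\in\bR^n$, we obtain
\[
\Exp_x^\bX(v)\,\Exp_x^\bX(w)=\Exp_x^\bX\!\left(z(x;v,w)\right),
\quad z(x;v,w)={\rm BCH}_{s_0}(v,w;\ad_x^\bX),
\]
which is the claimed formula.

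It then remains to read off the regularity and homogeneity properties of the $z_k$. Since ${\rm BCH}_{s_0}$ is a finite sum of fixed universal polynomial expressions in $v,w$ whose coefficients are rational numbers, each $z_k(x;v,w)$ is a polynomial in $(v,w)$; its coefficients are polynomial expressions in the structure constants $\ad_x^\bX(e_i)(e_j)$, which depend smoothly on $x$ by smoothness of the map $x\mapsto\ad_x^\bX$ noted after Definition~\ref{def_adxbX}. For the homogeneity claim, recall that $\fg_xM$ is \emph{graded}, so the Lie bracket satisfies $[\fg_i,\fg_j]\subset\fg_{i+j}$; equivalently, each $\langle X_i\rangle_x$ has weight $\upsilon_i$ and the dilations $\delta_r$ are Lie algebra morphisms. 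Hence for each term $(\ad_x^\bX v)^{\alpha}(\ad_x^\bX w)^{\beta}e$ of ${\rm BCH}_{s_0}$ landing in the $k$-th coordinate (i.e.~with nontrivial $\langle X_k\rangle_x$-component), the total weight must equal $\upsilon_k$. Since $\delta_r$ acts on $v,w$ coordinate-wise with weight $\upsilon_i$ on the $i$-th entry, this forces $z_k(\delta_r v,\delta_r w;x)=r^{\upsilon_k}z_k(x;v,w)$. The only mildly subtle point is checking that the coordinate extraction really commutes with BCH, but this is immediate once one notes that the map $V\mapsto(v_1,\ldots,v_n)$ is a linear isomorphism $\fg_xM\to\bR^n$ intertwining $\ad_{\fg_xM}$ and $\ad_x^\bX$ by construction.
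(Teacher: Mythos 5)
The paper does not include a proof of this lemma: the statement is left as a standard application of the Dynkin/BCH formula for nilpotent Lie groups, together with the gradation. Your write-up supplies exactly that argument, and it is correct: (i) $G_xM$ is nilpotent and simply connected, so $\Exp_x$ is a global diffeomorphism and BCH holds exactly; (ii) by Remark~\ref{rem:steps} the nilpotency step of $\fg_xM$ is at most the gradation step $s$, so all BCH terms with $|\alpha|+|\beta|>s$ vanish and the series equals ${\rm BCH}_{s_0}$ for any $s_0\geq s$; (iii) the coordinate map intertwines $\ad_{\fg_xM}$ with $\ad_x^\bX$, giving $z(x;v,w)={\rm BCH}_{s_0}(v,w;\ad_x^\bX)$; (iv) polynomiality in $(v,w)$ with smooth $x$-dependence follows because BCH is a finite universal polynomial in the $\ad$'s with structure constants $c_{i,j,k}(x)$ depending smoothly on $x$. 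All of this matches what the paper implicitly takes for granted.

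One small stylistic point on the homogeneity argument: rather than a weight-counting argument term by term (where the phrase ``the total weight must equal $\upsilon_k$'' is slightly imprecise, since $v,w$ are not assumed homogeneous), the cleanest route is the one you gesture at but could lead with: $\delta_r$ is a Lie algebra morphism of $\fg_xM$ (this is the definition of ``graded''), hence commutes with every nested bracket and therefore with BCH, so ${\rm BCH}(\delta_r V,\delta_r W)=\delta_r{\rm BCH}(V,W)$; reading off the $k$-th $\langle\bX\rangle_x$-coordinate, on which $\delta_r$ acts by $r^{\upsilon_k}$, gives $z_k(x;\delta_r v,\delta_r w)=r^{\upsilon_k}z_k(x;v,w)$ directly. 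This avoids any appeal to homogeneous components of $V,W$.
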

Recall that that the step of the graded spaces $\fg_x M$ was defined in  Section~\ref{sec:def_filt_manifold}
and that  ${\rm BCH}_{s_0}$ has been introduced in Notation~\ref{notation_BCH}.

We denote by 
\begin{equation}
\label{eqdef_v*xw}
 v*_x^\bX w :=
\Ln_x^\bX \left(\Exp^\bX_x (v)\  \Exp^\bX_x (w)\right)\qquad v,w\in \bR^n
\end{equation}
the  {\it law of the group} $\bR^n$ 
isomorphic to $G_x M$ via the group exponential map $\Exp_x$ and the choice of the basis $\langle \bX\rangle_x$.
By Lemma \ref{lem_BCHGxM} (and with its notation), we have
$$
v*_x^\bX w = z(x;v,w) ={\rm BCH}_{s_0}(v,w;\ad_x^\bX), 
$$
so that
$$
\Exp^\bX_x (v*_x^\bX w) =
\Exp^\bX_x (v)\  \Exp^\bX_x (w).
$$
If the adapted frame $\bX$ is fixed, we will allow ourselves to use the shorthand $*_x^\bX =*_x$.
The reader should keep in mind that star-notation is used in various ways in this paper: $*_x$ gives the product law of the group $G_xM$ after suitable identifications
while the convolution product on a group $G$  will be denoted by $\star_{G}$, in accordance
with~\eqref{eq_convolution}.

\section{Action of  geometric maps on the osculating group}\label{sec:geom_maps}

In this section we analyse
the composition of geometric exponential maps.
After defining  the notion of higher order maps between graded vector spaces in Section~\ref{sec:higher_order}, we use this notion on filtered manifolds to describe 
various algebraic properties in Section~\ref{sec:higher_order_M}, 
and eventually the composition of geometric exponential mappings  in Section~\ref{sec:comp_exp}. 

The notion of terms of higher order is crucial in this analysis. 
It is also present in various forms 
in many papers studying the composition of geometric exponential on filtered or subRiemannian manifolds, for instance in~\cite{Choi_Ponge_1,Choi_Ponge_2}.

\subsection{Higher order terms on graded  spaces}\label{sec:higher_order}
Here, 
we define 
what higher order means for maps between graded vector spaces.
This notion will be at the core of  the comparison of the commutator brackets of vector fields at $x$ with the structure of the Lie algebra $\fg_x$,
and in various other places.

In this section, we consider  two graded  vector spaces $\cV_1$ and $\cV_2$ and  denote by $\delta_{1,r}$ and $\delta_{2,r}$ for $r>0$ the associated dilations (see Definition \ref{def_dilation_gradedV}).

\begin{definition}
	A function $f:\cV_1\to \cV_2 $ is \emph{homogeneous} of degree $m$ (for the dilations $\delta_{1,r},\delta_{2,r},r>0$) when $f(\delta_{1,r} v) =\delta_{2,r^m} f(v)$ for any $r>0$ and $v\in \cV_1$. 
	In the case $m=1$, we say that $f$ is homogeneous. 
\end{definition}

\begin{ex}
\label{ex_homopolymap_grlaw}
    If $G$ is a graded Lie group with a fixed adapted basis $\bV$ for its Lie algebra, then the group law expressed as a map $(v,w)\mapsto v *_G w$, $\bR^n\times \bR^n \to \bR^n$ is a homogeneous map.  
\end{ex}

We fix bases $ (V_1,\ldots,V_{\dim \cV_1})$ and $(W_1,\ldots,W_{\dim \cV_2})$ for $\cV_1$ and $\cV_2$ adapted to the gradation, 
and we denote by $\upsilon_j$ the weight of $W_j$, $j=1,\ldots, \dim \cV_2$.
It is easy to check that a function $f:\cV_1\to \cV_2 $ is homogeneous  if and only if, for each $j=1,\ldots, \dim \cV_2$, the $j$-components $f_j:\cV_1\to \bR$ of $f$ is an $\upsilon_j$-homogeneous map.
This motivates the following definition:

\begin{definition}
\label{def_hot}
\begin{enumerate}
\item
We say that a polynomial map $r:\cV_1 \to \cV_2$ is \emph{higher order} (for the gradations) when 
the degrees of homogeneity (for the dilations on $\cV_1$) of 
the monomials occurring in its $j$-component $r_j$  
are $>\upsilon_j$. 
The notion of higher order   extends to formal power series. 
 \item  We say that a smooth function $r$ defined on a neighbourood of 0 in $\cV_1$ and valued in~$\cV_2$ is higher order when its Taylor series at 0 is a higher order  power series.
 \end{enumerate}
\end{definition}

It is not difficult to construct higher order polynomial maps from homogeneous ones:
\begin{ex}
\label{ex_hotpoly}
Let  $q:\cV_1\to \cV_2$ and 
$r:\cV_1\to \cV_2$ be polynomial maps that are  respectively homogeneous and higher order.   
\begin{enumerate}
    \item If $p:\cV_1\to \bC$ is a polynomial satisfying $p(0)=0$, then $pq$ and $pr$ are higher order polynomial maps.
    \item This first example can be generalised as follow. We fix a basis $(W_1,\ldots,W_{\dim \cV_2})$  adapted to the gradation of $\cV_1$, so that we may write $q=(q_k)_{k=1}^{\dim \cV_2}$
    and $r=(r_k)_{k=1}^{\dim \cV_2}$ in the corresponding coordinates.
    Given any family of polynomials $p_k:\cV_1\to \bC$, $k=1,\ldots,\dim \cV_2$,
    then the polynomial maps $(p_k r_k)_{k=1}^{\dim \cV_2}$  are higher order. 
    If in addition $p_k(0)=0$ for every $k=1,\ldots, \dim \cV_2$, then  the polynomial maps $(p_k q_k)_{k=1}^{\dim \cV_2}$  are higher order. 
\end{enumerate}    
\end{ex}

The following properties characterise 
the higher order maps. In particular, the characterisations (2) or (3) imply that the notion of higher order does not depend on the chosen graded bases.

\begin{proposition}
\label{prop_char_higherorder}
Let $r:\cV_1 \to \cV_2$ be a smooth map. 
The following properties are equivalent:
\begin{enumerate}
	\item The function $r$ is higher order.
	\item For one (and then any) neighbourhood $V_0$ of 0, we have for any $v\in V_0$ 
	$$
\lim_{\eps\to 0}	\delta_{\eps^{-1}} r(\delta_\eps v) =0.
	$$
\item There exists a unique smooth map 
$R:\bR\times \cV_1\to \cV_2 $ satisfying 
$\delta_{\eps^{-1}} r(\delta_\eps v)= \eps R(\eps,v)$.
\end{enumerate}	
If an adapted basis of $\cV_2$ has been chosen, then it is also equivalent to:
\begin{itemize}
	\item[(4)]  For $j=1, \ldots, \dim \cV_2$, we have $\partial_v^\alpha r_j(0)=0$, $[\alpha]\leq \upsilon_j$; 
 equivalently, this means that the Taylor series of $r_j$ about~$0$ is 
$$
r_j(v)\sim_{v\sim 0} \sum_{[\alpha]>\upsilon_j }\partial^\alpha r_j(0) v^\alpha.
$$
\end{itemize}
\end{proposition}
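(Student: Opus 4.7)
The plan is to fix adapted bases $(V_1,\dots,V_{\dim\cV_1})$ and $(W_1,\dots,W_{\dim\cV_2})$, denote the weights of $V_i$ by $\nu_i$ (so $\nu_1\geq 1$ is the smallest), and write $r=(r_j)_{j=1}^{\dim\cV_2}$ with $r_j:\cV_1\to \bR$. I will establish the cycle (1)$\Leftrightarrow$(4)$\Leftrightarrow$(2) together with (4)$\Leftrightarrow$(3). The equivalence (1)$\Leftrightarrow$(4) is essentially by definition: the Taylor series of $r_j$ at $0$ equals $\sum_\alpha (\partial^\alpha r_j(0)/\alpha!)\,v^\alpha$, and Definition \ref{def_hot} asks exactly that all coefficients with $[\alpha]\leq \upsilon_j$ vanish.

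For (4)$\Rightarrow$(2), I would compute
\[
\bigl(\delta_{\eps^{-1}}r(\delta_\eps v)\bigr)_j=\eps^{-\upsilon_j}\,r_j(\delta_\eps v)
\]
and apply Taylor's formula with remainder to $r_j$ at $0$ up to order $N$. Under hypothesis (4) and using $|\delta_\eps v|\leq \eps^{\nu_1}|v|$ for $\eps\in(0,1]$, this yields
\[
\eps^{-\upsilon_j}r_j(\delta_\eps v)=\sum_{[\alpha]>\upsilon_j,\,|\alpha|\leq N} c_\alpha\,\eps^{[\alpha]-\upsilon_j}v^\alpha+O\bigl(\eps^{(N+1)\nu_1-\upsilon_j}|v|^{N+1}\bigr),
\]
which tends to $0$ as $\eps\to 0$ provided $N$ is large enough. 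For the converse, I would contrapose: if some $\alpha_0$ with $[\alpha_0]\leq \upsilon_j$ satisfies $\partial^{\alpha_0}r_j(0)\neq 0$, set $d:=\min\{[\alpha]:\partial^\alpha r_j(0)\neq 0\}\leq \upsilon_j$, and pick $v_0$ with $P(v_0):=\sum_{[\alpha]=d}(\partial^\alpha r_j(0)/\alpha!)\,v_0^\alpha\neq 0$, which is available in any neighborhood of $0$ by homogeneity of $P$; the same expansion shows $\eps^{-\upsilon_j}r_j(\delta_\eps v_0)\sim \eps^{d-\upsilon_j}P(v_0)$, which either blows up or has a nonzero limit, contradicting (2).

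For (4)$\Leftrightarrow$(3), uniqueness of $R$ is immediate since $\{\eps\neq 0\}$ is dense in $\bR$, and (3)$\Rightarrow$(2) is immediate from continuity of $\eps R(\eps,v)$ at $0$. To prove existence under (4), I would use Taylor with integral remainder,
\[
r_j(w)=\sum_{|\alpha|\leq N}\frac{\partial^\alpha r_j(0)}{\alpha!}w^\alpha+\sum_{|\alpha|=N+1}w^\alpha\,\tilde r_{j,\alpha}(w),\qquad \tilde r_{j,\alpha}\in C^\infty(\cV_1),
\]
substitute $w=\delta_\eps v$, and choose $N$ large enough that $(N+1)\nu_1\geq \upsilon_j+1$. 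Then every multi-index $\alpha$ contributing to either sum satisfies $[\alpha]\geq \upsilon_j+1$, so factoring out $\eps^{\upsilon_j+1}$ one obtains
\[
\eps^{-\upsilon_j-1}r_j(\delta_\eps v)=P_j(\eps,v)+\sum_{|\alpha|=N+1}\eps^{[\alpha]-\upsilon_j-1}v^\alpha\,\tilde r_{j,\alpha}(\delta_\eps v),
\]
with $P_j$ polynomial, all exponents $[\alpha]-\upsilon_j-1$ nonnegative integers, and each $\tilde r_{j,\alpha}(\delta_\eps v)$ smooth on $\bR\times\cV_1$; this defines the required smooth $R_j$. The main obstacle is precisely this last step: one must ensure that after factoring $\eps^{\upsilon_j+1}$ the residual exponents are \emph{nonnegative integers} and not merely nonnegative, so that the quotient is genuinely $C^\infty$ at $\eps=0$ rather than just continuous; this exploits both the graded vanishing (4) and the Taylor expansion at sufficiently high Euclidean order.
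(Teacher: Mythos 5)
Your proposal is correct and takes essentially the same approach as the paper: (1)$\Leftrightarrow$(4) by definition, (4)$\Leftrightarrow$(2) by grouping the Taylor expansion according to homogeneous weight, and (4)$\Leftrightarrow$(3) via a standard Euclidean Taylor expansion with smooth remainder at high enough order so that all surviving powers of $\eps$ are nonnegative integers. The only small difference is that for the direction (1)$\Rightarrow$(2) the paper invokes the Folland--Stein anisotropic Taylor estimate (Theorem~\ref{thm_MV+TaylorG}~(2)) to get the clean bound $|R_j(v)|\leq C|v|_1^{\upsilon_j+1}$ directly, whereas you use a Euclidean remainder together with the cruder bound $|\delta_\eps v|\leq\eps^{\nu_1}|v|$; both yield the conclusion.
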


\begin{proof}
	As above, 
we fix bases $ (V_1,\ldots,V_{\dim \cV_1})$ and $(W_1,\ldots,W_{\dim \cV_2})$ for $\cV_1$ and $\cV_2$ adapted to the gradation, with $\upsilon_j$ the weight of $W_j$.
We denote   by $r_j$ the $j$th component of $r$.
By definition, 
$r$ is higher order if and only if Property (4) holds.

Whether $r$ is higher order or not, we may write, 
$$
r_j(v) = \sum_{[\alpha]\leq \upsilon_j} \partial^\alpha r_j(0) v^\alpha + R_j(v),
$$
with $R_j\in C^\infty(\cV_1)$.
Fixing a quasinorm $|\cdot|_1$ on $\cV_1$, by the Taylor estimate  due to Folland and Stein at order $\upsilon_j$
(see \cite{folland+stein_82} or  Theorem~\ref{thm_MV+TaylorG}), 
for any compact neighbourhood $W$ of 0 in $\cV_1$,
there exists  a constant $C_j>0$ such that
$$
\forall v\in W,
\qquad  
|R_j(v)|\leq C_j |v|_1^{\upsilon_j+1}.
$$

If Property (1) holds, that is, if the Taylor series of $r$ at 0 is higher order, then for any $j=1,\ldots, \dim \cV_2$ 
$$
|\eps^{-\upsilon_j} r_j(\delta_\eps v)|
= |\eps^{-\upsilon_j} R_j(\delta_\eps v) |
\leq\eps^{-\upsilon_j}
C |\delta_\eps v|_1^{\upsilon_j+1}
= C \eps |v|_1^{\upsilon_j+1}
\longrightarrow_{\eps\to 0 } 0, 
$$
so Property  (2) hold. 

If Property (2) holds, or equivalently if for any $j=1,\cdots,\dim \cV_2$ and $v\in \cV_1$
$$
0=
\lim_{\eps\to 0}\eps^{-\upsilon_j} r_j(\eps v)  
	=
\lim_{\eps\to 0} 	
	\sum_{[\alpha]\leq \upsilon_j} \eps^{-\upsilon_j+[\alpha]} \partial^\alpha r_j(0) v^\alpha + \eps^{-\upsilon_j} R_j(\delta_\eps v), 
	$$
then $\partial^\alpha r_j(0)=0$, $[\alpha]\leq \upsilon_j$ and Property (1) holds.  
Moreover, the smoothness of the $r_j$'s implies
$$
r_j(v) = \sum_{|\alpha|\leq N_0} \partial^\alpha r_j (0) v^\alpha + \sum_{|\alpha|=N_0+1} v^\alpha \rho_{j,\alpha}(v), \quad\mbox{with}\ \rho_{j,\alpha}\in C^\infty (\cV_1).
$$
Choosing  $N_0 = \max\{|\alpha| : \alpha\in \bN_0, [\alpha]\leq \upsilon_j\}$, 
since $\partial^\alpha r_j(0)=0$, $[\alpha]\leq \upsilon_j$, we obtain
$$
\eps^{-\upsilon_j} r_j (\delta_\eps v) =\sum_{|\alpha|\leq N_0, [\alpha]\leq \upsilon_j}\eps^{-\upsilon_j +[\alpha]} \partial^\alpha r_j (0) v^\alpha + \sum_{|\alpha|=N_0+1}\eps^{-\upsilon_j +[\alpha]} v^\alpha \rho_{j,\alpha}(\delta_\eps v),
$$
implying Property (3).
We have obtained  (1) $\Longleftrightarrow$  (2)
$\Longrightarrow$ (3).
Clearly, (3) $\Longrightarrow$ (2).
 The proof is complete.
\end{proof}

Let us present two corollaries of Proposition \ref{prop_char_higherorder} and its proof.
The first one gives estimates of a higher order function:

\begin{corollary}
\label{cor1lem_char_higherorder}
Let $r:\cV_1 \to \cV_2$ be a smooth map of higher order.
Having fixed quasinorms $|\cdot|_1$ and $|\cdot|_2$ on $\cV_1$ and $\cV_2$, for any bounded neighbourhood $W$ of 0 in $\cV_1$, there exists $C>0$ such that 
$$
\forall v\in W,\;\; 
|r(v)|_2 \leq C |v|_1^{1+\frac 1{\upsilon_n} }.
$$
\end{corollary}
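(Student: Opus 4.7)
My plan is to reduce to a scalar Taylor estimate via the characterization in Proposition \ref{prop_char_higherorder}(4), and then assemble the componentwise bounds through a convenient choice of quasinorm on $\cV_2$.

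First, I would fix adapted bases of $\cV_1$ and $\cV_2$, writing $r = (r_1, \ldots, r_{\dim \cV_2})$ in the basis of $\cV_2$ whose weights I denote $\upsilon_1 \leq \cdots \leq \upsilon_n$ (with $n = \dim \cV_2$). By Proposition \ref{prop_char_higherorder}(4), the higher-order hypothesis is equivalent to $\partial^\alpha r_j(0) = 0$ for every multi-index $\alpha$ on $\cV_1$ with $[\alpha] \leq \upsilon_j$. Applying the Folland-Stein Taylor estimate (Theorem \ref{thm_MV+TaylorG}(2)) to the scalar function $r_j$ at $x = 0$ and order $N = \upsilon_j$, the Taylor polynomial $\bP_{\cV_1, r_j, 0, \upsilon_j}$ vanishes identically, so on any bounded neighbourhood of $0$ I obtain a constant $C_j > 0$ with
$$|r_j(v)| \leq C_j |v|_1^{\upsilon_j + 1}.$$
Indeed, the remainder in the Folland-Stein estimate involves powers $|v|_1^{[\alpha]}$ with $[\alpha] > \upsilon_j$; since all weights are positive integers, $[\alpha] \geq \upsilon_j + 1$, and the derivatives $\bV^\alpha r_j$ are bounded on a slight enlargement of $W$, so larger exponents can be absorbed into the constant using boundedness of $|v|_1$.

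Second, by the equivalence of quasinorms (Remark \ref{rem_eq_quasinorm}), I may replace $|\cdot|_2$ by the standard quasinorm of Example \ref{ex_quasinorm},
$$|w|_2 = \Bigl(\sum_{j=1}^{n} |w_j|^{\alpha/\upsilon_j}\Bigr)^{1/\alpha},$$
with $\alpha = 2M_0$ a common multiple of the $\upsilon_j$. Substituting the componentwise bounds above yields
$$|r(v)|_2^{\alpha} \leq \sum_{j=1}^{n} C_j^{\alpha/\upsilon_j}\, |v|_1^{\alpha(1 + 1/\upsilon_j)}.$$
Since $\upsilon_j \leq \upsilon_n$ for every $j$ and $|v|_1$ stays bounded on $W$, I factor out $|v|_1^{\alpha(1 + 1/\upsilon_n)}$ and absorb the remaining bounded factors $|v|_1^{\alpha(1/\upsilon_j - 1/\upsilon_n)}$ into the constant. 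Taking the $1/\alpha$-th root produces the desired estimate $|r(v)|_2 \leq C |v|_1^{1 + 1/\upsilon_n}$.

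The argument is essentially routine; no step presents a genuine obstacle. The only point requiring care is to keep separate the two roles played by the weights: the $\cV_1$-weights enter through the order $[\alpha]$ in the Folland-Stein expansion, whereas the $\cV_2$-weights enter through the quasinorm $|\cdot|_2$. The appearance of the uniform exponent $1 + 1/\upsilon_n$ rather than the sharper componentwise $1 + 1/\upsilon_j$ is precisely the price paid for collapsing all components into a single quasinorm on $\cV_2$, the worst component being the one of largest weight.
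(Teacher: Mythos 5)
Your proof is correct and follows essentially the same route as the paper's: both derive the componentwise bound $|r_j(v)|\leq C_j|v|_1^{\upsilon_j+1}$ from the Folland--Stein Taylor estimate combined with Proposition~\ref{prop_char_higherorder}(4), then uniformize the exponent to $1+1/\upsilon_n$ on the bounded set $W$ and conclude by equivalence of quasinorms. The only cosmetic difference is the reference quasinorm on $\cV_2$ — you use the $\ell^{\alpha/\upsilon_j}$-sum quasinorm of Example~\ref{ex_quasinorm}, while the paper uses the max quasinorm $u\mapsto\max_j|u_j|^{1/\upsilon_j}$ — which changes nothing substantive.
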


Note
that in the isotropic case, i.e. when $\upsilon_1=\ldots=\upsilon_n=1$, we recover the Euclidean or Riemannian exponent $1+\frac 1{\upsilon_n} = 1+1 =2$. However, this is not generally the case; for instance, it
won't be the case for a regular subRiemannian manifold as in Example~\ref{ex:subrie}.

\begin{proof}[Proof of Corollary \ref{cor1lem_char_higherorder}] 
Keeping the notation in the proof of Proposition \ref{prop_char_higherorder},
the estimates  therein
show that when $r$ is higher order, we have for any $v\in W$
$$
|r_j (v) |=|R_j(v)| \leq C_j |v|_1^{\upsilon_j +1},
$$
so
$$
\max_{j=1,\ldots, \dim \cV_2}
|r_j(v)|^{\frac 1\upsilon_j}
\leq \max_{j=1,\ldots, \dim \cV_2} C_j^{\frac 1\upsilon_j} |v|_1^{1+ \frac 1{\upsilon_j}}
\lesssim  |v|_1^{1+\frac 1{\upsilon_n}}.
$$
As $u\mapsto \max_{j=1,\ldots, \dim \cV_2} |u|^{1/\upsilon_j}$ is a quasinorm on $\cV_2$ and all the quasinorms are equivalent, the statement is proved.
\end{proof}

In the second corollary, we consider  higher order functions depending  continuously and  smoothly on a parameter:
\begin{corollary}
\label{cor_derivatives_cV}
\begin{enumerate}
\item  
Let $A$ be a topological vector space and $(a,v)\mapsto r(a,v )$ be a continuous map $A\times \cV_1\to \cV_2$ such that for every $a\in A$, $r(a,\, \cdot)$ is a smooth function on $\cV_1$ that is higher order. 
Then there exists a unique continuous function $R:A\times \bR\times \cV_1\to \cV_2$ such that for every $a\in A$, $R(a,\,\cdot,\,\cdot)$ satisfies Property (3) of Proposition \ref{prop_char_higherorder}, that is, 
$R(a,\, \cdot,\ \, \cdot)$ is a smooth function 
$\bR\times \cV_1\to \cV_2 $ satisfying 
$$
\forall \eps,v\in \bR\setminus\{0\}\times \cV\qquad 
\delta_{\eps^{-1}} r(a,\delta_\eps v)= \eps R(a,\eps,v).
$$
    \item 	Let $(t,v)\mapsto r(t,v)$ be a smooth map  $I \times \cV_1\to \cV_2$, $I$ an interval of~$\bR$.
	Assume that for every $t\in I$, $r$  is higher order in $v$. 
	Then $\partial_{t=0} r(t,v)$  is higher order.
\end{enumerate}
 
\end{corollary}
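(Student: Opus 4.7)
Set $\rho(v):=\partial_{t=0}r(t,v)$. Since $r$ is smooth on $I\times \cV_1$, the map $\rho$ is smooth on $\cV_1$, and for every multi-index $\alpha\in \bN_0^{\dim \cV_1}$ one may interchange derivatives: $\partial_v^\alpha \rho(0)=\partial_v^\alpha\partial_{t=0} r(t,0)=\partial_{t=0}\partial_v^\alpha r(t,0)$. By Proposition \ref{prop_char_higherorder}(4) applied to $r(t,\cdot)$, which is higher order for each fixed $t\in I$, we have $\partial_v^\alpha r_j(t,0)=0$ for every $t$ whenever $[\alpha]\leq \upsilon_j$, so its $t$-derivative at $t=0$ also vanishes. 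Applying Proposition \ref{prop_char_higherorder}(4) in the reverse direction to $\rho$ then yields that $\rho$ is higher order, which proves (2).

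\textbf{Plan for Part (1).} For each fixed $a\in A$, the map $r(a,\cdot)$ is smooth and higher order, so Proposition \ref{prop_char_higherorder}(3) supplies a unique smooth $R(a,\cdot,\cdot):\bR\times \cV_1\to \cV_2$ satisfying $\delta_{\eps^{-1}} r(a,\delta_\eps v)=\eps R(a,\eps,v)$. This forces $R$ to be defined pointwise; uniqueness in the joint statement is immediate from this fiberwise uniqueness. On $A\times (\bR\setminus\{0\})\times \cV_1$, the explicit formula $R(a,\eps,v)=\eps^{-1}\delta_{\eps^{-1}} r(a,\delta_\eps v)$ shows $R$ is jointly continuous from the joint continuity of $r$. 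The work is therefore to establish continuity at points $(a_0,0,v_0)$.

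\textbf{Strategy at $\eps=0$.} Fix an adapted basis of $\cV_2$ and, following the proof of Proposition \ref{prop_char_higherorder}, choose $N_0=\max\{|\alpha|:[\alpha]\leq \upsilon_j\}$ and Taylor-expand each component:
\begin{equation*}
r_j(a,v)=\!\!\!\sum_{[\alpha]>\upsilon_j,\ |\alpha|\leq N_0}\!\!\!\frac{v^\alpha}{\alpha!}\,\partial_v^\alpha r_j(a,0)+\sum_{|\alpha|=N_0+1} v^\alpha \rho_{j,\alpha}(a,v),
\end{equation*}
with $\rho_{j,\alpha}(a,v)=\int_0^1 \tfrac{(1-s)^{N_0}}{N_0!}\partial_v^\alpha r_j(a,sv)\,ds$ (using that the lower-order Taylor coefficients vanish by the higher-order hypothesis on $r(a,\cdot)$). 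Substituting $v\rightsquigarrow \delta_\eps v$ and multiplying by $\eps^{-\upsilon_j-1}$, the identity $\eps R(a,\eps,v)=\delta_{\eps^{-1}}r(a,\delta_\eps v)$ gives
\begin{equation*}
R_j(a,\eps,v)=\!\!\!\sum_{[\alpha]>\upsilon_j,\ |\alpha|\leq N_0}\!\!\!\eps^{[\alpha]-\upsilon_j-1}\frac{v^\alpha}{\alpha!}\partial_v^\alpha r_j(a,0)+\sum_{|\alpha|=N_0+1}\eps^{[\alpha]-\upsilon_j-1}v^\alpha\rho_{j,\alpha}(a,\delta_\eps v).
\end{equation*}
The choice of $N_0$ guarantees $[\alpha]\geq\upsilon_j+1$ in both sums, so all powers of $\eps$ are non-negative and $R_j$ extends continuously to $\eps=0$, the limit being the degree-$(\upsilon_j{+}1)$ homogeneous part of the Taylor expansion of $r_j(a,\cdot)$ at $0$. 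Joint continuity in $(a,\eps,v)$ then follows from the joint continuity of the maps $a\mapsto \partial_v^\alpha r_j(a,0)$ and $(a,v)\mapsto \rho_{j,\alpha}(a,v)$.

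\textbf{Main obstacle.} The delicate point is that the hypothesis only asserts joint continuity of $r$ itself, while the argument above needs the $v$-derivatives of $r$, and in particular the coefficients $\partial_v^\alpha r_j(\cdot,0)$ and the integral remainders $\rho_{j,\alpha}(\cdot,\cdot)$, to be jointly continuous in $(a,v)$. The natural reading of the hypothesis ``for every $a\in A$, $r(a,\cdot)$ is smooth'' in this paper is that $r$ is smooth in $v$ with all $v$-derivatives depending continuously on $(a,v)$ — under this interpretation the argument closes cleanly; without it, one cannot even speak of a continuous extension of $R$ to $\eps=0$ at the level of generality considered.
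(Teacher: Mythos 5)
Your treatment of Part (2) matches the paper's: interchange the $t$- and $v$-derivatives and apply Proposition \ref{prop_char_higherorder}(4) in both directions. Your treatment of Part (1) also matches the paper's intended route; the paper merely says ``Part (1) follows from Property (4),'' which is shorthand for the Taylor-expansion construction appearing in the proof of the implication (2)$\Rightarrow$(3) in Proposition \ref{prop_char_higherorder}, and you have unpacked exactly that construction, arriving at the same formula for $R$.

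The concern you raise at the end is not merely an interpretive quibble: the hypothesis as literally stated is too weak, and the conclusion of Part (1) fails under it. Joint continuity of $r$ alone does not control the $v$-derivatives $\partial_v^\alpha r(\cdot,0)$ that appear in the expression for $R(a,0,\cdot)$. Concretely, take $\cV_1=\cV_2=\bR$ with the trivial weight-$1$ gradation, $A=[0,1]$, and
$$
r(a,v)=\begin{cases}a\bigl(1-\cos(v/a)\bigr)&a>0,\\[2pt] 0&a=0.\end{cases}
$$
Then $|r(a,v)|\le 2a$, so $r$ is jointly continuous on $A\times\bR$; for each fixed $a$, $r(a,\cdot)$ is smooth and higher order. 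Yet $R(a,0,v)=\tfrac12\,\partial_v^2 r(a,0)\,v^2=v^2/(2a)$ for $a>0$ while $R(0,0,v)=0$, so $R$ is not continuous at $(0,0,v)$ whenever $v\neq0$. Thus the statement as written is false, and the paper's one-line proof inherits the same gap.

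This does not, however, propagate to the rest of the paper. Every invocation of Corollary \ref{cor_derivatives_cV}(1) — in the proofs of Corollary \ref{cor_lem_Cq1BCH} and Lemma \ref{lem:exp_derivative_bis} — applies it to a map $r$ that is \emph{jointly smooth} in $(a,v)$ (there $A$ is an open subset of the manifold). Under that stronger hypothesis your explicit formula for $R$ shows joint continuity (indeed joint smoothness), and your argument closes cleanly. The corollary's hypothesis should simply be read as requiring the $v$-derivatives of $r$ to be jointly continuous in $(a,v)$, which is what all the applications supply.
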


\begin{proof}We use the notation in the proof of Proposition~\ref{prop_char_higherorder} and consider bases for $\cV_1$ and $\cV_2$ that are adapted  to the gradation where $r$ has coordinates $r_j$.
Part (1) follows from Property~(4). 
For Part (2), by Proposition~\ref{prop_char_higherorder},
 $\partial^\alpha_v r_j(t,0)=0$ for any $t\in I$ and for any $\alpha$ with $[\alpha]\leq \upsilon_j$, so it is also the case for $\partial_{t=0} r(t,v)$ by smoothness 
 and $\partial_{t=0} r(t,v)$  is higher order.
\end{proof}

\subsection{Higher order remainders on filtered manifolds}
\label{sec:higher_order_M}

The notion of higher order term defined above allows us to now investigate the algebraic structure  of a  filtered manifold: up to remainder terms of higher order, it will be given by homogeneous polynomials or series. 
Following Corollary~\ref{cor_derivatives_cV} (1), we extend the notion of higher order to maps depending smoothly on a point $x$ of the manifold and we will 
say that such a $x$-dependent polynomial  or a $x$-dependent remainder are homogeneous or higher order above some open subset $U\subset M$ of interest, when   they are thus  for all $x\in U$. 

\subsubsection{Nested commutator brackets  on filtered manifolds}
\label{subsubsec_structuralhot}
The notion of higher order map defined above together with 
the map $\ad_x^\bX$ (see Definition \ref{def_adxbX}) allow for the description of the commutator bracket on $M$ modulo higher order terms:

\begin{proposition}
\label{prop_adxbX}
Let $\bX$ be an adapted frame on an open set $U$ of the filtered manifold $M$. 
The commutator brackets of vector fields may be written as:
    $$
    \ad \left(\sum_{i=1}^n v_i X_i\right) 
    \left( \sum_{j=1}^n w_j X_j\right) (x)
    = \sum_{j=1}^n (q_j (x;v,w) + r_j (x;v,w)) X_j,
    $$
    where the $q_j(x;v,w)$'s and $r_j(x;v,w)$'s are  polynomials  in $v,w$ with smooth coefficents in $x\in U$;  the $q_j$'s are $\upsilon_j$-homogeneous and described by
    $$
    (q_j(x;v,w))_{j=1}^n = \ad_x^\bX (v)(w),
    $$
and  $r(x;v,w)=(r_j(x;v,w))_{j=1}^n$ is  higher order in $v,w$.
\end{proposition}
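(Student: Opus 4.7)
\medskip

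\noindent\textbf{Proposal.} The plan is to expand the commutator bracket in the adapted frame $\bX$ and read off the homogeneous and higher order parts directly from the filtration. By bilinearity,
\[
\ad\Bigl(\sum_i v_i X_i\Bigr)\Bigl(\sum_j w_j X_j\Bigr)(x) = \sum_{i,j} v_i w_j\, [X_i,X_j](x),
\]
so the first step is to analyse each $[X_i,X_j]$ as a section of $TM|_U$ using the frame $\bX$. Since $\bX$ is adapted to the filtration, $X_i\in\Gamma(H^{\upsilon_i}|_U)$ and $X_j\in\Gamma(H^{\upsilon_j}|_U)$, hence $[X_i,X_j]\in\Gamma(H^{\upsilon_i+\upsilon_j}|_U)$ by the filtration compatibility in Definition~\ref{def_filteredM}. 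Expanding in the frame $\bX$, there exist unique $c_{ij}^k\in C^\infty(U)$ such that
\[
[X_i,X_j](x) = \sum_{k:\,\upsilon_k\leq \upsilon_i+\upsilon_j} c_{ij}^k(x)\, X_k(x),
\]
where the restriction on the summation index comes from the adapted nature of the frame: any vector in $H^{\upsilon_i+\upsilon_j}_x$ lies in the span of those $X_k(x)$ with $\upsilon_k\leq \upsilon_i+\upsilon_j$.

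Now I would set
\[
q_k(x;v,w) := \sum_{\upsilon_i+\upsilon_j=\upsilon_k} c_{ij}^k(x)\, v_i w_j,
\qquad
r_k(x;v,w) := \sum_{\upsilon_i+\upsilon_j>\upsilon_k} c_{ij}^k(x)\, v_i w_j,
\]
so that $\sum_{i,j}v_iw_j[X_i,X_j](x)=\sum_k (q_k+r_k)(x;v,w)\,X_k(x)$. The polynomial $q_k(x;\cdot,\cdot)$ is $\upsilon_k$-homogeneous for the dilations (viewing $(v,w)\in\bR^n\times\bR^n$ as a graded space with weights $(\upsilon_i,\upsilon_j)$) since every monomial it contains has total degree equal to $\upsilon_k$. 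Each monomial of $r_k$ has total degree $\upsilon_i+\upsilon_j>\upsilon_k$, which by Definition~\ref{def_hot} is exactly what is meant by higher order.

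It remains to identify the homogeneous part $(q_k(x;v,w))_{k=1}^n$ with $\ad_x^\bX(v)(w)$. This follows from the very definition of the osculating Lie bracket recalled in Section~\ref{sec:def_filt_manifold}: quotienting $[X_i,X_j](x)$ modulo $H^{\upsilon_i+\upsilon_j-1}_x$ and using the congruence $X_k(x)\equiv \langle X_k\rangle_x$ when $\upsilon_k=\upsilon_i+\upsilon_j$ yields
\[
[\langle X_i\rangle_x,\langle X_j\rangle_x]_{\fg_x M}
= \sum_{\upsilon_k=\upsilon_i+\upsilon_j} c_{ij}^k(x)\,\langle X_k\rangle_x,
\]
which after taking the bilinear extension in $v,w$ is exactly $\ad_x^\bX(v)(w)$ by Definition~\ref{def_adxbX}. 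No step of this plan is really an obstacle; the only point to keep track of carefully is the bookkeeping between the three cases $\upsilon_k<,=,>\upsilon_i+\upsilon_j$, the last of which is ruled out by the adapted filtration and the first two of which produce the higher order remainder and the homogeneous principal part respectively.
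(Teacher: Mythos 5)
Your proposal is correct and follows essentially the same route as the paper: expand the bracket in the structural constants $c_{ij}^k$, use the adapted filtration to rule out $\upsilon_k>\upsilon_i+\upsilon_j$, split the remaining terms according to $\upsilon_k=\upsilon_i+\upsilon_j$ (homogeneous part, identified with $\ad_x^\bX$ by quotienting) versus $\upsilon_k<\upsilon_i+\upsilon_j$ (higher order remainder). The paper records these same three cases as observations (a), (b), (c) just before the proof and then assembles them in exactly the way you do.
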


Before entering into the proof of Proposition \ref{prop_adxbX}, let us state some structural properties of  the commutator brackets for $\bX$.
We denote by  $c_{i,j,k}(x)$  the structural constants of the commutator brackets for $\bX$, that is, 
$$
\ad (X_i)(X_j)(x)=
[X_i,X_j](x) = \sum_{k=1}^n c_{i,j,k}(x) X_k(x);
$$
the $c_{i,j,k}(x)$ are smooth functions of $x\in U.$
As $\bX$ is adapted to the filtration, we see:
\begin{itemize}
	\item[(a)] $c_{i,j,k}=0$ when $\upsilon_k>\upsilon_i+\upsilon_j$,
	\item[(b)] when $\upsilon_k=\upsilon_i+\upsilon_j$, the $c_{i,j,k}(x)$'s are the structural constants of $[,]_{\fg_x}$ for $\langle \bX\rangle_x$:
	$$
\left[\langle X_i\rangle_x ,\langle X_j\rangle_x\right]_{\fg_x} = 
\sum_{\upsilon_k=\upsilon_i+\upsilon_j} c_{i,j,k}(x) \langle X_k\rangle_x,
$$
or in other words, using the map $\ad_x^\bX:\bR^n\times \bR^n \to \bR^n$ (see Definition \ref{def_adxbX})  and  denoting the canonical basis of $\bR^n$ by $(e_1,\ldots,e_n)$,
$$ 
\ad_x^\bX (e_i,e_j)=\sum_{\upsilon_k=\upsilon_i+\upsilon_j} c_{i,j,k} e_k
$$
\item[(c)]  the polynomial $(c_{i,j,k}(x)v_i w_j)_{k=1}^n$ from the components of 
$$
 \sum_{\upsilon_k<\upsilon_i+\upsilon_j}  c_{i,j,k}(x) v_i w_j X_k(x) 
 = [v_i X_i,w_j X_j](x) - \sum_{\upsilon_k=\upsilon_i+\upsilon_j}  c_{i,j,k}(x) v_i w_j X_k(x),
$$
are higher order. 
\end{itemize}  

\begin{proof}
Using the structural constants $c_{i,j,k}$, we can  write
$$
\left[\sum_{i=1}^n v_i X_i \, ,\, \sum_{j=1}^n w_j X_j\right](x) =
\sum_{\upsilon_i+\upsilon_j\leq\upsilon_k} v_iw_j c_{i,j,k}(x) X_k(x)
=
\sum_{k=1}^n (q_k(x;v,w)+r_k(x;v,w))
 X_k(x),
$$
where
$$
q_k(x;v,w) := \sum_{\upsilon_k = \upsilon_i + \upsilon_j} c_{i,j,k} (x) v_i w_j.
$$
Hence the maps $q(x;v,w):= (q_k(x;v,w))_{k=1}^n$ and $r(x;v,w):= (r_k(x;v,w))_{k=1}^n$ are polynomials in $v,w$ with smooth coefficients in $x\in U$.
From the observation in (b) above, 
$q(x;v,w)=\ad_x^\bX (v) (w)$ is homogeneous in $v,w$, while from the observation in (c), 
$r(x;v,w)$ is higher order in $v,w$.
\end{proof}

Proposition \ref{prop_adxbX}  implies inductively the following property of nested commutator brackets:
\begin{corollary}
 \label{cor_prop_adxbX} 
 Let $\bX$ be an adapted frame on an open set $U$ of the filtered manifold $M$. 
Let $\ell\in\bN_0$, $\alpha=(\alpha_1,\ldots,\alpha_\ell)\in \bN_0^\ell$
and $\beta=(\beta_1,\ldots,\beta_\ell)\in \bN_0^\ell$.
We can write 
\begin{align*}
&(\ad \sum_{i=1}^n v_i X_i  )^{\alpha_1}(\ad \sum_{i=1}^n w_i X_i  )^{\beta_1}
\ldots (\ad \sum_{i=1}^n v_i X_i  )^{\alpha_\ell}(\ad \sum_{i=1}^n w_i X_i  )^{\beta_\ell} \ (x)
\\&\qquad= 
\sum_{k=1}^n (q_k^{\alpha,\beta}(x;v,w)+r^{\alpha,\beta}_k(x;v,w))
 X_k(x),
\end{align*}
with 
$q^{\alpha,\beta}_k(x;v,w)$ 
and $r^{\alpha,\beta}_k(x;v,w)$ polynomials in $v,w$ with smooth coefficients in $x\in U$;
 the polynomial map 
$q (x;v,w) = (q_k^{\alpha,\beta}(x;v,w))_{k=1}^n$ is 
$([\alpha]+[\beta])$-homogeneous and is
defined via 
$$
q^{\alpha,\beta}(x;v,w):=
  (\ad_x^\bX v)^{\alpha_1}(\ad_x^\bX w)
)^{\beta_1}
\ldots (\ad_x^\bX v)^{\alpha_\ell}(\ad_x^\bX w)^{\beta_\ell} \ (x)
$$
while the polynomial map 
$r^{\alpha,\beta}(x;v,w)=(r^{\alpha,\beta}_k(x;v,w))_{k=1}^n$ is higher order. 
\end{corollary}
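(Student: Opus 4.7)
My plan is to proceed by induction on the total number $N = |\alpha| + |\beta|$ of iterated $\ad$ operators, interpreting the nested bracket in the BCH convention of Notation \ref{notation_BCH} (so that the innermost $\ad$ supplies the vector field argument that everything acts on). The base case $N = 1$ is precisely Proposition \ref{prop_adxbX}. Passing from one nested bracket to the next by prepending either $\ad(\sum v_i X_i)$ or $\ad(\sum w_i X_i)$ is symmetric in the two variables, so without loss of generality I would carry out the inductive step for a new factor of $\ad(\sum v_i X_i)$.

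For the inductive step, suppose the result is proved at length $N$, giving a nested bracket $B(x) = \sum_k (p_k(x;v,w) + s_k(x;v,w)) X_k(x)$ with $p = (p_k)$ satisfying the $\ad_x^\bX$-formula and $s = (s_k)$ higher order. The Leibniz rule for Lie brackets splits $\ad(\sum_i v_i X_i)(B)$ into
\begin{equation*}
\sum_{i,k} v_i\, X_i(p_k + s_k)\, X_k + \sum_{i,k} v_i (p_k + s_k)\, [X_i, X_k].
\end{equation*}
In the first sum, $X_i$ differentiates only in the $x$-variable, so each $v,w$-monomial inside $p_k$ or $s_k$ keeps its weighted degree; the extra factor $v_i$ then strictly raises the weighted degree by $\upsilon_i \geq 1$ in the $X_k$-component, which by Proposition \ref{prop_char_higherorder}(4) places the whole sum entirely in the higher-order bucket. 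In the second sum, I would expand each $[X_i,X_k]$ using the structural observations (a)--(c) following Proposition \ref{prop_adxbX}: the parts with $\upsilon_m = \upsilon_i + \upsilon_k$ contribute via coefficients that match $\ad_x^\bX(e_i)(e_k)$ componentwise, and reassemble after the sum in $i,k$ into exactly one further application of $\ad_x^\bX v$ to the previous homogeneous polynomial $p$; this produces the updated homogeneous piece matching the $q^{\alpha',\beta'}$ recipe. All remaining cross-contributions, namely those involving either an $s_k$ factor or a structural coefficient with $\upsilon_m < \upsilon_i + \upsilon_k$, will be absorbed into the new remainder.

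The main obstacle is the weighted-degree bookkeeping underlying the final sentence: showing rigorously that every cross product not part of the $q^{\alpha',\beta'}$ recipe satisfies the componentwise weighted-degree condition of Proposition \ref{prop_char_higherorder}(4). The key observation is that the class of higher-order polynomial maps is stable under left multiplication by any $v_i$ or $w_i$ and under differentiation by $X_j$ of the $x$-dependent coefficients, and that the product of a homogeneous polynomial with a higher-order one is again higher order once read componentwise, as in Example \ref{ex_hotpoly} together with Definition \ref{def_hot}. Granting this, collecting the homogeneous contributions yields the claimed $q^{\alpha',\beta'}$ and collecting the residual contributions yields a polynomial $r^{\alpha',\beta'}$ that is higher order in $v,w$ with smooth $x$-dependence, closing the induction.
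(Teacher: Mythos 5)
Your proposal is correct and follows essentially the same route as the paper: induction from Proposition \ref{prop_adxbX}, using the Leibniz rule for the bracket, stability of the higher-order class under multiplication by $v_i$ or $w_i$ (Example \ref{ex_hotpoly}) and under $x$-differentiation by the $X_j$ (Corollary \ref{cor_derivatives_cV}~(2)), and the structural identities (a)--(c) to identify the homogeneous part with one more application of $\ad_x^\bX$. The paper's proof is a one-sentence sketch pointing to exactly these ingredients; you have merely carried out the inductive step explicitly.
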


\begin{proof}
The proof follows by induction from Proposition \ref{prop_adxbX}, together with the observations from Examples \ref{ex_hotpoly}
and the fact that if $q(x;v,w)$  and $r(x;v,w)$ are polynomial maps that are respectively homogeneous and  higher order, then so are their $x$-derivatives $X_\ell q (x;v,w)$ and 
$X_\ell r (x;v,w)$,
$\ell =1,\cdots n$ (for the latter, see Corollary \ref{cor_derivatives_cV} (2)).    
\end{proof}

\subsubsection{Examples and further properties of higher order remainders}

In this section, we give two further examples  and some  general properties of  higher order remainders.

\smallskip

The first example concerns the laws  of the osculating groups, or rather the law $*_x^\bX$ of the isomoprhic group $\bR^n$ (see \eqref{eqdef_v*xw}). 
Later on, we will need the following property:

\begin{lemma}
\label{lem_group_laws}
Let  $(\bX,U)$ be an adapted frame on a filtered manifold $M$. 
\begin{enumerate}
    \item The map $(x;v,w)\mapsto v*_x^\bX w$ is  a homogeneous polynomial map in $v,w$ with coefficients depending smoothly in $x\in U$, and so are the maps $(x;v,w)\mapsto\bX^\beta_x (v*_x^\bX w)$.
    
    \item The maps defined by
\begin{align*}
r_1(x;v,w)
&:=w *^\bX_{\exp^\bX_x v}v -w*^\bX_x v	\\
r_2(x;v,w)
&:=\left(-(w*^\bX_x v) \right)*_x^\bX \left( w  *^\bX_{\exp^\bX_x v}v \right)	
\end{align*}
are smooth on an open neighbourhood of the zero section of $U\times \bR^n\times \bR^n$. They are higher order in $v,w$.
\end{enumerate}
\end{lemma}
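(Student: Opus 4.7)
The plan is to prove part (1) as a direct corollary of Lemma~\ref{lem_BCHGxM}, and to reduce part (2) entirely to careful weight-bookkeeping using Proposition~\ref{prop_char_higherorder}(4), with the higher-order property of $r_1$ fed into a BCH expansion for $r_2$.

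For part (1), I would simply invoke Lemma~\ref{lem_BCHGxM}: it gives $v*_x^\bX w = z(x;v,w) = {\rm BCH}_{s_0}(v,w;\ad_x^\bX)$, with $k$-th component a $\upsilon_k$-homogeneous polynomial in $(v,w)$ whose coefficients are smooth in $x\in U$. Applying $\bX^\beta_x$ differentiates only the $x$-variable, so it preserves both the polynomial structure and the $\upsilon_k$-homogeneity in $(v,w)$.

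For part (2), smoothness of $r_1$ and $r_2$ on a neighbourhood of the zero section is immediate from the smoothness of $(x,v)\mapsto \exp^\bX_x v$ on $U_\bX$ composed with the smooth polynomial map $(y;v,w)\mapsto w*_y^\bX v$. For the higher-order claim on $r_1$, I would write, using part (1),
\[
w*_y^\bX v = \sum_{k}\sum_{[\alpha]+[\beta]=\upsilon_k} c_{\alpha,\beta,k}(y)\, w^\alpha v^\beta\, e_k,
\qquad c_{\alpha,\beta,k}\in C^\infty(U),
\]
so that
\[
r_{1,k}(x;v,w) = \sum_{[\alpha]+[\beta]=\upsilon_k} \bigl(c_{\alpha,\beta,k}(\exp^\bX_x v) - c_{\alpha,\beta,k}(x)\bigr)\, w^\alpha v^\beta.
\]
The smooth function $v \mapsto c_{\alpha,\beta,k}(\exp^\bX_x v) - c_{\alpha,\beta,k}(x)$ vanishes at $v=0$, so by Lemma~\ref{lem_Taylorexpxu} its Taylor series in $v$ starts at Euclidean order $\geq 1$, hence at weighted order $\geq \upsilon_1 \geq 1$. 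Multiplying by the monomial $w^\alpha v^\beta$ of weighted degree $\upsilon_k$ produces Taylor monomials of weighted degree $>\upsilon_k$ in $(v,w)$, and Proposition~\ref{prop_char_higherorder}(4) yields the conclusion.

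For $r_2$, I would set $a := w*_x^\bX v$ so that $w*^\bX_{\exp^\bX_x v} v = a + r_1$ (ordinary vector addition in $\bR^n$). Applying the BCH expansion for the group law $*_x^\bX$ with bracket $\ad_x^\bX$, and using $[a,a]_x^\bX = 0$, I obtain
\[
r_2 = (-a) *_x^\bX (a + r_1) = r_1 + \tfrac{1}{2}[r_1,a]_x^\bX + \cdots,
\]
where every term past the leading $r_1$ is a nested $\ad_x^\bX$-bracket in which $r_1$ appears at least once (terms involving only $a$'s sum to the BCH expansion of $(-a)*_x^\bX a = 0$). In components, such a nested bracket is a sum of products of $r_{1,i}$'s and $a_j$'s weighted by structural constants $c_{i,j,k}(x)$ of $\fg_x M$, which vanish unless the weights add exactly to $\upsilon_k$. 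Since each $r_{1,i}$ has Taylor series of weighted degree $>\upsilon_i$ (by the $r_1$ step above) and each $a_j$ is exactly $\upsilon_j$-homogeneous, each such product has Taylor series of weighted degree $>\upsilon_k$, and $r_2$ is higher order.

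The only real obstacle is the weight-bookkeeping across all BCH terms; this is handled uniformly by the observation that $\ad_x^\bX$ preserves the gradation of $\fg_x M$, so that one occurrence of $r_1$ suffices to inject the strict weight surplus into every term.
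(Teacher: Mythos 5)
Your Part (1) is the same as the paper's: both reduce to Lemma~\ref{lem_BCHGxM}. Your Part (2) is correct but follows a genuinely different route, so let me compare.

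The paper proves Part (2) by using the characterization in Proposition~\ref{prop_char_higherorder}(2): one checks directly that $\delta_\eps^{-1}r_i(x;\delta_\eps v,\delta_\eps w)\to 0$ as $\eps\to 0$. For $r_1$, the homogeneity of each group law $*^\bX_y$ and the linearity of $\delta_\eps$ give $\delta_\eps^{-1}r_1(x;\delta_\eps v,\delta_\eps w)=w*^\bX_{\exp^\bX_x\delta_\eps v}v-w*^\bX_x v$, which tends to $0$ because $\ad^\bX_{\exp^\bX_x\delta_\eps v}\to\ad^\bX_x$. For $r_2$, the same homogeneity yields $\delta_\eps^{-1}r_2(x;\delta_\eps v,\delta_\eps w)=(-(w*^\bX_x v))*^\bX_x(w*^\bX_{\exp^\bX_x\delta_\eps v}v)\to(-(w*^\bX_x v))*^\bX_x(w*^\bX_x v)=0$. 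No Taylor expansion or explicit BCH bookkeeping enters: the limit characterization makes the continuity of the group law in the base point carry the entire argument. You instead use the characterization in Proposition~\ref{prop_char_higherorder}(4), verifying componentwise that the Taylor series start strictly above the required weight: for $r_1$ this is the observation that the coefficients $c_{\alpha,\beta,k}(\exp^\bX_x v)-c_{\alpha,\beta,k}(x)$ vanish at $v=0$; for $r_2$ you substitute $w\mapsto a+r_1$ into the BCH formula, let the pure-$a$ terms telescope to $(-a)*^\bX_x a=0$, and use that every surviving nested bracket contains at least one $r_1$ factor, whose weight surplus then propagates via the gradation constraint on the structural constants. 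Both arguments are valid; the paper's is shorter and isolates the conceptual reason (continuity of $\ad^\bX$ along the geometric exponential), while yours is more explicit and, as a byproduct, makes the exact weight count of the error visible, which can be useful for quantitative estimates. One presentational remark: the claim that the $a$-only contributions in the BCH expansion of $(-a)*^\bX_x(a+r_1)$ sum to ${\rm BCH}_{s_0}(-a,a)=0$ is correct, but it rests on the multilinearity of nested brackets plus the exactness of the truncated BCH on a step-$s_0$ Lie algebra; it would be worth stating those two facts explicitly rather than asserting the grouping.
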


\begin{proof}
Part (1) follows from 
Lemma \ref{lem_BCHGxM} and the homogeneity of the group law (see Example~\ref{ex_homopolymap_grlaw}).
For Part (2) and $r_1$,  we have
by the homogeneity of the group law
\begin{align*}
 \delta_{\eps}^{-1}r_1(x;\delta_{\eps} v,\delta_{\eps}w)
 &=w *^\bX_{\exp^\bX_x \delta_{\eps} v}v -w*_x v    \\
 &={\rm BCH}_{s_0}(v,w;\ad_{\exp^\bX_x \delta_{\eps} v}^\bX)
 -{\rm BCH}_{s_0}(v,w;\ad_x^\bX);
\end{align*}
this last expression tends to 0 as $\eps\to 0$ since
 ${\rm BCH}_{s_0}(v,w;\ad_x^\bX)$ is a polynomial expression in $\ad_x^\bX v$ and $\ad_x^\bX w$ and $\lim_{\eps\to 0}\ad_{\exp^\bX_x \delta_{\eps} v}^\bX = \ad_x^\bX$. 
 For $r_2$, we see
 \begin{align*}
 \delta_{\eps}^{-1}r_2(x;\delta_{\eps} v,\delta_{\eps}w)
 &=\left(-(w*^\bX_x v) \right)*_x^\bX \left( w  *^\bX_{\exp^\bX_x \delta_\eps v}v \right)\longrightarrow_{\eps\to 0}0.
\end{align*} 
 We conclude with Proposition \ref{prop_char_higherorder}.
\end{proof}

Before giving the second example, let us show some properties of higher order remainders: 

\begin{lemma}
\label{lem:exp_derivative_bis}
 Let $(\bX, U)$ be an adapted frame on a filtered manifold $M$. Let $r:  U\times\bR^n\times \bR^n\to \bR^n$ be a   smooth  higher order map.
Then, the maps 
 \begin{align*}
        (v,w)  \mapsto r_x(v*_x^\bX w,v),\qquad     (v,w)  \mapsto r_{\exp_x^\bX v} (v,w),\qquad 
      (v,w)  \mapsto (-v)*_x^{\bX}(v+{r}_x(v,w)),
 \end{align*}
are  also smooth and higher order.  
 \end{lemma}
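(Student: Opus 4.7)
The plan is to rely on the equivalent characterisations in Proposition~\ref{prop_char_higherorder}: a smooth map is higher order if and only if $\delta_{\eps^{-1}} f(\delta_\eps\cdot) \to 0$ pointwise as $\eps \to 0$, equivalently $\delta_{\eps^{-1}} f(\delta_\eps\cdot) = \eps R(\eps,\cdot)$ for a smooth $R$ (statements (2) and (3)). Smoothness of all three maps is automatic from smoothness of $r$, of $\exp_x^\bX$, and of the polynomial group law $*_x^\bX$ (Lemma~\ref{lem_group_laws}(1)), so the content is the higher-order estimate. Throughout I will use that $\delta_\eps$ is a \emph{linear} automorphism of $\bR^n$ (addition commutes with $\delta_\eps$) and that $\delta_\eps$ is a morphism of the graded Lie group $(\bR^n,*_x^\bX)$, i.e.\ $\delta_\eps v *_x^\bX \delta_\eps w = \delta_\eps(v*_x^\bX w)$.

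For the first map $s_1(v,w) := r_x(v*_x^\bX w, v)$, the group-law homogeneity gives
\[
\delta_{\eps^{-1}} s_1(\delta_\eps v, \delta_\eps w) = \delta_{\eps^{-1}} r_x\bigl(\delta_\eps(v*_x^\bX w),\, \delta_\eps v\bigr),
\]
and this tends to $0$ by applying Proposition~\ref{prop_char_higherorder}(2) to $r_x$ at the point $(v*_x^\bX w, v) \in \bR^n \times \bR^n$. For the third map $s_3(v,w) := (-v)*_x^\bX(v + r_x(v,w))$, linearity of $\delta_{\eps^{-1}}$ combined with characterisation (3) applied to $r_x$ yields
\[
\delta_{\eps^{-1}}\bigl(\delta_\eps v + r_x(\delta_\eps v, \delta_\eps w)\bigr) = v + \eps R_x(\eps,v,w),
\]
and then group-law homogeneity gives $\delta_{\eps^{-1}} s_3(\delta_\eps v, \delta_\eps w) = (-v)*_x^\bX(v + \eps R_x(\eps,v,w))$. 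As $\eps \to 0$ this converges to $(-v)*_x^\bX v = 0$, since in $\Exp$-coordinates $-v$ is the group inverse of $v$.

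The second map $s_2(v,w) := r_{\exp_x^\bX v}(v,w)$ is the one requiring slightly more care, since the base point itself depends on $v$: the pure dilation-equivariance trick used above breaks down. Here I would invoke Corollary~\ref{cor_derivatives_cV}(1) with parameter $y$ ranging over (a chart of) $U$: since $r$ is smooth jointly in $(y,v,w)$ and higher-order in $(v,w)$ for every fixed $y$, there is a continuous map $R(y,\eps,v,w)$ with $\delta_{\eps^{-1}} r_y(\delta_\eps v, \delta_\eps w) = \eps R(y,\eps,v,w)$. Substituting $y = \exp_x^\bX(\delta_\eps v)$, which depends continuously on $\eps$ and tends to $x$ as $\eps \to 0$, we obtain
\[
\delta_{\eps^{-1}} s_2(\delta_\eps v, \delta_\eps w) = \eps\, R\bigl(\exp_x^\bX(\delta_\eps v),\, \eps,\, v,\, w\bigr) \longrightarrow 0.
\]
Equivalently, one can argue via Proposition~\ref{prop_char_higherorder}(4) and the Folland--Stein Taylor inequality, both of which give a bound $|r_{y,j}(v,w)| \leq C\,|(v,w)|^{\upsilon_j+1}$ uniform in $y$ over compact subsets of $U$; this compactness-plus-uniformity argument is precisely what the $v$-dependent base point forces, and is the main technical subtlety of the proof.
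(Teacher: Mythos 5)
Your proof is correct and takes essentially the same route as the paper's: all three maps are handled by the dilation criteria of Proposition~\ref{prop_char_higherorder} (statements (2) and (3)), using the linearity of $\delta_\eps$ and its compatibility with the group law $*_x^\bX$ for the first and third maps, and Corollary~\ref{cor_derivatives_cV}(1) with the $x$-dependent base point $\exp_x^\bX(\delta_\eps v)\to x$ for the second map. The only cosmetic difference is that for the third map the paper leaves $\delta_\eps^{-1} r_x(\delta_\eps v,\delta_\eps w)$ explicit and applies criterion (2) directly, while you invoke criterion (3) to introduce the factor $\eps R_x$ before passing to the limit — the two are interchangeable here. The closing aside about a Folland--Stein Taylor estimate with compactness-plus-uniformity is a valid alternative for the second map, but the Corollary~\ref{cor_derivatives_cV}(1) argument you give first is cleaner and is the one the paper uses.
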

 
 Note that this lemma extends to functions $r_x$ depending only on the variable~$v$. 

The proof relies on the characterizations of higher order functions in  Proposition~\ref{prop_char_higherorder}, and its extensions to functions depending on a parameter in Corollary \ref{cor_derivatives_cV}.

\begin{proof}
For analyzing the function, $(v,w)\mapsto r_x(v*_x^\bX w,v)$, we observe that by the properties of dilation with respect to the group law,  for $v,w\in\bR^n$,
\[
\delta_\eps^{-1} r_x(\delta_\eps v*_x^\bX \delta_\eps  w,\delta_\eps v)=\delta_\eps^{-1} r_x(\delta_\eps( v*_x^\bX  w),\delta_\eps v).
\]
Therefore
\[
\lim_{\eps\to 0} \delta_\eps^{-1} r_x(\delta_\eps v*_x^\bX \delta_\eps  w,\delta_\eps v)=0,
\]
and $(v,w)\mapsto r_x(v*_x^\bX w,v)$ is higher order by  Proposition \ref{prop_char_higherorder}.

Regarding 
 the function  $(v,w)\mapsto r_{\exp_x^\bX v} (v,w)$, 
 consider the function $R=R(x,v,w,\eps)$ such that 
 $\delta_\eps^{-1} r(x,\delta_\eps v,\delta_\eps w)= \eps R(x,v,w,\eps)$. 
  By Corollary \ref{cor_derivatives_cV} (1), $R$ is continuous in $x,v,w,\eps$ and we have 
 \[
 \lim_{\eps\to 0} \delta_\eps^{-1} r(\exp_x^\bX \delta_\eps v,\delta_\eps v,\delta_\eps w) =  
  \lim_{\eps\to 0} \,\eps R( \exp_x^\bX \delta_\eps v,v,w)=0,
 \]
 showing that $(v,w)\mapsto r_{\exp_x^\bX v} (v,w)$ is higher order by Proposition \ref{prop_char_higherorder}.

Consider now 
$(v,w)\mapsto (-v)*_x^{\bX}(v+{r}_x(v,w))=:\widetilde r_x(v,w)$.
By homogeneity of the group laws, we have:
$$
\delta_\eps ^{-1} \widetilde r_x (\delta_\eps v,\delta_\eps w)	
=
(-v)*_x  ( v+\delta_\eps ^{-1} r_x(\delta_\eps v,\delta_\eps w))
=
{\rm BCH}_{s_0}( -v, v+\delta_\eps ^{-1} r_x(\delta_\eps v,\delta_\eps w), \ad_x^\bX)
$$
As  $r$  is higher order, by Proposition \ref{prop_char_higherorder}, 
$$
\lim_{\eps\to 0} \delta_\eps ^{-1} r_x(\delta_\eps v,\delta_\eps w)=0
\quad\mbox{so}\quad
\lim_{\eps\to 0} 	\delta_\eps ^{-1} \widetilde r_x (\delta_\eps v,\delta_\eps w)	
=
(-v)*_x   v
=0,
$$
and  $\widetilde r_x$  is higher order. 
\end{proof}

The second example of higher order remainders we will use later on occurs when considering  changes of adapted frames:

\begin{proposition}
\label{prop_T_Taylor}
We consider two local frames $\bX$ and $\bY$ as in  Lemma \ref{lem_uYX} (whose notation we keep), with the additional assumption that they are adapted to the filtration of $M$. 
The map $r=r(x,v)=r_x(v)$ defined via 
\begin{equation}\label{def:rx(v)}
-{\rm diag}(T_x)^{-1}u_x(-v) = v *_x^\bX 
r(x,v), 
\qquad 
(x, v)\in U_{\bX,\bY}
\end{equation}
is smooth on $U_{\bX,\bY}$ and higher order. 
\end{proposition}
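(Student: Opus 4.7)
The plan is to reduce the claim to an application of Lemma~\ref{lem:exp_derivative_bis}, after extracting the linear part of $u_x$ at the correct graded scale. Setting
\[
\phi_x(v):=-{\rm diag}(T_x)^{-1}u_x(-v),
\]
the defining relation \eqref{def:rx(v)} reads $v*_x^\bX r(x,v)=\phi_x(v)$, which yields, by multiplication by the group inverse on the left,
\[
r(x,v)=(-v)*_x^\bX\phi_x(v)=(-v)*_x^\bX\bigl(v+\tilde r_x(v)\bigr),\qquad \tilde r_x(v):=\phi_x(v)-v.
\]
Smoothness of $r$ on $U_{\bX,\bY}$ is then automatic: $u$ is smooth by Lemma~\ref{lem_uYX}, the change-of-frame matrix $T$ and its block-diagonal inverse are smooth in $x$, and $*_x^\bX$ is polynomial in $(v,w)$ with smooth coefficients in $x$ by Lemma~\ref{lem_group_laws}(1). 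Viewing $\tilde r_x$ as a function of $(v,w)$ that is constant in $w$, the third assertion of Lemma~\ref{lem:exp_derivative_bis} reduces the higher-order property of $r$ to that of $\tilde r_x$.

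The key step is then to show that $\psi_x(w):=u_x(w)-{\rm diag}(T_x)\,w$ is higher order in $w$. I would verify condition~(4) of Proposition~\ref{prop_char_higherorder} by inspecting each term of the Taylor series of $u_x$ provided by Lemma~\ref{lem_uYX} and checking that every $w$-monomial occurring in the $k$-th component of $\psi_x$ has weight strictly greater than $\upsilon_k$. For the linear term $T_x w$, Lemma~\ref{old_lem_XY}(1) shows that $T$ is block upper-triangular, so $T_{k,i}(x)\neq 0\Rightarrow \upsilon_i\geq\upsilon_k$; subtracting $({\rm diag}(T_x)\,w)_k=\sum_{\upsilon_i=\upsilon_k}T_{k,i}(x)w_i$ kills exactly the equality case, leaving strict inequality. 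In the $m$-th term with $m\geq 2$, namely $\frac{1}{m!}\bigl(\sum_j w_j X_j\bigr)^{m-1}T_\cdot w\bigl|_{z=x}$, every monomial in the $k$-th component is a scalar multiple of $w_i w_{j_1}\cdots w_{j_{m-1}}$ with $T_{k,i}(x)\neq 0$, whose weight $\upsilon_i+\upsilon_{j_1}+\cdots+\upsilon_{j_{m-1}}$ is at least $\upsilon_k+(m-1)>\upsilon_k$.

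Finally, adding and subtracting ${\rm diag}(T_x)^{-1}{\rm diag}(T_x)(-v)=-v$ inside the definition of $\phi_x(v)-v$ produces the identity
\[
\tilde r_x(v)=-{\rm diag}(T_x)^{-1}\psi_x(-v).
\]
Higher-order-ness of $\tilde r_x$ follows at once: the sign flip $v\mapsto -v$ is a homogeneous linear map, and ${\rm diag}(T_x)^{-1}$ is block-diagonal with each block acting on a uniform-weight subspace, so it commutes with the dilations $\delta_\eps$ and preserves the component-wise weight structure. Applying Lemma~\ref{lem:exp_derivative_bis} then closes the proof.

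The main subtlety is the weight bookkeeping in the mixed Taylor terms of $u_x$; the block upper-triangularity of $T$ furnished by Lemma~\ref{old_lem_XY} is exactly the algebraic input that converts the needed weight inequalities into strict ones, which is what underpins the whole reduction.
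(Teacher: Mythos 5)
Your proof is correct and follows essentially the same route as the paper: show $u_x(v) - {\rm diag}(T_x)v$ is higher order via the Taylor series of Lemma~\ref{lem_uYX} and the block upper-triangular structure of $T$, deduce that $\tilde r_x$ is higher order (since ${\rm diag}(T_x)^{-1}$ commutes with the dilations), and close with the third assertion of Lemma~\ref{lem:exp_derivative_bis}. The only difference is that you spell out the weight bookkeeping for the Taylor terms in more detail than the paper, which simply asserts these terms are higher order.
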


\begin{proof}
By Lemma \ref{lem_uYX}, the Taylor series of $u_x(v)-\textrm{diag}(T_x)v$ about $v\sim 0$ is given by
$$
u_x(v)-\textrm{diag}(T_x)v \sim    \left(T_x-\textrm{diag}(T_x)\right)v
    +\sum_{k\geq 2}\frac{1}{k!}\Big (\sum_j v_j X_j\Big )^{k-1}T_xv.
$$
Since   $\left(T_x-\textrm{diag}(T_x)\right)v$ and each term of the series $\sum_{k>2}$ are higher order in $v$,  $u_x(v)-{\rm diag}(T_x)v$  is higher order. Hence, the function 
$$
\widetilde{r}_x(v):=-{\rm diag}(T_x)^{-1}u_x(-v)-v
$$
 is smooth on $U_{\bX, \bY}$ and higher order.
By Lemma \ref{lem:exp_derivative_bis}, 
$r_x(v)=(-v)*_x(v+\widetilde{r}_x(v))$
 is higher order.  
\end{proof}

\subsection{Composition of geometric exponential mappings}\label{sec:comp_exp}

In this section, we study the composition of geometric exponential mappings, comparing it with the law of the osculating group. Then, we derive technical properties that we shall use later.

\subsubsection{Relation with the law group}\label{section_3.5.1}

The core of this section is the next result which is expressed using the notion of higher order  developed in Section \ref{sec:higher_order}
and the structure of the osculating groups via the group law $*_x^\bX$ defined in \eqref{eqdef_v*xw}:
\begin{theorem}
\label{lem_Cq1BCH}
 Let $\bX$ be an adapted frame on an open set $U\subset M$.
Then 
$\ln^\bX_x(\exp^\bX_{\exp^\bX_x w} (v))$ may be written in the form:
 \begin{equation}
    \label{eq_Cq1BCH_loc}
    \ln^\bX_x(\exp^\bX_{\exp^\bX_x w} (v))  =   w*^\bX_x v \ + \ r (x;v,w);
\end{equation}
 the map  $r(x;v,w)$  is smooth on an open neighbourhood of the 0-section of $U\times \bR^n\times \bR^n$,  valued in $\bR^n$ and    higher order in $v,w$. 
\end{theorem}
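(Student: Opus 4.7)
My plan is to reduce the higher-order property to a rescaling limit, and then to evaluate that limit by the Baker--Campbell--Hausdorff formula for composition of vector-field flows combined with the osculating approximation from Corollary \ref{cor_prop_adxbX}. Smoothness of $r$ on an open neighbourhood of the zero section is automatic, since $\exp^\bX$ is a local diffeomorphism near the zero section of $U_\bX$ (so $\ln^\bX$ is smooth where defined) and $(x; v, w) \mapsto w *_x^\bX v$ is smooth by Lemma \ref{lem_group_laws}(1).

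For the higher-order property, by Proposition \ref{prop_char_higherorder}(2) combined with Corollary \ref{cor_derivatives_cV}(1) (which handles continuous dependence on the parameter $x$), it suffices to show that
\begin{equation*}
\lim_{\eps \to 0} \delta_\eps^{-1}\, \ln^\bX_x\!\bigl(\exp^\bX_{\exp^\bX_x \delta_\eps w}(\delta_\eps v)\bigr) = w *_x^\bX v,
\end{equation*}
after using the homogeneity of $*_x^\bX$ from Lemma \ref{lem_BCHGxM} to identify $\delta_\eps^{-1}(\delta_\eps w *_x^\bX \delta_\eps v)$ with $w *_x^\bX v$.

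To compute this limit, I write $\exp^\bX_{\exp^\bX_x w}(v) = \Phi^1_{V_v} \circ \Phi^1_{V_w}(x)$, where $V_u := \sum_i u_i X_i$ for $u \in \bR^n$ and $\Phi^t_Y$ denotes the time-$t$ flow of $Y$. The BCH formula for composition of flows recalled in Section \ref{subsubsec_BCH} (see also the appendix of \cite{NagelSteinWainger1985}), applied with commutator structure given by the bracket of vector fields, yields for $N$ sufficiently large
\begin{equation*}
\ln^\bX_x\!\bigl(\Phi^1_{V_v} \circ \Phi^1_{V_w}(x)\bigr) \;=\; \bigl[\,\mathrm{BCH}_N(V_w, V_v; \ad)\,\bigr](x) \;+\; R_N(x; v, w),
\end{equation*}
where $[Z](x)$ denotes the coordinates of the vector field $Z$ in the frame $\bX$ at $x$, and $R_N$ is a remainder. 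For each nested commutator of $V_w, V_v$ appearing on the right, Corollary \ref{cor_prop_adxbX} decomposes its $\bX$-components at $x$ as a $\upsilon_j$-homogeneous polynomial (namely the corresponding iterated $\ad_x^\bX$-bracket in $w, v$) plus a higher-order term in $(v, w)$. Summing the homogeneous parts across the BCH truncation reconstructs $\mathrm{BCH}_{s_0}(w, v; \ad_x^\bX) = w *_x^\bX v$ by Lemma \ref{lem_BCHGxM} (BCH terms of $\ad_x^\bX$-length greater than $s_0$ vanish by nilpotency of $\fg_x M$), while the collected higher-order contributions together with $R_N$ assemble into $r(x; v, w)$.

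The main obstacle will be making the BCH-for-flows expansion fully rigorous, in particular controlling the truncation remainder $R_N$ so that $\delta_\eps^{-1} R_N(x; \delta_\eps v, \delta_\eps w) \to 0$ as $\eps \to 0$ for $N$ large enough. I expect to handle this by iteratively Taylor-expanding the flows via Lemma \ref{lem_Taylorexpxu} and tracking the homogeneous weights of the monomials in $v, w$ that appear — a bookkeeping very much in the spirit of the Nagel--Stein--Wainger osculating approximation. An alternative route is to rescale the frame $\bX$ by $\delta_\eps$ and compare the resulting rescaled flows directly to the limiting osculating-group flow, using the continuity statement of Corollary \ref{cor_derivatives_cV}(1) to deduce uniformity in $x$.
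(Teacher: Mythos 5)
Your central formula
\[
\ln^\bX_x\!\bigl(\Phi^1_{V_v} \circ \Phi^1_{V_w}(x)\bigr) \;=\; \bigl[\,\mathrm{BCH}_N(V_w, V_v; \ad)\,\bigr](x) \;+\; R_N(x; v, w)
\]
is not correct, and this is the heart of the difficulty rather than a technicality about controlling $R_N$. What BCH-for-flows actually gives (via \cite{NagelSteinWainger1985}) is $\Phi^1_{V_v}\circ\Phi^1_{V_w}(x) \approx \exp_x Z^{(N)}(v,w)$, where $\exp_x$ is the time-one flow of the \emph{vector field} $Z^{(N)}$. Since the $\bX$-coefficients $z^{(N)}_j(x;v,w)$ of $Z^{(N)}$ are genuinely $x$-dependent (they involve the structural functions $c_{i,j,k}(x)$), one has $\exp_x Z^{(N)} \neq \exp^\bX_x\bigl(z^{(N)}(x;v,w)\bigr)$ in general, and consequently $\ln^\bX_x(\exp_x Z^{(N)}) \neq [Z^{(N)}](x)$. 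The equality $\ln^\bX_x(\exp_x Z) = [Z](x)$ holds only for constant-coefficient $Z$ in the frame $\bX$. The paper even flags this explicitly: "in general, we expect that $\exp_x^\bX(z^{(N)}(x;v,w))$ be different from $\exp_x(Z^{(N)}(v,w))$."

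The correct path, and the one the paper takes, is to expand $\ln^\bX_x(\exp_x Z^{(N)})$ in the Taylor series $\sum_{k\geq 0}\frac{1}{k!}\bigl((Z^{(N)})^k \ln^\bX_x\bigr)(x)$ à la Lemma~\ref{lem_Taylorexpxu}. The $k=1$ term does give $z^{(N)}(x;v,w)$, whose homogeneous part via Corollary~\ref{cor_prop_adxbX} is $w *_x^\bX v$ (for $N > s$), as you say. But the $k\geq 2$ terms do \emph{not} vanish (they vanish only when $Z$ has constant coefficients), and showing that they contribute only higher-order remainders requires a non-trivial combinatorial analysis of $(Z^{(N)})^k$ — this is the content of observations (ii) and (iii) in the paper's proof and of the identity \eqref{eq:combinatoric} proved in Appendix~\ref{app:combi}. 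One sorts the expansion of $(Z^{(N)})^k$ by the length $|I|$ of the surviving differential monomial $\bX_I$, uses the special algebraic properties of $f = \ln^\bX_x$ (namely $f(x)=0$, $\sum u_j X_j f(x)=u$, and $(\sum u_j X_j)^k f(x)=0$ for $k>1$) to kill all terms with $|I|>1$, and then observes that the surviving $|I|=1$ terms for $k\geq 2$ carry at least two $z_{j_i}^{(N)}$-factors (one bare, one differentiated) and are therefore higher order. Without this step your claimed decomposition into "homogeneous part plus higher order" is not justified, because the formula it is applied to is itself wrong by the unaccounted $k\geq 2$ contributions. Your "alternative route" via rescaling the frame is too sketchy to evaluate, but note that it too would have to confront the difference between $\exp_x Z$ and $\exp^\bX_x([Z](x))$ at some point.
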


\begin{proof}[Proof of Theorem \ref{lem_Cq1BCH}]
By \cite[Appendix]{NagelSteinWainger1985}, for any $f\in C_c^\infty(U)$, 
we have the following equality between Taylor series for $v,w\sim 0$:
\begin{equation}
\label{eq_BCHcq}
	f(\exp^\bX_{\exp^\bX_x w} (v))
\sim f(\exp_x Z(v,w)),
\end{equation}
where $\exp_x Z $ denotes\ the 1-time flow of a vector field $Z$ on $U$ with initial data $x$ (see Section~\ref{subsec_expX})  and  $Z(v,w)$  formally given by the BCH formula in \eqref{eq_fBCH} at $x$;
in the BCH, 
the adjoint operation $\ad$ refers to the commutator bracket between vector fields, $a=\sum_i w_iX_i$ and $b=\sum_i v_i X_i$.
The meaning of \eqref{eq_BCHcq} is that for any $N\in \bN_0$, we have:
\begin{align}
	f(\exp^\bX_{\exp_x w} v)
	&= f( \exp_x(Z^{(N)}(v,w) ) +O(|v|^{N+1} +|w|^{N+1}),\nonumber
	\\
		&= \sum_{k=0}^N \frac 1{k!} 
		\left((Z^{(N)}(v,w))^k f\right )( x ) +O(|v|^{N+1} +|w|^{N+1}),\label{eq_BCHcq1}
\end{align}
where $Z^{(N)}(v,w)$ denotes the truncation in $(v,w)$ at order $N$ of the formal series defining $Z(v,w)$, 
i.e. with Notation \ref{notation_BCH}:
$$
Z^{(N)}(v,w)={\rm BCH}_N (\sum_i w_iX_i,\sum_i v_i X_i,\ad).
$$
Note that $Z^{(N)}(v,w)$ is indeed a vector field on $U$, 
and its $\bX$-coordinates, i.e. 
$$
z^{(N)}(x;v,w) = (z_j^{(N)}(x;v,w))_{j=1}^n,\qquad
Z^{(N)}(v,w)(x) =\sum_{j=1}^n z_j^{(N)}(x;v,w) X_j(x),
$$
are polynomial in $v,w$ and smooth in $x\in U$. 
Because of this dependence in $x$, in general, we expect that $\exp_x^\bX (z^{(N)}(x;v,w))$ be different from $\exp_x (Z^{(N)}(v,w))$.
\medskip 

We will need the following two observations. 
\begin{enumerate}
\item [(i)] Firstly, Corollary \ref{cor_prop_adxbX} implies that 
$$
z^{(N)}(x;v,w) = \widetilde  z^{(N)}(x;v,w)  \, + \, r^{(N)}(x;v,w), 
$$
 where $r^{(N)}$ and $ \widetilde z^{(N)}$ are $N$-truncated power series   in $v,w$ with smooth coefficients in $x\in U$ with  $r^{(N)}$ of higher order and $\widetilde z^{(N)}$ 
 given by BCH for $\ad_x^\bX$.
 By  Proposition \ref{prop_adxbX},
 if $N>s$ where $s$ is the step of the gradation, we have
 $$
 \widetilde z^{(N)} := {\rm BCH}_N (w,v;\ad_x^\bX)=w*^\bX_x v.
 $$

\item[(ii)] Secondly, omitting the writing of the dependence in $v,w$, 
we have
\begin{align*}
    (Z^{(N)})^2 &=\sum_{j_1,\, j_2}\left(z_{j_1}^{(N)}X_{j_1}z_{j_2}^{(N)}X_{j_2}\right)\\
    &=\sum_{j_1,\, j_2}z_{j_1}^{(N)}z_{j_2}^{(N)} X_{j_1} X_{j_2} \ + \ \sum_{j_1}z_{j_1}^{(N)}\, \sum_j (X_{j_1}z_{j}^{(N)})\, X_{j}
    .
\end{align*}
We write
$\bX_I:=X_{i_1}\ldots X_{i_k}$
for any multi-index
$I=(i_1,\ldots,i_k)\in \{1,\ldots,n\}^k$
of length  $|I|=k$. Then, inductively, 
  we can write  for $k=2,3,\ldots$
\begin{equation}\label{eq:combinatoric}
(Z^{(N)})^k  =
\sum_{(j_1,\ldots,j_k)=[I,I_1,\ldots,I_k]}
(\bX_{I_1}z_{j_1}^{(N)})\ldots (\bX_{I_k}z_{j_k}^{(N)}) \bX_I,
 \end{equation}
where the sum is over the multi-indices $I, I_1,\ldots,I_k$ with 
at least one of the $I_1,\ldots,I_k$  empty but $I\neq \emptyset$, and such that their concatenation $[I, I_1, \ldots , I_k] = (j_1,\ldots, j_k)$ is in  $\{1,\ldots,n\}^k$ with $j_i\not \in I_i$ and $(j_1,\ldots, j_k)$ runs over $\{1,\ldots,n\}^k$. 
This relation is proved in the Appendix~\ref{app:combi}. 
 \end{enumerate}

\smallskip

We can apply the formula in \eqref{eq_BCHcq1} to functions $f$ that are valued in a finite dimensional vector space. We 
choose $f$ as $\ln^\bX_{x}$ locally.
Proceeding for instance as in the proof of Lemma~\ref{lem_Taylorexpxu}, 
we see that for $f=\ln^\bX_{x}$, we have $f(x)=0$
and for any $u\in \bR^n$
\begin{equation}
    \label{eq_sumvXf=v}
    \sum_j u_j X_j f(x) = u = \sum_j u_j e_j,
\end{equation}
with $(e_1,\ldots,e_n)$ denoting the canonical basis of $\bR^n$.
Moreover,
 for $k>1$, 
\begin{equation}
\label{eq:vanishing}
	\left(\sum_j u_j X_j\right)^k f(x) = \sum_{j_1,\ldots,j_k} u_{j_1}\ldots u_{j_k} (X_{j_1}\ldots X_{j_k} f)(x)=
 0. 
\end{equation}
In other words, the (non-commutative) polynomial expression
\begin{equation}\label{def:product_form}
\sum_{I=(j_1,\ldots,j_\ell)} u_{j_1}\ldots u_{j_\ell}\bX_{(j_1,\ldots,j_\ell)}
\end{equation}
vanishes when  applied to $f=\ln^\bX_{x}$ at $x$ if $\ell>1$. Hence, we focus in identifying such expressions in \eqref{eq:combinatoric}. For this we will use the third following observation:
\begin{enumerate}
    \item[(iii)] 
    Reorganising the terms of $(Z^{(N)})^k$ in terms of the cardinal  of the indices $I$ in equation~\eqref{eq:combinatoric}, and writing 
    \begin{equation}\label{eq:ZNk}
(Z^{(N)})^k=\sum_{\ell=1}^k\sum_{I=(j_1,\cdots , j_\ell)} a_I \bX_I,    
    \end{equation}
the terms $\sum_{I=(j_1,\cdots , j_\ell)} a_I \bX_I$ is the sum of terms that have the property~\eqref{def:product_form} for $\ell\geq 1$. 
This is proved in  the Appendix~\ref{app:combi}. 
\end{enumerate}

In view of (iii) and  of~\eqref{eq:vanishing}, when applying  $(Z^{(N)})^k$ written as in \eqref{eq:ZNk} to the function $f=\ln^\bX_{x}$ at $x$,  the $\ell$-th term  vanishes when $\ell >1$.
Hence,  we are left with the sum over the $I$'s with only   1 element:
\begin{align*}
(Z^{(N)})^k  f(x)
&=\sum_{j=1}^n\sum _{(j_1,\ldots,j_k)\in [\{j\},I_1,\cdots, I_k]}
(\bX_{I_1}z_{j_1})\ldots (\bX_{I_k} z_{j_k})X_jf(x)
\\
&=\sum_{j=1}^n\sum _{(j_1,\ldots,j_k)\in [\{j\},I_1,\cdots, I_k]}
(\bX_{I_1}z_{j_1})\ldots (\bX_{I_k} z_{j_k})e_j.
\end{align*}
If $k>1$, 
each term in the sum above is a product of $k$ $x$-derivatives of the higher order polynomials 
$z_{j_i}(x;v,w)$'s   in $(v,w)$;
so $(Z^{(N)})^k  f(x)$ is of higher order.

\smallskip

Therefore, using $f(x)=0$ and equation~\eqref{eq_sumvXf=v},
  we obtain  for any $N>s$
  $$
 \ln^\bX_x(\exp^\bX_{\exp^\bX_x w} (v))
  = z^{(N)}(x;v,w) +
 \sum_{k=2}^N \frac 1{k!} 
		\left(Z^{(N)}(v,w)^k f\right )( x ) +O(|v|^{N+1} +|w|^{N+1}),
 $$
 with $z^{(N)}(x;v,w)=\widetilde z(x;v,w)+r^{(N)}(x;v,w)$, 
 the polynomial
  $\widetilde z(x;v,w) =w*^\bX_x v$
  being homogeneous while  $r^{(N)}(x;v,w)$ and  every 
 $\left((Z^{(N)}(v,w))^k f\right )( x )$, $k>1$, are of higher order. The conclusion follows. 
\end{proof}

\subsubsection{Consequences of Theorem~\ref{lem_Cq1BCH}}

We\ will use Theorem \ref{lem_Cq1BCH} via the following  statement which will bring  key arguments while proving the main properties of the pseudodifferential calculus. This section can be skipped at first reading. We recall that when a frame $(\bX,U)$ has been fixed, for $x\in U$,  $*_x^\bX$ describes the law of the group $G_xM$. We use the shorthand $*_x^\bX=*_x$ in this section.

 \begin{corollary}
\label{cor_lem_Cq1BCH}
We continue with the setting of Theorem \ref{lem_Cq1BCH}. 
\begin{enumerate}
	\item We may write 
$$
-\ln^\bX_{\exp^\bX_{x} v} \exp^\bX_{x}( v*_{x}(-w) )
= w *_x \widetilde r(x;v,w);
$$
above, $\widetilde r$ is a smooth function on a 0-section  of $M\times \bR^n\times \bR^n$   where $(x,v,w)\mapsto -\ln^\bX_{\exp^\bX_{x} v} \exp^\bX_{x} (v*_{x}(-w))$ is defined and smooth. 
Moreover, $\widetilde r(x;v,w)$ is higher order in $(v,w)$ and vanishes at $v=0$ and at $w=0$,
so we  have locally
$$
 \left|\widetilde r(x;v,w) \right|_{\bX}
 =O(|v|_{\bX}), \ O(|w|_{\bX}), \ O \left(|v|_{\bX}+|w|_{\bX}\right)^{1+\frac 1{\upsilon_n}}.
 $$
\item We may write 
$$
-\ln^\bX_{x} \exp^\bX_{\exp^\bX_x -v}( v*_x (-w)  )
= w *_x \check r(x;v,w);
$$
above $\check r$ is a smooth function on a 0-section  of $M\times \bR^n\times \bR^n$   where $(x,v,w)\mapsto -\ln^\bX_{x} \exp^\bX_{\exp^\bX_x -v} ((-w) *_x v) $ is defined and smooth. 
Moreover, $\check  r(x;v,w)$ is higher order in $(v,w)$ and vanishes at $v=0$ and at $w=0$,
so we  have locally
$$
 \left|\check r(x;v,w) \right|_{\bX}
 =O(|v|_{\bX}), \  \ O(|w|_{\bX}), \ \  O \left(|v|_{\bX}+|w|_{\bX}\right)^{1+\frac 1{\upsilon_n}}.
 $$
\end{enumerate}
\end{corollary}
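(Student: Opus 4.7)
The plan is to apply Theorem~\ref{lem_Cq1BCH} after strategic substitutions, identify the desired quantity (modulo the higher-order remainder $r$ produced by that theorem), and then tautologically define $\widetilde r$ and $\check r$ from the required equality. The higher-order property would then be verified via the dilation characterisation of Proposition~\ref{prop_char_higherorder}(2), exploiting crucially the homogeneity of the group law $*_x^\bX$. The vanishing at $v=0$ or $w=0$ will follow from specialisation identities inherited by $r$ from being a genuine equality.

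For Part (1), I would set
$$S(x;v,w):=-\ln^\bX_{\exp^\bX_x v}\exp^\bX_x(v*_x^\bX(-w)), \qquad \widetilde r(x;v,w):=(-w)*_x^\bX S(x;v,w),$$
so that the identity $-\ln^\bX_{\exp^\bX_x v}\exp^\bX_x(v*_x^\bX(-w)) = w*_x^\bX\widetilde r$ holds tautologically. Smoothness of $\widetilde r$ near the $0$-section follows from smoothness of $\exp^\bX$, $\ln^\bX$ and $*_x^\bX$. For vanishing at $v=0$ I would use $S(x;0,w)=-\ln^\bX_x\exp^\bX_x(-w)=w$, yielding $\widetilde r=(-w)*_x^\bX w=0$; at $w=0$, $v*_x^\bX 0=v$ gives $S(x;v,0)=0$ and $\widetilde r=0$. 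To prove $\widetilde r$ higher order, I would apply Theorem~\ref{lem_Cq1BCH} (with ``$w$''$\to v$, ``$v$''$\to -S$) to obtain the implicit relation
$$v*_x^\bX(-S) + r(x;-S,v) = v*_x^\bX(-w),$$
replace $(v,w)$ by $(\delta_t v,\delta_t w)$, use the homogeneity of $*_x^\bX$ to rewrite the rescaled identity in terms of $S_t:=\delta_t^{-1}S(x;\delta_t v,\delta_t w)$, and conclude $S_t\to w$ as $t\to 0$ since $r$ being higher order forces $\delta_t^{-1}r(x;-\delta_t S_t,\delta_t v)\to 0$ (Proposition~\ref{prop_char_higherorder}(3), using the continuous remainder $R$ and the fact that left multiplication by $v$ in $*_x^\bX$ is a diffeomorphism). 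This yields $\delta_t^{-1}\widetilde r(x;\delta_t v,\delta_t w)=(-w)*_x^\bX S_t \to (-w)*_x^\bX w=0$, and Proposition~\ref{prop_char_higherorder}(2) concludes.

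For Part (2), the substitution in Theorem~\ref{lem_Cq1BCH} (with ``$w$''$\to -v$, ``$v$''$\to v*_x^\bX(-w)$) gives directly
$$\ln^\bX_x\exp^\bX_{\exp^\bX_x(-v)}(v*_x^\bX(-w)) = (-v)*_x^\bX(v*_x^\bX(-w)) + r(x;v*_x^\bX(-w),-v) = -w + r(x;v*_x^\bX(-w),-v),$$
so I would define $\check r(x;v,w):=(-w)*_x^\bX\bigl(w-r(x;v*_x^\bX(-w),-v)\bigr)$. The vanishing at $v=0$ uses the specialisation identity $r(x;-w,0)=0$, a consequence of $\ln^\bX_x\exp^\bX_x v'=v'=0*_x^\bX v'+r(x;v',0)$ forcing $r(x;v',0)=0$; the vanishing at $w=0$ uses $r(x;v,-v)=0$, which follows from the flow identity $\exp^\bX_{\exp^\bX_x(-v)}(v)=x$ combined with $(-v)*_x^\bX v=0$. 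The higher-order property would follow by the dilation argument of Part (1), the key step being $\delta_t^{-1}r(x;\delta_t(v*_x^\bX(-w)),-\delta_t v)\to 0$ (by higher-order-ness of $r$ and homogeneity of $*_x^\bX$), forcing $\delta_t^{-1}\check r(\delta_t v,\delta_t w)\to(-w)*_x^\bX w=0$.

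The main obstacle is verifying that higher-order-ness propagates cleanly through the non-abelian group law $*_x^\bX$: Part (1) only defines $\widetilde r$ implicitly through the identity for $S$, with no explicit formula available, and without the homogeneity of $*_x^\bX$ under dilations $\delta_t$ there would be no way to interchange the rescaling with the group operation and extract the desired limit. The size bounds stated in the corollary then follow: $O((|v|_\bX+|w|_\bX)^{1+1/\upsilon_n})$ is Corollary~\ref{cor1lem_char_higherorder} applied to the higher-order maps $\widetilde r$ and $\check r$, while $O(|v|_\bX)$ and $O(|w|_\bX)$ come from smoothness plus vanishing on the respective coordinate axis via a Taylor expansion in $v$ from $v=0$ (respectively in $w$ from $w=0$) combined with the standard comparison of Euclidean and quasinorm scales on bounded sets.
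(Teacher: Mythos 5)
Your setup for both parts — defining $\widetilde r$ and $\check r$ tautologically, checking the vanishing at $v=0$ and $w=0$, and appealing to Corollary~\ref{cor1lem_char_higherorder} and the Mean Value Theorem (Theorem~\ref{thm_MV+TaylorG}) for the size bounds — is sound, and your Part~(2) is essentially the same proof as the paper's: the direct substitution ``$w$''$\to -v$, ``$v$''$\to v*_x^\bX(-w)$ in Theorem~\ref{lem_Cq1BCH} gives an \emph{explicit} formula for $-\ln^\bX_x\exp^\bX_{\exp^\bX_x(-v)}(v*_x(-w))$ in terms of $r$ evaluated at \emph{fixed} homogeneous polynomial maps of $(v,w)$, so the dilation limit goes through without obstruction.

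The gap is in Part~(1). You apply Theorem~\ref{lem_Cq1BCH} at base point $x$ with ``$w$''$\to v$, ``$v$''$\to -S$, producing the implicit relation
\[
v*_x^\bX(-S) + r(x;-S,v) = v*_x^\bX(-w),
\]
in which $S$ appears inside the argument of $r$. When you rescale and try to conclude $S_t := \delta_t^{-1}S(x;\delta_t v,\delta_t w)\to w$, you need the error $\delta_t^{-1} r(x;-\delta_t S_t, \delta_t v)=tR(x,t,-S_t,v)$ to tend to $0$, which requires $S_t$ to stay bounded as $t\to 0$; but that boundedness is precisely what you are trying to prove. The \emph{a priori} information (smoothness of $S$ with $S(x;0,0)=0$) only gives Euclidean control $|S(x;\delta_t v,\delta_t w)|=O(t^{\upsilon_1})$, and this does not dominate the anisotropic rescaling $\delta_t^{-1}$ on higher-weight components: for example with weights $(1,1,2)$ a monomial $u_{1,3}u_{2,1}$ inside $r_3$ produces a term of size $O(t^0)$ after rescaling, and the argument does not close. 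The paper avoids the circularity by applying Theorem~\ref{lem_Cq1BCH} at base point $\exp^\bX_x v$, using $\exp^\bX_{\exp^\bX_x v}(-v)=x$, which yields the \emph{explicit} identity
\[
-\ln^\bX_{\exp^\bX_x v}\exp^\bX_x(v*_x^\bX(-w)) = w - r\bigl(\exp^\bX_x v;\ v*_x^\bX(-w),\ -v\bigr) \quad (\text{mod a higher-order correction from Lemma~\ref{lem_group_laws}(2)}),
\]
where now $r$ is evaluated at known homogeneous polynomials and a shifted base point. Then Lemma~\ref{lem:exp_derivative_bis} (second and third cases) delivers the higher-order property of $\widetilde r = (-w)*_x^\bX\bigl(w - [\text{h.o.}]\bigr)$ directly, with no limit-extraction needed. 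To repair your Part~(1), swap the base point of the Theorem~\ref{lem_Cq1BCH} application as the paper does.
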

Above, we have used the quasinorm $|\cdot|_\bX$ defined in Remark \ref{rem_quasinormbX}.
 
Before entering into the proof of Corollary~\ref{cor_lem_Cq1BCH}, let us explain what is meant by `locally' in the statement.
Actually, the function $\widetilde r$ above is simply defined as 
\begin{equation}\label{def:tilde_v}
\widetilde r(x;v,w) = (-w)*_x 
\left(-\ln^\bX_{\exp^\bX_{x} v} \exp^\bX_{x}( v*_{x}(-w))\right), 
\end{equation}
and the fact that the inequality above holds locally  means that for any compact subset $\cK$ of~$U_\bX$ satisfying
$$
 \forall x\in U, \ w,v\in\bR^n,\quad
(x,w), (x,v) \in \mathcal K\Longrightarrow
 (\exp^\bX_{x} v , \exp^\bX_{x} (v*_{x}(-w) ) )\in \exp^\bX(U_\bX),
 $$
 there exists $C>0$ such that for any $x\in U$, $v,w\in \mathbb R^n$ with $(x,w)$ and $(x,v)$ lying in $\mathcal K$ we have 
\begin{align*}
\left|\widetilde r(x;v,w)\right|_{\bX}
\leq C |v|_{\bX}, \quad \left|r_1(x;v,w)\right|_{\bX}
\leq C |w|_{\bX},
\\
\left|\widetilde r(x;v,w)\right|_{\bX}
\leq C \left(|v|_{\bX}+|w|_{\bX}\right)^{1+\frac 1{\upsilon_n}},	
\end{align*}
the constant $C$ depends on $N,\mathcal K, \bX,M$, while the hypotheses on $\mathcal K$, besides its compactness, ensures that the left-hand side of the inequality above makes sense. 
The meaning of the local estimates in Part 2 of Corollary \ref{cor_lem_Cq1BCH} is similar to the one of Part 1.

\begin{proof}[Proof of Corollary \ref{cor_lem_Cq1BCH}
] 
By Theorem \ref{lem_Cq1BCH} (and using its notation),  we may write:
$$
\ln^\bX_{\exp^\bX_{x} v} \exp^\bX_{x}( v*_{x}(-w) ) 
=  r (\exp_x v; v *_x (-w),-v) \ - \ w,
$$
with $r$ higher order. 
The function $\tilde r$  defined via~\eqref{def:tilde_v}  is smooth
and also higher order 
by  
Lemmata \ref{lem_group_laws} and \ref{lem:exp_derivative_bis}.
We also have 
\begin{align*}
	\widetilde r(x;v,0)&=\ln^\bX_{\exp^\bX_{x} v} \exp^\bX_{x} v =0,
\\
\widetilde r(x;0,w)&=(-w)*_x (\ln^\bX_x \exp^\bX_{x} w ) = (-w)*_x w=0.
\end{align*}
The first two local estimates follow from applying the Mean Value Theorem of~\cite{folland+stein_82} (see Theorem~\ref{thm_MV+TaylorG} (1) and Remark \ref{remthm_MV+TaylorG}). 
For the third local estimate, we apply 
 Corollary \ref{cor1lem_char_higherorder}
 to $\cV_1=\bR^n\times \bR^n$ 
 and $\cV_2=\bR^n$, 
 with $|\cdot|_\bX$ the quasinorm on $\cV_2$, 
 and $(v,w)\mapsto |v|+|w|$ the quasinorm on $\cV_1$; the estimate is indeed  locally uniform by Corollary \ref{cor_derivatives_cV} (1) and the proof of  Corollary \ref{cor1lem_char_higherorder}.
  This shows Part 1.

For Part 2, we proceed as for Part 1 having observed the two  following formulae:
\begin{align*}
  \check r(x;v,w) &= (-w) *_x
\left( -\ln^\bX_{x} \exp^\bX_{\exp^\bX_x -v} v*_x(-w)   \right),\\
\ln^\bX_{x} \exp^\bX_{\exp^\bX_x -v} v*_x(-w) 
&= -w + r(x;v*_x (-w) ,-v).
\end{align*}
\end{proof}

%%%%%%%%%%%%%%%%%%%%%%%%%%%%%%
\section{Symbols classes on filtered manifolds}\label{sec:symbols}

In this section, we introduce the notion of symbols on filtered manifolds in Section~\ref{subsec_symbol+k} and define  symbol classes  in Section~\ref{sec:def_symbol}. We study their algebraic properties  in Section~\ref{sec:op_symbols} and focus on two important types of symbols: on the one hand, spectral multipliers in Section~\ref{sec:spectral_multipliers} and, on the other hand, polyhomogeneous symbols in Section~\ref{subsec_polyhom}.
 The quantization process  of these symbols will be described  in Section~\ref{sec:quantization}, after studying their convolution kernels in Section~\ref{sec:kernel_estmates}.
 
\subsection{The symbols and their  convolution kernels}
\label{subsec_symbol+k}

\subsubsection{Definitions}
\label{subsubsec_defsymbol+k}

\begin{definition}
A \emph{symbol} $\sigma$ on $M$ is a collection of invariant symbols 
	$$
	\sigma (x)= \{\sigma (x,\pi):\cH_\pi^{+\infty} \to \cH_\pi^{-\infty} : \pi\in \widehat {G}_x M\}
	$$
	 on $\widehat {G}_x M$ parametrised by $x\in M$. 
\end{definition}

Alternatively, we will see a symbol as a collection of fields of operators
$$
\sigma = \{\sigma (x,\pi):\cH_\pi^{+\infty} \to \cH_\pi^{-\infty} : x \in M, \ \pi\in \Gh_x M\}. 
$$
The operators are defined up to the unitary equivalence of representation (see Section \ref{subsec_cFG}) and are
parametrised by 
$$
\Gh M := \{(x,\pi) : x \in M, \ \pi\in \Gh_x M\}.
$$

\smallskip

Our definition of convolution kernels relies on smooth Haar systems whose definition we now recall:

\begin{definition}
 A smooth Haar system  $\mu=\{
	\mu_x\}_{x\in M}$ for $GM$ is a choice of Haar measure $\mu_x$ for each of the group fibers $G_x M$ such that the map 
 $x\mapsto \int_{G_x M} f(x,v) d\mu_x(v)$ is smooth on $M$ for any $f\in C_c^\infty(GM)$.   
\end{definition}

 \begin{remark}
 \label{rem_Haarsys1dens}
Alternatively, a smooth Haar system $\mu$ is the image under the group bundle exponential mapping $\Exp:\fg M\to GM$ of a positive 1-density of the vector bundle $\fg M$, i.e. $\mu= \Exp_* w$, $w\in |\Omega|^+(\fg M)$ (see Appendix \ref{subsec_vertdensityE} for these notions). 
 \end{remark}

\begin{definition}
    Let $\{
	\mu_x\}_{x\in M}$ be a smooth Haar system. Let $\sigma$ be a symbol on $M$.
    If  for every  $x\in M$, 
  the following makes sense
    \begin{equation}\label{eq:conv_ker}
\kappa^{\mu}_{\sigma,x} = \cF_{G_x M,\mu_x}^{-1} \sigma(x),
\end{equation}
where $\cF_{G_x M,\mu_x}$ denotes
the Fourier transform  on $G_x M$  with respect to the Haar measure $\mu_x$, 
 then we say that $\sigma$ \emph{admits a convolution kernel}.
The distribution $\kappa^{\mu}_{\sigma,x} \in \cS'(G_x M)$ is called the convolution kernel of $\sigma$ at $x\in M$ with respect to $\mu$. 
\end{definition}

\begin{ex}
\label{ex_symbolwithkappa}
    If $\sigma$ is a symbol such that 
for any $x\in M$, we have $\sigma (x)\in L^\infty_{a,b}(\widehat {G}_x M)$ for some $a,b\in \bR$ (possibly depending on $x$), 
then $\sigma$ admits a convolution kernel.
For instance, this is the case for a symbol such that $\sigma(x)$ is in $S^m(\Gh_x M)$ for every $x\in M.$
\end{ex}

  \smallskip 

Let $\mu'=\{\mu'_x\}_{x\in M}$ be another smooth Haar system. Then $\mu'_x$ is another Haar measure for $G_x M$, 
which is therefore equal to $\mu_x$ up to a constant $c(x)>0$. This defines a positive smooth function $c\in C^\infty(M)$ satisfying 
\begin{equation}
    \label{eq_mumu'}
    \mu'_x = c(x)\mu_x.
\end{equation}
Hence, if $\sigma$ admits a convolution kernel with respect to one smooth Haar system $\mu$, then it admits a convolution kernel with respect to any smooth Haar system $\mu'$. 
 Moreover, even though $\kappa_{\sigma,x}^{\mu}$ depends on the choice of such a Haar system,  its existence is independent of which Haar system is chosen and we have:
\begin{equation}
\label{eq_kappamumu'}
 \kappa^{\mu}_{\sigma,x} = c(x)\kappa^{\mu'}_{\sigma,x}, 
 \quad x\in M.    
\end{equation}

\subsubsection{Convolution kernels in an adapted frame.}
If   $\bX$ is an adapted frame on an open set $U\subset M$, this induces a smooth Haar system $\mu^\bX = \{\mu^\bX_x\}_{x\in U}$  on $U$  
given by 
$$
\mu^{\bX} = (\Exp^\bX)_* dv, \quad\mbox{i.e.}\quad
\int_{G_x M} f\, d\mu_x^\bX
= \int_{\bR^n} f(\Exp_x^\bX v)\, dv,
$$
where $dv$ is the Lebesgue measure on $\bR^n$.

Let $\sigma$ be a symbol  on $M$ admitting a convolution kernel.
We consider 
the  convolution kernel 
$\kappa^{\mu^\bX}_{\sigma,x}=\kappa^{\mu^\bX}_{x} \in \cS'(G_x M)$ at $x\in U$
corresponding to the induced Haar measure $\mu^\bX$,
or, equivalently, its  composition with the group exponential mapping:
\begin{equation}
	\label{eq_kappaX}
\kappa_x^\bX (v):=  \kappa_x^{\mu^\bX} (\Exp^\bX_x v).
\end{equation}
Note that
$$
\kappa_x^\bX(v)
dv = 
(\Exp^\bX)^* (\kappa^{\mu^\bX}_x d\mu^\bX_x )
,
$$
and at least formally:
$$
\sigma(x,\pi)=\int_{\bR^n} \kappa_x^\bX (v) \pi(\Exp_x^\bX -v) dv,
$$
in the sense that $\sigma(x,\cdot)$ is the group Fourier transform of $\kappa^{\mu^\bX}_{x} \in \cS'(G_x M)$ for the Haar measure $\mu^\bX$. 

\begin{definition}
We call the map $\kappa_x^{\mu^\bX}\in \cS'(G_x M)$
or
 $\kappa_x^\bX \in \cS'(\bR^n)$ (as in~\eqref{eq_kappaX})
 the {\it convolution kernel in the $\bX$-coordinates} at $x$, 
 depending on whether we see this object as a distribution on $G_xM$ or on $\bR^n$.
\end{definition}

The relation between the convolution kernels in two different frames is explained in the following statement.
\begin{lemma}
\label{lem_XYkappa}
Let $\bX$ and $\bY$ be two adapted frames on an open subset $U$ of $M$.
Let $\sigma$ be a symbol admitting a convolution kernel.
Then, using the notation of Lemma~\ref{old_lem_XY},
 we have:
$$
\kappa_{\sigma,x}^\bX (v) = \det( \theta_x)\, \kappa_{\sigma,x}^\bY (\theta_x\,  v) , 
\quad\mbox{where} \ 
\theta_x :={\rm diag}\, T_x.
$$
\end{lemma}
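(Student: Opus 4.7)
The proof will hinge on combining the geometric identity relating the two exponential maps with a change of variables in the defining integral of the convolution kernel. More precisely, by Lemma \ref{lem_XYExp}, one has
$$
\Exp_x^\bX(v) = \Exp_x^\bY(\theta_x v), \qquad x\in U,\; v\in \bR^n,
$$
so the identifications of $G_x M$ with $\bR^n$ induced by the two frames differ exactly by the linear map $\theta_x$. The two smooth Haar systems $\mu^\bX$ and $\mu^\bY$ are the pushforwards of Lebesgue measure on $\bR^n$ under $\Exp^\bX$ and $\Exp^\bY$ respectively, so the above identity together with a linear change of variables yields
$$
\mu^\bY_x = |\det \theta_x|\, \mu^\bX_x \qquad \text{on } G_x M.
$$

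The plan is then to write $\sigma(x,\pi)$ in two ways using \eqref{eq:conv_ker} and \eqref{eq_kappaX}: on the one hand as
$$
\sigma(x,\pi) = \int_{\bR^n} \kappa^\bX_{\sigma,x}(v)\, \pi(\Exp^\bX_x v)^*\, dv,
$$
and on the other as the analogous integral in $\bY$-coordinates. Substituting $w = \theta_x v$ in the $\bY$-integral and using $\Exp^\bY_x(\theta_x v) = \Exp^\bX_x(v)$ converts it into
$$
\sigma(x,\pi) = |\det \theta_x| \int_{\bR^n} \kappa^\bY_{\sigma,x}(\theta_x v)\, \pi(\Exp^\bX_x v)^*\, dv.
$$
Comparing the two expressions and invoking the injectivity of the group Fourier transform on $G_x M$ (extended to the appropriate distribution class as recalled in Section \ref{subsubsec_FKLinfty}) yields the claimed identity. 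Equivalently, this last step is exactly the instance of \eqref{eq_kappamumu'} applied to the ratio $\mu^\bY_x / \mu^\bX_x = |\det \theta_x|$, followed by transferring from $G_x M$ to $\bR^n$ via $\Exp^\bX_x$.

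There is no serious obstacle here; the computation is essentially linear algebra combined with Lemma \ref{lem_XYExp}. The one point deserving care is that the convolution kernel is in general only a tempered distribution, so the integrals above are formal. This is handled either by reading the change-of-variables step as an identity of distributions on $\bR^n$ (tested against Schwartz functions), or, more structurally, by first establishing the Haar measure ratio on $G_x M$ and then invoking the intrinsic relation \eqref{eq_kappamumu'} between convolution kernels for different smooth Haar systems before passing to the $\bX$-coordinate form \eqref{eq_kappaX}. Either route makes the sign/orientation of $\det \theta_x$ a matter of the paper's convention (Lemma \ref{lem_compwmorph}) rather than a genuine difficulty.
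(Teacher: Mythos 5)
Your proof is correct and follows essentially the same route as the paper, which simply writes $\sigma(x,\pi)$ as the group Fourier transform of the convolution kernel in both the $\bX$- and $\bY$-coordinates and applies Lemma~\ref{lem_XYExp} to change variables; you have merely filled in the details (the Haar-measure ratio, the distributional reading) that the paper leaves implicit. You are also right that the Jacobian factor is $|\det\theta_x|$ rather than $\det\theta_x$ unless the transition matrix is assumed orientation-preserving; this matches the convention in Lemma~\ref{lem_compwmorph} and is a matter of the paper's sign convention, not a gap.
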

In other words, considering the notation of Lemma \ref{lem_compwmorph}
and the isomorphism $ \theta_x ={\rm diag}\, T_x$
of the Lie group $G$, 
we have
$$
(\theta_x)_* \cF_{G_x M} (\kappa_{\sigma,x}^\bX \circ \Ln_x^\bX)
= \cF_{G_x M} (\kappa_{\sigma,x}^\bY \circ \Ln_x^\bY).
$$
\begin{proof}
The formula follows from 
    $$
\sigma(x,\pi)=\int_{\bR^n} \kappa_{\sigma,x}^\bX (v) \pi(\Exp_x^\bX v) dv=\int_{\bR^n} \kappa_{\sigma,x}^\bY (v_1) \pi(\Exp_x^\bY v_1) dv_1,
$$
and Lemma  \ref{lem_XYExp}.
\end{proof}

\subsubsection{First examples}\label{sec_first_examples}
Let us describe some examples that we will be develop along this section.

\begin{ex}
\label{ex_fId}
A function  $f\in C^\infty(M)$ defines 
    the symbol 
    $$
    f \id = \{f(x) \id_{\cH_\pi}, (x,\pi) \in \Gh M\},
    $$
    with convolution kernel $\kappa_f^\bX:=f(x)\delta_0$ in any $\bX$-coordinates.
\end{ex}

The following example  follows readily from  Example \ref{ex_XalphainSGh}:
\begin{ex}
\label{ex_widehatlangleXrangle}
The symbol 
$$
\widehat {\langle X_j\rangle} := \{\pi(\langle X_j\rangle_x ) : x\in U, \pi\in \widehat G_x M\}
$$
admits $\kappa_{\widehat {\langle X_j\rangle}}^\bX:=\partial_{v_j}\delta_0$
as convolution kernel in the $\bX$-coordinates.
\end{ex}

In order to generalise this example to higher order, we introduce the following notation:
\begin{notation}\label{notation_LX}
We denote by 
 $L_{\langle X_j\rangle_{x}}$ and $R_{\langle X_j\rangle_{x}}$ 
the vector fields of $\bR^n$ corresponding to 
 the left and right-invariant vector fields associated with $\langle X_j\rangle_{x}\in \fg_{x} M$ on the  Lie group $G_{x}M$: for $\varphi\in\mathcal S(\bR^n)$ 
 $$
L_{\langle \bX \rangle,j,x} \varphi (v) = (\langle X_j\rangle (\varphi \circ \Ln^\bX) )(\Exp^\bX_x v)
\quad\mbox{and}\quad
R_{\langle \bX \rangle,j,x} \varphi (v) = (\widetilde{\langle X_j\rangle} (\varphi \circ \Ln^\bX) )(\Exp^\bX_x v).
$$ 
We shall use the short-hand:
$$
L_{\langle \bX\rangle_x}^{\beta}
:=
L_{\langle X_1\rangle_{x}}^{\beta_{1}}
\ldots L_{\langle X_n\rangle_{x}}^{\beta_{n}},\qquad 
R_{\langle \bX\rangle}^{\gamma}
:= 
L_{\langle X_1\rangle_{x}}^{\gamma_1}
\ldots L_{\langle X_n\rangle_{x}}^{\gamma_{n}}.
$$
\end{notation}

\begin{ex}
\label{ex_widehatlangleXrangle_bis}
Revisiting Example~\ref{ex_widehatlangleXrangle}, we observe that 
the symbol 
$$
\widehat {\langle \bX\rangle}^\alpha := \{\pi(\langle X\rangle_x^\alpha ) : x\in U, \pi\in \widehat G_x M\}
$$
admits $\kappa^\bX_{\widehat {\langle \bX\rangle}^\alpha }:=L_{\langle \bX\rangle_x}^{\alpha}\delta_0$
as convolution kernel in the $\bX$-coordinates.
\end{ex}

\subsubsection{Smooth symbols and their derivatives}

\begin{definition}
Let $\sigma$ be a symbol on $M$ admitting a convolution kernel. 
The symbol $\sigma$ or its convolution kernel is  \emph{smooth in the variable} $x$ (or just smooth) when for any adapted frame $(\bX,U)$, the convolution kernel in the $\bX$-coordinates is smooth in $x$ in the sense that  
$$
\kappa^\bX  \in C^\infty(U, \cS'(\bR^n)).
$$    
\end{definition}

A symbol admitting a convolution kernel is smooth in the variable $x$ on a  open subset $U\subset M$ small enough (to admit adapted frames) if there is one adapted frame $\bX$ on $U$
where $\kappa^\bX  \in C^\infty(U, \cS'(\bR^n))$. This notion is invariantly defined since Lemma \ref{lem_XYkappa} implies that
 $\kappa^\bY  \in C^\infty(U, \cS'(\bR^n))$ for any other adapted frame $\bY$ on $U$. We will give it an invariant meaning in Section \ref{subsubsec_invariantmeaning}  below.
 
 We observe that the space of smooth symbols is a module over $C^\infty(M)$.

\begin{notation}\label{notation_DXbeta}
   For any smooth symbol $\sigma$ and any multi-index $\beta\in \bN_0^n$, we adopt the notation 
$$
D_\bX^\beta \sigma 
$$
 for the symbol (when it exists)
whose convolution kernel in the $\bX$-coordinates is $\bX_x ^\beta \kappa_x^\bX$ at any $x\in U$.
If $|\beta|=1$, that is, if $\beta=e_j$ is the index which is 0 everywhere except at the $j$th entry where it is 1,  we may write $D_{X_j}$  instead of $D_\bX^\beta$. As we shall see later (see Remark~\ref{rem:LeibnizD}), the derivation~$D_\bX$ satisfies Leibniz rules.
\end{notation}

\subsubsection{Invariant meaning of convolution kernels}
\label{subsubsec_invariantmeaning} 
Here, we discuss the invariant definition of convolution kernels as distributional densities.
This viewpoint is  not required in practice in our subsequent analysis. However, it sheds a geometric light on the above considerations. We will use  
the language of vector bundles and of their functional spaces recalled in Appendix~\ref{secApp_VecBundleS}. 
We will apply it to the vector bundle $\fg M$ comprised of the osculating Lie algebras.

We   observe that an adapted frame $(\bX, U)$ yields a local trivialisation of the vector bundle $\fg M$, that is, 
\begin{equation}
\label{eq_phibX}
\phi_\bX:U\times \bR^n \to \fg M, 
\quad
\phi_\bX(x,v) = (x,\sum_j v_j \langle X_j\rangle_x ).
\end{equation}
Conversely, given a local trivialisation of the vector bundle $\fg M$, we may operate a linear change of coordinates and assume that it comes from an adapted frame. 
The properties of the change of frames in 
Lemma \ref{lem_XYkappa} together with  
 \eqref{eq_kappamumu'} may be summarised in the following statement:
\begin{lemma}
\label{lem_convkerD}
 If $\sigma$ is a smooth symbol on $M$, 
then  there exists a unique tempered vertical density $\kappa$ in $\cS'(\fg M, |\Omega|(\fg M))$ such that 
$\phi_\bX^* \kappa  (x,v)= \kappa^\bX_x (v) |dv|$ for any  trivialisation
$\phi_\bX$ induced by an adapted frame $(\bX,U)$ as in  \eqref{eq_phibX}.
\end{lemma}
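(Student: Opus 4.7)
The proof proposal rests on interpreting the data $\{\kappa_x^\bX\}$ as the local expression of a single intrinsic object. First, I would recall from the appendix the structure of $|\Omega|(\fg M)$ and of its distributional sections: any local trivialisation $\phi:U\times\bR^n\to \fg M|_U$ induces an isomorphism between $\cS'(\fg M|_U,|\Omega|(\fg M)|_U)$ and $C^\infty(U,\cS'(\bR^n))\otimes |dv|$, and the transition between two such local expressions $k_\phi(x,v)|dv|$ and $k_{\phi'}(x,v')|dv'|$ is dictated by the pullback rule for densities along the fibrewise-linear transition diffeomorphism.

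The plan is then to define the candidate $\kappa$ locally via the prescribed formula $\phi_\bX^*\kappa = \kappa_x^\bX(v)|dv|$ on each adapted chart $(\bX,U)$, check compatibility on overlaps, and glue. The smoothness assumption on $\sigma$ ensures $x\mapsto \kappa_x^\bX$ lies in $C^\infty(U,\cS'(\bR^n))$, so each local expression is an honest element of $\cS'(\fg M|_U,|\Omega|(\fg M)|_U)$. For compatibility, if $\bX$ and $\bY$ are two adapted frames on the same~$U$, Lemma~\ref{old_lem_XY}(2) computes the transition $\phi_\bY^{-1}\circ\phi_\bX(x,v)=(x,\theta_x v)$ with $\theta_x=\mathrm{diag}\,T_x$, and the density pullback rule gives
\[
\phi_\bX^*(\kappa_x^\bY(w)|dw|) \;=\; \kappa_x^\bY(\theta_x v)\,|\det \theta_x|\,|dv|.
\]
This is exactly the formula from Lemma~\ref{lem_XYkappa} (which reads with $\det\theta_x$ rather than $|\det\theta_x|$, matching here because the adapted-frame transitions are block-upper-triangular with positive diagonal once a fibrewise orientation of $\fg M$ has been fixed, or by using the absolute-density convention throughout). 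Hence the two local definitions coincide on $U$.

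Uniqueness is immediate: two tempered vertical densities whose pullbacks along every $\phi_\bX$ agree must coincide, since distributional sections of a bundle form a sheaf and $M$ is covered by adapted frame charts. For existence, I would take a locally finite cover of $M$ by open sets $U_i$ admitting adapted frames $\bX^{(i)}$, form the local densities $\kappa^{(i)}\in \cS'(\fg M|_{U_i},|\Omega|(\fg M)|_{U_i})$ as above, and invoke the sheaf property: the compatibility just verified shows that $\kappa^{(i)}|_{U_i\cap U_j}=\kappa^{(j)}|_{U_i\cap U_j}$, so they glue to a unique $\kappa\in\cS'(\fg M,|\Omega|(\fg M))$. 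The only mildly delicate point — really just bookkeeping — is reconciling the sign convention between the statement of Lemma~\ref{lem_XYkappa} and the absolute-value pullback of densities; this is a matter of fixing an orientation on $\fg M$ compatible with the adapted frames, which is always possible since the structure group reduces to $BU(d_1,\ldots,d_s)$ (Lemma~\ref{old_lem_XY}(1)) and one can work within its identity component, or by working with absolute densities from the start.
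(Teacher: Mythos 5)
Your proof is correct and takes exactly the approach the paper indicates (it states the lemma as a summary of Lemma~\ref{lem_XYkappa} and \eqref{eq_kappamumu'} without writing out the gluing argument). Your careful check on overlaps via Lemma~\ref{old_lem_XY}(2) and the sheaf-gluing step are precisely what is implicit, and your remark about $\det\theta_x$ versus $|\det\theta_x|$ is a legitimate bookkeeping point that the paper glosses over; as you note, the discrepancy is benign once one works with absolute densities, or equivalently once one observes that the change-of-variables deriving Lemma~\ref{lem_XYkappa} produces $|\det\theta_x|$ anyway.
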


It is  more natural for our future analysis to consider objects on the group bundle $GM$ rather than on the algebra bundle $\fg M$.
This was already the case for  smooth Haar systems. Indeed, we preferred them over  1-densities on $\fg M$, although 
the two notions are in one-to-one correspondence via  the group bundle exponential mapping $\Exp : \fg M \to GM$
(see  Remark~\ref{rem_Haarsys1dens}):
$$
\{\mu \ \mbox{smooth Haar system}\} = \Exp^* |\Omega|^+(\fg M).
$$
For this reason, we define the convolution kernel of a smooth symbol in the following way:

\begin{definition}
    The \emph{convolution kernel density} (or convolution kernel for short) of a  smooth symbol $\sigma$  on $M$ is 
  $\tilde \kappa:=\Exp^*\kappa$ with $\kappa$ as in Lemma \ref{lem_convkerD}.  
\end{definition}

We have
$$
(\Exp_x^\bX)_* \tilde\kappa_{x}\, (v)  = 
(\Ln^\bX_x)^* \tilde\kappa_{x} (v) = \kappa_x^\bX (v) |dv|,
$$   
and at least formally, 
$$
\sigma(x,\pi)=\int_{G_x M} \tilde\kappa_{x} (w) \,\pi(w)^*.
$$

We  define the space
$$
\cS'(GM;|\Omega|(GM)):=
\Exp^* \cS'(\fg M; |\Omega|(\fg M)) = 
\{\Exp^*\kappa, \kappa \in \cS'(\fg M; |\Omega|(\fg M))\},
$$
and call it the space of convolution kernel densities.

\begin{definition}
\label{def_mhomS'GMvert}
Let $m\in \bR.$
    An element $\tilde \kappa\in \cS'(GM;|\Omega|(GM))$ is $m$-homogeneous when $\kappa = \Ln^* \tilde \kappa \in \cS'(\fg M;|\Omega|(\fg M))$
    is $m$-homogeneous (as defined via Lemma \ref{lem_mhomcS'Evert}). 
\end{definition}

Given a smooth Haar system $\mu $, we denote by $\kappa^\mu_x\in \cS'(G_x M)$ the distribution corresponding to $\kappa = \Ln^* \tilde \kappa$ and  $\Ln^* \mu \in |\Omega|(\fg M)$ via 
Lemma \ref{lem_kappaomegax}. 
In the case of a convolution kernel densities associated with a smooth kernel, this coincides with the notation defined in Section~\ref{subsubsec_defsymbol+k}.

 We check that $\tilde \kappa\in \cS'(GM;|\Omega|(GM))$ is $m$-homogeneous when the distribution $\kappa^\mu_x\in \cS'(G_x M)$ is $m$-homogeneous.

\smallskip

It will be handy to consider Schwartz convolution kernels densities:
\begin{equation}
    \label{eq_SconvkerD}
    \cS(GM; |\Omega|(GM)):=\Exp^* \cS(\fg M; |\Omega|(\fg M)) = \{\kappa\circ \Exp, \kappa \in \cS(\fg M; |\Omega|(\fg M))\}.
\end{equation}
Both $ \cS'(GM; |\Omega|(GM))$ and $ \cS(GM; |\Omega|(GM)))$ are naturally equipped with structures of topological vector spaces, with $ \cS(GM; |\Omega|(GM))$  a dense subspace of $ \cS'(GM; |\Omega|(GM))$.

\subsubsection{Semi-norms and difference operators}\label{subsubsec:diffop}
In Example \ref{ex_symbolwithkappa}, we have considered symbols belonging in  $S^m(\Gh_x M)$ associated with each $x\in M$.
By~\eqref{eq_kappamumu'}, the membership of $\sigma(x,\cdot)$ in $S^m(\Gh_x M)$ is a property that holds independently of the choice of a smooth Haar system $\mu$.
However,  its $S^m(\Gh_x M)$-semi-norms require fixing  a graded basis of the Lie algebra and a Rockland operator on $G_x M$.
We can now consider semi-norms  of $S^m(\Gh_x M)$ defined in~\eqref{def_norm_symbol_1} using adapted frames: 
\begin{lemma}
\label{lem_SmnormsigmaxRx}
Let $\bX$ and $\bY$ be two adapted frames on an open subset $U$ of $M$. 
For any $x\in U$, any $N\in \bN$ and any Rockland operator $\cR$ on $G_x M$, 
      there exists a constant $C$ such that for any symbol $\sigma$ admitting a convolution kernel, we have 
      $$
      \|\sigma(x,\cdot)\|_{S^m (\Gh_x M),N, \langle \bY\rangle_x,\cR} 
      \leq C \|\sigma(x,\cdot)\|_{S^m (\Gh_x M),N, \langle \bX\rangle_x,\cR}.
      $$
\end{lemma}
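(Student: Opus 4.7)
The plan is to reduce the comparison of the two semi-norms to a linear change-of-basis argument at the level of monomials, exploiting that difference operators are linear in the test function on the group.

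First, I would observe that by Lemma \ref{lem_XYExp}, the two coordinate systems on $G_x M$ associated with $\bX$ and $\bY$ are related via the block-diagonal matrix $\theta_x := \mathrm{diag}(T_x)$ from Lemma \ref{old_lem_XY}: explicitly, if $g \in G_x M$ has $\bX$-coordinates $v$, then its $\bY$-coordinates are $\theta_x v$. Denoting by $p_\alpha^\bX$ and $p_\alpha^\bY$ the monomials on $G_x M$ built from the adapted bases $\langle\bX\rangle_x$ and $\langle\bY\rangle_x$ respectively (cf. Example \ref{ex_monomialX}), Lemma \ref{old_lem_XY} (3) gives
\[
p_\alpha^\bY(g) \;=\; (\theta_x v)^\alpha \;=\; \sum_{[\beta]=[\alpha]} \tilde T_{\alpha,\beta}(x)\, p_\beta^\bX(g),
\qquad g = \Exp_x^\bX v.
\]

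Second, I would invoke the linearity of the difference operator (Definition \ref{def:diff_op}) to obtain
\[
\Delta_{p_\alpha^\bY}\sigma(x,\cdot) \;=\; \sum_{[\beta]=[\alpha]} \tilde T_{\alpha,\beta}(x)\,\Delta_{p_\beta^\bX}\sigma(x,\cdot).
\]
Taking the $L^\infty_{0,-m+[\alpha]}(\Gh_x M),\cR$-norm on both sides and applying the triangle inequality---crucially, each summand on the right sits in the same Banach space since $[\beta]=[\alpha]$---gives
\[
\|\Delta_{p_\alpha^\bY}\sigma(x,\cdot)\|_{L^\infty_{0,-m+[\alpha]},\cR} \;\leq\; \Bigl(\sum_{[\beta]=[\alpha]} |\tilde T_{\alpha,\beta}(x)|\Bigr) \,\|\sigma(x,\cdot)\|_{S^m(\Gh_x M), N, \langle\bX\rangle_x,\cR}
\]
whenever $[\alpha]\leq N$. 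Taking the maximum over $[\alpha]\leq N$ yields the claimed bound with constant
\[
C \;:=\; \max_{[\alpha]\leq N}\, \sum_{[\beta]=[\alpha]} |\tilde T_{\alpha,\beta}(x)|,
\]
which depends on $x$, on $N$, and on the change-of-frame data between $\bX$ and $\bY$ (but not on $\sigma$).

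There is no real obstacle here: the statement reflects the fact that switching between two adapted bases only reshuffles monomials of equal homogeneous degree, and the $S^m$-semi-norm is block-graded by homogeneous degree, so the Rockland operator $\cR$ plays no active role in the comparison. By symmetry of the roles of $\bX$ and $\bY$, one also obtains the reverse inequality (with an analogous constant), yielding the equivalence of the two families of semi-norms at each fixed point $x \in U$.
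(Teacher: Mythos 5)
Your proof is correct, and it takes a more elementary route than the paper's. The paper simply cites Lemmata \ref{lem_XYExp}, \ref{lem_XYkappa} and Proposition \ref{prop_compwmorph}: it views the change of frame as the pushforward by the graded automorphism $\theta_x = \mathrm{diag}(T_x)$ of $G_x M$, and then invokes the general fact that such pushforwards act continuously on $S^m(\Gh)$. That machinery is designed for reuse, but its internals (see the proof of Proposition \ref{prop_compwmorph} and Remark \ref{rem_constant_theta}) involve replacing $\cR$ by $\cR_\theta = \theta(\cR)$ and then comparing Sobolev norms for the two Rockland operators, so the resulting constant a priori depends on $\cR$. You instead stay with the fixed $\cR$ throughout and compare the two semi-norms directly at the level of difference operators: the monomial $p_\alpha^\bY$ on $G_x M$ decomposes as a finite $\bR$-linear combination $\sum_{[\beta]=[\alpha]}\tilde T_{\alpha,\beta}(x)\,p_\beta^\bX$ of monomials of the \emph{same} homogeneous degree (Lemma \ref{old_lem_XY}(3) together with Lemma \ref{lem_XYExp}), and $\Delta_q$ is linear in $q$ and measure-independent, so each term $\Delta_{p_\beta^\bX}\sigma(x,\cdot)$ already lands in the Banach space $L^\infty_{0,-m+[\alpha]}(\Gh_x M)$ and the triangle inequality closes the argument. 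This yields an explicit constant $C = \max_{[\alpha]\leq N}\sum_{[\beta]=[\alpha]}|\tilde T_{\alpha,\beta}(x)|$ depending only on $x$, $N$ and the transition matrix, and in particular \emph{not} on $\cR$ — a mild strengthening of the paper's statement. The symmetry remark at the end is fine since $T^{\bX,\bY}_x = (T^{\bY,\bX}_x)^{-1}$.
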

In the inequalities above, the semi-norms may not exist in the sense that they are infinite. The meaning of the estimates above is that if the right-hand side is finite, then so is the left-hand side and the inequality holds. This will be so for all the lemmata in this section.
\begin{proof}
The result follows from Lemmata \ref{lem_XYExp} and \ref{lem_XYkappa} and  Proposition \ref{prop_compwmorph}.  
\end{proof}

In Lemma~\ref{lem_SmnormsigmaxRx}, we have used implicitly the difference operators defined in each group $G_xM$. We can 
  extend the definition of the difference operators in the following way.

\begin{definition}\label{def:rock_on_M}
Let  $\bX$ be an  adapted frame  on an open subset $U\subset M$ and let $\sigma$ be a symbol on $U$ admitting a convolution kernel.
If $\sigma (x,\cdot)\in S^m (\Gh_x M)$ for some $m\in \bR\cup\{-\infty\}$ and any $x\in U$, 
then for any $\alpha\in \bN_0^n$, 
$\Delta^\alpha_\bX \sigma$
is the symbol on $U$ whose convolution kernel in the $\bX$-coordinates is 
$u^\alpha \kappa_{\sigma,x}^\bX (u)$.
\end{definition}

\subsubsection{Rockland operators over $GM$}\label{sec:rockland}
The following definition will allow us to choose a positive Rockland operator above each group fiber $G_x M$ depending smoothly in $x\in M$:
\begin{definition} \label{def:Rockland}
An element $\cR\in \Gamma (\sU (\fg M))$ is \emph{Rockland} on $G M$,
resp. \emph{positive Rockland} on $G M$, when each $\cR_x$ is so on the fiber $\sU(\fg_x M)$.
Denoting by  $\nu$ its degree of homogeneity, 
we then call {\it Rockland symbol} and denote by $\pi(\mathcal R)$ the collection 
$$
\pi(\mathcal R_x)\in S^\nu(\widehat G_x M),\;x\in M,\; \pi\in \widehat G_xM.
$$
\end{definition}

It follows from Section \ref{subsec_X} that any Rockland operator on $GM$ may be written with respect to any adapted frame $(\bX,U)$ as $\cR = \sum_{[\alpha]=\nu} c_\alpha \langle\bX\rangle^\alpha$.
The homogeneous degree $\nu$ is independent of the frame $\bX$.
\smallskip 

We will often use in this text the 
 positive Rockland element  on $G M|_U$  given by 
\begin{equation}\label{ex_RU}
\cR_{\bX} = \sum_j (-1)^{\frac {M_0}{\upsilon_j}} \langle X_{j}\rangle^{2\frac{M_0} {\upsilon_j}},
\end{equation}
where  $M_0$ is a (fixed) multiple of $\upsilon_1,\ldots,\upsilon_n$ that we will generally choose as 
the least common multiple of $\upsilon_1,\ldots,\upsilon_n$.
The resulting Rockland symbol is 
$$
\widehat \cR_\bX
= 
\{\pi(\cR_{\bX,x}), (x,\pi)\in \Gh M|_U\}.
$$
From this, we can readily construct a positive Rockland element on $GM$, see Remark \ref{rem_existenceR}.

  \begin{remark}
 \label{rem_subLaplacianSymbol}  
 Although we will not consider the particular case  of an equiregular subRiemannian manifold in detail in this paper, let us mention that
the more natural and global positive Rockland operator is given by the subLaplacian symbol in that setting. Indeed, 
 recall that
 a regular subRiemannian manifold  $(M, \cD, g)$ is 
 naturally a filtered manifold with, furthermore, $H_1=\cD$ equipped with a metric $g$ (see Example~\ref{ex:subrie}).
 The subLaplacian symbol $\widehat \sL$ is then globally defined as follows: on any open set $U\subset M$ equipped with an adapted frame $\bX = (X_1,\ldots, X_n)$, it is given by
    $$
    \widehat {\sL}|_U = -\sum_{j=1}^{d_1} \widehat X_j^2,
    $$
    with  $X_{1,x},\ldots,X_{d_1,x}$ a $g_x$-orthonormal basis of $H_1(x)$  (if this is not so, a Graham-Schmidt procedure allows us to assume so). We check readily that it defines a global object that is the symbol of a positive Rockland operator on $GM$.
  \end{remark} 

\smallskip

The next lemma shows that  the semi-norms involved in the definition of  the symbol classes $S^m(G_xM)$ are uniform in $x$, once chosen a Rockland operator on~$M$, in the sense of  Definition~\ref{def:rock_on_M}.
This point will be important below (see Section~\ref{sec:def_symbol}) for defining global symbol classes on $M$.

\begin{lemma}
\label{lem_SmnormsigmaxRS}
Let $\bX$ and $\bY$ be two adapted frames on an open subset $U$ of $M$.
 Let $\cR$ and~$\cS$ be two positive Rockland elements on $GM|_U$.
    For any $x\in U$ and any $N\in \bN$, 
      there exists a constant $C$ such that for any symbol $\sigma$ admitting a convolution kernel, we have 
      $$
      \|\sigma(x,\cdot)\|_{S^m (\Gh_x M),N, \langle \bY\rangle_x,\cR_x} 
      \leq C \|\sigma(x,\cdot)\|_{S^m (\Gh_x M),N, \langle \bX\rangle_x,\cS_x}.
      $$  
      Moreover, the constant $C$ depends  locally uniformly on $x\in U$, that is, it is uniform when $x$ remains in a compact subset $\cC$ of $U$. 
\end{lemma}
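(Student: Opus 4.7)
The plan is to split the comparison of the two semi-norms into two independent reductions, first changing the adapted frame and then changing the Rockland element, and to verify at each step that the constant produced depends only on smooth-in-$x$ ingredients, which are therefore bounded on compacts.

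\textbf{Step 1: change of frame.} Applying Lemma \ref{lem_SmnormsigmaxRx} at each $x\in U$ with the fixed positive Rockland $\cS_x$ gives a pointwise inequality
\begin{equation*}
\|\sigma(x,\cdot)\|_{S^m(\Gh_xM), N, \langle\bY\rangle_x, \cS_x} \leq C_1(x)\, \|\sigma(x,\cdot)\|_{S^m(\Gh_xM), N, \langle\bX\rangle_x, \cS_x}.
\end{equation*}
Unfolding its proof through Proposition \ref{prop_compwmorph} and Remark \ref{rem_constant_theta}, the automorphism at stake is ${\rm diag}(T_x)$ (Lemma \ref{lem_XYExp}), and $C_1(x)$ can be written as an increasing function of $\|[\cdot,\cdot]_{\fg_xM}\|_{\langle\bX\rangle_x}$, $C_{\langle\bX\rangle_x}(\cS_x)$, and $\|{\rm diag}(T_x)\|_{\mathcal L(\mathfrak g)}$. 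Each of these three quantities depends smoothly on $x$: the first because the structural constants of $[\cdot,\cdot]_{\fg_xM}$ in the smooth frame $\langle\bX\rangle$ are smooth functions on $U$ (as follows from Section~\ref{subsubsec_structuralhot}), the second because $\cS\in\Gamma(\sU(\fg M))$ so its coordinates in $\langle\bX\rangle$ are smooth, and the third by smoothness of $T$. Hence $C_1$ is bounded on any compact $\cC\subset U$.

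\textbf{Step 2: change of Rockland element.} At each $x\in U$, $\cR_x$ and $\cS_x$ are both positive Rockland operators on $G_xM$ expressed in the adapted basis $\langle\bX\rangle_x$. By Proposition \ref{prop_sob} \eqref{prop_sob_R1R2} and its dual version, together with the isometric identifications $L^\infty_{a,b}(\Gh_xM)\simeq \mathcal L(L^2_a(G_xM),L^2_b(G_xM))^{G_xM}$ recalled after \eqref{def:norm_introduction}, the norms
$\|\cdot\|_{L^\infty_{0,s}(\Gh_xM),\cR_x}$ and $\|\cdot\|_{L^\infty_{0,s}(\Gh_xM),\cS_x}$ on invariant symbols of $G_xM$ are equivalent, with a constant $C_2(x)$ depending only on $s$ and on the ingredients singled out in Remark \ref{rem_constant_RS}, namely $\|[\cdot,\cdot]_{\fg_xM}\|_{\langle\bX\rangle_x}$ and the coefficients of $\cR_x,\cS_x$ in the basis $\langle\bX\rangle_x$. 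Taking the maximum over $[\alpha]\leq N$ in the definition \eqref{def_norm_symbol_1} yields
\begin{equation*}
\|\sigma(x,\cdot)\|_{S^m(\Gh_xM), N, \langle\bX\rangle_x, \cR_x} \leq C_2(x)\, \|\sigma(x,\cdot)\|_{S^m(\Gh_xM), N, \langle\bX\rangle_x, \cS_x}.
\end{equation*}
All quantities that enter $C_2(x)$ are smooth functions of $x\in U$, hence $C_2$ is bounded on $\cC$.

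\textbf{Conclusion.} Combining the two steps and setting $C := \sup_{x\in\cC} C_1(x)\, C_2(x)$, which is finite since both functions are continuous on the compact $\cC$, gives the stated inequality, with $C$ locally uniform in $x$. The only non-routine point in the argument is the careful bookkeeping of constants: one needs to re-examine the proofs of Lemma~\ref{lem_SmnormsigmaxRx} and of Proposition~\ref{prop_sob} \eqref{prop_sob_R1R2} to ensure that every implicit constant may be chosen as a continuous function of quantities that are themselves smooth in $x$. This is precisely what Remarks \ref{rem_constant_RS} and \ref{rem_constant_theta} were set up to provide, so once invoked, local uniformity follows from the compactness of $\cC$ alone.
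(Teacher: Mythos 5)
Your two reductions (change of frame via Lemma~\ref{lem_SmnormsigmaxRx}/Proposition~\ref{prop_compwmorph}, change of Rockland element via Proposition~\ref{prop_sob}~\eqref{prop_sob_R1R2}) are exactly the ingredients the paper invokes, and your tracking of the constants through Remarks~\ref{rem_constant_RS} and~\ref{rem_constant_theta} is correct and carefully argued.

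However, as written, Step~1 and Step~2 do not compose. Step~1 produces $\|\sigma(x,\cdot)\|_{\langle\bY\rangle_x,\cS_x}\leq C_1(x)\|\sigma(x,\cdot)\|_{\langle\bX\rangle_x,\cS_x}$ and Step~2 produces $\|\sigma(x,\cdot)\|_{\langle\bX\rangle_x,\cR_x}\leq C_2(x)\|\sigma(x,\cdot)\|_{\langle\bX\rangle_x,\cS_x}$: both inequalities share the \emph{same} right-hand side $\|\cdot\|_{\langle\bX\rangle_x,\cS_x}$, so multiplying $C_1C_2$ does not give an estimate on the target quantity $\|\sigma(x,\cdot)\|_{\langle\bY\rangle_x,\cR_x}$, which never appears on either side. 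The chain is broken: neither Step~1 feeds into Step~2 nor vice versa. The fix is minor: carry out Step~2 in the $\langle\bY\rangle_x$-basis, i.e.
\begin{equation*}
\|\sigma(x,\cdot)\|_{S^m(\Gh_xM),N,\langle\bY\rangle_x,\cR_x}\leq C_2'(x)\,\|\sigma(x,\cdot)\|_{S^m(\Gh_xM),N,\langle\bY\rangle_x,\cS_x},
\end{equation*}
then chain with your Step~1. The dependence of $C_2'(x)$ is still controlled by Remark~\ref{rem_constant_RS}, but now with $\|[\cdot,\cdot]_{\fg_xM}\|_{\langle\bY\rangle_x}$ and the coefficients of $\cR_x,\cS_x$ expressed in the basis $\langle\bY\rangle_x$; these are still smooth in $x$ (the change of adapted basis is governed by the smooth matrix ${\rm diag}(T_x)$ of Lemma~\ref{old_lem_XY}), so local uniformity on $\cC$ goes through unchanged. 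Alternatively, you could run Step~1 with $\cR_x$ in place of $\cS_x$ and keep your Step~2 as is; either repair yields a valid chain. With one of these corrections, your argument coincides with the paper's.
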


\begin{proof}
This follows from    the properties of Rockland operators, 
    see Proposition \ref{prop_sob} (6).
    The fact that the constant depends only locally uniformly in $x$ is a consequence of  Remarks~\ref{rem_constant_RS} and~\ref{rem_constant_theta}
    following Propositions \ref{prop_sob}
    and \ref{prop_compwmorph} respectively. 
\end{proof}

\subsubsection{Derivatives and $S^m(G_x M)$-membership}
The existence of the $x$-derivatives of a symbol in $S^m(G_x M)$  is independent of a choice of frame in the following sense:
\begin{lemma}
\label{lem_DXYsigma}
Let $\bX$ and $\bY$ be two adapted frames on an open subset $U$ of $M$. 
If  $\sigma$ is a smooth symbol such that $D_{X_j}\sigma $ makes sense for any $j=1,\ldots,n$ with 
$D_{X_j}\sigma (x,\cdot)\in S^m(\Gh_x M)$
for any $x\in U$, then 
$D_{Y_\ell}\sigma$ also makes sense for any $\ell=1,\ldots, n$ with 
$D_{Y_\ell}\sigma(x,\cdot)\in S^m(\Gh_x M)$ for any $x\in U$.
Moreover, fixing a Rockland element $\cR$ on $GM|_U$,  we have 
\begin{align*}
 &\|D_{Y_\ell}\sigma(x,\cdot)\|_{S^m (\Gh_x M),N, \langle \bX\rangle_x,\cR_x} 
    \\&\qquad   \leq C\Bigl( \|\sigma(x,\cdot)\|_{S^m (\Gh_x M),N', \langle \bX\rangle_x,\cR_x}+ 
\sum_j\|D_{X_j}\sigma(x,\cdot)\|_{S^m (\Gh_x M),N, \langle \bX\rangle_x,\cR_x} \Bigr),    
\end{align*}
where $N'\in \bN_0$ and  the constant $C>0$ are independent of $\sigma$, $N'$ is independent of $x\in U$ while $C$  depends on $x\in U$ locally uniformly.
\end{lemma}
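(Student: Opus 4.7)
The plan is to reduce the claim to a kernel-level computation followed by an application of the symbolic calculus on each osculating group. Writing $\bX = T^{\rm t}\bY$ with $T\in BU(d_1,\ldots,d_s)$ as in Lemma \ref{old_lem_XY}, one gets $Y_\ell = \sum_j a_{\ell j}(x)X_j$ with $a_{\ell j}(x) = (T^{-1})_{j\ell}(x) \in C^\infty(U)$. Hence $Y_\ell$ acting on any $x$-dependent object decomposes as $\sum_j a_{\ell j}(x)X_j$, and in particular $Y_\ell\kappa^\bY_x(v_1) = \sum_j a_{\ell j}(x)X_j\kappa^\bY_x(v_1)$. Converting to the $\bX$-frame via Lemma \ref{lem_XYkappa}, i.e.\ $\kappa^\bY_x(v_1) = \det(\theta_x)^{-1}\kappa^\bX_x(\theta_x^{-1}v_1)$ with $\theta_x = \mathrm{diag}(T_x)$ block-diagonal, and evaluating at $v_1 = \theta_x v$ to read off the $\bX$-convolution kernel of $D_{Y_\ell}\sigma$, the Leibniz and chain rules split the result into three types of contributions.

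The first type consists of smooth scalar multiples of $\kappa^\bX_x(v)$ arising from $X_j[\det(\theta_x)^{-1}]$; these correspond to smooth scalar multiples of $\sigma$, hence are in $S^m$ with seminorm bounded by $\|\sigma\|_{S^m,N,\langle\bX\rangle_x,\cR_x}$. The second type consists of smooth linear combinations of $(X_j\kappa^\bX)_x(v)$, i.e.\ of the $\bX$-convolution kernels of $D_{X_j}\sigma$; these lie in $S^m$ by the hypothesis, and their seminorms are controlled by $\sum_j\|D_{X_j}\sigma(x,\cdot)\|_{S^m,N,\langle\bX\rangle_x,\cR_x}$. The third type consists of chain-rule terms of the form $b^{(j)}_{k,m}(x)\,v_m\,\partial_{v_k}\kappa^\bX_x(v)$, where the smooth coefficients $b^{(j)}_{k,m}(x)$ collect the factors $X_j[(\theta^{-1})_{k,i}](x)\,(\theta_x)_{i,m}$.

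The main obstacle is the third type. The key observation is that since $\theta_x$ is block-diagonal with respect to the weight grading, the coefficients $b^{(j)}_{k,m}(x)$ vanish unless $\upsilon_k = \upsilon_i = \upsilon_m$. Using the BCH formula for $G_xM$ expressed in exponential coordinates via the basis $\langle\bX\rangle_x$, one expands $\partial_{v_k}$ as a finite linear combination $\sum_\alpha Q^{(k)}_\alpha(v)\,L^\alpha_{\langle\bX\rangle_x}$, where $Q^{(k)}_\alpha$ is a polynomial of homogeneous weight $[\alpha]-\upsilon_k$. On the symbol side, $L^\alpha_{\langle\bX\rangle_x}\kappa^\bX$ corresponds to $\widehat{\langle\bX\rangle}^\alpha\sigma$ (Notation \ref{notation_LX} and Example \ref{ex_widehatlangleXrangle_bis}), which belongs to $S^{m+[\alpha]}$; multiplication by $Q^{(k)}_\alpha(v)$ translates into the difference operators $\Delta^\gamma_\bX$ with $[\gamma] = [\alpha]-\upsilon_k$, lowering the order by $[\alpha]-\upsilon_k$; and the additional factor $v_m$ gives another $\Delta^{e_m}_\bX$, lowering the order by $\upsilon_m = \upsilon_k$. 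All weight changes balance to $m$, so by the symbolic calculus (Theorem \ref{thm_SmGh}) the contribution lies in $S^m(\widehat G_xM)$ with seminorm continuously controlled by some $\|\sigma\|_{S^m,N',\langle\bX\rangle_x,\cR_x}$ for an $N'$ depending only on $N$ and on the degrees of the BCH polynomials, hence on the gradation.

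Assembling the three contributions and invoking Proposition \ref{prop_compwmorph} to handle the weight-preserving push-forward by $\theta_x$, together with Lemma \ref{lem_SmnormsigmaxRS} for the transfer of seminorms between the $\bY$- and $\bX$-frames and for the local uniformity of the structural constants, yields the stated estimate. The integer $N'$ depends only on $N$ and on the fixed gradation of $\mathfrak gM$, while the constant $C$ depends smoothly on $x$ through the entries of $T_x$, $\theta_x$, $\det\theta_x$ and their $X_j$-derivatives, so that it is uniform on compact subsets of $U$.
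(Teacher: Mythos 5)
Your proof takes essentially the same approach as the paper's: decompose $Y_\ell = \sum_j a_{\ell j}(x)X_j$ with $a_{\ell j}$ coming from the inverse transition matrix, use Lemma \ref{lem_XYkappa} to pass from $\kappa^\bY$ to $\kappa^\bX$, split via Leibniz and chain rule into a scalar-multiple term, a $D_{X_j}\sigma$ term, and a $v$-derivative term, and handle the last by expressing $\partial_{v_k}$ as a combination of left-invariant $L_{\langle X_{i'}\rangle_x}$ with homogeneous polynomial coefficients so that weights balance, before assembling via Proposition \ref{prop_compwmorph} and Lemma \ref{lem_SmnormsigmaxRS}. (Minor remarks: in practice only $|\alpha|=1$ occurs in your expansion $\partial_{v_k}=\sum_\alpha Q^{(k)}_\alpha L^\alpha$, since $\partial_{v_k}$ is first-order; and your stated direction $\upsilon_{i'}\ge\upsilon_k$ with $\deg Q^{(k)}_{e_{i'}}=\upsilon_{i'}-\upsilon_k$ is the correct one — the paper's proof has an inverted index range and degree for the polynomial $p_{i,i'}$, but this typo does not affect the subsequent degree count $q_{j,k}$, which comes out to $\upsilon_k$ either way.)
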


\begin{proof}
As in the proof of Lemma~\ref{old_lem_XY} (see~\eqref{ironman}),  we may write 
$$
Y_\ell = \sum_{j: \upsilon_j \leq \upsilon_\ell } c_j X_j, \qquad c_j \in C^\infty(U).
$$
Therefore, by Lemma \ref{lem_XYkappa},
$$
Y_{\ell,x} \kappa_{\sigma,x}^\bY 
= 
\sum_j c_j (x) X_{j,x} \kappa_{\sigma,x}^\bY
= 
\sum_j c_j (x) X_{j,x} 
(\det \theta_x^{-1} \  \kappa_{\sigma,x}^\bX \circ \theta_x^{-1}),
$$
and we compute
\begin{align*}
& X_{j,x} 
(\det \theta_x^{-1} \  \kappa_{\sigma,x}^\bX \circ \theta_x^{-1})
\\&\qquad =   
 (X_{j,x} \det \theta_x^{-1})
\kappa_{\sigma,x}^\bX \circ \theta_x^{-1}
+\det \theta_x^{-1} \  (X_{j,x}\kappa_{\sigma,x}^\bX) \circ \theta_x^{-1}
+
\det \theta_x^{-1} \  X_{j,x_1=x}\kappa_{\sigma,x}^\bX \circ \theta_{x_1}^{-1}.
\end{align*}
For the last expression, 
vector calculus in $\bR^n$ yields
$$
X_{j,x_1=x}\,\kappa_{\sigma,x}^\bX (\theta_{x_1}^{-1}(v))
= 
\sum_{i=1}^n (X_{j,x} \theta_x^{-1}(v))_{i}
(\partial_i \kappa_{\sigma,x}^\bX) (\theta_{x_1}^{-1}(v)).
$$
Now, recall that $\theta_x:\bR^n\to \bR^n$ is linear and block diagonal for every $x\in U$, so $(X_{j,x} \theta_x^{-1}(v))_{i}$ is a polynomial in $v$ that is homogeneous of degree $\upsilon_i$ and  smooth in $x\in U$.
Furthermore,  using \cite[Proposition 1.26]{folland+stein_82} or \cite[Section 3.1.5]{R+F_monograph}, we have
$$
\partial_i = L_{\langle X_i\rangle_x}
+\sum_{i': \upsilon_{i'}<\upsilon_i} p_{i,i'}(x,v) L_{\langle X_{i'}\rangle_x},
$$
where $p_{i,i'}(x,v)$ is a  polynomial in $v$ that is $(\upsilon_{i}-\upsilon_{i'})$ homogeneous for the graded structure, and smooth in $x$. We recall that the vector fields $L_{\langle X_i\rangle_x}$ have been introduced in Notation~\ref{notation_LX}.
Hence, we obtain
\begin{align*}
 X_{j,x_1=x}\,\kappa_{\sigma,x}^\bX (\theta_{x_1}^{-1}(v))
&=\sum_i (X_{j,x} \theta_x^{-1}(v))_{i}
\Bigl(L_{\langle X_i\rangle_x}\kappa_{\sigma,x}^\bX
+\sum_{i': \upsilon_{i'}<\upsilon_i} p_{i,i'}(x,v) L_{\langle X_{i'}\rangle_x}\kappa_{\sigma,x}^\bX\Bigr) (\theta_{x_1}^{-1}(v))
\\&=\sum_{k=1}^n 
q_{j,k}(x,v) (L_{\langle X_k\rangle_x}
\kappa_{\sigma,x}^\bX) (\theta_{x_1}^{-1}(v)),
\end{align*}
with $q_{j,k}(x,v)$ being a  polynomial in $v$ that is homogeneous for the graded structure of degree at least $\upsilon_k$, smooth in $x$.
We set $\tilde q_{j,k}(x,v)=q_{j,k}(x,\theta_x^{-1}v)$; 
it is a polynomial in $v$ smooth in $x$ with  the same properties as $q_{j,k}(x,v)$.
Denoting by 
$\rho_{j,k}$  the symbol with convolution kernel in the $\bX$-coordinates given by $
\tilde q_{j,k}(x, \cdot ) (L_{\langle X_k\rangle_x}
\kappa_{\sigma,x}^\bX) $, 
 the properties of $S^m(\Gh_x M)$ 
imply that for any $N\in \bN_0$, 
$$
\|\rho_{j,k}(x,\cdot)\|_{S^m (\Gh_x M),N, \langle \bX\rangle_x,\cR} 
      \leq C \|\sigma(x,\cdot)\|_{S^m (\Gh_x M),N', \langle \bX\rangle_x,\cR},
$$
where $N'\in \bN_0$ and  the constant $C$ are independent of $\sigma$
while $N'$ is independent of $x\in U$ and $C$  depends on it locally uniformly. This remains so even if we replace a fixed Rockland operator $\cR_x$ on $G_x M$ with a Rockland element on $GM|_U$ as in Lemma \ref{lem_SmnormsigmaxRS}.
Since we have 
    \begin{align*}
    D_{Y_\ell} \sigma(x,\cdot)& = \sum_j c_j(x) \frac{X_{j,x} \det \theta_x^{-1}}{\det \, \theta_x^{-1}}
    (\theta_x)_*\sigma(x,\cdot) + c_j(x)  (\theta_x)_* (D_{X_j}\sigma(x,\cdot))\\
    &\qquad+ 
    c_j(x) \sum_k  (\theta_x)_* \rho_{j,k} (x,\cdot),
    \end {align*}
    we conclude  as in the proofs of Lemmata \ref{lem_SmnormsigmaxRx} and \ref{lem_SmnormsigmaxRS}.
\end{proof}

\subsection{The classes of symbols $S^m(\Gh M)$}\label{sec:def_symbol}
We now have set the notation required for defining symbol classes on the filtered manifold~$M$.

\subsubsection{Definition of the class $S^m(\Gh M)$}
\label{subsec_SmGhM}

\begin{definition}\label{def:Sm}
		A symbol $\sigma$ is of order $m$ on $M$ when 
		\begin{enumerate}
			\item 
   $\sigma(x,\cdot)\in S^m(\widehat {G}_x M)$ for each $x\in M$ and $\sigma$ is smooth in the variable $x\in M$,
			\item  for any  adapted frame $U$ on an open subset $U\subset M$ and for any multi-index $\beta\in\bN_0^n$, the symbol  $D_\bX^\beta\sigma$ exists with 
   $D_\bX^\beta\sigma (x,\cdot)\in S^m(\widehat {G}_x M)$ for any $x\in U$,
			\item 
			 for any local adapted frame $\bX$ on $U\subset M$,
for any compact subset $\cC\subset U$, 
  for any positive Rockland element $\cR$ on $G M|_U$, and
		for any $N\in \bN_0$ and $\beta\in \bN_0^n$, the following quantity is finite:
		$$
\sup_{x\in \cC}		\|D_\bX^\beta\sigma(x,\cdot)\|_{S^m(\widehat {G}_x M), N,\langle \bX\rangle_x,\cR_x}<\infty.
$$
\end{enumerate}			
We denote by $S^m(\widehat G M)$ the space of symbols of order $m$ on $M$. 
\end{definition}

In Property (3), we use the semi-norms defined in~\eqref{def_norm_symbol_1} on the group $G_xM$ for the choice of graded basis $\langle \bX\rangle_x$, 
and we do so for each $x\in U$.
Property (1) implies that $\sigma$ admits a convolution kernel and is smooth.
By Lemmata \ref{lem_SmnormsigmaxRx} and \ref{lem_SmnormsigmaxRS}, and, inductively, by Lemma~\ref{lem_DXYsigma}, if Property (2) holds for one  adapted frame $\bX$ on an open subset $U\subset M$, it will hold for any other  adapted frame~$\bY$ on~$U$. 
This will be also true for Property (3)
for any other choice of positive Rockland operator $\cS$ on $GM|_U$
and we have
$$
\sup_{x\in \cC}		\|D_\bY^{\beta_1}\sigma(x,\cdot)\|_{S^m(\widehat {G}_x M), N,\langle \bY\rangle_x,\cS_x}
\leq C 
\sup_{x\in \cC}	
\sum_{|\beta|\leq |\beta_1|} \|D_\bX^\beta\sigma(x,\cdot)\|_{S^m(\widehat {G}_x M), N,\langle \bX\rangle_x,\cR_x},
$$
with a constant $C$ dependent on $M,H, N,\bX,\bY,\cR,\cS,\cC,\beta_1$ but not on $\sigma$.
\medskip

We equip $S^m(\widehat G M)$ with the  topology generated by the semi-norms given in Property (3) of  Definition~\ref{def:Sm}.
This topology is in fact Fr\'echet as we assume that the manifold $M$ is second countable. 
Indeed, 
if   $\bX$ is an adapted frame on an open set $U\subset M$ and if $\cC$ is a compact of $U$, we set 
\begin{equation}\label{def:semi_norm}
\|\sigma\|_{S^m(\widehat G M),(\bX,U), \cC, N}
:=
\max_{|\beta|\leq N}
\sup_{x\in \cC}		\|D_\bX^\beta\sigma(x,\cdot)\|_{S^m(\widehat {G}_x M), N,\langle \bX\rangle_x, \cR_{x}}, \qquad N\in \bN_0, 
\end{equation}
where $\cR_x=\cR_{\bX,x}$ 
 is the positive Rockland operator given in $x\in U$ by~\eqref{ex_RU}. 
We now fix a countable sequence of compact subsets $\cC_k$, $k=0,1,2,\ldots$
covering $M=\cup_{k} \cC_k$ and such that 
each compact is included in a bounded open subset $U_k$ on which an adapted frame $\bX_k$ exists.  
The continuous semi-norms given by 
\begin{equation}
	\label{eq_seminormSmGhMN}
	\|\sigma\|_{S^m(\widehat G M),N}
:=\max_{k\leq N}\|\sigma\|_{S^m(\widehat G M),(\bX_k,U_k), \cC_k, N}, \qquad N\in \bN_0,
\end{equation}
form a countable fundamental basis for the topology of $S^m(\widehat G M)$,  yielding the Fr\'echet topology. 

The properties of symbol classes on groups imply readily  the continuous inclusions:
\begin{equation}
\label{eq_inclusionSmGhM}
   m_1 \leq m_2 \Longrightarrow
S^{m_1}(\widehat G M)
\subset 
S^{m_2}(\widehat G M),
\end{equation} 
and the natural properties of interpolation for the spaces $S^m(\Gh)$, $m\in\bR$.

\subsubsection{Examples from Section~\ref{sec_first_examples}}\label{sec:example_bis}

If $f\in C^\infty(M)$ as in Example~\ref{ex_fId}, then 
    the symbol 
    $f \id = \{f(x) \ id_{\cH_\pi}, (x,\pi) \in \Gh M\}$ satisfies 
  $D_\bX^\beta (f \id) =(\bX^\beta f) \id$ 
  for all $\beta
  $ and $f\id \in S^0(\Gh M)$.
  
\smallskip

Similarly, the  symbol 
$
\widehat {\langle X_j\rangle} := \{\pi(\langle X_j\rangle_x ) : x\in U, \pi\in \widehat G_x M\}
$
introduced in Example~\ref{ex_widehatlangleXrangle}
satisfies $D_\bX^\beta \widehat {\langle X_j\rangle}=0$ for all $\beta
$, and 
$\{\pi(\langle X_j\rangle_x ) ,\pi\in \Gh\}\in S^{\upsilon_j}(\Gh_x M)$ for any $x\in U$.
Consequently, 
$\widehat {\langle X_j\rangle} 
\in S^{\upsilon_j}(\widehat G M|_U)$.
\smallskip 

\subsubsection{Smoothing symbols}
\begin{definition}
The class of smoothing symbols 
$$
S^{-\infty}(\widehat G M):= \cap_{m\in \bR} S^m(\widehat G M)
$$
is equipped  with the induced topology of projective limit.	
\end{definition}
This topology makes sense thanks to the continuous inclusions in \eqref{eq_inclusionSmGhM}.

The advantage of smoothing symbols is that their convolution kernels are given by Schwartz densities.
Indeed,
the convolution kernel $\kappa_{\sigma,x}^\bX (v)$ of a smoothing symbol $\sigma\in S^{-\infty}(\Gh M)$ in the coordinates of an adapted frame $(\bX,U)$ is Schwartz in $v$ and depends smoothly in $x\in U$.
Therefore, using the notation in \eqref{eq_SconvkerD},
$$
\kappa_\sigma\in \cS(G M, |\Omega|(GM)).
$$

\subsection{Operations on symbols}\label{sec:op_symbols}

We analyse the different operations that we can perform on symbols on filtered manifolds and that preserve their membership to the symbol classes defined above. This section relies on the analogue results for symbols and kernels on groups stated in Section~\ref{sec:groups}.
Indeed, by definition of our symbol classes on $M$, for any $\sigma\in S^m(\Gh M) $, any smooth system of Haar measure $\mu=\{\mu_x, x\in G\}$ and any $x\in G$, the symbol $\sigma(x)$ is in $S^m(G_x M)$ with convolution kernel $\kappa^\mu_{\sigma,x}$.

\subsubsection{Symbolic algebraic properties}

From the Leibniz property of vector fields and Theorem~\ref{thm_SmGh} together with Remark~\ref{rem_thm_SmGh}, it follows readily:
\begin{proposition}
\label{prop_comp+adj}
For any 
$ m_1 , m_2 , m \in \bR \cup \{-\infty\} $,
	the composition and adjoint operations
$$
\left\{\begin{array}{rcl}
S^{m_1}(\widehat G M) \times S^{m_2}(\widehat G M)
&\longrightarrow & S^{m_1+m_2}(\widehat G M)
\\
(\sigma_1,\sigma_2)& 	\longmapsto& \sigma_1\sigma_2
\end{array}
\right.
\;\; \mbox{and}\;\;
\left\{\begin{array}{rcl}
S^{m}(\widehat G M) 
&\longrightarrow & S^{m}(\widehat G M)
\\
\sigma& 	\longmapsto& \sigma^*
\end{array}
\right. 
$$
are continuous maps.
Moreover, for any smooth Haar system $\mu=\{\mu_x\}_{x\in M}$, 
the  convolution kernels of $\sigma_1\sigma_2$
and $\sigma^*$
are formally given by 
\begin{align}
\label{conv_kernel_product}
\kappa_{\sigma_1\sigma_2,x}^{\mu}(v)&=
\kappa_{\sigma_2,x}^{\mu} \star_{G_x M,\mu_x}
\kappa_{\sigma_1,x}^{\mu} (v) = 
\int_{G_x M}\kappa_{\sigma_2,x}^{\mu} (v w^{-1}) \ \kappa_{\sigma_1,x}^\mu(w) d\mu_x(w),\\
\label{conv_kernel_adjoint}
 \kappa_{\sigma^*,x}^{\mu}(v)&=\bar \kappa_{\sigma,x}^{\mu} (v^{-1}),
\end{align}
where $\star_{G_xM,\mu_x}$ denotes the convolution product on $G_x M$ with respect to the Haar measure $\mu_x$.
\end{proposition}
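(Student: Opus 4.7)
The plan is to reduce both claims to their fibre-wise analogues on each osculating group $G_xM$, where they follow from Theorem~\ref{thm_SmGh}, and then to upgrade fibre-wise continuity to the Fr\'echet topology of $S^m(\widehat GM)$ using the uniformity of constants recorded in Remark~\ref{rem_thm_SmGh}. I would work throughout in a local adapted frame $(\bX,U)$ and in $\bX$-coordinates, so that the inverse on each $G_xM$ becomes $v\mapsto -v$ and the group product is the polynomial map $*_x^\bX$ of Lemma~\ref{lem_BCHGxM}.

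First, I would verify the kernel formulas~\eqref{conv_kernel_product} and~\eqref{conv_kernel_adjoint} at every fibre $x$: they are nothing but~\eqref{eq_convolution} and the standard adjoint identity on the nilpotent group $G_xM$ with its Haar measure $\mu_x$. Theorem~\ref{thm_SmGh} then gives immediately $\sigma_1(x)\sigma_2(x)\in S^{m_1+m_2}(\widehat G_xM)$ and $\sigma(x)^*\in S^m(\widehat G_xM)$ for each $x$, so the fibre-wise parts of Definition~\ref{def:Sm} are settled.

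Next I would turn to the $x$-regularity. For the adjoint this is transparent: the identity $\kappa^\bX_{\sigma^*,x}(v)=\overline{\kappa^\bX_{\sigma,x}(-v)}$ is linear in $\kappa^\bX_{\sigma,x}$ with $x$-independent coefficients in $\bX$-coordinates, so $D_\bX^\beta\sigma^*=(D_\bX^\beta\sigma)^*$ and the existence and class membership of every $D_\bX^\beta\sigma^*$ follow from those of $D_\bX^\beta\sigma$. For composition I would derive, by direct $X_j$-differentiation of
\begin{equation*}
\kappa^\bX_{\sigma_1\sigma_2,x}(v)=\int_{\bR^n}\kappa^\bX_{\sigma_2,x}(v*_x^\bX(-w))\,\kappa^\bX_{\sigma_1,x}(w)\,dw,
\end{equation*}
a Leibniz-type identity
\begin{equation*}
D_{X_j}(\sigma_1\sigma_2)=(D_{X_j}\sigma_1)\sigma_2+\sigma_1\,(D_{X_j}\sigma_2)+R_j(\sigma_1,\sigma_2),
\end{equation*}
where the remainder $R_j$ collects the chain-rule contribution coming from $X_j$-differentiating the polynomial law $*_x^\bX$ in $x$. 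The hard part will be the analysis of $R_j$: after the substitution $u=v*_x^\bX(-w)$, the chain-rule factor $X_j(v*_x^\bX(-w))$ becomes a finite polynomial in $u$ and $w$ with smooth $x$-coefficients, so Example~\ref{ex_widehatlangleXrangle_bis} together with the Leibniz formula of Lemma~\ref{lem_LeibnizG} will let me rewrite $R_j$ as a finite $C^\infty(U)$-combination of products of the form $(\widehat{\langle\bX\rangle}^{\beta_2}\Delta^{\alpha_2}_\bX\sigma_2)\cdot(\Delta^{\alpha_1}_\bX\sigma_1)$, each of which lands in $S^{m_1+m_2}(\widehat G_xM)$ by the fibre-wise calculus of Theorem~\ref{thm_SmGh}. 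Iterating this identity inductively on $|\beta|$ will yield the existence of $D_\bX^\beta(\sigma_1\sigma_2)$ and its fibre-wise membership in $S^{m_1+m_2}(\widehat G_xM)$.

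Finally, to conclude continuity in the Fr\'echet topology generated by the semi-norms~\eqref{def:semi_norm}, I would invoke Remark~\ref{rem_thm_SmGh}: the constants of the fibre-wise continuity of composition and adjoint on $S^m(\widehat G_xM)$ depend on the Lie structure of $\fg_xM$ only through the structural constant $\|[\cdot,\cdot]_{\fg_xM}\|_{\langle\bX\rangle_x}$, which is smooth in $x$ and thus uniformly bounded on any compact $\cC\subset U$. Combining this uniform boundedness with the iterated Leibniz expansion of the previous paragraph and with the continuity of each $\Delta^\alpha_\bX\colon S^{m_i}\to S^{m_i-[\alpha]}$ on $G_xM$ will bound every $\|\sigma_1\sigma_2\|_{S^{m_1+m_2}(\widehat GM),(\bX,U),\cC,N}$ by a finite sum of products of analogous semi-norms of $\sigma_1$ and $\sigma_2$, and every $\|\sigma^*\|_{S^m(\widehat GM),(\bX,U),\cC,N}$ by a semi-norm of $\sigma$. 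The main obstacle throughout is the careful bookkeeping of the remainder $R_j$ and of its $\beta$-iterates; once this is in hand, the rest of the proof reduces to a mechanical combination of Theorem~\ref{thm_SmGh} and Remark~\ref{rem_thm_SmGh}.
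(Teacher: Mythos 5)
Your overall strategy — reduce everything to the fibre-wise statement of Theorem~\ref{thm_SmGh} and then upgrade to the Fr\'echet topology of $S^m(\widehat GM)$ using the uniformity of constants recorded in Remark~\ref{rem_thm_SmGh} — is exactly the paper's strategy, and your treatment of the adjoint is correct: $v\mapsto -v$ is $x$-independent in $\bX$-coordinates, so $D_\bX^\beta(\sigma^*)=(D_\bX^\beta\sigma)^*$ is clean.

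The substantive point in your proposal is the remainder $R_j$ coming from $X_j$-differentiating the group law $*_x^\bX$ in $x$. Here you are onto something the paper glosses over: its own proof closes with ``routine checks using the Leibniz properties of vector fields,'' and the accompanying Remark~\ref{rem:LeibnizD} even asserts the clean identity $D_\bX^\beta(\sigma_1\sigma_2)=(D_\bX^\beta\sigma_1)\sigma_2+\sigma_1(D_\bX^\beta\sigma_2)$ with no remainder. Your chain-rule bookkeeping — and, tellingly, the paper's own more careful treatment of the analogous issue in the amplitude setting, Lemma~\ref{lem_algprop_ex_amplitude:convolution}~(4), where an explicit extra term $\sum_\ell\tau(\sigma_1,\sigma_2\widehat{\langle X_\ell\rangle},g_\ell J)$ appears — indicates that a remainder from the $x$-dependence of $*_x^\bX$ is genuinely present. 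So your version is more faithful to the underlying computation, and the ``routine checks'' are less routine than the paper's phrasing suggests.

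That said, the concrete form you propose for $R_j$ is not yet justified and needs care. The chain-rule factor hits $\kappa^\bX_{\sigma_2,x}$ through the Euclidean gradient in the $u$-variable, not directly through $L_{\langle\bX\rangle_x}$ or $R_{\langle\bX\rangle_x}$; one first has to write $\partial_{u_\ell}$ as a polynomial combination of left- or right-invariant vector fields (as in the proof of Lemma~\ref{lem_DXYsigma}), and it is the right-invariant ones, hence factors of the form $\sigma_2\widehat{\langle X_\ell\rangle}$ (not $\widehat{\langle\bX\rangle}^{\beta_2}\sigma_2$), that naturally appear. More importantly, these $\widehat{\langle X_\ell\rangle}$-shifts raise the order by $\upsilon_\ell>0$, so without a compensating vanishing condition on the polynomial coefficient in front (the role played by the ``vanishing at order $\max(\upsilon_\ell-\upsilon_j,-1)$'' condition on the $g_\ell$ in Lemma~\ref{lem_algprop_ex_amplitude:convolution}~(4)) you cannot conclude that each term of $R_j$ lands in $S^{m_1+m_2}(\widehat G_xM)$ rather than in a strictly larger class. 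As stated, ``each of which lands in $S^{m_1+m_2}$'' is the step that is missing, and it is precisely the degree bookkeeping that the paper defers to the amplitude lemma. You should either carry out that bookkeeping explicitly or recognise the flow-derivative trick ($X_{j,x}\varphi(v*_x^\bX(-w))=\partial_{t=0}\varphi(v*_{\exp_x^\bX te_j}^\bX(-w))$) that delivers the homogeneous vanishing of the coefficients for free.
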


Note that the convolution kernel $\kappa_{\sigma_1\sigma_2,x}^{\mu}
$ in~\eqref{conv_kernel_product} is the convolution product   of  two tempered distributions 
on $G_x M$ for the Haar measure $\mu_x$. Moreover, it makes sense on the group $G_x M$ as each distribution can be written as the sum of a Schwartz function with a distribution with compact support,
see Theorem \ref{thm_kernelG}.

\begin{proof}[Proof of Proposition \ref{prop_comp+adj}]
Consider a smooth Haar system $\mu=\{\mu_x\}_{x\in M}$ and $x\in M.$
The properties of composition and taking the adjoint of symbols on groups imply that the symbol $\sigma_1(x)\sigma_2(x)$ on the group $G_x M$
is in $S^{m_1+m_2}(G_x M)$ with convolution kernel given by the convolution product 
$\kappa_{\sigma_2,x}^\mu\star_{G_x M,\mu_x} \kappa^\mu_{\sigma_1,x}$, 
see  Theorem \ref{thm_SmGh}.

If $\mu'=\{\mu'_x\}_{x\in M}$ is another smooth Haar system, it differs from $\mu=\{\mu_x\}_{x\in M}$ by a positive constant $c(x)$, see \eqref{eq_mumu'}, and so do the convolution kernels of symbols, see \eqref{eq_kappamumu'}. As we check readily
\begin{align*}
 \kappa_{\sigma_2,x}^\mu\star_{G_x M,\mu_x} \kappa^\mu_{\sigma_1,x}
&=
c(x) \ 
\kappa_{2,x}^{\mu'}\star_{G_x M,\mu'_x} \kappa^{\mu'}_{1,x},  \\
 \kappa_{\sigma^*,x}^{\mu}
 &=
c(x) \  \kappa_{\sigma^*,x}^{\mu'},
\end{align*}
the convolution kernels of the symbols $\sigma_1\sigma_2$ and $\sigma^*$ are well defined as densities independently of the choice of a smooth Haar system. This shows that these symbols satisfy Part (1) of Definition \ref{def:Sm}.

Once a frame~$\bX$ is fixed on an open subset $U\subset M$, then equation~\eqref{conv_kernel_product} reads for $v\in \bR^n$ at $x\in U$,
\begin{equation}\label{key_for_Leibniz}
 \kappa_{\sigma_1\sigma_2,x}^{\bX}(v)=
\int_{\bR^n}\kappa_{\sigma_2,x}^{\bX} (v w^{-1}) \ \kappa_{\sigma_1,x}^\bX(w) dw.
\end{equation}
Routine checks using the Leibniz properties of vector fields and Theorem \ref{thm_SmGh} imply that $\sigma^*$ and $\sigma_1\sigma_2$ satisfy 
Parts (2) and (3) of Definition \ref{def:Sm}.
\end{proof}

\begin{remark}
    \label{rem:LeibnizD}
    Note that the proof of Proposition~\ref{prop_comp+adj} (namely equation~\eqref{key_for_Leibniz}) shows that the derivation $D_\bX$ satisfies Leibniz rules in the sense that for $(\sigma_1,\sigma_2)\in S^{m_1}(\widehat GM)\times S^{m_2}(\widehat GM)$ and $\beta\in \bN^d$ with $|\beta|=1$, 
    \[ 
    D_\bX^\beta (\sigma_1\sigma_2)=(D_\bX^\beta \sigma_1)\sigma_2+\sigma_1(D_\bX^\beta \sigma_2).
    \]
\end{remark}

Recall that a smooth function $f\in C^\infty(M)$ gives rise to a symbol $f\id \in S^0(M)$, see Example \ref{ex_fId}.
For any $\sigma\in S^m(\Gh M)$, the symbol $f\sigma$ may be viewed as the composition of $f\id \in S^0(M)$ with $\sigma\in S^m(\Gh M)$.
Hence it is also in $S^m(\Gh M)$ by Proposition \ref{prop_comp+adj}.
In other words, $S^m(\Gh M)$ is stable under multiplication by $C^\infty(M)$. 
It is clearly stable under addition, hence $S^m(\Gh M)$ is a $C^\infty(M)$-module. By Proposition \ref{prop_comp+adj}, $S^m(\Gh M)$
is also stable under taking the adjoint and  the space of symbols $\cup_{m\in \bR}S^m (\Gh M)$ is an 
$*$-algebra over the ring $C^\infty(M)$.

\medskip

In Example \ref{ex_widehatlangleXrangle}, we showed that given an adapted frame $\bX$ on an open subset $U\subset M$, the symbol  $\widehat {\langle X_j\rangle}$  is in $S^{\upsilon_j}(\Gh M|_U)$.
We can describe more precisely the composition with $\widehat {\langle X_j\rangle}$.
\begin{lemma}
Let $\bX$ be an  adapted frame  on an open subset $U\subset M$. Let $m\in \bR\cup \{-\infty\}$.
If $\sigma \in S^m (\widehat G M|_U)$ and $ j=d_{i-1}+1,\ldots, d_i$,
then $\widehat {\langle X_j\rangle}\sigma $ 
and $\sigma \widehat {\langle X_j\rangle}$ are 
in $S^{m +w_j} (\widehat G M|_U)$
with respective convolution kernels in the $\bX$-coordinates:
$$
L_{\langle X_j\rangle_{x}}\kappa^\bX_{\sigma,x}
\quad\mbox{and}\quad
R_{\langle X_j\rangle_{x}}\kappa^\bX_{\sigma,x}.
$$
\end{lemma}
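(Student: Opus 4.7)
The plan is to obtain both statements by a direct application of the composition result Proposition~\ref{prop_comp+adj}, combined with the explicit description of the convolution kernel of $\widehat{\langle X_j\rangle}$ and the standard identification of convolution against a delta-derivative with a group derivative.

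First, I recall from Section~\ref{sec:example_bis} (and Example~\ref{ex_widehatlangleXrangle_bis}) that $\widehat{\langle X_j\rangle}\in S^{\upsilon_j}(\Gh M|_U)$ (where $\upsilon_j=w_i$ for $j\in\{d_{i-1}+1,\dots,d_i\}$) and that, in the $\bX$-coordinates, its convolution kernel is the distribution $L_{\langle X_j\rangle_x}\delta_0$. With this in hand, Proposition~\ref{prop_comp+adj} applied to $(\widehat{\langle X_j\rangle},\sigma)$ gives $\widehat{\langle X_j\rangle}\sigma\in S^{m+\upsilon_j}(\Gh M|_U)$, and applied to $(\sigma,\widehat{\langle X_j\rangle})$ gives $\sigma\widehat{\langle X_j\rangle}\in S^{m+\upsilon_j}(\Gh M|_U)$. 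This disposes of the membership assertion for free.

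It then remains to identify the convolution kernels in the $\bX$-coordinates. Formula~\eqref{conv_kernel_product} yields
\[
\kappa^\bX_{\widehat{\langle X_j\rangle}\sigma,x}=\kappa^\bX_{\sigma,x}\star_{G_xM,\mu^\bX_x}(L_{\langle X_j\rangle_x}\delta_0),
\qquad
\kappa^\bX_{\sigma\widehat{\langle X_j\rangle},x}=(L_{\langle X_j\rangle_x}\delta_0)\star_{G_xM,\mu^\bX_x}\kappa^\bX_{\sigma,x}.
\]
I would then evaluate each right-hand side against a test function $\phi\in C_c^\infty(\bR^n)$ using the standard pairing $\langle \kappa_1\star\kappa_2,\phi\rangle=\langle\kappa_1(y)\otimes\kappa_2(z),\phi(yz)\rangle$ on the group $G_xM$. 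Since left-invariant vector fields on a nilpotent Lie group are divergence-free with respect to the Haar measure $\mu^\bX_x$, one has $L_{\langle X_j\rangle_x}^{t}=-L_{\langle X_j\rangle_x}$, so
\[
\langle \kappa^\bX_{\sigma,x}\star L_{\langle X_j\rangle_x}\delta_0,\phi\rangle=-\langle\kappa^\bX_{\sigma,x},L_{\langle X_j\rangle_x}\phi\rangle=\langle L_{\langle X_j\rangle_x}\kappa^\bX_{\sigma,x},\phi\rangle,
\]
giving the first identity. The second computation is parallel, except that the inner differentiation now acts on the argument $yz$ through the variable $y$, which by definition of $R_{\langle X_j\rangle_x}$ produces a right-invariant derivative; using the analogue $R_{\langle X_j\rangle_x}^{t}=-R_{\langle X_j\rangle_x}$ one obtains $R_{\langle X_j\rangle_x}\kappa^\bX_{\sigma,x}$.

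The only genuinely delicate point is the left/right bookkeeping: because the paper's convention writes the convolution kernel of a product $\sigma_1\sigma_2$ as $\kappa_2\star\kappa_1$, convolution of $\kappa_\sigma$ on the \emph{right} by $L_{\langle X_j\rangle_x}\delta_0$ must reproduce the \emph{left}-invariant derivative, while convolution on the \emph{left} must reproduce the right-invariant one. Beyond verifying these conventions consistently, no further estimate or regularity argument is needed; the quantitative control (Fréchet continuity, semi-norm bounds, smoothness in $x$) is inherited from Proposition~\ref{prop_comp+adj}.
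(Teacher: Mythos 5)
Your proof is correct and uses essentially the same mechanism as the paper's: the paper dispatches this with a one-line citation to Remark~\ref{rem_thm_kernelG}(2), which asserts exactly the group-case identity that left-invariant (resp.\ right-invariant) differentiation of the convolution kernel corresponds to left (resp.\ right) composition of the symbol with $\widehat{\bV}^\beta$; you simply re-derive this from Proposition~\ref{prop_comp+adj} together with the explicit kernel of $\widehat{\langle X_j\rangle}$. Your treatment of the left/right bookkeeping — in particular that left-convolving with $L_{\langle X_j\rangle_x}\delta_0$ differentiates $\phi(yw)$ in $y$ at $y=0$, producing the \emph{right}-invariant derivative $R_{\langle X_j\rangle_x}\phi(w)$ — is accurate, and is the only point where the computation is not completely mechanical.
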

\begin{proof}
    This follows readily from the group case, see Remark \ref{rem_thm_kernelG} (2).
\end{proof}

The composition property allows us to generalise Examples \ref{ex_fId} and \ref{ex_widehatlangleXrangle_bis} with the following two further sets of examples:

\begin{ex}
 Any element in $\Gamma (H^{w_i}/H^{w_{i-1}})$ is in $S^{w_i}(\widehat G M)$, $i=1,2,\ldots$.
 Indeed, it can be written in some $\bX$-coordinates as a $C^\infty(M)$-combination of $\widehat {\langle X_j\rangle}$, $j=d_{i-1}+1,\ldots, d_i$, or equivalently, as a sum of products of  symbols $f \id \in S^0(\Gh M)$ (as in Example \ref{ex_fId}) with symbols $\widehat {\langle X_j\rangle} \in S^{\upsilon_j}(\Gh M)$ (as in Examples \ref{ex_widehatlangleXrangle} and \ref{ex_widehatlangleXrangle_bis}).
\end{ex}

\begin{ex}
\label{ex_widehatlangleXranglealpha}
If $\bX$ is an adapted frame on an open subset $U\subset M$,
for any $\alpha\in \bN_0$, 
the symbol 
$\widehat {\langle \bX\rangle}^\alpha$ 
is in $S^{[\alpha]}(\Gh M|_U)$ and
admits 
$$
\kappa_{\widehat {\langle \bX\rangle}^\alpha,x}^{\bX} =L_{\langle \bX_1\rangle_x}^{\alpha_1} \ldots L_{\langle \bX_n\rangle_x}^{\alpha_n}\delta_0=L_{\langle \bX\rangle_x}^{\alpha}\delta_0,
$$
as convolution kernel in the $\bX$-coordinates. 
In particular, $\id = \widehat {\langle \bX\rangle}^\alpha$ with $\alpha =0$ is an element of $\Gamma(\sU(\fg M))$  of zero-th order, with convolution kernel in the $\bX$-coordinates given by $\delta_0$.
\end{ex}

This may be generalised in the following observation:
\begin{lemma}
\label{lem_LTx}
Let $T\in \Gamma_{\leq m}(\sU(\fg M))$.
If $\bX$ is an adapted frame on an open subset $U\subset M$, 
then any element $T$ of $\Gamma_{\leq m}(\sU(\fg M))$ can be written uniquely as
$$
T = \sum_{[\alpha]\leq m} c_\alpha \ {\langle \bX\rangle}^\alpha, \qquad c_\alpha \in C^\infty (U).
$$
The symbol associated to $T$ then is 
$$
\widehat T = \sum_{[\alpha]\leq m} c_\alpha \widehat {\langle \bX\rangle}^\alpha
\in S^m(\widehat G M|_U),$$
and
   the convolution kernel of $\widehat T$ 
   is given  in the $\bX$-coordinates 
   by 
   $\kappa_{T,x}^\bX = L_{T_x} \delta_0$, where 
   $L_{T_x} = \sum_{[\alpha]\leq m} c_\alpha (x) L_{\langle \bX\rangle}^\alpha $ is the differential operator on $\bR^n$ identified with the left-invariant operator corresponding to the element $T_x\in \sU(\fg_x M)$, once $G_x M$ is identified with $\bR^n$ via $\Exp^\bX_x$.
\end{lemma}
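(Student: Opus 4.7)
The plan is to assemble the statement from three pieces that have already been put in place earlier in the excerpt, namely the module structure of sections of $\sU(\fg M)$ in adapted frames, the basic examples of symbols coming from powers of $\langle X_j\rangle$, and the algebraic stability properties of the symbol classes $S^m(\widehat G M)$ under composition with zeroth-order symbols.

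First I would establish the unique decomposition $T=\sum_{[\alpha]\leq m}c_\alpha\,\langle\bX\rangle^\alpha$ with $c_\alpha\in C^\infty(U)$. This is a direct consequence of the general discussion in Section~\ref{subsubsec_adaptedFrame}: once an adapted frame $\bX$ on $U$ is fixed, the family $\{\langle\bX\rangle^\alpha\}_{\alpha\in\bN_0^n}$ is a free basis of the $C^\infty(U)$-module $\Gamma(\sU(\fg M|_U))$, and the subfamily with $[\alpha]\leq m$ is a basis of the submodule $\Gamma(\sU_{\leq m}(\fg M|_U))$. Hence the coefficients $c_\alpha$ exist and are unique.

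Next I would show that $\widehat T$ belongs to $S^m(\widehat G M|_U)$. By Example~\ref{ex_widehatlangleXranglealpha}, each symbol $\widehat{\langle\bX\rangle}^\alpha$ lies in $S^{[\alpha]}(\widehat G M|_U)$ with convolution kernel $L_{\langle\bX\rangle_x}^\alpha \delta_0$ in the $\bX$-coordinates. By the continuous inclusion~\eqref{eq_inclusionSmGhM}, $\widehat{\langle\bX\rangle}^\alpha\in S^m(\widehat G M|_U)$ whenever $[\alpha]\leq m$. Moreover, the function $c_\alpha\in C^\infty(U)$ yields the symbol $c_\alpha\,\id\in S^0(\widehat G M|_U)$ of Example~\ref{ex_fId}, and multiplication of symbols by $c_\alpha$ is precisely composition with $c_\alpha\,\id$. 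Therefore, by the composition part of Proposition~\ref{prop_comp+adj}, $c_\alpha\,\widehat{\langle\bX\rangle}^\alpha\in S^{[\alpha]}(\widehat G M|_U)\subset S^m(\widehat G M|_U)$. Summing the finitely many terms over $[\alpha]\leq m$ gives $\widehat T\in S^m(\widehat G M|_U)$.

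Finally I would identify the convolution kernel in the $\bX$-coordinates. By linearity of the group Fourier transform on each fiber, and by Example~\ref{ex_widehatlangleXranglealpha},
\[
\kappa_{\widehat T,x}^{\bX}
=\sum_{[\alpha]\leq m} c_\alpha(x)\,\kappa_{\widehat{\langle\bX\rangle}^\alpha,x}^{\bX}
=\sum_{[\alpha]\leq m} c_\alpha(x)\,L_{\langle\bX\rangle_x}^\alpha\,\delta_0
=L_{T_x}\delta_0,
\]
where in the last equality we used that $L_{T_x}=\sum_{[\alpha]\leq m}c_\alpha(x)L_{\langle\bX\rangle_x}^\alpha$ is by construction the left-invariant differential operator on $\bR^n\simeq G_xM$ attached to $T_x\in\sU(\fg_xM)$ through the basis $\langle\bX\rangle_x$ and the identification via $\Exp_x^\bX$. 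There is no real obstacle here: the statement is essentially a $C^\infty(U)$-linear packaging of Examples~\ref{ex_fId} and~\ref{ex_widehatlangleXranglealpha}, and the only thing one has to check is that the assembly is compatible with the symbol class definitions, which follows from Proposition~\ref{prop_comp+adj}.
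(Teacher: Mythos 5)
Your proof is correct and follows essentially the same route as the paper: the unique decomposition comes from the free basis property of $\{\langle\bX\rangle^\alpha\}$ established in Section~\ref{subsubsec_adaptedFrame}, and the rest is assembled from Examples~\ref{ex_fId} and~\ref{ex_widehatlangleXranglealpha} together with the composition property of Proposition~\ref{prop_comp+adj}, exactly as the paper's (more terse) proof indicates.
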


\begin{proof}
The writing of $T$ is explained in Section \ref{subsubsec_adaptedFrame}. The rest of the statement  follows readily from 	
	the property of composition of symbols 
(Proposition \ref{prop_comp+adj}) and Examples \ref{ex_fId} and \ref{ex_widehatlangleXranglealpha}.
\end{proof}

\subsubsection{Difference operators}

The definition of the difference operators performed in Section~\ref{subsubsec:diffop} extends naturally in the following way.
For any   adapted frame $\bX$ on an open subset $U\subset M$ and any $\sigma \in S^m(\widehat GM|_U)$ where $m\in \bR\cup\{-\infty\}$, 
 for any $\alpha\in \bN_0^n$, 
we denote by $\Delta^\alpha_\bX \sigma$
the symbol on $U$ whose convolution kernel in the $\bX$-coordinates is 
$u^\alpha \kappa_{\sigma,x}^\bX (u)$.
We check readily:

\begin{lemma}\label{lem:prop_diff}
Let $\bX$ be an  adapted frame  on an open subset $U\subset M$. 
For any $\alpha\in \bN_0^n$ and $m\in \bR\cup\{-\infty\}$, the following map
is continuous
$$
\left\{
\begin{array}{rcl}
 S^m(\widehat G M) & \longrightarrow &S^{m-[\alpha]}(\widehat G M|_U)\\
\sigma & \longmapsto & \Delta_\bX^\alpha \sigma
\end{array}
\right.
 $$
\end{lemma}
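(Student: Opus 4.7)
The plan is to reduce the claim to the group-level continuity of $\Delta_{x^\alpha}$ on $S^m(\widehat G)$ (Theorem \ref{thm_SmGh}), applied uniformly in $x$, and to verify the three properties in Definition \ref{def:Sm} directly from the definition of $\Delta_\bX^\alpha$ via its action on convolution kernels.

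First, I would observe that for each fixed $x \in U$, identifying $G_x M$ with $\bR^n$ through $\Exp_x^\bX$ and the adapted basis $\langle \bX\rangle_x$, the operation sending a symbol with convolution kernel $\kappa_x^\bX(u)$ to the symbol with kernel $u^\alpha \kappa_x^\bX(u)$ is exactly the difference operator $\Delta_{v^\alpha}$ on the group $G_x M$ associated with this basis. Hence by Theorem \ref{thm_SmGh}(1), for every $x \in U$, the symbol $\Delta_\bX^\alpha \sigma(x,\cdot)$ lies in $S^{m-[\alpha]}(\widehat G_x M)$, with
\begin{equation*}
  \|\Delta_\bX^\alpha\sigma(x,\cdot)\|_{S^{m-[\alpha]}(\widehat G_xM),N,\langle\bX\rangle_x,\cR_x}
  \leq C_x\, \|\sigma(x,\cdot)\|_{S^m(\widehat G_xM),N+N_0,\langle\bX\rangle_x,\cR_x},
\end{equation*}
for some $N_0 \in \bN_0$ depending on $\alpha,N$ and a constant $C_x>0$. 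This establishes Property (1) of Definition~\ref{def:Sm} for $\Delta_\bX^\alpha\sigma$.

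Next, I would check Property (2), i.e.\ the existence and membership of the $D_\bX^\beta$-derivatives. Since the convolution kernel of $\Delta_\bX^\alpha\sigma$ in the $\bX$-coordinates is $u^\alpha \kappa_{\sigma,x}^\bX(u)$ and the factor $u^\alpha$ does not depend on $x$, the differentiation operator $\bX_x^\beta$ acts only on $\kappa_{\sigma,x}^\bX$, so $\Delta_\bX^\alpha$ and $D_\bX^\beta$ commute:
\begin{equation*}
  D_\bX^\beta(\Delta_\bX^\alpha \sigma)(x,\cdot) = \Delta_\bX^\alpha(D_\bX^\beta\sigma)(x,\cdot), \qquad x\in U,\ \beta\in\bN_0^n.
\end{equation*}
Applying the pointwise estimate from the first step to $D_\bX^\beta \sigma$ in place of $\sigma$, we get that $D_\bX^\beta(\Delta_\bX^\alpha\sigma)(x,\cdot) \in S^{m-[\alpha]}(\widehat G_xM)$ for every $x \in U$ and every $\beta$.

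For Property (3) and the continuity of the map, the crucial point is to control the constant $C_x$ locally uniformly in $x$. By Remark~\ref{rem_thm_SmGh}, the constant produced by Theorem~\ref{thm_SmGh} for the continuity of $\Delta_{v^\alpha}$ on the group $G_xM$ can be chosen as an increasing function of the structural constant $\|[\cdot,\cdot]_{\fg_xM}\|_{\langle\bX\rangle_x}$, once the gradation and the adapted basis $\langle\bX\rangle_x$ are fixed. Since $x \mapsto \|[\cdot,\cdot]_{\fg_xM}\|_{\langle\bX\rangle_x}$ is smooth on $U$, it is bounded on any compact $\cC \subset U$, so the constant $C_x$ is uniformly bounded there by some $C_\cC$. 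Combined with the commutation identity above, this yields
\begin{equation*}
  \sup_{x\in\cC}\|D_\bX^\beta(\Delta_\bX^\alpha\sigma)(x,\cdot)\|_{S^{m-[\alpha]}(\widehat G_xM),N,\langle\bX\rangle_x,\cR_x}
  \leq C_\cC\, \|\sigma\|_{S^m(\widehat GM),(\bX,U),\cC,N+N_0+|\beta|},
\end{equation*}
which both verifies Property (3) for $\Delta_\bX^\alpha\sigma$ and gives the continuity of $\sigma \mapsto \Delta_\bX^\alpha\sigma$ as a map $S^m(\widehat GM) \to S^{m-[\alpha]}(\widehat GM|_U)$.

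The main obstacle is the uniformity of the group-level constants in $x$; this is not automatic and is precisely what motivated the structural tracking of constants emphasized throughout Section~\ref{sec:groups} (cf.\ Remarks~\ref{rem_constant_RS}, \ref{rem_thm_SmGh}, \ref{rem_constant_theta}). Once this uniformity is in hand, the proof reduces to bookkeeping with the semi-norms~\eqref{def:semi_norm}.
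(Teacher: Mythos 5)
Your proof is correct and fills in the details that the paper leaves as ``we check readily''. The overall route — commute $\Delta_\bX^\alpha$ with $D_\bX^\beta$, reduce to the group-level estimate fibre by fibre, then control constants uniformly on compacta — is the intended one and verifies Definition~\ref{def:Sm} properly.

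One small remark: you work harder than necessary on the uniformity in $x$. For the difference operator $\Delta_{x^\alpha}$ (as opposed to adjoint or composition), the group-level continuity does not actually depend on the structural constants of $\fg_x M$ at all. Indeed, with the $\bX$-adapted basis and the Rockland operator $\cR_{\bX,x}$ fixed, one has $\Delta_{x^{\alpha'}}\Delta_{x^\alpha}=\Delta_{x^{\alpha'+\alpha}}$ with $[\alpha'+\alpha]=[\alpha']+[\alpha]$, so directly from the definition of the semi-norms in \eqref{def_norm_symbol_1}
\begin{equation*}
\|\Delta_{x^\alpha}\sigma(x,\cdot)\|_{S^{m-[\alpha]}(\widehat G_x M),N,\langle\bX\rangle_x,\cR_{\bX,x}}
\;\le\;
\|\sigma(x,\cdot)\|_{S^{m}(\widehat G_x M),N+[\alpha],\langle\bX\rangle_x,\cR_{\bX,x}},
\end{equation*}
with constant exactly $1$, uniformly in $x$. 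Invoking Remark~\ref{rem_thm_SmGh} (which tracks constants for the adjoint and the product) and appealing to local boundedness of $x\mapsto\|[\cdot,\cdot]_{\fg_x M}\|_{\langle\bX\rangle_x}$ is therefore not needed here, though it is harmless; this machinery becomes essential in Lemmata~\ref{lem_SmnormsigmaxRx}--\ref{lem_DXYsigma} when one genuinely changes frame or Rockland operator.
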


If follows from the group case (see Lemma \ref{lem_LeibnizG}) that the difference operators $\Delta_\bX^{\alpha}$ satisfy a Leibniz property:
\begin{corollary}
\label{cor_GM_Leib}
Let $\bX$ be an adapted frame on an open subset $U$ of $M$.
       There exist  smooth functions $c^{(\alpha)}_{\alpha_1,\alpha_2} \in C^\infty (U)$, $\alpha,\alpha_1,\alpha_2\in \bN_0^n$ satisfying 
$$
\forall \sigma_1,\sigma_2\in S^{-\infty}(\widehat G M),\qquad 
\Delta_\bX^\alpha (\sigma_1\sigma_2) 
=
\sum_{[\alpha_1]+[\alpha_2]=[\alpha]} 
c^{(\alpha)}_{\alpha_1,\alpha_2} \
\Delta_\bX^{\alpha_1} \sigma_1\
\Delta_\bX^{\alpha_2}\sigma_2. 
$$
\end{corollary}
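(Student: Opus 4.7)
The plan is to reduce the corollary to the pointwise application, in each osculating group $G_xM$, of the group-case Leibniz property stated in Lemma~\ref{lem_LeibnizG}. Fix an adapted frame $\bX$ on $U$. For each $x \in U$, Lemma~\ref{lem_LeibnizG}(1) applied to $G_xM$ with the graded basis $\langle \bX\rangle_x$ produces unique real numbers $c^{(\alpha)}_{\alpha_1,\alpha_2}(x)$ such that
\[
(v *_x^\bX w)^\alpha \;=\; \sum_{[\alpha_1]+[\alpha_2]=[\alpha]} c^{(\alpha)}_{\alpha_1,\alpha_2}(x)\, v^{\alpha_1} w^{\alpha_2}, \qquad v,w\in \bR^n.
\]
These are universal polynomial expressions in the structure constants of the bracket $[\cdot,\cdot]_{\fg_x M}$ with respect to the basis $\langle \bX\rangle_x$, as recorded in Lemma~\ref{lem_LeibnizG}(1). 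This is the sole place where the coefficients $c^{(\alpha)}_{\alpha_1,\alpha_2}$ get defined.

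I would then establish that $x \mapsto c^{(\alpha)}_{\alpha_1,\alpha_2}(x)$ belongs to $C^\infty(U)$. By Lemma~\ref{lem_group_laws}(1), the map $(x,v,w)\mapsto v*_x^\bX w$ is a homogeneous polynomial in $(v,w)$ whose coefficients are smooth functions of $x \in U$; equivalently, the structure constants of $\ad_x^\bX$ are smooth in $x$ (see Section~\ref{subsubsec_structuralhot}). Expanding $(v*_x^\bX w)^\alpha$ and matching monomials in $(v,w)$ then gives the $c^{(\alpha)}_{\alpha_1,\alpha_2}(x)$ as polynomial expressions in these smooth functions of $x$, hence smooth.

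Next, for $\sigma_1,\sigma_2 \in S^{-\infty}(\widehat G M)$, the convolution kernels $\kappa^\bX_{\sigma_1,x}$ and $\kappa^\bX_{\sigma_2,x}$ in $\bX$-coordinates are Schwartz functions on $\bR^n \simeq G_xM$ depending smoothly on $x$. By Proposition~\ref{prop_comp+adj}, the convolution kernel of $\sigma_1\sigma_2$ in $\bX$-coordinates is the group convolution $\kappa^\bX_{\sigma_2,x} \star \kappa^\bX_{\sigma_1,x}$ relative to the law $*_x^\bX$ on $\bR^n$. Applying Lemma~\ref{lem_LeibnizG}(2) pointwise in $x$, so that the $c^{(\alpha)}_{\alpha_1,\alpha_2}$ produced there are exactly the functions introduced above, one obtains at each $x \in U$ the identity
\[
\Delta^\alpha_\bX (\sigma_1\sigma_2)(x,\cdot) \;=\; \sum_{[\alpha_1]+[\alpha_2]=[\alpha]} c^{(\alpha)}_{\alpha_1,\alpha_2}(x)\, \Delta^{\alpha_1}_\bX \sigma_1(x,\cdot)\, \Delta^{\alpha_2}_\bX \sigma_2(x,\cdot)
\]
as symbols on $\widehat{G}_xM$. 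Since the coefficients are smooth in $x$ and the relation holds at every point, this is an identity of symbols on $U$.

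There is no serious obstacle: essentially everything is already in Lemma~\ref{lem_LeibnizG}, and the only manifold-side issue is the smooth dependence of the coefficients on $x$, which is immediate from the smoothness of the structure constants of $\fg_x M$ in a fixed adapted frame. The one bookkeeping remark I would include is that the identity, established here only for smoothing symbols, extends to $\sigma_i \in S^{m_i}(\widehat GM)$ by the density of $S^{-\infty}(\widehat GM)$ in $S^{m_i}(\widehat GM)$ for a weaker topology (as in Proposition~\ref{prop_densitysmoothing} on groups) combined with the continuity statements of Proposition~\ref{prop_comp+adj} and Lemma~\ref{lem:prop_diff}; however for the present statement as formulated, smoothing symbols suffice.
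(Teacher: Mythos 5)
Your proposal is correct and takes essentially the same approach as the paper, which simply declares the corollary to "follow from the group case" (Lemma~\ref{lem_LeibnizG}) and leaves the pointwise application and the smoothness of the coefficients $c^{(\alpha)}_{\alpha_1,\alpha_2}$ implicit. You have supplied exactly the missing bookkeeping: the identification via Proposition~\ref{prop_comp+adj} of the convolution kernel of the product, and the smooth $x$-dependence of the coefficients through the structural constants of $\ad_x^\bX$.
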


If also follows  from the group case 
(see Proposition \ref{prop_DeltaqG} and Remark \ref{rem_prop_DeltaqG}) that locally compactly supported functions on $GM$ (in the following sense) yields continuous generalised difference operators:

\begin{definition}
\label{def:loccompsupp}
A function  $\chi_0\in C^\infty (M\times \bR^{n'})$  is $M$-\emph{locally compactly supported}
 in $\bR^{n'}$ when for any compact subset $\cC\subset M$, there exists a compact neighbourhood $V\subset \bR^{n'}$ of $v=0$ such that 
$\chi_0(x,v)=0$ for $(x,v)\notin \cC \times V$.
\end{definition}

\begin{corollary}
\label{cor_GM_Deltaq}
Let $\bX$ be an adapted frame on an open subset $U$ of $M$.
 Let $q\in C^\infty(U\times \bR^n)$ be $U$-locally compactly supported. 
Then the operation $\Delta_q$ defined via
$$
\Delta_q \sigma (x,\pi)  = \cF_{G_x M, \mu^\bX} (q(x,\Ln_x^\bX \cdot) \kappa_x^{\mu_\bX}), \qquad \sigma = \cF_{G_x M,\mu^\bX}\kappa_x^{\mu_\bX}
$$
is a continuous operator $S^m(\widehat G M)\to S^{m}(\widehat G M)$ for any $m\in \bR\cup\{-\infty\}$.
Moreover, fixing an  adapted frame  $\bX$ on an open subset $U$ of $M$ and
 writing the Taylor series of $q(x,\cdot)$ at 0 as $\bT_0 q (x,v)\sim \sum_\alpha c_\alpha(x) v^\alpha$, produces smooth functions $c_\alpha\in C^\infty (U)$, $\alpha\in \bN_0^n$, and
the map 
	$$
\sigma \longmapsto	\Delta_q \sigma  - \sum_{[\alpha]\leq N} c_\alpha \Delta_\bX^\alpha  \sigma
$$
is continuous $S^m(\widehat G M|_U)\to S^{m-(N+1)}(\widehat G M|_U)$  for any $N\in \bN_0$.
\end{corollary}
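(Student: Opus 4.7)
The plan is to reduce the corollary to a fiber-by-fiber application of Proposition~\ref{prop_DeltaqG} on each osculating group $G_x M$, and to verify that all the constants behave well as $x$ varies, using Remark~\ref{rem_prop_DeltaqG} and the smoothness of the frame $\bX$. The first step is to observe that, in the $\bX$-coordinates, the symbol $\tau := \Delta_q \sigma$ has convolution kernel
\[
\kappa_{\tau,x}^\bX(v) = q(x,v)\,\kappa_{\sigma,x}^\bX(v),
\]
which is exactly $\Delta_{q(x,\cdot)}$ applied to $\sigma(x,\cdot)$ at the group level. So for each fixed $x\in U$, Proposition~\ref{prop_DeltaqG} gives $\tau(x,\cdot)\in S^m(\Gh_x M)$, and smoothness in $x$ follows from the smoothness of $q$ and of $\kappa_{\sigma,x}^\bX$. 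Property~(1) of Definition~\ref{def:Sm} is therefore immediate. To verify Property~(3) for $\beta=0$, I would invoke Remark~\ref{rem_prop_DeltaqG}: the seminorms of $\tau(x,\cdot)$ are bounded by quantities that depend only on  $x$ through $q(x,\cdot)$, its Taylor data at $0$, and the structural constant $\|[\cdot,\cdot]_{\fg_x M}\|_{\langle\bX\rangle_x}$ (together with the coefficient of $\cR_{\bX,x}$, which is here constant in $x$). All of these depend continuously on $x\in U$, hence are locally bounded on any compact $\cC\subset U$.

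For Property~(2), the key step is to compute $D_\bX^\beta\tau$ by iterated Leibniz for vector fields, which rewrites
\[
\bX_x^\beta\bigl(q(x,v)\,\kappa_{\sigma,x}^\bX(v)\bigr)
=\sum_{\beta_1+\beta_2\,\text{``partition''}\beta}
c_{\beta_1,\beta_2}\,(\bX_x^{\beta_1}q)(x,v)\,\bX_x^{\beta_2}\kappa_{\sigma,x}^\bX(v)
\]
as a finite integer-coefficient combination. Each $\bX_x^{\beta_1}q$ is smooth on $U\times\bR^n$ and $U$-locally compactly supported in $v$, while $\bX_x^{\beta_2}\kappa_{\sigma,x}^\bX$ is the convolution kernel of $D_\bX^{\beta_2}\sigma\in S^m(\Gh M|_U)$. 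Hence $D_\bX^\beta\tau$ is a finite combination of symbols of the form $\Delta_{\bX_x^{\beta_1}q}(D_\bX^{\beta_2}\sigma)$, and the case $\beta=0$ handled in the previous paragraph, applied to each $D_\bX^{\beta_2}\sigma$, gives both the membership in $S^m(\Gh M|_U)$ and the locally uniform seminorm bounds. This simultaneously establishes Property~(3) for all $\beta$ and the continuity of $\sigma\mapsto \Delta_q\sigma$.

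For the asymptotic expansion part, the coefficients $c_\alpha(x)=\partial_v^\alpha q(x,0)/\alpha!$ are smooth on $U$, and Proposition~\ref{prop_DeltaqG} applied on the fiber $G_xM$ gives
\[
\Delta_{q(x,\cdot)}\sigma(x,\cdot)-\sum_{[\alpha]\leq N} c_\alpha(x)\,\Delta_\bX^\alpha\sigma(x,\cdot)\in S^{m-(N+1)}(\Gh_x M),
\]
with seminorm bounds that, by Remark~\ref{rem_prop_DeltaqG}, are uniform in $x$ on compact subsets $\cC\subset U$. Combining this fiberwise estimate with the Leibniz argument above (to handle the $D_\bX^\beta$-derivatives of $\sum_{[\alpha]\leq N} c_\alpha \Delta_\bX^\alpha\sigma$ simultaneously with those of $\Delta_q\sigma$) yields the desired continuity $S^m(\Gh M|_U)\to S^{m-(N+1)}(\Gh M|_U)$.

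The main obstacle I expect is purely bookkeeping: making sure that, when $x$-derivatives of the kernel land on $q(x,v)$, the resulting ``perturbed'' function $\bX_x^{\beta_1} q$ still qualifies as a symbol for a generalized difference operator (which it does, being smooth and $U$-locally compactly supported), and that every constant that appears depends continuously on $x$ only through the smooth data $\|[\cdot,\cdot]_{\fg_x M}\|_{\langle\bX\rangle_x}$ and the $C^\infty$-seminorms of $q$ on $\cC\times V$. Once these uniformity remarks from Section~\ref{sec:groups} are invoked, no essentially new difficulty appears beyond the group case.
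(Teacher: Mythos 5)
Your proposal is correct and follows essentially the same route the paper has in mind: the paper states the corollary without a written proof, indicating only that it ``follows from the group case (see Proposition~\ref{prop_DeltaqG} and Remark~\ref{rem_prop_DeltaqG}),'' which is exactly the fiberwise reduction plus uniformity argument you spell out, supplemented by the Leibniz rule in $x$ to handle $D_\bX^\beta$. One small observation: your claim of an integer-coefficient Leibniz expansion $\bX_x^\beta(q\kappa)=\sum_{\gamma_1+\gamma_2=\beta}\binom{\beta}{\gamma_1}(\bX_x^{\gamma_1}q)(\bX_x^{\gamma_2}\kappa)$ does in fact hold verbatim (despite the $X_j$'s not commuting), because iterating the single-vector-field Leibniz rule with the ordering convention $\bX^\beta=X_1^{\beta_1}\cdots X_n^{\beta_n}$ automatically produces sub-words in the correct order, so no commutator terms and hence no $C^\infty(U)$-coefficients arise.
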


We may summarise parts of Corollary \ref{cor_GM_Deltaq} as 
$$
\Delta_q  \sigma  \sim \sum_{\alpha\in \bN^n_0} c_\alpha \Delta_\bX^\alpha \sigma \quad\mbox{in}\ S^m(\Gh M|_U),
$$
having used the following natural notion of expansion of a symbol:

\begin{definition}\label{def:asymptotics}
 A symbol $\sigma \in S^m(\Gh M)$ admits the (inhomogeneous)  expansion 
\[
\sigma \sim \sum_{j=0} ^\infty \sigma_{m-j} \quad \mbox{in} \ S^m(\Gh M)
\]
when for all $N\in \bN_0$,
$\displaystyle{
\sigma -\sum_{j=0} ^N \sigma_{m-j}\in  S^{m-N-1}(\Gh M).
}$
\end{definition}

Given a sequence $(\sigma_j)$ as in Definition \ref{def:asymptotics}, we can construct a symbol $\sigma\in S^{m}(\widehat GM)$  following the classical ideas due to Borel  (see e.g. \cite[Section 5.5.1]{R+F_monograph}), that will satisfy $\sigma\sim \sum_j \sigma_j$.
Moreover, $\sigma$ is unique up to $S^{-\infty}(\widehat GM)$.
\smallskip

We will often apply Corollary \ref{cor_GM_Deltaq} with functions $q$ whose first terms in their Taylor series is zero. 
To describe this precisely, we  adapt the notion of functions vanishing at a certain order at 0 to the anisotropic setting:

\begin{definition}
\label{def_vanishingHOMorder}
Let $f$ be a (scalar-valued) function defined on an open subset of $\bR^n$ containing 0.
Here, $\bR^n$ is equipped with the dilations $\delta_r$.
We say that  $f$ {\it vanishes at homogeneous order} $N\in \bN_0$ when $f(v) = O(|v|^{N+1})$ for one (and then any) homogeneous quasinorm $|\cdot|$ on $\bR^n$. 

We abuse the vocabulary for the case $N=-1$ where $f(v)= O(1)$. 
\end{definition}

We readily adapt the usual properties of functions vanishing at a certain order by using  the Taylor expansion due to Folland and Stein~\cite{folland+stein_82} (see also Theorem~\ref{thm_MV+TaylorG}).
\begin{lemma}\label{lem_van_homo_char}
	Let $f$ be a (scalar-valued) function defined and smooth on an open subset $V$ of~$\bR^n$ containing 0. Let $N\in \bN_0$.
	The following properties are equivalent:
	\begin{enumerate}
		\item $f$ vanishes at homogeneous order $N$.
		\item $\partial^\alpha f(0)=0$ 
		for any $\alpha\in \bN_0$, $[\alpha]\leq N$. \item for any fixed $v\in V$, we have  $ f(\delta_\eps v) = O(\eps^{N+1})$ for $\delta\in \bR$.
	\end{enumerate}
	If it is the case, then for any $\alpha_0\in \bN_0^n$, $\partial^{\alpha_0} f$ vanishes at homogeneous order $\max( N-[\alpha_0],-1)$.
\end{lemma}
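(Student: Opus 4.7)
The plan is to establish the circle of implications $(1)\Rightarrow(3)\Rightarrow(2)\Rightarrow(1)$, with the Folland--Stein anisotropic Taylor estimate (Theorem \ref{thm_MV+TaylorG} applied to the graded abelian group $\bR^n$ equipped with the dilations $\delta_r$) doing the real work in the step $(2)\Rightarrow(1)$. The other two implications reduce to algebraic manipulations of the Taylor polynomial in anisotropic coordinates, so the main obstacle is essentially bookkeeping: making sure we track the homogeneous degree $[\alpha]$ of monomials $v^\alpha$ correctly.

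For $(1)\Rightarrow(3)$, I would simply observe that $|\delta_\eps v| = |\eps|\, |v|$ by definition of a quasinorm, so that $|f(\delta_\eps v)|\lesssim |\delta_\eps v|^{N+1} = |\eps|^{N+1}|v|^{N+1}$ locally, which yields the claim with $v$ fixed. For $(3)\Rightarrow(2)$, the idea is to use the ordinary (isotropic) Taylor expansion of $f$ at $0$, grouping monomials by their homogeneous degree:
\[
f(\delta_\eps v) = \sum_{j=0}^{N}\eps^{j}\Bigl(\sum_{[\alpha]=j}\tfrac{\partial^\alpha f(0)}{\alpha!}v^\alpha\Bigr)+O(\eps^{N+1}),
\]
locally in $v$. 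The hypothesis $f(\delta_\eps v)=O(\eps^{N+1})$ forces each polynomial coefficient $\sum_{[\alpha]=j}\tfrac{\partial^\alpha f(0)}{\alpha!}v^\alpha$ to vanish identically in $v$ for $j\leq N$, and since distinct monomials $v^\alpha$ are linearly independent, each $\partial^\alpha f(0)$ with $[\alpha]\leq N$ is zero.

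For $(2)\Rightarrow(1)$, I would invoke Theorem \ref{thm_MV+TaylorG} with the group $G=\bR^n$ (so $*_G$ is addition), the basis $\bV$ being the canonical one, and the anisotropic quasinorm $|\cdot|$ used in Definition \ref{def_vanishingHOMorder}. The hypothesis $\partial^\alpha f(0)=0$ for all $[\alpha]\leq N$ says exactly that the Folland--Stein Taylor polynomial $\bP_{G,f,0,N}$ vanishes identically. The theorem then gives, on a compact neighbourhood of $0$,
\[
|f(v)| \leq C_N \sum_{\substack{|\alpha|\leq \lceil N\rfloor+1\\ [\alpha]>N}} |v|^{[\alpha]}\sup_{|z|\leq \eta^{\lceil N\rfloor+1}|v|}|\partial^\alpha f(z)|.
\]
Since $[\alpha]$ is a non-negative integer combination of the weights and $[\alpha]>N$, each surviving $[\alpha]$ is $\geq N+1$, so $|v|^{[\alpha]}\lesssim |v|^{N+1}$ near the origin, giving $f(v)=O(|v|^{N+1})$, as required.

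Finally, for the claim about $\partial^{\alpha_0}f$: applying the equivalence $(1)\Leftrightarrow(2)$ to $g := \partial^{\alpha_0}f$, it suffices to observe that $\partial^\beta g(0)=\partial^{\alpha_0+\beta}f(0)=0$ whenever $[\beta]\leq N-[\alpha_0]$, since then $[\alpha_0+\beta]\leq N$ and we can use hypothesis $(2)$ on $f$. If $N-[\alpha_0]\geq 0$ this gives vanishing at homogeneous order $N-[\alpha_0]$, and otherwise the conclusion is the trivial statement $g=O(1)$ (the abuse of vocabulary explicitly allowed in Definition \ref{def_vanishingHOMorder}).
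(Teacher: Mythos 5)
Your proof is correct and follows exactly the route the paper hints at (it does not write out a proof but just cites Folland--Stein's Taylor expansion). Each implication is handled cleanly: $(1)\Rightarrow(3)$ is immediate from $1$-homogeneity of the quasinorm, $(3)\Rightarrow(2)$ correctly uses the isotropic Taylor expansion regrouped by homogeneous degree together with linear independence of monomials, and $(2)\Rightarrow(1)$ is the anisotropic Taylor estimate of Theorem \ref{thm_MV+TaylorG} applied to the abelian group $\bR^n$ with $\bP_{G,f,0,N}\equiv 0$, combined with the fact that $[\alpha]>N$ with $[\alpha]$ a non-negative integer forces $[\alpha]\geq N+1$. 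The treatment of $\partial^{\alpha_0}f$ via the equivalence applied to $g=\partial^{\alpha_0}f$, including the $N-[\alpha_0]<0$ degenerate case under the paper's convention, is also exactly right.
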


We observe that in our context, Property (2) is equivalent to 
\begin{itemize}
    \item[(2)'] $L_{\langle \bX \rangle_x} ^\alpha f(0)=0$ 
		for any $\alpha\in \bN_0$, $[\alpha]\leq N$. 
\end{itemize}
In any case, Property (2) allows to detect when the first coefficients $c_\alpha$ in the asymptotics given by Corollary \ref{cor_GM_Deltaq} vanish. 

\subsubsection{Differentiation of symbols}
The differentiation of symbols introduced in Notation~\ref{notation_DXbeta} preserves the symbol classes.

\begin{lemma}
Let $\bX$ be an  adapted frame  on an open subset $U\subset M$. 
For any $\beta\in \bN_0^n$ and $m\in \bR\cup\{-\infty\}$, the following map
is continuous
$$
\left\{
\begin{array}{rcl}
 S^m(\widehat G M) & \longrightarrow &S^{m}(\widehat G M|_U)\\
\sigma & \longmapsto & D_{\bX}^\beta \sigma
\end{array}
\right.
 $$
\end{lemma}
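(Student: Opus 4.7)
The plan is to reduce the statement to Property (2) and Property (3) of Definition~\ref{def:Sm} applied to $\sigma$ itself, together with the fact that $\DO^{\leq N}(U)$ is a free $C^\infty(U)$-module generated by $\bX^\alpha$, $|\alpha|\leq N$.

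First I would verify that $D_{\bX}^\beta\sigma$ is well defined as a smooth symbol on $U$: since $\sigma$ is smooth, its convolution kernel in the $\bX$-coordinates satisfies $\kappa_\sigma^\bX \in C^\infty(U,\cS'(\bR^n))$, so $\bX_x^\beta\kappa_{\sigma,x}^\bX$ is also in $C^\infty(U,\cS'(\bR^n))$, and thus corresponds to a smooth symbol whose group-Fourier-transform is what we call $D_\bX^\beta\sigma$. That $D_\bX^\beta\sigma(x,\cdot)\in S^m(\widehat G_xM)$ for every $x\in U$ is immediate: indeed, by Property (2) of Definition~\ref{def:Sm} applied to $\sigma$, the symbol with convolution kernel $\bX_x^\beta\kappa_{\sigma,x}^\bX$ is exactly $D_\bX^\beta\sigma$, and it belongs to $S^m(\widehat G_xM)$ by assumption.

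The core of the argument is checking Properties (2) and (3) of Definition~\ref{def:Sm} applied to $D_\bX^\beta\sigma$. For any $\gamma\in \bN_0^n$, the convolution kernel of $D_\bX^\gamma(D_\bX^\beta\sigma)$ in the $\bX$-coordinates is $\bX_x^\gamma\bX_x^\beta\kappa_{\sigma,x}^\bX$. The differential operator $\bX^\gamma\bX^\beta$ lies in $\DO^{\leq|\gamma|+|\beta|}(U)$ (in the ordinary, non-graded sense of products of vector fields), so by Lemma~\ref{lem_PulcXalpha} we may write uniquely
\[
\bX^\gamma\bX^\beta=\sum_{|\alpha|\leq|\gamma|+|\beta|} c_{\gamma,\beta,\alpha}(x)\,\bX^\alpha,\qquad c_{\gamma,\beta,\alpha}\in C^\infty(U).
\]
Hence
\[
D_\bX^\gamma(D_\bX^\beta\sigma)=\sum_{|\alpha|\leq|\gamma|+|\beta|} c_{\gamma,\beta,\alpha}\cdot D_\bX^\alpha\sigma.
\]
Each $D_\bX^\alpha\sigma(x,\cdot)$ lies in $S^m(\widehat G_xM)$ by Property~(2) of Definition~\ref{def:Sm} applied to $\sigma$, and multiplication by the smooth scalar $c_{\gamma,\beta,\alpha}(x)$ preserves $S^m(\widehat G_xM)$ (this corresponds to composition with the symbol $c_{\gamma,\beta,\alpha}(x)\,\id\in S^0$, cf.\ Proposition~\ref{prop_comp+adj}). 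This proves Property (2) for $D_\bX^\beta\sigma$.

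For Property (3) and the continuity, fix a compact $\cC\subset U$ and $N\in\bN_0$, and let $\cR$ be the Rockland element on $GM|_U$ associated with $\bX$ via~\eqref{ex_RU}. Combining the decomposition above with the subadditivity of the semi-norms $\|\cdot\|_{S^m(\widehat G_xM),N,\langle\bX\rangle_x,\cR_x}$ yields
\[
\sup_{x\in\cC}\|D_\bX^\gamma(D_\bX^\beta\sigma)(x,\cdot)\|_{S^m(\widehat G_xM),N,\langle\bX\rangle_x,\cR_x}
\leq C\,\|\sigma\|_{S^m(\widehat GM),(\bX,U),\cC,N+|\beta|},
\]
where $C$ depends only on $\bX,\cC,\gamma,\beta,N$ through the compact suprema of the finitely many smooth coefficients $c_{\gamma,\beta,\alpha}$ on $\cC$. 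Taking the maximum over $|\gamma|\leq N$ gives
\[
\|D_\bX^\beta\sigma\|_{S^m(\widehat GM|_U),(\bX,U),\cC,N}\leq C\,\|\sigma\|_{S^m(\widehat GM),(\bX,U),\cC,N+|\beta|},
\]
which simultaneously verifies Property (3) for $D_\bX^\beta\sigma$ and establishes the continuity of the map $\sigma\mapsto D_\bX^\beta\sigma$ from $S^m(\widehat GM)$ to $S^m(\widehat GM|_U)$. The case $m=-\infty$ follows immediately by taking the intersection over all real $m$. No step is really an obstacle here; the one point to be careful about is the non-commutativity of $\bX^\gamma$ and $\bX^\beta$, which is precisely why Lemma~\ref{lem_PulcXalpha} is invoked.
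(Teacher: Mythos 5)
Your proof is correct and follows the same strategy as the paper's: rewrite $\bX^\gamma\bX^\beta$ as a $C^\infty(U)$-combination of ordered monomials $\bX^\alpha$ (Lemma~\ref{lem_PulcXalpha}, relying on $\bX$ being adapted), then bound the semi-norms by linearity and compactness of $\cC$. The only cosmetic difference is that you track the index constraint via the isotropic length $|\alpha|\leq|\gamma|+|\beta|$ (which directly matches the definition~\eqref{def:semi_norm} of the semi-norms), whereas the paper records the homogeneous bound $[\gamma]\leq[\beta]+[\beta']$ and accordingly cites the slightly larger semi-norm index $N+[\beta]$; both are valid.
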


\begin{proof}
    The convolution kernel of $D_\bX^\beta \sigma$ in $\bX$-coordinates is $\bX^\beta\kappa_{\sigma, x}^\bX(u)$. For every $\beta'\in \bN^n$ there are functions $c^{\beta\beta'}_\gamma\in C^\infty(U)$ such that $\bX^{\beta'}\bX^{\beta}=\sum_{[\gamma]\leq[\beta]+[\beta']}c^{\beta\beta'}_\gamma \bX^\gamma$. Then we have $$\|D_{\bX}^\beta\sigma\|_{S^m(\widehat{G}M), \cC, N, \cR}\leq C\|\sigma\|_{S^m(\widehat{G}M), \cC, N+[\beta], \cR}$$ where $C\leq \sum_{[\gamma]\leq[\beta]+[\beta']}\sup_{x\in \cC}|c_\gamma^{\beta\beta'}(x)|$.
\end{proof}

To sum-up the elements of this section and the previous one, we can generalise easily this  property. 

\begin{corollary}
\label{cor_contSmTXDelta}
Let $\bX$ be an  adapted frame  on an open subset $U\subset M$. 
For any $\alpha,\beta, \beta',\gamma\in \bN_0^n$
and $m\in \bR\cup\{-\infty\}$, 
if $\sigma\in S^m(\widehat G M)$, 
then the symbol $\Delta^\alpha_\bX D_\bX^{\beta'} \widehat{\langle \bX\rangle}^{\beta} \sigma  \widehat{\langle \bX\rangle}^{\gamma}$ belongs to the symbol class  $S^{m-[\alpha]+[\beta]+[\gamma]}(\widehat G M|_U)$ with convolution kernel 
in the $\bX$-coordinates 
$$
(-u)^\alpha  \bX^{\beta'} L_{\langle \bX\rangle}^{\beta}R_{\langle \bX\rangle}^{\gamma} \kappa^\bX_{\sigma,x}  (u).
$$
Moreover, 
the following map is continuous:
$$
\left\{
\begin{array}{rcl}
S^m(\widehat G M) &\longrightarrow &S^{m-[\alpha]+[\beta]+[\gamma]}(\widehat G M|_U)\\
\sigma & \longmapsto & \Delta^\alpha_\bX  D_\bX^{\beta'}\,\widehat{ \langle \bX\rangle}^{\beta} \sigma  \widehat{\langle \bX\rangle}^{\gamma}  
\end{array}
\right.
$$
\end{corollary}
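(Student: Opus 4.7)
The plan is to decompose the target operation into four elementary building blocks, each of which has already been shown to act continuously between the appropriate symbol classes, and then track the effect on convolution kernels at each step using the fiberwise group-level identities.

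More precisely, I would factor the map $\sigma \mapsto \Delta^\alpha_\bX D_\bX^{\beta'}\bigl(\widehat{\langle\bX\rangle}^{\beta}\, \sigma\, \widehat{\langle\bX\rangle}^{\gamma}\bigr)$ as the composition of: (i) right multiplication by $\widehat{\langle\bX\rangle}^\gamma$, which by Example~\ref{ex_widehatlangleXranglealpha} lies in $S^{[\gamma]}(\widehat GM|_U)$, so the composition map of Proposition~\ref{prop_comp+adj} sends $S^m \to S^{m+[\gamma]}$ continuously; (ii) left multiplication by $\widehat{\langle\bX\rangle}^\beta \in S^{[\beta]}(\widehat GM|_U)$, yielding $S^{m+[\gamma]} \to S^{m+[\beta]+[\gamma]}$; (iii) the derivation $D_\bX^{\beta'}$, continuous from $S^{m+[\beta]+[\gamma]}$ to itself by the preceding lemma (iterated if $|\beta'|>1$); (iv) the difference operator $\Delta_\bX^\alpha$, continuous $S^{m+[\beta]+[\gamma]} \to S^{m+[\beta]+[\gamma]-[\alpha]}$ by Lemma~\ref{lem:prop_diff}. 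Composing yields the membership in $S^{m-[\alpha]+[\beta]+[\gamma]}(\widehat GM|_U)$ together with the asserted continuity.

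To identify the convolution kernel, I would work in $\bX$-coordinates and compute at a fixed $x \in U$. By Example~\ref{ex_widehatlangleXranglealpha}, $\widehat{\langle\bX\rangle}^\beta$ has kernel $L_{\langle\bX\rangle_x}^\beta \delta_0$ (and similarly for $\gamma$). The composition formula~\eqref{conv_kernel_product}, combined with the fact that convolution against $L_V \delta_0$ (resp. $R_V\delta_0$) implements the left-invariant (resp. right-invariant) derivative—this being the translation to the manifold setting of Remark~\ref{rem_thm_kernelG}(2)—gives the kernel of $\widehat{\langle\bX\rangle}^\beta \sigma\, \widehat{\langle\bX\rangle}^\gamma$ at $x$ as $L_{\langle\bX\rangle_x}^\beta R_{\langle\bX\rangle_x}^\gamma \kappa^\bX_{\sigma,x}$, using that left- and right-invariant operators on $G_xM$ commute. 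The operation $D_\bX^{\beta'}$ differentiates the kernel in the $x$-variable by definition (Notation~\ref{notation_DXbeta}), inserting $\bX^{\beta'}$, and $\Delta_\bX^\alpha$ multiplies the kernel in the $v$-variable by $(-u)^\alpha$ by the definition in Section~\ref{subsubsec:diffop} (up to sign convention inherited from the quantization in~\eqref{eq_QX_intro}). Concatenating these transformations produces exactly $(-u)^\alpha\, \bX^{\beta'} L_{\langle\bX\rangle}^\beta R_{\langle\bX\rangle}^\gamma \kappa^\bX_{\sigma,x}(u)$.

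There is no genuine obstacle here; the statement is essentially a bookkeeping corollary that packages together Proposition~\ref{prop_comp+adj}, the continuity of $D_\bX^{\beta'}$ and $\Delta_\bX^\alpha$, and Example~\ref{ex_widehatlangleXranglealpha}. The only point requiring mild care is that left- and right-invariant differential operators on the varying osculating groups $G_xM$ act smoothly in $x$ and commute fiberwise, which is already built into the definition of the symbol classes $S^m(\widehat GM)$ through Lemmata~\ref{lem_SmnormsigmaxRS} and~\ref{lem_DXYsigma}, ensuring that the intermediate symbols obtained at each step genuinely satisfy the three conditions of Definition~\ref{def:Sm}.
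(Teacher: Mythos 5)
The paper gives no explicit proof of this corollary—only the remark "To sum-up the elements of this section and the previous one, we can generalise easily this property"—and your factorization into (i) right-composition with $\widehat{\langle\bX\rangle}^\gamma$, (ii) left-composition with $\widehat{\langle\bX\rangle}^\beta$, (iii) the $x$-derivation $D_\bX^{\beta'}$, (iv) the difference operator $\Delta_\bX^\alpha$ is precisely the intended route, with each step's continuity already established in Proposition~\ref{prop_comp+adj}, the unnamed lemma just above, and Lemma~\ref{lem:prop_diff}. Your kernel tracking is likewise correct: the composition formula~\eqref{conv_kernel_product} together with Example~\ref{ex_widehatlangleXranglealpha} gives $L_{\langle\bX\rangle}^\beta R_{\langle\bX\rangle}^\gamma\kappa^\bX_{\sigma,x}$, then $D_\bX^{\beta'}$ inserts $\bX^{\beta'}$ and $\Delta_\bX^\alpha$ multiplies by the monomial. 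The one caveat you flag—the sign $(-u)^\alpha$ versus $u^\alpha$—is not a gap in your argument but an apparent inconsistency in the paper itself: the manifold definition of $\Delta_\bX^\alpha$ in Section~\ref{subsubsec:diffop} multiplies the kernel by $u^\alpha$, and the analogous group statement Remark~\ref{rem_thm_kernelG}(2) also has $x^\alpha$ with no sign, so the $(-u)^\alpha$ in the corollary statement is almost certainly a typo.
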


\subsection{Spectral multipliers of Rockland symbols}\label{sec:spectral_multipliers}

A fundamental example of symbols consists in the {\it spectral multipliers} of positive Rockland symbols. 
Here, we will generalise the properties of spectral multipliers obtained in Theorem \ref{thm_phi(R)} on groups and show that it holds on filtered manifolds:
\begin{proposition}\label{prop:fct_de_R}
	Let $\phi\in \cG^m (\bR)$ with $m\in \bR$ (see Notation~\ref{notation_Gm}), and let $\cR$ be a positive Rockland element on $G M$.
	Then the symbol 
    $$
    \phi(\widehat \cR) = \{\phi (\pi(\cR_x)):(x,\pi)\in \widehat G M\}
    $$
    is in $S^{m\nu}(\widehat G M)$ where $\nu$ is the  homogeneous degree of $\cR$. 
	Moreover, the map
	$$
	 \phi\longmapsto \phi(\widehat \cR), \qquad 
	 \cG^m (\bR)\longrightarrow S^{m\nu}(\widehat G M),
	 $$
	  is continuous. 
\end{proposition}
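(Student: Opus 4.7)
The plan is to deduce Proposition \ref{prop:fct_de_R} from Theorem \ref{thm_phi(R)} applied fiberwise, exploiting the smoothness of the Rockland element $\cR$ in the manifold variable and the locally uniform dependence of the group-level constants on the Lie structure (Remarks \ref{rem_thm_phi(R)} and \ref{rem_constant_RS}).

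\emph{Pointwise reduction and uniformity.} Fix an adapted frame $(\bX,U)$ and a compact subset $\cC\subset U$. Writing $\cR_x=\sum_{[\alpha]=\nu}c_\alpha(x)\langle \bX\rangle_x^\alpha$ with $c_\alpha\in C^\infty(U)$, Theorem \ref{thm_phi(R)} gives $\phi(\widehat\cR)(x,\cdot)=\phi(\pi(\cR_x))\in S^{m\nu}(\widehat G_xM)$ for each $x\in M$ together with
\[
\|\phi(\widehat\cR)(x,\cdot)\|_{S^{m\nu}(\widehat G_xM),N,\langle\bX\rangle_x,\cR_x}\leq C_x\,\|\phi\|_{\cG^m,N_\phi},
\]
for some integer $N_\phi=N_\phi(N)$. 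By Remark \ref{rem_thm_phi(R)} the constant $C_x$ may be chosen as an increasing function of $\|[\cdot,\cdot]_{\fg_xM}\|_{\langle\bX\rangle_x}$; by Remark \ref{rem_constant_RS} one may further replace the pointwise operator $\cR_x$ by the structural choice $\cR_{\bX,x}$ of \eqref{ex_RU}, the cost being an enlargement of $C_x$ into an increasing function of $\|[\cdot,\cdot]_{\fg_xM}\|_{\langle\bX\rangle_x}$ and of the coefficients $c_\alpha(x)$. Since these quantities depend continuously on $x$, they are bounded on $\cC$, and one obtains
\[
\sup_{x\in\cC}\|\phi(\widehat\cR)(x,\cdot)\|_{S^{m\nu}(\widehat G_xM),N,\langle\bX\rangle_x,\cR_{\bX,x}}\leq C_{\cC,N}\,\|\phi\|_{\cG^m,N_\phi}.
\]
This settles condition (3) of Definition \ref{def:Sm} in the case $\beta=0$ and supplies the continuity at the level of $\beta=0$ seminorms.

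\emph{Smoothness in $x$ and derivatives $D_\bX^\beta$ --- the main obstacle.} To verify conditions (1) and (2) of Definition \ref{def:Sm} and obtain the analogous uniform bounds for $\beta\neq 0$, the plan is to use a Helffer--Sj\"ostrand-type representation. After splitting $\phi(\lambda)=(1+\lambda)^m\psi(\lambda)$ with $\psi\in\cG^0(\bR)$, I would write
\[
\phi(\widehat\cR)=(\id+\widehat\cR)^m\,\psi(\widehat\cR),\qquad \psi(\widehat\cR)=-\frac 1\pi\int_{\bC}\bar\partial\widetilde\psi(z)\,(z-\widehat\cR)^{-1}\,d\lambda(z),
\]
for an almost-analytic extension $\widetilde\psi$ with enough vanishing $\bar\partial$-derivatives on $\bR$ to ensure convergence; each factor is a fiberwise invariant symbol. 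Applying $D_{X_j}$ and using the resolvent identity
\[
D_{X_j}(z-\widehat\cR)^{-1}=(z-\widehat\cR)^{-1}\,(D_{X_j}\widehat\cR)\,(z-\widehat\cR)^{-1},
\]
with $D_{X_j}\widehat\cR=\sum_{[\alpha]=\nu}(X_jc_\alpha)\widehat{\langle\bX\rangle}^\alpha\in S^\nu(\widehat GM|_U)$, reduces iteratively the computation of $D_\bX^\beta\phi(\widehat\cR)$ to a finite combination of terms of the form $Q_k\,\psi_k(\widehat\cR)\,Q'_k$, where $\psi_k\in\cG^{m-s_k}$ with $s_k\geq 0$ is built from $\psi$ by differentiation, and $Q_k,Q'_k$ are products of powers of $(\id+\widehat\cR)$ and of iterated $D_\bX$-derivatives of $\cR$. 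Combining the group-level composition of Proposition \ref{prop_comp+adj} with the fiberwise bound above yields $D_\bX^\beta\phi(\widehat\cR)\in S^{m\nu}(\widehat GM|_U)$ together with an estimate
\[
\|\phi(\widehat\cR)\|_{S^{m\nu}(\widehat GM),(\bX,U),\cC,N}\leq C_{\cC,N,\beta}\,\|\phi\|_{\cG^m,N'}
\]
for a suitable $N'=N'(\beta,N)$; the constant is locally uniform in $x$ because each intermediate symbol is handled via Remark \ref{rem_thm_phi(R)} on a compact set.

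\emph{Conclusion.} Assembling these estimates on the countable cover defining the Fr\'echet topology of $S^{m\nu}(\widehat GM)$ via the seminorms \eqref{eq_seminormSmGhMN} shows that $\phi\mapsto \phi(\widehat\cR)$ is continuous $\cG^m(\bR)\to S^{m\nu}(\widehat GM)$. The hard part is the combinatorial bookkeeping in the iteration on $\beta$, where one must ensure that the differentiated Rockland resolvent remains controlled in $S^{m\nu}$, uniformly on compact sets; once the Helffer--Sj\"ostrand representation is in place, this is a routine consequence of Proposition \ref{prop_comp+adj} and the fiberwise bounds of the first paragraph.
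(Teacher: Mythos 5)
There is a genuine gap in the second paragraph. First, the splitting $\phi(\lambda)=(1+\lambda)^m\psi(\lambda)$ with $\psi\in\cG^0(\bR)$ cannot feed into a Helffer--Sj\"ostrand representation: a function in $\cG^0$ has no decay, and Lemma~\ref{lem_tildepsicGm'} requires $\psi\in\cG^{m'}$ with $m'<-1$ for the almost-analytic extension to give a convergent integral --- one cannot compensate by ``choosing $\bar\partial\widetilde\psi$ with enough vanishing'', since $\widetilde\psi|_\bR=\psi$ does not decay. Fixing this by writing $\phi=(1+\lambda)^N\psi$ with $N>m+1$ integer and $\psi\in\cG^{m-N}$ still leaves the central problem: the Helffer--Sj\"ostrand representation together with the resolvent identity only controls $\psi(\widehat\cR)$ and all its $D_\bX^\beta$-derivatives in $S^{-\nu}(\Gh M)$, \emph{not} in $S^{(m-N)\nu}(\Gh M)$ (each resolvent contributes order $-\nu$, each $D_{X_j}\widehat\cR$ contributes $+\nu$, so iterated resolvent identities are order-neutral and never improve on $-\nu$). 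This is exactly the content of Lemma~\ref{lem_prop:fct_de_R}\,(1) in the paper, which only asserts $S^{-\nu}$. Multiplying by $(\id+\widehat\cR)^N$ then lands in $S^{(N-1)\nu}\supsetneq S^{m\nu}$, which is strictly weaker than the target. The claimed ``factorization $D_\bX^\beta\phi(\widehat\cR)=\sum_k Q_k\psi_k(\widehat\cR)Q_k'$'' with $\psi_k\in\cG^{m-s_k}$ would rescue this, but it does not hold in general: $(z-\widehat\cR)^{-1}(D_{X_j}\widehat\cR)(z-\widehat\cR)^{-1}$ does not factor through a function of $\widehat\cR$ unless $D_{X_j}\widehat\cR$ commutes with $\widehat\cR$, which fails for a generic Rockland element.

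What is missing is the mechanism that actually recovers the optimal order $m\nu$: the paper uses Helffer--Sj\"ostrand only to establish the $S^{-\nu}$ bound and the crucial $t^m$-homogeneous estimate for compactly supported $\psi\in C_c^\infty(\tfrac12,2)$ (Lemma~\ref{lem_prop:fct_de_R}\,(3)), then performs a dyadic decomposition $\phi=\sum_j 2^{jm}\psi_j(2^{-j}\cdot)$ and sums via the Cotlar--Stein lemma applied to $T_j:=2^{jm}(\id+\pi(\cR_x))^{\frac{-m\nu+[\alpha]+\gamma}{\nu}}\Delta_\bX^\alpha D_\bX^\beta\psi_j(2^{-j}\pi(\cR_x))(\id+\pi(\cR_x))^{-\gamma/\nu}$. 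Note that this Cotlar--Stein step handles the $x$-derivatives $D_\bX^\beta$ directly inside the almost-orthogonality argument, so the manifold derivatives are not ``peeled off'' beforehand as your sketch proposes. Your first paragraph (the fiberwise $\beta=0$ seminorm bound via Theorem~\ref{thm_phi(R)} and Remarks~\ref{rem_thm_phi(R)}, \ref{rem_constant_RS}) is sound and can be kept, but the $\beta\neq 0$ case requires the dyadic decomposition and Cotlar--Stein, not a Helffer--Sj\"ostrand bootstrap.
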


This section will be mainly devoted to 
the proof of Proposition \ref{prop:fct_de_R},
which follows the arguments in
\cite[Sections 4.7 \& 4.8]{fischerMikkelsen} and
\cite[Section 5.3]{R+F_monograph}.
The strategy  is  to decompose the multiplier function $\phi$ in dyadic pieces $\psi_j (2^{-j}\cdot)$, having shown  beforehand enough properties for 
spectral multipliers of the form $\psi (t\widehat \cR)$ with $t\in (0,1]$ and $\psi\in C_c^\infty(\bR)$.
The analysis of $\psi (t\widehat \cR)$ may be obtained
with two different methods:
in  \cite[Section 5.3]{R+F_monograph}
via heat kernel properties,
and in \cite[Sections 4.7 \& 4.8]{fischerMikkelsen} via the Helffer-Sj\"ostrand formula.
Only the latter analysis lends itself readily to a generalisation on manifolds. 
Hence, we  will start with recalling  the Helffer-Sj\"ostrand formula \cite{Davies,HS}.

After presenting the proof 
we will observe that essentially the same arguments will allow for the multiplier function to depend on $x$ 
and also that Proposition \ref{prop:fct_de_R}  implies the density of smoothing symbols.

\subsubsection{The Helffer-Sj\"ostrand formula}\label{subsubsec:HSformula}
If   $T$ is a self-adjoint operator densely defined on a separable Hilbert space $\cH$, and if  $\psi\in C_c^\infty(\bR)$, then 
the spectrally defined operator $\psi(T)\in \sL(\cH)$    coincides with  
\begin{equation}
\label{HSformula}
\psi(T) = \frac 1{\pi} \int_\bC \bar \partial \tilde \psi(z)\
(T-z)^{-1} L(dz).
\end{equation}
Here,  $\tilde \psi$ is any almost analytic extension of $\psi$ such that $\int_{\bC} |\bar \partial \tilde \psi(z)| |\IM\, z|^{-1} L(dz)$ is finite, 
and  $L(dz)=dxdy$, $z=x+iy$, is the Lebesgue measure on $\bC$. 
Recall that an almost analytic extension of a function 
 $\psi\in C^\infty(\bR)$ is any function  $\tilde\psi\in C^\infty(\bC)$ satisfying 
    $$
    \tilde \psi\big|_{\bR} = \psi \qquad\text{and}\qquad \bar{\partial} \tilde\psi\big|_{\bR}=0, 
    \qquad
    \mbox{where}\quad \bar \partial =\frac12( \partial_x +i\partial_y).
    $$
    This can be extended to smooth functions $\psi$ not necessarily with compact support but with enough decay at infinity \cite[Appendix]{fischerMikkelsen}:
\begin{lemma}
\label{lem_tildepsicGm'}
Let $\psi\in \cG^{m}(\bR)$ with $m<-1$. Then, there exists an almost analytic extension $\tilde {\psi} \in C^\infty(\bC)$ to $\psi$ 
for which the Helffer-Sj\"ostrand formula in \eqref{HSformula} holds for any self-adjoint operator $T$.
Moreover,   we have for all $N\in\bN_0$,
$$
\int_{\bC} \big| \bar\partial \tilde \psi( z) \big| \left(\frac{1+|z| }{|\IM \, z|}\right)^N L(dz) \leq C_{N}  \|\psi\|_{\cG^{m},N+3},
$$
with  the constant  $C_{N}>0$ depending on~$N$ and~$m$, but not on~$\psi$.
\end{lemma}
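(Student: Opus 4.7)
The strategy is the classical Davies construction of almost analytic extensions, tailored so that the polynomial growth rate $m<-1$ guarantees global integrability against the weight $((1+|z|)/|\IM z|)^N$. Fix a cutoff $\chi\in C_c^\infty(\bR)$ with $\chi\equiv 1$ on $[-1,1]$ and $\supp\chi\subset[-2,2]$, and set $\langle x\rangle=\sqrt{1+x^2}$. For an integer $K\geq N$ to be chosen, define
\[
\tilde\psi(x+iy)\ :=\ \chi\!\left(\frac{y}{\langle x\rangle}\right)\sum_{k=0}^{K}\frac{(iy)^k}{k!}\,\psi^{(k)}(x).
\]
A direct computation, using that $\partial_x+i\partial_y$ annihilates each Taylor monomial $(iy)^k\psi^{(k)}(x)/k!$ up to passing one derivative to $\psi^{(k+1)}$ and telescoping, gives
\[
2\bar\partial\tilde\psi(z)\ =\ \underbrace{\bigl(\partial_x+i\partial_y\bigr)\!\left[\chi\!\left(\tfrac{y}{\langle x\rangle}\right)\right]\sum_{k=0}^K\frac{(iy)^k}{k!}\psi^{(k)}(x)}_{=:A(z)}\ +\ \underbrace{\chi\!\left(\tfrac{y}{\langle x\rangle}\right)\frac{(iy)^K}{K!}\psi^{(K+1)}(x)}_{=:B(z)}.
\]

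The term $A$ is supported in the strip $\langle x\rangle\leq |y|\leq 2\langle x\rangle$, where the cutoff derivative contributes a factor $O(\langle x\rangle^{-1})$; using the $\cG^m$-estimate $|\psi^{(k)}(x)|\leq\|\psi\|_{\cG^m,k}(1+|x|)^{m-k}$ term by term yields $|A(z)|\lesssim \|\psi\|_{\cG^m,K}\langle x\rangle^{m-1}$ on this annular region, where $|y|\sim |z|\sim\langle x\rangle$ so $(1+|z|)^N/|y|^N\lesssim 1$. Integrating in $y$ over a band of width $\sim\langle x\rangle$ and then in $x$ gives $\int\langle x\rangle^m\,dx<\infty$ precisely because $m<-1$. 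For $B$, supported in $|y|\leq 2\langle x\rangle$, the same pointwise bound gives $|B(z)|\lesssim\|\psi\|_{\cG^m,K+1}|y|^K\langle x\rangle^{m-K-1}$; weighting by $(1+|z|)^N/|y|^N$ and choosing $K=N$ we obtain $|y|^{K-N}\langle x\rangle^{m-K-1+N}=\langle x\rangle^{m-1}$ after the $y$-integration (which converges at $y=0$ since $K-N=0$ and the support forces $|y|\leq 2\langle x\rangle$), and again integrability follows from $m<-1$. Choosing $K=N$, we obtain the claimed bound with a loss of a few derivatives of $\psi$, and the writeup allows $\|\psi\|_{\cG^m,N+3}$ as a safe margin.

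Finally, to establish the Helffer--Sj\"ostrand representation for self-adjoint $T$, apply the Stone--Weierstrass / standard compactly supported case: for $\psi_R(\lambda):=\psi(\lambda)\eta(\lambda/R)$ with $\eta\in C_c^\infty(\bR)$ equal to $1$ near $0$, the formula \eqref{HSformula} holds for the standard compactly supported almost analytic extension and therefore, by Cauchy--Pompeiu applied pointwise at each $\lambda$ in the spectrum, one has $\psi_R(\lambda)=\frac{1}{\pi}\int_\bC\bar\partial\widetilde{\psi_R}(z)(\lambda-z)^{-1}L(dz)$. The integrability established above (applied with $N=0$) combined with $\|(T-z)^{-1}\|\leq|\IM z|^{-1}$ and dominated convergence, letting $R\to\infty$, transfers the identity to $\tilde\psi$ at the operator level via the spectral theorem. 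The main obstacle is simply the book-keeping balancing the order $K$ of the Taylor truncation against the weight exponent $N$, and ensuring the cutoff $\chi(y/\langle x\rangle)$ produces annular support compatible with the decay $\langle x\rangle^m$; the hypothesis $m<-1$ is exactly what turns the formal calculation into a convergent one.
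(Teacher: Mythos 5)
The paper itself does not prove this lemma --- it cites the appendix of \cite{fischerMikkelsen} --- so there is no internal proof to compare with; your argument stands on its own as the classical Davies construction, and the estimate part of it is correct. The decomposition $2\bar\partial\tilde\psi = A + B$ is exact, the support of $A$ is the annulus $\langle x\rangle\leq|y|\leq2\langle x\rangle$ on which $|y|\sim 1+|z|\sim\langle x\rangle$, the bounds $|A|\lesssim\langle x\rangle^{m-1}$ and $|B|\lesssim|y|^K\langle x\rangle^{m-K-1}$ follow from the $\cG^m$-estimates, and the choice $K=N$ is exactly the right Taylor truncation. In fact your accounting yields $\|\psi\|_{\cG^m,N+1}$, which is sharper than the stated $\|\psi\|_{\cG^m,N+3}$.

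Two issues in the final paragraph. First, to verify absolute operator-norm convergence of $\int\bar\partial\tilde\psi(z)(T-z)^{-1}L(dz)$ you need $\int|\bar\partial\tilde\psi|\,|\IM z|^{-1}L(dz)<\infty$, which is your estimate with $N=1$, not $N=0$; the $N=0$ bound supplies no control on the $|\IM z|^{-1}$ singularity at the real axis, so it does not pair with $\|(T-z)^{-1}\|\leq|\IM z|^{-1}$. Second, the truncation step as written does not pass to the limit: you take $\widetilde{\psi_R}$ to be ``the standard compactly supported almost analytic extension'' of $\psi_R$, but these do not converge to the $\tilde\psi$ you constructed, so the identity does not transfer. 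Either define $\widetilde{\psi_R}$ by the same formula with $\psi$ replaced by $\psi_R$ (then $\bar\partial\widetilde{\psi_R}\to\bar\partial\tilde\psi$ pointwise and your estimates give a dominating function), or, more simply, drop the truncation entirely: since $m<-1$ and the cutoff confines $\supp\,\tilde\psi$ to $|y|\lesssim\langle x\rangle$ one has $\tilde\psi(z)\to 0$ as $|z|\to\infty$, so the Cauchy--Pompeiu formula applied directly to $\tilde\psi$ gives the scalar identity $\psi(\lambda)=\frac{1}{\pi}\int_\bC\bar\partial\tilde\psi(z)(\lambda-z)^{-1}L(dz)$ for every $\lambda\in\bR$, and the operator identity then follows from the spectral theorem and Fubini using the $N=1$ integrability.
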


\subsubsection{Properties of the resolvent}
In order to  use  the Helffer-Sj\"ostrand formula, 
we  show some properties for the  resolvent of  $\widehat \cR$:
\begin{lemma}
\label{lem_I+RNGM}
    Let $\cR$ be a positive Rockland element on $G M$.
    \begin{enumerate}
        \item For any $N\in \bZ$, the symbol $(\id +\widehat \cR)^N = \{(\id + \pi(\cR_x))^N:(x,\pi)\in \widehat G M\}$ is in $S^{N\nu}(\widehat G M)$ where $\nu$ is the  homogeneous degree of $\cR$. 
        \item For any $z\in \bC \setminus \bR$, 
        the symbol 
        $$
        (\widehat \cR-z)^{-1} = \{( \pi(\cR_x)-z)^{-1}:(x,\pi)\in \widehat G M\}
        $$
        is in $S^{-\nu}(\widehat G M)$. Moreover,  for any seminorm $\|\cdot\|_{S^{-\nu}(\Gh M), (\bX,U),\cC,N}$, there exist $C>0$ and $p\in \bN_0$ such that
        $$
        \forall z\in \bC\setminus \bR,\qquad 
        \|(\widehat \cR -z)^{-1}\|_{S^{-\nu}(\Gh M), (\bX,U),\cC,N} \leq C \left(1+ \frac {1+|z|}{|\IM z|}\right)^p.
        $$
    \end{enumerate}    
\end{lemma}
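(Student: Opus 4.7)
Part (1) will be reduced to Part (2), together with the symbolic calculus already established in Proposition \ref{prop_comp+adj}. Indeed, $\cR$ is a section of $\sU_\nu(\fg M)$, so by Lemma \ref{lem_LTx} its symbol $\widehat\cR$ lies in $S^\nu(\widehat G M)$. For $N\geq 0$, we simply expand $(\id+\widehat\cR)^N = \sum_{k=0}^N \binom{N}{k}\widehat\cR^k$ and apply Proposition \ref{prop_comp+adj} iteratively, which gives $(\id+\widehat\cR)^N\in S^{N\nu}(\widehat G M)$. For $N<0$, once Part (2) is established at $z=-1$ (noting $-1\notin \mathrm{sp}(\pi(\cR_x))\subset [0,\infty)$ after passing to the resolvent set via a small deformation or directly since $(\pi(\cR_x)+\id)^{-1}$ is the $z=-1$ resolvent), we obtain $(\id+\widehat\cR)^{-1}\in S^{-\nu}(\widehat G M)$, and composing with itself $|N|$ times finishes the proof via Proposition \ref{prop_comp+adj}.

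For Part (2), the fiberwise bound comes from Theorem \ref{thm_phi(R)} applied to $\phi_z(\lambda):=(\lambda-z)^{-1}$ at each $x$. A direct estimate of the derivatives $\phi_z^{(k)}(\lambda)=(-1)^k k!(\lambda-z)^{-k-1}$, splitting into the regimes $|\lambda|\leq 2|z|$ (where $|\lambda-z|\geq|\IM z|$) and $|\lambda|\geq 2|z|$ (where $|\lambda-z|\geq|\lambda|/2$), yields
\begin{equation*}
    \|\phi_z\|_{\cG^{-1},N}\leq C_N \left(\frac{1+|z|}{|\IM z|}\right)^{N+1},\qquad z\in\bC\setminus\bR.
\end{equation*}
Theorem \ref{thm_phi(R)} (and Remark \ref{rem_thm_phi(R)} for the structural dependence of the constant) then gives, fiberwise and with constants uniform for $x$ in compact subsets (using Lemma \ref{lem_SmnormsigmaxRS} and Remarks \ref{rem_constant_RS}, \ref{rem_constant_theta}),
\begin{equation*}
    \|(\widehat\cR_x-z)^{-1}\|_{S^{-\nu}(\widehat G_x M),N,\langle\bX\rangle_x,\cR_x}\leq C\left(\frac{1+|z|}{|\IM z|}\right)^{p}.
\end{equation*}

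To upgrade this fiberwise statement to membership in $S^{-\nu}(\widehat G M)$, I will verify conditions (1)--(3) of Definition \ref{def:Sm} via the resolvent identity. At the symbolic level, the Leibniz rule of Remark \ref{rem:LeibnizD} applied to $(\widehat\cR-z)(\widehat\cR-z)^{-1}=\id$ gives
\begin{equation*}
    D_{X_j}\!\left((\widehat\cR-z)^{-1}\right) = -(\widehat\cR-z)^{-1}\,(D_{X_j}\widehat\cR)\,(\widehat\cR-z)^{-1}.
\end{equation*}
Writing $\cR=\sum_{[\alpha]=\nu}c_\alpha(x)\langle\bX\rangle_x^\alpha$ in a local adapted frame, $D_{X_j}\widehat\cR$ is obtained by differentiating the convolution kernel $\sum_\alpha c_\alpha(x)L_{\langle\bX\rangle_x}^\alpha\delta_0$ in $x$; both the coefficients $c_\alpha$ and the dependence of $L_{\langle\bX\rangle_x}^\alpha$ on $x$ (through the smoothly varying adjoint map $\ad_x^\bX$) are smooth, and the resulting symbol lies in $S^\nu(\widehat G M|_U)$ by the arguments of Lemma \ref{lem_LTx}. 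Iterating the resolvent identity yields, for each $\beta\in\bN_0^n$, a finite expansion
\begin{equation*}
    D_\bX^\beta\!\left((\widehat\cR-z)^{-1}\right) = \sum_k (\widehat\cR-z)^{-1} T_{k,1}(\widehat\cR-z)^{-1}T_{k,2}\cdots T_{k,\ell_k}(\widehat\cR-z)^{-1},
\end{equation*}
with each $T_{k,i}\in S^\nu(\widehat G M|_U)$ and $\ell_k\leq|\beta|$. The fiberwise seminorm estimate applied to every resolvent factor, together with the composition estimate of Proposition \ref{prop_comp+adj}, then produces the claimed bound $(1+(1+|z|)/|\IM z|)^p$ for each semi-norm \eqref{def:semi_norm}.

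\textbf{Main obstacle.} The hard step is the precise bookkeeping needed to justify the symbolic resolvent identity and to show that $D_{X_j}\widehat\cR$ really is a symbol in $S^\nu(\widehat G M|_U)$ --- not merely fiberwise in each $S^\nu(\widehat G_xM)$. This requires working at the level of convolution kernels in $\bX$-coordinates and carefully tracking the smooth $x$-dependence both of the structural constants $c_\alpha$ and of the operators $L_{\langle\bX\rangle_x}^\alpha$ (through $\ad_x^\bX$). Once this is in place, the quantitative dependence on $z$ is extracted automatically from the $\cG^{-1}$-seminorm estimates on $\phi_z$ and the uniformity Remark \ref{rem_thm_phi(R)}.
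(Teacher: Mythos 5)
Your proof is correct and follows essentially the paper's strategy: reduce Part (1) to a single inverse power via composition (Lemma \ref{lem_LTx} and Proposition \ref{prop_comp+adj}), get the fiberwise seminorm estimate via the spectral multiplier theorem, and propagate to $x$-derivatives through the resolvent identity $D_{X_j}(\widehat\cR-z)^{-1}=-(\widehat\cR-z)^{-1}(D_{X_j}\widehat\cR)(\widehat\cR-z)^{-1}$. The one place you genuinely diverge from the paper is the fiberwise step in Part (2): the paper derives the $0$-seminorm bound from the elementary operator-norm estimate $\|(\pi(\cR_x)-z)^{-1}(\id+\pi(\cR_x))\|_{\sL(\cH_\pi)}\leq\sup_{\lambda\geq 0}|\tfrac{1+\lambda}{\lambda-z}|\leq 1+\tfrac{1+|z|}{|\IM z|}$ and cites the proof of an external result for the full fiberwise membership, whereas you feed $\phi_z(\lambda)=(\lambda-z)^{-1}$ directly into Theorem \ref{thm_phi(R)} and extract the $z$-dependence by computing $\|\phi_z\|_{\cG^{-1},N}$. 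Your route is somewhat more self-contained and yields the power $p$ mechanically from the multiplier seminorm index; either works.

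Two small points to tighten. First, your two-regime split $|\lambda|\leq 2|z|$ vs.\ $|\lambda|\geq 2|z|$ misses the corner $2|z|\leq |\lambda|\leq 1$ when $|z|<\tfrac12$, where $(1+|\lambda|)^{1+k}(|\lambda|/2)^{-1-k}$ is not bounded without further input; fall back on $|\lambda-z|\geq|\IM z|$ there (or split at $\max(1,2|z|)$), after which $\|\phi_z\|_{\cG^{-1},N}\lesssim_N(\tfrac{1+|z|}{|\IM z|})^{N+1}$ does hold since $\tfrac{1+|z|}{|\IM z|}\geq 1$. Second, invoking ``Part (2) at $z=-1$'' is formally outside the hypothesis $z\in\bC\setminus\bR$; the identical argument applies (indeed more simply, since $|\lambda+1|^{-1}\leq 1$ for $\lambda\geq 0$), and this is how the paper treats $N=-1$ — directly via Theorem \ref{thm_phi(R)} with $\phi(\lambda)=(1+\lambda)^{-1}$ — but you should state that you are reusing the method rather than citing the statement verbatim.
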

\begin{proof}[Proof of Lemma \ref{lem_I+RNGM}]
For Part (1),  the case of $N=1$ by
Lemma \ref{lem_LTx} and the properties of composition of symbols (Proposition \ref{prop_comp+adj})
    imply that it suffices to show the case $N=-1$.
By Theorem \ref{thm_phi(R)},  $(\id +\widehat \cR_x)^{-1} \in S^{-\nu}(\Gh_x M)$
for every $x\in M$, 
and from Remark~\ref{rem_thm_phi(R)}, we obtain 
that the 0-seminorm defined in \eqref{def:semi_norm}, i.e.
    $$
    \max_{x\in \cC} \|(\id +\widehat \cR_x)^{-1}\|_{S^{-\nu}(\Gh_x M), (\bX, U),\cC, 0}<\infty,
    $$
    is finite; we could also obtain this fact 
    from the analysis of the Bessel kernels in  \cite[Section 4.3]{R+F_monograph} without resorting to  Theorem \ref{thm_phi(R)} and Remark \ref{rem_thm_phi(R)}.
   
Differentiating with respect to $x$, we obtain 
\begin{equation}\label{diff:(R-z)}
D_{X_j}(\id +\widehat \cR_x)^{-1}  = 
(\id +\widehat \cR_x)^{-1} (D_{X_j} \widehat \cR)_x (\id +\widehat \cR_x)^{-1} 
\end{equation}
in a local adapted frame
$(\bX,U)$ (at first formally in terms of convolution kernels).
Inductively, we obtain an expression for $D_\bX^\beta (\id +\widehat \cR)^{-1},$ $\beta\in \bN_0^n,$ 
as a linear combination of products of $(\id+\widehat \cR_x)^{-1}$
and  
$D_\bX^{\beta'}\widehat \cR$, $\beta'\in \bN_0^n$.
By Lemma \ref{lem_LTx}, $\widehat \cR \in S^{\nu}(\Gh M)$, so $D_\bX^{\beta'}\widehat \cR \in S^{\nu}(\Gh M)$.
This yields 
suitable seminorm bounds for $D_\bX^\beta (\id +\widehat \cR)^{-1},$ showing that $(\id+\cR)^{-1}\in S^{-\nu}(\Gh M).$
This concludes the proof of Part (1).

Let us show Part (2). 
By \cite[Proposition 4.33]{fischerMikkelsen} and its proof, 
$(\widehat \cR_x -z)^{-1}\in S^{-\nu}(\Gh_x M)$ for any $x\in M$ with for any $\pi\in \Gh_x M$
\begin{align*}
 \|(\id+\pi(\cR_x))^{\frac{\nu +\gamma}\nu }
(\pi( \cR_x) -z)^{-1}
(\id+\pi(\cR_x))^{-\frac{\gamma}\nu }\|_{\sL(\cH_\pi)}
&=
\|(\pi(\cR_x) -z)^{-1}(\id+\pi(\cR_x))\|_{\sL(\cH_\pi)}
\\& \leq \sup_{\lambda\geq 0}\left|\frac{1+\lambda}{\lambda-z}\right| \leq 1+ \frac {1+|z|}{|\IM z|},  
\end{align*}
by functional analysis.
Using~\eqref{diff:(R-z)}
in a local adapted frame
$(\bX,U)$ and arguing
inductively, we obtain an expression for $D_\bX^\beta \Delta_\bX^\alpha(\widehat \cR-z)^{-1},$ $\beta,\alpha\in \bN_0^n,$ with  
suitable seminorm bounds by proceeding as above (see  the proof of \cite[Proposition 4.33]{fischerMikkelsen}).
This concludes the proof of Part (2).  
\end{proof}

\subsubsection{Proof of Proposition \ref{prop:fct_de_R}}\label{subsubsec:proof_spec_mult}
The main step in the proof of Proposition \ref{prop:fct_de_R} is the following statement:
\begin{lemma}
\label{lem_prop:fct_de_R}
Let $\cR$ be a positive Rockland operator on $G M$.
We denote by $\nu$ its  homogeneous degree.
\begin{enumerate}
    \item For any $\psi\in \cG^m (\bR)$ with $m<-1$, 
	the symbol $\psi(\widehat \cR)$ is in $S^{-\nu}(\widehat G M)$. 
	Moreover, for any seminorm $\|\cdot\|_{S^{-\nu}(\widehat G M),(\bX,U), \cC, N} $, there exist $C>0$ and 
	a seminorm $\|\cdot\|_{\cG^{m},N'}$ such that 
	$$
	\forall \psi\in \cG^{m}(\bR), \ \forall t\in (0,1],\;\; 
	\|\psi(t\widehat \cR)\|_{S^{-\nu}(\widehat G M),(\bX,U), \cC, N} \leq C t^{-1} \|\psi\|_{\cG^{m},N'}.
	$$
    \item 
    If $\psi \in \cS(\bR) $, then 
    $\psi (\widehat \cR) \in S^{-\infty}(\Gh M)$. 
    Moreover, the map $\psi\mapsto \psi (\widehat \cR)$ is continuous $\cS(\bR) \to S^{-\infty}(\Gh M)$.
    \item For any $m\in \bR$ and any seminorm  $\|\cdot\|_{S^{m\nu}(\widehat G M),(\bX,U), \cC, N} $,  there exist $C>0$ and $k\in \bN$
	 such that for any  $\psi \in  C^\infty_c(\frac 12,2) $, 
     $$
     \forall t\in (0,1],\;\;
	\|\psi(t\widehat \cR)\|_{S^{m\nu}(\widehat G M),(\bX,U), \cC, N} \leq C t^{m} \max_{k'=0,\ldots,k}\sup_{\lambda\in  \bR} |\psi^{(k')}(\lambda)|.
     $$  
\end{enumerate}
\end{lemma}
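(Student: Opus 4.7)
I would prove the three parts in order, with the common engine being the Helffer--Sj\"ostrand formula of Section \ref{subsubsec:HSformula} combined with the fiberwise resolvent bounds of Lemma \ref{lem_I+RNGM}. For Part (1), given $\psi\in \cG^m(\bR)$ with $m<-1$, I would apply the representation \eqref{HSformula} to $\psi(t\widehat\cR)$ using an almost-analytic extension $\tilde\psi$ supplied by Lemma \ref{lem_tildepsicGm'}. The key manipulation is
\[
\psi(t\widehat\cR)=\frac{1}{\pi}\int_\bC \bar\partial\tilde\psi(z)\,(t\widehat\cR-z)^{-1}L(dz)=\frac{1}{\pi t}\int_\bC \bar\partial\tilde\psi(z)\,(\widehat\cR-z/t)^{-1}L(dz).
\]
Inserting the $S^{-\nu}$-bound of Lemma \ref{lem_I+RNGM}(2) at the parameter $z/t$ and observing that $(1+(t+|z|)/|\IM z|)^p\leq (1+(1+|z|)/|\IM z|)^p$ for $t\in(0,1]$, the required estimate follows from Lemma \ref{lem_tildepsicGm'}, the factor $t^{-1}$ coming from the prefactor outside the integral.

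For Part (2), given $\psi\in \cS(\bR)$ and an arbitrary $N\in\bN_0$, I would set $\tilde\psi_N(\lambda):=(1+\lambda)^N\psi(\lambda)\in\cS(\bR)\subset\cG^{-2}(\bR)$ and use the spectral identity $\psi(\widehat\cR)=(\id+\widehat\cR)^{-N}\tilde\psi_N(\widehat\cR)$. Lemma \ref{lem_I+RNGM}(1) places $(\id+\widehat\cR)^{-N}$ in $S^{-N\nu}(\widehat GM)$, Part (1) places $\tilde\psi_N(\widehat\cR)$ in $S^{-\nu}(\widehat GM)$ with continuous dependence on $\psi$ via the Schwartz topology, and Proposition \ref{prop_comp+adj} then gives $\psi(\widehat\cR)\in S^{-(N+1)\nu}(\widehat GM)$ continuously. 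Since $N$ is arbitrary, this delivers membership in $S^{-\infty}(\widehat GM)$ with continuous dependence on $\psi$.

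Part (3) is the most delicate. For $\psi\in C_c^\infty(1/2,2)$ and $t\in(0,1]$, the rescaled function $\psi_t(\lambda):=\psi(t\lambda)$ is supported where $\lambda\sim 1/t$, and a direct computation gives the compact-support scaling
\[
\|\psi_t\|_{\cG^{m'},N}\leq C_N\, t^{m'}\max_{k'\leq N}\sup_\lambda|\psi^{(k')}(\lambda)|\qquad\text{for every }m'\in\bR.
\]
To upgrade this to the sharp $S^{m\nu}$-bound with prefactor $t^m$, I would factor $\psi(t\widehat\cR)=(\id+\widehat\cR)^M\,\rho_t(\widehat\cR)$ with $\rho_t(\lambda):=(1+\lambda)^{-M}\psi(t\lambda)$ and $M\in\bN$ chosen so that $M>m+1$. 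A Leibniz computation gives $\|\rho_t\|_{\cG^{m-M},N}\leq C_N t^m\sup_k|\psi^{(k)}|$, so that Part (1) yields $\rho_t(\widehat\cR)\in S^{-\nu}(\widehat GM)$ with seminorms bounded by $C t^m$. Since naive composition with $(\id+\widehat\cR)^M\in S^{M\nu}(\widehat GM)$ would only produce a symbol in $S^{(M-1)\nu}$, which is weaker than the required $S^{m\nu}$, one closes the gap by iterating the Helffer--Sj\"ostrand representation so as to use higher resolvent powers $(\widehat\cR-z)^{-M-1}\in S^{-(M+1)\nu}$ (obtained by iterated composition from Lemma \ref{lem_I+RNGM}(2)), matching the $(M+1)$-fold resolvent decay on the symbol side against the sharp $t^m$-scaling of $\|\psi_t\|_{\cG^m,N}$ on the multiplier side.

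\textbf{Main obstacle.} Parts (1) and (2) are essentially routine once Lemma \ref{lem_I+RNGM} is available, and the change of variables $z\mapsto z/t$ supplies the $t^{-1}$ factor in Part (1) directly. The genuine technical difficulty lies in Part (3): extracting simultaneously the sharp $t^m$ prefactor and the correct symbol order $m\nu$ requires a careful coupling between the compact-support scaling of $\cG^{m'}$-seminorms of $\psi_t$ and a higher-order resolvent representation, which refines the basic one-resolvent argument of Part (1).
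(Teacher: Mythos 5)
Your Parts (1) and (2) follow the paper's proof essentially verbatim: Part (1) is the Helffer--Sj\"ostrand formula plus Lemma~\ref{lem_I+RNGM}(2) with the rescaling $z\mapsto z/t$; Part (2) is the factorization $\psi(\widehat\cR)=(\id+\widehat\cR)^{-N}\psi_N(\widehat\cR)$ with $\psi_N(\lambda)=(1+\lambda)^N\psi(\lambda)$, then Part (1) and Lemma~\ref{lem_I+RNGM}(1).

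For Part (3), however, you have a genuine gap. The factorization $\psi(t\widehat\cR)=(\id+\widehat\cR)^M\rho_t(\widehat\cR)$ cannot produce the required $S^{m\nu}$-seminorm bound, and you correctly diagnose why: with $M>m+1$ one lands in $S^{(M-1)\nu}$ which strictly \emph{contains} $S^{m\nu}$, and there is no way to pass from a seminorm in the larger class to a seminorm in the smaller one. The source of the mismatch is that $(\id+\lambda)^M$ does not scale homogeneously under $\lambda\mapsto t\lambda$, so the factor of $t^M$ you extract from $\rho_t$ is not accompanied by a drop in the symbol order; you get the right power of $t$ but the wrong class. Your proposed fix --- iterating Helffer--Sj\"ostrand to use higher resolvent powers $(\widehat\cR-z)^{-(M+1)}$ --- is left as a sketch, and would require establishing both a higher-order Helffer--Sj\"ostrand representation and corresponding resolvent-power estimates, neither of which the paper sets up. The paper avoids all of this by factoring out a \emph{homogeneous} power: for integer $m=N-1\geq 0$, write $\phi_N(\lambda):=\lambda^{-N}\psi(\lambda)$ so that $\psi(t\lambda)=(t\lambda)^N\phi_N(t\lambda)=t^N\lambda^N\phi_N(t\lambda)$; now $\widehat\cR^N\in S^{N\nu}$ and $\phi_N(t\widehat\cR)\in S^{-\nu}$ with $\lesssim t^{-1}$ from Part (1), so the composition lands exactly in $S^{(N-1)\nu}=S^{m\nu}$ with prefactor $t^{N-1}=t^m$; the explicit $t^N$ coming from the homogeneous factor $\lambda^N$ is what reconciles the power of $t$ with the symbol order, and this is precisely what $(\id+\lambda)^M$ cannot give you. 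You also omit the case of integer $m\leq -2$, which the paper handles by writing $\psi=\psi\chi$ with a wider compactly supported $\chi$ and composing $\psi(t\widehat\cR)\chi(t\widehat\cR)$, and the final interpolation step for non-integer $m$.
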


\begin{proof} [Proof of Lemma \ref{lem_prop:fct_de_R}]
Let us prove (1). Let $\psi\in \cG^m(\bR)$, $m<-1$.
We consider the almost analytical extension $\tilde \psi$ constructed as in Lemma \ref{lem_tildepsicGm'}.
By Lemma \ref{lem_I+RNGM} (2) and the formula in \eqref{HSformula}, we have
\begin{align*}
     \| \psi(\widehat \cR)\|_{S^{-\nu}(\Gh M), (\bX,U),\cC,N} 
   & \leq
\frac 1{\pi} \int_\bC |\bar \partial \tilde \psi(z)|
\|(\widehat \cR-z)^{-1}\|_{S^{-\nu}(\Gh M), (\bX,U),\cC,N}  L(dz)
\\&\lesssim 
\int_\bC |\bar \partial \tilde \psi(z)|
\left(1+ \frac {1+|z|}{|\IM z|}\right)^p
 L(dz)  
 \lesssim \|\psi\|_{\cG^m, p+3}.
\end{align*}
This provides the estimate for $t=1$. To obtain the estimate for any  $t\in (0,1]$, it suffices to  observe 
$$
\psi(t\widehat \cR) = \frac 1{\pi} \int_\bC \bar \partial \tilde \psi(z)\
(t\widehat \cR-z)^{-1} L(dz)
=
\frac {t^{-1}}{\pi}\int_\bC \bar \partial \tilde \psi(z)\
(\widehat \cR-t^{-1}z)^{-1} L(dz),
$$
and to proceed as above (see the proof of \cite[Corollary 4.37]{fischerMikkelsen}). This shows Part (1).

Let us sketch the proof of Parts (2) and (3) which follows the argument of the proof of \cite[Lemma 4.38]{fischerMikkelsen}. 
For 
(2), we consider $\psi \in \cS( \bR).$
As $\widehat \cR\geq 0$, we may assume that $\psi(\lambda)=0$ for $\lambda\leq -1/2$.
Then for any $N\in \bN_0$, $\psi_N (\lambda) := (1+\lambda)^N \psi(\lambda)$ defines a  function $\psi_N \in \cS(\bR)$.
Applying Part (1) to $\psi_N$ and using the properties of composition of symbols (Proposition~\ref{prop_comp+adj}) to 
$$
\psi(\widehat \cR) = (\id +\widehat \cR)^{-N} \psi_N(\widehat \cR),
$$
 thanks to Lemma \ref{lem_I+RNGM} (1),  imply Part (2).

Finally, let  $\psi \in C^\infty_c(\frac 12,2) $. 
The case $m=-1$ follows from Part (1). Then, for $m=N-1$ with  
 $N\in \bN$, one 
applies Part (1) to $\phi_N(\lambda):=\lambda^{-N} \psi(\lambda)$ and uses the properties of composition of symbols (Proposition \ref{prop_comp+adj})
on 
$$
\psi(\widehat \cR) 
= {\widehat \cR}^N \phi_N(\widehat \cR),
$$
together with $\widehat \cR^N\in S^{N\nu}(\Gh M)$. Writing $\psi = \psi \chi$ with $\chi\in C_c^\infty (\frac 14 ,4)$ such that $\chi=1$ on $(1/2,2)$ 
and  using the properties of composition of symbols (Proposition \ref{prop_comp+adj}) on 
$$
\psi(\widehat \cR)
=
\psi(\widehat \cR)
\chi(\widehat \cR),
$$ 
yields the case of $m=-2$. 
Inductively, we obtain the case $m=-2,-3,\ldots$
Part (3) follows by interpolation.
\end{proof}

We can now show Proposition \ref{prop:fct_de_R}. 
Our proof will follow closely 
\cite[Section 4.8]{fischerMikkelsen}.

\begin{proof}[Proof of Proposition \ref{prop:fct_de_R}] 
We only require to prove the result for $m<-1$. Indeed,  for  $\phi\in \cG^m (\bR)$, with $m\geq -1$,  choosing $N\in \bN_0$ such that $ N >  m+1$, one can find $\psi\in \cG^{m-N} (\bR)$ such that  for all $\lambda\in [0,+\infty)$, $\phi(\lambda)=(1+\lambda)^N \psi(\lambda)$. The result follows by the properties of symbols with respect to product (see Proposition~\ref{prop_comp+adj}) and Lemma \ref{lem_I+RNGM} (1). Thus we focus on $m<-1$. 
By Lemma \ref{lem_prop:fct_de_R} (2), we may assume that $\phi$ is supported on $[1,\infty)$.
	Let $(\eta_j)$ be a dyadic decomposition of $[1,+\infty)$, that is,  $\eta_0\in C_c^\infty (\frac 12,2) $ with 
\begin{equation}\label{def:dyadic}
	\sum_{j=0}^\infty \eta_j(\lambda)=1 \ \mbox{for all}\ \lambda\geq 1, \quad\mbox{where}\quad \eta_j(\lambda):=\eta_0(2^{-j}\lambda).
	\end{equation}
	We may write for any $\lambda\geq 0$
	$$
	\phi(\lambda) = 
    \sum_{j=0}^\infty \eta_j(\lambda) \phi(\lambda)=\sum_{j=0}^\infty 2^{jm} \psi_j (2^{-j}\lambda), 
\quad\mbox{where}\quad \psi_j (\mu):= 2^{-jm} \phi(2^j \mu) \eta_0(\mu).
	$$
	We observe that 
	$$
	\psi_j \in C_c^\infty (\frac 12,2)
	\qquad \mbox{and} \qquad
	\sup_{\lambda\geq 0} |\psi_j ^{(k)}(\lambda)| \lesssim_k \|\phi\|_{\cG^{m},k}
		$$
		for any $k\in \bN_0$
with an implicit constant independent of $j$.
Let $(\bX,U)$ be an adapted frame, 
let $\alpha,\beta\in \bN_0^n$, $x\in M$ and $\pi\in \Gh_x$.
For each $j\in \bN_0$, 
let us consider the operator
$$
T_j (\alpha,\beta,\gamma; x,\pi):=
T_j := 2^{jm} (\id+\pi(\cR_x))^{\frac{-m\nu +[\alpha]+\gamma}\nu}
\Delta_\bX^\alpha D_\bX^\beta
\psi_j (2^{-j}\pi(\cR_x) )
(\id+\pi(\cR_x))^{-\frac{\gamma}\nu}.
$$
The properties  of composition and adjoint for symbols  in Proposition~\ref{prop_comp+adj} imply
\begin{align*}
    \|T_i^* T_j\|_{\sL(\cH_\pi)}
    &= 2^{(i+j)m }
    \|(\id+\pi(\cR_x))^{-\frac{\gamma}\nu}
    \Delta_\bX^\alpha D_\bX^\beta
\psi_i (2^{-i}\pi(\cR_x) )
    (\id+\pi(\cR_x))^{\frac{[\alpha]+\gamma}\nu} \times \\
    & \qquad\qquad\qquad  \times  (\id+\pi(\cR_x))^{\frac{-2m\nu +[\alpha]+\gamma}\nu}
\Delta_\bX^\alpha D_\bX^\beta
\psi_j (2^{-j}\pi(\cR_x) )
(\id+\pi(\cR_x))^{-\frac{\gamma}\nu}\|_{\sL(\cH_\pi)}\\
&\lesssim 2^{(i+j)m} \|
\psi_i (2^{-i}\pi(\cR_x) )\|_{S^0(\Gh_x M),[\alpha],[\beta],c}
\| \psi_j (2^{-j}\pi(\cR_x) )\|
_{S^{2m\nu} (\Gh_x M),[\alpha],[\beta],c}, 
\end{align*}
for some $c>0.$
Lemma \ref{lem_prop:fct_de_R} (3) yields that for some 
 $k\in \bN_0$, we have
 \begin{align*}
    \|T_i^* T_j\|_{\sL(\cH_\pi)}
    &\lesssim 2^{(i+j)m} 2^{-j (2m)} \max_{k'=0,\ldots,k} 
	\sup_{\lambda \geq 0} |\psi_i^{(k')}(\lambda)|
	\max_{k'=0,\ldots,k} 
	\sup_{\lambda \geq 0} |\psi_j^{(k')}(\lambda)|\\
    &\lesssim 2^{(i-j)m} \|\phi\|_{\cG^{m},k}^2,
\end{align*}
and similarly for $\|T_iT_j^*\|_{\sL(\cH_\pi)}$.
By the Cotlar-Stein Lemma \cite[\S VII.2]{Ste93},  $\sum_j T_j$ converges in the strong operator topology to a bounded operator on $\cH_\pi$ with norm $\lesssim \|\phi\|_{\cG^{m},k}$,
the implicit constant being independent of $(x,\pi)\in G\times \Gh$.
This implies that 
$$
\sup_{(x,\pi)\in \Gh M}
\|(\id+\pi(\cR_x))^{\frac{-m\nu +[\alpha]+\gamma}\nu}
\Delta_\bX^\alpha D_\bX^\beta\phi(\pi(\cR_x))
(\id+\pi(\cR_x))^{-\frac{\gamma}\nu}\|_{\sL(\cH_\pi)} 
\lesssim  \|\phi\|_{\cG^{m},k}
$$
for some $k\in \bN$. 
As this holds for any $\alpha,\beta, (\bX,U)$, the conclusion follows.
\end{proof}

\subsubsection{Multipliers depending on $x$}
\label{subsubsec_multdeponx}
We observe that the proof above is robust enough to allow a dependence of the multiplier function $\phi$ in $x$. More precisely,  we can consider 
 multiplier functions  in the space
$$
\cG^m(M\times \bR) = C^\infty(M)\otimes \cG^m(\bR),
$$
equipped with the tensor topology; 
equivalently, this is  
the Fr\'echet space of smooth functions $\phi:M\times \bR \to \bC$ generated by the  semi-norms given by
$$
\|\phi\|_{\cG^m, N} := \max_{k,j,[\alpha]\leq N}\sup_{(x,\lambda)\in \cC_k\times \bR} 
(1+|\lambda|)^{-m+j}|\bX_k^{\alpha} \partial_\lambda^j \phi(x,\lambda)|,
$$
where   a countable sequence of compact subsets $\cC_k$, $k=0,1,2,\ldots$
covering $M=\cup_{k} \cC_k$ has been fixed, 
with each compact $\cC_k$  included in a bounded open subset $U_k$ on which an adapted frame $\bX_k$ exists.

Adapting the proof above, we readily obtain:
\begin{corollary}
\label{corprop:fct_de_R}
	Let $\phi\in \cG^m (M\times \bR)$ with $m\in \bR$, and let $\cR$ be a positive Rockland element on $G M$.
	Then the symbol 
    $$
    \phi(\widehat \cR) := \{\phi (x,\pi(\cR_x)):(x,\pi)\in \widehat G M\}
    $$
    is in $S^{m\nu}(\widehat G M)$ where $\nu$ is the  homogeneous degree of $\cR$. 
	Moreover, the map
	$$
	 \phi\longmapsto \phi(\widehat \cR), \qquad 
	 \cG^m (M\times \bR)\longrightarrow S^{m\nu}(\widehat G M),
	 $$
	  is continuous. 
\end{corollary}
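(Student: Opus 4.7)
The plan is to adapt the proof of Proposition \ref{prop:fct_de_R} by carefully carrying the smooth $x$-dependence through each step; the overall strategy—reduction to $m<-1$, Helffer–Sj\"ostrand representation, dyadic decomposition, and Cotlar–Stein—remains unchanged. First I would reduce to $m<-1$: for general $m\in\bR$, pick $N\in\bN$ with $N>m+1$ and write $\phi(x,\lambda)=(1+\lambda)^N\psi(x,\lambda)$ with $\psi\in\cG^{m-N}(M\times\bR)$, so that $\phi(\widehat\cR)=(\id+\widehat\cR)^N\cdot\psi(\widehat\cR)$; using Lemma \ref{lem_I+RNGM} (1) together with Proposition \ref{prop_comp+adj}, this reduces us to the case $m<-1$, and by Lemma \ref{lem_prop:fct_de_R} (2) we may also assume $\phi(x,\lambda)=0$ for $\lambda\leq -1/2$.

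Next, for $m<-1$, I would establish the parametric analogue of Lemma \ref{lem_prop:fct_de_R} (1). The construction of the almost analytic extension in Lemma \ref{lem_tildepsicGm'} is linear in $\phi(x,\cdot)$, hence produces $\tilde\phi(x,z)$ depending smoothly in $x$, and the bound on $\int_\bC|\bar\partial\tilde\phi(x,z)|((1+|z|)/|\IM z|)^N L(dz)$ is uniform in $x$ over any compact once we replace $\|\phi\|_{\cG^m,N+3}$ by a local $\cG^m(M\times\bR)$-seminorm. Applied fiber-wise,
\[
\phi(x,\widehat\cR_x)=\frac{1}{\pi}\int_\bC\bar\partial\tilde\phi(x,z)(\widehat\cR_x-z)^{-1}L(dz),
\]
and I would differentiate under the integral using Leibniz. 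Each $X_j$-derivative splits into a term with $X_j$ acting on $\tilde\phi$ (which yields a new $\cG^m(M\times\bR)$-function with controlled seminorms) and a term with $D_{X_j}(\widehat\cR_x-z)^{-1}=-(\widehat\cR_x-z)^{-1}(D_{X_j}\widehat\cR)(\widehat\cR_x-z)^{-1}$ as in \eqref{diff:(R-z)}. Iterating and applying Lemma \ref{lem_I+RNGM} (2) together with the symbol membership of $D_\bX^{\beta'}\widehat\cR\in S^\nu(\Gh M|_U)$, the integrand acquires polynomial growth in $(1+|z|)/|\IM z|$, absorbed by $\bar\partial\tilde\phi$; the $\Delta_\bX^\alpha$-factors are handled identically since the resolvent lies in $S^{-\nu}(\Gh M)$.

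For general $m$, I would run the dyadic decomposition of Section \ref{subsubsec:proof_spec_mult} with $\psi_j(x,\mu):=2^{-jm}\phi(x,2^j\mu)\eta_0(\mu)\in C^\infty(M,C_c^\infty(1/2,2))$: all derivatives $\bX^\alpha\partial_\mu^k\psi_j$ are bounded uniformly in $j$ by a seminorm $\|\phi\|_{\cG^m(M\times\bR),N}$. The parametric versions of Lemma \ref{lem_prop:fct_de_R} (2)–(3) follow from the $m<-1$ case by pulling out factors of $\widehat\cR^N$ and $(\id+\widehat\cR)^N$ via Lemma \ref{lem_I+RNGM} (1). Feeding these into the Cotlar–Stein argument applied fiber-wise over $(x,\pi)\in\Gh M$, I obtain
\[
\|T_i^*T_j\|_{\sL(\cH_\pi)}\lesssim 2^{(i-j)m}\|\phi\|_{\cG^m(M\times\bR),k}^2,
\]
uniformly in $(x,\pi)\in \Gh M$ with $x$ in a compact, which yields the desired $S^{m\nu}(\Gh M)$-seminorm estimate and hence the continuity of $\phi\mapsto\phi(\widehat\cR)$.

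The main obstacle is the combinatorial bookkeeping in the second step: the Leibniz expansion of $D_\bX^\beta\Delta_\bX^\alpha$ through the Helffer–Sj\"ostrand integral produces many products of resolvents $(\widehat\cR_x-z)^{-1}$, lower-order symbols $D_\bX^{\beta'}\widehat\cR$, and derivatives $\bX^{\alpha_0}\tilde\phi$, each of which must be shown to sit in the correct symbol class with seminorms controlled by a \emph{single} $\cG^m(M\times\bR)$-seminorm of $\phi$. This is routine but delicate, relying on the uniformity in $(x,\pi)$ of the constants in Lemmata \ref{lem_SmnormsigmaxRS}, \ref{lem_DXYsigma}, and \ref{lem_I+RNGM}, which in turn trace back to the structural constant dependence discussed in Remarks \ref{rem_constant_RS} and \ref{rem_constant_theta}.
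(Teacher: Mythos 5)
Your proposal is correct and takes essentially the same route the paper intends. The paper gives no explicit proof of this corollary — it says only ``Adapting the proof above, we readily obtain'' — and the adaptation you spell out (reduction to $m<-1$ via $(\id+\widehat\cR)^N$, Helffer--Sj\"ostrand with an $x$-smooth almost analytic extension, Leibniz distribution of $D_\bX^\beta$ between $\tilde\phi$ and the resolvent, dyadic decomposition with $x$-dependent $\psi_j$, and Cotlar--Stein fiberwise) is precisely the carrying-through of $x$-dependence that the authors have in mind; just take care, as in the paper's own reduction step, that the factorisation $\phi(x,\lambda)=(1+\lambda)^N\psi(x,\lambda)$ is only required to hold for $\lambda\in[0,\infty)$, with $\psi$ extended arbitrarily but smoothly to the left so that $\psi\in\cG^{m-N}(M\times\bR)$.
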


\subsubsection{Density of smoothing symbols}
\label{subsubsec_densitysmoothing}
The proofs of Propositions \ref{prop_smoothing} and \ref{prop_densitysmoothing}, with Remarks~\ref{rem_prop_smoothing} and~\ref{rem_prop_densitysmoothing},
can be readily adapted to the manifold case by using Proposition \ref{prop:fct_de_R}.
We obtain the following result:

\begin{proposition}
\label{prop_smoothingM}
	\begin{enumerate}
		\item The fiberwise group Fourier transform is an isomorphism 
		from the topological vector space of the vertical Schwartz densities $ \cS(GM; |\Omega|(GM))$ onto $S^{-\infty}(\widehat G M)$.

\item Let $\sigma\in S^m(\Gh M)$ with $m\in \bR$.
We can construct a sequence of smoothing symbols $(\sigma_\ell)_{\ell\in \bN}\subset S^{-\infty}(\Gh M)$ satisfying the following properties:
	\begin{enumerate}
		\item For any $m_1>m$, we have the convergence 
$$
\lim_{\ell\to \infty} \sigma_\ell	= \sigma \quad\mbox{in}\ S^{m_1}(\Gh M).
$$		
In other words, 
the space 
$S^{-\infty}(\Gh M)$ equipped with the $S^{m_1}(\Gh M)$-topology is dense in $S^{m}(\Gh M)$.
\item 
For any semi-norm $\|\cdot\|_{S^m(\Gh),(\bX,U),\cC,N}$ of $S^m(\Gh)$, there exist a constant $C>0$ and $N'\geq N$ such that 
$$
\forall \ell\in \bN,\qquad  \|\sigma_\ell-\sigma\|_{S^m(\Gh),(\bX,U),\cC,N}
\leq C \|\sigma\|_{S^m(\Gh),(\bX,U),\cC,N'}.
$$	
Consequently, 
$$
\forall \ell\in \bN,\qquad  \|\sigma_\ell\|_{S^m(\Gh),(\bX,U),\cC,N}
\leq (1+C) \|\sigma\|_{S^m(\Gh),(\bX,U),\cC,N'}.
$$	

		\item 
		Denoting respectively by $\kappa^\bX$ and $\kappa^\bX_\ell$ the convolution kernels of $\sigma$ and $\sigma_\ell$ in the $\bX$-coordinates,
	we
 have the convergences 
$$
\lim_{\ell\to \infty}  \kappa^\bX_\ell = \kappa^\bX 
\quad\mbox{in}\  \Gamma(\cS'(\bR^n))
\quad \mbox{and in} \ \Gamma(C^\infty (U_1)),
$$
for any open subset of $U_1\subset G\setminus \{0\}$.
\item Moreover, $\liminf_{\ell \to\infty}  \|\sigma_\ell\|_{S^m(\Gh M),(\bX,U),\cC,N} \geq \|\sigma\|_{S^m(\Gh M),(\bX,U),\cC,N} $.
	\end{enumerate}
	\end{enumerate}
\end{proposition}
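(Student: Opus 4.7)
The plan is to adapt the group-level arguments from Propositions~\ref{prop_smoothing} and~\ref{prop_densitysmoothing} to the manifold setting, using an adapted frame $(\bX,U)$ together with a positive Rockland element $\cR$ on $GM$ (which exists by Remark~\ref{rem_existenceR}) to transport the fiberwise results, and ensuring locally uniform estimates in~$x$.

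For Part~(1), I would first verify bijectivity at the level of the stated spaces. Given $\kappa\in\cS(GM;|\Omega|(GM))$, fix an adapted frame $(\bX,U)$ and note that $\kappa^\bX\in C^\infty(U,\cS(\bR^n))$; then the pointwise group case (Proposition~\ref{prop_smoothing}) yields $\sigma(x,\cdot)\in S^{-\infty}(\Gh_x M)$, and since the convolution kernel of $D_\bX^\beta\sigma$ in the $\bX$-coordinates is $\bX^\beta\kappa_x^\bX$ (again Schwartz in $v$, smooth in $x$), membership in $S^{-\infty}(\Gh M)$ follows. Continuity is obtained by bounding each manifold-level seminorm $\|\cdot\|_{S^m(\Gh M),(\bX,U),\cC,N}$ by Schwartz seminorms of $\kappa^\bX$ locally uniformly in $x\in\cC$; the uniformity stems from Remark~\ref{rem_prop_smoothing} together with the smooth dependence of the structural constant $\|[\cdot,\cdot]_{\fg_x M}\|_{\langle\bX\rangle_x}$ on $x$. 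Conversely, given $\sigma\in S^{-\infty}(\Gh M)$, applying the kernel estimates of Theorem~\ref{thm_kernelG} fiberwise at arbitrarily negative order to the symbols $\Delta_\bX^\alpha D_\bX^{\beta'}\widehat{\langle\bX\rangle}^{\beta_1}\sigma\widehat{\langle\bX\rangle}^{\beta_2}\in S^{-\infty}(\Gh M|_U)$ furnished by Corollary~\ref{cor_contSmTXDelta} produces Schwartz seminorm estimates for $v^\alpha\bX^{\beta'} L_{\langle\bX\rangle}^{\beta_1}R_{\langle\bX\rangle}^{\beta_2}\kappa_x^\bX$, locally uniformly in $x$. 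This forces $\kappa^\bX \in C^\infty(U,\cS(\bR^n))$ and continuity in the reverse direction.

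For Part~(2), I would follow Proposition~\ref{prop_densitysmoothing} verbatim: fix a positive Rockland element $\cR$ on $GM$ of homogeneous degree $\nu$, a cutoff $\chi\in\cD(\bR)$ with $\chi\equiv 1$ on $[0,1]$, and set $\sigma_\ell := \sigma\,\chi(\ell^{-1}\widehat\cR)$. Since $\chi(\ell^{-1}\cdot)\in\cS(\bR)\subset\cG^{m_0}(\bR)$ for every $m_0\in\bR$, Proposition~\ref{prop:fct_de_R} gives $\chi(\ell^{-1}\widehat\cR)\in\bigcap_{m_0} S^{m_0\nu}(\Gh M)=S^{-\infty}(\Gh M)$; by Proposition~\ref{prop_comp+adj}, $\sigma_\ell\in S^{-\infty}(\Gh M)$. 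For (a) and (b), I write $\sigma-\sigma_\ell=\sigma\,(1-\chi)(\ell^{-1}\widehat\cR)$ and apply the continuity of composition (Proposition~\ref{prop_comp+adj}) together with the elementary seminorm bound $\|(1-\chi)(\ell^{-1}\cdot)\|_{\cG^{(m_1-m)/\nu},N_1}\lesssim\ell^{(m-m_1)/\nu}$ valid for $m_1\geq m$, pushed through Proposition~\ref{prop:fct_de_R}. Part (c) follows from (a) combined with the continuous inclusion $\cF^{-1}S^{m_1}(\Gh M)\hookrightarrow\Gamma(\cS'(\bR^n))$ (for the $\cS'$-convergence) and with the kernel estimates of Theorem~\ref{thm_kernelG} applied to $\sigma-\sigma_\ell\in S^{m_1}$ as $m_1\to -\infty$ (for the $C^\infty(U_1)$-convergence on open subsets $U_1\subset G\setminus\{0\}$). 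Part (d) is the fiberwise functional-analytic statement that $\liminf_\ell\|A\,\chi(\ell^{-1}T)\|_{\sL(\cH)}\geq\|A\|_{\sL(\cH)}$ for a positive self-adjoint $T$, applied inductively through the Leibniz rule of Corollary~\ref{cor_GM_Leib} to $\Delta_\bX^\alpha\sigma_\ell$, with the cross-term $\sigma\,\Delta_\bX^\alpha\chi(\ell^{-1}\widehat\cR)$ being negligible of order $\ell^{-[\alpha]/\nu}$ by Proposition~\ref{prop:fct_de_R} and Proposition~\ref{prop_comp+adj}.

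The main obstacle is not conceptual but rather the careful bookkeeping of uniformity in~$x$: every group-level constant invoked must be shown to be locally uniform on $M$. This is guaranteed by the paper's repeated structural remarks (Remarks~\ref{rem_constant_RS}, \ref{rem_constant_theta}, \ref{rem_thm_SmGh}, \ref{rem_thm_phi(R)}, \ref{rem_prop_densitysmoothing}), which reduce the dependence on the Lie structure to the single quantity $\|[\cdot,\cdot]_{\fg_x M}\|_{\langle\bX\rangle_x}$, depending smoothly on $x\in U$; combined with Lemma~\ref{lem_SmnormsigmaxRS}, this yields the required uniform control on every compact subset $\cC\subset U$.
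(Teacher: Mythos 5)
Your proposal is correct and follows the same approach as the paper: the paper's own proof consists of the single sentence that Propositions~\ref{prop_smoothing} and~\ref{prop_densitysmoothing} (with their associated structural-constant remarks) "can be readily adapted to the manifold case by using Proposition~\ref{prop:fct_de_R}", and your argument carries out precisely this adaptation, with the correct choice $\sigma_\ell=\sigma\,\chi(\ell^{-1}\widehat\cR)$ for a globally defined positive Rockland element~$\cR$ and the same seminorm/kernel-estimate reductions. The one point you could state more explicitly is that, unlike in the group case, $\chi(\ell^{-1}\widehat\cR)$ here genuinely depends on~$x$ through~$\cR_x$, so the Leibniz rule for $D_\bX^\beta$ (and not only for $\Delta_\bX^\alpha$) produces cross terms that must be shown negligible via the Helffer--Sj\"ostrand-based estimates underlying Proposition~\ref{prop:fct_de_R}; your phrase "applied inductively through the Leibniz rule" implicitly covers this, but it is the only genuinely new feature of the manifold case and deserves to be flagged.
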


\begin{remark}
\label{rem_altpf_prop_comp+adj}
The above result  will be  useful in many proofs. Indeed, it will allow us to assume that symbols are smoothing, and hence have Schwartz convolution kernels. 

For instance, 
   instead of manipulating distributions 
   in the composition of symbols 
   in  Proposition \ref{prop_comp+adj}, 
   we can instead consider Schwartz functions 
by considering smoothing symbols and applying the result of density in Proposition \ref{prop_smoothingM}. Indeed, 
if $\sigma_1,\sigma_2$ and $\sigma$ are smoothing, 
 given a smooth Haar measure $\mu$ and fixing $x\in M$, 
the formulae for $\kappa_{\sigma_1\sigma_2,x}^{\mu}$ and $ \kappa_{\sigma^*,x}^{\mu}$
in Proposition \ref{prop_comp+adj}
make sense as Schwartz functions on $G_x M$.
 We check as above that these define Schwartz densities independently of a choice of a smooth Haar measure. 
  The estimates with respect to suitable seminorms of symbol classes are  checked using the results on $G_x M$, $x\in M$.
  And we can conclude by  density (Proposition \ref{prop_smoothingM}). 
\end{remark}

\subsection{Homogeneous and polyhomogeneous symbols} \label{subsec_polyhom}

\subsubsection{Homogeneous symbols} \label{sebsec:poluhom}

We define the class $\dot S^m(\Gh M)$ of homogeneous symbol of order~$m$ by replacing $S^m(\widehat {G}_x M)$ with $\dot S^m(\widehat {G}_x M)$ in Definition \ref{def:Sm} (see Section~\ref{subsubsec_invdotSm} for the definition of homogeneous symbols on a group).
As Lemmata \ref{lem_SmnormsigmaxRx},  
\ref{lem_SmnormsigmaxRS} and 
\ref{lem_DXYsigma} can easily be adapted to hold for $\dot S^m(\widehat {G}_x M)$-semi-norms, 
this notion is intrinsically defined. 
We denote by $\dot S^m(\widehat G M)$ the space of homogeneous symbols of order $m$ on $M$.  
If $\bX$ is an adapted frame on an open set $U\subset M$ and $\cC$ a compact subset of $U$, we  define 
the associated seminorm $\|\cdot \|_{\dot S^m(\widehat G M),(\bX,U), \cC, N}$ as in \eqref{def:semi_norm}
by replacing the $S^m(\widehat {G}_x M)$-semi-norm with the corresponding one on $\dot S^m(\widehat {G}_x M)$ (see~\eqref{def:semi_norm_hom_group}).
It is a Fr\'echet space for the countable family of semi-norms analogous to \eqref{eq_seminormSmGhMN}.

\begin{ex}
  Adapting Example~\ref{ex_widehatlangleXrangle_bis}, 
the symbol 
$\widehat {\langle \bX\rangle}^\alpha := \{\pi(\langle X\rangle_x^\alpha ) : x\in U, \pi\in \widehat G_x M\}$ is in $\dot S^{[\alpha]}(\Gh M)$.  
\end{ex}

\begin{ex}
\label{ex_I+RmnuphM}
Extending Example \ref{ex_I+Rmnuph} to the manifold setting, 
for any  positive Rockland operator  $\cR$, 
we have 
$(\id+\widehat \cR)^{m/\nu} \in S^m_{ph}(\Gh M)$
where $\nu$ is the homogeneous degree of $\cR$, 
and 
$$
		(\id+\widehat \cR)^{m/\nu}  
		\sim_h   \widehat \cR ^{m/\nu}  + \frac m \nu \widehat \cR^{-1 + m/\nu}  + \ldots 
$$
\end{ex}

 By construction, every symbol $\sigma \in \dot S^m(\widehat G M)$ admits a convolution kernel density that is $m$-homogeneous in the sense of Definition \ref{def_mhomS'GMvert}.

Adapting the proof for the inhomogeneous symbol classes, we obtain that 
the homogeneous classes $\dot S^m (\Gh M)$ enjoy the algebraic properties for composition and adjoint as described in Proposition \ref{prop_comp+adj}  for the inhomogeneous case.

\subsubsection{Links between  homogeneous and inhomogeneous  symbol classes}
\label{subsubsec_hom+inhomsymb}

\begin{proposition}
\label{prop_homsigmapsi}
	Let $\cR$ be a positive Rockland element of $\sU(\fg M)$.
Let $\psi_0 \in C^\infty(M\times \bR)$ be such that $\psi_0(x,\lambda)=0$ for $\lambda$ in a neighbourhood of $(-\infty,0]$
and $\psi_0(x,\lambda)=1$ on a neighbourhood of $+\infty$, these two $\lambda$-neighbourhoods depending locally uniformly on $x\in M$.
Then for any $\sigma\in \dot S^{m_1}(\Gh M)$ and $\psi\in \cG^{m_2}(\bR)$,	 the symbols 
	 $$
\sigma \, (\psi\psi_0) (\widehat\cR)=\{\sigma(x,\pi)\, (\psi\psi_0)(x,\pi(\cR_x)) :
x\in M, \ \pi\in \Gh_x M\}
$$
and 
$$
 (\psi\psi_0) (\widehat\cR) \,\sigma =\{(\psi\psi_0)(x,\pi(\cR_x))\sigma(x,\pi) :
x\in M, \ \pi\in \Gh_x M\}
$$
	 are in $ S^{m_1+m_2 \nu}(\Gh M)$
	  where $\nu$ is the homogeneous degree of $\cR$.
	Moreover, the maps
	$$
	(\sigma,\psi)\longmapsto (\psi\psi_0) (\widehat \cR) \, \sigma
	\quad\mbox{and}\quad 
	(\sigma,\psi)\longmapsto \sigma \, (\psi\psi_0) (\widehat \cR) ,
	$$
	are continuous $\dot S^{m_1}(\Gh M)\times \cG^{m_2}(\bR)\to S^{m_1+m_2\nu}(\Gh M)$.
\end{proposition}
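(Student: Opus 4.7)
The plan is to decompose the expression via functional calculus, reduce to fiberwise statements on groups, and conclude via the composition property of symbol classes. Since $\psi_0(\widehat\cR)$ and $\psi(\widehat\cR)$ commute as spectral multipliers of the same operator, we factor
\[
\sigma\cdot(\psi\psi_0)(\widehat\cR) \;=\; \bigl(\sigma\,\psi_0(\widehat\cR)\bigr)\cdot\psi(\widehat\cR),
\qquad
(\psi\psi_0)(\widehat\cR)\cdot\sigma \;=\; \psi(\widehat\cR)\cdot\bigl(\psi_0(\widehat\cR)\,\sigma\bigr).
\]
Corollary \ref{corprop:fct_de_R}, applied to $\psi\in\cG^{m_2}(\bR)$ viewed as an element of $\cG^{m_2}(M\times\bR)$ independent of $x$, gives the continuity of $\psi\mapsto\psi(\widehat\cR)$ from $\cG^{m_2}(\bR)$ to $S^{m_2\nu}(\Gh M)$. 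The problem therefore reduces to showing that the maps $T_1:\sigma\mapsto\sigma\,\psi_0(\widehat\cR)$ and $T_2:\sigma\mapsto\psi_0(\widehat\cR)\,\sigma$ are continuous $\dot S^{m_1}(\Gh M)\to S^{m_1}(\Gh M)$; the joint continuity asserted then follows from the bilinear continuity of composition established in Proposition \ref{prop_comp+adj}.

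The continuity of $T_1$ (the case of $T_2$ is symmetric) is the manifold analogue of the implication (1)$\Leftrightarrow$(3) in Proposition \ref{prop_dotS0Gh}. Fiberwise, $\psi_0(x,\cdot)$ satisfies the cutoff hypotheses of Proposition \ref{prop_dotS0Gh}(3), so $(T_1\sigma)(x,\cdot)\in S^{m_1}(\Gh_xM)$, with $S^{m_1}(\Gh_xM)$-semi-norms estimated by $\dot S^{m_1}(\Gh_xM)$-semi-norms of $\sigma(x,\cdot)$. By the remark following Proposition \ref{prop_dotS0Gh}, the implicit constants depend on the Lie structure only through $\|[\cdot,\cdot]\|_{\langle\bX\rangle_x}$; for any adapted frame $(\bX,U)$ and compact $\cC\subset U$, this quantity varies smoothly and is uniformly bounded over $\cC$, which controls the semi-norms of $T_1\sigma$ carrying no $x$-derivatives. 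For semi-norms involving $D_\bX^\beta$, the Leibniz rule (Remark \ref{rem:LeibnizD}) expands
\[
D_\bX^\beta(T_1\sigma) \;=\; \sum_{\beta_1+\beta_2=\beta} \binom{\beta}{\beta_1}\,(D_\bX^{\beta_1}\sigma)\cdot\bigl(D_\bX^{\beta_2}\psi_0(\widehat\cR)\bigr).
\]
The factor $D_\bX^{\beta_1}\sigma$ lies in $\dot S^{m_1}(\Gh M|_U)$ by definition of the homogeneous class. Each $D_\bX^{\beta_2}\psi_0(\widehat\cR)$, computed via the Helffer-Sj\"ostrand formula of Section \ref{subsubsec:HSformula} together with resolvent identities, is a finite sum of symbols of the form $\phi(x,\widehat\cR)\cdot\widehat T$, where $\phi\in\cG^\cdot(M\times\bR)$ still vanishes in a neighbourhood of $\lambda=0$ (inherited from $\psi_0$) and $\widehat T$ comes from $\Gamma(\sU(\fg M))$ (produced by derivatives of $\cR_x$). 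By Corollary \ref{corprop:fct_de_R} and Lemma \ref{lem_LTx}, these derivative symbols lie in appropriate $S^\cdot(\Gh M|_U)$, and their cutoff near zero allows a further fiberwise application of Proposition \ref{prop_dotS0Gh}(3) (combined with the composition properties of Proposition \ref{prop_comp+adj}) to place each product in $S^{m_1}(\Gh M|_U)$.

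The main obstacle is the analysis of $D_\bX^{\beta_2}\psi_0(\widehat\cR)$: differentiating in $x$ a spectral multiplier of a Rockland operator that itself depends on $x$ requires some care, since $\cR_x$, its inverse powers, and $\psi_0(x,\cR_x)$ all vary simultaneously with $x$. The key is to express these derivatives, through the Helffer-Sj\"ostrand formalism and the resolvent identity, as finite sums of products of spectral multipliers (still carrying the cutoff structure of $\psi_0$ near $\lambda=0$) with fixed-order elements of $\Gamma(\sU(\fg M))$. Once this algebraic representation is in hand, the semi-norm estimates assemble uniformly over compact $x$-regions by the structural-constant remark following Proposition \ref{prop_dotS0Gh}, and continuity of $T_1$ and $T_2$ follows; composition with $\psi(\widehat\cR)$ via Proposition \ref{prop_comp+adj} then completes the proof.
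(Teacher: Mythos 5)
Your reduction — factoring out $\psi(\widehat\cR)$ via Corollary \ref{corprop:fct_de_R} and Proposition \ref{prop_comp+adj}, and reducing to the continuity of $T_1:\sigma\mapsto\sigma\,\psi_0(\widehat\cR)$ from $\dot S^{m_1}(\Gh M)$ to $S^{m_1}(\Gh M)$ — is sound, and matches the paper's remark that the proposition is essentially equivalent to its $\psi\equiv 1$ case. The fiberwise treatment via Proposition \ref{prop_dotS0Gh} and the Leibniz decomposition of $D_\bX^\beta(T_1\sigma)$ are also correct. The gap is in the treatment of the terms with $\beta_2\neq 0$. You assert that $D_\bX^{\beta_2}\psi_0(\widehat\cR)$ is a finite sum of symbols of the form $\phi(x,\widehat\cR)\,\widehat T$, where $\phi$ \emph{still vanishes in a neighbourhood of $\lambda=0$}. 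This is false. When one differentiates $\psi_0(x,\pi(\cR_x))$ in $x$, one must differentiate the operator $\cR_x$ itself; since $D_\bX\cR_x$ does not commute with $\cR_x$, the Helffer--Sj\"ostrand formula produces expressions of the type $(\widehat\cR-z)^{-1}(D_\bX\widehat\cR)(\widehat\cR-z)^{-1}$, which are not spectral multipliers of $\widehat\cR$ at all, and whose integrated contribution does \emph{not} vanish on the low-frequency part of the spectrum. Concretely, if $\pi(\cR_x)$ has eigenvalues $\lambda_1 < a(x) < b(x) < \lambda_2$ with corresponding eigenvectors $v_1,v_2$, and $\pi(D_\bX\cR_x)$ has a nonzero $(1,2)$ matrix element, the Daleckii--Krein formula shows that $\pi(D_\bX\psi_0(\widehat\cR))$ has an off-diagonal entry $(\psi_0(\lambda_2)-\psi_0(\lambda_1))/(\lambda_2-\lambda_1) = 1/(\lambda_2-\lambda_1)\neq 0$, which couples to $v_1$ and therefore does not annihilate the low-frequency subspace. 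The paper states this obstruction explicitly: \emph{``we can not in general claim that $D_\bX^\beta (\psi\psi_0)(\widehat \cR)$ vanishes identically for low frequencies.''}

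Because of this, your plan to apply Proposition \ref{prop_dotS0Gh}(3) fiberwise to the derivative terms cannot go through: that proposition crucially requires the multiplier to vanish near $\lambda=0$, precisely the feature the derivative terms lose. The paper's proof resolves this with a genuinely different mechanism: a dyadic decomposition $\id=\sum_{j\geq 0}\eta_j(\widehat\cR)$ on $[0,\infty)$ is inserted, the $j\geq 1$ pieces are bounded by exploiting the $\dot S^{m_1}$-homogeneity of $\sigma$ to obtain the geometric decay $E_j\lesssim 2^{-m_1(j-1)/\nu}\|\sigma\|_{\dot S^{m_1},0}$, the $j=0$ piece is bounded using that $\widehat\cR^{m_1/\nu}\eta_0(\widehat\cR)(\id+\widehat\cR)^{-m_1/\nu}$ is bounded, and the convergence of $\sum_j E_j$ requires $m_1>0$, with the case $m_1\leq 0$ handled separately by replacing $(\sigma,\psi)$ with $(\sigma\,\widehat\cR^{N_1},\,\lambda^{-N_1}\psi)$ for $N_1$ large. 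To repair your argument you would need to replace the (incorrect) claim about $D_\bX^{\beta_2}\psi_0(\widehat\cR)$ with some such frequency-decomposition argument using the homogeneity of $\sigma$.
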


Below, we will use Proposition \ref{prop_homsigmapsi} with $\psi\equiv 1$, that is, $\sigma \psi_0(\widehat \cR) \in S^{m_1}(\Gh M)$ if $\sigma\in \dot S^{m_1} (\Gh M)$. 
In the proof of Proposition \ref{prop_homsigmapsi} and in the remainder of this section, we will use the notation 
\[
\Gh M^*=\bigcup_{x\in M} \left(\Gh_xM\setminus \{1\}\right)
\]
and write $\Gh_xM^*$ for the fibers of $\Gh M^*$.

\begin{proof}[Proof of Proposition \ref{prop_homsigmapsi}]
From the group case (see Section \ref{subsubsec_invdotSm} and Proposition~4.6 in~\cite{FF0}), 
we see that for any $x\in M$, 
the symbols
$\sigma(x,\cdot) (\psi\psi_0) (x,\widehat \cR_x)$ 
and 
$(\psi\psi_0) (x,\widehat \cR_x) \sigma(x,\cdot) $ 
are in $S^{m}(\Gh_x M)$.
Moreover, setting $m:=m_1+m_2\nu$, the $S^{m}(\Gh_x M)$-semi-norms of these symbols
are bounded by some $S^{m_1}(\Gh_x M)$-semi-norms of $\sigma$ and 
$\cG^{m_2}(\bR)$-semi-norms in $\psi$
up to a constant locally uniformly in $x\in M$. 
The main issue is the $x$-derivatives.
Let $\bX$ be an adapted frame on an open subset $U$ of $M$.
By the Leibniz rule in $x$, we have at least formally with $|\beta|=1$
$$
D_\bX^\beta (\sigma (\psi\psi_0)(\widehat \cR))
= D_\bX^\beta  \sigma \, (\psi\psi_0)(\widehat \cR)
+ \sigma\, D_\bX^\beta (\psi\psi_0)(\widehat \cR).
$$
By construction, $D_\bX^\beta \sigma\in \dot S^m(\Gh M|_U)$ and therefore the first term may be analysed fiberwise as above.
For the second term, we can not in general claim that $D_\bX^\beta (\psi\psi_0)(\widehat \cR)$ vanishes identically for low frequencies. To deal with this issue, we modify the 
 dyadic decomposition~\eqref{def:dyadic} into a dyadic decomposition on $[0,+\infty)$: we fix $\eta_0\in C^\infty([-1,1], [0,1])$  
and $\eta_1\in C_c^\infty ([\frac 12,\frac 32],[0,1])$ such that 
$$
1 = \sum_{j=0}^\infty \eta_j(\lambda)
\quad \mbox{for any}\ \lambda\geq 0, \quad\mbox{with}\ 
\eta_j(\lambda):= \eta_1 (2^{j-1}\lambda).
$$
We then write
$$
\sup_{(x,\pi)\in \Gh M^*}
\|\sigma(x,\pi)\, D_\bX^\beta (\psi\psi_0)(\widehat \cR_x) \, (\id + \pi(\cR_x))^{-\frac{m}\nu}\|_{\sL(\cH_\pi)}
\leq C_0+C_{\psi,\beta}
\sum_{j=1}^\infty E_{j}, 
$$
where
\begin{align*}
C_0&:=
\sup_{(x,\pi)\in \Gh M^*}\|\sigma(x,\pi)\eta_0 (\pi( \cR_x))\, D_\bX^\beta (\psi\psi_0)(\widehat \cR_x) \, (\id + \pi(\cR_x))^{-\frac{m}\nu}\|_{\sL(\cH_\pi)},\\
  E_{j}&:=\sup_{(x,\pi)\in \Gh M^*}\|\sigma(x,\pi)\eta_j (\pi( \cR_x))\|_{\sL(\cH_\pi)},\\
C_{\psi,\beta}&:=\sup_{(x,\pi)\in \Gh M^*}
\|  D_\bX^\beta (\psi_0\psi)(\widehat \cR_x) \, (\id + \pi(\cR_x))^{-\frac{m}\nu}\|_{\sL(\cH_\pi)}.
\end{align*}
The constant $C_0$ is finite since we have
$C_0 \leq C'_0\|\sigma\|_{\dot S^{m_1}(\Gh M), 0}$ with 
the constant 
\begin{align*}
C'_0 := 
&\sup_{(x,\pi)\in \Gh M^*}\|\widehat \cR_x^{\frac {m_1}\nu } \eta_0 (\pi( \cR_x))(\id + \pi(\cR_x))^{-\frac{m_1}\nu}\|_{\sL(\cH_\pi)}\\
&\qquad \times \, 
\|(\id + \pi(\cR_x))^{\frac{m_1}\nu} D_\bX^\beta (\psi\psi_0)(\widehat \cR_x) \, (\id + \pi(\cR_x))^{-\frac{m}\nu}\|_{\sL(\cH_\pi)},
\end{align*}
being finite and in fact bounded by the product of a $S^0(\Gh M)$-semi-norm in $\eta_0(\widehat \cR)$ with a 
 $S^{m_2\nu}(\Gh M)$-semi-norm in $(\psi\psi_0)(\widehat \cR)$ up to a constant.

By Corollary~\ref{corprop:fct_de_R}, $C_{\psi,\beta}$ is a finite constant since $D_\bX^\beta (\psi_0\psi) = -D_\bX^\beta (1-\psi_0\psi)$ and $1-\psi_0\psi\in C^\infty(M)\otimes C_c^\infty(\bR)$.
Regarding the $E_j$-s, we write 
\begin{align*}
  E_{j}&=\sup_{(x,\pi)\in \Gh M^*}\|\sigma(x,\pi)\eta_1 (2^{j-1} \pi( \cR_x))\|_{\sL(\cH_\pi)}
 \\& =\sup_{(x,\pi)\in \Gh M^*}\|\sigma(x,\pi)\eta_1 (2^{\frac{j-1}\nu}\cdot \pi( \cR_x))\|_{\sL(\cH_\pi)}
  \\
&=\sup_{(x,\pi')\in \Gh M^*}\|\sigma(x,2^{-\frac{j-1}\nu}\cdot \pi')\eta_1 (\pi'(\cR_x))\|_{\sL(\cH_{\pi'})}
\\&= 2^{-m_1\frac{j-1}\nu}\sup_{(x,\pi')\in \Gh M^*}\|\sigma(x, \pi')\eta_1 (\pi'(\cR_x))\|_{\sL(\cH_{\pi'})}
\end{align*}
where we have used the homogeneity of $\sigma$. We obtain
\begin{align*}
E_j &\leq
2^{-m_1\frac{j-1}\nu} \|\sigma\|_{\dot S^{m_1}(\Gh M), 0} \sup_{\lambda>0 } (1+ \lambda)^{\frac {m_1}\nu} |\eta_1(\lambda)|
\end{align*}
by functional analysis; alternatively, we can estimate $S^{-m_1}(\widehat GM)$-seminorms of $\eta_1 (\pi'(\cR_x))$ by $\sup_{\lambda>0 } \lambda^{\frac {m_1}\nu} |\eta_1(\lambda)|$ using  Lemma \ref{lem_prop:fct_de_R}.
Hence for $m_1>0$, $\sum_j E_j$ is finite.

If $m_1\leq 0$, we modify the analysis of the second term by  considering $\sigma_1:=\sigma \widehat \cR^{N_1}$ and $\psi_1(\lambda):=\lambda^{-N_1}\psi$ for $N_1 $ large enough. Indeed, as $\sigma_1$ will be in $\dot S^{m_1+N_1\nu }$, we will choose $N_1 > -m_1/\nu$ and we will still have $\sigma \psi (\widehat \cR)=\sigma_1 \psi_1(\widehat \cR)$.

Adapting the reasoning above to any derivative $\bX^\beta$, $\beta\in \bN_0^n$, and $\Delta_\bX^\alpha$, $\alpha\in \bN_0^n$, we  obtain that the quantity $\|\sigma (\psi_0\psi) (\widehat \cR)\|_{S^m(\Gh M), N}$ is finite for any $N\in \bN_0$, 
and therefore $\sigma (\psi_0\psi) (\widehat \cR) \in S^m(\Gh M) $. 
The continuity follows from tracking the constant in  the above estimates. 
The result for the symbol $(\psi_0\psi) (\widehat \cR) \sigma$ is proved in a similar manner.
\end{proof}

The following statement shows that modulo smoothing symbols, the precise choices of the positive Rockland element $\cR$ or the  cut-off $\psi_0$ for the low frequencies in Proposition \ref{prop_homsigmapsi} are irrelevant:
\begin{lemma}
\label{lem_indeppsi0R}
Let $\psi_1,\psi_2$ be two smooth functions on $M\times \bR$ satisfying for $i=1,2$, 
 $\psi_i(x,\lambda)=0$ for $\lambda$ in a neighbourhood of $(-\infty,0]$
and $\psi_i(x,\lambda)=1$ on a neighbourhood of $+\infty$, these two $\lambda$-neighbourhoods depending locally uniformly on $x\in M$.
Let $\cR$ and $\cS$ be two positive Rockland elements on $GM$.
\begin{enumerate}
    \item The symbol $\psi_1(\cR) - \psi_2(\cS)$ is smoothing:
$$
\psi_1(\cR) - \psi_2(\cS)
\in S^{-\infty}(\Gh M).
$$
\item More generally, for any $\sigma\in \dot S^m (\Gh M)$, the symbols
$$
\sigma \psi_1(\cR) - \sigma \psi_2(\cS)
\quad\mbox{and}\quad
 \psi_1(\cR)\sigma -  \psi_2(\cS)\sigma
$$
are smoothing, that is, in $S^{-\infty} (\Gh M).$
\end{enumerate}
\end{lemma}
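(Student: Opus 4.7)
The plan is to obtain Part (1) directly from Corollary \ref{corprop:fct_de_R}, and then to reduce Part (2) to a single technical step via an algebraic decomposition that leverages Proposition \ref{prop_homsigmapsi} together with Part (1).

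For Part (1), observe that since $\cR$ and $\cS$ are positive Rockland elements, every fiber operator $\pi(\cR_x)$ and $\pi(\cS_x)$ has spectrum in $[0,\infty)$, so each spectral multiplier $\psi_i(\widehat{\cR_i})$ depends only on $\psi_i|_{[0,\infty)}$. The function $\psi_i - 1$ vanishes on a neighborhood of $+\infty$ in $\lambda$ (locally uniformly in $x$), and hence on $[0,\infty)$ has compact support locally uniformly in $x$. Modifying $\psi_i - 1$ on $(-\infty,0)$ produces a function $\tilde\phi_i\in \cG^{-\infty}(M\times\bR)$ with $\tilde\phi_i(\widehat{\cR_i}) = (\psi_i-1)(\widehat{\cR_i})$. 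By Corollary \ref{corprop:fct_de_R}, $(\psi_i-1)(\widehat{\cR_i})\in S^{-\infty}(\Gh M)$, so subtracting gives $\psi_1(\cR)-\psi_2(\cS)\in S^{-\infty}(\Gh M)$.

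For Part (2), I would split
\[
\sigma[\psi_1(\cR)-\psi_2(\cS)] = \sigma(\psi_1-\psi_2)(\cR) + \sigma[\psi_2(\cR)-\psi_2(\cS)].
\]
The function $\psi_1-\psi_2$ vanishes both near $+\infty$ (where both $\psi_i\equiv 1$) and on a common neighborhood $(-\infty,\delta)$ of $(-\infty,0)$ (where both $\psi_i\equiv 0$), so it is compactly supported in $\lambda$ with support bounded away from $0$. Picking a cutoff $\psi_0$ of the type of Proposition \ref{prop_homsigmapsi} that equals $1$ on this support yields $\psi_1-\psi_2 = (\psi_1-\psi_2)\psi_0$ with $\psi_1-\psi_2\in \cG^{-\infty}(M\times\bR)$, so Proposition \ref{prop_homsigmapsi} gives the first summand in $S^{-\infty}(\Gh M)$. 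For the second summand, choose $\psi_0$ additionally so that $\psi_0\psi_2 = \psi_2$ and write
\[
\sigma[\psi_2(\cR)-\psi_2(\cS)] = \sigma\psi_0(\cR)\,[\psi_2(\cR)-\psi_2(\cS)] + \bigl[\sigma\psi_0(\cR)-\sigma\psi_0(\cS)\bigr]\psi_2(\cS).
\]
The first of these is in $S^m\cdot S^{-\infty}\subset S^{-\infty}$: $\sigma\psi_0(\cR)\in S^m(\Gh M)$ by Proposition \ref{prop_homsigmapsi}, $\psi_2(\cR)-\psi_2(\cS)\in S^{-\infty}(\Gh M)$ by Part~(1), and the composition is governed by Proposition \ref{prop_comp+adj}. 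The left-hand identity of the statement is handled symmetrically using the ``left'' version of Proposition \ref{prop_homsigmapsi}.

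The main obstacle is the remaining piece, namely to show $\sigma[\psi_0(\cR)-\psi_0(\cS)]\in S^{-\infty}(\Gh M)$, i.e.\ that $\sigma\in \dot S^m$ composed with the smoothing symbol $\rho':=\psi_0(\cR)-\psi_0(\cS)\in S^{-\infty}$ stays smoothing. The difficulty is that $\dot S^m$ is not contained in any inhomogeneous symbol class (its elements may be singular at the trivial representation), so Proposition \ref{prop_comp+adj} does not apply directly. The key fact that makes the argument work is that the convolution kernel $\kappa_{\rho'}$ is Schwartz \emph{and all of its moments vanish}: because $\psi_0\equiv 0$ on a neighborhood of $(-\infty,0)$, the function $\psi_0$ vanishes to infinite order at $\lambda=0$, so for each $\alpha\in\bN_0^n$
\[
\int_{\bR^n} v^\alpha\,\kappa_{\rho',x}^{\bX}(v)\,dv \;=\; \bigl(\Delta_\bX^\alpha \rho'\bigr)(x,1_{\Gh_xM}) \;=\; 0,
\]
since at the trivial representation $\pi(\cR_x)=\pi(\cS_x)=0$, and all Taylor coefficients of $\psi_0$ at $0$ vanish. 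One then invokes the Calder\'on--Zygmund-type argument on $G_xM$: expanding $\kappa_\sigma$ in a group Taylor series around the target point (Theorem \ref{thm_MV+TaylorG}) and using the vanishing of moments of $\kappa_{\rho'}$ up to arbitrary order gives arbitrarily fast decay of $\kappa_{\sigma\rho'} = \kappa_{\rho'}\star_{G_xM}\kappa_\sigma$ at infinity, with the singular behavior near $0$ absorbed by the Schwartz decay of $\kappa_{\rho'}$; tracking the $D_\bX^\beta$-derivatives and using that all the above structure is locally uniform in $x$ yields the seminorm bounds defining $S^{-\infty}(\Gh M)$.
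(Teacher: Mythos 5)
Your reduction matches the paper's in effect: the three-term decomposition of $\sigma[\psi_1(\widehat\cR)-\psi_2(\widehat\cS)]$ correctly disposes of everything except the core piece $\sigma[\psi_0(\widehat\cR)-\psi_0(\widehat\cS)]$, which is just the original problem with $\psi_1=\psi_2=\psi_0$ --- the same reduction the paper makes in its opening line via Corollary~\ref{corprop:fct_de_R} and Proposition~\ref{prop_homsigmapsi}. (Your Part~(1) via Corollary~\ref{corprop:fct_de_R} is fine; the paper obtains it as the $\sigma=\id$ case of Part~(2).) Where you diverge is the core step. The paper uses a dyadic decomposition in the spectral variable of $\cR$ and the homogeneity of $\sigma$ to get $2^{-j}$-type decay of the $j$th piece's seminorms (Lemma~\ref{lem_prop:fct_de_R}~(3) is the key input). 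You instead propose a kernel argument exploiting vanishing moments of $\kappa_{\rho'}$, $\rho'=\psi_0(\widehat\cR)-\psi_0(\widehat\cS)$, followed by a Calder\'on--Zygmund estimate for $\kappa_{\rho'}\star_{G_xM}\kappa_\sigma$.

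There are two genuine gaps. First, the moment vanishing $\Delta_\bX^\alpha\rho'(x,1_{\Gh_xM})=0$ for $\alpha\neq 0$ is asserted, not proved. The claim that ``all Taylor coefficients of $\psi_0$ at $0$ vanish'' does not by itself give the vanishing of the moments of the Schwartz kernel on a graded group. Writing $\kappa_{\psi_0(\cR)}=\delta_0-\kappa_{(1-\psi_0)(\cR)}$, those moments are
$\int v^\alpha\kappa_{(1-\psi_0)(\cS),x}\,dv-\int v^\alpha\kappa_{(1-\psi_0)(\cR),x}\,dv$;
in the Euclidean model they vanish because $(1-\psi_0)(|\xi|^2)$ is locally constant near $\xi=0$, which is a chain-rule fact on the spectral side, not a property of $\psi_0$ alone. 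The analogous statement on a graded group is plausible but requires justification that is neither supplied nor available in the paper. Second, even granting the moments vanish, the Calder\'on--Zygmund bound --- rapid decay of $\kappa_{\rho'}\star_{G_xM}\kappa_\sigma$ together with all the $D_\bX^\beta$-derivative estimates, locally uniformly in $x$ --- is left entirely as a sketch, and this is precisely the hard part: convolving a Schwartz kernel against a homogeneous one on a graded group and recovering all seminorms is nontrivial. The paper's dyadic argument avoids the kernel-side analysis altogether, working purely with the symbol-side estimates from Lemma~\ref{lem_prop:fct_de_R} and Proposition~\ref{prop:fct_de_R}, and is considerably shorter to make rigorous.
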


\begin{proof} [Proof of Lemma \ref{lem_indeppsi0R}]
By Corollary~\ref{corprop:fct_de_R} and Proposition \ref{prop_homsigmapsi},
we may assume $\psi_1=\psi_2=\psi_0$.
Part (1) is a consequence of Part (2) with $\sigma = \id$. Let us prove Part (2). 
We consider a dyadic decomposition  on $[0,+\infty)$ as introduced in the proof of Proposition~\ref{prop_homsigmapsi}:
we fix $\eta_0\in C^\infty([-1,1], [0,1])$  
and $\eta_1\in C_c^\infty ([\frac 12,\frac 32],[0,1])$ such that 
$$
1 = \sum_{j=0}^\infty \eta_j(\lambda)
\quad \mbox{for any}\ \lambda\geq 0, \quad\mbox{with}\ 
\eta_j(\lambda):= \eta_1 (2^{j-1}\lambda).
$$
For each $j\in \bN_0$, 
the symbol 
$\sigma\, \eta_j(\widehat \cR) \left(
\psi_0 (\widehat \cR)-\psi_0 (\widehat \cS)\right)$
is  smoothing by Proposition \ref{prop_homsigmapsi} and the properties of composition of symbols stated in Proposition~\ref{prop_comp+adj}.
The properties of the supports of $\psi_0$ and $\eta_0,\eta_1$ imply the existence of  $j_0\in \bN$ such that for all $j\geq j_0$, $\eta_j \psi_0=\eta_j$.
Hence, for $j\geq j_0$, 
$$
\eta_j(\widehat \cR) \left(
\psi_0 (\widehat \cR)-\psi_0 (\widehat \cS)\right)
=
\eta_j(\widehat \cR) - \eta_j(\widehat \cR)\psi_0 (\widehat \cS)
= \eta_j(\widehat \cR)\, (1-\psi_0)(\widehat \cS) .
$$
For any semi-norm $\|\cdot\|_{S^{-N_1}(\Gh M), N_2}$, we have
$$
\|\sigma \eta_j(\widehat \cR) (1-\psi_0)(\widehat \cS)\|_{S^{-N_1}(\Gh M), N_2}
\lesssim \|\sigma \eta_j(\widehat \cR) \|_{S^{m-\nu}(\Gh M), N'_2} 
\|(1-\psi_0)(\widehat \cS)\|_{S^{-N_1-m+\nu}(\Gh M), N''_2},
$$
for some $N'_2,N''_2\in \bN_0$,
where $\nu$ is the homogeneous degree of $\cR$.
By Proposition~\ref{prop:fct_de_R}, 
$\|(1-\psi_0)(\widehat \cS)\|_{S^{-N_1+\nu}(\Gh M), N''_2}$ is finite since $1-\psi_0\in C_c^\infty(\bR)$, while we have by Lemma~\ref{lem_prop:fct_de_R} (3):
$$
\|\sigma \eta_j(\widehat \cR) \|_{S^{m-\nu}(\Gh M), N'_2} 
\lesssim 
\|\sigma\|_{S^{m}(\Gh M), \tilde N_2}
\| \eta_j(\widehat \cR) \|_{S^{-\nu}(\Gh M), N''_2}
\lesssim \|\eta_j\|_{\cG^{-1},N_3} \lesssim 2^{-j} ,
$$
 for some $\tilde N_2,N_2'', N_3\in \bN_0$..
Hence, the sum
$$
\sum_{j\geq j_0}
\|\eta_j(\widehat \cR)\left(
\psi_0 (\widehat \cR)-\psi_0 (\widehat \cS)\right)
\|_{S^{-N_1}(\Gh M), N_2} 
$$
is finite for any semi-norm $\|\cdot\|_{S^{-N_1}(\Gh M), N_2}$.
This concludes the proof for $\sigma \psi_1(\cR) - \sigma \psi_2(\cS)$. The case of 
$ \psi_1(\cR)\sigma -  \psi_2(\cS)\sigma$ may be dealt with in a similar manner.
\end{proof}

The following statement shows that modulo smoothing symbols, cutting the low frequencies on the left or on the right makes no difference:
\begin{lemma}
\label{lem_homsigmapsiLR}
    Let $\psi_0$ be a smooth function on $M\times \bR$ satisfying $\psi_0(x,\lambda)=0$ for $\lambda$ in a neighbourhood of $(-\infty,0]$
and $\psi_0(x,\lambda)=1$ on a neighbourhood of $+\infty$, 
these two $\lambda$-neighbourhoods depending locally uniformly on $x\in M$.
Let $\cR$  be a positive Rockland element on $GM$.
For any $\sigma\in \dot S^m (\Gh M)$, the symbol
$\sigma \psi_0(\widehat \cR) - \psi_0(\widehat \cR)\sigma $
is smoothing, that is, in $S^{-\infty} (\Gh M).$
\end{lemma}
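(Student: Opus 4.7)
Setting $\chi := 1-\psi_0$, it suffices to show $[\sigma, \chi(\widehat \cR)] = \sigma\chi(\widehat \cR) - \chi(\widehat \cR)\sigma \in S^{-\infty}(\widehat G M)$, since $\sigma\psi_0(\widehat \cR) - \psi_0(\widehat \cR)\sigma = -[\sigma, \chi(\widehat \cR)]$. Because $\widehat \cR \geq 0$, the operator $\chi(\widehat \cR)$ depends only on $\chi|_{[0,\infty)}$, where $\chi$ is compactly supported in $\lambda$ locally uniformly in $x \in M$; after modifying $\chi$ on $(-\infty, 0)$ to obtain an element of $C^\infty(M)\otimes C^\infty_c(\bR)$, Corollary \ref{corprop:fct_de_R} gives $\chi(\widehat \cR) \in S^{-\infty}(\widehat G M)$.

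The plan is to mirror the dyadic argument of Lemma \ref{lem_indeppsi0R}(2). I fix a dyadic partition of unity $1 = \tilde\eta_0(\lambda) + \sum_{j\geq 1}\eta_j(\lambda)$ on $[0,\infty)$, with $\eta_0 \in C^\infty_c(\tfrac12, 2)$, $\eta_j(\lambda) := \eta_0(2^{-j}\lambda)$, and $\tilde\eta_0 \in C^\infty_c(\bR)$ a low-frequency piece. I choose $j_0$ large enough that $\operatorname{supp}\eta_j \cap \operatorname{supp}(\chi|_{[0,\infty)}) = \emptyset$ for $j \geq j_0$ (locally uniformly in $x$), which is possible because $\chi|_{[0,\infty)}$ has bounded support. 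For such $j$, $(\eta_j\chi)(\widehat \cR) = 0$, so
\[
  \eta_j(\widehat \cR)\,[\sigma, \chi(\widehat \cR)] \;=\; \eta_j(\widehat \cR)\sigma\chi(\widehat \cR) - (\eta_j\chi)(\widehat \cR)\sigma \;=\; \eta_j(\widehat \cR)\sigma\chi(\widehat \cR).
\]
The finitely many remaining terms $\tilde\eta_0(\widehat \cR)[\sigma, \chi(\widehat \cR)]$ and $\eta_j(\widehat \cR)[\sigma, \chi(\widehat \cR)]$ with $1 \leq j < j_0$ each factor through $\eta(\widehat \cR) \in S^{-\infty}(\widehat G M)$ (by Corollary \ref{corprop:fct_de_R}), and are smoothing by Propositions \ref{prop_homsigmapsi} and \ref{prop_comp+adj}.

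The main step is the tail summability estimate. Applying Proposition \ref{prop_homsigmapsi} with $\psi = \eta_j \in \cG^{m_2}(\bR)$ for arbitrary $m_2 > 0$ and a fixed cutoff $\psi_0'$ that is $1$ on $[1, \infty) \supset \operatorname{supp}\eta_j$, the continuity of the composition map yields
\[
  \|\eta_j(\widehat \cR)\sigma\|_{S^{m+m_2\nu}, K'} \;\lesssim\; \|\sigma\|_{\dot S^m, K''}\,\|\eta_j\|_{\cG^{m_2}, K''}.
\]
A direct scaling computation based on $\operatorname{supp}\eta_j \subset [2^{j-1}, 2^{j+1}]$ and $|\eta_j^{(k)}(\lambda)| \lesssim 2^{-jk}$ gives $\|\eta_j\|_{\cG^{m_2}, K''} \lesssim 2^{-jm_2}$. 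Composing with $\chi(\widehat \cR) \in S^{-\infty}$ via Proposition \ref{prop_comp+adj} produces $\|\eta_j(\widehat \cR)\sigma\chi(\widehat \cR)\|_{S^{-N}, K} \lesssim 2^{-jm_2}$ for every $N \in \bN$ and every $m_2 > 0$. Summing over $j \geq j_0$ yields absolute convergence in $S^{-N}(\widehat G M)$ for every $N$, so the tail lies in $S^{-\infty}(\widehat G M)$. Together with the smoothing finite part, $[\sigma, \chi(\widehat \cR)] \in S^{-\infty}(\widehat G M)$, which is the claim. The only technical point is pairing the continuity constants of Proposition \ref{prop_homsigmapsi} with the scaling estimate $\|\eta_j\|_{\cG^{m_2}} \lesssim 2^{-jm_2}$, both of which are routine manipulations with the tools already developed.
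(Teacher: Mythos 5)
Your proof is correct, and it takes a route that is genuinely different in presentation from the paper's, though both ultimately rest on the same ideas. The paper introduces an auxiliary cutoff $\psi_1$ with $\psi_1 = 1$ on $\supp\,\psi_0$, observes that $1-\psi_1(\widehat\cR)$ annihilates $\psi_0(\widehat\cR)$ and is smoothing against $\sigma\psi_0(\widehat\cR)$, and then rewrites the remaining piece $\psi_1(\widehat\cR)\sigma(\psi_0-1)(\widehat\cR) - (\psi_0-\psi_1)(\widehat\cR)\sigma$; the first term is the composition of $\psi_1(\widehat\cR)\sigma \in S^m$ with the smoothing multiplier $(\psi_0-1)(\widehat\cR)$, while the second is handled as in Lemma~\ref{lem_indeppsi0R}~(2). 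You instead carry out the dyadic decomposition directly: after writing $1 = \tilde\eta_0 + \sum_j \eta_j$ on $[0,\infty)$, the disjointness of $\supp\,\eta_j$ and $\supp\,\chi|_{[0,\infty)}$ for $j \ge j_0$ collapses $\eta_j(\widehat\cR)[\sigma,\chi(\widehat\cR)]$ to $\eta_j(\widehat\cR)\sigma\chi(\widehat\cR)$, and the scaling bound $\|\eta_j\|_{\cG^{m_2}} \lesssim 2^{-jm_2}$ combined with Propositions~\ref{prop_homsigmapsi} and~\ref{prop_comp+adj} gives geometric decay in every $S^{-N}$-seminorm. For the finitely many low-frequency blocks your appeal is correct but deserves one word of justification: you should note explicitly that $[\sigma,\chi(\widehat\cR)] = \psi_0(\widehat\cR)\sigma - \sigma\psi_0(\widehat\cR) \in S^m(\Gh M)$ by Proposition~\ref{prop_homsigmapsi}, since $\sigma$ and $\chi(\widehat\cR)$ are not separately in the inhomogeneous classes; once that identity is on the table, composing with $\tilde\eta_0(\widehat\cR), \eta_j(\widehat\cR) \in S^{-\infty}$ is indeed smoothing. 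In effect you have re-proved the dyadic estimate that the paper delegates to Lemma~\ref{lem_indeppsi0R}; the paper's route is shorter because it reuses that machinery, while yours is more self-contained and makes the decay mechanism explicit.
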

\begin{proof}
Let $\psi_1$ be a smooth function on $M\times \bR$ satisfying 
$\psi_1(x,\lambda)=0$ for $\lambda$ in a  neighbourhood of $(-\infty,0]$ (locally uniformly in $x\in M$)
and $\psi_1=1$ on the support of $\psi_0$.
We may compute modulo smoothing symbols by Corollary~\ref{corprop:fct_de_R} and Proposition \ref{prop_homsigmapsi}
\begin{align*}
 \sigma \psi_0(\widehat\cR) - \psi_0(\widehat\cR)\sigma 
& = \psi_1(\widehat \cR)
 \left(\sigma \psi_0(\widehat\cR) - \psi_0(\widehat\cR)\sigma\right) + S^{-\infty}(\Gh M)\\
 &=\psi_1(\widehat \cR)
 \sigma\,  (\psi_0-1)(\widehat\cR) - \psi_1(\psi_0-1)(\widehat\cR) \, \sigma 
 + S^{-\infty}(\Gh M)\\
 & 
 \in S^{-\infty}(\Gh M),
\end{align*}
    concluding the proof.
\end{proof}

\subsubsection{Polyhomogeneous expansion  and symbols}
\label{subsubsec_SmphGhM}

We extend to filtered manifolds the notion of polyhomogeneous expansion of symbols (see Section~\ref{subsubsec_poly_exp} for the groups case). 

\begin{definition}
\label{def_polyhomexp}
	A symbol $\sigma\in S^m(\Gh M)$ admits the {\it polyhomogeneous expansion}
	$$
\sigma \sim_h \sum_{j=0} ^\infty \sigma_{m-j}, 
\qquad \sigma_{m-j}\in \dot S^{m-j}(\Gh M),
$$
if for  a function $\psi\in C^\infty(M\times \bR)$ 
 satisfying $\psi(x,\lambda)=0$ for $\lambda$ in a neighbourhood of $(-\infty,0]$
and $\psi_0(\lambda)=1$ on a neighbourhood of $+\infty$, 
these two $\lambda$-neighbourhoods depending locally uniformly on $x\in M$, 
 and for a positive Rockland element $\cR$, 
the symbol $\sigma$ admits the following expansion in $S^m(\Gh M)$ (in the sense of Definition~\ref{def:asymptotics}):
$$
\sigma \sim  \sum_{j=0} ^N  \psi(\widehat \cR)\sigma_{m-j}  .
$$
\end{definition}
By Lemma \ref{lem_indeppsi0R}, if the property in Definition \ref{def_polyhomexp} holds for one such function $\psi$ and one positive Rockland element $\cR$ then it holds for any of them.
By Lemma \ref{lem_homsigmapsiLR}, the property above is also equivalent to 
$$
\sigma - \sum_{j=0} ^N \psi(\widehat \cR) \sigma_{m-j} \in S^{m-N-1}(\Gh M) .
$$

\begin{definition}
\label{def:SmphGhM}
A symbol  is \emph{polyhomogeneous} of order $m\in \bR$ when it is a symbol in  $S^m(\Gh M)$ admitting a  polyhomogeneous expansion.
We denote by $S^m_{ph}(\Gh M)$ the classes of  polyhomogeneous symbols of order $m\in \bR$.
\end{definition}

By construction, if $\sigma\in S^m_{ph}(\Gh M)$, then $\sigma(x,\cdot)\in S^m_{ph}(\Gh_x M)$ for every $x\in M$. 
Consequently, 
the polyhomogeneous expansion on $\Gh M$, when it exists, is unique as it is unique  in the group case (Section \ref{subsubsec_poly_exp}).
\smallskip 

The properties of the homogeneous and inhomogeneous symbol classes imply that 
the polyhomogeneous symbol classes enjoy the following algebraic properties of composition and adjoint:
\begin{itemize}
    \item[(a)] the product of a symbol in $S^{m_1}_{ph}(\Gh M)$
with a symbol in $S^{m_2}_{ph}(\Gh M)$ is in $S^{m_1+m_2}_{ph}(\Gh M)$,
\item[(b)] the adjoint of a symbol in $S^{m}_{ph}(\Gh M)$ is in $S^{m}_{ph}(\Gh M)$.
\end{itemize}

%%%%%%%%%%%%%%%%%%%%%%%%%%%%%%%%%%%%%%%%%%%%%%%%%%%%%%%%%%%%%%%%%%%%%%%%%%%%%%%%%%%%%%%%%%%%%%%%%%%%%%%%%%%

\section{Kernel estimates and related results}\label{sec:kernel_estmates}

As for operators of Calderon-Zygmund's type, a deep analysis of the kernel will be at the roots of the analysis of the operators obtained by quantization of the symbols in $\cup_{m\in \bR}S^m(\widehat GM)$. In this section, we first present in Section~\ref{subsec:kernel_est} kernel estimates that are really similar to those of the group case. 
Then, in the next two sections, we develop consequences of kernels estimates regarding kernel functions that will appear when developing a pseudodifferential calculus in the rest of the article. Such terms do not appear in the group case. They are mainly due to the fact that the coordinates provided by the exponential map on the manifold do not coincide exactly with the variables of the osculating group. This difficulty will be overcome by identifying terms of higher order, that we will be able to treat. 
In Section~\ref{subsec:kernel_lemma}, we  derive technical lemma that we will use in Sections~\ref{sec:quantization} to analyse the invariance for the change of frame. In Section~\ref{subsec_kernel_convolution}, we focus on identifying convolution structure up to higher order terms. 
 This  section can be skipped at first reading.

 \begin{notation}\label{notation:q_alpha}
 If an adapted frame $\bX$ has been fixed on an open subset $U\subset M$, then 
  we consider the  family $(q_\alpha (x,v))_{\alpha\in \bN_0^n}$ consisting in the homogeneous polynomials $q_\alpha (x,v)=q_{x,\alpha}(v)$ on $\bR^n$ that form a basis dual to $(L_{\langle \bX\rangle_x}^\beta)_{\beta\in \bN_0^n}$.
  We also consider the quasinorm $|\cdot|_\bX$
  defined in Remark~\ref{rem_quasinormbX}.
 \end{notation}

\subsection{Kernel estimates}\label{subsec:kernel_est}
As a consequence of Theorem \ref{thm_kernelG} and Remark \ref{rem_thm_kernelG}, we have estimates on the convolution kernel of a symbol.

\begin{theorem}
\label{thm_kernelM}
Let $\bX$ be an adapted frame on an open set $U\subset M$. 
 	Let $\sigma\in S^m(\widehat G M)$ and $m\in \bR$.
 	Then for each $x\in U$, the distribution $\kappa_x^\bX$ is Schwartz  away from the origin. 
  Moreover, if $\cC$ is a fixed compact subset of $U$, we have the following kernel estimates:
	
	\begin{enumerate}
	    \item For any $N\in \bN_0$ , there exists $C>0$ such that for all $x\in \cC$ and $v\in \bR^n$, we have:
 	$$
|v|_\bX\geq 1 \;\;\Longrightarrow\;\;
	|\kappa_x^\bX (v)|\leq C |v|_{\bX}^{-N}.
	$$
\item	If $Q+m<0$ then $\kappa_x^\bX$ is continuous on $G$ and bounded: 
there exists $C>0$ such that  
\[
\sup_{(x,v)\in \cC\times \bR^n} | \kappa_x^\bX (v)|<C.
\]
\item	If $Q+m>0$, then there exists $C>0$ such that for all $x\in \cC$ and $v\in \bR^n$, we have
	$$
	0<|v|_\bX \leq 1 \ \Longrightarrow \ 
	|\kappa^\bX_x(v)|\leq  C|v|_\bX^{-(Q+m)}.
	$$
 \item	If $m=-Q$, then there exists $C>0$ such that for all $x\in \cC$ and $v\in \bR^n$, we have
	$$
	0<|v|_\bX \leq 1/2 \ \Longrightarrow \ 
	|\kappa^\bX_x(v)|\leq - C\ln |v|_\bX.
	$$
		\end{enumerate}

 In all the estimates above, 
 the constant $C$ may be chosen of the form 
$$
C = C_1 \|\sigma\|_{S^m(\widehat G M),(\bX,U), \cC, N'}
$$
with $C_1>0$, $N'\in\mathbb N_0$. 
\end{theorem}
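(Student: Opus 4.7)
The plan is to reduce this to a pointwise application of the group-case kernel estimates in Theorem \ref{thm_kernelG} and then upgrade to uniformity over the compact $\cC$ by carefully tracking constants. For each fixed $x\in U$, the definition of $S^m(\widehat{G}M)$ (Definition \ref{def:Sm}) ensures that $\sigma(x,\cdot)\in S^m(\widehat{G}_xM)$, and the quantity $\kappa_x^\bX$ is precisely the convolution kernel of $\sigma(x,\cdot)$ in the basis $\langle\bX\rangle_x$ of $\fg_xM$ via the identification $\Exp_x^\bX:\bR^n\to G_xM$. The quasinorm $|\cdot|_\bX$ is the standard quasinorm on $G_xM$ associated with $\langle\bX\rangle_x$ as in Example \ref{ex_quasinormGX} and Remark \ref{rem_quasinormbX}, and the positive Rockland operator $\cR_{\bX,x}$ in \eqref{ex_RU} is the one associated with $\langle\bX\rangle_x$ as in Example \ref{ex_RG}. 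All estimates claimed in the statement are therefore the pointwise estimates of Theorem \ref{thm_kernelG} applied to $\sigma(x,\cdot)$, and the key issue is uniformity in $x\in\cC$.

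First I would observe that, by Remark \ref{rem_thm_kernelG}, the constants $C_1$ in Theorem \ref{thm_kernelG} applied at the point $x$ can be chosen as increasing functions of the structural constant $\|[\cdot,\cdot]_{\fg_xM}\|_{\langle\bX\rangle_x}$, with all other dependencies (on $m$, $N$, the gradation, and the adapted basis) being the same at every $x\in U$ since the gradation is constant on $M$. Because the map $x\mapsto \ad_x^\bX$ is smooth on $U$ (see Definition \ref{def_adxbX}) and $\cC\subset U$ is compact, the structural constants $c_{i,j,k}(x)$ of $[\cdot,\cdot]_{\fg_xM}$ in the basis $\langle \bX\rangle_x$ are bounded uniformly on $\cC$. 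Hence the quantity $\sup_{x\in\cC}\|[\cdot,\cdot]_{\fg_xM}\|_{\langle\bX\rangle_x}$ is finite, so a single constant $C_1=C_1(\cC,\bX,M,m,N)$ works uniformly in $x\in \cC$ for each of the four estimates.

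It then remains to dominate the $x$-dependent seminorm on $S^m(\widehat{G}_xM)$ by the global seminorm on $S^m(\widehat{G}M)$. This is immediate from the definition \eqref{def:semi_norm} with $\beta=0$:
$$
\sup_{x\in\cC}\|\sigma(x,\cdot)\|_{S^m(\widehat{G}_xM),N',\langle\bX\rangle_x,\cR_{\bX,x}} \leq \|\sigma\|_{S^m(\widehat{G}M),(\bX,U),\cC,N'}.
$$
Combining this with the pointwise bound of the form $C_1\|\sigma(x,\cdot)\|_{S^m,N'}$ furnished by Theorem \ref{thm_kernelG} yields each of the four inequalities in the statement with the required form of the constant. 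The smoothness of $\kappa_x^\bX$ away from the origin is then  obtained in the same way: Theorem \ref{thm_kernelG} gives smoothness of each $\kappa_x^\bX$ on $\bR^n\setminus\{0\}$, and the Schwartz decay at infinity, with uniform constants on $\cC$, is already part of estimate (1); applying the same argument to the symbols $\Delta_\bX^\alpha D_\bX^{\beta'}\widehat{\langle\bX\rangle}^\beta\sigma\widehat{\langle\bX\rangle}^\gamma$ (which remain in appropriate classes by Corollary \ref{cor_contSmTXDelta}) yields, by Remark \ref{rem_thm_kernelG}(2), the Schwartz property of all derivatives of $\kappa_x^\bX$ off the origin uniformly in $x\in\cC$. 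The main (but mild) obstacle throughout is  bookkeeping the dependence of constants on the basis $\langle\bX\rangle_x$ and the bracket $[\cdot,\cdot]_{\fg_xM}$; this is entirely handled by Remark \ref{rem_thm_kernelG} together with the compactness of $\cC$.
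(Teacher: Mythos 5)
Your proposal is correct and takes precisely the approach the paper intends: the paper itself introduces Theorem~\ref{thm_kernelM} with the words "As a consequence of Theorem~\ref{thm_kernelG} and Remark~\ref{rem_thm_kernelG}," so the intended argument is exactly a pointwise application of the group-case kernel estimates, upgraded to uniformity on the compact $\cC$ via the remark that the constants depend on the Lie structure only through $\|[\cdot,\cdot]_{\fg_xM}\|_{\langle\bX\rangle_x}$, together with the smooth dependence of $\ad_x^\bX$ on $x$ and the bound $\sup_{x\in\cC}\|\sigma(x,\cdot)\|_{S^m(\Gh_x M),N',\langle\bX\rangle_x,\cR_{\bX,x}}\le\|\sigma\|_{S^m(\Gh M),(\bX,U),\cC,N'}$ from \eqref{def:semi_norm}. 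The details you supply (continuity and hence boundedness of the structural constants on $\cC$, and the treatment of the Schwartz property off the origin by passing to $\Delta_\bX^\alpha\widehat{\langle\bX\rangle}^\beta\sigma\widehat{\langle\bX\rangle}^\gamma$) are exactly what the paper leaves to the reader.
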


Corollaries \ref{cor_contSmTXDelta} and Theorem \ref{thm_kernelM} yield also  estimates for 
$(-u)^\alpha L_{\langle \bX\rangle}^{\beta}R_{\langle \bX\rangle}^{\gamma}  \bX_x^{\beta'}\kappa^\bX_{ \sigma,x}( u)$. 
They,  together with Corollary \ref{cor_thm_kernelG}, and its subsequent remarks, also yield the following notion of homogeneous semi-norms.
Recall that the definition of the semi-norms is in~\eqref{def:semi_norm}.

\begin{corollary}
\label{cor2_kernelM}
    Let $\sigma\in S^m(\widehat G M)$ with $m\in \bR$, $\alpha, \beta,\beta'\in \bN_0^n$, $p\in [1,\infty]$.
If $m -[\alpha] +[\beta]<Q(\frac 1p -1)$  then the quantity 
\begin{equation}\label{def:norm(X,U)}
\|\sigma\|_{(\bX,U), \cC, \alpha,\beta,\beta',p}
:=
\Bigl\| \sup_{x\in \cC}| v^\alpha L_{\langle \bX \rangle}^\beta \bX^{\beta'}\kappa^\bX_{\sigma, x}| \Bigr\|_{L^p(\bR^n)}
\end{equation}
is finite. Moreover, the map $\sigma\mapsto \|\sigma\|_{(\bX,U),\cC, \alpha,\beta,\beta',p}$ is a continuous semi-norm on $S^m(\widehat G M)$
while if $m \leq -M_0$ with $M_0\in \bN_0$ a common multiple of the dilations' weight, we have for all $N\in\bN$
$$
 \|\sigma\|_{S^m(\widehat G M),(\bX,U), \cC, N}
\leq 
C
\max_{\substack{[\alpha]+[\beta]\leq N+M_0\\|\beta'|\leq N}}
\|\sigma\|_{(\bX,U),\cC,\alpha,\beta,\beta',1},
$$
\end{corollary}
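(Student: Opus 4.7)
The plan is to adapt the group-case arguments of Corollary~\ref{cor_thm_kernelG} to the manifold setting, combining the locally uniform kernel bounds of Theorem~\ref{thm_kernelM} with the continuity of the operations $\sigma \mapsto \Delta^\alpha_\bX D^{\beta'}_\bX \widehat{\langle\bX\rangle}^\beta \sigma$ from Corollary~\ref{cor_contSmTXDelta}.

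For Part~(1), I first apply Corollary~\ref{cor_contSmTXDelta} to $\sigma \in S^m(\widehat GM)$: the derived symbol $\Delta^\alpha_\bX D^{\beta'}_\bX \widehat{\langle\bX\rangle}^\beta \sigma$ lies in $S^{m-[\alpha]+[\beta]}(\widehat GM|_U)$, its convolution kernel in the $\bX$-coordinates at $x$ equals $(-v)^\alpha \bX^{\beta'} L^\beta_{\langle\bX\rangle} \kappa^\bX_{\sigma,x}(v)$, and its seminorms are continuously controlled by those of $\sigma$. Theorem~\ref{thm_kernelM}, applied to this derived symbol on $\cC$, then yields uniform-in-$x$ pointwise bounds on $\sup_{x\in\cC}|v^\alpha L^\beta_{\langle\bX\rangle} \bX^{\beta'} \kappa^\bX_{\sigma,x}(v)|$: super-polynomial decay as $|v|_\bX \to \infty$, and at the origin a singularity of order at most $|v|_\bX^{-(Q+m-[\alpha]+[\beta])}$ (or logarithmic in the borderline case). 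The condition $m-[\alpha]+[\beta] < Q(1/p-1)$ is exactly what places both contributions in $L^p(\bR^n)$; the logarithmic borderline is handled through the continuous inclusion $S^m \subset S^{m+\epsilon}$ of~\eqref{eq_inclusionSmGhM}, as in the group case. Continuity of $\sigma \mapsto \|\sigma\|_{(\bX,U),\cC,\alpha,\beta,\beta',p}$ follows by tracking the constants, since those appearing in Theorem~\ref{thm_kernelM} are themselves continuous seminorms applied to the derived symbol.

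For Part~(2), I mimic the fiberwise argument in the proof of Corollary~\ref{cor_thm_kernelG}~(2) and then take the supremum over $x\in\cC$. From the definition~\eqref{def:semi_norm}, the task reduces to bounding, for $[\alpha]\leq N$ and $|\beta'|\leq N$,
\[
\sup_{x\in\cC}\|(\id+\widehat{\cR}_x)^{(-m+[\alpha])/\nu}\Delta^\alpha_\bX D^{\beta'}_\bX \sigma(x,\cdot)\|_{L^\infty(\widehat G_x M)}
\]
by a maximum of the $L^1$ quantities appearing on the right-hand side. The conditions on $m$ and $M_0$ are exactly those needed so that $(\id+\widehat{\cR}_x)^{-a/\nu}$, which is $L^\infty$-bounded by $1$ for $a\geq 0$, allows replacing the exponent $(-m+[\alpha])/\nu$ by $M_0/\nu$, so the display is bounded by $\|(\id+\widehat{\cR}_x)^{M_0/\nu}\Delta^\alpha_\bX D^{\beta'}_\bX \sigma(x,\cdot)\|_{L^\infty}$. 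Since $M_0$ is a common multiple of the dilations' weights, Proposition~\ref{prop_sob} together with Lemma~\ref{lem_LTx} expand $(\id+\widehat{\cR}_x)^{M_0/\nu}$ into a $C^\infty(U)$-combination of $\widehat{\langle\bX\rangle}^\beta$ with $[\beta]\leq M_0$, dominating the previous norm by a finite sum of $\|\widehat{\langle\bX\rangle}^\beta \Delta^\alpha_\bX D^{\beta'}_\bX \sigma(x,\cdot)\|_{L^\infty(\widehat G_x M)}$. Each summand is in turn bounded by the $L^1$-norm of its convolution kernel $v^\alpha L^\beta_{\langle\bX\rangle}\bX^{\beta'}\kappa^\bX_{\sigma,x}$, using the bound $\|\cF_{G_xM}f\|_{L^\infty(\widehat G_xM)} \leq \|f\|_{L^1(G_xM)}$ recalled in Section~\ref{subsubsec_FL1}. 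Finally, the trivial inequality $\sup_{x\in\cC}\int \leq \int \sup_{x\in\cC}$ produces exactly $\|\sigma\|_{(\bX,U),\cC,\alpha,\beta,\beta',1}$, and the index conditions $[\alpha]\leq N$, $[\beta]\leq M_0$ give $[\alpha]+[\beta] \leq N+M_0$ as in the statement.

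I expect the main obstacle to be bookkeeping rather than analysis: the $x$-dependence of the constants produced by the functional calculus of $\cR_x$ and of the monomial expansions must remain locally uniform on $\cC$. This is precisely what Lemmata~\ref{lem_SmnormsigmaxRx}--\ref{lem_DXYsigma} guarantee, so no new analytic input is needed beyond the arguments already developed on groups and their translation via adapted frames.
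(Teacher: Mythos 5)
Part (1) of your argument is correct and is the natural manifold translation of Corollary~\ref{cor_thm_kernelG}~(1): pass to the derived symbol $\Delta^\alpha_\bX D^{\beta'}_\bX \widehat{\langle\bX\rangle}^\beta\sigma$ via Corollary~\ref{cor_contSmTXDelta}, apply the uniform-in-$\cC$ kernel estimates of Theorem~\ref{thm_kernelM}, and handle the logarithmic borderline by the inclusion $S^m\subset S^{m+\epsilon}$.

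In Part~(2), however, the central reduction step is not justified and, as stated, does not work. You factor $(\id+\widehat\cR_x)^{(-m+[\alpha])/\nu}=(\id+\widehat\cR_x)^{(-m+[\alpha]-M_0)/\nu}(\id+\widehat\cR_x)^{M_0/\nu}$ and absorb the first factor using the fact that $(\id+\widehat\cR_x)^{-a/\nu}$ is $L^\infty$-bounded by $1$ for $a\geq 0$. For this absorption you need $-m+[\alpha]-M_0\leq 0$, i.e.\ $-m+[\alpha]\leq M_0$, and since $[\alpha]$ ranges over $[\alpha]\leq N$ you need $-m+N\leq M_0$. This is precisely the hypothesis of the group-case Corollary~\ref{cor_thm_kernelG}~(2). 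It is \emph{not} what the hypothesis $m\leq -M_0$ gives: that hypothesis says $-m\geq M_0$, hence $-m+[\alpha]\geq M_0$ already for $\alpha=0$, so the leftover factor $(\id+\widehat\cR_x)^{(-m+[\alpha]-M_0)/\nu}$ has a nonnegative exponent and is unbounded. Your sentence ``The conditions on $m$ and $M_0$ are exactly those needed so that \dots allows replacing the exponent $(-m+[\alpha])/\nu$ by $M_0/\nu$'' is therefore false as written. The correct hypothesis for the argument is the one from the group case, $-m+N\leq M_0$ (equivalently $M_0\geq N-m$); the one printed in the statement appears to be a misprint, and your proof should have noticed and flagged the discrepancy rather than asserting that the printed condition is ``exactly'' sufficient.

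Two minor points for clarity. First, $(\id+\widehat\cR_x)^{M_0/\nu}$ is not literally a $C^\infty(U)$-combination of $\widehat{\langle\bX\rangle}^\beta$ (the exponent $M_0/\nu$ need not be an integer); what makes the step work is Proposition~\ref{prop_sob}~(5), which identifies $\|(\id+\widehat\cR_x)^{M_0/\nu}\tau\|_{L^\infty(\widehat G_xM)}$ with the $L^2\to L^2_{M_0}$ operator norm of the convolution operator with kernel $\kappa_\tau$, and then the equivalence of the $L^2_{M_0}$ norm with $\|f\|_{L^2}+\sum_{[\beta]=M_0}\|\bV^\beta f\|_{L^2}$, which is how $\sum_{[\beta]\leq M_0}\|\widehat{\langle\bX\rangle}^\beta\cdot\|_{L^\infty}$ enters. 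Second, once Leibniz is used to move the monomial past the group derivatives, the resulting multi-index $\alpha'$ satisfies $[\alpha']\leq[\alpha]\leq N$, which together with $[\beta]\leq M_0$ gives $[\alpha']+[\beta]\leq N+M_0$, matching the right-hand side; that index bookkeeping should be spelled out rather than left implicit.
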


From the kernel estimate above via Corollary \ref{cor_thm_kernelG} (3), we also obtain  an estimate for the order of the distribution~$\kappa_x^\bX$. We will need the following quantification of this property:

\begin{corollary}
\label{cor_OrderDistrib}
    	Let $\sigma\in S^m(\widehat G M)$.
	Let $\bX$ be an adapted frame on an open set $U\subset M$. 
 For any $x\in U$,
the distribution $\kappa_x^\bX$ is smooth away from the
origin. At the origin, 
its order is the integer 
$$
p_\sigma:=\max\{|\alpha|: \alpha \in \bN_0 , [\alpha] \leq s'\}
$$
where $s'$ is the smallest non-negative  integer such that $s'>Q/2+m$. 
Moreover, for any compact $\cC \subset U$, 
we have for any compact neighbourhood $\cC_0$ of $0$ in $\bR^n$, 
  $$
  \exists C>0 ,\;\;
  \forall \phi \in C_c^\infty (\cC_0 ), \;\; \forall x\in \cC, \;\; 
   \left|\int_{\bR^n} \kappa_x^\bX (v) \phi(v) dv \right|
  \leq C \max_{|\alpha | \leq p_\sigma}  \| \partial^\alpha \phi\|_{L^\infty(\cC_0)}.
  $$
Above,   $C$ may be chosen of the form 
$$
C = C_1 \|\sigma\|_{S^m(\widehat G M),(\bX,U), \cC_0, N'},
$$
with $C_1>0$, $N'\in\mathbb N_0$.
\end{corollary}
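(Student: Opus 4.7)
The plan is to deduce the statement from the corresponding group-level result, Corollary \ref{cor_thm_kernelG} (3), applied pointwise to each $\sigma(x,\cdot) \in S^m(\widehat G_x M)$, and then to extract uniformity in $x \in \cC$ from the structure of our symbol seminorms on $M$. The smoothness of $\kappa_x^\bX$ away from the origin is already contained in Theorem \ref{thm_kernelM}, so the content to prove is the estimate against $\max_{|\alpha| \leq p_\sigma}\|\partial^\alpha \phi\|_{L^\infty(\cC_0)}$ with a constant that is uniform for $x \in \cC$ and bounded by a single $S^m(\widehat G M)$-seminorm.

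First, I would fix the adapted frame $\bX$ on $U$ and identify the convolution kernel $\kappa_x^\bX$ on $\bR^n$ with the convolution kernel of the invariant symbol $\sigma(x,\cdot)$ on the group $G_x M$ via the isomorphism $\Exp_x^\bX \colon \bR^n \to G_x M$. Under this identification, and with the Haar measure $\mu_x^\bX$, Corollary \ref{cor_thm_kernelG} (3) applied to $\sigma(x,\cdot)$ yields a constant $C(x) > 0$ and an integer $N' \in \bN_0$ (depending only on $m$, $p_\sigma$ and $\cC_0$) such that
\[
\forall \phi \in C_c^\infty(\cC_0), \qquad \Bigl|\int_{\bR^n} \kappa_x^\bX(v)\phi(v)\, dv\Bigr| \leq C(x) \max_{|\alpha| \leq p_\sigma} \|\partial^\alpha \phi\|_{L^\infty(\cC_0)},
\]
with $C(x) = C_1(x) \|\sigma(x,\cdot)\|_{S^m(\widehat G_x M), N', \langle \bX\rangle_x, \cR_{\bX,x}}$ for the positive Rockland operator $\cR_{\bX,x}$ of \eqref{ex_RU}.

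The key point is then the uniformity of $C_1(x)$ on $\cC$. Here I would invoke Remark \ref{rem_thm_kernelG}: the constant $C_1$ in Corollary \ref{cor_thm_kernelG} is an increasing function of the structural constant $\|[\cdot,\cdot]_{\fg_x M}\|_{\langle \bX\rangle_x}$ of the Lie bracket in the basis $\langle \bX\rangle_x$, once the gradation and the adapted basis type have been fixed. Because $x \mapsto \langle \bX\rangle_x$ is smooth on $U$ (Section \ref{subsubsec_adaptedFrame}) and the gradation is locally constant, the map $x \mapsto \|[\cdot,\cdot]_{\fg_x M}\|_{\langle \bX\rangle_x}$ is a continuous function on $U$ and hence bounded on the compact set $\cC$. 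Consequently $\sup_{x \in \cC} C_1(x) < \infty$.

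Finally, by Property (3) in Definition \ref{def:Sm} of $S^m(\widehat G M)$, we have
\[
\sup_{x \in \cC} \|\sigma(x,\cdot)\|_{S^m(\widehat G_x M), N', \langle \bX\rangle_x, \cR_{\bX,x}} \leq \|\sigma\|_{S^m(\widehat G M), (\bX,U), \cC, N'},
\]
so combining the two bounds gives the desired estimate with
\[
C = \Bigl(\sup_{x \in \cC} C_1(x)\Bigr) \|\sigma\|_{S^m(\widehat G M), (\bX,U), \cC_0, N'}.
\]
The main (minor) obstacle is the uniformity of $C_1(x)$; once Remark \ref{rem_thm_kernelG} is invoked, the rest is bookkeeping. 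If $\cC_0$ is not already contained in a compact where an adapted frame is defined, one may shrink $\cC_0$ or replace it by a slightly larger compact $\cC$ on which $\bX$ is defined, at the cost of enlarging $C$.
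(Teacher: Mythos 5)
Your proposal is correct and takes essentially the same route the paper intends: the authors dispatch this corollary in one line as following ``From the kernel estimate above via Corollary \ref{cor_thm_kernelG} (3)'', and your proof is precisely the fleshed-out version of that — apply the group-level bound pointwise at each $x\in\cC$, then control the constant uniformly in $x$ by noting it depends on the Lie structure only through the structural constant $\|[\cdot,\cdot]_{\fg_xM}\|_{\langle\bX\rangle_x}$, which is continuous and hence bounded on the compact $\cC$, and finally absorb the fibrewise $S^m(\widehat G_xM)$-seminorm into the global seminorm via Definition \ref{def:Sm} (3). Two tiny bookkeeping remarks: the reference you want for the constant's dependence on the Lie structure is Remark \ref{rem_cor_thm_kernelG} (which addresses Corollary \ref{cor_thm_kernelG} directly) rather than Remark \ref{rem_thm_kernelG} (which addresses Theorem \ref{thm_kernelG}), though the latter feeds into the former; and your closing sentence conflates the compact $\cC_0\subset\bR^n$ (support of the test function $\phi$) with the compact $\cC\subset U$ (range of the base point $x$) — these are disjoint roles, and the correct seminorm in the conclusion is $\|\sigma\|_{S^m(\widehat GM),(\bX,U),\cC,N'}$ over the base compact $\cC$, not over $\cC_0$.
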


Finally,
the next corollary identifies the neighbourhood of $0$ as the singular part of the kernel of a symbol, in terms of smoothing symbols. 
 We obtain it here as a consequence of the kernel estimates, although we can also deduce it from Proposition \ref{prop_smoothingM} (1).

\begin{corollary}\label{cor:ker_smoothing}
Let $\sigma\in S^m(\Gh M)$ for some $m\in \bR$. 
\begin{enumerate}
	\item If its convolution kernel $\kappa_{\sigma}^\bX$ in an adapted frame $(\bX,U)$ is a smooth function on $U\times \bR^n$ then the restriction of $\sigma$ to $U$ is smoothing, i.e. $\sigma|_U \in S^{-\infty}(\Gh M|_U)$.
	\item If $\sigma$ is smoothing, i.e. $\sigma\in S^{-\infty}(\Gh M)$, then its convolution kernel in any adapted frame $(\bX,U)$ is a smooth function on $U\times \bR^n$.
	\item If $q\in C^\infty (GM)$ is such that 
	$q\equiv 0$ in a neighbourhood of $M\times \{0\}$, 
	then $\Delta_q \sigma\in S^{-\infty} (\Gh M)$. 	
\end{enumerate}
\end{corollary}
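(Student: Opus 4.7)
I will treat the three parts in the order (2), (1), (3), since Part (3) will reduce to Part (1).

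Part (2) is a direct reading of Proposition \ref{prop_smoothingM} (1). That proposition states that the fiberwise group Fourier transform is an isomorphism between $\cS(GM;|\Omega|(GM))$ and $S^{-\infty}(\widehat GM)$. Hence for $\sigma\in S^{-\infty}(\widehat GM)$, its convolution kernel density belongs to $\cS(GM;|\Omega|(GM))$, which in any adapted frame $(\bX,U)$ translates into $\kappa_\sigma^\bX\in C^\infty(U,\cS(\bR^n))$; in particular it is smooth on $U\times\bR^n$.

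For Part (1), the strategy is to exhibit all the $L^1$-type semi-norms $\|\sigma\|_{(\bX,U),\cC,\alpha,\beta,\beta',1}$ from Corollary \ref{cor2_kernelM} as finite on each compact $\cC\subset U$, and then conclude via that corollary that the $S^{m'}(\widehat GM|_U)$-seminorms of $\sigma$ are finite for every $m'\in\bR$, yielding $\sigma|_U\in S^{-\infty}(\widehat GM|_U)$. To bound each $\|\sup_{x\in\cC}|v^\alpha L_{\langle\bX\rangle}^\beta\bX^{\beta'}\kappa_{\sigma,x}^\bX|\|_{L^1(\bR^n)}$, I will split $\bR^n=\{|v|_\bX\le 1\}\cup\{|v|_\bX\ge 1\}$. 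On the far region, $L_{\langle\bX\rangle}^\beta\bX^{\beta'}\kappa_{\sigma,x}^\bX$ is the convolution kernel, up to signs, of $D_\bX^{\beta'}\widehat{\langle\bX\rangle}^\beta\sigma\in S^{m+[\beta]}(\widehat GM|_U)$ (see Corollary \ref{cor_contSmTXDelta}), so Theorem \ref{thm_kernelM} (1) gives faster-than-polynomial decay uniformly for $x\in\cC$; multiplication by $v^\alpha$ preserves integrability. On the bounded region, I will use the hypothesis that $\kappa_\sigma^\bX$ is smooth jointly in $(x,v)$ on $U\times\bR^n$: since the left-invariant operators $L_{\langle X_i\rangle_x}$ are, in exponential coordinates, differential operators with polynomial coefficients depending smoothly on $x$ (via the smooth group law $*_x^\bX$ of Lemma \ref{lem_BCHGxM}), and since $\bX^{\beta'}$ acts smoothly in the $x$-direction, the function $v^\alpha L_{\langle\bX\rangle}^\beta\bX^{\beta'}\kappa_{\sigma,x}^\bX(v)$ is continuous on $\cC\times\{|v|_\bX\le 1\}$, hence uniformly bounded there, and therefore integrable.

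Part (3) reduces to Part (1) as follows. In an adapted frame $(\bX,U)$, the convolution kernel of $\Delta_q\sigma$ in the $\bX$-coordinates is $v\mapsto q(\Exp^\bX_x v)\,\kappa_{\sigma,x}^\bX(v)$. By Theorem \ref{thm_kernelM} the distribution $\kappa_{\sigma,x}^\bX$ is smooth on $\bR^n\setminus\{0\}$ and, as a field over $U$, smooth in $x$ away from $v=0$; the hypothesis that $q$ vanishes on a neighbourhood of $M\times\{0\}$ in $GM$ means that $v\mapsto q(\Exp^\bX_x v)$ vanishes on a neighbourhood of $v=0$ depending smoothly on $x$, hence the product is smooth on all of $U\times\bR^n$. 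Part (1) then gives $(\Delta_q\sigma)|_U\in S^{-\infty}(\widehat GM|_U)$, and covering $M$ with adapted frames yields $\Delta_q\sigma\in S^{-\infty}(\widehat GM)$.

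The main obstacle is the near-origin estimate in Part (1): combining the joint smoothness hypothesis with the fact that left-invariant derivatives on $G_xM$ depend on $x$ through the group law must be handled carefully, in order to obtain continuity in $(x,v)$ of $L_{\langle\bX\rangle}^\beta\bX^{\beta'}\kappa_{\sigma,x}^\bX$ on the compact region. The far-field estimate is by contrast a direct application of the kernel bounds of Theorem \ref{thm_kernelM}.
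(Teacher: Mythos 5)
Your proposal is correct and for Parts (1) and (3) follows essentially the same route as the paper's own (very terse) proof, namely reducing Part (1) to the $L^1$-type seminorm characterisation of Corollary \ref{cor2_kernelM} and reducing Part (3) to Part (1) via the observation that the kernel of $\Delta_q\sigma$ is $q(\Exp^\bX_x\cdot)\kappa_{\sigma,x}^\bX$, which is smooth since $q$ kills the origin and $\kappa_\sigma^\bX$ is smooth away from it. Your near/far splitting in Part (1) is a correct expansion of what the paper leaves implicit; the only thing worth being explicit about is that the chain from the $L^1$-seminorms to the $S^{m'}$-seminorms in Corollary \ref{cor2_kernelM} applies as soon as the right-hand sides are finite, so there is no circularity in establishing membership in $S^{m'}(\widehat GM|_U)$ for arbitrarily negative $m'$. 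For Part (2) you invoke Proposition \ref{prop_smoothingM}(1), whereas the paper's proof deliberately uses the kernel estimates of Theorem \ref{thm_kernelM} together with Corollary \ref{cor_contSmTXDelta}; the remark preceding the corollary explicitly flags both routes as valid, so your choice is a legitimate alternative and arguably more direct, at the cost of relying on the heavier structural result about $\cF_{G}:\cS(GM;|\Omega|(GM))\to S^{-\infty}(\widehat GM)$ being an isomorphism.
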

\begin{proof}
Part (1) follows from Corollary \ref{cor2_kernelM}.
Part (3) follows from Part (1). 
	Part (2) follows from the kernel estimates in Theorem~\ref{thm_kernelM} together with Corollary \ref{cor_contSmTXDelta}.
\end{proof}

\subsection{Kernel estimates and higher order terms}\label{subsec:kernel_lemma}

We first analyse convolution kernels of the form $(x,v)\mapsto \kappa_{x}^\bX (v *_x^\bX r(x,v))$ with $r$ of higher order. 
We introduce the notation:
\begin{definition}
\label{def_min_index}
    For $m\in \bN_0$, we define 
    $$
    |m|_{min}:=\min\{|\alpha|: \alpha\in \bN^n, \, [\alpha]=m\}.
    $$
    For  $\alpha\in \bN_0^n$ with $[\alpha]=m$, we write  $| \alpha|_{min} :=| m|_{min}$.
\end{definition}

 \begin{ex}
 \label{ex_min_index}
     \begin{enumerate}
         \item In the case of a stratification, 
which implies $\{\upsilon_1,\ldots,\upsilon_n\}=\{1,\ldots,s\}$ with $s$ the gradation step, we compute readily:
   $|1|_{min}=1$, and more generally  
   $$
   |j|_{min} = 1\quad\mbox{for}\ j\leq s, \qquad
   |j|_{min} = 1+{\rm Int}[j/(s+1)]
   \quad\mbox{for}\ j\in \bN,
   $$
above, ${\rm Int}[x]$
denotes the largest integer less or equal to $x>0$.
\item Let us consider $\bR^3$ with the gradation with weights $(2, 3, 5)$.
We compute:
 $$
 |1|_{min}=0, \quad |2|_{min} = |3|_{min}=|5|_{min}=1 \quad\mbox{but}\quad |4|_{min} = 2.
 $$ 
Hence, in the graded but not stratified case, the map $M\mapsto |M|_{min}$ may not be increasing.
\end{enumerate}
 \end{ex}

 Definition \ref{def_min_index} is motivated by the following observations:
\begin{lemma}
\label{lem_min_index}
Let $q$ be a homogeneous polynomial on $\bR^n$ that is $m$-homogeneous for $\delta_r$.
\begin{enumerate}
    \item Then for any fixed $v\in \bR^n$, we have for the isotropic dilations:
    $$
    q(\eps v)  = O(\eps^{|m|_{min}}).
    $$
    \item If  $r$ is a smooth higher order map on $\bR^n$, then $p=q\circ r$ vanishes at homogeneous order $m + |m|_{min}-1$ (in the sense of Definition \ref{def_vanishingHOMorder}).
\end{enumerate}
\end{lemma}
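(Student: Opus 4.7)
The plan is to treat the two parts in succession, with Part (1) feeding directly into Part (2).

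For Part (1), I would start by expanding $q$ in the $\delta_r$-adapted monomial basis. Since $q$ is $m$-homogeneous for the anisotropic dilations, it is a linear combination of monomials $v^\alpha$ with $[\alpha]=m$, say $q(v)=\sum_{[\alpha]=m} c_\alpha v^\alpha$. For the \emph{isotropic} rescaling we have $(\eps v)^\alpha = \eps^{|\alpha|} v^\alpha$, so
\begin{equation*}
q(\eps v) = \sum_{[\alpha]=m} c_\alpha \eps^{|\alpha|} v^\alpha.
\end{equation*}
By the very definition of $|m|_{min}$, every exponent of $\eps$ appearing here is at least $|m|_{min}$, which gives $q(\eps v) = O(\eps^{|m|_{min}})$ as $\eps\to 0$ for any fixed $v$.

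For Part (2), the key is to combine Part (1) with the characterisation of higher order maps from Proposition \ref{prop_char_higherorder} (3): since $r$ is smooth and higher order, there is a smooth map $R:\bR\times \bR^n\to \bR^n$ with $\delta_{\eps^{-1}}r(\delta_\eps v)=\eps R(\eps,v)$, equivalently $r(\delta_\eps v)=\delta_\eps(\eps R(\eps,v))$. Applying the $m$-homogeneity of $q$ for the anisotropic dilations yields
\begin{equation*}
p(\delta_\eps v) = q\bigl(r(\delta_\eps v)\bigr) = q\bigl(\delta_\eps(\eps R(\eps,v))\bigr) = \eps^m\, q\bigl(\eps R(\eps,v)\bigr).
\end{equation*}
Now Part (1), applied for each fixed $v$ (with $w=R(\eps,v)$ depending smoothly and hence boundedly on $\eps$ near $0$), shows that $q(\eps R(\eps,v))=O(\eps^{|m|_{min}})$. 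Hence $p(\delta_\eps v) = O(\eps^{m+|m|_{min}})$ for every fixed $v$.

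To conclude, I would invoke the equivalence of characterisations in Lemma \ref{lem_van_homo_char}: the condition $p(\delta_\eps v)=O(\eps^{N+1})$ for every fixed $v$ is equivalent to $p$ vanishing at homogeneous order $N$. Taking $N+1 = m+|m|_{min}$ gives exactly that $p$ vanishes at homogeneous order $m+|m|_{min}-1$. I do not expect any serious obstacle; the only subtlety to be attentive to is handling the $\eps$-dependence of $R(\eps,v)$ cleanly when applying Part (1), which is immediate since $q$ is a polynomial and $R$ is smooth, so $q(\eps R(\eps,v))$ is a smooth function of $\eps$ whose order of vanishing at $\eps=0$ is controlled monomial-by-monomial by $|m|_{min}$.
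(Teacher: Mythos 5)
Your proof is correct and follows essentially the same route as the paper: expand $q$ in the $\delta_r$-adapted monomial basis for Part (1), then use the smooth factorisation $\delta_{\eps^{-1}}r(\delta_\eps v)=\eps R(\eps,v)$ from Proposition \ref{prop_char_higherorder} together with the $m$-homogeneity of $q$ and Lemma \ref{lem_van_homo_char} for Part (2). The extra remark you add about applying Part (1) with $w=R(\eps,v)$ depending on $\eps$ is a sensible precaution, but as you note it is immediate from the monomial expansion since $R$ is smooth and hence bounded near $\eps=0$.
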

\begin{proof}
Part (1). Writing $q(v) = \sum_{[\alpha]=m} c_\alpha v^\alpha$, we have
$$
q(\eps v) = \sum_{[\alpha]=m} c_\alpha (\eps v)^\alpha = \sum_{[\alpha]=m} c_\alpha \eps^{|\alpha|} v^\alpha.
$$
This proves Part (1).
      
Part (2).  By Proposition \ref{prop_char_higherorder}, we may write $\delta_{\eps}^{-1}r(\delta_\eps v) = \eps R(v,\eps)$, with $R$ smooth. 
Hence, we have
   $$
   p(\delta_\eps v) =  q(\delta_\eps\delta_\eps^{-1} r(\delta_\eps v) )
   = \eps^m q (\delta_\eps^{-1} r(\delta_\eps v) )
   = \eps^m q(\eps R(v,\eps))
   = O(\eps^{m + |m|_{min}}),
   $$
   by Part (1). We conclude the proof of Part (2) with Lemma \ref{lem_van_homo_char}.
\end{proof}

In our analysis of the local calculus on $M$, 
we will often consider a symbol $\sigma$
modified into a symbol $\tau$ formally given by its convolution kernel  $
\kappa_{\tau,x}^\bX:v\mapsto
\kappa_{\sigma,x}^\bX (v *_x^\bX r(x,v))$
with $r$ higher order.
The following statement gives an approximation of $\tau$
by an asymptotic sum of symbols related to $\sigma$:

\begin{proposition}
\label{prop:kernel_higher_order}
    	Let $\sigma\in S^m(\Gh M)$.
	Let $\bX$ be an adapted frame on an open set $U\subset M$. 
Let $r\in C^\infty (U\times \bR^n)$ 
be  higher order. 
Let $\chi\in C^\infty (U\times \bR^n)$  be a function $U$-locally compactly supported in $\bR^n$.
We also assume that $\chi=1$ on a neighbourhood of $\{(x,0), x\in U\}$.
For every $\alpha\in \bN_0$, with Notation~\ref{notation:q_alpha} for $q_\alpha$,
we set 
$$
p_\alpha(x, v) := q_\alpha (x,  r(x,v)), 
\quad x\in U, v\in \bR^n, 
$$
The following holds:
\begin{enumerate} 
\item 
For each $\alpha\in \bN_0$, 
the symbol $\Delta_{\chi p_\alpha} \widehat {\langle\bX\rangle}^\alpha\sigma$ 
is in $S^{m-|\alpha|_{min}}(\Gh M|_U)$,
and the map 
$$
\sigma\longmapsto \Delta_{\chi p_\alpha} \widehat {\langle\bX\rangle}^\alpha\sigma, 
\qquad S^m(\Gh M)\longrightarrow S^{m-| \alpha|_{min}}(\Gh M|_U),
$$ 
is continuous. 

\item 
The symbol
$$
\tau = \sigma^{(r,\chi)}
$$
 given by its convolution kernel in the $\bX$-coordinates
\begin{equation}\label{def:symbol_*r}
\kappa_{\tau,x}^\bX (v) := \chi(x,v) \kappa_{\sigma,x}^\bX (v *_x^\bX r(x,v)), \;\; v\in\bR^n,
\end{equation}
is in $S^m(\Gh M|_U)$. Moreover, it admits the asymptotic expansion in $S^m(\Gh M)$
\begin{equation}\label{def:tauj}
\tau \sim \sum_j \tau_j, 
\qquad \tau_j:= \sum_{j=|\alpha|_{min}} \Delta_{\chi p_\alpha} \widehat {\langle\bX\rangle}^\alpha\sigma 
\ \in S^{m-j}(\Gh M|_U),
\end{equation}
and, for each  $N\in \bN_0$, the map 
$$
\sigma \longmapsto \tau - \sum_{j=0}^N \tau_j, 
\qquad S^m(\Gh M)\to S^{m-N-1}(\Gh M|_U),$$ 
is continuous.
\end{enumerate}
\end{proposition}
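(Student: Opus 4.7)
The plan is to handle Part (1) by exploiting the vanishing order of $p_\alpha$ at $v=0$, and Part (2) by applying the group-fiber Taylor expansion to $\kappa_{\sigma,x}^\bX$ and estimating the remainder.

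For Part (1), I would first compute the vanishing order of $v \mapsto p_\alpha(x,v) = q_{x,\alpha}(r(x,v))$ at $v = 0$. Since $q_{x,\alpha}$ is a homogeneous polynomial of degree $[\alpha]$ and $r(x,\cdot)$ is higher order, Lemma \ref{lem_min_index} (2) gives that $p_\alpha(x,\cdot)$ vanishes at homogeneous order $[\alpha] + |\alpha|_{min} - 1$. Since $\chi = 1$ near $0$, the same is true of $\chi p_\alpha$, so all the Taylor coefficients $c_\beta$ of $\chi p_\alpha$ satisfy $c_\beta = 0$ for $[\beta] < [\alpha] + |\alpha|_{min}$. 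Combined with $\widehat{\langle\bX\rangle}^\alpha \sigma \in S^{m+[\alpha]}(\widehat GM|_U)$ (Example \ref{ex_widehatlangleXranglealpha} and the composition properties of Section \ref{sec:op_symbols}), applying Corollary \ref{cor_GM_Deltaq} with $N = [\alpha] + |\alpha|_{min} - 1$ yields that $\Delta_{\chi p_\alpha}\widehat{\langle\bX\rangle}^\alpha \sigma$ coincides modulo $S^{m-|\alpha|_{min}}(\widehat GM|_U)$ with a vanishing finite sum, hence lies in that class; continuity follows from that of $\Delta_q$ and of $\sigma \mapsto \widehat{\langle\bX\rangle}^\alpha \sigma$ (Corollary \ref{cor_contSmTXDelta}).

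For Part (2), the key identity is the group Taylor expansion (Theorem \ref{thm_MV+TaylorG}) applied fiberwise to $\kappa_{\sigma,x}^\bX$ centered at $v$ with increment $w = r(x,v)$:
\begin{equation*}
\kappa_{\sigma,x}^\bX(v *_x^\bX r(x,v)) = \sum_{[\alpha] \leq M} q_{x,\alpha}(r(x,v))\, L_{\langle\bX\rangle_x}^\alpha \kappa_{\sigma,x}^\bX(v) + R_{M,x}(v),
\end{equation*}
valid on the region $\{v \neq 0\}$ where $\kappa_{\sigma,x}^\bX$ is smooth. Multiplying by $\chi$ and recognising each term $\chi p_\alpha L^\alpha_{\langle \bX\rangle}\kappa_{\sigma,x}^\bX$ as the $\bX$-kernel of $\Delta_{\chi p_\alpha}\widehat{\langle\bX\rangle}^\alpha\sigma$, one obtains
\begin{equation*}
\tau = \sum_{[\alpha] \leq M} \Delta_{\chi p_\alpha}\widehat{\langle\bX\rangle}^\alpha\sigma + \rho_M,
\end{equation*}
where $\rho_M$ has $\bX$-convolution kernel $\chi(x,v) R_{M,x}(v)$. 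Reorganising the finite sum by $j = |\alpha|_{min}$ and noting that each $\alpha$ with $[\alpha] \leq M$ satisfies $|\alpha|_{min} \leq M$, for $M \geq (N+1)\upsilon_n$ every $\alpha$ with $|\alpha|_{min} \leq N$ contributes to the sum $\sum_{j\leq N}\tau_j$, while the remaining terms with $N < |\alpha|_{min} \leq M$ lie in $S^{m-N-1}(\widehat GM|_U)$ by Part (1).

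The main obstacle is showing that $\rho_M \in S^{m-N-1}(\widehat GM|_U)$ for $M$ large, with continuous dependence on $\sigma$. The local estimate $|r(x,v)|_\bX \leq C|v|_\bX^{1+1/\upsilon_n}$ from Corollary \ref{cor1lem_char_higherorder} ensures that for $v$ in the support of $\chi$ and near $0$, the points $vz$ with $|z| \leq \eta^{\lceil M\rfloor+1}|r(x,v)|$ stay on the scale of $v$, away from the singularity of $\kappa_{\sigma,x}^\bX$. Combining the Taylor estimate in Theorem \ref{thm_MV+TaylorG} with the kernel estimates of Theorem \ref{thm_kernelM} applied to each $L_{\langle\bX\rangle_x}^\alpha \kappa_{\sigma,x}^\bX$ (itself the kernel of a symbol in $S^{m+[\alpha]}$) yields a pointwise bound $|\chi R_{M,x}(v)| \leq C|v|_\bX^{(M+1)/\upsilon_n - Q - m}$, matching the kernel profile of $S^{m-N-1}$ for $M$ large. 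The delicate step is to extend this pointwise control to all the seminorms defining $S^{m-N-1}(\widehat GM|_U)$: one differentiates the Taylor remainder in $x$ via Lemma \ref{lem_comp_der_groupe} and in $v$ directly, using that $x$-derivatives of $r$ remain higher order (Corollary \ref{cor_derivatives_cV} (2)) and that $v$-derivatives of the kernel are themselves kernels of symbols with known orders. By Corollary \ref{cor2_kernelM}, the $S^{m-N-1}$-membership then reduces to a finite family of weighted $L^1$-bounds on such differentiated kernels, and an inductive application of these estimates, with continuity tracked at each step, delivers the assertion. Letting $M$ grow yields the full asymptotic expansion.
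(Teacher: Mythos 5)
Part (1) of your proof is correct and essentially identical to the paper's: the vanishing of $p_\alpha$ at homogeneous order $[\alpha]+|\alpha|_{min}-1$ (Lemma~\ref{lem_min_index}) combined with Corollaries~\ref{cor_GM_Deltaq} and~\ref{cor_contSmTXDelta} gives the gain. For Part (2), the strategy — expand $\kappa_{\sigma,x}^\bX$ fiberwise by the Folland--Stein Taylor theorem with increment $r(x,v)$, identify the finite sum with $\sum_\alpha \Delta_{\chi p_\alpha}\widehat{\langle\bX\rangle}^\alpha\sigma$, and bound the remainder — is also the paper's. The pointwise bound you state for $\chi R_{M,x}$ matches the paper's estimate~\eqref{eqpfprop:kernel_higher_orderEST}.

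The gap is in your handling of the derivatives. You write that one ``differentiates the Taylor remainder in $x$ via Lemma~\ref{lem_comp_der_groupe} and in $v$ directly, using that $x$-derivatives of $r$ remain higher order and that $v$-derivatives of the kernel are themselves kernels of symbols with known orders,'' and then invoke an ``inductive application of these estimates.'' This elides the central technical difficulty. When you apply $L_{\langle\bX\rangle_x}^\beta$ to $v\mapsto\kappa_{\sigma,x}^\bX(v*_x^\bX r(x,v))$ via the chain rule, the resulting terms involve $L_{\langle X_\ell\rangle_x}\kappa_{\sigma,x}^\bX$ evaluated at $v*_x r$, which is the kernel of a symbol of order $m+\upsilon_\ell$; for $\upsilon_\ell>[\beta]$ this is \emph{more} singular than what the target class $S^{m-N-1}$ tolerates after applying $L^\beta$. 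The estimate only survives because the chain-rule coefficients $f_\ell(x,v)$ vanish at homogeneous order $\upsilon_\ell-\upsilon_j$ precisely compensating this loss — this is the content of the paper's Lemma~\ref{lem_der_kappatau} and Remark~\ref{rem:lemma6.10}, and it requires a dedicated computation showing $\lim_{\eps\to0}\eps^{-\upsilon_\ell+\upsilon_j}f_\ell(x,\delta_\eps v)=0$. Similar compensations are needed for the $x$-derivative terms. Without this, your claimed induction does not close. A second omission: to compare the differentiated remainder with the differentiated finite Taylor sum, one must verify that the Taylor polynomials of the expanded kernels commute appropriately with $L^\beta\bX^{\beta'}$ (the paper's equality~\eqref{eq_pf_bPcoincide}); you do not address this, and it does not follow automatically from the $\beta=\beta'=0$ case.
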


When the filtered manifold $M$ is equiregular subRiemannian as in Example~\ref{ex:subrie}, 
the gradation is in fact a stratification so 
 the indices $|\alpha|_{min}$ are computed in  Example \ref{ex_min_index} (1).
 However, if the gradation is not a stratification (for instance as in Example \ref{ex_min_index} (2)), there may be terms with very large $[\alpha]$ contributing to the $S^{m-1}$ term  in the asymptotic expansion~\eqref{def:tauj} in that case. 
\smallskip 

Note however that the sum defining $\tau_j$ in~\eqref{def:tauj} is finite. Indeed, for $j\in\bN$ fixed, the set $\cA_j=\{\alpha\in\bN^n;\, j=|\alpha|_{\rm min}\}$ cannot contain a sequence of multi-indices $(\alpha^{(\ell)})_{\ell\in\bN}$ such that $|\alpha^{(\ell)}|\rightarrow_{\ell\rightarrow+\infty}+\infty$. 
Indeed, if it would, then we could find $N$ such that for $\ell\geq N$, we have  $[\alpha^{(\ell)}]\geq|\alpha^{(\ell)}|>(j+1)\upsilon_n$.  Choosing then $\beta$ such that $[\beta]=[\alpha^{(\ell)}]$ and  $|\beta|=|\alpha|_{\rm min}=j$, we have simultaneously  $[\beta]\leq \upsilon_n |\beta|=\upsilon_n j$, and $[\beta]=[\alpha^{(\ell)}]>(j+1)\upsilon_n$, which is a contradiction.
\smallskip

The proof of Proposition \ref{prop:kernel_higher_order} relies on a  structural property that we now state. For various choices of symbols $\sigma$ and cut-off $\chi$, we use the  notation $\tau(\sigma,\chi)$ to denote the symbol 
determined  by its kernel in the $\bX$-coordinates given formally by~\eqref{def:symbol_*r}.

\begin{lemma}
\label{lem_der_kappatau}
	   	Let $\sigma\in S^m(\Gh M)$.
	Let $\bX$ be an adapted frame on an open set $U\subset M$. 
Let $r\in C^\infty (U\times \bR^n)$ 
be higher order. 
Let $\chi\in C^\infty (U\times \bR^n)$  be a function $U$-locally compactly supported in $\bR^n$.
We assume that $\chi(x,v)$ vanishes at order~$N_0$ in~$v$ for any~$x\in U$, and we
consider the symbol $\tau=\tau(\sigma,\chi)$ determined  by its kernel in the $\bX$-coordinates given formally by~\eqref{def:symbol_*r}.
We can construct functions 
$$
\chi_{\beta,\beta_1},
\chi'_{\beta',\beta'_1,\beta'_2}\in C^\infty(U\times \bR^n),
\quad \beta,\beta',\beta_1,\beta'_1,\beta'_2\in \bN_0^n, \quad 
  |\beta_1|\leq |\beta|,\ |\beta'_1|+|\beta'_2|\leq |\beta'|, 
$$
 vanishing at homogeneous order $\max(N_0+[\beta_1]-[\beta], -1)  $
 and 
 $\max(N_0+[\beta'_2], -1)  $  respectively,
and such that
 \begin{align*}
 L_{\langle \bX\rangle_x}^\beta 
 \kappa_{\tau(\sigma,\chi),x}
 &= \sum_{|\beta_1|\leq |\beta|}
 \kappa_{\tau_{\beta,\beta_1},x}
 \qquad \mbox{where}\quad \tau_{\beta,\beta_1}:=
\tau(\widehat {\langle \bX\rangle_x}^{\beta_1} \sigma,\chi_{\beta,\beta_1}),\\
\bX^{\beta'}_x \kappa_{\tau(\sigma,\chi),x}& = 
\sum_{|\beta'_1|+|\beta'_2|\leq |\beta'|}
\kappa_{\tau'_{\beta',\beta'_1,\beta'_2},x}
\qquad\mbox{where}\quad 
\tau'_{\beta',\beta'_1,\beta'_2}
:=
\tau(D_\bX^{\beta'_1} \widehat {\langle \bX\rangle_x}^{\beta'_2}\sigma,\chi'_{\beta',\beta'_1,\beta'_2}).
 \end{align*}
\end{lemma}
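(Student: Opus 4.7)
The plan is to prove both parts by induction on $|\beta|$ (respectively $|\beta'|$). The base case is trivial with $\chi_{0,0}=\chi'_{0,0,0}=\chi$, and since both $L_{\langle X_j\rangle_x}$ and $\bX_j$ satisfy the Leibniz rule, the induction step reduces to handling a single derivative; the two single-step cases are treated below.

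For Part (1) with $\beta=e_j$, applying Leibniz to the kernel
\[
\kappa_{\tau(\sigma,\chi),x}(v)=\chi(x,v)\,\kappa_{\sigma,x}^\bX(\phi_x(v)),\qquad \phi_x(v):=v*_x^\bX r(x,v),
\]
splits the derivative into two summands. The first, $(L_{\langle X_j\rangle_x}\chi)(\kappa_{\sigma,x}^\bX\circ\phi_x)$, yields $\chi_{e_j,0}:=L_{\langle X_j\rangle_x}\chi$, whose homogeneous vanishing order drops by $\upsilon_j$ (Lemma~\ref{lem_van_homo_char}). For the second summand, I apply the group chain rule of Lemma~\ref{lem_comp_der_groupe} with $f=\kappa_{\sigma,x}^\bX$ and $g=\phi_x$ on $G_xM$ to obtain
\[
L_{\langle X_j\rangle_x}(\kappa_{\sigma,x}^\bX\circ\phi_x)(v)=\sum_{\ell=1}^n h_\ell^j(x,v)\,(L_{\langle X_\ell\rangle_x}\kappa_{\sigma,x}^\bX)(\phi_x(v)),
\]
with coefficients $h_\ell^j(x,v):=\partial_{t=0}\bigl[(-\phi_x(v))*_x^\bX\phi_x(v*_x^\bX te_j)\bigr]_\ell$, and set $\chi_{e_j,e_\ell}:=\chi\cdot h_\ell^j$. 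Using the cancellation $(-\phi_x(v))*_x^\bX(v*_x^\bX te_j)*_x^\bX r(x,v*_x^\bX te_j)=(-r(x,v))*_x^\bX te_j*_x^\bX r(x,v*_x^\bX te_j)$ and the BCH formula (Lemma~\ref{lem_BCHGxM}), the crucial step will be to verify that $h_\ell^j(x,v)-\delta_{j,\ell}$ vanishes in $v$ at homogeneous order at least $\upsilon_\ell-\upsilon_j$ when $\upsilon_\ell\geq\upsilon_j$, and is simply smooth otherwise. This follows because $r$ is higher order, so each component $r_\ell$ vanishes at order $\upsilon_\ell$ (Proposition~\ref{prop_char_higherorder}), and because $\ad_x^\bX$ preserves the gradation (Proposition~\ref{prop_adxbX}), so every bracket $[r(x,v),e_j]_{\ad_x^\bX}$ in its $\ell$-th component only involves $r_k$ with $\upsilon_k=\upsilon_\ell-\upsilon_j$. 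With this, $\chi_{e_j,e_\ell}$ vanishes at order $\max(N_0+\upsilon_\ell-\upsilon_j,-1)$ as required.

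For Part (2) with $\beta'=e_j$, Leibniz splits $\bX_j[\chi\cdot\kappa_{\sigma,x}^\bX\circ\phi_x]$ into a term with $\chi'_{e_j,0,0}:=\bX_j\chi$, which still vanishes at order $N_0$ in $v$ since $\bX_j$ differentiates only in $x$, plus $\chi\cdot\bX_j(\kappa_{\sigma,x}^\bX\circ\phi_x)$. I expand the last factor by the ordinary chain rule: the term where $\bX_j$ hits the first argument of $\kappa_\sigma$ equals $\kappa_{D_{X_j}\sigma,x}^\bX(\phi_x(v))$ by Notation~\ref{notation_DXbeta}, contributing $\chi'_{e_j,e_j,0}=\chi$; the remaining term is $\sum_\ell \bX_j[\phi_x(v)]_\ell\cdot\partial_\ell\kappa_{\sigma,x}^\bX|_{y=\phi_x(v)}$. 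Expressing the Euclidean partial derivatives through left-invariant vector fields via
\[
\partial_\ell=L_{\langle X_\ell\rangle_x}+\sum_{k:\,\upsilon_k<\upsilon_\ell}p_{\ell,k}(y)L_{\langle X_k\rangle_x},
\]
with $p_{\ell,k}$ polynomial and homogeneous of degree $\upsilon_\ell-\upsilon_k$ (as in \cite[Section 3.1.5]{R+F_monograph} and the proof of Lemma~\ref{lem_DXYsigma}), and grouping by the resulting $\beta'_2$ yields the $\chi'_{e_j,0,e_\ell}$ and $\chi'_{e_j,0,e_k}$ contributions. The required vanishing order $\max(N_0+[\beta'_2],-1)$ follows from two estimates: (a) $\bX_j[\phi_x(v)]_\ell$ vanishes at order $\upsilon_\ell$ in $v$, since $\phi_x(v)-v$ equals $r(x,v)$ plus BCH-brackets of $v$ and $r(x,v)$, and $x$-derivatives of higher-order functions in $v$ remain higher order (Corollary~\ref{cor_derivatives_cV}); and (b) $p_{\ell,k}(\phi_x(v))$ is a homogeneous polynomial of degree $\upsilon_\ell-\upsilon_k$ evaluated at a smooth function vanishing at $v=0$, hence vanishes at order $\upsilon_\ell-\upsilon_k-1$.

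The main obstacle is the BCH-level bookkeeping establishing the vanishing orders of $h_\ell^j(x,v)-\delta_{j,\ell}$ and of $\bX_j[\phi_x(v)]_\ell$: in each BCH bracket one either picks up a factor $r_k$ (vanishing at order $\upsilon_k$) or a derivative of $r$, and the grading of $\ad_x^\bX$ precisely balances the weight loss against the targeted gain. Once the two single-step decompositions are in hand, the induction step follows by applying them to each $\chi_{\beta,\beta_1}$ (resp.\ $\chi'_{\beta',\beta'_1,\beta'_2}$) already produced and collecting terms according to the multi-indices; the vanishing orders behave additively under the shifts $\beta\mapsto\beta+e_j$, $\beta_1\mapsto\beta_1+e_\ell$ (resp.\ $\beta'\mapsto\beta'+e_j$ and either $\beta'_1\mapsto\beta'_1+e_j$ or $\beta'_2\mapsto\beta'_2+e_\ell$), so the claimed bounds $\max(N_0+[\beta_1]-[\beta],-1)$ and $\max(N_0+[\beta'_2],-1)$ are preserved throughout the induction.
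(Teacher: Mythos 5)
Your proof is correct. For Part (1) you follow essentially the same path as the paper: apply Leibniz, invoke the group chain rule (Lemma~\ref{lem_comp_der_groupe}, in its $G_xM$ form) to get coefficients $h_\ell^j$, cancel $(-\phi_x(v))*_x(v*_xte_j)*_x r(\cdot)$ down to $(-r(x,v))*_xte_j*_x r(x,v*_xte_j)$, and establish the vanishing order of $h_\ell^j-\delta_{j\ell}$. The paper establishes this last point directly by a scaling argument, showing $\eps^{\upsilon_j-\upsilon_\ell}f_\ell(x,\delta_\eps v)\to\delta_{j\ell}$; your BCH-level bookkeeping is a more computational route to the same conclusion, and you should ultimately reduce it to a statement like Proposition~\ref{prop_char_higherorder}(2) to avoid re-deriving the Folland--Stein estimate by hand.

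For Part (2) your decomposition genuinely differs from the paper's. The paper expands $X_{j,x}\kappa_{\sigma,x}^\bX(v*_x r(x,v))$ into four pieces, separating the $x$-derivative hitting $\chi$, hitting $\kappa_\sigma$, hitting the group law $*_{x}$, and hitting $r(x,\cdot)$; the last two are each converted via the group chain rule into $\sum_\ell g_\ell(x,v)(L_{\langle X_\ell\rangle_x}\kappa_\sigma)(\phi_x(v))$, with auxiliary functions $s$, $\tilde s$ shown to be higher order. You instead treat $\phi_x(v)=v*_x r(x,v)$ as a single $x$-dependent map, apply the ordinary chain rule to get $\sum_\ell X_j[\phi_x(v)]_\ell\,\partial_\ell\kappa_\sigma|_{\phi_x(v)}$, and then convert $\partial_\ell$ to $L_{\langle X_k\rangle_x}$ via the triangular relation $\partial_\ell=L_{\langle X_\ell\rangle_x}+\sum_{\upsilon_k<\upsilon_\ell}p_{\ell,k}L_{\langle X_k\rangle_x}$ (exactly the relation the paper uses in Lemma~\ref{lem_DXYsigma}). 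This works: $\phi_x(v)-v$ is higher order (by Lemma~\ref{lem:exp_derivative_bis}-type reasoning), so $X_j[\phi_x(v)]_\ell$ vanishes at order $\upsilon_\ell$ by Corollary~\ref{cor_derivatives_cV}(2) and Proposition~\ref{prop_char_higherorder}(4), and $p_{\ell,k}(\phi_x(v))$ supplies the extra $\upsilon_\ell-\upsilon_k$. Your route trades the two-way split plus group chain rule for a single Euclidean chain rule plus an extra conversion step; both are comparable in length and both rely on the same homogeneity inputs. One small gap worth flagging: in the induction step, $\widehat{\langle X_\ell\rangle}\widehat{\langle\bX\rangle}^{\beta_1}$ is not of the ordered form $\widehat{\langle\bX\rangle}^{\beta_1'}$ unless you reorder using the (graded, $x$-dependent) commutation relations of $\fg_xM$; the resulting smooth $x$-coefficients must be absorbed into the new cutoffs. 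This does not change any vanishing orders in $v$, but it should be said. (The paper is equally terse about this point.)
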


\begin{remark}\label{rem:lemma6.10}
 The numerology above is explained by the following observation:  if there was no terms in $r$, that is if $r=0$, it is then straightforward that $\tau\in S^{m}(\Gh M|U)$. Moreover, the function 
$L_{\langle \bX\rangle_x}^\beta 
 \kappa_{\tau(\sigma,\chi),x}$ is the kernel associated with a symbol of order $m-N_0+[\beta]$ (by Corollary~ \ref{cor_GM_Deltaq}) and so is any $\tau_{\beta,\beta_1}$.
 Similarly,  $\bX^{\beta'}
 \kappa_{\tau(\sigma,\chi),x}$ is a kernel associated with a symbol of order $m-N_0$ and so are $\tau'_{\beta',\beta'_1,\beta'_2}$.
The result of Lemma~\ref{lem_der_kappatau} consists in proving that these properties still holds with a non zero term in $r$, when $r$ is higher order.  Indeed, the order of the vanishing of the functions $\chi_{\beta,\beta_1}$ and $\chi_{\beta',\beta'_1,\beta'_2}$ is exactly compensating the loss of regularity of the kernel when it is differentiated in $v$. 
\end{remark}

The proof of Lemma \ref{lem_der_kappatau} relies on Lemma~\ref{lem_comp_der_groupe} reformulated in the context of the groups $G_xM
$ for $x\in M$ as follows:
if $f:\bR^n\to \bR$ and $g:\bR^n\to \bR^n$ are smooth, then 
\begin{equation}
\label{eq_comp_der_M}
L_{\langle X_{j}\rangle_x} f\circ g(v) = \sum_{\ell=1}^n h_\ell (v)
 \ (L_{\langle X_\ell\rangle_x} f) (g(v)),
\end{equation}
where $h_\ell(v):=\partial_{t=0} \left[(-g(v)) *_x^\bX g (v*_x ^\bX te_j)\right]_\ell$.

\begin{proof}[Proof of Lemma \ref{lem_der_kappatau}]
	Thanks to the  Leibniz property of vector fields, 
inductively, it suffices to show the result 
for  $|\beta|=|\beta'|=1$,
 that is for $L_{\langle X_{j}\rangle_x}$ and for $X_{j,x}$.
 
 \smallskip 

Let us analyse the $L_{\langle X_{j}\rangle_x}$-derivative, which means that we choose $\beta$ such that $[\beta]=\upsilon_j$. 
We have
\begin{align*}
L_{\langle X_{j}\rangle_x}	\kappa_{\tau,x}^\bX (v) 
&= L_{\langle X_{j}\rangle_x}\chi(x,v) \, \kappa_{\sigma,x}^\bX (v *_x^\bX r(x,v))
+ \chi(x,v)
 L_{\langle X_{j}\rangle_x} \left(\kappa_{\sigma,x}^\bX (v *_x^\bX r(x,v))\right)
\end{align*}
As $L_{\langle X_{j}\rangle_x}\chi(x,v)$ vanishes at homogeneous order $\max(N_0 -\upsilon_j,-1)$, 
the first term is of the correct form.
 By~\eqref{eq_comp_der_M}, the second writes
\begin{align*}
 L_{\langle X_{j}\rangle_x} \left(\kappa_{\sigma,x}^\bX (v *_x^\bX r(x,v))\right)=
\sum_{\ell=1}^n
  f_\ell(x,v)
 \left(L_{\langle X_\ell\rangle_x} \kappa_{\sigma,x}^\bX\right) (v *_x^\bX r(x,v)),
 \end{align*}
where 
 \begin{align*}
 f_\ell(x,v)&:=\partial_{t=0} 
 \left[ 
 (-(v*_x^\bX  r(x,v))) *_x^\bX v*_x^\bX  t e_j *_x^\bX r(x,v*_x^\bX te_j)\right]_\ell.
 \end{align*}
 In the sum above, the terms for which $ \upsilon_\ell<\upsilon_j$ are already of the correct form. Indeed, the derivatives $L_{\langle X_\ell\rangle_x}$ induces loss of regularities smaller or equal to $\upsilon_j=[\beta]$. It is not the case for the other terms. However, we can conclude for them once we prove that for $ \upsilon_\ell\geq \upsilon_j$, the function $f_\ell$ vanishes at homogeneous order $\upsilon_\ell-\upsilon_j$.
 Let us do so. By homogeneity, we have:
  \begin{align*}
 \eps^{-\upsilon_\ell+ \upsilon_j} f_\ell(x,v)&=\partial_{t=0} 
 \left[ \delta_\eps^{-1}\left(
 (-(v*_x^\bX r(x,v))) *_x^\bX v*_x^\bX \eps^{\upsilon_j} t e_j *_x^\bX r(x,v*_x ^\bX\eps^{\upsilon_j} te_j)\right) \right]_\ell ,
 \end{align*}
 so 
   \begin{align}\label{eq:der57}
 \eps^{-\upsilon_\ell+ \upsilon_j} f_\ell(x,\delta_\eps v)&=\partial_{t=0} 
 \left[ 
 (-(v*_x^\bX s_0(\eps)) *_x^\bX v*_x^\bX  t e_j *_x^\bX s_t(\eps) ) \right]_\ell ,
 \end{align}
where $s_t(\eps ):= \delta_\eps^{-1} r(x,\delta_\eps (v *_x^\bX te_j))$; here, we have fixed $(x,v)$.
As $r$ is smooth and higher order,
by Proposition \ref{prop_char_higherorder}, $(t,\eps)\mapsto s_t(\eps)$ is smooth on $\bR\times \bR$ and vanishes at $\eps=0$ for any $t$. 
The RHS in~\eqref{eq:der57} is smooth in $\eps$ and taken as $\eps=0$, it is equal to 
\[
\partial_{t=0} [(-v)*_x ^\bX v*_x ^\bX te_j ]_\ell 
=\partial_{t=0} [ te_j ]_\ell = [e_j]_\ell = \delta_{j=\ell}.
\] 
We have obtained 
$$
\forall j \neq \ell, \qquad 
\lim_{\eps\to 0 }  \eps^{-\upsilon_\ell+ \upsilon_j} f_\ell(x,\delta_\eps v)=0.
$$
Consequently, 
  $f_\ell$ vanishes at 
homogeneous order $\upsilon_j-\upsilon_\ell$ when 
 $\upsilon_j\leq \upsilon_\ell$.
We can now conclude that $L_{\langle X_{j}\rangle_x}	\kappa_{\tau,x}^\bX$ is of the correct form. 
\smallskip

We now analyse the $X_{j,x}$-derivatives. 
We have
\begin{align*}
X_{j,x} \kappa_{\tau,x}^\bX (v) 
&= X_{j,x}\chi(x,v) \, \kappa_{\sigma,x}^\bX (v *_x^\bX r(x,v))
+ \chi(x,v) 
(X_{j}\kappa_{\sigma,x}^\bX) (v *_x^\bX r(x,v))\\
&\qquad 	+ \chi(x,v) 
X_{j,x_1=x}\left(\kappa_{\sigma,x}^\bX (v *_{x_1}^\bX r(x,v))\right)
+ \chi(x,v) 
X_{j,x_1=x}\left(\kappa_{\sigma,x}^\bX (v *_{x}^\bX r(x_1,v))\right).
\end{align*}
 The first two terms are already in the correct form. 
  Let us analyse the third term. We first transform it in order to highlight higher order terms:
 \begin{align*}
X_{j,x_1=x}\left(\kappa_{\sigma,x}^\bX (v *_{x_1}^\bX r(x,v))\right)
 	&= 
 X_{j,x_1=x} \kappa_{\sigma,x}^\bX  (v *_x^\bX r(x,v)*_x^\bX s(x,x_1,v))\\
 &=\sum_{\ell=1}^n
  [X_{j,x_1=x} s(x,x_1,v)]_\ell
 (L_{\langle X_\ell\rangle_x} \kappa_{\sigma,x}^\bX) (v *_x^\bX r(x,v)),
 \end{align*}
 where 
 $$
 s(x,x_1,v) := (-(v *_{x}^\bX r(x,v)))*_x (v *_{x_1}^\bX r(x,v) ).
 $$
  We observe that 
 $$
 \delta_\eps^{-1} s(x,x_1,\delta_\eps v)
 =
 (-(v *_{x}^\bX \delta_\eps^{-1} r(x,\delta_\eps v)))*_x (v *_{x_1}^\bX \delta_\eps^{-1} r(x,\delta_\eps v) )\longrightarrow_{\eps\to0} 0,
 $$
since $\lim_{\eps\to 0} \delta_\eps^{-1} r(x,\delta_\eps v))) =0$ by Proposition \ref{prop_char_higherorder}, $r$ being higher order. Using again Proposition \ref{prop_char_higherorder} implies that $s$ is higher order in $v$.
By Corollary \ref{cor_derivatives_cV}, so is $X_{j,x_1=x} s(x,x_1,v)$.
Hence, $[X_{j,x_1=x} s(x,x_1,v)]_\ell$ vanishes at homogeneous order $\upsilon_\ell$, implying that the third term is of the correct form.  

 Let us analyse the fourth term. As before, we start by exhibiting higher order terms:
 \begin{align*}
X_{j,x_1=x}\left(\kappa_{\sigma,x}^\bX (v *_x^\bX r(x_1,v))\right)
 	&= 
 X_{j,x_1=x}\kappa_{\sigma,x}^\bX  (v *_x^\bX r(x,v)*_x^\bX (-r(x,v))*_x^\bX r(x_1,v))\\
 &=\sum_{\ell=1}^n
  \left[ X_{j,x_1=x}  \tilde s(x,x_1,v)\right] _\ell
 L_{\langle X_\ell\rangle_x} \kappa_{\sigma,x}^\bX (v *_x^\bX r(x,v)),
 \end{align*}
 where
 $$
 \tilde s(x,x_1,v):=(-r(x,v))*_x^\bX r(x_1,v).
 $$
  We observe that 
   $$
 \delta_\eps^{-1}\tilde s(x,x_1,\delta_\eps v)
 =
  (-\delta_\eps^{-1} r(x,\delta_\eps v))*_x^\bX \delta_\eps ^{-1} r(x_1,\delta_\eps v) \longrightarrow_{\eps \to 0} 0,
$$
since $\lim_{\eps\to 0} \delta_\eps^{-1} r(x,\delta_\eps v)) =0$ by Proposition \ref{prop_char_higherorder}, $r$ being higher order. Using again Proposition \ref{prop_char_higherorder} implies that $\tilde s$ is higher order in $v$.
By Corollary \ref{cor_derivatives_cV}, 
 so is $X_{j,x_1=x} \tilde s(x,x_1,v)$.
Hence, $[X_{j,x_1=x} \tilde s(x,x_1,v)]_\ell$ vanishes at homogeneous order $\upsilon_\ell$, implying that the fourth term is of the correct form.  
We conclude that $X_{j,x}	\kappa_{\tau,x}^\bX$ is of the correct form. 
\end{proof}

We can now present the proof of Proposition \ref{prop:kernel_higher_order}.

\begin{proof}[Proof of Proposition \ref{prop:kernel_higher_order}]
By Lemma \ref{lem_min_index} (2),  $v\mapsto p_\alpha (x,v)$ vanishes at order $[\alpha] + |\alpha|_{min}-1 $.
Part (1) then follows from   Corollaries \ref{cor_GM_Deltaq} and \ref{cor_contSmTXDelta}.
Let us now prove Point (2).
We may assume that on the support of $\chi$,
there exists a constant $C_0>0$ such that
$$
 \forall (x,v)\in U\times B_0, \quad 
\forall w\in \bR^n,\quad |w|_\bX \leq |r(x,v)|_\bX 
\Longrightarrow
|v*_x^\bX w|_\bX \geq C_0 |v|_\bX.
$$

We now apply  the Taylor estimates due to Folland and Stein~\cite{folland+stein_82} on the group $G_x M$
and recalled in Theorem~\ref{thm_MV+TaylorG} (2) 
(see also  Remark \ref{remthm_MV+TaylorG} for the local uniformity in $x$). Using  the notation  $\lceil N \rfloor$ and $\eta$ of Theorem~\ref{thm_MV+TaylorG},
 we have for any $r\in \bR^n$
 \begin{align}
 \label{eq_pf_kappa(vr)}
	&| \kappa_{\sigma,x}^\bX\left(v*_x^\bX r\right)
- \sum_{[\alpha]\leq N} q_\alpha (r) 
L_{\langle \bX\rangle_x}^\alpha \kappa_x^\bX (v)|
\\&\qquad\qquad \leq C \sum_{\substack{|\alpha|\leq \lceil N \rfloor +1\\ [\alpha]> N}}|r|_\bX^{[\alpha]}\sup_{|w|_\bX \leq \eta^{\lceil N \rfloor+1}|r|_\bX}\bigg| (L_{\langle\bX\rangle_x}^\alpha \kappa_x^{\bX})(v*_x^\bX w)\bigg|, \nonumber   
 \end{align}
with a constant $C=C_{N,x}$ depending locally uniformly in $x\in U$; we may assume $\eta\geq 1$. 
We set
\begin{equation}\label{notation_noyaux31}
\kappa_{N,\sigma,\chi, x}(v):=
\kappa_{N,x}(v):=
\sum_{[\alpha]\leq N} \chi (x,v) q_\alpha (x,  r(x,v))
L_{\langle\bX\rangle_x}^\alpha  \kappa_{\sigma,x}^\bX (v),
\end{equation}
and, by Part 1, the kernels $(x,v)\mapsto \chi (x,v) q_\alpha (x,  r(x,v))
L_{\langle\bX\rangle_x}^\alpha  \kappa_{\sigma,x}^\bX (v)$ are convolution kernels in the $\bX$-coordinates of symbols in $S^{m- |\alpha|_{min}}(\widehat GM|_U)$.

Let us now analyse $\kappa_{N,x}$.
We choose $N$ large enough so that $$
m+\epsilon_0 N>-Q
\qquad\mbox{with} \qquad \epsilon_0:=1/\upsilon_n.
$$ 
For $|v|_\bX\leq 1$, we have by Corollary \ref{cor1lem_char_higherorder} and the kernel estimates from  Theorem \ref{thm_kernelM}:
\begin{align*}
|\kappa_{\tau,x}(v)-\kappa_{N,x} (v) | 
&\leq C \sum_{\substack{|\alpha|\leq \lceil N \rfloor +1\\ [\alpha]> N}}|r(x,v)|_\bX ^{[\alpha]}\sup_{|w|_\bX\leq \eta^{\lceil N \rfloor+1}|r(x,v)|_\bX}| (L_{\langle\bX\rangle_x}^\alpha \kappa_x^{\bX})(v*_x^\bX w)|\\
&\lesssim  \sum_{\substack{|\alpha|\leq \lceil N \rfloor +1\\ [\alpha]> N}}|v|_\bX^{[\alpha](1+\eps_0)}\sup_{|w|_\bX\leq \eta^{\lceil N \rfloor+1}|r(x,v)|_\bX}| v*_x^\bX w|_\bX^{-Q-m -[\alpha]}  \|\sigma\|_{S^m(\widehat{G}M), (\bX, U), \cC, N'} \\
&\lesssim  |v|_\bX^{N\eps_0-m-Q}  \|\sigma\|_{S^m(\widehat{G}M), (\bX, U), \cC, N'} .
\end{align*}
More precisely, we have obtained the existence of a constant $C>0$ depending locally uniformly in $x\in U$ such that for $|v|_\bX\leq 1$, 
\begin{equation}
\label{eqpfprop:kernel_higher_orderEST}
    |\kappa_{\tau,x}(v)-\kappa_{N,x} (v) | 
\leq C
|v|_\bX^{\epsilon_0 N -Q-m} 
\max_{\substack{|\alpha|\leq \lceil N \rfloor +1\\ [\alpha]> N}}
\sup_{v'\in \bR^n\setminus \{0\}}
||v'|_\bX^{-Q-m-[\alpha]}L_{\langle\bX\rangle_x}^\alpha \kappa_x^{\bX}(v')|.
\end{equation}
As the supports of both $\kappa_{\tau,x}(v)$ and $\kappa_{N,x} (v)$
are included in the support of the function $\chi(x,v)$, the kernel estimates from  Theorem \ref{thm_kernelM} imply that there exist $C_1>0$ and $N'\in \bN_0$  such that $$\|\tau- \sum_{[\alpha]\leq N }
\Delta_{\chi p_\alpha }\widehat{\langle\bX\rangle}^\alpha \sigma \|_{(\bX, U),\cC ,0, 0, 0, \infty}\leq C_1\|\sigma\|_{S^m(\widehat G M),(\bX,U), \cC, N'}$$
for the semi-norm defined in \eqref{def:norm(X,U)}, as soon as
$N>(Q+m)/\epsilon_0$.
\smallskip

We now introduce derivatives in $x$ and $v$. By Lemma~\ref{lem_der_kappatau},
$$
	L_{\langle \bX\rangle_x}^\beta 
\bX^{\beta'}_x \kappa_{\tau(\sigma,\chi),x}(v)
 = \sum_{|\beta_1|\leq |\beta|}
\sum_{|\beta'_1|\leq |\beta'|}
\kappa_{\tau_{\beta_1,\beta'_1},x}(v),
\quad\mbox{where}\quad
\tau_{\beta_1,\beta'_1}:=
\tau(\widehat {\langle \bX\rangle_x}^{\beta_1} 
D_\bX^{\beta'_1} \sigma,\chi_{\beta_1,\beta'_1}),
$$
and we can perform the same analysis as above on each $\kappa_{\tau_{\beta_1,\beta'_1},x}$. Extending the notation in~\eqref{notation_noyaux31}, we obtain:
\begin{align*}
&	|\kappa_{\tau_{\beta_1,\beta'_1},x}(v)-\kappa_{N,\widehat {\langle \bX\rangle_x}^{\beta_1} 
D_\bX^{\beta'_1} \sigma,\chi_{\beta_1,\beta'_1},x} (v) | 
\\&\quad \leq C
|v|_\bX^{\epsilon_0 N -Q-m} 
\max_{\substack{|\alpha|\leq \lceil N \rfloor +1\\ [\alpha]> N}}
\sup_{v'\in \bR^n\setminus \{0\}}
||v'|_\bX^{-Q-m-[\alpha]}L_{\langle\bX\rangle_x}^\alpha L_{\langle \bX\rangle_x}^{\beta_1} 
\bX^{\beta'_1} \kappa_x^{\bX}(v')|.
\end{align*}
In particular, we have 
\begin{equation}
\label{eq_der_pfprop:kernel_higher_order}
|L_{\langle \bX\rangle_x}^\beta
\bX^{\beta'}_x\left (\kappa_\tau (v)
- 
\kappa_{N,\sigma,\chi, x}(v)\right )
| 
\lesssim |v|_\bX^{\epsilon_0 N -Q-m - [\beta]} ,
\end{equation}
with an implicit constant depending on a seminorm of $\sigma\in S^m (\Gh M)$,
thereby finishing the proof,  once we show 
\begin{equation}
\label{eq_endpfprop:kernel_higher_order}
|L_{\langle \bX\rangle_x}^\beta 
\bX^{\beta'}_x\kappa_{N,\sigma,\chi, x}(v)
-\sum_{|\beta_1|\leq |\beta|}
\sum_{|\beta'_1|\leq |\beta'|}\kappa_{N,\widehat {\langle \bX\rangle_x}^{\beta_1} 
D_\bX^{\beta'_1} \sigma,\chi_{\beta_1,\beta'_1}, x}(v)| 
\lesssim |v|_\bX^{\epsilon_0 N -Q-m - [\beta]} ,
\end{equation}
again
with an implicit constant depending on seminorms of $\sigma\in S^m (\Gh M)$. 
This will follow  from the fact that the Taylor polynomials at order $N_1\leq \epsilon_0 N-Q-m-[\beta]$ of 
$L_{\langle \bX\rangle_x}^\beta 
\bX^{\beta'}_x\kappa_{N,\sigma,\chi, x}(v)$
and $\sum_{|\beta_1|\leq |\beta|}
\sum_{|\beta'_1|\leq |\beta'|}
\kappa_{N,\widehat {\langle \bX\rangle_x}^{\beta_1} 
D_\bX^{\beta'_1} \sigma,\chi_{\beta_1,\beta'_1}, x}(v)$ coincide:
\begin{equation}
	\label{eq_pf_bPcoincide}
\bP_{N_1}^{L_{\langle \bX\rangle_x}^\beta\bX_x^{\beta'}\kappa_{N,x}}(v)
= \sum_{|\beta_1|\leq |\beta|}
\sum_{|\beta'_1|\leq |\beta'|}
\bP_{N_1}^{\kappa_{N, \widehat {\langle \bX\rangle_x}^{\beta_1} 
D_\bX^{\beta'_1} \sigma,\chi_{\beta_1,\beta'_1},x}}(v).
\end{equation}
Indeed, in this case, \eqref{eq_endpfprop:kernel_higher_order} 
follows readily from the Taylor estimates due to Folland-Stein 
at order $N_1$
for 
$L_{\langle \bX\rangle_x}^\beta 
\bX^{\beta'}_x\kappa_{N,\sigma,\chi, x}(v)$
and $\sum_{|\beta_1|\leq |\beta|}
\sum_{|\beta'_1|\leq |\beta'|}\kappa_{N,\widehat {\langle \bX\rangle_x}^{\beta_1} 
D_\bX^{\beta'_1} \sigma,\chi_{\beta_1,\beta'_1}, x}(v)$.

It remains to show \eqref{eq_pf_bPcoincide}.
We start by observing that  the analysis above implies that the (Folland-Stein) Taylor polynomial 
for $\kappa_{\tau,x}(v)$ and for 
$\kappa_{ N,x}(x)$
coincide up to order $N_1 <\epsilon_0 N-Q-m$:
$\bP_{N_1}^{\kappa_{\tau,x}}(v)
=
\bP_{N_1}^{\kappa_{N,x}}(v).$
This implies that the $(x,v)$-derivatives of these polynomials also coincide:
$$
L_{\langle \bX\rangle_x}^\beta 
\bX^{\beta'}_x 
\bP_{N_1}^{\kappa_{\tau,x}}(v)
=
L_{\langle \bX\rangle_x}^\beta 
\bX^{\beta'}_x 
\bP_{N_1}^{\kappa_{N,x}}(v) ,
$$
for any $\beta,\beta'$.
From the general properties of Taylor polynomials,
we have for $N_2\geq [\beta]$
$$
L_{\langle \bX\rangle_x}^\beta 
\bX^{\beta'}_x 
\bP_{N_2}^{\kappa_{\tau,x}}(v)
=
\bP_{N_2-[\beta]}^{L_{\langle \bX\rangle_x}^\beta\bX_x^{\beta'}\kappa_{\tau,x}}(v), 
\qquad 
L_{\langle \bX\rangle_x}^\beta 
\bX^{\beta'}_x 
\bP_{N_2}^{\kappa_{N,x}}(v)
=
\bP_{N_2-[\beta]}^{L_{\langle \bX\rangle_x}^\beta\bX_x^{\beta'}\kappa_{N,x}}(v).
$$
Hence, we have for  $N_1<\epsilon_0 N- Q-m-[\beta]$
$$
\bP_{N_1}^{L_{\langle \bX\rangle_x}^\beta\bX_x^{\beta'}\kappa_{\tau,x}}(v)
=\bP_{N_1}^{L_{\langle \bX\rangle_x}^\beta\bX_x^{\beta'}\kappa_{N,x}}(v).
$$
By Lemma \ref{lem_der_kappatau}, the LHS is equal to
$$
\sum_{|\beta_1|\leq |\beta|}
\sum_{|\beta'_1|\leq |\beta'|}
\bP_{N_1}^{\kappa_{\tau_{\beta_1,\beta'_1},x}}(v)
=\sum_{|\beta_1|\leq |\beta|}
\sum_{|\beta'_1|\leq |\beta'|}
\bP_{N_1}^{\kappa_{
\sigma_{\beta_1,\beta'_1},\chi_{\beta_1,\beta'_1},x}}(v),
$$
having applied the above result to $\sigma_{\beta_1,\beta'_1}= \widehat{\langle \bX\rangle_x}^{\beta_1} 
D_{\bX}^{\beta'_1} \sigma $ with $\chi_{\beta_1,\beta'_1}$.
This shows 
\eqref{eq_pf_bPcoincide} and concludes the proof. 
\end{proof}

\subsection{Kernel estimates and perturbed convolution} \label{subsec_kernel_convolution}
The following technical result analyses 
an integral expression given by the perturbation of  the convolution of a kernel with a function.

\begin{lemma}
\label{lem:symbol_perturbconvolution}
   Let $\bX$ be an adapted frame on an open set $U\subset M$. 
 Let  $J\in C^\infty (U\times \bR^n\times \bR^n)$ be $U$-locally compactly supported in $\bR^n\times \bR^n$.
 Let $\sigma\in S^m(\Gh M)$	and denote its integral kernel in the $\bX$ coordinates by $\kappa^\bX_\sigma$.
For any $f\in \cS(\bR^n)$, we define in the distributional sense:
\begin{equation}\label{int_eq_convol}
\kappa_{\sigma,f,J,x}^\bX (v)
  := 
\int_{\bR^n} \kappa_{\sigma ,x}^\bX (w ) \ f(v*^\bX_x (-w) ) \ J(x,v,w) dw, 
\qquad x\in U, \ v\in \bR^n.
\end{equation}
We define the  coefficients $c_{\alpha}(x,v)$  described via
the Taylor expansion about $w=0$
$$
	J(x,v,w)
\sim \sum_{\alpha} c_{\alpha}(x,v) w^{\alpha} .
$$
The following convolution makes sense in the sense of distribution as the convolution of a distribution with a Schwartz function $f$:
$$
f\star^\bX_{G_x M} (\cdot ^\alpha  \kappa_{\sigma,x}^\bX )\, (v)=
\int_{\bR^n} w^{\alpha}  \kappa_{\sigma,x}^\bX (w ) \ f(v*_x^\bX (-w)) \  dw, 
\qquad x\in U, \ v\in \bR^n.
$$
For any $p\in [1,\infty]$ and any $N\in \bN_0$ with $N\geq  m+Q/p$, 
for any compact subset $\cC\subset U$, 
there exist $C>0$ and $N_1\in \bN_0$ (independent of $f,\sigma$) such that 
the following inequality holds uniformly in $x\in \cC$:
\begin{equation}\label{ineq_conv}
\|\kappa_{\sigma,f,J,x}^\bX -
\sum_{[\alpha]\leq N} c_\alpha (x,  \ \cdot \ )
\ f \star_{G_x M}^\bX (\, \cdot\, ^\alpha   \kappa_{\sigma,x}^\bX )  \|_{L^p(\bR^n)}\leq C
\|\sigma\|_{S^m (\Gh M), (\bX, U), \cC, N_1}
 \| f\|_{L^p(\bR^n)} .
\end{equation}
\end{lemma}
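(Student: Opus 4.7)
The approach is to use the Folland--Stein anisotropic Taylor formula (Theorem~\ref{thm_MV+TaylorG}) to expand $J(x,v,\cdot)$ at $w=0$ up to order $N$, together with the kernel estimates of Theorem~\ref{thm_kernelM} and Young's convolution inequality on the unimodular osculating groups $G_xM$. First, I would select a cut-off $\chi_0\in C_c^\infty(\bR^n)$ equal to $1$ on an open neighbourhood of the compact set $\bigcup_{x\in\cC} \operatorname{pr}_w(\supp J(x,\cdot,\cdot))\subset\bR^n$, which exists because $J$ is $U$-locally compactly supported. Applying Theorem~\ref{thm_MV+TaylorG} to $w\mapsto J(x,v,w)$ at the origin in the graded group $G_xM$ produces the decomposition
\begin{equation*}
J(x,v,w)=\sum_{[\alpha]\leq N}c_\alpha(x,v)\,w^\alpha+R_N(x,v,w),
\end{equation*}
with $c_\alpha\in C^\infty(U\times\bR^n)$ having compact $v$-support uniformly in $x\in\cC$, and with a smooth remainder $R_N$ satisfying $|R_N(x,v,w)|\leq C\,|w|_\bX^{N+1}$ on $\supp J$, where $C$ is controlled by a semi-norm of $J$ over $\cC$.

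Using $J=\chi_0\cdot J$ and adding/subtracting $\chi_0(w)\sum_{[\alpha]\leq N}c_\alpha(x,v)w^\alpha$ inside the integral~\eqref{int_eq_convol}, I would split the difference in~\eqref{ineq_conv} as
\begin{equation*}
\kappa_{\sigma,f,J,x}^\bX(v)-\sum_{[\alpha]\leq N}c_\alpha(x,v)\bigl(f\star_{G_xM}^\bX(\cdot^\alpha\kappa_{\sigma,x}^\bX)\bigr)(v)=A_x(v)+B_x(v),
\end{equation*}
where
\begin{align*}
A_x(v)&:=\int \chi_0(w)\,R_N(x,v,w)\,\kappa_{\sigma,x}^\bX(w)\,f(v*_x^\bX(-w))\,dw,\\
B_x(v)&:=-\sum_{[\alpha]\leq N}c_\alpha(x,v)\int(1-\chi_0(w))\,w^\alpha\,\kappa_{\sigma,x}^\bX(w)\,f(v*_x^\bX(-w))\,dw.
\end{align*}

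Next, I would bound each piece in $L^p$ uniformly in $x\in\cC$. For $B_x$: on $\supp(1-\chi_0)$ the convolution kernel $\kappa_{\sigma,x}^\bX$ is smooth with rapid decay by Theorem~\ref{thm_kernelM}(1), so $(1-\chi_0)(\cdot)^\alpha\kappa_{\sigma,x}^\bX$ lies in $L^1(\bR^n)$ with norm controlled uniformly in $x\in\cC$ by $\|\sigma\|_{S^m(\widehat GM),(\bX,U),\cC,N_1}$ for some $N_1$. Young's inequality $L^p\star L^1\to L^p$ on the unimodular group $G_xM$, combined with the $L^\infty$-bound on the compactly supported factor $c_\alpha(x,\cdot)$, provides the claimed $L^p$-estimate on $B_x$. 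For $A_x$: combining the pointwise bound on $\chi_0R_N$ with the kernel estimate near the origin from Theorem~\ref{thm_kernelM}(2)--(4) yields a pointwise domination
\begin{equation*}
|A_x(v)|\leq C\,\mathbf{1}_{V_v}(v)\bigl(|f|\star g_x\bigr)(v),\qquad g_x(w):=|w|_\bX^{N+1}|\kappa_{\sigma,x}^\bX(w)|\,\mathbf{1}_{|w|_\bX\leq R_0},
\end{equation*}
where the hypothesis $N\geq m+Q/p$, together with the behaviour $|w|_\bX^{-(Q+m)}$ of the singularity of $\kappa_{\sigma,x}^\bX$ at the origin, ensures the required $L^1$-control of $g_x$ uniformly in $x\in\cC$ by $\|\sigma\|_{S^m(\widehat GM),(\bX,U),\cC,N_1}$. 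A further application of Young's inequality then closes the estimate.

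The main technical difficulty lies in the bookkeeping that matches the decay of the anisotropic Taylor remainder against the singularity of $\kappa_{\sigma,x}^\bX$ at the origin so as to produce $L^1$-bounds uniform in $x\in\cC$ on the effective convolution kernels; the cut-off $\chi_0$ adapted to $\supp J$ is precisely what allows a clean separation into a ``near-origin'' term $A_x$, controlled by the Taylor remainder, and an ``away-from-origin'' term $B_x$, controlled by the rapid decay of $\kappa_{\sigma,x}^\bX$ away from the origin.
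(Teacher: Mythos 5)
Your proof is correct and rests on the same core ingredients as the paper's: the anisotropic Folland--Stein Taylor expansion of $J(x,v,\cdot)$ at $w=0$ (Theorem~\ref{thm_MV+TaylorG}), the kernel estimates of Theorem~\ref{thm_kernelM}, and a $w$-cut-off separating the singular near-origin regime from the rapid-decay far-field regime. What differs is the mechanism for closing the $L^p$-bound on the main term. The paper applies H\"older's inequality in the $w$-variable to obtain a pointwise bound in $v$ of the form $|\kappa_{N,x}(v)| \lesssim \|f\|_{L^p}\,\chi_1(x,v)\,\|\kappa_{\sigma,x}^\bX\,|\cdot|_\bX^{N_1+1}\|_{L^{p'}}$, and the requirement that $\kappa_{\sigma,x}^\bX\,R_{N_1}(x,v,\cdot)\in L^{p'}$ near the origin is precisely what forces $N\geq m+Q/p$; integration over the compact $v$-support then gives the $L^p$-estimate. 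You instead dominate $A_x$ pointwise by a group convolution $|f|\star g_x$ and invoke Young's inequality $L^p\star L^1\to L^p$ on the unimodular group $G_xM$. This is equally rigorous, and your explicit $A_x/B_x$ split makes transparent a step that the paper dispatches quickly (the replacement of $\kappa_{\sigma,x}^\bX$ by its near-origin cut-off). One small imprecision worth flagging: the $L^1$-finiteness of $g_x(w)=|w|_\bX^{N+1}|\kappa_{\sigma,x}^\bX(w)|\mathbf{1}_{|w|_\bX\leq R_0}$ only requires $N+1>m$, not $N\geq m+Q/p$; your Young-based route therefore actually establishes the conclusion under the milder hypothesis $N\geq m$, and the role you ascribe to $N\geq m+Q/p$ is more than is really needed. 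That is a bonus, not an error, but the way you present it slightly misattributes where the hypothesis is used.
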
 

Recall that the notation $\star_{G_xM}$ has been introduced in~\eqref{eq_convolution}. The mention of the frame~$\bX$ is added in order to settle the choice of coordinates. 

\begin{proof} 
We fix $\chi_1,\chi_2\in C^\infty (U\times \bR^n)$ valued in $[0,1]$ and $U$-locally compactly supported such that $\chi_1(x,v)=1=\chi_2(x,w)$
on the support of $J(x;v,w)$.
We can insert $\chi_1(x;v)\chi_2(x,w)$   in the integral expression~\eqref{int_eq_convol} of $\kappa_{\sigma,f,x}^\bX (v)$.
Moreover, the kernel estimates and the properties of the convolution on groups allow us to  replace $\kappa_{\sigma,x}^\bX(w)$ with $\chi_1(x,v)\chi_2(x,w)\kappa_{\sigma,x}^\bX(w)$ in the desired inequality~\eqref{ineq_conv}.
Hence, we are now  analysing:
\begin{align*}
\kappa_{N,x}^\bX(v)
&:=\kappa_{\sigma,f,J,x}^\bX (v) -  
\sum_{[\alpha]\leq N_1} (\chi_1 c_\alpha) (x,v) \ f \star_{G_x M}^\bX
(\cdot ^\alpha  \chi_2 (x,\cdot)\kappa_{\sigma,x}^\bX )  (v)\\
&=
\int_{\bR^n} \kappa_{\sigma ,x}^\bX (w ) \ f(v*^\bX_x (-w) ) R_{N_1}(x,v,w) dw, 
\end{align*}
where
$$
R_{N_1}(x,v,w):= 
J(x,v,w) -
\sum_{[\alpha]\leq N_1} (\chi_1 c_\alpha) (x,v)
w^\alpha   \chi_2 (x,w).
$$
By the Taylor estimates due to Folland and Stein (see Theorem~\ref{thm_MV+TaylorG} and Remark \ref{remthm_MV+TaylorG}), 
we have
$$
|R_{N_1}(x,v,w)|
\lesssim \chi_1(x,v) |w|_\bX^{N_1+1},
$$
with an implicit constant depending locally uniformly on $x\in U$.
Hence, by H\"older's inequality, with $1=\frac 1p +\frac 1{p'}$, 
\begin{align*}
|\kappa_{N,x}^\bX(v)|
&\leq 
\|f(v *_x^\bX (-\ \cdot \ )\|_{L^p(\bR^n)}
\|\kappa_{\sigma ,x}^\bX  R_{N_1}(x,v,\cdot )\|_{L^{p'}(\bR^n)}\\
	&\lesssim  
\|f\|_{L^p(\bR^n)}
\chi_1(x,v) \|\kappa_{\sigma ,x}^\bX |w|_\bX^{N_1+1} \|_{L^{p'}(\bR^n)}.
\end{align*}
We can now conclude with the kernel estimates of Theorem~\ref{thm_kernelM}.
\end{proof}

\section{The local quantization}\label{sec:quantization}

This section is devoted to the local quantization of the classes of symbols $S^m(\widehat GM)$ defined in  Section~\ref{sec:symbols}. 
After its definition in Section~\ref{sec:local_quantization}, 
 we discuss
its $L^2$-boundedness for symbols of order 0 and  
 its dependence on local items  in Sections~\ref{sec:boundedness} and~\ref{sec:ind_cut-off} respectively.

\subsection{Local quantization}\label{sec:local_quantization}
 
\subsubsection{Definition}\label{sec:local_quant}
We consider a frame $\bX$ on an open set $U\subset M$ and  a $\bX$-cut-off~$\chi$
in the following sense (with the notation of Section \ref{subsec_expX}):

\begin{definition}
Let $(\mathbb X, U)$ be a local adapted frame. 
A \textit{cut-off for} $\exp^\bX$, or a {\it cut-off for the frame $(\bX,U)$}, is a smooth function $\chi\in C^\infty(U\times U)$ valued in $[0,1]$, properly supported in $\exp^\bX({U}_{\mathbb X})\subset U\times U$ for some $U_\bX$, and identically equal to $1$ on a neighbourhood of the diagonal of $U\times U$.
\end{definition}

Equivalently, 
a function $\chi\in C^\infty(U\times U)$ is a cut-off for $(\bX,U)$ when  
$$
\chi_0(x,v)= \chi(x,\exp_x v),
$$
defines a  function $\chi_0\in C^\infty(U\times \bR^n)$ that 
satisfies $\chi_0=1$ on a neighbourhood of $U\times \{0\}$
and that is  
$U$-locally compactly supported in the sense of Definition \ref{def:loccompsupp}.
Given a cut-off $\chi$ for $\exp^\bX$, 
	we will often use the shorthand:
	$$
	\chi_x (y):=\chi(x,y).
	$$
    
For a symbol $\sigma\in S^m(\widehat{G  }M)$ with associated convolution kernel density $\kappa\in \cS'(GM;|\Omega|(GM))$,
we define the operator $\Op^{\bX,  \chi}(\sigma)$  at any function $f\in C_c^\infty(U)$ via
\begin{equation}\label{eq_QX}
	\Op^{\bX,  \chi}(\sigma)f(x):=
\int_{w\in G_x M} \kappa_x (w) \ (\chi_x f)(\exp^\bX_x(- \Ln^\bX_x w )), \quad x\in U,
\end{equation}
with an integration in the density sense.
When we consider a smooth Haar system $\mu=\{\mu_x\}_{x\in U}$ on $U$, this formula becomes
$$
\Op^{\bX,  \chi}(\sigma)f(x)=
\int_{w\in G_x M} \kappa^\mu_x (w) \ (\chi_x f)(\exp^\bX_x(- \Ln^\bX_x w ))\, d\mu_x(w).
$$
In particular, for  the Haar system induced by $\bX$, 
we obtain 
$$
\Op^{\bX,  \chi}(\sigma)f(x)
=
\int_{v \in \bR^n}  \kappa_x^{\bX} ( v) \ (\chi_x f)(\exp^\bX_x - v) \ dv,
$$
in terms of  the convolution kernel $\kappa^\bX$  of $\sigma$ in the $\bX$-coordinates.

From now on, we assume that we have fixed a positive 1-density, that is, a smooth positive
measure on $M$ denoted by $dx$ whenever $x$ is the integrated variable on $M$; this means that $dx$
is a positive Borel measure such that the image of $dx$ in any local coordinates has a smooth
Radon-Nykodym derivative with respect to the Lebesgue measure.

\smallskip 

Performing the change of variable $y=\exp^\bX_x(-v)$,
or more accurately, considering the pullback of $dv$ via $\exp^\bX_x - v$, 
the quantization formula is also given by
\begin{equation}\label{eq_int_kernel}
	\Op^{\bX,  \chi}(\sigma)f(x)
=
\int_{M}  \kappa_x^{\bX} (- \ln^\bX_x y) \ (\chi_x f)(y)\ \jac_{y} (\ln^\bX_x) \ dy;
\end{equation}
here
$\jac_{y} (\ln^\bX_x) $
is the (smooth) Radon-Nykodym derivative $\frac{d \ln^\bX_x}{dy} $  of $(\exp^\bX_x - v)^*dv$ against $dy$.
The latter formula shows that   the integral kernel of $\Op^{\bX,  \chi}(\sigma)$ is formally given by the map 
\[
(x,y)\longmapsto \kappa_x^{\bX} (- \ln^\bX_x y) \ \chi_x (y)\ \jac_{y} (\ln^\bX_x).
\]

\subsubsection{Action on smooth compactly supported functions}
\begin{lemma}
\label{lem_actonDistrib_locQ_1}
Let $(\bX,U)$ be an adapted local frame  and $\chi$ a cut-off function  for $(\bX,U)$. Let $m\in\bR$ and 
    $\sigma\in S^{m}(\Gh M|_U)$, then 
  $\Op^{\bX, \chi} (\sigma) $  acts continuously on $C_c^\infty(U)$.
  Moreover, $\sigma \mapsto \Op^{\bX, \chi} (\sigma)$ is a continuous morphism of  topological vector spaces 
 $S^{m}(\Gh M|_U)\to \sL(C_c^\infty(U)) $. 
\end{lemma}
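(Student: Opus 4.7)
The plan is to identify $\Op^{\bX,\chi}(\sigma)f(x)$ as a distributional pairing in $v\in\bR^n$ for each fixed $x\in U$, then exploit the finite order of $\kappa_x^\bX$ at the origin together with the smooth dependence of $\sigma$ on $x$. Writing
\[
\Op^{\bX,\chi}(\sigma)f(x)=\langle \kappa^\bX_x,\phi_{x,f}\rangle,\qquad \phi_{x,f}(v):=\chi(x,\exp^\bX_x(-v))\,f(\exp^\bX_x(-v)),
\]
two elementary facts should be observed. First, the proper support of $\chi$ and the compact support of $f$ yield a compact $K_0:=\pi_1(\supp\chi\cap(U\times\supp f))\subset U$ outside of which $\phi_{x,f}\equiv 0$; hence $\supp\Op^{\bX,\chi}(\sigma)f\subset K_0$. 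Second, for every compact $K\subset U$ there is a compact $V\subset\bR^n$ such that $\supp_v\phi_{x,f}\subset V$ for all $x\in K$, and $(x,v)\mapsto\phi_{x,f}(v)$ belongs to $C^\infty(K\times V)$ with seminorms controlled by those of $f$ and by the fixed data $\chi$, $\exp^\bX$.

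The pointwise estimate will then follow from Corollary~\ref{cor_OrderDistrib}: $\kappa_x^\bX$ is Schwartz away from $0$ and of order at most $p_\sigma$ at $0$, so there exist $C,N'$ (depending on $m$, $\bX$, $K$, $V$, but not on $\sigma$ or $f$) with
\[
|\Op^{\bX,\chi}(\sigma)f(x)|\le C\,\|\sigma\|_{S^m(\widehat GM),(\bX,U),K,N'}\max_{|\alpha|\le p_\sigma}\|\partial_v^\alpha\phi_{x,f}\|_{L^\infty(V)},\qquad x\in K,
\]
and the right-hand side is bounded by a Fr\'echet seminorm of $f$ in $C_c^\infty(U)$ through the chain rule.

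The central step is then smoothness in $x$ together with derivative bounds. Since $\sigma\in S^m(\widehat GM|_U)$, the map $x\mapsto\kappa_x^\bX$ is smooth in $\cS'(\bR^n)$, while $(x,v)\mapsto\phi_{x,f}(v)$ is smooth with compact $v$-support uniform on $K$; these are exactly the hypotheses permitting differentiation under the distributional pairing. Applying $\bX^\beta$ in $x$ and the Leibniz rule splits $\bX^\beta\Op^{\bX,\chi}(\sigma)f(x)$ into a finite sum of terms in which part of the derivatives falls on $\kappa_x^\bX$---producing the convolution kernel of $D_\bX^{\beta_1}\sigma$, which by the very Definition~\ref{def:Sm} of $S^m(\widehat GM|_U)$ lies again in $S^m$ with seminorms continuous in $\sigma$---and the remaining derivatives fall on $\phi_{x,f}$, producing smooth test functions supported in $V$ and bounded by a seminorm of $f$. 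Re-applying Corollary~\ref{cor_OrderDistrib} to each term gives an estimate of the form
\[
\|\Op^{\bX,\chi}(\sigma)f\|_{C^N(K)}\le C\,\|\sigma\|_{S^m(\widehat GM),(\bX,U),K',N''}\,\|f\|_{C^{N''}(K'')}
\]
for some compact $K'\supset K$, $K''\supset K_0$ and integers $N''$ depending only on $N,K,\chi,\bX,m$. This single estimate simultaneously gives that $\Op^{\bX,\chi}(\sigma)f\in C_c^\infty(U)$, the continuity of $\Op^{\bX,\chi}(\sigma)$ on $C_c^\infty(U)$, and the continuity of $\sigma\mapsto\Op^{\bX,\chi}(\sigma)$ into $\sL(C_c^\infty(U))$.

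The main obstacle is the middle step: legitimising differentiation under the distributional pairing and relating $x$-derivatives of $\kappa_x^\bX$ back to symbols of the same order. The first point is handled by the smoothness of $x\mapsto\kappa_x^\bX$ in $\cS'(\bR^n)$ together with the uniformly compact $v$-support of $\phi_{x,f}$; the second is built into the definition of $S^m(\widehat GM|_U)$, which is precisely designed so that each $D_\bX^\beta\sigma$ remains in $S^m$ with uniform seminorm control on compact subsets of $U$.
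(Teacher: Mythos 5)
Your proof is correct and follows essentially the same approach as the paper's: rewrite $\Op^{\bX,\chi}(\sigma)f(x)$ as a distributional pairing, use the compact support of $\chi$ to control the support, invoke Corollary~\ref{cor_OrderDistrib} for pointwise and seminorm estimates, and handle $x$-derivatives via smoothness of $x\mapsto\kappa_x^\bX$ and the Leibniz rule (which produces kernels of $D_\bX^{\beta_1}\sigma$, again in $S^m$). The paper's version is terser — it establishes compact support through the cut-off identity $\Op^{\bX,\chi}(\sigma)f=\theta\,\Op^{\bX,\chi}(\sigma)\theta f=\Op^{\bX,\chi}(\theta\sigma)\theta f$ rather than tracking the projection of $\supp\chi$ — but the underlying argument is the same.
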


\begin{proof}
The quantization formula may be rephrased for $f\in C_c^\infty (U)$, $x\in U$, as
\begin{equation}
	\label{eq_pfOpcontCcinfty}
\Op^{\bX, \chi} (\sigma) f (x)
= (\kappa_x^\bX , g)_{\cD'(\bR^n),C_c^\infty(\bR^n)}, 
\ \mbox{with}  \  g(v) := (\chi_x f) (\exp_x^\bX -v).
\end{equation}
Note that $g=g_{x,f}\in C_c^\infty(\bR^n)$ and that  the maps
 $x\mapsto \kappa_x^\bX$ and $x\mapsto g_{x,f}$ are smooth $U\to \cD'(\bR^n)$ 
 and $U\to C_c^\infty(\bR^n)$ respectively.
 Hence, 
 the function $x\mapsto \Op^{\bX, \chi} (\sigma) f (x)$ is smooth in $x\in U$.

Let $\cC$ be a compact subset of $U$ with non-empty interior. 
Let $\theta \in C_c^\infty (U)$ with $\theta=1$ on a neighbourhood of~$\cC$ and $\supp\, \chi (x,\cdot)$, $x\in \cC$. 
Then for any $f\in C_c^\infty(U)$ supported in $\cC$, we have
  $$
\Op^{\bX, \chi} (\sigma) f 
=
\theta \,\Op^{\bX, \chi} (\sigma) \theta f
=\Op^{\bX, \chi} (\theta \sigma)  \theta f.
$$
Hence, $\Op^{\bX, \chi} (\sigma) f \in C_c^\infty (U)$.

It follows readily from \eqref{eq_pfOpcontCcinfty} and Corollary \ref{cor_OrderDistrib} that the map
$f\mapsto \Op^{\bX, \chi} (\sigma) f$ is continuous 
$C_c^\infty (U)\to C_c^\infty (U)$. Moreover, the estimates obtained from the application of Corollary \ref{cor_OrderDistrib} also imply that the map 
$\sigma\mapsto \Op^{\bX, \chi} (\sigma)$ is continuous 
$S^{m}(\Gh M|_U)\to \sL(C_c^\infty(U)) $. 
\end{proof}

We will extend the action of the operators $\Op^{\bX, \chi} (\sigma)$, $\sigma\in S^m(\Gh M)$, to distributions in Corollary  \ref{cor_actonDistrib_loc_Q}.

\subsubsection{Differential operators as pseudodifferential operators}
With the preceding definitions, we obtain the differential calculus as the quantization of symbols given by elements of $\Gamma (\sU (\fg M))$:

\begin{proposition}
\label{ex_OpT}
Let $(\bX,U)$ be an adapted frame. 
Let $T\in \Gamma (\sU (\fg M)) $.
With the notation of Lemma \ref{lem_LTx},
writing  $T = \sum_{[\alpha]\leq m} c_\alpha \ {\langle \bX\rangle}^\alpha$ in $\Gamma (\sU (\fg M|_U)) $, 
 we have for any $f\in C^\infty(U)$
 $$
 \Op^{\bX,\chi}(\widehat T )f(x)= L_{T_x} (f \circ \exp_x )(0) =\sum_{[\alpha]\leq m} c_\alpha(x)
 L_{\langle \bX\rangle_x}^\alpha (f \circ \exp_x )(0).
 $$
In particular, 
if $T=\id$, then $\Op^{\bX,\chi}(\id ) = \id$.
 \end{proposition}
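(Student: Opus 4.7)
The plan is to exploit that, by Lemma~\ref{lem_LTx}, the convolution kernel of $\widehat T$ in the $\bX$-coordinates is the distribution $\kappa_{\widehat T,x}^{\bX}=L_{T_x}\delta_0$, supported at the origin. Substituting into the quantization formula~\eqref{eq_QX} yields
\begin{equation*}
\Op^{\bX,\chi}(\widehat T)f(x)=\bigl\langle L_{T_x}\delta_0,\;v\mapsto (\chi_x f)(\exp^{\bX}_x(-v))\bigr\rangle.
\end{equation*}
Since $\chi$ is identically $1$ on a neighbourhood of the diagonal of $U\times U$, we have $\chi_x(\exp^{\bX}_x(-v))=1$ for $v$ near $0$; as $L_{T_x}\delta_0$ is supported at $\{0\}$, the cut-off $\chi_x$ can be removed from the distributional pairing without affecting its value.

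I next recognise the remaining pairing as a group convolution on the osculating group $G_xM$, identified with $\bR^n$ via $\Exp^{\bX}_x$ (with inversion $v\mapsto -v$ and unit Jacobian). Setting $F_x(v):=(\chi_x f)(\exp^{\bX}_x v)$, the convolution convention~\eqref{eq_convolution} gives
\begin{equation*}
\Op^{\bX,\chi}(\widehat T)f(x)=\bigl((L_{T_x}\delta_0)\star_{G_xM} F_x\bigr)(0).
\end{equation*}
The identity $(L_V g)\star_{G_xM}f=L_V(g\star_{G_xM}f)$, which follows directly from the left-invariance of $L_V$ together with~\eqref{eq_convolution}, specialises to $(L_{T_x}\delta_0)\star_{G_xM}F_x=L_{T_x}(\delta_0\star_{G_xM}F_x)=L_{T_x}F_x$, so the previous expression reduces to $L_{T_x}F_x(0)$.

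To conclude, since $F_x$ coincides with $f\circ\exp^{\bX}_x$ on a neighbourhood of $v=0$ and $L_{T_x}F_x(0)$ depends only on the germ of $F_x$ at~$0$, we obtain $\Op^{\bX,\chi}(\widehat T)f(x)=L_{T_x}(f\circ\exp^{\bX}_x)(0)$, which is the first claimed equality; the second is the explicit expansion of $L_{T_x}$ recalled in Lemma~\ref{lem_LTx}. Specialising to $T=\id$ gives $L_{T_x}=\id$ and hence $\Op^{\bX,\chi}(\id)f(x)=(f\circ\exp^{\bX}_x)(0)=f(x)$. The argument is essentially algebraic; the only delicate step is the convolution identity, which may alternatively be established by direct integration by parts, using that the formal transpose of a left-invariant vector field on a nilpotent (hence unimodular) Lie group is its negative and that the sign produced by each transpose is compensated by one arising from the substitution $v\leftrightarrow -v$.
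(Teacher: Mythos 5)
Your reformulation of the quantization as a group convolution evaluated at the identity is correct, and so is your observation that the cut-off may be dropped since the distribution is supported at the origin. However, the pivotal step — the identity $(L_V g)\star_{G_xM}f=L_V(g\star_{G_xM}f)$ — is false. What follows from the left-invariance of $L_V$ (since $g\star f=\int_G g(y)\,(L_yf)\,dy$ with $L_yf(\cdot)=f(y^{-1}\cdot)$, a superposition of left translations) is
\[
L_V(g\star_{G_xM}f)=g\star_{G_xM}(L_Vf),
\]
i.e.\ differentiating the \emph{second} factor, not the first. Moving the derivative off the first factor by integration by parts produces a \emph{right}-invariant field: $(L_Vg)\star f=g\star(R_Vf)$. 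Applied to $\kappa_x=L_{T_x}\delta_0$ with $T_x=V_1\cdots V_k$ this gives $(L_{T_x}\delta_0)\star F_x=R_{V_k}\cdots R_{V_1}F_x$, which as a function is generically different from $L_{T_x}F_x$. Your conclusion is rescued only because you then evaluate at the group identity, where $R_{V_k}\cdots R_{V_1}h(0)=L_{V_1}\cdots L_{V_k}h(0)$ (both equal $\partial_{t_1}\cdots\partial_{t_k}\,h(\Exp(t_1V_1)\cdots\Exp(t_kV_k))\big|_{0}$). As written your argument asserts an equality of functions that is false and gets a correct number out of it by an unacknowledged cancellation.

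The cleaner route — and the one the paper follows — is the direct transpose computation that you mention only in passing: the pairing $\langle L_{T_x}\delta_0,g\rangle$ equals $(L_{T_x}^t g)(0)$, and then one tracks the $(-1)^k$ from the transpose against the $(-1)^k$ coming from $v\mapsto-v$ and the order-reversal that is inherent in $L_{V_1}\cdots L_{V_k}(h\circ\iota)=(-1)^k(R_{V_1}\cdots R_{V_k}h)\circ\iota$. You need to actually carry that out (or fix your convolution identity and add the left/right comparison at the identity) rather than rely on the incorrect intermediate equality.
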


 \begin{proof}[Proof of Proposition~\ref{ex_OpT}]
 The convolution kernel of $\widehat T$ in the $\bX$-coordinates is $L_{T_x} \delta_0$, so the quantization formula yields:
	\begin{align*}
	\Op^{\bX,\chi}(\widehat T )f(x)
	&=\int_{v\in \bR^n} L_{T_x}  \delta_0\ (\chi_x f) (\exp_x^\bX -v) dv	\\
	&=L_{T_x}^t (\chi_x f) (\exp_x^\bX -v)|_{v=0} = L_{T_x} (f \circ \exp_x )(0)
	\end{align*}
\end{proof}

It follows from Proposition \ref{ex_OpT}
that 
$$
\forall T\in \Gamma (\sU(\fg)), \;\;\forall  \alpha\in \bN_0,\qquad 
\Op^{\bX,\chi}(\widehat T ) \Op^{\bX,\chi}(\widehat{\langle \bX\rangle} ^\alpha  ) =
\Op^{\bX,\chi}(\widehat T \, \widehat{\langle \bX\rangle} ^\alpha) .
$$
However, as in the Euclidean setting, this will not be the case for the composition of two symbols in general.

\subsection{$L^2$ boundedness for the local quantization}
\label{sec:boundedness}
In this section, we show  the $L^2$-boundedness of 
the operators with symbols in $S^0(\widehat GM)$,  the action of the operators with symbols of orders $m\neq 0$ being discussed later in Section~\ref{sec:symb_cal}. Actually, we will consider the  local $L^2$ space defined on $M$ as 
\begin{equation}\label{def:L2loc}
L^2_{loc}(M)=\{ f\in \cD'(M) \ : \ \theta f \in L^2(M) \ \mbox{for any} \; \theta\in C_c^\infty (M)\}.
\end{equation}
We recall that $L^2_{loc}(M)$
is a topological vector space naturally equipped with a Fr\'echet structure given by the seminorms
$$
f\longmapsto \max_{j\leq N} \| \theta_j f\|_{L^2(M)},
\qquad N\in \bN,
$$
having fixed a sequence
$(\theta_j)_{j\in \bN}$ 
 in $C_c^\infty(M)$  valued in $[0,1]$ 
       such that 
       $M=\cup_{j \in \bN}\{\theta_j=1\} .$

\begin{theorem}
\label{prop_L2bddX} 
 Let $\bX$ be an adapted frame on $U$ and $\chi$ a cut-off for $\bX$. Let $\sigma\in S^0(\widehat GM)$.
 Then $\Op^{\bX,\chi}(\sigma)$ is continuous on $L^2_{loc}(U)$ and the linear map $\sigma\mapsto \Op^{\bX,\chi}(\sigma)$ is continuous $S^0(\Gh M)\to \sL(L^2_{loc}(U))$.
	\end{theorem}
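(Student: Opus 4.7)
The plan is to reduce the problem to the $L^2$-bound for bounded Fourier multipliers on each osculating group (Section~\ref{subsubsec_FKLinfty}) via a Cotlar--Stein almost-orthogonality argument with respect to a dyadic decomposition in frequency.

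First, I would reduce to a uniform bound on a fixed compact. Since $\chi$ is properly supported, for any compact $\cC \subset U$ the operator $\Op^{\bX,\chi}(\sigma)$ sends $C_c^\infty(\cC)$ into functions supported in a slightly larger compact $\cC' \subset U$. By the definition of $L^2_{loc}(U)$ in~\eqref{def:L2loc}, it suffices to show that there exist $C > 0$ and $N \in \bN_0$ such that $\|\Op^{\bX,\chi}(\sigma) f\|_{L^2(U)} \leq C\, \|\sigma\|_{S^0(\widehat GM),(\bX,U),\cC',N}\, \|f\|_{L^2(U)}$ for every $f \in C_c^\infty(\cC)$. Using the density of smoothing symbols (Proposition~\ref{prop_smoothingM}) together with Lemma~\ref{lem_actonDistrib_locQ_1}, I may restrict to $\sigma \in S^{-\infty}(\widehat GM)$ provided $C, N$ are independent of $\sigma$. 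For each $x_0 \in U$, I would then introduce the ``frozen'' operator on $\bR^n$ defined by
$$
T_{x_0} g(u) := \int_{\bR^n} \kappa_{x_0}^\bX(w)\, g\bigl(u *^\bX_{x_0} (-w)\bigr)\, dw.
$$
This is precisely the right convolution on the osculating group $G_{x_0}M \cong \bR^n$ with kernel $\kappa_{x_0}^\bX$, so Section~\ref{subsubsec_FKLinfty} gives $\|T_{x_0}\|_{\sL(L^2(\bR^n))} = \|\sigma(x_0,\cdot)\|_{L^\infty(\widehat G_{x_0}M)} \leq \|\sigma\|_{S^0(\widehat GM),(\bX,U),\cC',0}$, uniformly in $x_0 \in \cC'$.

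Next, I would apply Cotlar--Stein to a dyadic decomposition in frequency. Fix the positive Rockland element $\cR_\bX$ of~\eqref{ex_RU} and a dyadic partition of unity $\{\eta_j\}_{j \geq 0}$ of $[0,+\infty)$ as in Section~\ref{subsubsec:proof_spec_mult}, and set $\sigma_j := \sigma\, \eta_j(\widehat\cR_\bX) \in S^{-\infty}(\widehat GM)$. By Corollary~\ref{corprop:fct_de_R}, the family $\{\sigma_j\}$ is bounded in $S^0(\widehat GM)$ uniformly in $j$, and Theorem~\ref{thm_kernelM} together with scaling shows that $\kappa^\bX_{\sigma_j,x}(v)$ is essentially concentrated at the scale $|v|_\bX \sim 2^{-j/\nu}$. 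The key estimates to verify are the almost-orthogonality bounds
$$
\|\Op^{\bX,\chi}(\sigma_i)^*\Op^{\bX,\chi}(\sigma_j)\|_{\sL(L^2)} + \|\Op^{\bX,\chi}(\sigma_i)\Op^{\bX,\chi}(\sigma_j)^*\|_{\sL(L^2)} \leq C_M\, 2^{-M|i-j|}\, \|\sigma\|_{S^0,N}^2
$$
for every $M \in \bN_0$. For $|i-j|$ large, scale separation of the kernels yields the decay; for $i = j$ (or $|i-j|$ bounded), one locally freezes the symbol through a slowly-varying partition of unity on $\cC'$, replacing $\Op^{\bX,\chi}(\sigma_j)$ by a patchwork of operators $T_{x_0}$ from the previous step.

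The main obstacle is this last diagonal step, because on a filtered manifold the product law $*^\bX_{x_0}$ varies with the base point $x_0$ and does not coincide with the composition of flows of $\bX$. The precise quantitative control making the freezing rigorous is provided by Theorem~\ref{lem_Cq1BCH}, which yields $\ln^\bX_x\bigl(\exp^\bX_{\exp^\bX_x w}(v)\bigr) = w *^\bX_x v + r(x;v,w)$ with $r$ of higher order in $(v,w)$. Combined with Proposition~\ref{prop:kernel_higher_order}, which rewrites the resulting perturbed convolutions as asymptotic sums of symbols of strictly decreasing order, the freezing errors are absorbed into symbols ``smaller'' than $\sigma_j$ itself, closing the Cotlar--Stein argument. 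The uniform dependence of the constants on a fixed $S^0(\widehat GM)$-seminorm then gives the continuity of $\sigma \mapsto \Op^{\bX,\chi}(\sigma)$ from $S^0(\widehat GM)$ into $\sL(L^2_{loc}(U))$.
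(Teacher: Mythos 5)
Your proposal takes a genuinely different high-level route from the paper. You reach for Cotlar--Stein with a dyadic frequency decomposition, whereas the paper avoids Cotlar--Stein altogether and uses a Calder\'on--Vaillancourt-style trick: it inserts an auxiliary variable $x_1$, bounds the pointwise dependence of the kernel on the base point by a supremum over $x_1$ in a fixed compact, and then converts that supremum into a finite sum of $L^2$-norms of $x_1$-derivatives via the Sobolev embedding $\|g\|_{L^\infty}^2 \lesssim \sum_{|\alpha|\leq 1+n/2}\|\bX^\alpha g\|_{L^2}^2$. After swapping the $x$- and $x_1$-integrations and performing the changes of variables $x=\exp^\bX_{x_1}v$, $u'=\varphi_v(u)$, $w=(-u')*^\bX_{x_1}v$, the remaining integral has a \emph{fixed} base point $x_1$, and the error between the geometric-exponential structure and the group law at $x_1$ enters as a higher-order remainder $r_{x_1}(v,w)$ via Corollary~\ref{cor_lem_Cq1BCH}. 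At that point the estimate reduces to Proposition~\ref{prop:kernel_higher_order} (absorbing the remainder into a perturbed symbol) together with Lemma~\ref{lem:symbol_perturbconvolution}, which is the result that actually produces the $\|\tilde f\star(\cdot^\alpha\kappa)\|_{L^2}\leq \|\Delta^\alpha\sigma(x_1,\cdot)\|_{L^\infty(\widehat G_{x_1}M)}\|\tilde f\|_{L^2}$ bound by Plancherel.

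The gap in your proposal is in the diagonal (and near-diagonal) case of the almost-orthogonality estimate. You correctly identify that $\Op^{\bX,\chi}(\sigma)$ is not a group convolution --- both the kernel $\kappa^\bX_x$ and the group law $*^\bX_x$ vary with $x$, and what actually appears in the quantization formula is the geometric exponential $\exp^\bX_x$, not the group law --- but the ``slowly-varying partition of unity replacing $\Op^{\bX,\chi}(\sigma_j)$ by a patchwork of $T_{x_0}$'' is precisely the original difficulty in disguise. The bounded-$|i-j|$ estimate is equivalent to a uniform $L^2$ bound for $\Op^{\bX,\chi}(\sigma_j)$, so Cotlar--Stein postpones rather than removes the problem. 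Moreover, the claim that ``freezing errors are absorbed into symbols smaller than $\sigma_j$'' does not close the argument by itself: those lower-order symbols must themselves be bounded on $L^2$, so one would need either a termination at order $<-Q$ (where the kernel is $L^1$ and Cauchy--Schwarz suffices) or a further decoupling of the $x$-dependence. The paper achieves the latter decoupling through the Sobolev trick on the auxiliary $x_1$-variable; nothing in your sketch performs an analogous decoupling. You also omit Lemma~\ref{lem:symbol_perturbconvolution}, which is the key quantitative ingredient that turns ``perturbed convolution with fixed base point'' into an $L^2$ bound through $L^\infty(\widehat G_{x_1}M)$. If you want to keep the dyadic framework, you would still need that lemma (or an equivalent) at the diagonal, at which point you might as well dispense with Cotlar--Stein and run the paper's direct estimate.
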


 Slightly more precisely, we will show  that for any  $\chi_1\in C_c^\infty(U)$ and   $\sigma\in S^0(\widehat{G  }M)$, 
  the operator $\chi_1 \Op^{\bX, \chi}(\sigma)$ is bounded on $L^2(U)$, and furthermore that there exist $C>0$ and $N\in \bN$ (both independent of $\sigma$) such that 
$$
\forall \sigma\in S^0(\widehat{G  }M),\qquad
	\|\chi_1 \Op^{\bX, \chi}(\sigma)\|_{\sL(L^2(U))}\leq C  
	\|\sigma\|_{S^0(\widehat G M),(\bX,U), \supp \chi_1, N}\,.
	$$

The rest of this section is devoted to the proof of Theorem \ref{prop_L2bddX}.
The proof relies 
on the kernel estimates stated in Theorem~\ref{thm_kernelM} which show that the operators $\Op^{\bX, \chi}(\sigma)$ have singularities \`a la Calder\'on-Zygmund.
It  is inspired by  Stein's argument in \cite[ch.\!\!\! VI \S 2]{Ste93} (see also \cite[Section 5.4.4]{R+F_monograph}),

\begin{proof}[Proof of Theorem~\ref{prop_L2bddX}]
If $\sigma\in S^m(\Gh M)$ with $m<-Q$, then 
using the Cauchy-Schwartz inequality, we have:
\begin{align*}
\|\chi_1 \Op^{\bX, \chi}(\sigma)f\|_{L^2(U)}^2
 &\leq 
    \int_{x\in U}|\chi_1(x) |^2
    \int_{u_1 \in \bR^n}  |\kappa_{\sigma,x}^{\bX} ( u_1)|^2 du_1  \int_{u_2 \in \bR^n} |(\chi_x f)(\exp^\bX_x - u_2)|^2 \ du_2 \  dx\\
    &\lesssim  \|\sigma\|^2_{S^m(\widehat GM), (\bX, U),\cC_1, N'} \|f\|^2_{L^2(U)},
\end{align*}
by  the kernel estimates in Theorem~\ref{thm_kernelM} (1).
Hence, the linear map $\sigma\mapsto \Op^{\bX,\chi}(\sigma)$ is continuous $S^m(\Gh M)\to \sL(L^2_{loc}(U))$.
\smallskip

The proof will use several $U$-locally compactly supported functions, and we will assume that their supports are small enough so that the expression involving the geometric exponential makes sense.
For instance, we fix    a $U$-locally compactly supported function $\chi_2\in C^\infty(U\times \bR^n)$ valued in $[0,1]$ such that 
$\exp_x(-v)$ exists where $\chi_2(x,v)\neq 0$, and
$\chi_2(x,v)=1$ when $\chi(x,\exp_x (-v))\neq 0$. 

By Corollary \ref{cor_GM_Deltaq}, 
 $\Delta_{1-\chi_2}\sigma\in S^{-\infty}(\Gh M)$, so  $\Op^{\bX,\chi}(\Delta_{1-\chi_2}\sigma)$
is bounded on $L^2_{loc}(U)$ by the result for $m<-Q$ above. 
Hence, we now focus on $\sigma_0:=\Delta_{\chi_2}\sigma$.
We denote by $\kappa_{0,x}^\bX$ its convolution kernel in the $\bX$-coordinates.
Let $\chi_1\in C_c^\infty (U)$.
Up to decomposing $\chi_1$ into a finite sum of functions with smaller supports and modifying $\chi_2$, we may assume that 
$$
\chi_1(x) \chi_2(x,u)\chi_x(\exp_x^\bX -u)
= \chi_1(x) \chi_2(x,u).
$$

We then have
with $\cC_1:= \supp \, \chi_1$,
\begin{align*}
 I_0&:= \|\chi_1 \Op^{\bX, \chi}(\sigma_0)f\|_{L^2(U)}^2 
 \\
&  \leq \int_{x\in U} |\chi_1(x)|^2  \sup_{x_1\in \cC_1}
  \left|\int_{u \in \bR^n}  \kappa_{0,x_1}^{\bX} ( u) \ f(\exp^\bX_x - u) \ du\right|^2 dx\\
    &\lesssim \sum_{|\alpha|\leq 1+n/2} \int_{x\in U} |\chi_1(x)|^2  \int_{x_1\in \cC_1}
  \left|\int_{u \in \bR^n}  \bX_{x_1}^\alpha \kappa_{0,x_1}^{\bX} ( u) \  f(\exp^\bX_x - u) \ du\right|^2 dx_1 dx, 
\end{align*}
having used  the Sobolev inequality on a compact manifold of dimension $n$: 
\[
\| g\|_{L^\infty(M)}^2\leq C \sum_{|\alpha|\leq 1+n/2} \| \mathbb X^\alpha g\|_{L^2}^2.
\]
Swapping the integral over $x$ and $x_1$, we obtain 
$$
I_0
 \lesssim\sum_{|\alpha|\lesssim 1+n/2} \sup_{x_1\in \cC_1} I_{\alpha}(x_1),
$$
where
\begin{align*}
I_{\alpha}(x_1)
&:=
\int_{x\in U} |\chi_1(x)|^2  
  \left|\int_{u \in \bR^n}  \bX_{x_1}^\alpha \kappa_{0,x_1}^{\bX} ( u) \  f(\exp^\bX_x - u) \ du\right|^2 dx.
  \end{align*}
  
  The integrals $I_{\alpha}(x_1)$  are compactly supported in $x$ and $u$ and  
 we now transform them with changes of variables. The first one aims at identifying all the points of the integral in terms of the base point $x_1$ and the map $\exp_{x_1}^\bX$. In the variable $x$, we set $x=\exp^\bX_{x_1} v$, which gives
  \begin{align*}
  I_{\alpha}(x_1)
  &=\int_{v\in \bR^n} |\chi_1(\exp^\bX_{x_1} v)|^2  
  \left|\int_{u \in \bR^n}  \bX_{x_1}^\alpha \kappa_{0,x_1}^{\bX} ( u ) \  f( \exp^\bX_{\exp^\bX_{x_1} v} -u )  du\right|^2 \jac_v \exp_{x_1}^\bX dv,
\end{align*}
and in the variable $u$, we  make the substitution
$$
u':=\varphi_v(u):= \ln^\bX_{x_1}\exp^\bX_{\exp^\bX_{x_1} v} -u,
$$
and obtain:
$$
I_{\alpha}(x_1)
=\int_{\bR^n} |\chi_1(\exp^\bX_{x_1} v)|^2  
  \left|\int_{ \bR^n}  \bX_{x_1}^\alpha \kappa_{0,x_1}^{\bX} (\varphi_v^{-1} (u') ) \  f( \exp^\bX_{x_1} u' ) \ \jac_{u'} \varphi_v^{-1}  du'\right|^2 \jac_v \exp^\bX _{x_1} dv.
$$
We now introduce a convolution structure via the  change of variables $u'\mapsto w := (-u')*_{x_1}^\bX v$. We
write 
$$
\varphi_v^{-1} (u') = -\ln^\bX_{\exp^\bX_{x_1} v} \exp^\bX_{x_1} u' = -\ln^\bX_{\exp^\bX_{x_1} v} \exp^\bX_{x_1} v*_{x_1}^\bX(-w) =  w *_{x_1}^\bX r_{x_1}(v,w),
$$
with $r_{x_1}(v,w)=r_1(x_1,v,w)$ as in  Corollary \ref{cor_lem_Cq1BCH} (1). 
At the end, we are left with
\begin{align*}
I_{\alpha}(x_1)
&=\int_{\bR^n} 
  \left|\int_{ \bR^n}  \bX_{x_1}^\alpha \kappa_{0,x_1}^{\bX} (w *_{x_1}^\bX r_{x_1}(v,w) ) \  \tilde f(v*_x^\bX (-w) ) J(x_1;v,w) dw\right|^2 \jac_v \exp^\bX_{x_1} dv,
\end{align*}
where
\begin{align*}
  \widetilde f(v)&:=\widetilde f_{x_1}(v):= f( \exp^\bX_{x_1} v) \chi_3(x_1,v),\\
J(x_1;v,w)&:=\chi_4 (x;v,w )\,
\jac_{v*_{x_1}(-w)} \varphi_v^{-1} \, \jac_v \exp^\bX_{x_1},  
\end{align*}
and $\chi_3\in C^\infty(U\times \bR^n)$, $\chi_4\in C^\infty(U\times \bR^n\times \bR^n)$
are $U$-locally compactly supported functions satisfying  
$\chi_3=1$ on a neighbourhood of $\supp\, \chi_2$
and 
$\chi_4 =1$ on a neighbourhood of the support of $ (x_1,v,w)\mapsto \chi_2 (x_1,w *_{x_1}^\bX r_{x_1}(v,w) ) 
\chi_3(x_1,v*_x^\bX (-w) ).$

Using the notation of Proposition \ref{prop:kernel_higher_order}, 
we observe
$$
\bX_{x_1}^\alpha \kappa_{0,x_1}^{\bX} (w *_{x_1} r_{x_1}(v,w) ) = \kappa_{(D_\bX^\alpha \sigma)^{(\tilde r_v,\chi_2)},x_1}^\bX
$$
with $\tilde r_v(x,w) =r_x(v,w)$ at $v$ fixed.
Therefore, using the analysis in the proof of Proposition~\ref{prop:kernel_higher_order}, 
more precisely the inequality \eqref{eqpfprop:kernel_higher_orderEST}, 
we obtain $v$-uniform estimates leading to:
$$
I_{\alpha}(x_1)\leq 
\sum_{[\alpha'] \leq N}I_{\alpha,\alpha'}(x_1)
\quad +\quad C_{\sigma} R
$$
where 
\begin{align*}
I_{\alpha,\alpha'}
&:=\int_{\bR^n} 
  \left|\int_{ \bR^n} 
  \bX_{x_1}^\alpha L_{\langle\bX\rangle_x}^{\alpha'} 
  \kappa_{0,x_1}^{\bX} (w  ) \  \tilde f(v*_x^\bX (-w) ) J_{\alpha'} (x;v,w) dw\right|^2 \jac_v \exp_{x_1} dv,
\\
R&:=
   \left|\int_{ \bR^n}  |\tilde f(v*_x^\bX (-w) ) J(x;v,w)| dw\right|^2 \jac_v \exp_{x_1} dv 
\end{align*}
for $N$ with $\upsilon_n N-Q\geq 0$, with a constant $C_\sigma$ given by kernel estimates uniformly in $v$ and locally uniformly in $x_1\in U$, 
and with
$$
J_{\alpha'} (x;v,w):=q_{\alpha'}(x,r(x;v,w))J (x;v,w),
$$
having used Notation~\ref{notation:q_alpha}.
We readily check 
$$
R\lesssim \|\tilde f\|_{L^2(\bR^n)}
\lesssim \|f\|_{L^2(U)},
$$
and we have 
$$
	I_{\alpha,\alpha'}
\lesssim
\| \kappa_{D_\bX ^\alpha \widehat {\langle\bX\rangle_x}^{\alpha'}\sigma, \tilde f, J_{\alpha'}x_1}\|_{L^2(\bR^n)},
$$
with the notation of Lemma~\ref{lem:symbol_perturbconvolution}.
Applying  this lemma, we obtain:
\begin{align*}
	\| \kappa_{D_\bX ^\alpha \widehat {\langle\bX\rangle_x}^{\alpha'}\sigma, \tilde f, J_{\alpha'}x_1}\|_{L^2(\bR^n)}
	\lesssim
	\sum_{\alpha''}
	\| \tilde f  \star_{G_{x_1} M}^\bX (\, \cdot\,  ^{\alpha''} \bX ^\alpha_{x_1} L_{\langle\bX\rangle_x}^{\alpha'} \kappa_{\sigma,x_1}^\bX ) \|_{L^2(\bR^n)}
	+C'_\sigma \|\tilde f \|_{L^2(\bR^n)},
\end{align*}
with $C'_\sigma$ a constant depending on some seminorm in $\sigma\in S^0(\Gh M)$;
above the sum is over $\alpha''$ such that 
$[\alpha']< [\alpha'']\leq Q/2$ when  $\alpha'\neq0$ and  $[\alpha'']\leq Q/2$ when $\alpha'=0.$
For the terms in this sum over $\alpha''$, 
we notice (see Section~\ref{subsec:distrib} for the Fourier transform of distributions)
$$
\|\tilde f \star_{G_{x_1} M}^\bX (\, \cdot\, ^{\alpha''}   \bX ^\alpha_{x_1}L_{\langle\bX\rangle_x}^{\alpha'}\kappa_{\sigma,x_1}^\bX )  \|_{L^2(\bR^n)}
\leq
\|\Delta^{\alpha''}_\bX 
D_\bX ^\alpha \widehat {\langle\bX\rangle_x}^{\alpha'}\sigma(x_1,\cdot) \|_{L^\infty (\widehat G_{x_1}M)} \|\tilde f\|_{L^2(\bR^n)}.
$$
Gathering the estimates above yields with $N_1\in \bN_0$ large enough
$$
 I_\alpha (x_1)\lesssim \| \sigma \|_{S^0 (\widehat GM),\bX,\mathcal C_1,N_1}^2 \|f\|_{L^2(M)}^2.
 $$
This concludes the proof.
\end{proof}

\subsection{Independence of the local quantization in the cut-off and the frame }\label{sec:ind_cut-off}
  In this section, we use the kernel estimates to analyse the consequences of a change of cut-off or of frames in the local quantization process.

\subsubsection{Independence  in the cut-off}
 The local quantization  is independent in the cut-off 
 modulo smoothing symbols in the following sense:

\begin{proposition}
\label{prop_Qindepchi}
  Let $\bX$ be an  adapted frame  on an open subset $U\subset M$.
  Let $\chi_1$ and $\chi_2$ be two cut-offs for $\bX$.   
  Then for any $\sigma\in S^{m}(\widehat GM)$, $m\in \bR\cup\{-\infty\}$,
  there exists $\rho\in S^{-\infty} (\widehat GM)$ such that 
  $$
  \Op^{\bX, \chi_1}(\sigma)
  =
  \Op^{\bX, \chi_2}(\sigma)
  +\Op^{\bX, \chi_2}(\rho).
  $$
Moreover, we can construct $\rho$ such that the map $\sigma \mapsto \rho$ is continuous $S^m(\widehat G M) \to S^{-\infty}(\widehat G M)$.
\end{proposition}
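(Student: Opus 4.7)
The plan is to first identify the difference of the two quantized operators as a smoothing operator on $U$, and then recast that smoothing operator in the form $\Op^{\bX,\chi_2}(\rho)$ for some $\rho\in S^{-\infty}(\widehat G M)$. Applying the quantization formula~\eqref{eq_QX}, one computes for any $f\in C_c^\infty(U)$ and $x\in U$:
$$
(\Op^{\bX,\chi_1}(\sigma)-\Op^{\bX,\chi_2}(\sigma))f(x)=\int_{\bR^n}\kappa_{\sigma,x}^\bX(v)\,q(x,v)\,f(\exp_x^\bX(-v))\,dv,
$$
where $q(x,v):=(\chi_1-\chi_2)(x,\exp_x^\bX(-v))$ lies in $C^\infty(U\times\bR^n)$ and is $U$-locally compactly supported in $v$. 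The key observation is that since $\chi_1$ and $\chi_2$ both equal $1$ on a common neighbourhood of the diagonal of $U\times U$, the function $q$ vanishes on a neighbourhood of $U\times\{0\}$.

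Next, I would recognize $q(x,v)\kappa_{\sigma,x}^\bX(v)$ as the convolution kernel in the $\bX$-coordinates of the symbol $\Delta_q\sigma$. By Corollary~\ref{cor_GM_Deltaq}, the operation $\Delta_q$ is continuous $S^m(\widehat GM)\to S^m(\widehat GM|_U)$, and since $q$ vanishes on a neighbourhood of $U\times\{0\}$, Corollary~\ref{cor:ker_smoothing}(3) forces $\Delta_q\sigma\in S^{-\infty}(\widehat GM|_U)$, with $\sigma\mapsto\Delta_q\sigma$ continuous into $S^{-\infty}(\widehat GM|_U)$.

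The last and most delicate step is to convert this smoothing operator into the prescribed form $\Op^{\bX,\chi_2}(\rho)$. Setting $\tilde\chi_2(x,v):=\chi_2(x,\exp_x^\bX(-v))$, which equals $1$ on a neighbourhood of $U\times\{0\}$, I seek $\rho$ with $\kappa_{\rho,x}^\bX(v)\,\tilde\chi_2(x,v)=q(x,v)\,\kappa_{\sigma,x}^\bX(v)$. Formally one should put $\kappa_{\rho,x}^\bX(v):=\tilde\chi_2(x,v)^{-1}q(x,v)\kappa_{\sigma,x}^\bX(v)$ where $\tilde\chi_2>0$, extended by zero elsewhere; introducing a third cut-off $\chi_3$ for $\bX$ with $\supp\chi_3\subset\{\tilde\chi_2>0\}$ and $\chi_3=1$ on a neighbourhood of the diagonal lets one decompose $q=q\chi_3+q(1-\chi_3)$ and handle each piece so that the resulting $\kappa_{\rho,\cdot}^\bX$ is smooth and $U$-locally compactly supported in $v$, hence Schwartz. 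By Proposition~\ref{prop_smoothingM}(1) this defines $\rho\in S^{-\infty}(\widehat GM|_U)$, which extends (via zero extension outside $U$ or a partition of unity on $M$) to $\rho\in S^{-\infty}(\widehat GM)$, with continuity $\sigma\mapsto\rho$ inherited from the previous steps. The main obstacle is the construction of a smooth $\rho$ in Step~3 when $\supp(\chi_1-\chi_2)$ is not contained in $\{\tilde\chi_2>0\}$; this is precisely where the auxiliary cut-off $\chi_3$ and the partition trick are needed.
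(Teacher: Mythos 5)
Your steps 1 and 2 are fine and match the paper's viewpoint: the kernel of the difference is $q\,\kappa_\sigma^\bX$ with $q$ vanishing near the zero section, so $\Delta_q\sigma$ is smoothing. Step 3 has a genuine gap that the $\chi_3$-splitting does not close. Realizing the operator with kernel $q\kappa_\sigma^\bX$ as $\Op^{\bX,\chi_2}(\rho)$ means solving $\kappa_{\rho,x}^\bX(v)\,\tilde\chi_2(x,v)=q(x,v)\,\kappa_{\sigma,x}^\bX(v)$; this is impossible on $\{\tilde\chi_2=0,\ q\neq 0\}$, a nonempty set whenever $\supp\chi_1\not\subset\supp\chi_2$. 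The piece $q(1-\chi_3)\kappa_\sigma^\bX$ is supported exactly there, so it still cannot be absorbed into $\Op^{\bX,\chi_2}(\cdot)$; division by the cut-off is the wrong operation.

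The paper avoids division entirely with a nested cut-off sandwich (Lemma~\ref{lem_prop_Qindepchi}). For $\underline\chi,\chi,\overline\chi$ with $\supp\underline\chi\subset\{\chi=1\}$ and $\supp\chi\subset\{\overline\chi=1\}$, set $\kappa_{\rho,x}^\bX(u):=\kappa_{\sigma,x}^\bX(u)\,(\chi_x-\underline\chi_x)(\exp_x^\bX(-u))$, a Schwartz density; then directly from the quantization formula $\Op^{\bX,\chi}\sigma=\Op^{\bX,\underline\chi}\sigma+\Op^{\bX,\overline\chi}\rho$, because $\overline\chi\equiv1$ on $\supp\kappa_\rho^\bX$ — only multiplication by $1$, nothing to invert. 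The Proposition follows by applying this twice, with $\chi=\chi_1$ and $\chi=\chi_2$ for a common inner $\underline\chi$ (supported in $\{\chi_1=1\}\cap\{\chi_2=1\}$) and a common outer $\overline\chi$ ($\equiv1$ on $\supp\chi_1\cup\supp\chi_2$), and subtracting; continuity of $\sigma\mapsto\rho$ is inherited from the lemma. Note the smoothing correction then naturally carries the outer cut-off $\overline\chi$, not $\chi_2$ — insisting on $\chi_2$ would force $\supp\chi_1\subset\{\chi_2=1\}$, which is not assumed, and is precisely the obstruction your approach runs into. You should replace your division step with this sandwich argument.
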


This proposition is a consequence of the following more technical lemma: 
\begin{lemma}
\label{lem_prop_Qindepchi}
  Let $\bX$ be an  adapted frame on an open subset $U\subset M$. Let $m\in \bR$.
\begin{enumerate}
    \item Let $\chi_1$
 and $\chi_2$ be $\bX$-cut-offs such that $ \chi_1$ is supported in the interior of $\{\chi_2 =1\}$.
 For any $\sigma \in S^m(\widehat G M)$,
defining the symbol  $\tau\in S^m(\widehat G M|_U)$
via its convolution kernel in the $\bX$-coordinates:
$$
\kappa_{\tau,x}^\bX (u):=
\kappa_{\sigma,x}^\bX (u)  \chi_{1,x}(\exp_x^\bX -u),
$$
we have 
$$
\Op^{\bX,\chi_1} \sigma = \Op^{\bX,\chi_2} \tau.
$$
Moreover, the map $\sigma\mapsto \tau$ is continuous $S^m(\widehat GM) \to S^m(\widehat G M|_U)$.

\item Let $\underline \chi, \chi, \overline \chi$ be three $\bX$-cut-offs  
such that
$\underline\chi$ is supported in the interior of $\{\chi=1\}$ and 
$\chi$ is supported in the interior of $\{\overline \chi =1\}$.
 For any $\sigma \in S^m(\widehat G M)$,
defining the symbol  $\rho\in S^{-\infty}(\widehat G M|_U)$
via its convolution kernel in the $\bX$-coordinates:
$$
\kappa_{\rho,x}^\bX (u):=
\kappa_{\sigma,x}^\bX (u) ( \chi_x - \underline \chi_x )
(\exp_x^\bX -u),
$$
we have 
$$
\Op^{\bX,\chi} \sigma = \Op^{\bX,\underline \chi} \sigma
+ \Op^{\bX,\overline \chi} \rho.
$$
Moreover, the map $\sigma\mapsto \rho$ is continuous $S^m(\widehat GM) \to S^{-\infty}(\widehat G M|_U)$.
 \end{enumerate}
\end{lemma}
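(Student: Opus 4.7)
The plan is to reduce both parts of the lemma to applications of the generalised difference operator $\Delta_q$ introduced in Definition \ref{def:diff_op} and studied in Corollary \ref{cor_GM_Deltaq}, with appropriate choices of $q$. The key point is that both $\tau$ and $\rho$ are obtained from $\sigma$ by multiplying its $\bX$-convolution kernel by a smooth, $U$-locally compactly supported function built from the cut-offs, which is precisely the action of a $\Delta_q$ operator.

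For Part (1), I would first verify the operator identity by direct substitution into the quantization formula \eqref{eq_QX}. Expanding $\Op^{\bX,\chi_2}(\tau)f(x)$ gives
\[
\int_{\bR^n} \kappa_{\sigma,x}^\bX(v)\,\chi_{1,x}(\exp^\bX_x{-v})\,\chi_{2,x}(\exp^\bX_x{-v})\,f(\exp^\bX_x{-v})\,dv,
\]
and the support hypothesis $\supp\,\chi_1\subset\{\chi_2=1\}$ forces $\chi_1\chi_2=\chi_1$, so the product collapses to $\chi_{1,x}(\exp^\bX_x{-v})$ and the right-hand side is exactly $\Op^{\bX,\chi_1}(\sigma)f(x)$. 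For the continuity statement, I would set $q_1(x,v):=\chi_1(x,\exp^\bX_x{-v})$: this is smooth and $U$-locally compactly supported in the sense of Definition \ref{def:loccompsupp} (since $\chi_1$ is properly supported in $\exp^\bX(U_\bX)$), with $q_1(x,0)=1$ on a neighbourhood of $U\times\{0\}$. By construction $\tau=\Delta_{q_1}\sigma$, and Corollary \ref{cor_GM_Deltaq} then gives continuity $S^m(\widehat GM)\to S^m(\widehat GM|_U)$.

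For Part (2), the same substitution argument works: expanding $\Op^{\bX,\overline\chi}(\rho)f(x)$ produces the factor $(\chi_x-\underline\chi_x)(\exp^\bX_x{-v})\,\overline\chi_x(\exp^\bX_x{-v})$, which equals $(\chi_x-\underline\chi_x)(\exp^\bX_x{-v})$ because both $\supp\,\chi$ and $\supp\,\underline\chi$ are contained in $\{\overline\chi=1\}$. This yields exactly $\Op^{\bX,\chi}(\sigma)f - \Op^{\bX,\underline\chi}(\sigma)f$. For the smoothing property, set $q_2(x,v):=(\chi-\underline\chi)(x,\exp^\bX_x{-v})$: since $\chi=\underline\chi=1$ on a common neighbourhood of the diagonal, $q_2$ vanishes on a neighbourhood of $U\times\{0\}$. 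Hence $\rho=\Delta_{q_2}\sigma$ with $q_2$ supported away from $M\times\{0\}$, so Corollary \ref{cor:ker_smoothing} (3) guarantees $\rho\in S^{-\infty}(\widehat GM|_U)$, and Corollary \ref{cor_GM_Deltaq} gives the continuity of $\sigma\mapsto\rho$ into $S^{-\infty}(\widehat GM|_U)$ equipped with its projective-limit topology.

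There is no substantial obstacle in this argument: both steps are essentially bookkeeping with the quantization formula and the support conditions on the cut-offs. The mildly delicate point is checking that the auxiliary functions $q_i(x,v)$ built from $\chi_i(x,\exp^\bX_x{-v})$ are indeed $U$-locally compactly supported — this follows from the fact that cut-offs for $(\bX,U)$ are properly supported in $\exp^\bX(U_\bX)$, so given a compact $\cC\subset U$, the set of $v$ for which $\chi_i(x,\exp^\bX_x{-v})\neq 0$ with $x\in\cC$ is a bounded subset of $\bR^n$.
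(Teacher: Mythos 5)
Your verification of the two operator identities by direct substitution into the quantization formula is correct and complete. Your route to the continuity statements, however, differs from the paper's: the paper invokes the kernel estimates of Corollary~\ref{cor2_kernelM} together with the topological isomorphism $\cF_G:\cS(GM;|\Omega|(GM))\to S^{-\infty}(\Gh M)$ of Proposition~\ref{prop_smoothingM}~(1) (the point being that $\kappa_\rho^\bX = q_2\,\kappa_\sigma^\bX$ is a Schwartz vertical density, smooth in $x$, because $\kappa_\sigma^\bX$ is Schwartz away from $0$ and $q_2$ kills the singularity at $0$ and is compactly supported; continuity into $S^{-\infty}$ then comes from the isomorphism). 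You instead package everything through the difference operator $\Delta_q$ and Corollaries~\ref{cor_GM_Deltaq} and \ref{cor:ker_smoothing}, which ultimately rests on the same kernel estimates but keeps the argument at the symbol level. Both are valid; your version fits more naturally into the symbolic-calculus machinery already set up, while the paper's is more direct.

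One imprecision in Part~(2): the main statement of Corollary~\ref{cor_GM_Deltaq} only gives continuity $S^m\to S^m$, and Corollary~\ref{cor:ker_smoothing}~(3) gives membership in $S^{-\infty}$ but without a continuity estimate. To get continuity of $\sigma\mapsto\rho$ into $S^{-\infty}$ from $\Delta_q$ alone you should explicitly invoke the supplementary part of Corollary~\ref{cor_GM_Deltaq} — namely that $\sigma\mapsto\Delta_q\sigma-\sum_{[\alpha]\le N} c_\alpha\Delta_\bX^\alpha\sigma$ is continuous $S^m\to S^{m-(N+1)}$ — and then observe that since $q_2$ vanishes on a neighbourhood of the zero section, all Taylor coefficients $c_\alpha(x)=\partial^\alpha_v q_2(x,0)$ are zero, so $\sigma\mapsto\Delta_{q_2}\sigma$ itself is continuous $S^m\to S^{m-(N+1)}$ for every $N$, hence continuous $S^m\to S^{-\infty}$. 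With that one-line addition your argument is complete.
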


The proof of Lemma \ref{lem_prop_Qindepchi} follows from 
the kernel estimates in Corollary \ref{cor2_kernelM} and Proposition~\ref{prop_smoothingM} (1), keeping in mind that the map $x\mapsto \exp_x^\bX u$ defined in~\eqref{eq_mapxexpxv} is smooth.  
\smallskip

Then, the proof of Proposition \ref{prop_Qindepchi} follows from Lemma \ref{lem_prop_Qindepchi} together with constructing two $\bX$-cut-offs $\underline \chi,\overline \chi$ that, having been given the two  $\bX$-cut-offs $\chi_1$ and $\chi_2$  in the statement, \textcolor{blue}{are such that  
$\underline \chi$ is supported in the interior of   $\{\chi_1=1\}\cup\{\chi_2=1\}$ and
$\supp \chi_1 \cup \supp \chi_2 $ is included in the interior of $\{\overline \chi =1\}$,}

\subsubsection{Independence in the frame}
The independence of the local quantization in the frame only holds up to lower order terms. It is also the case when working on Riemanian manifolds with the standard pseudodifferential calculus via change of charts (see e.g.~\cite{Zwobook}).

\begin{proposition}[Independence in the adapted frames]
\label{prop_indepframe}
Let $\bX$ and $\bY$ be two  adapted frames on the same open subset $U\subset M$. 
Let  $\chi_1$ and $\chi_2,\chi$ be  cut-off functions for $\exp^{\bX}$ and 
$\exp^{\bY}$ respectively. 
 Then for any $\sigma\in S^{m}(\widehat GM)$, $m\in \bR\cup\{-\infty\}$,
  there exist a symbol $\rho \in S^{m-1} (\Gh M|_U)$, 
  such that
   \[
  \Op^{\bY, \chi_2}(\sigma)=
  \Op^{\bX, \chi_1}(\sigma)
  +\Op^{\bX, \chi}(\rho).
  \]
Moreover, we can construct $\rho$ such that 
  the map 
$$
\sigma\longmapsto \rho,\quad S^m(\Gh M|_U)\to S^{m-1}(\Gh M|_U),
$$
is continuous. 
Furthermore,
  we can also construct 
  a sequence of symbols $\rho_j\in S^{m-1-j}(\Gh M|_U)$, $j\in \bN_0$, such that $\rho$ admits the  asymptotic expansion $\rho\sim \sum_{j\in \bN_0} \rho_j$ and   the following maps are continuous  
$$
 \sigma \longmapsto \rho - \sum_{j=0}^{N}\rho_j, 
 \qquad
  S^m(\Gh M)\longrightarrow S^{m-1-(N+1)} (\Gh M|_U), 
  \quad N\in \bN_0.
  $$
\end{proposition}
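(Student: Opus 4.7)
The plan is to reduce the statement to an application of Proposition \ref{prop:kernel_higher_order}, with some extra bookkeeping coming from a Jacobian factor and a cut-off change.

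First, using Proposition \ref{prop_Qindepchi} twice, I would absorb the choice of cut-offs into smoothing errors and reduce to the case where $\chi_1 = \chi_2 = \chi$ (after shrinking supports as necessary so that all compositions below make sense). The smoothing corrections produced this way are admissible remainders since $S^{-\infty}(\widehat G M|_U) \subset S^{m-1-j}(\widehat G M|_U)$ for every $j$, so they can be absorbed into any $\rho_j$.

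Next, in the defining integral
\[
\Op^{\bY,\chi}(\sigma)f(x)
=\int_{\bR^n}\kappa^{\bY}_{\sigma,x}(v)\,(\chi_x f)(\exp^{\bY}_x(-v))\,dv
\]
I would perform the substitution $v=-u^{\bY,\bX}_x(-w)$, i.e.\ $\exp^{\bY}_x(-v)=\exp^{\bX}_x(-w)$. By Proposition \ref{prop_T_Taylor}, $-u^{\bY,\bX}_x(-w)=\theta_x(w*_x^{\bX}r(x,w))$ with $\theta_x=\mathrm{diag}(T_x)$ and $r$ higher order. Combining this with the kernel transformation rule of Lemma \ref{lem_XYkappa},
\[
\kappa^{\bY}_{\sigma,x}(\theta_x z)=\det(\theta_x)^{-1}\kappa^{\bX}_{\sigma,x}(z),
\]
and letting $J(x,w):=\det(\theta_x)^{-1}\,|\det D_{-w} u^{\bY,\bX}_x|\,\chi_0(x,w)$ (with $\chi_0\in C^\infty(U\times \bR^n)$ a $U$-locally compactly supported cut-off equal to $1$ near $w=0$ and accommodating the change of variables), I obtain
\[
\Op^{\bY,\chi}(\sigma)f(x)=\int_{\bR^n} J(x,w)\,\kappa^{\bX}_{\sigma,x}\!\bigl(w*_x^{\bX}r(x,w)\bigr)\,(\chi_x f)(\exp^{\bX}_x(-w))\,dw,
\]
so that $\Op^{\bY,\chi}(\sigma)=\Op^{\bX,\chi}(\tau)$ modulo smoothing, where $\tau$ is the symbol with convolution kernel in the $\bX$-coordinates given by $\kappa^{\bX}_{\tau,x}(w)=J(x,w)\,\kappa^{\bX}_{\sigma,x}(w*_x^{\bX}r(x,w))$. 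Note $J(x,0)=1$ by the Lie-structure-preserving block form of $T_x$.

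The third and central step is to view $\tau=\Delta_J\tau'$, where $\tau'\in S^m(\widehat G M|_U)$ is the symbol of Proposition \ref{prop:kernel_higher_order} with cut-off $\chi_0$ and higher-order perturbation $r$. That proposition yields $\tau'\sim\sum_j\tau'_j$ with $\tau'_j\in S^{m-j}(\widehat G M|_U)$ and, crucially, $\tau'_0=\Delta_{\chi_0}\widehat{\langle\bX\rangle}^0\sigma=\sigma$ modulo $S^{-\infty}$ (the $\alpha=0$ contribution, since $q_0\equiv 1$ and $\chi_0\equiv 1$ near $0$). Then Corollary \ref{cor_GM_Deltaq} applied with the amplitude $J$ gives $\Delta_J\tau'\sim\sum_\beta c_\beta(x)\,\Delta^\beta_\bX\tau'$ with $c_0(x)=J(x,0)=1$. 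Combining the two expansions and reorganizing by order, $\tau\in S^m(\widehat G M|_U)$ with $\tau=\sigma$ modulo $S^{m-1}$, and one reads off an asymptotic expansion $\tau-\sigma\sim\sum_{j\ge 0}\rho_j$ with $\rho_j\in S^{m-1-j}(\widehat G M|_U)$. Setting $\rho:=\tau-\sigma$ and re-expressing $\Op^{\bX,\chi}(\tau)=\Op^{\bX,\chi_1}(\sigma)+\Op^{\bX,\chi}(\rho)$ modulo smoothing (absorbable in $\rho$) produces the required identity; the continuity statements on $\sigma\mapsto \rho$ and $\sigma\mapsto \rho-\sum_{j=0}^N\rho_j$ follow from the continuity assertions in Proposition \ref{prop:kernel_higher_order}, Corollary \ref{cor_GM_Deltaq} and Proposition \ref{prop_Qindepchi}.

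The main obstacle, I expect, will be verifying precisely that the $\alpha=0$, $\beta=0$ contribution is exactly $\sigma$ modulo $S^{m-1}$—in particular that the two Jacobian factors conspire to give $J(x,0)=1$, relying on the block-upper-triangular structure of the change-of-frame matrix $T_x$ from Lemma \ref{old_lem_XY}. Once this identification is in place, everything else is a combination of already proven ingredients (the higher-order kernel analysis, the Leibniz-type expansion of $\Delta_J$, and the cut-off independence), applied in the correct order.
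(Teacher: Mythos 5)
Your proposal is correct and follows essentially the same path as the paper: a change of variables using $u^{\bY,\bX}$, the identification $-\mathrm{diag}(T_x)^{-1}u_x(-v)=v*^{\bX}_x r(x,v)$ via Proposition~\ref{prop_T_Taylor}, an application of Proposition~\ref{prop:kernel_higher_order} to handle the higher-order perturbation, and a Leibniz-type expansion (Corollary~\ref{cor_GM_Deltaq}) of the Jacobian multiplier, with the block-upper-triangular structure of $T_x$ ensuring that the leading coefficient $J(x,0)$ equals $1$. The only cosmetic difference is the organization (you write $\tau=\Delta_J\tau'$ and then $\rho=\tau-\sigma$, while the paper writes $\rho=\Delta_q\tau-\Delta_{\bar\chi^2}\sigma$), but the key lemmas and estimates invoked are the same.
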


\begin{proof}
It suffices to consider the case of a finite order $m\in \bR$. 
By Proposition~\ref{prop_Qindepchi}, we may assume $\chi=\chi_1=\chi_2$.
We adopt the notation of Lemma \ref{lem_uYX}: with the
 two  frames  $\bX$ and $\bY$ on the open subset $U\subset M$ is associated an open neighbourhood
 $U_{\bX,\bY}\subset U_{\bX}\cap U_{\bY}\subset U\times \bR^n$ 
such that for any $(x,v)\in U_{\bX,\bY}$, the point $(x,tv)$   lie in $U_{\bX,\bY}$ for $t$ in some neighbourhood of $[0,1]$, and we
consider 
 the function $u := u^{\bY,\bX}$ defined via
$$
(x,u(x,v)) = \ln^\bY (\exp^\bX (x,v)), 
\qquad 
(x,v)\in U_{\bX,\bY}.
$$
We fix a function $\bar \chi\in C^\infty(U\times \bR^n)$ $U$-locally compactly supported such that $\bar \chi(x,-v) = 1$ 
on a zero section of $U\times \bR^n$ containing the support of $(x,v)\mapsto \chi (x,\Exp_x^\bX v)$.

We set 
$$
\kappa^{\bX}_{\rho,x} (v):=
 \bar \chi^2(x,v)\left(\kappa^{\bY}_x\left (- u_x (-  v) \right) 
\jac_{- v}(u_x^{\bY, \bX})-\kappa_x^{\bX}(v)\right)
$$ 
We check readily that $\kappa^{\bX}_{\rho,x} (v)$ is well defined. 
Moreover, 
the formula between operators given in the statement holds formally after the  change of variable given by $u_x$ since 
$\kappa^{\bX}$ and $\kappa^{\bY}$ correspond to the same convolution kernel $\kappa$ viewed in the coordinates for $\bX$ and $\bY$ respectively. It remains to check that the symbol $\rho$ associated with $\kappa^{\bX}_{\rho,x} (v)$ is  well defined in $S^{m-1}(\widehat G M)$ and depends continuously on $\sigma$.

By Lemma \ref{lem_XYExp}, we have
$$
\kappa_x^{\bY} (v)=\kappa_x^\bX\left({\rm{diag}}(T_x)^{-1}v\right)\big|\det T_x\big|^{-1},
$$
and therefore,
\begin{align*}
\kappa^{\bX}_{\rho,x} (v)
&=
 \bar \chi^2(x, v) \left(\kappa^{\bX}_x\left (
- {\rm{diag}}(T_x)^{-1} u_x(-v)\right) 
\jac_{- v}(u_x )|\det T_x|^{-1} -  \kappa_x^{\bX}(v)\right).
\end{align*}
We set
\begin{align*}
  \kappa_{\tau,x}^\bX(v)
  &:=\bar \chi(x, v) \kappa^{\bX}_x\left (
- {\rm{diag}}(T_x)^{-1} u_x(-v)\right) 
\\& = \bar \chi(x, v) \kappa^{\bX}_x(v*_x^\bX r(x,v)),  
\end{align*}
by  Proposition~\ref{prop_T_Taylor}, and using its notation.
By  Proposition~\ref{prop:kernel_higher_order}, 
this is the convolution kernel in the $\bX$-coordinates of a symbol $\tau\in S^m(\Gh M|_U)$.
We can now write
$$
\rho = \Delta_q \tau - \Delta_{\bar \chi^2} \sigma, 
$$
where $q\in C^\infty(U\times \bR^n)$ is the function $U$-locally compactly supported given by:
$$
q(x,  v):=  \bar \chi(x,v)\jac_{- v}(u_x) |\det T_x|^{-1}.
$$
By Proposition~\ref{prop:kernel_higher_order} and Corollary \ref{cor_GM_Deltaq}, 
since $q(x,0)=1$ and $\bar\chi^2(x,0)=1$, 
both $\Delta_q \tau$ and $\Delta_{\bar \chi^2} \sigma$
admit asymptotic expansions whose first terms are $\sigma$.
The  properties of continuity follow from the ones in Proposition~\ref{prop:kernel_higher_order} and Corollary \ref{cor_GM_Deltaq}. 
\end{proof}

\section{Symbolic calculus 
for the local quantization}\label{sec:symb_cal}

In this section, we prove that the operators obtained by the local quantization on $M$  enjoy a symbolic calculus. The proof relies on the notion of amplitudes that notably simplify the arguments and is presented in Section~\ref{sec:amplitudeDef+ex} while
Section \ref{sec:amplitudeQ+asymp} is devoted to their quantization and main asymptotic properties. Then, we analyse the adjoints of pseudodifferential operators in Section~\ref{sec:adjoint}, and their compositions in Section~\ref{sec:composition}.

\subsection{Amplitudes}\label{sec:amplitudeDef+ex}

In this section, we define the notion of amplitudes and give important examples.

\subsubsection{Definition}
\begin{definition}
    An {\it amplitude} $\tau$ of order $m$ is a smooth map  $$
    y\longmapsto \tau_y,\qquad M\longrightarrow S^m(\Gh M).$$
    We denote by $\cT^m (\Gh M)$ the Fr\'echet space of amplitudes of order $m$. 
\end{definition}

The topology on  $\cT^m(\Gh M)$ is given by the family of seminorms  
\begin{equation}\label{def:semi_norms_amplitudes}
\|\tau\|_{\cT^m(\widehat G M),(\bX,U), \cC, N}
:=
\max_{|\beta|\leq N}\sup_{y\in \cC}
\|\bX^\beta_y\tau_y  \|_{S^m(\Gh M),(\bX,U), \cC, N}, 
\end{equation}
where $N\in \bN_0$, 
   $\bX$ is an adapted frame on an open set $U\subset M$ and $\cC$ is a compact subset of $U$; 
   the seminorms on $S^m (\Gh M)$ are defined in \eqref{def:semi_norm}. 
We will often  view an amplitude $\tau \in \cT^m (\Gh M)$ as being given by a family of symbols 
$$
\tau(x,y)=\{\tau (x,y,\pi), \pi \in \Gh_x M \} \in S^m(\Gh_x M)
$$
smoothly parametrised by $x,y\in M$.

If $\mu$ is a smooth Haar system over $M$, 
the corresponding convolution kernels are the distributions 
$$
\cS'(G_x M) \ni \kappa^{\mu}_{\tau,x,y} := \cF_{G_x M,\mu_x}^{-1} \tau(x,y,\cdot),\qquad x,y\in M.
$$ 
As in Section \ref{subsec_symbol+k}, 
if $\mu'$ is another  smooth Haar system, then 
 $\mu' = c\mu$ for a positive function $c\in C^\infty(M)$ and we have
 $$
 \kappa^{\mu}_{\tau,x,y} = c(x)\kappa^{\mu'}_{\tau,x,y}, 
 \qquad x,y\in M.   
 $$ 
This allows us 
to define the convolution kernel of $\tau$ independently of any Haar system as  distributional vertical densities smoothly parametrised by $y\in M$:
  $$
\kappa_\tau : y\longmapsto \kappa_{\tau,y} \quad \mbox{in}\quad  C^\infty\left(M, \cS'(GM; |\Omega|(GM))\right).
$$
If  $\bX$ is an adapted frame on an open set $U\subset M$, the distributions 
$$
\kappa_{\tau,x,y}^\bX \in \cS'(\bR^n), \qquad
\kappa_{\tau,x,y}^\bX (v):=  \kappa_{\tau,x,y}^{\mu^\bX} (\Exp^\bX_x v), 
\qquad x,y\in M,
$$
define a map
$$
\kappa^\bX_\tau  \in C^\infty(U\times M, \cS'(\bR^n)), 
$$
 called the \textit{convolution kernel} of $\tau$ in the $\bX$-coordinates.
 At least formally, we have 
 $$
 \tau(x,y,\pi) = \cF_{G_x M,\mu^\bX} \,\kappa^\bX_{\tau,x,y}(\pi), 
 \qquad x,y\in U,  \ \pi\in \Gh_x M.
 $$

The following properties are easily checked using the relations of symbols and amplitudes with their convolution kernels:
\begin{lemma}
\label{lem_relAsigma}
\begin{enumerate}
    \item  If $\tau\in \cT^m (\Gh M)$ then the symbol $\sigma=\sigma_\tau$ given by 
    $$
    \sigma(x,\pi)= \tau(x,x,\pi), 
    \quad x\in M, \ \pi\in \Gh_x M,
    $$
    is in $S^m (\Gh M)$
    with  convolution kernel in the $\bX$-coordinates  given by 
$\kappa_{\sigma,x}= \kappa_{\tau,x,x}$. Moreover, the map 
$\tau\mapsto \sigma_\tau$ is continuous $\cT^m (\Gh M)\to S^m (\Gh M)$.
\item Let $\bX$ be an adapted frame on an open set  $U\subset M$.
If $\tau\in \cT^m (\Gh M)$ and $\alpha,\beta\in \bN_0^n$, then the amplitude $\tau^{(\alpha,\beta)}$ defined via
$$
\tau^{(\alpha,\beta)}(x,y,\pi)=
\Delta_\alpha^\bX D_{\bX, y}^\beta \tau (x,y,\pi) 
\qquad x,y\in U, \, \pi\in G_x M, 
$$
is in $\cT^{m-[\alpha]}(\Gh M|_U)$
with  convolution kernel in the $\bX$-coordinates formally given by 
$$
\kappa_{\tau^{(\alpha,\beta)},x,y}(v)=v^\alpha \bX^\beta_y
\kappa_{\tau,x,y}(v), \qquad x,y\in U, \, v\in \bR^n.
$$
Moreover, the map $\tau\mapsto \tau^{(\alpha,\beta)}$ is continuous $\cT^m (\Gh M) \to \cT^{m-[\alpha]} (\Gh M|_U)$.
\end{enumerate}
\end{lemma}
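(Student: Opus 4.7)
The plan is to verify both claims by unfolding the definitions of $S^m(\Gh M)$ and $\cT^m(\Gh M)$ and invoking the continuity properties of the difference operators $\Delta_\bX^\alpha$ and of the $x$-derivations $D_\bX^\beta$ already established in Section \ref{sec:op_symbols}.

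For Part (1), I would check the three conditions of Definition \ref{def:Sm} for $\sigma_\tau(x,\pi):=\tau(x,x,\pi)$. Condition (1) holds directly from $\tau(x,y,\cdot)\in S^m(\Gh_xM)$ together with the smoothness of $(x,y)\mapsto \tau(x,y,\cdot)$. For condition (2), applied to an adapted frame $(\bX,U)$, the Leibniz property for vector fields applied to the diagonal evaluation $x\mapsto \tau(x,x,\cdot)$ gives, for each $\beta\in \bN_0^n$, an expansion of the form
\[
D_\bX^\beta \sigma_\tau(x,\cdot) = \sum_{\beta_1+\beta_2=\beta} c_{\beta_1,\beta_2}\, \big(D_{\bX,x_1}^{\beta_1} D_{\bX,x_2}^{\beta_2} \tau\big)(x_1,x_2,\cdot)\big|_{x_1=x_2=x},
\]
with combinatorial coefficients $c_{\beta_1,\beta_2}\in C^\infty(U)$, and each summand lies in $S^m(\Gh_xM)$ by the smoothness of $y\mapsto \tau_y$ into $S^m(\Gh M)$. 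Condition (3), together with the continuity of $\tau\mapsto \sigma_\tau$, follows because the amplitude seminorm \eqref{def:semi_norms_amplitudes} already majorises $\sup_{y\in\cC}\|D_{\bX,y}^{\beta_2}\tau_y\|_{S^m(\Gh M),(\bX,U),\cC,N}$, so evaluating at $y=x$ and taking $\sup_{x\in \cC}$ only costs the finite Leibniz constants $\|c_{\beta_1,\beta_2}\|_{L^\infty(\cC)}$. The identity $\kappa_{\sigma_\tau,x}^\bX=\kappa_{\tau,x,x}^\bX$ is then immediate from taking the fiberwise inverse group Fourier transform at fixed $x$.

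For Part (2), I would freeze $y\in U$ and apply Lemma \ref{lem:prop_diff} to $\tau_y\in S^m(\Gh M)$, yielding $\Delta_\bX^\alpha \tau_y\in S^{m-[\alpha]}(\Gh M|_U)$ with convolution kernel $\kappa^\bX_{\Delta_\bX^\alpha \tau_y,x}(v)= v^\alpha \kappa^\bX_{\tau,x,y}(v)$ and continuous dependence on $\tau_y$. The smoothness of $y\mapsto \tau_y$ into $S^m(\Gh M)$ is preserved by the continuous linear map $\Delta_\bX^\alpha$, and the differential operator $\bX_y^\beta$ acts smoothly on this parameter (and trivially commutes with $\Delta_\bX^\alpha$, which touches only the first slot), so $y\mapsto \Delta_\bX^\alpha D_{\bX,y}^\beta \tau_y$ is smooth into $S^{m-[\alpha]}(\Gh M|_U)$. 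This produces $\tau^{(\alpha,\beta)}\in \cT^{m-[\alpha]}(\Gh M|_U)$ with the advertised kernel $v^\alpha \bX_y^\beta \kappa_{\tau,x,y}^\bX(v)$, and the continuity of $\tau\mapsto \tau^{(\alpha,\beta)}$ follows by combining the continuity of $\Delta_\bX^\alpha$ (Lemma \ref{lem:prop_diff}) with the $\sup_{y\in\cC}$ structure of the amplitude seminorms.

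The only mild obstacle is the bookkeeping required to ensure that the intrinsic definition of $S^m(\Gh M)$-membership---which asks for seminorm bounds uniform in $x$ over compacts, for \emph{every} adapted frame and \emph{every} positive Rockland element---is preserved under both the evaluation $y=x$ and the differentiations $\Delta_\bX^\alpha D_{\bX,y}^\beta$. This is handled by appealing to Lemmata \ref{lem_SmnormsigmaxRx}, \ref{lem_SmnormsigmaxRS} and \ref{lem_DXYsigma}, which guarantee that all relevant seminorms are comparable locally uniformly in $x\in U$ across choices of frame and Rockland element, so no further estimate is needed beyond the continuity statements already at our disposal.
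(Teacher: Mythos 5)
The paper gives no written proof for this lemma---it is labeled as ``easily checked''---so your argument fills that gap, and it does so correctly along the natural lines: unfold Definition \ref{def:Sm} for $\sigma_\tau$, expand $\bX_x^\beta$ along the diagonal $x\mapsto(x,x)$, and observe that $\Delta_\bX^\alpha$ (which multiplies the kernel by $v^\alpha$, independently of $y$) commutes with $\bX_y^\beta$. One small point of precision: the Leibniz coefficients you write as $c_{\beta_1,\beta_2}\in C^\infty(U)$ are in fact \emph{constant} multinomial coefficients $\binom{\beta}{\gamma}$, since the pushforward of $X_{j,x}$ through the diagonal is $X_{j,x_1}+X_{j,x_2}$, the two summands commute for all $j,k$ (acting on different factors), and $\bX^\beta$ is an ordered product; calling them $C^\infty(U)$-coefficients is not wrong but suggests a frame dependence that isn't there. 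Otherwise your handling of the frame/Rockland independence via Lemmata \ref{lem_SmnormsigmaxRx}, \ref{lem_SmnormsigmaxRS}, \ref{lem_DXYsigma} is exactly what is needed, and the seminorm estimates you sketch are the ones that make the continuity precise.
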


\subsubsection{Some examples of amplitudes}
\label{subsec_ex_amplitude:convolution}

\begin{lemma}
	\label{lem_ex_amplitude:convolution}
	Let $\sigma_1\in S^{m_1}(\widehat GM)$ and $\sigma_2\in S^{m_2}(\widehat GM)$.
	Then the amplitude $\tau_{\sigma_1,\sigma_2}$ defined via 
	$$
	\tau_{\sigma_1,\sigma_2} (x,y) = \sigma_1(x)\sigma_2(y)
	$$
	makes sense and is in $\cT^{m_1+m_2}(\Gh M)$.
	Moreover, the map
	$$
	(\sigma_1,\sigma_2)\longmapsto\tau_{\sigma_1,\sigma_2},\qquad 
	 S^{m_1}(\widehat GM)\times S^{m_2}(\widehat GM)\longrightarrow \cT^{m_1+m_2}(\Gh M)
	 $$ 
	 is continuous.
\end{lemma}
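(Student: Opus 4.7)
The first task is to give a meaning to the formal product $\sigma_1(x)\sigma_2(y)$, since $\sigma_1(x)\in S^{m_1}(\Gh_x M)$ and $\sigma_2(y)\in S^{m_2}(\Gh_y M)$ live over two \emph{different} osculating groups. The plan is to work in the convolution kernel picture. Fix an adapted frame $(\bX,U)$. Then, after applying $\Exp^\bX$, both kernels become distributions on $\bR^n$:
\[
\kappa^\bX_{\sigma_1,x},\ \kappa^\bX_{\sigma_2,y}\in \cS'(\bR^n),\qquad x,y\in U,
\]
smooth in the parameters $x$, $y$. We declare the convolution kernel of $\tau_{\sigma_1,\sigma_2}$ in the $\bX$-coordinates to be the distribution on $\bR^n$ obtained by convolving these kernels using the group law $*^\bX_x$ of the osculating group $G_x M$:
\[
\kappa^\bX_{\tau_{\sigma_1,\sigma_2},x,y}(v)\;=\;\bigl(\kappa^\bX_{\sigma_2,y}\,\star^\bX_x\,\kappa^\bX_{\sigma_1,x}\bigr)(v).
\]
Equivalently, having reinterpreted the distribution $\kappa^\bX_{\sigma_2,y}$ as a kernel on $G_x M$ via $\Exp^\bX_x$ (rather than $\Exp^\bX_y$), we denote by $\widetilde\sigma_2^{\,y}(x)$ the corresponding symbol on $\Gh_x M$, and set
\[
\tau_{\sigma_1,\sigma_2}(x,y)\;=\;\sigma_1(x)\,\widetilde\sigma_2^{\,y}(x)\qquad\text{in}\ S^{m_1+m_2}(\Gh_x M).
\]
Independence of the adapted frame will follow from Lemma \ref{lem_XYkappa}: a change of frame $\bX\to\bY$ at $x$ acts on both kernels by the same automorphism $\theta_x={\rm diag}(T_x)$ of the fiber $G_xM$, and convolution intertwines pushforward through Lie group automorphisms (Lemma \ref{lem_compwmorph}).

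The plan is then to verify three properties in sequence. First, I would show that for fixed $x,y$, the symbol $\widetilde\sigma_2^{\,y}(x)$ belongs to $S^{m_2}(\Gh_x M)$, with seminorms controlled by those of $\sigma_2$ locally uniformly in $(x,y)\in U\times U$. This is the crux of the matter (see below). Granting this, Proposition \ref{prop_comp+adj} applied fiberwise gives that $\tau_{\sigma_1,\sigma_2}(x,y)\in S^{m_1+m_2}(\Gh_x M)$ with a bound of the form
\[
\|\tau_{\sigma_1,\sigma_2}(x,y)\|_{S^{m_1+m_2}(\Gh_xM),\langle\bX\rangle_x,N}\;\lesssim\;\|\sigma_1(x)\|_{S^{m_1}(\Gh_x M),N'}\,\|\widetilde\sigma_2^{\,y}(x)\|_{S^{m_2}(\Gh_x M),N'},
\]
locally uniformly in $(x,y)$, with the key observation (see Remarks \ref{rem_thm_SmGh}, \ref{rem_constant_RS}) that the constant involved depends on the Lie structure only through $\|[\cdot,\cdot]_{\fg_xM}\|_{\langle\bX\rangle_x}$, which is a smooth (in particular locally bounded) function of~$x$. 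Second, smoothness in $y$ and the seminorm estimates for the amplitude seminorms \eqref{def:semi_norms_amplitudes} are obtained by applying $\bX^\beta_y$: this just differentiates the kernel $\kappa^\bX_{\sigma_2,y}(v)$ in its parameter, and by definition of $S^{m_2}(\widehat GM)$ the resulting distributions are again convolution kernels of symbols in $S^{m_2}(\widehat GM)$. Third, continuity of the bilinear map is obtained by tracking the constants through all the estimates above.

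The hard part is the first step: showing that \emph{reinterpreting} a $\bX$-kernel from $\Gh_yM$ to $\Gh_xM$ preserves the symbol class $S^{m_2}$, locally uniformly in $(x,y)$. My plan here is to use the quantitative kernel characterization of the symbol classes given by Corollary \ref{cor2_kernelM}: the seminorms
\[
\|\widetilde\sigma_2^{\,y}(x)\|_{(\bX,U),\cC,\alpha,\beta,\beta',1}\;=\;\bigl\|\sup_{x\in\cC}|v^\alpha L_{\langle\bX\rangle_x}^\beta \bX^{\beta'} \kappa^\bX_{\sigma_2,y}|\bigr\|_{L^1(\bR^n)}
\]
control $\|\widetilde\sigma_2^{\,y}(x)\|_{S^{m_2}(\Gh_xM),N}$ (for $m_2\leq -M_0$; the general case is recovered by composing with powers of $(\id+\widehat\cR_\bX)$ via Lemma \ref{lem_I+RNGM} and Proposition \ref{prop_homsigmapsi}). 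By \cite[Prop.\ 1.26]{folland+stein_82} or the formalism of Section \ref{subsec_X}, one has
\[
L_{\langle X_i\rangle_x}\;=\;\partial_{v_i}\;+\;\sum_{\upsilon_{i'}<\upsilon_i} p_{i,i'}(x,v)\,\partial_{v_{i'}},
\]
where the $p_{i,i'}(x,v)$ are polynomials in $v$, homogeneous for the grading, with coefficients smooth in $x$ (arising from the BCH coefficients of $*^\bX_x$). Iterating this, $L_{\langle\bX\rangle_x}^\beta$ is a universal polynomial (in the variable $v$ and in the structural constants of $\fg_xM$ in the basis $\langle\bX\rangle_x$) in the Euclidean derivatives $\partial_v^\gamma$, with $[\gamma]\leq[\beta]$; the analogue for $L_{\langle\bX\rangle_y}^\beta$ is the same expression with $x$ replaced by $y$. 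Inverting the relation on the relevant ``triangular'' structure of multi-indices expresses $\partial_v^\gamma$ in terms of $L^{\beta}_{\langle\bX\rangle_y}$, with coefficients smooth in $y$. Substituting into the $L^1$ seminorms of $\widetilde\sigma_2^{\,y}(x)$ and using Leibniz together with the kernel estimates for $\sigma_2$ from Theorem \ref{thm_kernelM} (applied at the base point $y$), one obtains bounds that are locally uniform in $(x,y)$ and polynomial in the structural data. This closes the argument and gives the continuity of the map $(\sigma_1,\sigma_2)\mapsto \tau_{\sigma_1,\sigma_2}$ by tracking each inequality.
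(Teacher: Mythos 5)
The paper states this lemma without proof, giving only the convolution-kernel formula $\kappa^{\mu}_{\tau,x,y}=\kappa^{\mu}_{2,y}\star_{G_xM,\mu_x}\kappa^{\mu}_{1,x}$ in the text immediately following the statement. Your proposal correctly identifies the subtlety the paper passes over in silence: since $\sigma_2(y)\in S^{m_2}(\Gh_yM)$ while $\tau(x,y,\cdot)$ must live in $S^{m_1+m_2}(\Gh_xM)$, the ``product'' $\sigma_1(x)\sigma_2(y)$ only makes sense after reinterpreting $\kappa^\bX_{\sigma_2,y}$ as a convolution kernel on $G_xM$ and verifying that it lands in $S^{m_2}(\Gh_xM)$ with bounds locally uniform in $(x,y)$ -- which is genuinely nontrivial because $G_xM$ and $G_yM$ need not be isomorphic Lie groups. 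Your route via the $L^1$ kernel seminorms of Corollary~\ref{cor2_kernelM} and a graded-triangular change of basis between $L_{\langle\bX\rangle_x}$ and $L_{\langle\bX\rangle_y}$ is sound: the re-expression introduces polynomial coefficients smooth in $(x,y)$ whose homogeneous degree exactly compensates the shift in derivative degree, so the exponent $m_2-[\alpha]+[\beta]$ governing integrability is preserved. Moreover, since the kernel is in $L^1$ once $m_2$ is negative enough, the estimate in Corollary~\ref{cor2_kernelM}(2) actually yields membership in $S^{m_2}(\Gh_xM)$ and not merely an a priori bound, with the general $m_2$ recovered by composing with powers of the Rockland symbol as you indicate. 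One small correction: the relation between $L_{\langle X_i\rangle_x}$ and $\partial_v$ runs in the opposite direction in weight -- e.g.\ on the Heisenberg group $L_1=\partial_1-\tfrac12 v_2\partial_3$ involves $\partial_3$ with $\upsilon_3>\upsilon_1$, so $L_{\langle\bX\rangle_x}^\beta$ expands over $\partial_v^\gamma$ with $[\gamma]\geq[\beta]$ (not $\leq$), with coefficients of homogeneous degree $[\gamma]-[\beta]$; the paper's displayed formula in the proof of Lemma~\ref{lem_DXYsigma} appears to have the same sign issue. The final homogeneous bookkeeping is unaffected, but it is worth fixing.
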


Continuing with the setting of Lemma \ref{lem_ex_amplitude:convolution}, let  $\kappa_1$, $\kappa_2$ and $\kappa_\tau$  denote the  convolution kernels densities of $\sigma_1,\sigma_2,\tau$ respectively.
For any smooth Haar system $\mu=\{\mu_x\}_{x\in M}$, 
we have for any $x,y\in M$,
$$
\kappa_{\tau,x,y}^\mu
=
\kappa_{2,y}^\mu\star_{G_x M,\mu_x} \kappa^\mu_{1,x},
$$
that is, with $v\in G_x M$
$$
\kappa_{\tau,x,y}^\mu(v)
=
\int_{G_x M}
\kappa_{2,x}^\mu(w)  \kappa^\mu_{1,x}(w^{-1}v) d\mu_x(w)
=
\int_{G_x M}
\kappa_{2,x}^\mu(vw^{-1})  \kappa^\mu_{1,x}(w) d\mu_x(w).
$$
The meaning of this convolution is similar to the one for the product of symbols, see the proof  of Proposition \ref{prop_comp+adj}:
 this is a convolution between distributions which are sums of a Schwartz function and a compactly supported distribution. Alternatively, this can be defined first for Schwartz kernels $\kappa_{1,x}^\mu$, $\kappa_{2,x}^\mu$, i.e. for smoothing symbols $\sigma_1,\sigma_2$, and 
then extended to any symbols in some symbol classes $S^{m_j}(\Gh M)$, $j=1,2$,  by density (see Remark 
\ref{rem_altpf_prop_comp+adj}).
\smallskip

We now generalise Lemma \ref{lem_ex_amplitude:convolution}
with an amplitude described by the perturbation of the convolution
of convolution kernels. Without details, we straightforwardly extend the notion of asymptotic expansion stated for  symbols in Definition~\ref{def:asymptotics} to the case of amplitudes.  

\begin{proposition}
\label{prop_ex_amplitude:convolution}
Let $\sigma_1\in S^{m_1}(\widehat GM)$ and $\sigma_2\in S^{m_2}(\widehat GM)$ with convolution kernels $\kappa_1$ and $\kappa_2$ respectively. Let $(\bX,U)$ a local frame and $(x,y;v,w)\mapsto J(x,y;v,w)$ a $U\times U$-locally compactly supported function
in $C^\infty(U\times U\times \bR^n\times \bR^n)$. Then, the convolution kernel 
\[
\kappa^\bX_{\tau,x,y} (v)
=\int_{\bR^n} \kappa_{1,x}^\bX(w) \kappa_{2,y}^\bX(v*^\bX_x (-w) )J(x,y;v,w) dw ,\quad\; x,y\in U,\;\; v\in\bR^n,
\]
defines an amplitude 
\begin{equation}\label{eq:tau_amplitude_conv}
\tau = \tau(\sigma_1,\sigma_2,J)\in \cT^{m_1+m_2}(\Gh M)
\end{equation}
admitting the asymptotics
$$
\tau \sim \sum_{j\in \bN_0} \tau_j ,
\qquad\mbox{with}\  \tau_j= \tau_j(\sigma_1,\sigma_2,J)\in \cT^{m_1+m_2-j}(\Gh M), \ j\in \bN_0.
$$ 
Moreover, we may choose $\tau_0:=J(x,y;0,0)\sigma_1(x)\sigma_2(y)$, and more generally
$$
\tau_j(x,y) := \sum_{[\alpha_1]+[\alpha_2]=j} c_{\alpha_1,\alpha_2} (x,y)\Delta_\bX^{\alpha_1}\sigma_1(x) \Delta_\bX^{\alpha_2}\sigma_2(y).
$$	
Above, the coefficients $c_{\alpha_1,\alpha_2} (x,y)$
come from the Taylor expansion at $(v,w)\sim 0$ of
$$
\tilde J(x,y; v,w):= J(x,y; v*^\bX_x w,w), 
\qquad
\tilde J(x,y;v,w) \sim \sum_{\alpha_1,\alpha_2} c_{\alpha_1,\alpha_2}(x,y) {v}^{\alpha_2} w^{\alpha_1}.
$$
With these choices, the following maps are continuous:
\begin{align*}
	(\sigma_1,\sigma_2)&\longmapsto  \tau(\sigma_1,\sigma_2,J), 
\qquad S^{m_1}(\widehat GM)\times S^{m_2}(\widehat GM) \longrightarrow \cT^{m_1+m_2}(\Gh M), \\
	(\sigma_1,\sigma_2)&\longmapsto  \tau_j(\sigma_1,\sigma_2,J), 
\qquad S^{m_1}(\widehat GM)\times S^{m_2}(\widehat GM) \longrightarrow \cT^{m_1+m_2-j}(\Gh M), \\
(\sigma_1,\sigma_2)&\longmapsto \tau - \sum_{j=0}^N\tau_j, 
\qquad S^{m_1}(\widehat GM)\times S^{m_2}(\widehat GM) \longrightarrow \cT^{m_1+m_2-(N+1)}(\Gh M),
\end{align*}
\end{proposition}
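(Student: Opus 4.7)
The plan is to reduce to smoothing symbols, perform a change of variables putting the integral into its ``natural'' convolution coordinates, and Taylor expand $\tilde J$ there. First, by Proposition~\ref{prop_smoothingM}, I will assume throughout that $\sigma_1, \sigma_2 \in S^{-\infty}(\Gh M)$, so that all kernels are Schwartz and every manipulation below is rigorous; the general case follows by continuity at the end. Substituting $v' = v *_x^\bX (-w)$ as the argument of $\kappa_{2,y}^\bX$, so that $v = v' *_x^\bX w$, the defining integral becomes
$$\kappa_{\tau, x, y}^\bX(v) = \int_{\bR^n} \kappa_{1, x}^\bX(w)\, \kappa_{2, y}^\bX(v *_x^\bX (-w))\, \tilde J(x, y;\, v *_x^\bX(-w),\, w)\, dw.$$
This is the key manoeuvre: the two natural Taylor variables are the arguments $w$ and $v'$ of the two kernels, and these are precisely the arguments in which $\tilde J$ is expanded.

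Next, I would Taylor expand $\tilde J(x,y;v',w)$ at $(v',w)=(0,0)$ via the graded Taylor theorem (Theorem~\ref{thm_MV+TaylorG}). For each monomial $v'^{\alpha_2} w^{\alpha_1}$ in the expansion, the resulting integral
$$\int_{\bR^n} w^{\alpha_1} \kappa_{1,x}^\bX(w)\, (v*_x^\bX(-w))^{\alpha_2} \kappa_{2,y}^\bX(v*_x^\bX(-w))\, dw$$
is, by~\eqref{conv_kernel_product} and the fact that $w^\alpha \kappa_{\sigma,x}^\bX$ is the convolution kernel of $\Delta_\bX^\alpha \sigma$, exactly the convolution kernel in the $\bX$-coordinates of the product symbol $\Delta_\bX^{\alpha_1}\sigma_1(x)\cdot \Delta_\bX^{\alpha_2}\sigma_2(y)$, which lies in $S^{m_1+m_2-[\alpha_1]-[\alpha_2]}(\Gh_x M)$ by Proposition~\ref{prop_comp+adj} and Lemma~\ref{lem:prop_diff}. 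Grouping contributions by $j=[\alpha_1]+[\alpha_2]$ gives the formula for $\tau_j$ in the statement, with membership in $\cT^{m_1+m_2-j}(\Gh M)$ following from the smoothness of $(x,y)\mapsto c_{\alpha_1,\alpha_2}(x,y)$ and the continuity of $\Delta_\bX$.

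The main work lies in controlling the Taylor remainder $R_N(x,y;v',w)$. It can be written as a finite sum of terms $c'_{\alpha,\beta}(x,y;v',w)\, v'^\alpha w^\beta$ with $[\alpha]+[\beta]\geq N+1$ and $c'_{\alpha,\beta}$ smooth and $U\times U$-locally compactly supported. The corresponding integral takes again the form of a perturbed convolution
$$\int_{\bR^n} \kappa_{\sigma_1',x}^\bX(w)\,\kappa_{\sigma_2',y}^\bX(v*_x^\bX(-w))\, J'(x,y;v,w)\, dw$$
with $\sigma_1' = \Delta_\bX^\beta \sigma_1 \in S^{m_1-[\beta]}(\Gh M)$, $\sigma_2'=\Delta_\bX^\alpha \sigma_2\in S^{m_2-[\alpha]}(\Gh M)$, and smooth bounded multiplier $J'(x,y;v,w)=c'_{\alpha,\beta}(x,y;v*_x^\bX(-w),w)$. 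I will show, adapting the kernel-estimate arguments of Lemma~\ref{lem:symbol_perturbconvolution} to the two-sided setting, that this defines an amplitude of order $m_1+m_2-(N+1)$: the extra factors $v'^\alpha w^\beta$ pass through the integral as difference operators reducing the orders of $\sigma_1,\sigma_2$ accordingly, while the smooth factor $c'_{\alpha,\beta}$ acts as a locally compactly supported $\Delta_q$-type perturbation treated by Corollary~\ref{cor_GM_Deltaq}. The $y$-derivatives required for the amplitude topology pass inside the integral and reduce to $\bX_y^\beta \kappa_{2,y}^\bX$ being the kernel of $D_\bX^\beta \sigma_2(y)\in S^{m_2}(\Gh M)$; the $x$-derivatives also act on the dependence of $*_x^\bX$ on $x$, producing terms of higher order in the sense of Section~\ref{sec:higher_order} that are absorbed via Proposition~\ref{prop:kernel_higher_order}. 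The main obstacle is purely technical bookkeeping: tracking all the seminorms uniformly in $x,y$ and in every order of derivative simultaneously, so as to deduce the continuity statements; no new analytical idea beyond what is developed in Sections~\ref{sec:kernel_estmates} and~\ref{sec:symbols} will be needed.
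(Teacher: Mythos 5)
Your overall strategy — reduce to smoothing symbols by density, rewrite the integral in the variables $(v',w)$ with $v' = v*_x^\bX(-w)$, perform a graded Taylor expansion of $\tilde J$ in $(v',w)$ at the origin, identify each monomial with $\Delta_\bX^{\alpha_1}\sigma_1\cdot\Delta_\bX^{\alpha_2}\sigma_2$, and control the remainder via the perturbed-convolution kernel estimates of Lemma~\ref{lem:symbol_perturbconvolution} — is essentially the paper's strategy, and the Taylor-expansion step is a clean way to arrive directly at the stated form of $\tau_j$. However, there are two concrete gaps in your treatment of the seminorm estimates.

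First, your handling of the $x$-derivatives is off. You write that differentiating the $x$-dependence of $*_x^\bX$ produces ``terms of higher order \dots absorbed via Proposition~\ref{prop:kernel_higher_order}.'' But Proposition~\ref{prop:kernel_higher_order} is about the reparametrized kernel $v\mapsto \kappa_{\sigma,x}^\bX(v*_x^\bX r(x,v))$ — a perturbation of the \emph{argument} by a higher-order map — which is not what appears here. The $x$-derivative of $\kappa_{2,y}^\bX(v*_x^\bX(-w))$ via the chain rule produces terms of the shape $g_\ell(x;v,w)\,(R_{\langle X_\ell\rangle_x}\kappa_{2,y}^\bX)(v*_x^\bX(-w))$, where the symbol gains a degree from the right-invariant derivative $R_{\langle X_\ell\rangle_x}$, compensated by the vanishing of $g_\ell$ at the matching homogeneous order (this is Part~(4) of the paper's Lemma~\ref{lem_algprop_ex_amplitude:convolution}). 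The correct mechanism is an algebraic closure property — the $x$-derivative of a $\tau(\sigma_1,\sigma_2,J)$-type expression is again a finite sum of such expressions with shifted inputs — not Proposition~\ref{prop:kernel_higher_order}.

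Second, you do not address the fact that Lemma~\ref{lem:symbol_perturbconvolution} only gives estimates with the inner symbol of sufficiently negative order (its hypothesis forces $N\geq m+Q/p$, and in the paper's application one wants $\kappa_{2,y}^\bX$ integrable, i.e.\ $m_2 < -Q$). To bootstrap to arbitrary $m_2$ one needs the integration-by-parts trick of Part~(5) of Lemma~\ref{lem_algprop_ex_amplitude:convolution}: passing a power of $(\id+\tilde\cR_x)^N$ across the convolution to lower the effective order of $\sigma_2$. Without this reduction your kernel estimate does not close. Both issues live precisely in the part of the argument you dismiss as ``purely technical bookkeeping'' — they are technical, but they require specific lemmas you did not name and in one case a different tool than the one you cited.
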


\begin{proof}[Strategy of the proof of Proposition \ref{prop_ex_amplitude:convolution}]
By Lemma~\ref{lem_ex_amplitude:convolution}
and
the properties of
$\Delta_\alpha^\bX$ as stated in Lemma~\ref{lem:prop_diff},
the amplitude 
$\tau_j=\tau_j(\sigma_1,\sigma_2)$ defined in the statement is in $S^{m_1+m_2-j}(\Gh M)$. Moreover, the map 
$(\sigma_1,\sigma_2)\mapsto \tau_j(\sigma_1,\sigma_2)$ is continuous from $S^{m_1}(\widehat GM)\times S^{m_2}(\widehat GM)$ to  the set $ \cT^{m_1+m_2-j}(\Gh M)$.
 
 We may assume that $\sigma_1$ and $\sigma_2$ are smoothing, so that $\kappa^\bX_{\tau,x,y} (v)$ is a Schwartz function in~$v$ depending smoothly in $x,y$. 
 Then, we will extend the result to any symbols with the density property of smoothing symbols  (see Proposition~\ref{prop_smoothingM})
 as well as 
by proving the continuities of the maps $(\sigma_1,\sigma_2)\mapsto \tau$ and $(\sigma_1,\sigma_2)\mapsto \tau - \sum_{j=0}^N\tau_j$.

The strategy is to prove kernel estimates using Lemma \ref{lem:symbol_perturbconvolution} together with the  properties of $\tau(\sigma_1,\sigma_2,J)$ obtained in Lemma \ref{lem_algprop_ex_amplitude:convolution} below. We postpone the development of this proof  after stating and proving  Lemma \ref{lem_algprop_ex_amplitude:convolution}.
\end{proof}

We first prove some algebraic properties for the amplitude $\tau$ defined in Proposition \ref{prop_ex_amplitude:convolution}:
\begin{lemma}
\label{lem_algprop_ex_amplitude:convolution}
	We continue with the setting and notation of Proposition \ref{prop_ex_amplitude:convolution}
 and consider an amplitude $\tau$ given by~\eqref{eq:tau_amplitude_conv} with $\sigma_1,\sigma_2\in S^{-\infty}(\Gh M)$.
	\begin{enumerate}
		\item For any $\alpha\in \bN_0^n$,
$$
	\Delta^\alpha_\bX \tau = \sum_{[\alpha_1]+[\alpha_2]=[\alpha]} 
c^{(\alpha)}_{\alpha_1,\alpha_2} \
\tau (\Delta_\bX^{\alpha_1} \sigma_1,
\Delta_\bX^{\alpha_2}\sigma_2, J)$$
	where the functions $c^{(\alpha)}_{\alpha_1,\alpha_2}$ are the same as in 
 Corollary \ref{cor_GM_Leib}.
   \item For any $j=1,\ldots,n$,
$$
 	X_{j,y} 
 \tau 
 = 
\tau( \sigma_1 ,D_{X_j} \sigma_2,J)
+\tau( \sigma_1 ,\sigma_2,X_{j,y}J)
.$$
 \item For any $j=1,\ldots,n$,
 $$
 	\widehat{\langle X_j\rangle}^\beta 
 \tau 
 = 
\tau(\widehat {\langle X_j\rangle}^{\beta} 
 \sigma_1 ,\sigma_2,J).$$

  \item For any $j=1,\ldots,n$,
 $$
D_{X_{j,x}} \tau
 = 
\tau (D_{X_{j}} \sigma_1,
\sigma_2, J)+
\tau ( \sigma_1,
\sigma_2, X_{j,x}J)
+ \sum_{\ell=1}^n \tau ( \sigma_1,
\widehat{\langle X_j\rangle} \sigma_2, g_\ell J),
$$
where $g_\ell (x;v,w)$ is a polynomial in $v,w$ with smooth coefficients in $x\in U$
  vanishing in $(v,w)$  at order $\max (\upsilon_\ell -\upsilon_j,-1)$.
\item Let $\cR$ be a symmetric positive Rockland operator on $GM$ of homogenenous degree $\nu$ (see Definition~\ref{def:Rockland}). 
 $$
 \tau = \sum_{[\alpha_1]+[\alpha_2]= \nu N}
\tau(
 \sigma_1 \widehat {\langle \bX\rangle}^{\alpha_2} ,(\id+\pi (\cR_x))^{-N}  \sigma_2,
 c_{\cR,\alpha_1,\alpha_2}  \tilde \bX^{\alpha_2}_x J),
 $$
 where the functions $c_{\cR,\alpha_1,\alpha_2}\in C^\infty(U)$
 come from 
 $$
 (\id+\tilde \cR_x)^N (g_1 \times g_2)= \sum_{[\alpha_1]+[\alpha_2]=\nu N}
 c_{\cR,\alpha_1,\alpha_2} (x) \tilde \bX^{\alpha_1}_x g_1 \times \tilde \bX^{\alpha_2}_x g_2,
 \qquad g_1,g_2\in C^\infty(\bR^n).$$
	\end{enumerate}
\end{lemma}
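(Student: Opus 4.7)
The strategy is, for each of the five assertions, to compute the convolution kernel in the $\bX$-coordinates of both sides of the claimed identity and check equality directly by manipulating the defining integral for $\kappa_{\tau,x,y}^\bX(v)$. The assumption $\sigma_1,\sigma_2\in S^{-\infty}(\Gh M)$ gives Schwartz fibre-kernels (Proposition \ref{prop_smoothingM}), so all differentiations and integrations are licit and identities become pointwise identities between smooth functions.

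For Part (1), the plan is to substitute the identity $v = (v*_x^\bX(-w))*_x^\bX w$, valid in every osculating group $G_xM$ viewed in $\bX$-coordinates, and apply Lemma \ref{lem_LeibnizG}(1) fibrewise. The monomial $v^\alpha$ expands as $\sum c^{(\alpha)}_{\alpha_1,\alpha_2}(v*_x^\bX(-w))^{\alpha_1}w^{\alpha_2}$; plugging this into the defining integral partitions $v^\alpha\kappa_{\tau,x,y}^\bX$ into a sum of kernels of the shape $\tau(\Delta_\bX^{\bullet}\sigma_1,\Delta_\bX^{\bullet}\sigma_2,J)$, with the very same universal coefficients as in Corollary \ref{cor_GM_Leib}. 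Part (2) follows by differentiating under the integral and noting that $y$ appears only in $\kappa_{2,y}^\bX$ and in $J$, while Lemma \ref{lem_relAsigma}(2) identifies $X_{j,y}\kappa_{2,y}^\bX$ as the kernel of $D_{X_j}\sigma_2$. Part (3) exploits the left-invariance of the fibre convolution $\star_{G_xM}$ on the fixed group $G_xM$ together with the fact (Example \ref{ex_widehatlangleXrangle_bis}) that left composition by $\widehat{\langle X_j\rangle}$ amounts to applying $L_{\langle X_j\rangle_x}$ to the convolution kernel; iteration in $\beta$ yields the claim. For Part (5), one writes $\sigma_2=(\id+\widehat\cR_x)^{-N}(\id+\widehat\cR_x)^N\sigma_2$, transfers $(\id+\widehat\cR_x)^N$ from the $v$-side onto $\kappa_{1,x}^\bX\cdot J$ by integration by parts in $w$ (swapping the left-invariant operator acting on $v$ for its right-invariant counterpart $\tilde\cR_x$ acting on $w$), and applies the Leibniz formula for $(\id+\tilde\cR_x)^N$ on a product of functions of $w$, recognising $\tilde\bX^{\alpha_1}_x\kappa_{1,x}^\bX$ as the kernel of $\sigma_1\widehat{\langle\bX\rangle}^{\alpha_1}$ via Remark \ref{rem_thm_kernelG}(2).

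The main obstacle is Part (4). The $x$-derivative must act on three $x$-dependent ingredients: $\kappa_{1,x}^\bX$, the scalar $J$, and, most delicately, the fibrewise group law $*_x^\bX$ itself, through which $\kappa_{2,y}^\bX$ is evaluated. The first two contributions produce $\tau(D_{X_j}\sigma_1,\sigma_2,J)$ and $\tau(\sigma_1,\sigma_2,X_{j,x}J)$ directly. For the third, the plan is to apply the chain rule fibrewise (Lemma \ref{lem_comp_der_groupe}) and write
\[
X_{j,x_1=x}\bigl[\kappa_{2,y}^\bX(v*_{x_1}^\bX(-w))\bigr]=\sum_{\ell=1}^n g_\ell(x;v,w)\,(L_{\langle X_\ell\rangle_x}\kappa_{2,y}^\bX)(v*_x^\bX(-w)),
\]
and then to identify each $g_\ell$ as a polynomial in $(v,w)$, smooth in $x$, that vanishes at homogeneous order $\max(\upsilon_\ell-\upsilon_j,-1)$. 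This vanishing is the crux of the difficulty, and is handled by the higher-order machinery of Section \ref{sec:higher_order_M}: the difference $*_{x_1}^\bX - *_x^\bX$ is higher order in $(v,w)$ in the sense of Definition \ref{def_hot}, as is seen from Lemma \ref{lem_group_laws} and, at a deeper level, from the structure of the commutator brackets given by Proposition \ref{prop_adxbX}. The derivative in $t$ preserves the higher-order property by Corollary \ref{cor_derivatives_cV}(2), and Lemma \ref{lem:exp_derivative_bis} propagates it through the residual group-law manipulations. Once the prescribed vanishing order of $g_\ell$ is established, the integrand is recognised as the kernel of a linear combination of amplitudes of the form $\tau(\sigma_1,\widehat{\langle X_j\rangle}\sigma_2,g_\ell J)$, completing the identity.
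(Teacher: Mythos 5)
Your strategy for Parts (1), (2), (3) and (5) is the same as the paper's: Part (1) decomposes $v^{\alpha}$ via $v=(v*_x(-w))*_x w$ and Lemma~\ref{lem_LeibnizG}; Part (2) is Leibniz in $y$; Part (3) uses the property $L_{\langle X_j\rangle_x,v}\bigl[\varphi(v*_x(-w))\bigr]=-L_{\langle X_j\rangle_x,w}\bigl[\varphi(v*_x(-w))\bigr]$ together with integration by parts (which, for $J\equiv 1$, is exactly your left-invariance of $\star_{G_xM}$); Part (5) transfers $(\id+\widehat\cR_x)^{N}$ from the $v$-side to a right-invariant operator $(\id+\tilde\cR_x)^N$ acting on $\kappa_{1,x}\cdot J$ via integration by parts and Leibniz. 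These are all fine.

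There is, however, a genuine gap in your Part (4). You claim that ``the difference $*_{x_1}^\bX-*_x^\bX$ is higher order in $(v,w)$ in the sense of Definition \ref{def_hot}, as is seen from Lemma \ref{lem_group_laws}'', and that Corollary~\ref{cor_derivatives_cV}(2) then preserves the higher-order property through the $t$-derivative. Neither step holds here. In Lemma~\ref{lem_group_laws}(2) the second base point is $\exp_x^\bX v$, so that dilating $(v,w)\to(\delta_\eps v,\delta_\eps w)$ drags the base point back to $x$ — that coupling is essential to obtaining a higher-order remainder. In your setting the base point is $x_1=\exp_x^\bX(te_j)$, decoupled from $(v,w)$: for fixed $t\neq 0$, the map $(v,w)\mapsto v*_{x_1}^\bX(-w)-v*_x^\bX(-w)$ is a difference of two BCH polynomials, each homogeneous of degree~$1$, so it is homogeneous, not higher order, and the hypothesis of Corollary~\ref{cor_derivatives_cV}(2) (higher order for every $t$) fails. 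What the paper actually uses is simpler: since each group law $*_{x_1}^\bX$ commutes with the dilations, the coefficient map $g(x;v,w)=\partial_{t=0}\bigl[(w*_x(-v))*_x(v*_{\exp_x^\bX te_j}(-w))\bigr]$ satisfies $g(x;\delta_\eps(v,w))=\delta_\eps g(x;v,w)$, so $g_\ell$ is $\upsilon_\ell$-homogeneous in $(v,w)$. Hence $g_\ell$ vanishes at homogeneous order $\upsilon_\ell-1$, which is at least the stated $\max(\upsilon_\ell-\upsilon_j,-1)$ because $\upsilon_j\geq 1$; that is exactly the bound needed. You would want to replace your higher-order argument by this homogeneity computation to close Part~(4).
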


Recall that the notation $\tilde\cR_x$ denotes the right-invariant operator corresponding to $\cR_x$ (see Notation~\ref{notation_right_left}).

\begin{proof}[Proof of Lemma \ref{lem_algprop_ex_amplitude:convolution}]
Part (1) follows from a 
modification of  the proof of  Corollary \ref{cor_GM_Leib}  which relies on  Lemma \ref{lem_LeibnizG}.
Part (2) is a 
consequence of the Leibniz property of vector fields  
For Part (3), 
we write 
$$
\kappa_{\tau,x,y}^\bX (v)
  := 
\int_{\bR^n} \kappa_{1 ,x}^\bX (w ) \ F(x,y;v*^\bX_x (-w),w ) \  dw, 
\qquad x\in U, \ v\in \bR^n,
$$
where 
$$
F(x,y;v',w) := \kappa_{2,y}(v')\, J (x,y;v'*_x^\bX w,w).
$$
Let us recall the following general property valid for any smooth  function $\varphi:G\to \bC$:
\begin{align*}
L_{\langle X_j\rangle_x,v}\left(\varphi(v*^\bX_x (-w)) \right)
&=\partial_{t=0}	\varphi(v*^\bX_x t e_j *^\bX_x (-w))\\
\nonumber
&=\partial_{t=0}	\varphi(v*^\bX_x (-(w *^\bX_x - t e_j )))\\
\nonumber
&= - L_{\langle X_j\rangle_x,w}\left(\varphi(v*^\bX_x (-w)) \right).
\end{align*}
Hence, with an integration by parts, this leads to
\begin{align*}
L_{\langle X_j\rangle_x,v} \kappa_{\tau,x,y}(v)
&=
\int_{\bR^n} \kappa_{1,x}^\bX (w ) \ L_{\langle X_j\rangle_x,v}\left(F(x,y;v*^\bX_x (-w),w )\right)dw\\	
&=\int_{\bR^n} L_{\langle X_j\rangle_x,w}\kappa_{\sigma ,x}^\bX (w ) \ F(x,y;v*^\bX_x (-w),w )dw.
\end{align*}
This shows Part (3).

 Let us show Part (4).
 We have 
\begin{align*}
X_{j,x}\kappa_{\tau,x,y}(v)=
& \int_{\bR^n}  X_{j,x}\kappa_{1 ,x}^\bX (w)\ \kappa_{2,y}^\bX ( v*_x^\bX (-w)) \ J(x,y;v,w) dw.\\
&+ \int_{\bR^n} \kappa_{2,x}^\bX (w ) \ \kappa_{2,y}^\bX( v*_x^\bX (-w)) \ X_{j,x}J(x,y;v,w) dw.\\
&+\int_{\bR^n} \kappa_{1,x}^\bX (w ) \ X_{j,x}\left(\kappa_{2,y}^\bX( v*_x^\bX (-w)) \right)\ J(x,y;v,w) dw.
\end{align*}
We recognise the first and second term as the convolution kernels of $\tau (D_{X_{j}} \sigma_1,
\sigma_2, J)$ and $
\tau ( \sigma_1,
\sigma_2, X_{j,x}J)$ respectively. It remains to analyse the third. 
We write for any function $\varphi$:
\begin{align*}
X_{j,x} \left(\varphi((-w)*_x v)\right)
&=\partial_{t=0} 
\varphi( v*_{\exp_x^\bX te_j}^\bX (-w))
\\&=\sum_{\ell=1}^n g_\ell (x;v,w ) (R_{\langle X_\ell\rangle_x} \varphi)(v*_x^\bX (-w))
\end{align*}
with  
$$
(g_\ell(x;v,w))_{\ell=1}^n
:= g (x;v,w)
:=\partial_{t=0} (w*_x^\bX (-v))*_x^\bX (v*_{\exp_x^\bX te_j}^\bX (-w)).
$$
The BCH formula implies that $g_\ell(x;v,w)$ is a polynomial in $v,w$ with smooth coefficients in $x\in U$. 
We also observe 
$$
\eps^{\upsilon_j} \delta_\eps^{-1} g (x;\delta_\eps(v,w))
=g (x;v,w),\qquad \eps>0,
$$
so that 
$$
	\lim_{\eps\to0}\eps^{\upsilon_j -\upsilon_\ell} g_\ell(x;\delta_\eps(v,w))   =0
	\quad \mbox{for}\ \upsilon_\ell> \upsilon_j.
$$
This shows that the third term is as stated, and concludes the proof of Part (4).

Let us show Part (5). We use the notation from Definition~\ref{def:Rockland} and identify  $\cR_x$ with a differential operator on $\bR^n \sim G_x M$ satisfying $\cR_x^t=\cR_x$. Then,  for any $\varphi\in \cS(\bR^n)$, we have 
$$
(\id+\cR_x)^{N}(\id+\cR_x)^{-N}\varphi=\varphi, 
$$
so
$$\varphi(v*^\bX_x (-w) =
 (\id+\tilde \cR_x)|_w^N \left( (\id+\cR_x)^{-N}\varphi\right)(v*^\bX_x (-w) ).
 $$
Hence, 
\begin{align*}
\kappa^\bX_{\tau,x,y} (v)
&=\int_{\bR^n} \kappa_{1,x}^\bX(w) \ (\id+\tilde \cR_x)|_w^N \left( (\id+\cR_x)^{-N}\kappa_{2,y}^\bX\right)(v*^\bX_x (-w) )J(x,y;v,w) dw \\
\\
&=\int_{\bR^n} (\id+\tilde \cR_x)^{N}\left( \kappa_{1,x}^\bX(w) \ J(x,y;v,w) \right)
 \left( (\id+\cR_x)^{-N}\kappa_{2,y}^\bX\right) (v*^\bX_x (-w) )dw ,
 \end{align*}
 after integrating by parts.  
 Developing the differential operator $(\id+\tilde \cR_x)^N$ 
 and using the Leibniz properties of vector fields, 
 yield the result.
\end{proof}

Let us now show Proposition \ref{prop_ex_amplitude:convolution}.

\begin{proof}[Proof of Proposition \ref{prop_ex_amplitude:convolution}]
Let $m_1,m_2\in \bR.$
It suffices to show that 
for any compact subset $\cC\subset U$,
given $\alpha_0,\beta_0,\beta_0',\beta_0''\in \bN_0^n$, there exists $N_0\in \bN_0$ such that for any $N\geq N_0$, there exist $C>0$, $N'_1,N'_2\in \bN$ for which the following holds 
\begin{align*}
&\biggl\|\bX^{\beta_0''}_y\biggl(\tau(\cdot, y) - \sum_{j=0}^N  \tau_j(\cdot, y)\biggr)
 \biggr\|_{(\bX,U), \cC, \alpha_0,\beta_0,\beta_0',\infty}
\\&\qquad \leq C \|\sigma_1\|_{S^{m_1}(\Gh M), (\bX,U), \cC, N'_1}\|\sigma_2\|_{S^{m_2}(\Gh M), (\bX,U), \cC, N'_2} ,
\end{align*}
for any $\sigma_1,\sigma_2\in S^{-\infty}(\Gh M)$.

We observe that the properties described in  Lemma \ref{lem_algprop_ex_amplitude:convolution} hold for $\tau$ but  also for each~$\tau_j$;
indeed, the latter can be viewed as a finite sum of particular cases for $J(x,y,v,w) = c_{\alpha_1,\alpha_2} w^{\alpha_1} (v*_x^\bX (-w))^{\alpha_2}$ for which the results in Lemma \ref{lem_algprop_ex_amplitude:convolution} hold with a proof identical to the one given for this lemma.
Furthermore, Parts (1), (2), (3) and (4) of Lemma \ref{lem_algprop_ex_amplitude:convolution} imply that we may assume $\alpha_0=\beta_0=\beta_0'=\beta_0''=0$ while Part (5) and the symbolic properties of the calculus (see Proposition~\ref{prop_smoothingM}
as well as Theorem \ref{thm_SmGh}) imply that it suffices to consider the case of $m_2$ as negative as we want. The case of $m_2<-Q$ and $\alpha_0=\beta_0=\beta_0'=\beta_0''=0$ follows from Lemma  \ref{lem:symbol_perturbconvolution} and the kernel estimates
(see Theorem \ref{thm_kernelM}), concluding the proof.
\end{proof}

\subsubsection{Some deformations of amplitudes}
%\footnote{V:new lemma.}
In Lemma \ref{lem:symbol_perturbconvolution}, we analysed some deformation of the symbol obtained by convolution of two convolution kernels. In our proof of composition (Proposition \ref{prop_comploc}), we will also need to consider  further deformations of amplitudes. They  will be covered by the following lemma:
\begin{lemma}
\label{lem_amplitudedepv_1}
Let $m\in \bR$.
Let $v_1\mapsto \tau(v_1)$ be a smooth map $\bR^n\to \cT^m(\Gh M)$.
We assume that $\tau_{x,y}(v_1)$ is compactly supported in $v_1\in \bR^n$ locally in $(x,y)\in M\times M$.
Let $(\bX,U)$ be a local frame.
Consider the amplitude $\tau_0$ given   via
$$
\kappa_{\tau_0,x,y}^\bX(v) =
\kappa_{\tau(v_1),x,y}^\bX(v)|_{v_1=v}, 
\qquad x,y\in U, \ v\in \bR^n.
$$
  \begin{enumerate}
      \item The amplitude $\tau_0$ is  in $\cT^m(\Gh M|_U)$ and admits the  asymptotics
$$
\tau_0 \sim \sum_j \sum_{[\alpha]=j} \Delta_{q_\alpha} L_{\langle\bX\rangle,v_1}^\alpha \tau(v_1)|_{v_1=0}
\qquad\mbox{in}\  \cT^m(\Gh M|_U),
$$
with $q_\alpha$ as in Notation \ref{notation:q_alpha}.
  \item For each $j'\in \bN_0$, we consider a sequence of smooth
  maps 
  $$
  v_1\longmapsto \tau_{j'}(v_1),\qquad  \bR^n\longrightarrow \cT^{m-j'}(\Gh M),
  $$
  with $\tau_{j',x,y}(v_1)$
compactly supported in $v_1\in \bR^n$ locally in $(x,y)\in M\times M$.
We assume that    the asymptotics
  $\tau(v_1) \sim \sum_{j'} \tau_{j'} (v_1)$ holds in $\cT^m(\Gh M)$ for each $v_1\in \bR^n$, and furthermore that for each $N'\in \bN_0$, the maps
  $  v_1 \mapsto \tau(v_1) - \sum_{j'=0}^{N'} \tau_{j'} (v_1)$
  are smooth $\bR^n\to \cT^{m-(N+1)}(\Gh M)$.
Then  $\tau_0$ admits the  asymptotics 
$$
\tau_0 \sim \sum_{j',j} \sum_{[\alpha]=j} \Delta_{q_\alpha} L_{\langle\bX\rangle,v_1} \tau_{j'}(v_1)|_{v_1=0}
\qquad \mbox{in}\  \cT^m(\Gh M|_U).
$$
  \end{enumerate}  
\end{lemma}

\begin{proof}
Before starting the proof, let us observe that the asymptotics make sense since 
$$
\Delta_{q_\alpha} L_{\langle\bX\rangle,v_1}^\alpha \tau(v_1)|_{v_1=0}\in   \cT^{m-[\alpha]}(\Gh M|_U).
$$
We apply  the Taylor estimates due to Folland and Stein~\cite{folland+stein_82} on the group $G_x M$,
and recalled in Theorem~\ref{thm_MV+TaylorG} (2) 
(see also  Remark \ref{remthm_MV+TaylorG} for the local uniformity in $x$). 
We will use the notation  $\lceil N \rfloor$ and $\eta$ of Theorem~\ref{thm_MV+TaylorG}.
We have
\begin{align*}
&|\kappa^\bX_{\tau(v),x,y}(v) -
\sum_{[\alpha]\leq N} q_\alpha(v) L_{\langle\bX\rangle_x,v_1}^{\alpha} \kappa^\bX_{\tau(v_1)x,y}(v)|_{v_1=0} | 
\\&\qquad \qquad \leq C \sum_{\substack{|\alpha|\leq \lceil N \rfloor +1\\ [\alpha]> N}}|v|_\bX^{[\alpha]}\sup_{|v_1|_\bX \leq \eta^{\lceil N \rfloor+1}|v|_\bX}\bigg| (L_{\langle\bX\rangle_x,v_1}^\alpha \kappa^\bX_{\tau(v_1)x,y}(v)
\bigg|,    
\end{align*}
with a constant $C=C_{N,x}$ depending locally uniformly in $x\in U$. 
Using the kernel estimates from  Theorem \ref{thm_kernelM} and the uniformity with respect to $v_1\in \bR^n$, we obtain for any compact subset $\cC\subset U$ and any $N$ sufficiently large,
$$
\biggl\|\tau_0 - \sum_{[\alpha]\leq N}  \Delta_{q_\alpha} L_{\langle\bX\rangle,v_1}^\alpha \tau(v_1)|_{v_1=0}
 \biggr\|_{(\bX,U), \cC, \alpha_0,\beta_0,\beta_0',\infty}
 \leq C' \sup_{v_1\in \bR^n, |\alpha'|\leq N'}\|\partial_{v_1}^{\alpha'} \tau(v_1) \|_{S^{m}(\Gh M), (\bX,U), \cC, N'_1}
$$
for $\alpha_0=\beta_0=\beta'_0=\beta''_0=0$; above, $N'$ is an integer depending on $N$ and the structural constant of $M$ while $C'>0$ is  constant depending on $N$, on the structural constant of $M$ on $\cC$.
This also holds for any given multi-indices $\alpha_0,\beta_0,\beta_0',\beta_0''\in \bN_0^n$ by applying the above estimate to 
$\Delta_\bX^{\alpha_0} \langle \bX \rangle^{\beta_0}D_{\bX,x} ^{\beta_0'}
D_{\bX,y} ^{\beta_0''} \tau_{x,y}(v_1)$
which satisfy similar hypotheses to $\tau(v_1).$
This proves Part (1). 

Under the hypotheses of
Part (2), 
we observe 
\begin{align*}
&|\kappa^\bX_{\tau(v)x,y}(v) -
\sum_{j'\leq N'}\sum_{[\alpha]\leq N} q_\alpha(v) L_{\langle\bX\rangle_x,v_1}^{\alpha} \kappa^\bX_{\tau_{j'}(v_1)x,y}(v)|_{v_1=0} | 
\\&\qquad \leq 
|\kappa^\bX_{\tau(v)x,y}(v) -
\sum_{[\alpha]\leq N} q_\alpha(v) L_{\langle\bX\rangle_x,v_1}^{\alpha} \kappa^\bX_{\tau(v_1)x,y}(v)|_{v_1=0} |
\\&\qquad\qquad \qquad +
\sum_{[\alpha]\leq N}
\left| q_\alpha(v) L_{\langle\bX\rangle_x,v_1}^{\alpha} 
\Bigl(\kappa^\bX_{\tau(v_1)x,y} (v)- 
\sum_{j'=0}^{ N'} \kappa^\bX_{\tau_{j'}(v_1)x,y}(v)\Bigr)\Bigl|_{v_1=0}\right|.
\end{align*}
Using the estimate from the proof of Part (1) for the first term, and the kernel estimates together with the smoothness of $  v_1 \mapsto \tau(v_1) - \sum_{j'=0}^{N'} \tau_{j'} (v_1)$, we can conclude as for Part~1.
\end{proof}

%%%%%%%%%%%%%%%%%%%%%%%%%%%%%%%%%%%%%%%%%%%%%%%%%%%%%%%%%%%%%%%%

\subsection{Local quantization of amplitudes}\label{sec:amplitudeQ+asymp}

The local quantization of amplitudes is defined as follows. 
Let $\bX$ be a frame on an open set $U\subset M$ and let  $\chi$ be a $\bX$-cut-off. 
For an amplitude $\tau\in \cT^m(\widehat{G  }M)$,
we define the operator $\AOp^{\bX,  \chi}(\tau)$  at any function $f\in C_c^\infty(M)$ and any point $x\in U$ via
\begin{align*}
    \AOp^{\bX,  \chi}(\tau)f(x)&:=
\int_{M}  \kappa_{\tau,x,y}^{\bX} (- \ln^\bX_x y) \ (\chi_x f)(y)\ \jac_{y} (\ln^\bX_x) \ dy\\
&=
\int_{\bR^n} \kappa^\bX_{\tau,x,\exp^\bX_x(-  w )} (w) \ (\chi_x f)(\exp^\bX_x(-  w )) \ dw;
\end{align*}
as before,
$\jac_{y} (\ln^\bX_x) $
is the (smooth) Radon-Nykodym derivative $\frac{d \ln^\bX_x}{dy} $  of $(\exp^\bX_x - u)^*du$ against the measure $dy$ that we have fixed on $M$.
In other words, the integral kernel of 
the operator $\AOp^{\bX,\chi}(\tau)$  is given by 
\begin{equation}\label{eq:kernel_AOp}
(x,y)\mapsto \kappa^\bX_{\tau,x,y}(-\ln_xy)\chi_x(y)\jac_y(\ln^\bX_x).
\end{equation}

The local quantizations for amplitudes and symbols are related by the following asympotic property:
\begin{proposition}
\label{prop:kxy}
   Let $\bX$ be an adapted frame on an open set $U\subset M$ and $\chi$ an adapted cut-off. 
   Let $\tau\in \cT^m(\Gh M|_U)$ be an amplitude.
   Then there exists a  symbol $\sigma \in S^m (\Gh M|_U)$ such that  the quantizations for amplitudes and symbols are related by
   $$
   \AOp^{\bX,\chi}(\tau)= \Op^{\bX,\chi}(\sigma),
   $$
and the asymptotic expansion $\sigma\sim \sum_{j\in \bN_0} \sigma_j$ for some $\sigma_j\in S^{m-j}(\Gh M|_U)$ holds. Moreover, the first symbol $\sigma_{0}(x,\cdot )$ coincides with $ \tau(x,x,\cdot)$, 
and for each $N\in \bN$, the  map
$$
 \tau \longmapsto \sigma - \sum_{j=0}^{N-1}\sigma_j, \qquad \cT^m(\Gh M)\longrightarrow S^{m-N} (\Gh M|_U)
 $$
  is continuous. 
\end{proposition}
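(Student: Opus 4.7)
The plan is a Kohn-Nirenberg-type reduction of amplitudes to symbols, adapted to the filtered setting. By~\eqref{eq:kernel_AOp}, the Schwartz kernel of $\AOp^{\bX,\chi}(\tau)$ is $\kappa^\bX_{\tau,x,y}(-\ln^\bX_x y)\chi_x(y)\jac_y(\ln^\bX_x)$, while the kernel of $\Op^{\bX,\chi}(\sigma)$ has the same form with $\kappa^\bX_{\sigma,x}(-\ln^\bX_x y)$ replacing $\kappa^\bX_{\tau,x,y}(-\ln^\bX_x y)$. Hence, after the change of variable $v=-\ln^\bX_x y$, matching the two amounts to Taylor-expanding $y\mapsto\kappa^\bX_{\tau,x,y}(v)$ at $y=x$ along the curve $y=\exp^\bX_x(-v)$ in the graded sense, and converting each monomial of the form $v^\alpha$ that appears into the corresponding difference operator on the symbol side. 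The natural candidate homogeneous components are
$$
\sigma_j(x,\pi):=\sum_{[\alpha]=j}\frac{(-1)^{|\alpha|}}{\alpha!}\,\Delta^\alpha_\bX\bigl(\bX^\alpha_y\tau(x,y,\pi)\bigr)\big|_{y=x},\qquad j\in\bN_0,
$$
and Lemma~\ref{lem_relAsigma} shows that each $\sigma_j$ lies in $S^{m-j}(\Gh M|_U)$ with $\sigma_0(x,\cdot)=\tau(x,x,\cdot)$, continuously in $\tau$.

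Applying Borel summation in the symbol classes (the paragraph after Definition~\ref{def:asymptotics}) produces a symbol $\tilde\sigma\in S^m(\Gh M|_U)$ with $\tilde\sigma\sim\sum_j\sigma_j$, depending continuously on $\tau$. The substance of the proof is then to exhibit, for every $N$, a symbol $\rho_N\in S^{m-N}(\Gh M|_U)$, continuous in $\tau$, such that
$$
\AOp^{\bX,\chi}(\tau)-\Op^{\bX,\chi}\Bigl(\sum_{j<N}\sigma_j\Bigr)=\Op^{\bX,\chi}(\rho_N).
$$
Once this is established, combining it with $\tilde\sigma-\sum_{j<N}\sigma_j\in S^{m-N}(\Gh M|_U)$ and the injectivity of the $\bX$-quantization on symbol classes shows that $\AOp^{\bX,\chi}(\tau)-\Op^{\bX,\chi}(\tilde\sigma)=\Op^{\bX,\chi}(\eta)$ for a single $\eta\in\cap_N S^{m-N}(\Gh M|_U)=S^{-\infty}(\Gh M|_U)$; then $\sigma:=\tilde\sigma+\eta\in S^m(\Gh M|_U)$ satisfies both $\AOp^{\bX,\chi}(\tau)=\Op^{\bX,\chi}(\sigma)$ and $\sigma\sim\sum_j\sigma_j$, with the required continuity statement following from that for $\tilde\sigma$ and the $\rho_N$'s.

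To produce $\rho_N$ with the required $S^{m-N}$-control, I would apply the graded Folland-Stein Taylor estimate (Theorem~\ref{thm_MV+TaylorG}) to $y\mapsto\kappa^\bX_{\tau,x,y}(v)$ at $y=x$, transported from $G_xM$ to the manifold neighbourhood via the geometric exponential. The discrepancy between $\exp^\bX$ and $\Exp^\bX$ is described precisely by Theorem~\ref{lem_Cq1BCH} and is higher order in the sense of Section~\ref{sec:higher_order}; the resulting remainder kernels are of the form $\kappa^\bX_{\sigma',x}(v*^\bX_x r(x,v))$ with $r$ higher order, which is exactly the situation handled by Proposition~\ref{prop:kernel_higher_order}. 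Together with the kernel estimates of Theorem~\ref{thm_kernelM} and Corollary~\ref{cor2_kernelM}, this delivers the pointwise bounds on the residual $\bX$-kernel and its relevant $v$- and $x$-derivatives that characterise membership in $S^{m-N}(\Gh M|_U)$; smoothing discrepancies coming from the cut-off $\chi$ and the Jacobian factor are absorbed as in Lemma~\ref{lem_prop_Qindepchi} and Corollary~\ref{cor:ker_smoothing}. The hard part will be the careful bookkeeping of cross-terms produced when commuting the manifold fields $\bX^{\beta'}_x$ and the left-invariant fields $L^\beta_{\langle\bX\rangle_x}$ through the amplitude Taylor expansion: this is the combinatorial content already encountered in Lemma~\ref{lem_der_kappatau} for the symbol case, and must be extended to the amplitude setting so as to guarantee uniform estimates across all seminorms~\eqref{def:semi_norm} defining $S^{m-N}(\Gh M|_U)$.
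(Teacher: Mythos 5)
Your candidate formula for the homogeneous components,
\[
\sigma_j(x,\pi)=\sum_{[\alpha]=j}\frac{(-1)^{|\alpha|}}{\alpha!}\,\Delta^\alpha_\bX\bigl(\bX^\alpha_y\tau(x,y,\pi)\bigr)\big|_{y=x},
\]
is the abelian Kohn-Nirenberg formula, and it is incorrect in the filtered setting because the fields $X_1,\dots,X_n$ do not commute. When one Taylor-expands $y\mapsto\kappa^\bX_{\tau,x,y}(v)$ at $y=x$ along $y=\exp^\bX_x(-v)$ and opens up $\bigl(\sum_i -v_i X_{i,y}\bigr)^k$, the rewriting of the $k$-fold products $X_{i_1}\cdots X_{i_k}$ in terms of ordered monomials $\bX^\beta_y$ produces, besides the leading term $\frac{k!}{\alpha!}\bX^\alpha_y$, lower-length correction terms $\bX^\beta_y$ with $|\beta|<k$ coming from commutators $[X_i,X_j]$ (which, in an adapted frame, are $C^\infty(U)$-combinations of the $X_\ell$). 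The correct statement has the form
\[
\Bigl(\sum_{i=1}^n -u_i X_{i,y}\Bigr)^k=\sum_{|\alpha|=k,\;|\beta|\leq k}c_{\alpha,\beta}(y)\,u^\alpha\,\bX^\beta_y,
\]
and correspondingly $\sigma_j(x,\pi)=\sum_{[\alpha]=j,\,|\beta|\leq|\alpha|}c_{\alpha,\beta}(x)\,\Delta^\alpha_\bX D^\beta_{\bX,y=x}\tau(x,x,\pi)$. Your formula only keeps the $\beta=\alpha$ diagonal and drops all cross terms with $0<|\beta|<|\alpha|$; since the asymptotic expansion of a symbol is unique modulo smoothing, those missing terms genuinely change every $\sigma_j$ with $j\geq 2\upsilon_1$, so the claimed residual control $\rho_N\in S^{m-N}(\Gh M|_U)$ would fail.

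A secondary problem is the choice of Taylor estimate. You propose to apply the graded Folland-Stein theorem (Theorem~\ref{thm_MV+TaylorG}) to $y\mapsto\kappa^\bX_{\tau,x,y}(v)$, transported to $G_xM$ via the exponential, and you bring in Theorem~\ref{lem_Cq1BCH} and Proposition~\ref{prop:kernel_higher_order} to control the discrepancy between $\exp^\bX$ and $\Exp^\bX$. None of this is needed here: the amplitude is a smooth manifold function of $y$ (the graded structure lives in the $\pi$-variable only), and the natural tool is the ordinary Euclidean Taylor expansion along $t\mapsto\exp^\bX_x(tv)$ furnished by Lemma~\ref{lem_Taylorexpxu}. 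The Euclidean remainder $O(|v|^{N+1})$ is then converted into an anisotropic gain using the elementary comparison $|v|\lesssim|v|_\bX$ on bounded sets, combined with the kernel estimates of Theorem~\ref{thm_kernelM}. The composition-of-exponentials machinery (Theorem~\ref{lem_Cq1BCH}, Proposition~\ref{prop:kernel_higher_order}) is what one uses for the adjoint and composition of quantized operators, where a genuine perturbation of the group law arises; here there is no such perturbation, since the kernel of $\AOp^{\bX,\chi}(\tau)$ is literally $\kappa^\bX_{\tau,x,y}(-\ln^\bX_x y)\chi_x(y)\jac_y(\ln^\bX_x)$ and matching it with $\Op^{\bX,\chi}(\sigma)$ only requires setting $\kappa^\bX_{\sigma,x}(v)=\kappa^\bX_{\tau,x,\exp^\bX_x(-v)}(v)$ up to cut-off. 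The genuinely delicate point, which your sketch also leaves open, is to show that the constants in the Taylor remainder for the $x$- and $v$-derivatives of this kernel are controlled by continuous seminorms of $\tau$; this requires commuting $\bX^{\beta'}_x$ and $L^\beta_{\langle\bX\rangle_x}$ through $\kappa^\bX_{\tau,x,\exp^\bX_x(v)}(u)$ and tracking the resulting $C^\infty(U\times\bR^n)$-coefficients explicitly, rather than appealing to a black-box Taylor theorem.
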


Proposition \ref{prop:kxy} is a consequence of the following more technical property and its proof:
\begin{lemma}
  \label{lem_prop:kxy}
  We continue with the setting of Proposition \ref{prop:kxy}.
  We fix an $\bX$-cut-off function~$\widetilde \chi$ such that $\widetilde \chi \equiv 1$ on the support of $\chi$ and we assume that the support of $\chi$ and $\widetilde \chi$ are sufficiently close to the diagonal.
  The symbol $\sigma$  in Proposition \ref{prop:kxy} may be constructed via  its convolution kernel in the $\bX$-coordinates which is given by:
$$
\kappa^\bX_{\sigma, x}(u)= \kappa^\bX_{\tau,x,\exp^\bX_x (-u)}(u)\, \widetilde \chi (x,\exp_x^\bX (-u)), \qquad x\in U, \ u\in \bR^n.
$$
In this case, the $\sigma_j$'s are given by the following finite sum over the indices $\alpha,\beta$
\begin{align*}
\sigma_j (x,\pi)&= \sum_{[\alpha]=j,\;|\beta|\leq |\alpha| }  c_{\alpha,\beta}(x)  \Delta_\bX^\alpha D_{\bX, y=x}^\beta \tau(x,x,\pi),
\end{align*}
where the  coefficients $ c_{\alpha,\beta}\in C^\infty(U)$, $\alpha,\beta\in \bN_0^n$, are determined by
$$
(\sum_{i=1}^n -u_i X_{i,y})^k = \sum_{|\alpha|=k,|\beta| \leq k}  c_{\alpha,\beta}(y)u^\alpha \bX_y^\beta;
$$
in particular, 
 $
 c_{0,0}=1,$ and $ c_{0,\beta}=0$ for $\beta\neq 0$.
\end{lemma}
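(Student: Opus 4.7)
The plan is first to verify the identity $\AOp^{\bX,\chi}(\tau) = \Op^{\bX,\chi}(\sigma)$ between operators by matching their integral kernels, and then to build the asymptotic expansion of $\sigma$ via an anisotropic Taylor expansion of the amplitude $\tau$ in its second argument along the geometric exponential map.

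For the first point, by \eqref{eq:kernel_AOp} the integral kernel of $\AOp^{\bX,\chi}(\tau)$ is
\[
(x,y)\longmapsto \kappa^{\bX}_{\tau,x,y}(-\ln^{\bX}_x y)\,\chi_x(y)\,\jac_y(\ln^{\bX}_x),
\]
while by \eqref{eq_int_kernel} that of $\Op^{\bX,\chi}(\sigma)$ is $(x,y)\mapsto \kappa^{\bX}_{\sigma,x}(-\ln^{\bX}_x y)\chi_x(y)\jac_y(\ln^{\bX}_x)$. Setting $u=-\ln^{\bX}_x y$ in the proposed formula for $\kappa^\bX_{\sigma,x}$, the factor $\widetilde\chi(x,\exp^{\bX}_x(-u))=\widetilde\chi(x,y)$ multiplies $\kappa^{\bX}_{\tau,x,y}(-\ln^{\bX}_x y)$; since $\widetilde\chi\equiv 1$ on $\supp\chi$, this does not alter the product with $\chi_x(y)$, and the two integral kernels agree.

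For the second and harder part, I would apply Lemma \ref{lem_Taylorexpxu} to the smooth function $y\mapsto f(y):=\kappa^{\bX}_{\tau,x,y}(u)\widetilde\chi(x,y)$ along the path $y=\exp^{\bX}_x(-u)$. This yields, for any isotropic order $K\in\bN_0$,
\[
\kappa^\bX_{\sigma,x}(u)=\sum_{k=0}^{K}\frac{1}{k!}\Bigl(\sum_{i=1}^n -u_i X_{i,y}\Bigr)^{k}\!\bigl[\kappa^{\bX}_{\tau,x,y}(u)\widetilde\chi(x,y)\bigr]\Big|_{y=x}\ +\ r_{K,x}(u),
\]
with an integral remainder $r_{K,x}(u)$ controlled by $\sup_{t\in[0,1]}\bigl|(\sum_i -u_i X_{i,y})^{K+1}f(\exp^{\bX}_x(-tu))\bigr|$. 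After expanding each power using the identity $(\sum_i -u_i X_{i,y})^{k}=\sum_{|\alpha|=k,\,|\beta|\leq k}c_{\alpha,\beta}(y)u^\alpha\bX^\beta_y$ defining the coefficients $c_{\alpha,\beta}$ of the statement, and invoking that $\widetilde\chi\equiv 1$ near the diagonal so that $\bX^\beta_y\widetilde\chi(x,y)|_{y=x}=\delta_{\beta,0}$, the $k$-th term collapses to a finite combination $\sum_{|\alpha|=k,|\beta|\leq k}c_{\alpha,\beta}(x)u^\alpha(\bX^\beta_y\kappa^{\bX}_{\tau,x,y}(u))|_{y=x}$. The multiplication by $u^\alpha$ on the kernel side corresponds to the difference operator $\Delta^\alpha_\bX$ on the symbol side, and $\bX^\beta_y$ corresponds to the derivation $D^\beta_\bX$ acting in the second amplitude variable (see Lemma~\ref{lem_relAsigma}(2)); thus that term is exactly the convolution kernel of $c_{\alpha,\beta}(x)\Delta^\alpha_\bX D^\beta_{\bX,y=x}\tau(x,x,\pi)$, which by Corollary~\ref{cor_contSmTXDelta} belongs to $S^{m-[\alpha]}(\widehat G M|_U)$. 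Regrouping by homogeneous degree $j=[\alpha]$ produces the symbols $\sigma_j$ of the statement.

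The main obstacle will be to turn the Taylor formula, which is isotropic in the variable $u$, into an asymptotic expansion in the anisotropic classes $S^{m-j}(\widehat G M|_U)$. To control the remainder in $S^{m-N-1}$, the idea is to take $K$ sufficiently large (of order $(N+1)\upsilon_n$), then to split the Taylor sum as the pieces of homogeneous order $\leq N$ plus an over-remainder absorbed into a higher-order term. The remainder itself will be estimated via the kernel estimates of Theorem~\ref{thm_kernelM} applied to the amplitude $(\sum_i -u_i X_{i,y})^{K+1}[\kappa^\bX_{\tau,x,y}(u)\widetilde\chi(x,y)]$, together with the anisotropic comparison $|u|_{eucl}^{K+1}\lesssim |u|_\bX^{(K+1)/\upsilon_n}$ on the support of $\widetilde\chi$, so that the accumulated $u$-powers convert isotropic vanishing into homogeneous vanishing; this, combined with Proposition~\ref{prop:kernel_higher_order} or Corollary~\ref{cor_GM_Deltaq} for the pieces of the truncated sum whose homogeneous degree exceeds $N$, identifies the remainder as the convolution kernel of a symbol in $S^{m-N-1}(\widehat G M|_U)$. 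Continuity of the map $\tau\mapsto\sigma-\sum_{j=0}^{N-1}\sigma_j$ then follows by tracking the seminorms produced at each step, as the constants in Lemma~\ref{lem_Taylorexpxu}, Theorem~\ref{thm_kernelM} and Corollary~\ref{cor_contSmTXDelta} are all continuous in the relevant data.
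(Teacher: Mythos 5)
Your plan follows the same broad strategy as the paper's proof — matching integral kernels, Euclidean Taylor expansion in the second amplitude slot via Lemma~\ref{lem_Taylorexpxu}, identification of the coefficients with $\Delta_\bX^\alpha D_\bX^\beta\tau$ via Lemma~\ref{lem_relAsigma}, and regrouping by homogeneous degree $[\alpha]$ while absorbing terms with $[\alpha]>N$ but $|\alpha|$ small — and the verification of $\AOp^{\bX,\chi}(\tau)=\Op^{\bX,\chi}(\sigma)$ together with the collapse of the Taylor coefficients of the product using $\widetilde\chi\equiv 1$ near the diagonal are both correct.

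However, there is a genuine gap where you write that ``continuity of the map $\tau\mapsto\sigma-\sum_{j=0}^{N-1}\sigma_j$ then follows by tracking the seminorms.'' To place the remainder in $S^{m-N-1}(\Gh M|_U)$ one must control not just the kernel difference itself but all the derivatives $L^{\beta}_{\langle\bX\rangle_x,u}\bX^{\beta'}_x$ of it, because this is what the kernel-side characterisation of the symbol classes (Corollary~\ref{cor2_kernelM}) requires. Simply applying the Taylor remainder estimate to the kernel does not automatically give this: once you set $v=-u$, the $u$-derivatives act both on the explicit $u$-slot and implicitly through $\exp^\bX_x(-u)$, and the $x$-derivatives also hit $y=\exp^\bX_x(-u)$; the resulting Taylor estimate for the differentiated kernel carries a constant whose dependence on $\tau$ is unclear \emph{a priori}, which is precisely the obstacle the paper flags (``such a Taylor estimate holds, however with a bound for which we know nothing of its dependence in $\tau$''). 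The paper resolves this by deriving explicit identities expressing $L^\beta_{\langle\bX\rangle_x,u}\kappa^\bX_{\tau,x,\exp_x^\bX(v)}(u)$, $\bX^{\beta'}_x\kappa^\bX_{\tau,x,\exp_x^\bX(v)}(u)$, and $R^\gamma_{\langle\bX\rangle_x,v}\kappa^\bX_{\tau,x,\exp_x^\bX(v)}(u)$ as finite combinations of $\kappa^\bX$-kernels of derived amplitudes (e.g.\ $\widehat{\langle\bX\rangle_x}^\beta\tau$, $D^{\beta'_1}_{\bX_x}\bX_y^{\beta'_2}\tau$), then invoking uniqueness of Taylor polynomials to identify the differentiated Taylor polynomial with the Taylor polynomial of the differentiated expression. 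You would need to supply this step (or an equivalent one) for the proof to go through; without it the passage from the $\beta=\beta'=0$ estimate to the full $S^{m-N-1}$-membership and the claimed continuity is not justified.
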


\begin{proof}[Proof of Lemma \ref{lem_prop:kxy}]
At least formally, the quantization for the amplitudes, especially \eqref{eq:kernel_AOp}, 
shows that  $
   \AOp^{\bX,\chi}(\tau)= \Op^{\bX,\chi}(\sigma),
   $ with $\sigma$ as in Lemma \ref{lem_prop:kxy}. 
We will prove that  $\sigma$ is a well-defined symbol in $S^m(\Gh M|_U)$ that  satisfies  the properties  stated in Lemma \ref{lem_prop:kxy} and Proposition \ref{prop:kxy}.
The hypothesis in Lemma \ref{lem_prop:kxy} on the cut-offs imposes no restriction on the proof of Proposition \ref{prop:kxy} because of the 
independence of the quantization in the cut-off, see Proposition \ref{prop_Qindepchi}.
In particular, we may assume that the function 
$$
\chi_0(x,u):= \widetilde \chi(x,\exp_x - u), \quad x\in U, \ u\in \bR^n,
$$
has $U$-local compact support where 
$\exp^\bX$ is defined and such that that if $(x,v)$ is in $\supp \, \chi_0$   then so is $(x,tv)$ for any $t\in [-1,1]$. Moreover,
by Lemma \ref{lem_relAsigma},  each 
$\sigma_j$ is a well defined symbol in $S^{m-j}(\Gh M)$.
As the distribution $\kappa^\bX_{\sigma,x}(u)$ is  
$U$-locally compactly supported
on $\supp \, \chi_0$
and $\sigma_j\in S^{m-j}(\Gh M)$, 
Lemma \ref{lem_prop:kxy} and Proposition \ref{prop:kxy} will be proved once we have shown  the following estimates:
for any $\beta,\beta'\in \bN_0$, 
there exist $N_0,N'_0\in \bN_0$ such that
for any compact subsets $\cC,\cC_0\subset U$
with $\cC$ being a neighbourhood of $\cC_0$, 
for any $N\geq N_0$,
there exist  $C>0$ and $N_1\in \bN_0$ for which we have  for  $x\in \cC_0$
and $u\in V_{\cC_0}$:
\begin{equation}
\label{eq_estpflem_prop:kxy}
        \left|L_{\langle \bX\rangle_x,u}^{\beta}
\bX^{\beta'}_x\left(\kappa^\bX_{\sigma,x}(u)
-
\sum_{j=0}^{N} \kappa_{\sigma_j,x}^\bX(u)\right)\right|
\leq C |u|_\bX^{N-N'_0}
 \|\tau\|_{\cT^m(\widehat G M),(\bX,U), \cC, N_1};
\end{equation}
above  $V_{\cC_0}$ is the neighbourhood 
$V_{\cC_0}:= \cup_{x\in \cC_0} \supp \, \chi_0(x,\cdot)$  of $0$ in $\bR^n$ and 
$|\cdot|_\bX$ is the fixed quasinorm associated with the gradation of $\bR^n$, see Notation \ref{notation:q_alpha}.
\smallskip

 Let us start by showing \eqref{eq_estpflem_prop:kxy} for $\beta=\beta'=0$.
Lemma~\ref{lem_Taylorexpxu} yields the following Taylor expansion in $v\sim 0$ (for $x_0,u_0$ fixed)  
$$
\kappa^\bX_{\tau,x_0,\exp^\bX_{x_0} (v)}(u_0)
\sim_{v\sim 0} 
\sum_{k}
\frac 1{k!} ( \sum_{i=1}^n v_i X_{i,y})^k 
\kappa_{\tau,x_0,y}^\bX(u_0)\Big|_{y=x_0},
$$
and we define the corresponding (Euclidean) Taylor polynomial at order $N$:
\begin{align*}
  \bP_N(\tau,x_0; u_0, v)
  &:= \sum_{k\leq N}
\frac 1{k!} ( \sum_{i=1}^n v_i X_{i,y})^k 
\kappa_{\tau,x_0,y}^\bX(u_0)\Big|_{y=x_0} \\
&=\sum_{k\leq N}
\frac 1{k!} \sum_{|\alpha|=k,|\beta| \leq k}  c_{\alpha,\beta}(x)(-v)^\alpha \bX_{y=x}^\beta\kappa_{\tau,x,y}^\bX(u_0),
\end{align*}
with the notation of the statement.
With $\cC,\cC_0, V_{\cC_0}$ as above,
the same lemma gives  the following (Euclidean) estimate for the remainder for any $x\in \cC_0$, $v\in V_{\cC_0}$
\begin{equation}
\label{eq_kappabPtau}
    |\kappa^\bX_{\tau,x_0,\exp^\bX_{x_0} (v)}(u_0)
-\bP_N(\tau,x_0; u_0, v)|
\lesssim_{N,\cC,\cC_0} |v|^{N+1}
\max_{|\alpha|=N+1}\sup_{y\in \cC}
 \Bigl| \bX^\alpha_y \kappa^\bX_{\tau,x_0,y}(u_0)\Bigr|,
\end{equation}
where $|v|$ is the Euclidean norm of $v$.
This implies for any $N\geq Q+m$, $x\in \cC_0$ and $u\in V_{\cC_0}$,
\begin{align*}
& |\kappa^\bX_{\sigma,x}(u)
-
\sum_{j=0}^{N} \kappa_{\sigma_j,x}^\bX(u)|
\leq 
|\chi_0 (x,u) | |\kappa^\bX_{\tau,x_0,\exp^\bX_{x_0} (-u)}(u)
-\bP_N(\tau,x_0; u, -u)|
\\&\qquad\qquad  +
 |(1-\chi_0) (x,u) | \sum_{|\alpha|<N, |\beta| \leq |\alpha|} \frac 1{|\alpha|!}|  c_{\alpha,\beta}(x)u^\alpha \bX_{y=x}^\beta\kappa_{\tau,x,y}^\bX(u)|
\\&\qquad\qquad  +|\chi_0 (x,u) | 
\sum_{[\alpha]>N, |\alpha|<N, |\beta| \leq |\alpha|} \frac 1{|\alpha|!}|  c_{\alpha,\beta}(x)u^\alpha \bX_{y=x}^\beta\kappa_{\tau,x,y}^\bX(u)|\\
&\quad \lesssim_{N,N_2, \cC}  (|u|^{N+1 }  +
|u|_\bX^{N+1})
|u|_\bX^{- \max (Q+m,0)}
 \|\tau\|_{\cT^m(\widehat G M),(\bX,U), \cC, N_1},
\end{align*}
for some $N_1\in \bN_0$ obtained from using the kernel estimates (see Theorem \ref{thm_kernelM}).
Since $|u|\lesssim |u|_\bX$ for $u$ in a bounded neighbourhood of 0, 
this yields \eqref{eq_estpflem_prop:kxy} 
in the case $\beta=\beta'=0$.
\smallskip

To consider the general case, we first  prove that an estimate similar to \eqref{eq_kappabPtau} holds after derivation in $x_0,v,u_0$. 
By the properties (especially uniqueness) of Taylor polynomials,  
such a Taylor estimate holds,  however with  a bound for which we know nothing of its dependence  in $\tau$. 
This is not sufficient for our proof: it is essential to know that the constant is given by a kernel estimate for $\tau$. 
To obtain this dependence,  we observe the following properties:
\begin{align*}
L_{\langle \bX\rangle_x,u}^\beta  \kappa^\bX_{\tau,x,\exp^\bX_{x} (v)}(u)
&=
\kappa^\bX_{\widehat {\langle \bX\rangle_x}^\beta\tau,x,\exp^\bX_{x} (v)}(u),\\
\bX^{\beta'}_x 
\kappa^\bX_{\tau,x,\exp^\bX_{x} (v)}(u)
&=
\sum_{|\beta'_1|+|\beta'_2|\leq |\beta'|} c'_{\beta',\beta'_1,\beta'_2}(x,v) 
\kappa^\bX_{D_{\bX_x}^{\beta'_1} \bX_y^{\beta'_2}  \tau,x,\exp^\bX_{x} (v)}(u),\\
R_{\langle \bX\rangle_x,v}^\gamma
\kappa^\bX_{\tau,x,\exp^\bX_{x} (v)}(u)
&=
\sum_{|\gamma_1|\leq|\gamma|} c_{\gamma,\gamma_1}(x,v) \kappa^\bX_{ \bX_y^{\gamma_1} \tau,x,\exp^\bX_{x} (v)}(u),
\end{align*}
for some families of functions $c'_{\beta',\beta'_1,\beta'_2},c_{\gamma,\gamma_1}\in C^\infty (U\times \bR^n)$
that we can construct by induction.
Consequently, denoting by $\bP_N^{f}(v)$ the (Euclidean) Taylor polynomial at order $N$ of a smooth function $f$ in $v\sim 0$, 
the (Euclidean) Taylor polynomial at order $N$ in $v\sim 0$ of 
$$
R_{\langle \bX\rangle_x,v}^\gamma
\bX^{\beta'}_x 
L_{\langle \bX\rangle_x,u}^\beta  \kappa^\bX_{\tau,x,\exp^\bX_{x} (v)}(u)
=
\sum_{\substack{|\beta'_1|+|\beta'_2|\leq |\beta'|\\ |\gamma_1|\leq|\gamma|}}
(c'_{\beta',\beta'_1,\beta'_2}c_{\gamma,\gamma_1})(x,v)
\kappa^\bX_{\bX_y^{\gamma_1} D_{\bX_x}^{\beta'_1} \bX_y^{\beta'_2}  \widehat {\langle \bX\rangle_x}^\beta
\tau,x,\exp^\bX_{x} (v)}(u),
$$
is 
\begin{align*}
 & \sum_{\substack{|\beta'_1|+|\beta'_2|\leq |\beta'|\\ |\gamma_1|\leq|\gamma|}}
\sum_{N_1+N_2=N}
\bP_{N_1}^{(c'_{\beta',\beta'_1,\beta'_2}c_{\gamma,\gamma_1})(x,\cdot)} (v) \
\bP_{N_2}(\bX_y^{\gamma_1} D_{\bX_x}^{\beta'_1} \bX_y^{\beta'_2}  \widehat {\langle \bX\rangle_x}^\beta,x; u, v),\\
&\qquad\qquad =R_{\langle \bX\rangle_x,v}^\gamma
\bX^{\beta'}_x 
L_{\langle \bX\rangle_x,u}^\beta 
\bP_{N+|\gamma|}(\tau,x; u, v),
\end{align*}
 by uniqueness of Taylor polynomials; 
 furthermore,  by the preceding step of the proof, its remainder for $x\in \cC_0$ and $v\in V$ is estimated by 
$$
\lesssim_{N,\cC, V} |v|^{N+1}
\max_{\beta_1\leq |\beta| }
\max_{|\alpha|\leq N+1+|\gamma|}\sup_{y\in \cC}
 \Bigl| \bX^\alpha_y \bX^{\beta_1}_x L_{\langle \bX\rangle_x}^\beta\kappa^\bX_{\tau,x,y}(u)\Bigr|.
 $$
 This gives the generalisation of \eqref{eq_kappabPtau} for derivatives.
This allows us to estimate 
\begin{align*}
 &    \left|L_{\langle \bX\rangle_x,u}^{\beta}
\bX^{\beta'}_x\left(\kappa^\bX_{\sigma,x}(u)
-
\sum_{j=0}^{N} \kappa_{\sigma_j,x}^\bX(u)\right)\right|
\\&\qquad \leq
|\chi_0 (x,u) | 
\left| L_{\langle \bX\rangle_x,u}^{\beta}
\bX^{\beta'}_x
\left(\kappa^\bX_{\tau,x,\exp_x^\bX -u}(u)
-
\sum_{j=0}^{N} \kappa_{\sigma_j,x}^\bX(u)\right)\right|
 \\&\qquad\qquad + 
  \sum_{
  \substack{\beta_1+\beta_2 = \beta\\
  \beta'_1+\beta'_2=\beta'\\
  \beta_1+\beta'_1>0}}
\binom {\beta_1}{\beta}
\binom {\beta'_1}{\beta'}
\left|
L_{\langle \bX\rangle_x}^{\beta_1}
\bX^{\beta'_1}_x\chi_0 (x,u) 
\right|
\left|L_{\langle \bX\rangle_x,u}^{\beta_2}
\bX^{\beta'_2}_x
\left(\kappa^\bX_{\tau,x,\exp_x^\bX -u}(u)\right)\right|.
\end{align*}
For the first term on the RHS, we proceed as in the case $\beta=\beta'=0$ above using the generalisation of \eqref{eq_kappabPtau} for derivatives we just obtained.
Due to the derivatives on $\chi_0$, the second term is supported 
on a $U$-locally compact neighbourhood of 0 and away from a smaller $U$-locally compact neighbourhood of 0. 
The kernel estimates (see Theorem \ref{thm_kernelM}) allow us  to  obtain \eqref{eq_estpflem_prop:kxy} for any $\beta,\beta'$.
This concludes the proof.
\end{proof}

\subsection{The adjoints of locally quantized operators}\label{sec:adjoint}

This section is devoted to showing the following  property for the local quantization:

\begin{proposition}
\label{prop_adjloc}
Let $\bX$ be an adapted frame on an open subset $U\subset M$.  
  If  $\sigma\in S^{m}(\widehat G M)$, 
 then  there exists a unique  symbol $\sigma^{(*)}   \in S^{m}(\widehat G M|_U)$ 
 such that  
$$
\Op^{\bX,\chi}(\sigma)^*=
\Op^{\bX,\chi}(\sigma^{(*)}).
$$
Moreover, the following map is continuous
$$
\sigma\longmapsto \sigma^{(*)},
\qquad S^{m}(\widehat G M) \longrightarrow S^{m}(\widehat G M|_U),
$$
  and $\sigma^{(*)}$ admits an asymptotic expansion 
  $$
  \sigma^{(*)}\sim \sum_{j\geq 0} \rho_j
  $$
  such that  $\rho_0=\sigma^*$ and for each $N\in\bN_0$, the following map is continuous
$$
\sigma\longmapsto \sigma ^{(*)}-\sum_{j=0}^N\rho_j, 
\qquad S^{m}(\widehat GM)\longrightarrow S^{m-N-1}(\widehat GM).
$$
\end{proposition}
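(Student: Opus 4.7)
The plan is to identify $\Op^{\bX,\chi}(\sigma)^*$ as the amplitude quantization $\AOp^{\bX,\chi}(\tau)$ of an explicit amplitude $\tau \in \cT^m(\widehat GM|_U)$, and then convert this amplitude into a symbol using Proposition~\ref{prop:kxy}. The continuity properties of the quantization maps, combined with the density of smoothing symbols (Proposition~\ref{prop_smoothingM}), will allow me to first perform the construction for $\sigma\in S^{-\infty}(\widehat GM)$—where every convolution kernel is a Schwartz function and all manipulations are routine—and then extend to general $\sigma\in S^m(\widehat GM)$ by continuity.

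The integral kernel of $\Op^{\bX,\chi}(\sigma)$ is $K(x,y)=\kappa^\bX_{\sigma,x}(-\ln^\bX_x y)\chi(x,y)\jac_y(\ln^\bX_x)$, so the adjoint has kernel $K^*(x,y)=\overline{K(y,x)}$. Using $\ln^\bX_x y=-\ln^\bX_y x$ from \eqref{eq_ODEln} and setting $v:=-\ln^\bX_x y$ (so that $y=\exp^\bX_x(-v)$ and $-\ln^\bX_y x=-v$), I would rewrite
$$
K^*(x,y)=J(x,y)\,\overline{\kappa^\bX_{\sigma,y}(-v)}\,\chi(x,y)\,\jac_y(\ln^\bX_x),
$$
where $J(x,y):=\chi(y,x)\jac_x(\ln^\bX_y)/\bigl(\chi(x,y)\jac_y(\ln^\bX_x)\bigr)$ is smooth on a neighbourhood of the diagonal with $J(x,x)=1$ (after possibly refining $\chi$ using Proposition~\ref{prop_Qindepchi} to ensure the denominator does not vanish on the relevant support). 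This exhibits $\Op^{\bX,\chi}(\sigma)^*=\AOp^{\bX,\chi}(\tau)$ for the amplitude
$$
\kappa^\bX_{\tau,x,y}(v):=J(x,y)\,\overline{\kappa^\bX_{\sigma,y}(-v)}.
$$

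The crucial step is to verify that $\tau$ genuinely lies in $\cT^m(\widehat GM|_U)$. For each fixed $y$, the distribution $v\mapsto\overline{\kappa^\bX_{\sigma,y}(-v)}$ is, by the group adjoint formula \eqref{conv_kernel_adjoint}, the convolution kernel in the $\bX$-coordinates at $y$ of $\sigma(y,\cdot)^*\in S^m(\widehat G_yM)$; but the amplitude formalism forces us to interpret it as a convolution kernel at $x$, i.e., as a distribution on $\bR^n\cong G_xM$. To establish the required uniform-in-$(x,y)$ seminorm bounds in $S^m(\widehat G_xM)$ together with smooth $(x,y)$-dependence, the plan is to combine the geometric kernel estimates of Theorem~\ref{thm_kernelM} and Corollary~\ref{cor2_kernelM}—which are formulated in terms of the quasinorm $|\cdot|_\bX$ and are insensitive to the particular group law on $\bR^n$—with the reductions afforded by Corollary~\ref{cor_contSmTXDelta} (composition with $\widehat{\langle\bX\rangle}^\alpha$ and application of $\Delta_\bX^\alpha$) to descend to a symbol order low enough that Corollary~\ref{cor2_kernelM}(2) applies directly. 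This will indeed be the main obstacle of the proof, as it requires transferring the $S^m$-class structure across base points—a phenomenon absent from the group case and where the comparison of the group laws $*_x^\bX$ and $*_y^\bX$ from Section~\ref{sec:geom_maps}, via higher-order terms, becomes essential.

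Once $\tau\in\cT^m(\widehat GM|_U)$ is established with continuous dependence on $\sigma$, Proposition~\ref{prop:kxy} produces the desired symbol $\sigma^{(*)}\in S^m(\widehat GM|_U)$ together with the asymptotic expansion $\sigma^{(*)}\sim\sum_{j\geq 0}\rho_j$. The leading term is $\rho_0(x,\cdot)=\tau(x,x,\cdot)=\sigma(x,\cdot)^*$, since $J(x,x)=1$ and $v\mapsto\overline{\kappa^\bX_{\sigma,x}(-v)}$ is exactly the convolution kernel of $\sigma(x,\cdot)^*$ in $\bX$-coordinates at $x$. The continuity of $\sigma\mapsto\sigma^{(*)}$ and of the tails $\sigma\mapsto\sigma^{(*)}-\sum_{j\leq N}\rho_j$ are inherited from Proposition~\ref{prop:kxy}. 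Uniqueness is to be interpreted modulo $S^{-\infty}(\widehat GM|_U)$: if two candidates yielded the same operator, then by subtraction their difference has convolution kernel vanishing on a neighbourhood of the diagonal (where $\chi\equiv 1$), hence is smoothing by Corollary~\ref{cor:ker_smoothing}(1).
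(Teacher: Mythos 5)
Your route is exactly the paper's: exhibit the adjoint as an amplitude quantization and invoke Proposition~\ref{prop:kxy}. The execution differs in one substantive place, and that difference is a genuine technical flaw. By insisting on keeping the original cut-off $\chi$ on the amplitude side, you are forced to define
$$
J(x,y)=\frac{\chi(y,x)\,\jac_x(\ln^\bX_y)}{\chi(x,y)\,\jac_y(\ln^\bX_x)},
$$
which is singular (indeed undefined) on the boundary of $\{\chi(x,y)>0\}$, so the family $\kappa^\bX_{\tau,x,y}(v)=J(x,y)\overline{\kappa^\bX_{\sigma,y}(-v)}$ is not an element of $\cT^m(\Gh M|_U)$: Proposition~\ref{prop:kxy} requires $\tau$ to be a globally smooth map $U\to S^m(\Gh M|_U)$, and your $\tau$ is not smooth across $\partial\{\chi>0\}$. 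Appealing to Proposition~\ref{prop_Qindepchi} to ``refine $\chi$'' does not automatically fix this, since the way the proposition modifies the cut-off replaces the symbol by another symbol, it does not alter the $J$ that appears in \emph{your} identity for $K^*$; you would need to explicitly first replace $\chi$ by a $\chi_1$ with $\supp\chi_1\subset\{\chi=1\}$, at which point $J$ degenerates to the pure Jacobian ratio $\jac_x(\ln^\bX_y)/\jac_y(\ln^\bX_x)$ on the support that matters. The paper sidesteps this entirely by noting that the natural cut-off for the adjoint is $\chi^*(x,y)=\overline\chi(y,x)$: one then writes $\Op^{\bX,\chi}(\sigma)^*=\AOp^{\bX,\chi^*}(\tau)$ with $\kappa^\bX_{\tau,x,y}(v)=\overline{\kappa^\bX_{\sigma,y}(-v)}\,\jac_x(\ln^\bX_y)/\jac_y(\ln^\bX_x)$ (no $\chi$ factor at all, and the Jacobian ratio is smooth and equal to $1$ on the diagonal), applies Proposition~\ref{prop:kxy} at $\chi^*$, and only then converts $\chi^*$ back to $\chi$ by Proposition~\ref{prop_Qindepchi}, which costs a smoothing symbol and hence does not affect the asymptotic expansion.

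You also flag the verification that $\tau\in\cT^m(\Gh M|_U)$ as ``the main obstacle'' and propose invoking the higher-order geometry of Section~\ref{sec:geom_maps}. This overstates the difficulty. The amplitude here is a smooth scalar $(x,y)$-dependent multiple of $\overline{\kappa^\bX_{\sigma,y}(-v)}$, which is the convolution kernel of $\sigma(y,\cdot)^*\in S^m(\Gh_y M)$. The cross-base-point transfer you worry about -- interpreting this distribution as a convolution kernel on $G_x M$ -- is already absorbed into the amplitude formalism of Section~\ref{sec:amplitudeDef+ex}; the relevant check is of the same nature as Lemma~\ref{lem_ex_amplitude:convolution}, and it boils down to the kernel estimates (Theorem~\ref{thm_kernelM}, Corollary~\ref{cor2_kernelM}), which are stated in terms of the quasinorm $|\cdot|_\bX$ and are insensitive to the choice of group law, together with the fact that $L_{\langle\bX\rangle_x}$ and $L_{\langle\bX\rangle_y}$ differ by smoothly $(x,y)$-dependent lower-triangular polynomial coefficients -- the same smooth-family phenomenon underlying Lemmata~\ref{lem_SmnormsigmaxRx}--\ref{lem_SmnormsigmaxRS}. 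No BCH-type composition of exponentials (Theorem~\ref{lem_Cq1BCH}) is needed at this step; that machinery is what drives the \emph{composition} theorem, not the adjoint. The rest of your proposal -- the identification of the leading term $\rho_0=\sigma^*$ via the group adjoint formula \eqref{conv_kernel_adjoint}, the inherited continuity from Proposition~\ref{prop:kxy}, and the uniqueness argument via Corollary~\ref{cor:ker_smoothing} -- is correct, though note the paper's uniqueness is genuine (Corollary~\ref{cor_actonDistrib_loc_Q}), not merely modulo smoothing.
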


Before discussing the proof of Proposition~\ref{prop_adjloc}, 
we observe that 
it implies together with the properties of the adjoint and 
the continuous action of $\Op^{\bX,\chi}(\sigma)$, $\sigma\in \cup_m S^m(\Gh)$, on $C_c^\infty(U)$
(see Lemma~\ref{lem_actonDistrib_locQ_1}) that
 the action of these operators can be extended to  distributions:

\begin{corollary}
\label{cor_actonDistrib_loc_Q}
Let $(\bX,U)$ be an adapted local frame  and $\chi$ a cut-off function  for $(\bX,U)$. Let $m\in \bR$.
If $\sigma\in S^m(\widehat GM|_{U})$, the operator $\Op^{\bX, \chi} (\sigma)$ acts continuously on $C_c^\infty(U)$ and $\mathcal D'(U)$. Furthermore, 
the map  $\sigma \mapsto \Op^{\bX, \chi} (\sigma)$ is an injective morphism of topological spaces 
\[
S^m(\Gh M|_U)\to \sL(C_c^\infty(U))\;\;\mbox{and}\;\; S^m(\Gh M|_U)\to \sL(\cD'(U)).
\]
   \end{corollary}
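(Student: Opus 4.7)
The proof combines Lemma~\ref{lem_actonDistrib_locQ_1} with a standard duality argument built on the adjoint produced in Proposition~\ref{prop_adjloc}. The first assertion (action on $C_c^\infty(U)$ and continuity of $\sigma\mapsto \Op^{\bX,\chi}(\sigma)\in\sL(C_c^\infty(U))$) is already provided by Lemma~\ref{lem_actonDistrib_locQ_1}, so no further work is needed for this part.

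To extend the action to $\cD'(U)$, I would proceed by transposition. Given $\sigma\in S^m(\Gh M|_U)$, Proposition~\ref{prop_adjloc} produces $\sigma^{(*)}\in S^m(\Gh M|_U)$ satisfying $\Op^{\bX,\chi}(\sigma)^*=\Op^{\bX,\chi}(\sigma^{(*)})$, understood via the $L^2$-pairing induced by the fixed positive $1$-density $dx$ on $M$. Applying Lemma~\ref{lem_actonDistrib_locQ_1} to $\sigma^{(*)}$ shows that $\Op^{\bX,\chi}(\sigma^{(*)})$ acts continuously on $C_c^\infty(U)$; its transpose then extends $\Op^{\bX,\chi}(\sigma)$ to a continuous operator on $\cD'(U)$ by the rule
$$
\langle \Op^{\bX,\chi}(\sigma)T,\phi\rangle := \langle T,\Op^{\bX,\chi}(\sigma^{(*)})\phi\rangle,
\qquad T\in\cD'(U),\ \phi\in C_c^\infty(U),
$$
and is compatible on $C_c^\infty(U)\hookrightarrow\cD'(U)$ with the previously defined action. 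Continuity of $\sigma\mapsto \Op^{\bX,\chi}(\sigma)\in\sL(\cD'(U))$ then follows formally by composing the continuity of $\sigma\mapsto \sigma^{(*)}$ from Proposition~\ref{prop_adjloc}, of $\tau\mapsto \Op^{\bX,\chi}(\tau)\in\sL(C_c^\infty(U))$ from Lemma~\ref{lem_actonDistrib_locQ_1}, and of the transposition $\sL(C_c^\infty(U))\to \sL(\cD'(U))$.

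The main obstacle is the injectivity. Assuming $\Op^{\bX,\chi}(\sigma)=0$, the integral kernel
$$
(x,y)\longmapsto \kappa^\bX_{\sigma,x}(-\ln^\bX_x y)\,\chi_x(y)\,\jac_y(\ln^\bX_x)
$$
vanishes as a distribution on $U\times U$; restricting to an open neighbourhood of the diagonal where $\chi\equiv 1$ forces $\kappa^\bX_{\sigma,x}$ to vanish on an open neighbourhood of $0\in\bR^n$ for each $x\in U$. Combined with the Schwartz decay away from $0$ provided by the kernel estimates of Theorem~\ref{thm_kernelM}, Corollary~\ref{cor:ker_smoothing}(1) first reduces matters to the case $\sigma\in S^{-\infty}(\Gh M|_U)$, i.e.\ to a smoothing symbol with Schwartz convolution kernel vanishing near $0$. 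The hard step, which I expect to be the substantial point of the proof, is to upgrade this to $\sigma=0$: the natural strategy is to exploit Proposition~\ref{prop_Qindepchi} by replacing $\chi$ with a family of cut-offs $\chi'$ equal to $1$ on arbitrarily large neighbourhoods of the diagonal inside $\exp^\bX(U_\bX)$, so that the vanishing of the original operator, combined with the smoothing perturbation produced by changing the cut-off, propagates the vanishing of $\kappa^\bX_{\sigma,x}$ to all of $\bR^n$; one then concludes $\sigma=0$ by the fibrewise injectivity of the group Fourier transform (Proposition~\ref{prop_smoothingM}(1)).
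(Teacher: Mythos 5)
Your treatment of the two boundedness claims matches the paper exactly: the continuous action on $C_c^\infty(U)$ is precisely Lemma~\ref{lem_actonDistrib_locQ_1}, and the extension to $\cD'(U)$ by transposition through the adjoint symbol $\sigma^{(*)}$ from Proposition~\ref{prop_adjloc} is the paper's duality argument. The paper's proof is even terser than yours --- it only records the defining formula
$\langle \Op^{\bX,\chi}(\sigma) f,g\rangle= \langle f, \overline{\Op^{\bX,\chi}(\sigma^{(*)}) g}\rangle$
and says that the rest follows from duality and Lemma~\ref{lem_actonDistrib_locQ_1} --- so your more explicit spelling-out of the continuity chain is fine and equivalent.

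The injectivity is where your argument has a genuine gap, and it is worth noting that the paper's own proof does not actually address this point at all. Your reduction is correct as far as it goes: from $\Op^{\bX,\chi}(\sigma)=0$ you deduce that $\kappa^\bX_{\sigma,x}(v)$ vanishes on $\{v:\chi_x(\exp^\bX_x(-v))\neq 0\}$, an open set containing $0$, and Corollary~\ref{cor:ker_smoothing} then forces $\sigma\in S^{-\infty}(\Gh M|_U)$. But the proposed propagation via Proposition~\ref{prop_Qindepchi} does not work: replacing $\chi$ by $\chi'$ gives $\Op^{\bX,\chi'}(\sigma+\rho')=0$ for a smoothing $\rho'$ that depends on $\chi'$, so what you learn is that $\kappa^\bX_{\sigma+\rho',x}$ vanishes near $0$, not that $\kappa^\bX_{\sigma,x}$ vanishes on a larger set. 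More fundamentally, since every $\bX$-cut-off has compact $y$-support for each $x$, the set $\{v:\chi_x(\exp^\bX_x(-v))\neq 0\}$ is bounded, and a non-trivial Schwartz density $\kappa_x$ supported outside this bounded set (which certainly exists by the kernel estimates in Theorem~\ref{thm_kernelM}) produces a non-zero smoothing symbol annihilated by $\Op^{\bX,\chi}$. Your strategy therefore cannot close the injectivity claim in the literal form stated; that point needs either a separate argument or a reinterpretation (for instance, modulo $S^{-\infty}(\Gh M|_U)$), and neither your sketch nor the paper's one-line justification supplies it.
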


   \begin{proof}
If $f\in \cD'(U)$, the distribution $\Op^{\bX,\chi}(\sigma) f$ is defined by 
       \[
       \langle \Op^{\bX,\chi}(\sigma) f,g\rangle= \langle f, \overline{\Op^{\bX,\chi}(\sigma^{(*)}) \bar g}\rangle,\;\; g\in \cD(M):=C^\infty_c(M).
       \]
       Here $\langle \cdot,\cdot\rangle$ denotes the duality bracket in distributions. 
Hence the statement follows by duality and Lemma~\ref{lem_actonDistrib_locQ_1}. 
   \end{proof}

The proof of  Proposition \ref{prop_adjloc} follows  from 
the properties of invariance by change of cut-off functions (see Proposition~\ref{prop_Qindepchi})  and  the following lemma:

\begin{lemma}
\label{lem_adj}
Let $\bX$ be an  adapted frame  on an open subset $U\subset M$, and  $\chi$ be a $\bX$-cut-off and we have:

\begin{enumerate}
    \item The function  $\chi^*$ given by  
\[
\chi^*(x,y)= \chi(y,x), 
\qquad x,y\in U,
\]
 is also a cut-off for $\bX$ and if  $\sigma\in S^{m}(\widehat G M)$, then 
    $$
\Op^{\bX, \chi}(\sigma)^*
=
\Op^{\bX, \chi^*}(\sigma^{(*)}),
$$
where $\sigma^{(*)}\in S^{m}(\widehat G M|_U)$
is the symbol whose convolution kernel in the $\bX$-coordinates is given by:
$$
\kappa_{\sigma^{(*)},x}^\bX (u)= 
\overline \kappa_{\sigma,\exp^\bX_x -u}\left(-u\right) \frac{\jac_x(\ln^\bX_{\exp_x^\bX -u})}{\jac_{\exp_x^\bX -u}(\ln_x^\bX)}.
$$
\item  If  $\sigma\in S^{m}(\widehat G M)$, then
$$
\sigma^{(*)} \sim \sum_{j \in \bN} 
\sum_{[\alpha]=j,\beta}
d_{\alpha,\beta}(x) 
\Delta_\alpha^\bX D_\bX^\beta \sigma^*,
$$
where 
  the coefficients $d_{\alpha,\beta}$ are obtained 
from the coefficients $c_{\alpha,\beta}$ of Lemma~\ref{lem_prop:kxy}:
$$
c_{\alpha,\beta}(x)\bX^{\beta}_{y=x}
\left( \frac{ \jac_x(\ln^\bX_y)}{\jac_y(\ln^\bX_x)} f(y)\right)
=
\sum_{|\beta'|\leq |\beta|}
d_{\alpha,\beta'}(x) 
\bX^{\beta'}f(x),
\quad x\in U, \ f\in C^\infty(U).
$$
Moreover, for each $N\in \bN$, 
the following map is continuous
$$
\sigma \longmapsto \sigma^{(*)} - \sum_{j\leq N} 
\sum_{[\alpha]=j,\beta}
d_{\alpha,\beta}(x) 
\Delta_\alpha^\bX D_\bX^\beta \sigma^*,
\quad
S^m(\Gh M)\longleftrightarrow S^{m-N-1}(\Gh M|_U).
$$
\end{enumerate}
\end{lemma}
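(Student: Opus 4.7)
The plan is to prove Part (1) by directly computing the adjoint integral kernel and then recognising the resulting symbol as the one associated via Proposition~\ref{prop:kxy} to a natural amplitude; Part (2) then follows by reading off the asymptotic expansion of Lemma~\ref{lem_prop:kxy} applied to that amplitude.

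For Part (1), observe first that $\chi^*$ is a cut-off for $\bX$ since $\chi$ is real-valued and its support properties are preserved by $(x,y)\mapsto(y,x)$. From \eqref{eq_int_kernel} the Schwartz kernel of $\Op^{\bX,\chi}(\sigma)$ with respect to $dy$ is
$$K_\sigma(x,y)=\kappa_{\sigma,x}^\bX(-\ln^\bX_x y)\,\chi(x,y)\,\jac_y(\ln^\bX_x),$$
so its adjoint has kernel $K_\sigma^*(x,y)=\overline{K_\sigma(y,x)}$. The substitution $u=-\ln^\bX_xy$, which gives $y=\exp^\bX_x(-u)$ and $\ln^\bX_yx=u$ by \eqref{eq_ODEln}, allows one to rewrite
$$K_\sigma^*(x,y)=\kappa^\bX_{\sigma^{(*)},x}(-\ln^\bX_xy)\,\chi^*(x,y)\,\jac_y(\ln^\bX_x)$$
with $\kappa^\bX_{\sigma^{(*)},x}$ given by the formula of the statement, yielding the operator identity $\Op^{\bX,\chi}(\sigma)^*=\Op^{\bX,\chi^*}(\sigma^{(*)})$ at the level of Schwartz kernels. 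To recognise $\sigma^{(*)}$ as a genuine symbol in $S^m(\widehat GM|_U)$ and to set up the asymptotic expansion, I introduce the amplitude $\tau\in\cT^m(\widehat GM|_U)$ with $\bX$-convolution kernel
$$\kappa^\bX_{\tau,x,y}(u):=\overline{\kappa^\bX_{\sigma,y}(-u)}\,J(x,y),\qquad J(x,y):=\frac{\jac_x(\ln^\bX_y)}{\jac_y(\ln^\bX_x)},$$
defined on a neighbourhood of the diagonal and extended by a suitable cut-off. Since $\overline{\kappa^\bX_{\sigma,y}(-u)}=\kappa^\bX_{\sigma^*,y}(u)$ is the convolution kernel of $\sigma^*\in S^m(\widehat GM)$ (by Proposition~\ref{prop_comp+adj}) and $J$ is smooth with $J(x,x)=1$, Corollary~\ref{cor2_kernelM} shows $\tau\in\cT^m(\widehat GM|_U)$ with continuous dependence on $\sigma$, and Proposition~\ref{prop:kxy} (via the explicit formula of Lemma~\ref{lem_prop:kxy}) yields $\sigma^{(*)}\in S^m(\widehat GM|_U)$ with $\AOp^{\bX,\chi^*}(\tau)=\Op^{\bX,\chi^*}(\sigma^{(*)})$, whose $\bX$-convolution kernel matches the prescribed formula where $\exp^\bX_x(-u)$ is defined.

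For Part (2), Lemma~\ref{lem_prop:kxy} provides the expansion
$$\sigma^{(*)}\sim\sum_{j\ge 0}\sum_{[\alpha]=j,\,|\beta|\le|\alpha|}c_{\alpha,\beta}(x)\,\Delta^\alpha_\bX D^\beta_{\bX,y=x}\tau(x,x,\pi),$$
so it suffices to unravel each term in terms of $\sigma^*$. By Lemma~\ref{lem_relAsigma}, the $\bX$-convolution kernel of $\Delta^\alpha_\bX D^\beta_{\bX,y}\tau$ is $u^\alpha\bX^\beta_y\bigl(\kappa^\bX_{\sigma^*,y}(u)\,J(x,y)\bigr)$; Leibniz in $y$ writes this as a linear combination of $u^\alpha\bX^{\beta'}_y\kappa^\bX_{\sigma^*,y}(u)\cdot\bX^{\beta-\beta'}_y J(x,y)$ with $|\beta'|\le|\beta|$. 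Evaluating at $y=x$, the factor $u^\alpha\bX^{\beta'}_y\kappa^\bX_{\sigma^*,y}(u)|_{y=x}$ is the convolution kernel of $\Delta^\alpha_\bX D^{\beta'}_\bX\sigma^*(x,\cdot)$, and regrouping according to the prescription in the statement collects the scalar factors $c_{\alpha,\beta}(x)\bX^{\beta-\beta'}_y J(x,y)|_{y=x}$ into the announced coefficients $d_{\alpha,\beta'}(x)$. The leading term ($\alpha=\beta=0$, $c_{0,0}=1$, $J(x,x)=1$) is $\sigma^*$, so $\rho_0=\sigma^*$, and continuity of the truncation maps inherits directly from Proposition~\ref{prop:kxy}.

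The main technical point, and the only step beyond bookkeeping, is the claim $\tau\in\cT^m(\widehat GM|_U)$: this is subtle because the kernel $u\mapsto\overline{\kappa^\bX_{\sigma,y}(-u)}$ encodes at parameter $y$ the symbol $\sigma^*(y,\cdot)$ on the fiber $\widehat G_yM$ rather than on $\widehat G_xM$, to which $\tau(x,y,\cdot)$ must belong. The resolution is that membership in $S^m(\widehat GM)$ is characterised purely by Euclidean-type semi-norms of the $\bX$-convolution kernel through Corollary~\ref{cor2_kernelM}, independent of the fiberwise group structure, so the estimates for $\sigma^*$ transfer directly; the smooth factor $J(x,y)$ and its $(x,y)$-derivatives affect neither the order nor the regularity of the resulting amplitude.
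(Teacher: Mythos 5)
The proposal is correct and mirrors the paper's argument almost exactly: both compute the Schwartz kernel of the adjoint, substitute $u=-\ln^\bX_x y$ (using $\ln^\bX_y x=-\ln^\bX_x y$) to obtain the displayed formula for $\kappa^\bX_{\sigma^{(*)},x}$, then recognise the amplitude with $\bX$-kernel $\kappa^\bX_{\tau,x,y}(u)=\kappa^\bX_{\sigma^*,y}(u)\,J(x,y)$ and invoke Proposition~\ref{prop:kxy} together with the explicit coefficients of Lemma~\ref{lem_prop:kxy} and Lemma~\ref{lem_relAsigma} for the asymptotic expansion. One small inaccuracy in your closing remark: the semi-norms of Corollary~\ref{cor2_kernelM} are \emph{not} independent of the fiberwise group structure — they involve $L^{\beta}_{\langle\bX\rangle_x}$ — so the transfer from $\widehat G_yM$ to $\widehat G_xM$ really rests on expressing $L_{\langle\bX\rangle_x}$ in terms of $L_{\langle\bX\rangle_y}$ with polynomial coefficients depending smoothly on $(x,y)$ (a routine point the paper also dispatches with ``we check readily''); this does not affect the validity of your argument.
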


\begin{proof}[Proof of Lemma~\ref{lem_adj}]
{\it Part (1)}. For any $f\in C^\infty_c(U)$ and $x\in M$
\[
\Op^{\bX,\chi} (\sigma)^*f(x)=\int_M 
\overline{\kappa}_{\sigma, y}^\bX(-\ln_y^\bX x) (\chi^*_xf)(y)\jac_x(\ln_y^\bX)dy.
\]
Therefore, the convolution kernel $\kappa^\bX_{\sigma^{(*)},x}$ of the operator $\Op^{\bX,\chi} (\sigma)^*$ satisfies for $x,y\in M$
\[
\kappa^\bX_{\sigma^{(*)},x}(-\ln_x^\bX y)
= \overline{\kappa}_{\sigma,y}^\bX(-\ln_y^\bX x) \frac{\jac_x(\ln_y^\bX)}
{\jac_y(\ln_x^\bX)}.
\]
The change of variable $u=-\ln_x^\bX y$ gives 
\[
\kappa^\bX_{\sigma^{(*)},x}(u)
= \overline{\kappa}_{\sigma,\exp_x^\bX -u}^\bX(-\ln_{\exp_x^\bX -u}^\bX x) \frac{\jac_x(\ln_{\exp_x^\bX -u}^\bX)}
{\jac_{\exp_x^\bX -u}(\ln_x^\bX)}.
\]
We then conclude by observing that 
\[
-\ln_{\exp_x^\bX -u}^\bX x=\ln_{x}^\bX \exp_x^\bX-u=-u.
\]

\smallskip

{\it Part (2)}.  By Part (1), 
we can write at least formally $\Op^{\bX,\chi}(\sigma)^*=\AOp^{\bX,\chi}(\tau)$ where $\tau$ is the amplitude given by the convolution kernel in the $\bX$-coordinates:
\begin{align*}
\kappa^\bX_{\tau,x,y}(v)
= \overline  \kappa_{\sigma,y}^\bX (-v) \frac{ \jac_x(\ln^\bX_y)}{\jac_y(\ln^\bX_x)},
\qquad x,y\in U.
\end{align*}
We check readily  that
$\tau\in \cT^m(\Gh M|_U)$
with  
$$
c_{\alpha,\beta}(x)\Delta^\bX_\alpha D_{\bX, y=x}^\beta \tau(x,y,\pi)
=
\sum_{|\beta'|\leq |\beta|}
d_{\alpha,\beta'}(x)
\Delta^\bX_\alpha D_\bX^\beta \sigma^*(x,\pi),
\quad x\in U, \ \pi\in G_x M,
$$
by Lemma \ref{lem_relAsigma}.
We then conclude with  Proposition~\ref{prop:kxy}.
\end{proof}

Usually, $\sigma^{(*)}$ is different from $\sigma^*$.
  However,  the   symbols $\widehat {\langle X_j\rangle}$ have special properties: 

\begin{lemma}
\label{lem_adjXj}
Let $\bX$ be an  adapted frame  on an open subset $U\subset M$, 
and  $\chi$ be a cut-off for~$\bX$.
Then $$
\Op^{\bX, \chi}(\widehat {\langle X_j\rangle})^*=
-\Op^{\bX, \chi}(\widehat {\langle X_j\rangle}).
$$	
\end{lemma}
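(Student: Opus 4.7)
The plan is to apply Lemma \ref{lem_adj} to the specific case $\sigma = \widehat{\langle X_j\rangle}$, exploiting the particularly simple structure of its convolution kernel. First, I would compute $\sigma^*$ at the invariant-symbol level: since $\pi(\langle X_j\rangle_x)$ is the infinitesimal generator at $t=0$ of the one-parameter unitary group $t\mapsto \pi(\Exp_x^\bX(te_j))$, it is skew-Hermitian on its domain of smooth vectors, so $\widehat{\langle X_j\rangle}^* = -\widehat{\langle X_j\rangle}$. Equivalently, by formula \eqref{conv_kernel_adjoint}, the convolution kernel $\partial_{v_j}\delta_0$ from Example \ref{ex_widehatlangleXrangle} transforms under $v\mapsto v^{-1} = -v$ into $-\partial_{v_j}\delta_0$.

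Next, using the asymptotic expansion from Lemma \ref{lem_adj}(2), I would argue that all higher-order corrections to $\sigma^{(*)}$ vanish, leaving $\sigma^{(*)} = -\widehat{\langle X_j\rangle}$ exactly. Two observations drive the argument. The kernel $\partial_{v_j}\delta_0$ is independent of $x$, so $D_\bX^\beta \widehat{\langle X_j\rangle} = 0$ for $|\beta|\geq 1$, restricting the sum to $\beta = 0$. For $[\alpha]\geq 1$, the symbol $\Delta_\alpha^\bX(-\widehat{\langle X_j\rangle})$ has kernel $-v^\alpha \partial_{v_j}\delta_0$, and a direct distributional computation gives $v^\alpha\partial_{v_j}\delta_0 = -\delta_{\alpha,e_j}\delta_0$; hence only $\alpha = e_j$ can contribute, yielding $\Delta_{e_j}^\bX(-\widehat{\langle X_j\rangle}) = +\id$. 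The sole potentially surviving correction is $d_{e_j,0}(x)\id$ at order zero, and this must vanish: specialising the defining relation in Lemma \ref{lem_adj}(2) at $(\alpha,\beta) = (e_j,0)$ gives $c_{e_j,0}(x)f(x) = d_{e_j,0}(x)f(x)$, and $c_{e_j,0} = 0$ because the first-order Taylor term $\sum_i (-u_i) X_{i,y}$ in Lemma \ref{lem_prop:kxy} contains no $\bX_y^0 = \id$ contribution with $|\alpha|=1$.

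Finally, Lemma \ref{lem_adj}(1) gives $\Op^{\bX,\chi}(\widehat{\langle X_j\rangle})^* = \Op^{\bX,\chi^*}(-\widehat{\langle X_j\rangle})$, and I would close the gap between the cut-offs $\chi^*$ and $\chi$ by invoking Lemma \ref{lem_prop_Qindepchi}(2). The resulting correction symbol has convolution kernel proportional to $\partial_{v_j}\delta_0 \cdot (\chi^* - \chi)(x, \exp_x^\bX(-u))$. Since both $\chi$ and $\chi^*$ equal $1$ in a neighbourhood of the diagonal, their difference vanishes to all orders at $u=0$, so its product with the delta-derivative distribution $\partial_{v_j}\delta_0$ (supported at the origin) is identically zero, completing the proof.

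The main obstacle I expect lies in the bookkeeping of the second step: the coefficients $c_{\alpha,\beta}$ and $d_{\alpha,\beta}$ are defined recursively, and the Jacobian ratio $\jac_x\ln_y^\bX/\jac_y\ln_x^\bX$ entering Lemma \ref{lem_adj}(1) must be shown to contribute nothing beyond the group-level sign flip — a subtle matching that ultimately reflects the simplicity of the symbol $\widehat{\langle X_j\rangle}$, whose kernel is first order in a single coordinate direction and independent of the base point.
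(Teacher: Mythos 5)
Your route through the general adjoint machinery (Lemma \ref{lem_adj}) is genuinely different from the paper's proof, which instead applies Proposition \ref{ex_OpT} to rewrite $(\Op^{\bX,\chi}(\widehat{\langle X_j\rangle}) f,g)_{L^2}$ as $\partial_j\big|_{v=0}\int_U f(\exp_x^\bX v)\bar g(x)\,dx$, performs a direct change of variables $y=\exp_x^\bX v$, and reads off the sign from the facts that the Jacobian $\jac_y\exp_y^\bX(-v)$ equals $1$ at $v=0$ with $\partial_j\big|_{v=0}\jac_y\exp_y^\bX(-v)=0$, the latter because $D_{v=0}D_y\exp_y^\bX(-v)=D_yD_{v=0}\exp_y^\bX(-v)=-D_y(\id)=0$. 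Your computation of $\sigma^*$, your identification of $\alpha=e_j$ as the only possibly surviving correction (since $v^\alpha\partial_{v_j}\delta_0=0$ unless $\alpha\in\{0,e_j\}$, with $v_j\partial_{v_j}\delta_0=-\delta_0$), and your handling of the cut-off mismatch are all correct.

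The gap is in the claim that the correction coefficient $d_{e_j,0}$ vanishes. You argue from the defining relation in Lemma \ref{lem_adj}(2) taken at $(\alpha,\beta)=(e_j,0)$ that $d_{e_j,0}=c_{e_j,0}=0$. But the coefficient of $\Delta_{e_j}^\bX\sigma^*=\id$ in the expansion also receives a contribution from $(\alpha,\beta)=(e_j,e_j)$: since $c_{e_j,e_j}=-1$, the Leibniz rule applied to $\bX^{e_j}_{y=x}\bigl[\tfrac{\jac_x(\ln_y^\bX)}{\jac_y(\ln_x^\bX)}f(y)\bigr]$ produces a term $-\bigl(X_{j,y=x}\tfrac{\jac_x(\ln_y^\bX)}{\jac_y(\ln_x^\bX)}\bigr)f(x)$ alongside $-X_{j,x}f(x)$, so in fact $d_{e_j,0}(x)=-X_{j,y=x}\tfrac{\jac_x(\ln_y^\bX)}{\jac_y(\ln_x^\bX)}$, which is not manifestly zero. (Equivalently, if you work from the exact formula in Lemma \ref{lem_adj}(1), the product $J(x,u)\cdot\partial_{v_j}\delta_0(-u)$ yields $-\partial_{v_j}\delta_0+(\partial_jJ)(x,0)\,\delta_0$, and the $\delta_0$ term is exactly what must be shown to vanish.) This is not bookkeeping to be dispatched after the fact but the core computational content of the lemma: you must establish the Jacobian identity $\partial_j\big|_{v=0}\jac_y\exp_y^\bX(-v)=0$, which is what the paper proves via the commutation $D_{v=0}D_y\exp_y^\bX(-v)=-D_y(\id)=0$. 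Your proposal names the obstacle in its closing paragraph but the argument offered to resolve it does not actually address it.
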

In other words,   $\widehat {\langle X_j\rangle}^{(*)}=\widehat {\langle X_j\rangle}^* = - \widehat {\langle X_j\rangle}$.

\begin{proof}[Proof of Lemma \ref{lem_adjXj}]
Let $f,g\in C^\infty_c(U)$.
 Proposition \ref{ex_OpT} implies
\begin{align*}
(\Op^{\bX, \chi}(\widehat {\langle X_j\rangle}) f,g)_{L^2(U)}
&=
\partial_j\big|_{v=0}
\int_U  f (\exp_x^\bX v )\bar g(x)dx
\\&=\partial_j\big|_{v=0}
\int_U  f (y )\bar g(\exp_y^\bX -v) \jac_y \exp_y^\bX (-v) dy
\\
&=	-
\int_U  f (y )  
\partial_j\big|_{v=0}\bar g(\exp_y^\bX -v)
\ dy,
\end{align*}
since
$\jac_y \exp_y ^\bX0=1$ and $\partial_j\big|_{v=0} \jac_y \exp_y^\bX (-v)=0$ because we have 
 \[
  D_{v=0}D_y \exp_y^\bX(-v) = D_y D_{v=0}\exp_y^\bX(-v) = -D_y (\id ) =0.
 \]
	\end{proof}

 \subsection{The composition of locally quantized operators}\label{sec:composition}

This section is devoted to showing   properties regarding composition  for the local quantization:
\begin{proposition}
\label{prop_comploc}
Let $\bX$ be an adapted frame on an open subset $U\subset M$
and let $\chi$ be a cut-off function for $\bX$. 
If  $\sigma_1\in S^{m_1}(\widehat G M)$ and  
   $\sigma_2\in S^{m_2}(\widehat G M)$, 
 then  there exists a unique symbol
 $$
 \sigma_1 \diamondsuit \sigma_2=\sigma_1 \diamondsuit_{\bX,\chi} \sigma_2   \in S^{m_1+m_2}(\widehat G M|_U)
 $$
 such that  
$$
\Op^{\bX,\chi}(\sigma_1 \diamondsuit \sigma_2)=
\Op^{\bX,\chi}(\sigma_1)\Op^{\bX,\chi}(\sigma_2).
$$
Moreover, the following properties hold:
\begin{enumerate}
    \item
The following map is continuous
$$
(\sigma_1,\sigma_2)\longmapsto  \sigma_1 \diamondsuit \sigma_2, 
\qquad 
S^{m_1}(\widehat G M)\times S^{m_2}(\widehat G M) \longrightarrow S^{m_1+m_2}(\widehat G M|_U).
$$
\item 
 The symbol  $\sigma_1 \diamondsuit \sigma_2$
 admits an asymptotic expansion 
 $$
 \sigma_1 \diamondsuit \sigma_2\sim \sum_{j\geq 0} \rho_j\quad\mbox{in} \ S^{m_1+m_2}(\Gh M|_U),
 $$
 satisfying   $\rho_0=\sigma_1\sigma_2$ and such that for each $N\in\bN$, the following map is continuous
$$
   (\sigma_1,\sigma_2)\longmapsto \sigma_1 \diamondsuit \sigma_2-\sum_{j=0}^N\rho_j, \quad 
   S^{m_1}(\widehat GM)\times S^{m_2}(\widehat GM)\longrightarrow S^{m_1+m_2-N-1}(\widehat GM).
   $$
\end{enumerate}
\end{proposition}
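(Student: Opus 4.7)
The plan is to reduce Proposition~\ref{prop_comploc} to the amplitude machinery developed in Sections~\ref{sec:amplitudeDef+ex}--\ref{sec:amplitudeQ+asymp}. Starting from $f\in C_c^\infty(U)$ and $x\in U$, the double integral
\[
\Op^{\bX,\chi}(\sigma_1)\Op^{\bX,\chi}(\sigma_2)f(x)
=\int\!\!\!\int \kappa_{\sigma_1,x}^\bX(v_1)\,\chi_x(y_1)\,\kappa_{\sigma_2,y_1}^\bX(v_2)\,\chi_{y_1}(z)\,f(z)\,dv_1\,dv_2,
\]
with $y_1:=\exp_x^\bX -v_1$ and $z:=\exp_{y_1}^\bX -v_2$, is rewritten by changing variables from $(v_1,v_2)$ to $(w,v)$, where $w:=v_1$ and $v:=-\ln_x^\bX z$ is the argument of~$f$ expressed in $x$-centered geometric-exponential coordinates. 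The outcome is to present the composition as $\AOp^{\bX,\chi'}(\tau)f(x)$ for a suitable $\bX$-cut-off~$\chi'$ (which can be brought back to the original~$\chi$ using Proposition~\ref{prop_Qindepchi}) and for an amplitude~$\tau$ whose convolution kernel in the $\bX$-coordinates is a \emph{perturbed convolution}
\[
\kappa_{\tau,x,y}^\bX(v)=\int_{\bR^n}\kappa_{\sigma_1,x}^\bX(w)\,\kappa_{\sigma_2,y}^\bX\bigl(v*_x^\bX(-w)\bigr)\,J(x,y;v,w)\,dw,
\]
where $J\in C^\infty(U\times U\times\bR^n\times\bR^n)$ is $U\times U$-locally compactly supported and bundles together the surviving cut-off factors and the Jacobians of the two changes of variables.

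The core geometric input is Theorem~\ref{lem_Cq1BCH}: applying it at basepoint~$x$ to identify $z=\exp_{\exp_x^\bX -w}^\bX -v_2$ with $\exp_x^\bX -v$ yields $-v=(-w)*_x^\bX(-v_2)+r(x;-v_2,-w)$ with $r$ higher order, so that, after inversion, $v_2=v*_x^\bX(-w)+\rho(x;v,w)$ with $\rho$ a smooth higher-order remainder. Either absorbing this perturbation into $\kappa_{\sigma_2,y}^\bX$ by Proposition~\ref{prop:kernel_higher_order}, or, equivalently, performing a further smooth change of variable $w\mapsto w'$ defined by $v_2=v*_x^\bX(-w')$ and then applying Proposition~\ref{prop:kernel_higher_order} to $\kappa_{\sigma_1,x}^\bX$, produces the clean convolution form above, the residual smooth Jacobian factors being folded into~$J$. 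By construction, $J(x,x;0,0)=1$, since on the diagonal both cut-offs equal~$1$ and, at zero vectors, the differentials of the changes of variables are the identity so all Jacobians reduce to~$1$.

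With $\tau$ in this form, Proposition~\ref{prop_ex_amplitude:convolution} gives $\tau\in \cT^{m_1+m_2}(\Gh M|_U)$ and the expansion $\tau\sim \sum_j \tau_j$ with $\tau_0(x,y)=J(x,y;0,0)\sigma_1(x)\sigma_2(y)$, with all the required continuity estimates. Proposition~\ref{prop:kxy} then produces a symbol $\sigma_1\diamondsuit\sigma_2\in S^{m_1+m_2}(\Gh M|_U)$ satisfying $\Op^{\bX,\chi}(\sigma_1\diamondsuit\sigma_2)=\AOp^{\bX,\chi}(\tau)=\Op^{\bX,\chi}(\sigma_1)\Op^{\bX,\chi}(\sigma_2)$, together with an asymptotic expansion $\sigma_1\diamondsuit\sigma_2\sim\sum_j\rho_j$; the leading term reads $\rho_0(x,\cdot)=\tau_0(x,x,\cdot)=\sigma_1(x)\sigma_2(x)$, after using $J(x,x;0,0)=1$ and redistributing the lower-order tails of $\tau(x,x,\cdot)$ into the $\rho_j$ for $j\geq 1$. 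Uniqueness of $\sigma_1\diamondsuit\sigma_2$ follows from injectivity of $\Op^{\bX,\chi}$ (Corollary~\ref{cor_actonDistrib_loc_Q}), and the continuity of $(\sigma_1,\sigma_2)\mapsto\sigma_1\diamondsuit\sigma_2$ as well as of the partial-sum remainders is obtained by chaining the continuity statements of Propositions~\ref{prop_ex_amplitude:convolution} and~\ref{prop:kxy}. The main technical obstacle lies in the change-of-variables step of the first paragraph: verifying that the combined cut-offs, Jacobians, and higher-order perturbations in $v_2$ can indeed be rearranged into the perturbed-convolution template required by Proposition~\ref{prop_ex_amplitude:convolution} while preserving $U\times U$-local compact support of~$J$. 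Density of smoothing symbols (Proposition~\ref{prop_smoothingM}) lets us first justify these manipulations for Schwartz convolution kernels, where everything is a bona fide smooth integral, and then extend the identity to arbitrary symbols by continuity of the maps involved.
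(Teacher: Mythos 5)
Your proposal takes a different route from the paper: you compose $\Op^{\bX,\chi}(\sigma_1)$ and $\Op^{\bX,\chi}(\sigma_2)$ directly, whereas the paper first writes $\Op^{\bX,\chi}(\sigma_2)=\bigl(\Op^{\bX,\chi}(\sigma_2)^*\bigr)^*=\Op^{\bX,\chi}(\sigma_2^{(*)})^*$ (using Proposition~\ref{prop_adjloc}) and then treats $\Op^{\bX,\chi}(\sigma_1)\,\Op^{\bX,\chi}(\sigma_3)^*$. Unfortunately, the direct route has a genuine gap: after your change of variables, the double integral contains $\kappa_{\sigma_2,y_1}^\bX(v_2)$ with base point $y_1=\exp_x^\bX(-w)$, which depends on the \emph{inner integration variable} $w$, not on the output point $z=\exp_x^\bX(-v)$. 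But the amplitude you claim to obtain,
\[
\kappa_{\tau,x,y}^\bX(v)=\int_{\bR^n}\kappa_{\sigma_1,x}^\bX(w)\,\kappa_{\sigma_2,y}^\bX\bigl(v*_x^\bX(-w)\bigr)\,J(x,y;v,w)\,dw,
\]
evaluates $\kappa_{\sigma_2}$ at the amplitude variable $y$, which under $\AOp^{\bX,\chi'}$ is specialised to $y=z$, not to $y_1$. These are different points. Your invocations of Theorem~\ref{lem_Cq1BCH} and Proposition~\ref{prop:kernel_higher_order} only handle the discrepancy in the \emph{argument} of $\kappa_{\sigma_2}$ (turning $v_2$ into $v*_x^\bX(-w)$ modulo higher order); they do not address this discrepancy in the \emph{base point}. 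To close the gap you would need an additional Taylor expansion of $\kappa_{\sigma_2,\cdot}^\bX$ in the base-point slot (in the spirit of Lemma~\ref{lem_prop:kxy} and Lemma~\ref{lem_algprop_ex_amplitude:convolution} (2)--(4)), producing a sum of terms $\bigl(\bX_y^\beta\kappa_{\sigma_2,y}^\bX\bigr)$ weighted by polynomials in $\ln_y^\bX(\exp_x^\bX(-w))$, and then verify that each piece is again of the perturbed-convolution type with controlled orders.

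This is precisely the extra machinery the paper's adjoint trick avoids: for $P_1P_3^*$, the kernel is
\[
(x,y)\longmapsto\int_M K_1(x,z)\,\overline{K_3(y,z)}\,dz,
\]
in which $K_1(x,z)$ involves $\kappa_{1,x}^\bX$ at $x$ and $K_3(y,z)$ involves $\kappa_{3,y}^\bX$ at $y$, so both convolution kernels are already anchored at the outer (amplitude) variables. Your direct composition is salvageable with the extra base-point expansion, but as written the reduction to Proposition~\ref{prop_ex_amplitude:convolution} does not go through.
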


Let us first explain the strategy of the proof of Proposition \ref{prop_comploc} (which follows from standard ideas developed in the Euclidean case). By  Proposition \ref{prop_adjloc}, there exists a symbol $\sigma_2^{(*)}$ such that 
\[
\Op^{\bX,\chi}(\sigma_2)= (\Op^{\bX,\chi}(\sigma_2)^*)^*
= \Op^{\bX,\chi}(\sigma_2^{(*)})^*.
\]
Therefore, it is enough to be able to prove that for all symbol $\sigma_3\in S^m(\widehat GM)$, the operator 
$\Op^{\bX,\chi}(\sigma_1)\, \Op^{\bX,\chi}(\sigma_3)^*$ is  pseudodifferential  in the sense that it  has a symbol in a suitable class, and that  
 this symbol can be written as an asymptotic sum of symbols with suitable continuity properties. 
The advantage of considering this new problem  is that a simple amplitude is associated with the operator $\Op^{\bX,\chi}(\sigma_1)\, \Op^{\bX,\chi}(\sigma_3)^*$. Then Proposition~\ref{prop:kxy} will allow us to conclude and to identify the first term of the asymptotic expansion.

With this strategy in mind,  Proposition \ref{prop_comploc} will derive from the following lemma:
\begin{lemma}
\label{lem_compadj2}
We continue with the setting and notation of Proposition \ref{prop_comploc}.
  The symbol  $\sigma_1 \diamondsuit (\sigma_2^{(*)})$
  admits an asymptotic expansion 
  $$
  \sigma_1 \diamondsuit (\sigma_2^{(*)})\sim \sum_{j\geq 0} \rho_j\quad\mbox{in} \ S^{m_1+m_2}(\Gh M|_U),
  $$
satisfying $\rho_0=\sigma_1 \sigma_2^*$ and  such that for each $J\in\bN$, the following map is continuous
$$   (\sigma_1,\sigma_2)\longmapsto \sigma_1 \diamondsuit (\sigma_2^{(*)})-\sum_{j=0}^J \rho_j,\qquad S^{m_1}(\widehat GM)\times S^{m_2}(\widehat GM)\longrightarrow S^{m_1+m_2-J-1}(\widehat GM).
$$
Moreover,
there exist two families of smooth functions $(c_{\alpha_1,\alpha_2,\gamma_1,\gamma_2})_{\alpha_1,\alpha_2,\gamma_1,\gamma_2\in \bN^n}$ in $C^\infty(U)$ and $(p_{\alpha_3})_{\alpha_3\in\bN^n}$ in $C^\infty(U\times \bR^n)$ such that 
\[
\rho_j=\sum_{[\alpha_1]+[\alpha_2]+|\alpha_3|_{min} =j}\; \sum _{\gamma_1+\gamma_2=\alpha_3,\; |\beta|\leq |\gamma_2|}
c_{\alpha_1,\alpha_2,\gamma_1,\gamma_2}(x)\Delta_{ p_{\alpha_3} }\widehat{\langle X\rangle}^{\alpha_3}
\Delta_\bX^{\alpha_1+\gamma_1} \sigma_1 \Delta_\bX ^{\alpha_2+\gamma_2} D_\bX^\beta  \sigma_2^*.
\]
\end{lemma}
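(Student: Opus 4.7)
My plan is to exhibit the composition $\Op^{\bX,\chi}(\sigma_1)\Op^{\bX,\chi}(\sigma_2)^*$ as the amplitude quantization $\AOp^{\bX,\chi}(\tilde\tau)$ of an amplitude $\tilde\tau\in\cT^{m_1+m_2}(\widehat GM|_U)$ built from a perturbed group convolution of $\kappa^\bX_{\sigma_1}$ with $\kappa^\bX_{\sigma_2^*}$, and then to recover the symbol expansion by applying in succession Proposition~\ref{prop:kxy}, Proposition~\ref{prop:kernel_higher_order}, and Proposition~\ref{prop_ex_amplitude:convolution}. Since $\Op^{\bX,\chi}(\sigma_2^{(*)})=\Op^{\bX,\chi}(\sigma_2)^*$ by Proposition~\ref{prop_adjloc}, this will yield the claim about $\sigma_1\diamondsuit\sigma_2^{(*)}$.

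The first step is to compute the Schwartz kernel of $\Op(\sigma_1)\Op(\sigma_2)^*$: using the formula \eqref{eq_int_kernel} for both factors, the intermediate substitution $y=\exp^\bX_x(-v)$, and the identity $\overline{\kappa^\bX_{\sigma_2,z}(w)}=\kappa^\bX_{\sigma_2^*,z}(-w)$ from \eqref{conv_kernel_adjoint} (since in $\Exp^\bX$-coordinates the inverse in $G_zM$ is negation), I would obtain
\[
K(x,z)=\int_{\bR^n}\kappa^\bX_{\sigma_1,x}(v)\,\kappa^\bX_{\sigma_2^*,z}\bigl(\ln^\bX_z\exp^\bX_x(-v)\bigr)\,J_0(x,z,v)\,dv,
\]
where $J_0$ is smooth, $U\times U$-locally compactly supported, and $J_0(x,x,0)=1$. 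Applying Theorem~\ref{lem_Cq1BCH} at the base $y=\exp^\bX_x(-v)$ together with the identity $\ln^\bX_z\exp^\bX_x(-v)=-\ln^\bX_yz$ and comparing $*^\bX_y$ with $*^\bX_x$ through the smoothness of the BCH coefficients in the base point (cf.~Lemma~\ref{lem_group_laws}), I would arrive at
\[
\ln^\bX_z\exp^\bX_x(-v)=v'*^\bX_x(-v)+\tilde r(x,v,v'),\qquad v':=-\ln^\bX_xz,
\]
where $\tilde r$ is smooth and higher order in $(v,v')$ jointly in the sense of Definition~\ref{def_hot} (in particular $\tilde r(x,0,v')=0$ and $\tilde r(x,v,0)=0$, as verified via characterization~(2) of Proposition~\ref{prop_char_higherorder} and the homogeneity of the group laws). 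This lets me define $\tilde\tau$ by setting
\[
\kappa^\bX_{\tilde\tau,x,z}(v'):=\int_{\bR^n}\kappa^\bX_{\sigma_1,x}(v)\,\kappa^\bX_{\sigma_2^*,z}\bigl(v'*^\bX_x(-v)+\tilde r(x,v,v')\bigr)\,J(x,z,v,v')\,dv,
\]
where $J$ is any smooth $U\times U$-locally compactly supported extension of the quotient $J_0(x,z,v)/[\chi_x(z)\jac_z(\ln^\bX_x)]$ off the diagonal $v'=-\ln^\bX_xz$, arranged so that $J(x,x,0,0)=1$. By construction, $\AOp^{\bX,\chi}(\tilde\tau)=\Op(\sigma_1)\Op(\sigma_2)^*$.

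Next, Proposition~\ref{prop:kxy} applied to $\tilde\tau$ produces the expansion
\[
\sigma_1\diamondsuit\sigma_2^{(*)}\sim\sum_{[\alpha_3]=k,\,|\beta|\leq|\alpha_3|}c_{\alpha_3,\beta}(x)\,\Delta^{\alpha_3}_\bX D^\beta_{\bX,y=x}\tilde\tau(x,x,\pi),
\]
whose leading term $\tilde\tau(x,x,\pi)$ is a pure convolution amplitude on $G_xM$ since $\tilde r(x,v,0)=0$. The $y$-derivatives $D^\beta_{\bX,y=x}$ bring out polynomial-in-$v$ factors from the $v'$-derivatives of $\tilde r$ and $J$; Proposition~\ref{prop:kernel_higher_order} absorbs these perturbations into a finite sum indexed by $\alpha_3$ and involving $\Delta_{p_{\alpha_3}}\widehat{\langle\bX\rangle}^{\alpha_3}$, where $|\alpha_3|_{\min}$ measures the effective loss of order, and Proposition~\ref{prop_ex_amplitude:convolution} then expands each resulting pure group convolution as $\sum_{[\alpha_1]+[\alpha_2]=\ell}c_{\alpha_1,\alpha_2}\,\Delta^{\alpha_1}_\bX\sigma_1\cdot\Delta^{\alpha_2}_\bX\,(\widehat{\langle\bX\rangle}^{\alpha_3}\sigma_2^*)$. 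Applying the Leibniz rule of Corollary~\ref{cor_GM_Leib} to decompose $\Delta^{\alpha_3}_\bX=\sum_{\gamma_1+\gamma_2=\alpha_3}c^{(\alpha_3)}_{\gamma_1,\gamma_2}\,\Delta^{\gamma_1}_\bX\otimes\Delta^{\gamma_2}_\bX$ across the product $\sigma_1\cdot D^\beta_\bX\sigma_2^*$ yields the announced form of $\rho_j$ with total order decrease $[\alpha_1]+[\alpha_2]+|\alpha_3|_{\min}=j$; the constraint $|\beta|\leq|\gamma_2|$ reflects the fact that $D^\beta_\bX$ only meaningfully acts on the $\sigma_2^*$-factor, which is the one carrying the $\gamma_2$-part of the Leibniz splitting. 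For $j=0$ only $\alpha_1=\alpha_2=\alpha_3=\beta=0$ contributes and $\rho_0=J(x,x,0,0)\sigma_1\sigma_2^*=\sigma_1\sigma_2^*$, as required. Continuity of the maps $(\sigma_1,\sigma_2)\mapsto\sigma_1\diamondsuit(\sigma_2^{(*)})-\sum_{j\leq J}\rho_j$ then follows from the continuity statements of the four propositions invoked.

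The main obstacle in carrying out this plan is that $\tilde r(x,v,v')$ is higher order in the joint variable $(v,v')$ but not in $v$ alone for generic $v'\neq 0$, so Proposition~\ref{prop:kernel_higher_order} cannot be applied directly at the amplitude level. The resolution is to first invoke Proposition~\ref{prop:kxy} to Taylor-expand in $v'=-\ln^\bX_xz$ around the diagonal $y=x$: since $\tilde r$ vanishes on $\{v'=0\}$, the coefficients in this Taylor expansion, when combined with the $D^\beta_{\bX,y=x}$ prefactors, become functions of $v$ alone that are higher order in $v$, and only then Proposition~\ref{prop:kernel_higher_order} applies. A secondary technical point is the non-uniqueness of the off-diagonal extension $J$ of $J_0$, but this affects $\tilde\tau$ only modulo smoothing amplitudes and does not alter the final symbol expansion.
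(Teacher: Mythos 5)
Your overall strategy matches the paper's: exhibit $\Op^{\bX,\chi}(\sigma_1)\Op^{\bX,\chi}(\sigma_2)^*$ as an amplitude quantization built from a perturbed group convolution, then expand via Propositions~\ref{prop:kxy}, \ref{prop:kernel_higher_order} and \ref{prop_ex_amplitude:convolution}. You also correctly identify the main obstacle, namely that the higher-order correction term (your $\tilde r(x,v,v')$, the paper's $\check r(x;u,w_1)$) is higher order only in the joint variable $(v,v')$, so Proposition~\ref{prop:kernel_higher_order} does not apply directly.

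However, your proposed resolution of this obstacle does not work, and this is a genuine gap. You claim that after invoking Proposition~\ref{prop:kxy} the coefficients ``become functions of $v$ alone that are higher order in $v$''. Two things break here. First, the Taylor expansion in Proposition~\ref{prop:kxy}/Lemma~\ref{lem_prop:kxy} is in the amplitude's \emph{manifold} variable $y$ (your $z$); the operators $D^\beta_{\bX,y=x}$ differentiate in $y$ and never touch $\tilde r$, since $\tilde r(x,v,v')$ does not depend on $z$. After setting $y=x$ you are left with terms of the form $\int\kappa^\bX_{\sigma_1,x}(v)\,\kappa^\bX_{D^{\beta'}_\bX\sigma_2^*,x}\bigl(v'*^\bX_x(-v)+\tilde r(x,v,v')\bigr)J'(x,v,v')\,dv$, where $v'$ is now the symbol's free group variable and the full joint $(v,v')$-dependence of $\tilde r$ persists. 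Second, even if one did expand $\tilde r$ in $v'$ about $v'=0$, the resulting coefficients are \emph{not} higher order in $v$: writing the Taylor series of $\tilde r_j$ as a sum of $v^\alpha(v')^{\alpha'}$ with $[\alpha]+[\alpha']>\upsilon_j$, the $\partial_{v'}^\gamma$-derivative at $v'=0$ retains monomials with only $[\alpha]>\upsilon_j-[\gamma]$, which fails to be higher order as soon as $\gamma\neq 0$. The argument that is actually needed (and that the paper performs implicitly with $\check r$ placed inside $\kappa_1^\bX(w_1*_x\check r(x;u,w_1))$) is to Taylor-expand the kernel in the perturbation à la Folland--Stein, obtaining polynomials $q_\alpha(\check r(x;u,w_1))$ in \emph{both} the inner variable $w_1$ and the outer variable $u$, and then to separate the $w_1$-part, which yields the $\Delta_\bX^{\gamma}$-factors on the convolution, from the $u$-part, which becomes the generalized difference operator $\Delta_{p_{\alpha_3}}$ on the composite symbol, with estimates taken uniformly in the outer variable using inequality~\eqref{eqpfprop:kernel_higher_orderEST} from the proof of Proposition~\ref{prop:kernel_higher_order} rather than the proposition itself. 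That bookkeeping, which produces exactly the Leibniz split $\gamma_1+\gamma_2=\alpha_3$ and the $|\alpha_3|_{\min}$-loss in the stated form of $\rho_j$, is the substance of the proof and is absent from your sketch. A secondary discrepancy: with your perturbation placed in $\kappa_{\sigma_2^*}$, the operator $\widehat{\langle\bX\rangle}^{\alpha_3}$ would land on $\sigma_2^*$ rather than on the left of the composite as in the stated formula; the paper's choice to perform an additional change of variables so that the correction sits inside $\kappa_{\sigma_1,x}$ is what makes the formula come out in the announced shape.
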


\begin{proof}[Proof of Lemma \ref{lem_compadj2}]
By density of smoothing symbols (see Proposition~\ref{prop_smoothingM}), and, 
since we will prove estimates for continuous seminorms, 
we may assume that the symbols $\sigma_1,\sigma_2$ are smoothing; this justifies  the manipulations below. 

The integral kernel of the operator $\Op^{\bX,\chi}(\sigma_1)\Op^{\bX,\chi}(\sigma_2)^*$ is given   by 
\begin{align*}
(x,y) \mapsto &\int_M \kappa_{1,x}^\bX (-\ln_x^\bX z) \overline \kappa_{2,y}^\bX(-\ln^\bX_y z) \jac_z(\ln^\bX_x)\jac_z(\ln_y^\bX)\chi_x(z)\overline \chi_y(z)dz\\
&= \widetilde \chi(x,y)\int_{\bR^n} \kappa_{1,x}^\bX (-v) \overline \kappa_{2,y}^\bX(-\ln^\bX_y (\exp^\bX_x v)) \jac_{\exp^\bX_x v}
(\ln^\bX_y) \, (\chi_x\overline \chi_y)(\exp^\bX_x v)dv,
\end{align*}
where $\widetilde \chi$ is an $\bX$-cut-off function  with \textcolor{blue}{$\widetilde \chi =1$ on a neighbourhood of  $\cup_{z\in U} \supp (\chi(\cdot,z) \bar \chi(\cdot,z)) $}.
Consequently, 
$$
\Op^{\bX,\chi}(\sigma_1) \, \Op^{\bX,\chi}(\sigma_2)^* = \Op^{\bX,\widetilde \chi}(\sigma)
=
\AOp^{\bX,\widetilde \chi}(\tau),
$$
where  $\tau $ is the amplitude whose  convolution kernel in the $\bX$-coordinates is given by 
$$
    \kappa_{\tau,x,y}^\bX (u)= 
\int_{\bR^n} \kappa_{1,x}^\bX (-v) \overline \kappa_{2,y}^\bX(-\ln^\bX_{\exp^{\bX}_x -u} (\exp^\bX_x v))
\frac{ \jac_{\exp^\bX_x v}
(\ln^\bX_y)}{\jac_y(\ln^\bX_x)}
(\chi_x\overline \chi_y)(\exp^\bX_x v)
dv.
$$
After
 the change of variable 
$$
w =\varphi_{x,u}(v)= \ln^\bX_{\exp^{\bX}_x -u} (\exp^\bX_x v) ,
\qquad
v=\varphi_{x,u}^{-1}(w)= \ln^\bX_x \exp^\bX_{\exp^\bX_x -u} w,
$$
we obtain:
$$
\kappa_{\tau,x,y}^\bX (u)= 
\int_{\bR^n} \kappa_{1,x}^\bX (-\varphi^{-1}_{x,u}(w)) \kappa_{2*,y}^\bX(w)
 \, 
(\chi_x\overline \chi_y)(\exp^\bX_{\exp^\bX_x -u} w)
\frac{ \jac_{\exp^\bX_{\exp^\bX_x -u} w}
(\ln^\bX_y)}{\jac_y(\ln^\bX_x)} \jac_w \varphi_{x,u}^{-1}
dw,
$$
 where $\kappa_{2*,y}^\bX (u) := \overline \kappa_{2,y}^\bX (-u)$ is
  the convolution kernel of the symbol $\sigma_2^*$ in the $\bX$-coordinates.
We then make the substitution $w=u*_x(-w_1)$, 
and define as in Corollary \ref{cor_lem_Cq1BCH} (2)
$$
\check r(x;u,w_1) =(- w_1)*_x
(-\ln^\bX_{x} \exp^\bX_{\exp^\bX_x -u} u*_x(-w_1)),
$$
to obtain
$$
\kappa_{\tau,x,y}^\bX (u) = \kappa_{\tau(u_1),x,y}(u)\big|_{u_1=u},
$$
where
$$	\kappa_{\tau(u_1),x,y}^\bX (u)
  := 
\int_{\bR^n} \chi(x;u_1,w_1)\kappa_{1,x}^\bX (w_1 *_x \check r(x;u_1,w_1)) \kappa_{2*,y}^\bX( u*_x(-w_1)) \ J(x,y;u,w_1) dw_1.
$$
Above, we adopted the short-hand
$$
J(x,y;u,w_1):=(\chi_x\overline \chi_y)(\exp^\bX_{\exp^\bX_x -u} u*_x(-w_1)) \
\frac{ \jac_{\exp^\bX_{\exp^\bX_x -u} u*_x(-w_1)}
(\ln^\bX_y)}{\jac_y(\ln^\bX_x)} 
\jac_{u*_x(-w_1)} \varphi_{x,u}^{-1},
$$
for the expression involving the cut-off functions and the Jacobians; we also 
 inserted a suitable $U$-locally  compactly supported function 
  $\chi_1\in C^\infty(U\times \bR^n\times \bR^n)$. 
Hence, with the notation of Lemma \ref{lem_amplitudedepv_1}, we have 
$$\tau=\tau_0
\qquad\mbox{where}\qquad
\tau(u_1) :=\tau(\sigma_1^{(\check r_{u_1},\chi_{1,u_1} )},\sigma_2^*,J),
$$
having used the notation of Propositions \ref{prop_ex_amplitude:convolution} and \ref{prop:kernel_higher_order}, and set
$$
\check r_{u_1} (x;w_1):=\check r(x;u_1,w_1), \qquad \chi_{1,u_1}(x;w_1):=\chi_1(x;u_1,w_1).
$$
These statements, together with the link between amplitudes and symbols (see Proposition~\ref{prop:kxy}) and  the continuity of  taking the adjoint of symbols (see Proposition \ref{prop_comp+adj}), allow us to conclude the proof. 
\end{proof}

\section{Pseudodifferential calculus on $M$}\label{sec:global_sobolev}

In this section we define a pseudodifferential calculus on filtered manifolds. 
In Section~\ref{sec:global} we introduce   a  passage 
from  the  local quantization to the definition of  operators defined on the whole manifold~$M$. It follows classical lines and is inspired by the construction of quantization on  manifolds  presented in~\cite{Zwobook,Hintz}. 
Then, we verify that the operators we have introduced satisfy the expected properties: composition rules when they are properly supported (Section~\ref{sec:properly_supported}),  a notion of principal symbol (Section~\ref{subsec:princ_symbol}), and the existence of parametrices when the symbols satisfy adequate conditions (Section~\ref{subsec:parametrices}).
As usual, associated with this set of pseudodifferential calculus comes a notion of (local) Sobolev spaces that we describe in Section~\ref{sec:sobolev}, and on which  pseudodifferential operators act continuously. Finally,  in Section~\ref{subsec:groupoid}, we discuss what is the classical pseudodifferential calculus deriving from our construction and prove in Section~\ref{sec:groupoid} that it coincides with 
the abstract class of pseudodifferential operators constructed by 
van Erp and Yuncken calculus in~\cite{VeY1,VeY2}
via groupoids.

\subsection{Pseudodifferential operators on $M$}\label{sec:global}

We recall that  the smooth manifold $M$ is assumed second countable, paracompact Hausdorff, 
and 
that we have fixed a measure $dx$ on~$M$. 

\subsubsection{Definition}
\label{subsubsec_defPDOM}

\begin{definition}\label{def:Q0}
    Let $M$ be a filtered manifold. Let $m\in\mathbb{R}$. 
    A linear operator 
    $$P: C_c^\infty(M)\longrightarrow C^\infty(M)$$
    is a pseudodifferential operator of order $m$ on the filtered manifold $M$ when 
    \begin{itemize}
         \item[(1)] 
         for any adapted frame $\bX$ on an open set $U\subset M$, for any $\bX$-cut-off $\chi$, 
there exists a symbol $\sigma\in S^m(\widehat{G}M|_U)$ such that 
      $$
      \varphi P(\psi f)=\varphi \Op^{\bX, \chi}(\sigma)(\psi f),
      $$
         for any $f\in C_c^\infty(M)$ and 
         $\varphi, \psi\in  C_c^\infty (U)$, 
\item[(2)] there exists a smooth function 
$K\in C^\infty (M\times M  \setminus D_M)$
defined away from the diagonal $D_M:=\{(x,x)\colon x\in M\}$
such that 
$$
\varphi_1 P(\varphi_2)(x) =\varphi_1 (x)\int_M K(x, y) \varphi_2(y)dy, \qquad x\in M,
$$
for any $\varphi_1,\varphi_2 \in C_c^\infty(M)$ with disjoint supports (i.e. $\supp\,\varphi_1\cap \supp\,\varphi_2=\emptyset$).
     \end{itemize}
\end{definition}

The properties in the definition above and  those of the local quantization (especially Lemma~\ref{lem_actonDistrib_locQ_1}) imply that $P$ is a continuous operator $C_c^\infty (M)\to \cD'(M)$.
By the Schwartz kernel theorem, it admits an integral  kernel $K_0\in \cD'(M\times M)$. 
Part (2) above says that $K_0$ coincides with a smooth function on $M\times M \setminus D_M$, i.e. away from the diagonal.
When this integral kernel is smooth on $M\times M$,  we will follow the traditional convention 
(as well as  in coherence with what we have done in the preceding sections)
of calling the operator smoothing:

\begin{definition}
\label{def_Psi-infty}
    Let $M$ be a smooth manifold. An operator $P$ on $M$ is {\it smoothing }when it admits a smooth integral kernel, that is, there exists $K\in C^\infty(M\times M)$ such that
    $$
    Pf(x) = \int_M f(y)K(x,y) dy, \qquad x\in M.
    $$
    \end{definition}

We denote by $\Psi^m(M)$  the  space of pseudodifferential operators of order $m\in \bR$ on  $M$
and by $\Psi^{-\infty}
(M)$ the  space of smoothing operators  on $M$. 
We check readily for $m,m_1,m_2\in \bR$ with $m_1\leq m_2$
$$
\Psi^{-\infty}(M)\subset \Psi^m (M)
\quad\mbox{and}\quad
\Psi^{m_1} (M) \subset \Psi^{m_2} (M).
$$
We also denote by 
$$
\Psi(M):=\cup_{m\in \bR}\Psi^m(M)
$$
the  space of pseudodifferential operators (of any order) on $M$.

From Definitions \ref{def:Q0} and \ref{def_Psi-infty} together with the properties of the local quantization, 
we check readily that 
     $\Psi^m (M)$ for each  $m\in \bR\cup\{-\infty\}$ is a vector space and furthermore a left and right $C^\infty(M)$-module:
    $$
\forall \varphi,\psi\in C^\infty (M), \quad \forall P\in \Psi^m(M)
\qquad \varphi P\psi \in \Psi^m (M).
$$
Moreover, it is stable under the formal adjoint:
\begin{lemma}
\label{lem_adjPsi}
 If $P\in \Psi^m(M)$ with $m\in \bR\cup\{-\infty\}$, then its formal adjoint $P^*$, that is, the operator defined via
 $$
 \forall f,g\in C_c^\infty (M)\qquad
 (P^* f,g)_{L^2(M)} = (f,Pg)_{L^2(M)},
 $$
 is a well defined operator  in $\Psi^m(M)$.
\end{lemma}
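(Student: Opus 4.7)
The plan is to verify directly that $P^*$ satisfies both defining properties of $\Psi^m(M)$ in Definition~\ref{def:Q0}, relying on the local adjoint statement for quantized symbols given by Proposition~\ref{prop_adjloc}. First I observe that $P^*$ is well defined as a linear operator $C_c^\infty(M) \to \mathcal D'(M)$, since $P: C_c^\infty(M) \to C^\infty(M)$ extends by duality to a map $\cD'_c(M)\to \cD'(M)$ and we only need its restriction to test functions. To land in $C^\infty(M)$ will be an output of property~(1) verified below.

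For property~(2), the Schwartz kernels of $P$ and $P^*$ are related by $K_{P^*} = \bar\tau^* K_P$, where $\tau(x,y) = (y,x)$ is the flip diffeomorphism of $M\times M$, which preserves $M\times M\setminus D_M$. Since by hypothesis $K_P$ agrees with a smooth function on $M\times M\setminus D_M$, the same holds for $K_{P^*}$, so off-diagonal smoothness is immediate.

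The substantive step is property~(1). Fix an adapted frame $\bX$ on an open subset $U\subset M$ and a $\bX$-cut-off $\chi$, and let $\sigma\in S^m(\Gh M|_U)$ be the symbol provided by Definition~\ref{def:Q0}(1) for $P$. Proposition~\ref{prop_adjloc} furnishes a symbol $\sigma^{(*)}\in S^m(\Gh M|_U)$ such that $\Op^{\bX,\chi}(\sigma)^* = \Op^{\bX,\chi}(\sigma^{(*)})$ as operators on $C_c^\infty(U)$. I claim $\sigma^{(*)}$ is the local symbol representing $P^*$ on $(U,\bX,\chi)$. To check this, fix $\varphi,\psi\in C_c^\infty(U)$, $f\in C_c^\infty(M)$, and test against an arbitrary $g\in C_c^\infty(M)$. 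The definition of the formal adjoint gives
\[
(\varphi P^*(\psi f), g)_{L^2(M)} = (\psi f,\, P(\bar\varphi g))_{L^2(M)}.
\]
Because $\supp\varphi\subset U$, the function $\bar\varphi g$ lies in $C_c^\infty(U)$; picking any $\psi_1\in C_c^\infty(U)$ with $\psi_1\equiv 1$ on $\supp(\bar\varphi g)$, the local representation of $P$ yields $\bar\psi\, P(\bar\varphi g) = \bar\psi\,\Op^{\bX,\chi}(\sigma)(\bar\varphi g)$. Substituting and reorganising,
\[
(\psi f,\, P(\bar\varphi g)) = (\psi f,\,\Op^{\bX,\chi}(\sigma)(\bar\varphi g)) = (\Op^{\bX,\chi}(\sigma^{(*)})(\psi f),\, \bar\varphi g) = (\varphi\,\Op^{\bX,\chi}(\sigma^{(*)})(\psi f),\, g).
\]
Since $g\in C_c^\infty(M)$ is arbitrary, we conclude $\varphi P^*(\psi f) = \varphi\,\Op^{\bX,\chi}(\sigma^{(*)})(\psi f)$, so property~(1) holds with symbol $\sigma^{(*)}\in S^m(\Gh M|_U)$.

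The main obstacle is merely bookkeeping of supports to ensure that the local representation of $P$ is applied only on test functions supported in~$U$; once this is done, Proposition~\ref{prop_adjloc} does all the analytic work. As a by-product, the image of $P^*$ lies in $C^\infty(M)$, since locally $P^*f = \Op^{\bX,\chi}(\sigma^{(*)})f + (\text{smoothing contributions})$ maps into $C^\infty$ by Lemma~\ref{lem_actonDistrib_locQ_1}. The continuity of $\sigma\mapsto \sigma^{(*)}$ from Proposition~\ref{prop_adjloc} also makes the construction canonical in the obvious sense.
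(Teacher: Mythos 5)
Your proposal is correct and takes essentially the same route as the paper's proof. Both arguments deal with Part~(2) of Definition~\ref{def:Q0} by flipping the Schwartz kernel ($K_{P^*}(x,y)=\overline{K_P(y,x)}$, which also handles the smoothing case $m=-\infty$ directly) and reduce Part~(1) to Proposition~\ref{prop_adjloc}; the paper dispatches Part~(1) in one line, while you spell out the duality and support-bookkeeping computation showing $\sigma^{(*)}$ is indeed a local symbol for $P^*$, which is a helpful expansion but not a different method.
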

  \begin{proof}
  We observe that if $P$ is an operator  $C_c^\infty(M)\to C^\infty(M)$ with integral kernel $K\in \cD'(M\times M)$, then  $P^*$ is also  an operator  $C_c^\infty(M)\to \cD'(M)$ with integral kernel $K^*\in \cD'(M\times M)$ given by $K^*(x,y)= \bar K(y,x)$.
  This observation implies 
  the statement for  smoothing operators (i.e. the case $m=-\infty$) 
  as well as the second part in Definition \ref{def:Q0} 
  for the formal adjoint $P^*$ of an operator
 $P\in \Psi^m(M)$ with $m\in \bR$. 
The first part in Definition \ref{def:Q0} for $P^*$ is a consequence of Proposition \ref{prop_adjloc}. This concludes the proof.
  \end{proof}

From Definition \ref{def:Q0} and the properties of the local quantization (especially Lemma \ref{lem_actonDistrib_locQ_1} and Corollary \ref{cor_actonDistrib_loc_Q}), we check readily that a pseudodifferential operator  on $M$ acts linearly continuously $C_c^\infty(M)\to  C^\infty (M)$
and also $\sE'(M) \to \cD'(M)$ by Lemma \ref{lem_adjPsi};  here $\sE'(M)$ stands for the space of  distributions with compact support. 
Note that the space $\Psi^{-\infty}(M)$ may  be  described equivalently as
 the space of continuous  linear operators $\sE'(M) \to C^\infty(M)$, as well as 
$$
\Psi^{-\infty}(M)
=\cap_{m\in \bR} \Psi^m (M).
$$

\subsubsection{Examples}\label{sec:ex_global_pseudo}
Here, we give some examples of pseudodifferential operators.
Our first class of examples  results from the local quantizations defined in Section~\ref{sec:local_quant}:

\begin{lemma}
\label{lem:local-to-global}
Let $m\in \bR\cup \{-\infty\}$. 
Let  $(\bX,U)$ be an adapted frame with $\mathbb{X}$-cut-off $\chi$.
\begin{enumerate}
    \item If  $\sigma\in S^m(\widehat{G}M|_U)$, then $\Op^{\mathbb{X}, \chi}(\sigma)\in \Psi^m(U)$.
    \item Moreover, if $\sigma\in \cup_{m'\in \bR\cup\{-\infty\}} S^{m'}(\widehat{G}M|_U)$ satisfies $\Op^{\mathbb{X}, \chi}(\sigma)\in \Psi^m(U)$, then $\sigma\in S^m(\Gh M|_U).$
\end{enumerate}
\end{lemma}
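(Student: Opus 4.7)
For Part (1), the two conditions of Definition \ref{def:Q0} will be verified in turn, viewing $U$ equipped with the restricted filtration as a filtered manifold in its own right. Condition (2) is the easiest: the integral kernel of $\Op^{\bX,\chi}(\sigma)$ is
\[
K(x,y)=\kappa^{\bX}_{\sigma,x}(-\ln^{\bX}_{x}y)\,\chi(x,y)\,\jac_{y}(\ln^{\bX}_{x}),
\]
and away from the diagonal this involves evaluating $\kappa^{\bX}_{\sigma,x}(v)$ at $v\neq 0$, where Theorem \ref{thm_kernelM} (together with Corollary \ref{cor_contSmTXDelta}, applied to arbitrary $x$- and $v$-derivatives) guarantees smooth, indeed Schwartz-type, behaviour jointly in $(x,v)$; hence $K\in C^{\infty}(U\times U\setminus D_{U})$.

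The verification of condition (1) is the heart of Part (1). Given any adapted frame $(\bY,V)$ with $V\subset U$ and any $\bY$-cut-off $\chi'$, I will show the existence of a symbol $\sigma'\in S^{m}(\widehat{G}M|_{V})$ such that $\Op^{\bX,\chi}(\sigma)$ and $\Op^{\bY,\chi'}(\sigma')$ agree after localisation by $\varphi,\psi\in C_{c}^{\infty}(V)$. First I replace $(\chi,\bX)$ by $(\chi',\bX)$ on $V$ modulo a smoothing symbol via Proposition \ref{prop_Qindepchi}, then replace $(\chi',\bX)$ by $(\chi',\bY)$ via Proposition \ref{prop_indepframe}; the latter produces, in addition to $\sigma$, an asymptotic expansion $\rho\sim\sum_{j}\rho_{j}$ with $\rho_{j}\in S^{m-1-j}(\widehat{G}M|_{V})$. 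I then use Borel summation, as recalled after Definition \ref{def:asymptotics} in the spirit of \cite[\S 5.5.1]{R+F_monograph}, to realise the asymptotic series $\sigma+\rho_{0}+\rho_{1}+\ldots$ as a genuine symbol $\sigma'\in S^{m}(\widehat{G}M|_{V})$, unique up to $S^{-\infty}$. Any remaining smoothing error has kernel smooth on $V\times V$ and supported in $\supp\chi\cup\supp\chi'$, hence near the diagonal; by shrinking $V$ and/or choosing $\chi'$ so that $\chi'=1$ on $\supp\chi$, this error is itself of the form $\Op^{\bY,\chi'}(\rho_{\infty})$ for a smoothing symbol $\rho_{\infty}\in S^{-\infty}(\widehat{G}M|_{V})$ (using the bijection in Proposition \ref{prop_smoothingM}(1) between Schwartz convolution-kernel densities and $S^{-\infty}$), which is absorbed into $\sigma'$. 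This matches condition (1) of Definition \ref{def:Q0}.

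Part (2) is shorter and rests on the injectivity of the local quantization. Suppose $\sigma\in S^{m'}(\widehat{G}M|_{U})$ for some $m'\in\bR\cup\{-\infty\}$ and that $P:=\Op^{\bX,\chi}(\sigma)\in\Psi^{m}(U)$. Applying condition (1) of Definition \ref{def:Q0} to $P$ itself, with the same adapted frame $(\bX,U)$ and cut-off $\chi$, yields a symbol $\sigma'\in S^{m}(\widehat{G}M|_{U})$ such that
\[
\varphi\,\Op^{\bX,\chi}(\sigma-\sigma')(\psi f)=0
\qquad
\forall\,\varphi,\psi\in C_{c}^{\infty}(U),\ f\in C_{c}^{\infty}(U).
\]
Letting $\varphi,\psi,f$ vary freely gives $\Op^{\bX,\chi}(\sigma-\sigma')=0$ as an operator on $C_{c}^{\infty}(U)$. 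Since $\sigma-\sigma'\in S^{\max(m,m')}(\widehat{G}M|_{U})$, the injectivity statement of Corollary \ref{cor_actonDistrib_loc_Q} forces $\sigma-\sigma'=0$, whence $\sigma=\sigma'\in S^{m}(\widehat{G}M|_{U})$.

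\textbf{Main obstacle.} The delicate step is the invariance argument in Part (1): the quantization in different frames and cut-offs differs by lower-order symbols and smoothing kernels whose supports depend on all cut-offs involved, so some care is needed to choose $\chi'$ dominating $\chi$ near the diagonal, and to check that the final smoothing remainder really is representable as $\Op^{\bY,\chi'}$ of a symbol in $S^{-\infty}$. Everything else is a bookkeeping exercise using Propositions \ref{prop_Qindepchi}, \ref{prop_indepframe}, and Proposition \ref{prop_smoothingM}.
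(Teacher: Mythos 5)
Your Part (1) follows the paper's structure — verify condition (2) of Definition~\ref{def:Q0} via the kernel estimates, and condition (1) via the invariance Propositions~\ref{prop_Qindepchi} and~\ref{prop_indepframe} — but you overcomplicate the second step. Proposition~\ref{prop_indepframe} already produces a genuine symbol $\rho\in S^{m-1}(\Gh M|_U)$ such that $\Op^{\bY,\chi_2}(\sigma)=\Op^{\bX,\chi_1}(\sigma)+\Op^{\bX,\chi}(\rho)$; the asymptotic expansion $\rho\sim\sum_j\rho_j$ is \emph{additional} structure, not a formal series that needs Borel summation. Likewise there is no dangling ``remaining smoothing error'' to track: the invariance propositions account for the full difference exactly, so the extra maneuvering with $\chi'=1$ on $\supp\chi$ and re-representing a smoothing kernel as $\Op^{\bY,\chi'}(\rho_\infty)$ is not needed. (Also, the paper handles $m=-\infty$ separately, by the characterisation of smoothing symbols; worth flagging that the endpoint case deserves a sentence.)

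Your Part (2) has a genuine gap. You conclude $\Op^{\bX,\chi}(\sigma-\sigma')=0$ and then appeal to the injectivity asserted in Corollary~\ref{cor_actonDistrib_loc_Q} to get $\sigma-\sigma'=0$. But the map $\sigma\mapsto\Op^{\bX,\chi}(\sigma)$ is \emph{not} injective: the local quantization only sees the convolution kernel $\kappa^{\bX}_{\sigma,x}(v)$ on the set $\{(x,v):\chi(x,\exp_x^{\bX}(-v))\neq 0\}$, a bounded neighbourhood of the zero section. Any Schwartz convolution-kernel density supported away from the zero section therefore lies in the kernel of the quantization while corresponding to a nonzero smoothing symbol. (The corollary's injectivity statement is an overstatement; its proof only establishes continuity and the extension to distributions.) What the vanishing of the integral kernel actually gives is that $\kappa^{\bX}_{\sigma-\sigma',x}$ vanishes near the zero section, and since it is smooth away from it (by Theorem~\ref{thm_kernelM}), it is smooth on all of $U\times\bR^n$; then Corollary~\ref{cor:ker_smoothing} gives $\sigma-\sigma'\in S^{-\infty}(\Gh M|_U)$, whence $\sigma=\sigma'+(\sigma-\sigma')\in S^m(\Gh M|_U)$. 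This is precisely the route the paper takes; you should replace your injectivity step with this argument.
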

\begin{proof}
The characterisation of smoothing symbols (see Proposition \ref{prop_smoothingM} (1), see also Corollary \ref{cor:ker_smoothing}) implies the case $m=-\infty$. We now prove the case $m\in \bR$.

Let $\sigma\in S^m(\widehat{G}M)$.
The kernel estimates (see Theorem \ref{thm_kernelM}) imply that  the Schwartz kernel $K(x, y)$ of $\Op^{\bX, \chi}(\sigma)$ is
smooth away from the diagonal when $m\in \bR$. Hence,
Part (2) of Definition  \ref{def:Q0} holds.

    Let $\bY$ be another adapted frame on an open set $V\subset U$, with $\bY$-cut-off $\widetilde{\chi}$. Then by Propositions  \ref{prop_Qindepchi} and  \ref{prop_indepframe}, 
    there exists $\widetilde{\sigma}\in S^{m}(\widehat{G}M|_U)$ such that
$\Op^{\bX, \chi}(\sigma) = \Op^{\bY, \widetilde{\chi}}(\widetilde{\sigma}).$
    This shows that Part (1) of Definition  \ref{def:Q0} holds.
    Therefore $\Op^{\bX, \chi}(\sigma)\in \Psi^m(U)$.

 Let $m'\in \bR$ and $\sigma\in S^{m'}(\widehat{G}M|_U)$.   
 We assume $\Op^{\bX, \chi}(\sigma)\in \Psi^m(U)$. We may assume $m'<m$. 
 By Part (1) of Definition  \ref{def:Q0}, 
 there exists $\widetilde{\sigma}\in S^{m'}(\widehat{G}M|_U)$ such that 
 $$
 \forall \varphi, \psi\in  C_c^\infty (U)\qquad
\varphi  \Op^{\bX, \chi}(\sigma) \psi = \varphi \Op^{\bX, \chi}(\widetilde{\sigma}) \psi.
 $$
 This readily implies that $\kappa_{\sigma}^\bX -\kappa_{\sigma'}^\bX$ is smooth on $U\times \bR^n$, so $\sigma-\widetilde \sigma \in S^{-\infty}(\Gh M)$ by Corollary~\ref{cor:ker_smoothing}. This shows Part (2).
\end{proof}

We will often identify a  function  with compact support in $U\subset M$ with a function trivially extended to $M$. And conversely, if a function on $M$ is supported in $U$, we may view it as a function on $U$. With these conventions in mind, Lemma \ref{lem:local-to-global} and its proof imply readily:
\begin{corollary}
\label{corlem:local-to-global}
Let $m\in \bR\cup \{-\infty\}$. 
Let  $( \bX,U)$ be an adapted frame with $\mathbb{X}$-cut-off $\chi$.
For any $\sigma\in S^m(\widehat{G}M|_U)$ and 
$\psi\in C_c^\infty (U)$, 
the operator given by 
$$
f\longmapsto \Op^{\bX, \chi}(\sigma)( \psi f), \qquad C_c^\infty(M)\longrightarrow C^\infty (M).
$$ 
is in $\Psi^m (M)$. That is, 
   $$
\forall \psi\in C_c^\infty (U),\qquad 
\Op^{\bX, \chi}(\sigma)\,  \psi \in \Psi^m(M).
$$
Moreover, 
if $\sigma\in \cup_{m'\in \bR\cup\{-\infty\}} S^{m'}(\widehat{G}M|_U)$ satisfies $\Op^{\mathbb{X}, \chi}(\sigma)\psi\in \Psi^m(M)$ for any $\psi\in C_c^\infty (U)$, then $\sigma\in S^m(\Gh M|_U).$
\end{corollary}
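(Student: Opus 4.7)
The plan is to deduce both implications from Lemma~\ref{lem:local-to-global} combined with the composition rule (Proposition~\ref{prop_comploc}) and the invariance under change-of-frame/cut-off (Propositions~\ref{prop_Qindepchi} and~\ref{prop_indepframe}). The central observation I would use is that multiplication by $\psi \in C_c^\infty(U)$ is itself a local quantization: the convolution kernel $\psi(x)\delta_0$ gives $\Op^{\bX,\chi}(\psi\,\id) = M_\psi$ (multiplication by $\psi$), because $\chi_x(x) = 1$ on the diagonal. Hence by Proposition~\ref{prop_comploc},
\[
\Op^{\bX,\chi}(\sigma)\,\psi \;=\; \Op^{\bX,\chi}(\sigma)\,\Op^{\bX,\chi}(\psi\,\id) \;=\; \Op^{\bX,\chi}\bigl(\sigma \diamondsuit_{\bX,\chi}(\psi\,\id)\bigr) \quad \text{on } U,
\]
with composed symbol in $S^m(\widehat{G}M|_U)$. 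Since $\chi$ is properly supported and $\psi$ has compact support, $\Op^{\bX,\chi}(\sigma)\psi f$ has compact support in $U$, and the operator extends by zero to a map $C_c^\infty(M)\to C_c^\infty(M)$.

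For the forward direction I would verify Definition~\ref{def:Q0} for this extension. Condition~(2) would follow directly from the kernel estimates of Theorem~\ref{thm_kernelM}: the integral kernel is smooth off the diagonal on $U \times U$, vanishes when $y \notin \supp\psi$, and vanishes outside the proper support of $\chi$, so it extends smoothly by zero off the diagonal of $M \times M$. For condition~(1), given any adapted frame $(\bY,V)$ with cut-off $\tilde\chi$, I would split into cases. If $V \cap \supp\psi = \emptyset$, the operator is zero on test functions supported in $V$ and $\tilde\sigma = 0$ works. Otherwise, on $V \cap U$ I would iteratively apply Propositions~\ref{prop_Qindepchi} and~\ref{prop_indepframe} and sum asymptotically (Borel-type construction) to obtain $\tilde\sigma_0 \in S^m(\widehat{G}M|_{V\cap U})$ whose $\bY$-quantization matches $\Op^{\bX,\chi}(\sigma)\psi$ modulo a smoothing remainder $R$. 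Choosing $\eta \in C_c^\infty(V \cap U)$ identically $1$ on a neighborhood of $V \cap \supp\psi$ localizes $R$ to a region where $\tilde\chi \equiv 1$, so $\eta R \eta$ realizes as $\Op^{\bY,\tilde\chi}$ of a smoothing symbol. Gluing and extending by zero outside $\supp\eta$ (where the operator itself already vanishes) produces the required $\tilde\sigma \in S^m(\widehat{G}M|_V)$.

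For the converse direction, I would assume $\sigma \in S^{m'}(\widehat{G}M|_U)$ satisfies $\Op^{\bX,\chi}(\sigma)\psi \in \Psi^m(M)$ for every $\psi \in C_c^\infty(U)$, fix a relatively compact open $U_0 \Subset U$, and choose $\psi$ identically $1$ on a neighborhood of $\overline{U_0}$. Since Definition~\ref{def:Q0} is local, the restriction $\Op^{\bX,\chi}(\sigma)\psi|_U$ lies in $\Psi^m(U)$, and by the composition identity above it equals $\Op^{\bX,\chi}(\sigma \diamondsuit_{\bX,\chi}(\psi\,\id))$; Lemma~\ref{lem:local-to-global}(2) then gives $\sigma \diamondsuit_{\bX,\chi}(\psi\,\id) \in S^m(\widehat{G}M|_U)$. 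On $U_0$ every derivative of $\psi$ vanishes, so the full asymptotic expansion from Proposition~\ref{prop_comploc} collapses term by term, yielding $\sigma - \sigma \diamondsuit_{\bX,\chi}(\psi\,\id) \in S^{-\infty}(\widehat{G}M|_{U_0})$. Hence $\sigma|_{U_0} \in S^m(\widehat{G}M|_{U_0})$ and, $U_0$ being arbitrary, $\sigma \in S^m(\widehat{G}M|_U)$.

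The main technical hurdle I anticipate is assembling a single globally-defined $\tilde\sigma \in S^m(\widehat{G}M|_V)$ for condition~(1): the change-of-frame formulae only provide $\tilde\sigma_0$ on the overlap $V \cap U$, and the residual smoothing operator must be absorbed in a way compatible with the support of $\tilde\chi$. All the hard analytical work is already done in the local symbolic calculus, so the argument ultimately reduces to careful bookkeeping of supports of cut-offs, as sketched above.
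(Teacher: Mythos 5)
Your overall strategy—rewriting $\Op^{\bX,\chi}(\sigma)\,\psi=\Op^{\bX,\chi}\bigl(\sigma\diamondsuit_{\bX,\chi}(\psi\,\id)\bigr)$, then reducing both implications to Lemma~\ref{lem:local-to-global} via localization and change-of-frame—is the route the paper has in mind, and your converse argument is essentially right (though the cleanest justification that the expansion collapses on $U_0$ is simply that $\Op^{\bX,\chi}(\sigma)(1-\psi)$ has a smooth kernel over $U_0\times U$, since the integrand vanishes near the diagonal and is smooth elsewhere).

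There is, however, a genuine slip in the gluing step of the forward direction. You choose $\eta\in C_c^\infty(V\cap U)$ identically $1$ on a neighbourhood of $V\cap\supp\psi$ and claim that outside $\supp\eta$ ``the operator itself already vanishes.'' That is not true. The integral kernel of $\Op^{\bX,\chi}(\sigma)\,\psi$ is supported in $\cC:=\supp\chi\cap(M\times\supp\psi)$, which is compact by properness of $\chi$, and therefore the operator's output is supported in the first projection $\cK:=\pi_1(\cC)\subset U$, which in general is strictly larger than (a neighbourhood of) $\supp\psi$—its size is governed by the width of $\supp\chi$, not by $\supp\psi$ alone. Thus for $x\in(\cK\cap V)\setminus\supp\eta$ the operator is in general nonzero, and the extension-by-zero symbol will not reproduce $\varphi P\psi''$ for every $\varphi,\psi''\in C_c^\infty(V)$. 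The correct requirement is $\eta\equiv 1$ on a $V$-neighbourhood of $\cK\cap V$. A second, smaller issue: $\cK\cap V$ (and likewise $V\cap\supp\psi$) is closed in $V$ and contained in $V\cap U$, but it need not be compactly contained in $V\cap U$ when $\cK$ meets $\partial V$; so one cannot always take $\eta\in C_c^\infty(V\cap U)$, and one should instead take $\eta\in C^\infty(V)$ with $\supp\eta\subset V\cap U$, which exists by a standard Urysohn argument on $V$. With these two corrections the gluing goes through, and the rest of your proposal is sound.
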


\begin{remark}\label{remlem:local-to-global}
    Lemma \ref{lem:local-to-global} and its proof imply that  if the property described in Part (1) of Definition~\ref{def:Q0} holds for a family of open subsets $(U_\alpha)_\alpha$ of $M$, each $U_\alpha$ being equipped with an adapted frame $\bX_\alpha$ and a $\bX_\alpha$-cut-off, with the family of open subsets  covering  $M$, then Part (1) of Definition  \ref{def:Q0} holds for any $(\bX,U)$.
\end{remark}

Our second class of examples comprises  differential operators:
\begin{lemma}
\label{lem_exdiffPDO}
Any differential operator is pseudodifferential,
and its  integral kernel is supported on the diagonal. Moreover, 
 if $P$ is a differential operator, then $P\in \Psi^N(M)$ with $N$ being its homogeneous order.   
\end{lemma}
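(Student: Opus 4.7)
The first assertion, that the Schwartz kernel of $P$ is supported on the diagonal $D_M = \{(x,x) : x \in M\}$, is the standard fact that any differential operator is local: if $f \in C_c^\infty(M)$ vanishes on a neighbourhood of $x$, then $Pf(x) = 0$. Hence Part~(2) of Definition~\ref{def:Q0} holds trivially with $K \equiv 0$ on $M \times M \setminus D_M$.

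For Part~(1), I fix an adapted frame $(\bX, U)$ and a $\bX$-cut-off $\chi$. By Lemma~\ref{lem_PulcXalpha} there exist unique $c_\alpha \in C^\infty(U)$ such that
\[
P|_U = \sum_{[\alpha] \leq N} c_\alpha\, \bX^\alpha,
\]
where $N$ is the homogeneous order of $P$. The crux of the proof is the identity
\begin{equation}
\label{eq:induct_diffop_lem}
\bX^\alpha = \Op^{\bX,\chi}\bigl(\widehat{\langle\bX\rangle}^\alpha\bigr), \qquad \alpha \in \bN_0^n,
\end{equation}
holding as operators on $C^\infty(U)$, which I would establish by induction on $|\alpha|$. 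The case $|\alpha| = 0$ is immediate, and for $|\alpha|=1$, say $\alpha = e_j$, Proposition~\ref{ex_OpT} gives $\Op^{\bX,\chi}(\widehat{\langle X_j\rangle})f(x) = \partial_t|_{t=0}f(\exp_x tX_j) = X_j f(x)$. For the inductive step, pick the largest index $j$ with $\alpha_j \geq 1$ and set $\beta := \alpha - e_j$; since the remaining factors $X_k$ with $k>j$ all appear with exponent $0$, one has $\bX^\alpha = \bX^\beta X_j$, and correspondingly $\langle\bX\rangle^\beta \langle X_j\rangle = \langle\bX\rangle^\alpha$ in $\Gamma(\sU(\fg M|_U))$. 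Applying the composition identity
\[
\Op^{\bX,\chi}(\widehat T)\,\Op^{\bX,\chi}\bigl(\widehat{\langle\bX\rangle}^\gamma\bigr) = \Op^{\bX,\chi}\bigl(\widehat T\,\widehat{\langle\bX\rangle}^\gamma\bigr), \qquad T \in \Gamma(\sU(\fg M|_U)), \ \gamma \in \bN_0^n,
\]
stated immediately after Proposition~\ref{ex_OpT}, with $T = \langle\bX\rangle^\beta$ and $\gamma = e_j$, and combining with the induction hypothesis, yields~\eqref{eq:induct_diffop_lem}.

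Granted~\eqref{eq:induct_diffop_lem}, linearity together with the observation that multiplying a symbol by the scalar $c_\alpha(x)$ multiplies its $\bX$-convolution kernel by $c_\alpha(x)$ (so that $\Op^{\bX,\chi}(c_\alpha \widehat{\langle\bX\rangle}^\alpha) = c_\alpha\,\Op^{\bX,\chi}(\widehat{\langle\bX\rangle}^\alpha)$) produce
\[
P|_U = \Op^{\bX,\chi}(\widehat T), \qquad \widehat T := \sum_{[\alpha] \leq N} c_\alpha\, \widehat{\langle\bX\rangle}^\alpha.
\]
By Example~\ref{ex_widehatlangleXranglealpha}, each $\widehat{\langle\bX\rangle}^\alpha$ lies in $S^{[\alpha]}(\widehat GM|_U)$, and since $S^m(\widehat GM)$ is a module over $C^\infty(M)$ (as follows from Proposition~\ref{prop_comp+adj} by viewing $c_\alpha\,\id$ as a symbol in $S^0(\widehat GM)$), the symbol $\widehat T$ belongs to $S^N(\widehat GM|_U)$. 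This verifies Part~(1) of Definition~\ref{def:Q0} with $\sigma = \widehat T$, and therefore $P \in \Psi^N(M)$. The only substantive step in the argument is \eqref{eq:induct_diffop_lem}; although the inductive step is short, it encodes the nontrivial fact, afforded by the composition formula after Proposition~\ref{ex_OpT}, that iterated differentiation along $X_1,\dots,X_n$ on $M$ coincides exactly, and not merely modulo lower-order terms, with the local quantization of the corresponding element of $\sU(\fg_xM)$.
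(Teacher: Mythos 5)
Your argument has a genuine gap: the key identity you build the whole proof on,
\[
\bX^\alpha = \Op^{\bX,\chi}\bigl(\widehat{\langle\bX\rangle}^\alpha\bigr),
\]
is false for $|\alpha|\geq 2$. A counterexample already appears in the simplest possible setting: take $M=\bR^2$ with the trivial filtration $H^1=TM$ (so $G_xM=\bR^2$ abelian and $L_{\langle X_j\rangle_x}=\partial_{v_j}$), and the adapted frame $X_1=\partial_1$, $X_2=x_1\partial_1+\partial_2$. By Lemma~\ref{lem_Taylorexpxu}, the $v_1v_2$-coefficient of the Taylor series of $f\circ\exp_x^\bX$ at $0$ is $\tfrac12(X_1X_2+X_2X_1)f(x)$, so
\[
\Op^{\bX,\chi}\bigl(\widehat{\langle X_1\rangle}\,\widehat{\langle X_2\rangle}\bigr)f(x)
= \partial_{v_1}\partial_{v_2}\bigl(f\circ\exp_x^\bX\bigr)(0)
= \tfrac12\bigl(X_1X_2+X_2X_1\bigr)f(x),
\]
whereas $\bX^{(1,1)}=X_1X_2$; the two differ by $\tfrac12[X_1,X_2]=\tfrac12\partial_1\neq 0$. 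The composition relation displayed just after Proposition~\ref{ex_OpT} that you invoke cannot be taken at face value: in the trivially filtered (hence $\sU(\fg_xM)$ commutative) case it would give $\Op(\widehat{\langle X_i\rangle})\Op(\widehat{\langle X_j\rangle})=\Op(\widehat{\langle X_i\rangle}\widehat{\langle X_j\rangle})=\Op(\widehat{\langle X_j\rangle}\widehat{\langle X_i\rangle})=\Op(\widehat{\langle X_j\rangle})\Op(\widehat{\langle X_i\rangle})$, i.e.\ $X_iX_j=X_jX_i$ as operators on $M$, which is false for a generic adapted frame. Indeed, if the exact composition relation held, the asymptotic expansion with nontrivial lower-order corrections in Proposition~\ref{prop_comploc} would be superfluous for such symbols. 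So your induction establishes nothing beyond the (true) base case $|\alpha|\leq 1$.

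What is actually true, and what the paper's proof uses, is that $\bX^\alpha$ and $\Op^{\bX,\chi}(\widehat{\langle\bX\rangle}^\alpha)$ agree only modulo differential operators of homogeneous order $\leq[\alpha]-1$. The paper therefore runs a peeling argument: writing $\sigma_N=\princ_N(P)\in\Gamma(\sU_N(\fg M))$, Proposition~\ref{ex_OpT} shows that $\Op^{\bX,\chi}(\sigma_N)$ is a differential operator whose difference with $P|_U$ has homogeneous order at most $N-1$ (in your example, $P=X_1X_2$, $\sigma_2=\langle X_1\rangle\langle X_2\rangle$, and the remainder is $\tfrac12 X_1$); iterating produces a finite sequence $\sigma_N,\dots,\sigma_0$ with $P|_U=\Op^{\bX,\chi}(\sigma_N+\cdots+\sigma_0)$. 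You would reach the correct conclusion $P\in\Psi^N(M)$ this way, but the route via the literal identity $\bX^\alpha=\Op^{\bX,\chi}(\widehat{\langle\bX\rangle}^\alpha)$ does not work.
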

The homogeneous order was defined in Definition \ref{def_homorder}. This statement implies:
$$
\DO(M)\subset \Psi(M),
\quad\mbox{and more precisely,}\quad 
\DO^{\leq N}(M)\subset \Psi^N(M).
$$
\begin{proof}
A differential operator $P$ is a linear operator $P: C_c^\infty(M)\to C^\infty(M)$ with  integral kernel supported on the diagonal. 
    It remains to check Part (1) of Definition \ref{def:Q0}.
    We denote by $N$ the homogeneous order of $P$ and by $\sigma_N\in \Gamma(\sU_N(\fg M))$  its principal part  in the sense of Section \ref{subsec:diff_op}. 
 
         Let $\bX$ be an adapted frame  on an open set $U\subset M$ and let $\chi$ be a $\bX$-cut-off.
The operator $\Op^{\bX, \chi}(\sigma_N)$ is differential by 
         Proposition \ref{ex_OpT} 
         and in $\Psi^N(M)$
by Lemma \ref{lem:local-to-global}.
Moreover, Proposition \ref{ex_OpT}  implies that $P|_U-\Op^{\bX, \chi}(\sigma_N)$ is a differential operator on $U$ of homogeneous order $\leq N-1$.
Inductively, we construct a finite sequence of symbols 
$$
\sigma_N,\, \sigma_{N-1},\ldots,\, \sigma_0\quad
         \mbox{respectively in} \  \Gamma(\sU_N(\fg M)),\ \Gamma(\sU_{N-1}(\fg M)), \ldots, \ \Gamma(\sU_0(\fg M)),
         $$
         such that the differential operator $P$ coincides on $U$ with 
         $\Op^{\bX, \chi}(\sigma_N +\sigma_{N-1}+\ldots +\sigma_0).
         $
This implies Part (2) of Definition \ref{def:Q0} and concludes the proof.        
\end{proof}

\subsubsection{Characterisation via atlases}
Remark \ref{remlem:local-to-global} above
 leads us  to adapt the notion of atlas to the setting of filtered manifold in the following way:

\begin{definition}\label{def:atlas}
The collection    $\mathcal A= (U_\alpha, \bX_\alpha, \chi_\alpha,\psi_\alpha)_{\alpha\in A}$ is   an  {\it atlas} of the filtered manifold~$M$ when it consists in 
    \begin{itemize}
        \item[(a)] a countable family of open subsets $U_\alpha$ providing a locally finite covering of $M = \cup_\alpha U_\alpha$, and
        \item[(b)] for each $\alpha\in A$, an adapted frame $\bX_\alpha$ on $U_\alpha$, a  $\bX_\alpha$-cut-off $\chi_\alpha$
and a function $\psi_\alpha\in C_c^\infty(U_\alpha:[0,1])$ 
\textcolor{blue}{
such that the functions
$(x,y)\mapsto \psi_\alpha(x)\psi_\alpha(y)$ are supported in the interior of  the set $\{\chi_\alpha(x,y)=1\}$, and satisfying
$\displaystyle{
\sum_\alpha \psi_\alpha^2=1}$.
}
    \end{itemize}
\end{definition}
It is possible to construct such atlases via 
routine arguments ommited here.

This notion of atlas on $M$ allows us to characterise the pseudodifferential operators on~$M$ in the following way:
\begin{proposition}
\label{caracterization_psi(M)}
Let $P$ be a continuous linear operator  $C_c^\infty(M)\to \mathcal{D}'(M).
    $ Then $P\in \Psi^m(M)$ if and only if 
    there exists one (and then any) atlas $\cA= (U_\alpha, \bX_\alpha, \chi_\alpha,\psi_\alpha)_{\alpha\in A}$ for the filtered manifold $M$, a family of symbols $(\sigma_\alpha)_{\alpha\in A}$ with  $\sigma_\alpha\in S^m(\Gh M|_{U_\alpha})$ and a smoothing operator~$R\in\Psi^{-\infty}(M)$  such that 
    \begin{equation}
        \label{eq:global-char}
           P=R+ \sum_{\alpha}\psi_\alpha\, \Op^{\bX_\alpha, \chi_\alpha}(\sigma_\alpha) \, \psi_\alpha.
    \end{equation}
\end{proposition}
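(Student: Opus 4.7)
The plan is to prove both implications, with the reverse direction ($\Leftarrow$) being a direct verification while the forward direction ($\Rightarrow$) requires an iterative construction followed by Borel summation.

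For the reverse direction, assume $P$ admits a decomposition as in the statement. I would verify the two conditions of Definition~\ref{def:Q0}. Part (2), smoothness of the integral kernel away from the diagonal, follows because $R$ is smoothing and each $\psi_\alpha \Op^{\bX_\alpha, \chi_\alpha}(\sigma_\alpha) \psi_\alpha$ has integral kernel smooth away from the diagonal by the kernel estimates of Theorem~\ref{thm_kernelM}, the locally finite sum preserving this property. For Part (1), given an adapted frame $(\bX, V)$ with cut-off $\chi$, only finitely many indices $\alpha$ contribute (those with $U_\alpha \cap V \neq \emptyset$); for each such $\alpha$, Propositions~\ref{prop_Qindepchi} and~\ref{prop_indepframe} allow us to rewrite $\Op^{\bX_\alpha, \chi_\alpha}(\sigma_\alpha)$ in the frame $(\bX, V)$ with cut-off $\chi$, modulo symbols of strictly lower order which, after Borel summation along the lines of Definition~\ref{def:asymptotics}, produce the required global symbol in $S^m(\Gh M|_V)$.

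For the forward direction, assume $P \in \Psi^m(M)$. For each $\alpha$, apply Definition~\ref{def:Q0} Part (1) to obtain a local symbol $\sigma_\alpha^{(0)} \in S^m(\Gh M|_{U_\alpha})$ representing $P$ in the frame $(\bX_\alpha, \chi_\alpha)$, and set $Q_\alpha^{(0)} := \Op^{\bX_\alpha, \chi_\alpha}(\sigma_\alpha^{(0)})$ and $R_0 := P - \sum_\alpha \psi_\alpha Q_\alpha^{(0)} \psi_\alpha$. Computing Schwartz kernels on each $U_\beta \times U_\beta$ and using that $K_P$ and $K_{Q_\alpha^{(0)}}$ agree there modulo a smooth correction, together with the atlas condition $\supp (\psi_\alpha \otimes \psi_\alpha) \subset \{\chi_\alpha = 1\}$ and local finiteness, one obtains that modulo smoothing,
\[
K_{R_0}(x, y) = K_P(x, y) \bigl(1 - \sum_\alpha \psi_\alpha(x) \psi_\alpha(y)\bigr) = K_P(x, y) \, g(x, y).
\]
The decisive observation is that $g$ vanishes to second order on the diagonal: $g(x, x) = 1 - \sum_\alpha \psi_\alpha(x)^2 = 0$, and differentiating the identity $\sum_\alpha \psi_\alpha^2 \equiv 1$ gives $\partial_y g(x, y)|_{y = x} = -\sum_\alpha \psi_\alpha(x) \nabla \psi_\alpha(x) = 0$. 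Converting to $\bX_\beta$-coordinates via $\tilde g_\beta(x, v) := g(x, \exp_x^{\bX_\beta}(-v))$, the Taylor coefficients $c_\alpha(x)$ of $\tilde g_\beta$ at $v = 0$ vanish for all multi-indices with $|\alpha| \leq 1$; since every multi-index $\alpha$ with $|\alpha| \geq 2$ satisfies $[\alpha] \geq 2\upsilon_1$, we conclude $c_\alpha \equiv 0$ for all $\alpha$ with $[\alpha] < 2\upsilon_1$. Multiplying $\tilde g_\beta$ by a suitable cut-off $\chi_0$ and applying Corollary~\ref{cor_GM_Deltaq}, the symbol $\Delta_{\chi_0 \tilde g_\beta} \sigma_\beta^{(0)}$ lies in $S^{m - 2\upsilon_1}(\Gh M|_{U_\beta})$ and represents $R_0$ on $U_\beta$ modulo smoothing. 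Appealing to Remark~\ref{remlem:local-to-global} to pass from the atlas to general adapted frames yields $R_0 \in \Psi^{m - 2\upsilon_1}(M)$.

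Iterating this construction produces sequences $\sigma_\alpha^{(k)} \in S^{m - 2k\upsilon_1}(\Gh M|_{U_\alpha})$ and $R_k \in \Psi^{m - 2(k+1)\upsilon_1}(M)$; a fiberwise Borel-type summation (adapting Section~\ref{subsubsec_densitysmoothing}, whose density statement for smoothing symbols on filtered manifolds is recorded in Proposition~\ref{prop_smoothingM}) produces $\sigma_\alpha \in S^m(\Gh M|_{U_\alpha})$ with $\sigma_\alpha \sim \sum_k \sigma_\alpha^{(k)}$, so that $R := P - \sum_\alpha \psi_\alpha \Op^{\bX_\alpha, \chi_\alpha}(\sigma_\alpha) \psi_\alpha$ lies in $\Psi^{m - 2(k+1)\upsilon_1}(M)$ for every $k$, hence in $\Psi^{-\infty}(M)$. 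The main obstacle is the rigorous identification, in each $\bX_\beta$-frame, of the operator $R_0$ with the symbol $\Delta_{\chi_0 \tilde g_\beta} \sigma_\beta^{(0)}$ of order $m - 2\upsilon_1$: this requires a careful reconciliation of the global Schwartz kernel formula $K_P \cdot g$ with the local quantization formalism, tracking the smooth off-diagonal discrepancies between $K_P$ and each $K_{Q_\alpha^{(0)}}$, and verifying that the difference operator $\Delta_{\chi_0 \tilde g_\beta}$ indeed realizes the expected anisotropic order reduction via Corollary~\ref{cor_GM_Deltaq}.
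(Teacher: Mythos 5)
Your proof of the reverse direction (sufficiency of \eqref{eq:global-char}) is essentially the paper's, resting on Lemma~\ref{lem:local-to-global} and Corollary~\ref{corlem:local-to-global} via the invariances of Propositions~\ref{prop_Qindepchi} and~\ref{prop_indepframe}. For the forward direction, however, you follow a genuinely different route. The paper's argument is a one-shot decomposition: it writes $P = \sum_{\alpha,\alpha'}\psi_{\alpha'}^2 P\psi_\alpha^2$, isolates the smoothing part coming from indices with $U_\alpha \cap U_{\alpha'} = \emptyset$, and packages the remainder into the target form $\sum_{\alpha'}\psi_{\alpha'}\Op^{\bX_{\alpha'},\chi_{\alpha'}}(\sigma'_{\alpha'})\psi_{\alpha'}$ via the $\diamond$ composition, with a further error term claimed to be smoothing. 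Your proof instead runs an iterative parametrix-style correction: extract first approximants $\sigma_\alpha^{(0)}$, show the error $R_0$ has strictly lower order, iterate, and Borel-sum.

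The pivotal observation in your argument, that $g(x,y) = 1 - \sum_\alpha\psi_\alpha(x)\psi_\alpha(y)$ vanishes to second order on the diagonal (because $\sum_\alpha\psi_\alpha^2\equiv 1$ kills both the constant and the first-order Taylor coefficient), so that $[\alpha]\geq 2\upsilon_1$ whenever $|\alpha|\geq 2$ lets $\Delta_{\chi_0\tilde g_\beta}$ drop the anisotropic order by $2\upsilon_1$ via Corollary~\ref{cor_GM_Deltaq}, is a genuinely good one that the paper does not use. In fact such a mechanism appears to be needed: the paper asserts that $R_2 := \sum_{\alpha'}\psi_{\alpha'}^2 P(1-\psi_{\alpha'}^2)\varphi_{\alpha'}$ is smoothing, but the coefficient multiplying $K_P$ in $K_{R_2}$ restricted to the diagonal equals $1 - \sum_{\alpha'}\psi_{\alpha'}^4(x)$, which is positive unless the $\psi_{\alpha'}$ are disjointly supported, so $K_{R_2}$ inherits the diagonal singularity of $K_P$ and $R_2$ is only of lower order, not smoothing. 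Your iterative strategy supplies exactly the correction that such a lower-order residue demands.

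That said, the step you yourself flag as ``the main obstacle'' is where the proposal is not fully closed, and it is a genuine gap. Knowing $K_{R_0} = K_P\, g$ does not automatically identify the local symbol of $R_0$ in the $\bX_\beta$-frame with $\Delta_{\chi_0\tilde g_\beta}\sigma_\beta^{(0)}$: the discrepancy $\varphi R_0\psi - \varphi\Op^{\bX_\beta,\chi_\beta}(\Delta_{\chi_0\tilde g_\beta}\sigma_\beta^{(0)})\psi$ is a smooth kernel, but Definition~\ref{def:Q0}(1) requires it be of the exact form $\varphi\Op^{\bX_\beta,\chi_\beta}(\rho)\psi$ with $\rho\in S^{-\infty}(\Gh M|_{U_\beta})$; the latter has kernel supported inside $\supp\chi_\beta$, and you have not verified that the smooth discrepancy is so supported. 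This must be pinned down using the atlas condition $\supp(\psi_\alpha\otimes\psi_\alpha)\subset\{\chi_\alpha = 1\}$ and the choice of $\chi_0$ before the assertion $R_0\in\Psi^{m-2\upsilon_1}(M)$ is rigorously established, after which the iteration and Borel summation close the argument as you describe.
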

The equality in \eqref{eq:global-char} above means that for any $f\in C_c^\infty (M)$ and $x\in M$, we have:
$$
Pf(x)=Rf (x) + \sum_{\alpha}\psi_\alpha (x) \Op^{\bX_\alpha, \chi_\alpha}(\sigma_\alpha)(\psi_\alpha f)(x).
    $$
    We will see in Corollary \ref{corcorthm:princ_symbol} that we can in fact consider one symbol $\sigma$ instead of a family $(\sigma_\alpha)_{\alpha\in A}$.
In any case, the family of symbols $(\sigma_\alpha)_{\alpha\in A}$  and the smoothing operator~$R$ depend on the atlas $\cA$. However, even if we fix the atlas $\cA$, they are not unique. 
Indeed,  
we may modify a symbol $\sigma_\alpha $
in \eqref{eq:global-char} 
with a smoothing symbol on $U_\alpha$.
Moreover,  we observe that the  integral kernel of $\psi_\alpha\, \Op^{\bX_\alpha, \chi_\alpha}(\sigma_\alpha) \, \psi_\alpha$ is supported near the diagonal, 
more precisely in  $\supp \chi_\alpha \cap \supp\left( (x,y)\mapsto \psi_\alpha(x)\psi_\alpha (y)\right)$. 
As the integral kernel of $P$ may not be supported near the diagonal, the presence of 
the smoothing operator $R$ is essential for the characterisation of $\Psi(M)$ in terms of the local quantization.

\begin{proof}[Proof of Proposition \ref{caracterization_psi(M)}]
If $P$ is of the form \eqref{eq:global-char}, then $P\in \Psi^m(M)$ by Lemma \ref{lem:local-to-global}.  Let us prove the converse. Let $P\in \Psi^m(M)$.
    We denote by $\sigma_\alpha$ the symbol in $S^m(\Gh M|_{U_\alpha})$ obtained from Part (1) of Definition  \ref{def:Q0} for the open set $U_\alpha$. 
    We start the analysis with writing $P$ as
$$
P=
\sum_{\alpha,\alpha'} \psi_{\alpha'}^2 P \psi_\alpha ^2
=
\sum_{\alpha'} \psi_{\alpha'}^2 P \varphi_{\alpha'}
\ + \ R_1
$$
with 
$$
R_1 := \sum_{\substack{\alpha,\alpha'\\ U_\alpha\cap U_{\alpha'}= \emptyset}} \psi_{\alpha'}^2 P \psi_\alpha ^2, 
\qquad\mbox{and}\qquad  \varphi_{\alpha'}:=
\sum_{\alpha:U_\alpha\cap U_{\alpha'}\neq \emptyset} \psi_\alpha ^2 \  \in \ C^\infty_c (M).
$$
Part (2) of Definition  \ref{def:Q0} implies that $R_1$ is smoothing.
We now write the sum over $\alpha'$ as 
$$
\sum_{\alpha'} \psi_{\alpha'}^2 P \varphi_{\alpha'}
=
\sum_{\alpha'} \psi_{\alpha'}^2 P \psi_{\alpha'}^2\varphi_{\alpha'}
+R_2
$$
 where $R_2$ is the smoothing operator given by:
$$
R_2 :=\sum_{\alpha'}\psi_{\alpha'}^2 P (1-\psi_{\alpha'}^2)\varphi_{\alpha'}  .
 $$
 We check readily that the symbol
$$
\sigma'_{\alpha'} := \sum_{\alpha,\alpha'} \psi_{\alpha'} \ (\sigma_{\alpha'} \diamond _{\bX_{\alpha'},\chi_{\alpha'}} \psi_\alpha  \varphi_{\alpha'} ) 
$$
 is in $S^m(\Gh M|_{U_\alpha})$ and that we have
\begin{align*}
 \sum_{\alpha'}   \psi_{\alpha'}\,  
 \Op^{\bX_{\alpha'}, \chi_{\alpha'}} (\sigma'_{\alpha'})\,\psi_{\alpha'} 
 &=\sum_{\alpha'}   \psi_{\alpha'}^2\,  
 \Op^{\bX_{\alpha'}, \chi_{\alpha'}} (\sigma_{\alpha'})\,\psi_{\alpha'} ^2 \varphi_{\alpha'}
 =\sum_{\alpha'} \psi_{\alpha'}^2 P \psi_{\alpha'}^2\varphi_{\alpha'},
 \end{align*}
 concluding the proof.
\end{proof}

This characterisation together with the properties of the local quantization  imply that pseudodifferential operators of non-negative order act continuously on $L^2_{loc}(M)$:
\begin{corollary}
\label{cor_charPsiM}
 If $P\in \Psi^m(M)$ with $m\in (-\infty,0]\cup\{-\infty\}$, then $P$ acts continuously on the space  $L^2_{loc}(M)$ (see~\eqref{def:L2loc} for a precise definition).
\end{corollary}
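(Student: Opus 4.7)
The plan is to reduce the statement to the local $L^2$-boundedness result of Theorem \ref{prop_L2bddX} via the atlas characterisation of Proposition \ref{caracterization_psi(M)}. First, fix any atlas $\cA = (U_\alpha, \bX_\alpha, \chi_\alpha, \psi_\alpha)_{\alpha\in A}$ and, applying Proposition \ref{caracterization_psi(M)}, write
\[
P = R + \sum_\alpha \psi_\alpha \Op^{\bX_\alpha, \chi_\alpha}(\sigma_\alpha) \psi_\alpha,
\]
with $\sigma_\alpha \in S^m(\Gh M|_{U_\alpha})$ and $R \in \Psi^{-\infty}(M)$. It suffices to show separately that each of the summands and $R$ act continuously on $L^2_{loc}(M)$; continuity for the full sum will then follow from the local finiteness of the cover $(U_\alpha)$.

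Next, I would handle the smoothing remainder $R$. Since $R$ has a smooth integral kernel $K_R \in C^\infty(M \times M)$, for any $\theta \in C_c^\infty(M)$ and any $f \in L^2_{loc}(M)$ one has
\[
\theta(x) R f(x) = \int_M \theta(x) K_R(x,y) f(y)\, dy,
\]
and by choosing any $\theta' \in C_c^\infty(M)$ equal to $1$ on a suitable neighbourhood, it is straightforward via Cauchy--Schwarz and the smoothness of $K_R$ to bound $\|\theta R f\|_{L^2(M)}$ by a finite constant times $\|\theta' f\|_{L^2(M)}$ for some $\theta'$; this yields continuity of $R$ on $L^2_{loc}(M)$.

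For each summand $P_\alpha := \psi_\alpha \Op^{\bX_\alpha, \chi_\alpha}(\sigma_\alpha) \psi_\alpha$, I invoke the continuous inclusion $S^m(\Gh M|_{U_\alpha}) \subset S^0(\Gh M|_{U_\alpha})$ from \eqref{eq_inclusionSmGhM}, valid since $m \leq 0$, so that $\sigma_\alpha \in S^0(\Gh M|_{U_\alpha})$. Theorem \ref{prop_L2bddX} then provides continuity of $\Op^{\bX_\alpha, \chi_\alpha}(\sigma_\alpha)$ on $L^2_{loc}(U_\alpha)$, and the multiplication by $\psi_\alpha \in C_c^\infty(U_\alpha)$ on both sides gives an operator $P_\alpha : L^2_{loc}(M) \to L^2_{loc}(M)$. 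More precisely, given any $\theta \in C_c^\infty(M)$, one has $\theta P_\alpha f = (\theta \psi_\alpha) \Op^{\bX_\alpha, \chi_\alpha}(\sigma_\alpha)(\psi_\alpha f)$, and the $L^2(M)$-norm of this is controlled by $\|\psi_\alpha f\|_{L^2(M)}$ by Theorem \ref{prop_L2bddX}.

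Finally, one assembles the sum using local finiteness of $(U_\alpha)$: given any compact set containing $\supp \theta$, only finitely many indices $\alpha$ contribute to $\theta \sum_\alpha P_\alpha f$, so the finite-sum argument of the previous step yields the required bound. The only mildly technical point is ensuring that the control on $\sum_\alpha \theta P_\alpha f$ involves only finitely many $\psi_\alpha f$-terms, which is immediate from local finiteness; there is no genuine obstacle beyond bookkeeping, since the heavy lifting is done by Theorem \ref{prop_L2bddX} and the smoothness of $K_R$.
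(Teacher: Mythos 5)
Your proposal is correct and takes essentially the same route as the paper: decompose $P$ via Proposition~\ref{caracterization_psi(M)}, handle the smoothing remainder by Cauchy--Schwarz using smoothness of its kernel, and handle each local piece by combining the inclusion $S^m\subset S^0$ (valid since $m\le 0$) with the $L^2$-boundedness of Theorem~\ref{prop_L2bddX}. You simply spell out the local-finiteness bookkeeping that the paper leaves implicit.
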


\begin{proof}
Consider the case of  $P\in \Psi^{-\infty}(M)$ smoothing. 
This means that its integral kernel $K(x,y)$ is a smooth function in $x,y\in M$.
This implies that $P$ acts continuously on $L^2_{loc}(M)$ for instance by the Cauchy-Schwartz inequality.
The case of $m\in (-\infty,0]$  follows from 
 the characterisation in Proposition \ref{caracterization_psi(M)} together with the $L^2$-boundedness (see Theorem \ref{prop_L2bddX}).
\end{proof}

\subsection{Properly supported pseudodifferential operators and composition}
\label{sec:properly_supported}
Already in the traditional case of a trivial filtration on a manifold $M$, 
two pseudodifferential operators may not be composable as they are maps between functional spaces which are not compatible with composition;
for instance, one of them is an operator $C_c^\infty(M)\to C^\infty (M)$ while the other is  $\sE'(M)\to \cD'(M)$.
The same technical obstruction appears here, and we adapt the usual line of approach to solve this issue by   considering properly supported pseudodifferential operators.

\subsubsection{Properly supported pseudodifferential operators.}

\begin{definition}
\label{def_properlysupp}
A pseudodifferential operator $P$ is said to be \emph{properly supported} when its integral kernel $K\in \cD'(M\times M)$ is properly supported or, in other words, when the two projection maps $\supp \, K \to M$ are proper (i.e. preimages of compact sets are compact).
\end{definition}

With Definition \ref{def_properlysupp} in mind
together with Lemma \ref{lem_adjPsi}, it follows readily that a properly supported pseudodifferential operator on $M$ acts on $C_c^\infty(M)$:
\begin{lemma}
\label{lem_properlysuppPDOacts}
  If $P$ is a properly supported pseudodifferential operator on $M$, then $P$ is a linear continuous operator $C_c^\infty(M)\to C_c^\infty(M)$, $\cD'(M)\to \cD'(M)$ and $\sE'(M)\to \sE'(M)$.
  If~$P$ is moreover smoothing, then $P$  is also a linear continuous operator $\sE'(M)\to C_c^\infty(M)$.
\end{lemma}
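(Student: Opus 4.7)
The plan is to exploit the properness of the two projections $\pi_1,\pi_2:\supp K\to M$ (where $K\in \cD'(M\times M)$ is the Schwartz kernel of $P$), together with Lemma~\ref{lem_adjPsi} applied to the formal adjoint. First, I would note that if $P$ is properly supported, then so is its formal adjoint $P^*$, since the integral kernel of $P^*$ is $K^*(x,y)=\overline{K(y,x)}$, whose support is the image of $\supp K$ under the swap map; hence both projections of $\supp K^*$ are proper as well. This reduces the four continuity statements to two by duality, once suitable mapping properties are established on smooth compactly supported functions.

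Next I would establish that $P$ maps $C_c^\infty(M)$ continuously to $C_c^\infty(M)$. From Section~\ref{subsubsec_defPDOM} we already know $P:C_c^\infty(M)\to C^\infty(M)$ is continuous, so it suffices to check that $Pf$ has compact support when $f$ does. Writing $\supp(Pf)\subset \pi_1\bigl(\pi_2^{-1}(\supp f)\bigr)$, properness of $\pi_2$ makes $\pi_2^{-1}(\supp f)$ a compact subset of $M\times M$, and then $\pi_1$ being continuous forces $\supp(Pf)$ to be compact. The same argument applied to $P^*\in \Psi^m(M)$ (which is properly supported by the first paragraph) gives $P^*:C_c^\infty(M)\to C_c^\infty(M)$. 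By transposition, this immediately yields the continuous maps $P:\cD'(M)\to\cD'(M)$ and $P:\sE'(M)\to \sE'(M)$, and similarly with $\sE'$ and $\cD'$ replaced by their counterparts for $P^*$; the continuity with respect to the standard Fr\'echet/LF-topologies follows from the continuity of $P^*$ on the predual.

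Finally, for the smoothing case, I would use that a smoothing operator $P$ has a smooth kernel $K\in C^\infty(M\times M)$, so that for $u\in \sE'(M)$ one can define $Pu(x):=\langle K(x,\cdot),u\rangle_{C^\infty,\sE'}$, which is smooth in $x$ by standard differentiation under the duality bracket. The proper support of $P$ forces $\supp(Pu)\subset \pi_1(\pi_2^{-1}(\supp u))$, hence compact, giving $Pu\in C_c^\infty(M)$; continuity follows from the continuity of the bracket pairing together with a standard seminorm estimate using a cut-off that equals $1$ on $\pi_2^{-1}(\supp u)$.

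The only step that needs a little care is the continuity of $P:\sE'(M)\to \sE'(M)$ in the inductive-limit topology: here one should exhibit, for each compact $K_0\subset M$, a compact $K_1\subset M$ such that $P$ maps $\sE'_{K_0}(M)$ into $\sE'_{K_1}(M)$ continuously. The set $K_1:=\pi_1(\pi_2^{-1}(K_0))$ does the job thanks to properness, and continuity on each $\sE'_{K_0}(M)$ follows by duality from the continuity of $P^*:C^\infty_{K_1}(M)\to C^\infty_{K_0}(M)$, which is the content of the second paragraph. None of the steps should present real difficulty; the whole argument is a straightforward transposition of the Euclidean case, the only non-cosmetic input being Lemma~\ref{lem_adjPsi} to know that $P^*$ lies in $\Psi^m(M)$ in the first place.
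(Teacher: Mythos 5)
Your proof is correct and fleshes out exactly the argument the paper invokes when it says the lemma ``follows readily'' from Definition~\ref{def_properlysupp} and Lemma~\ref{lem_adjPsi}: use properness of the projections of $\supp K$ to control supports, pass to $P^*$ (whose kernel is the swap of $\supp K$, hence also properly supported), and transpose. The only cosmetic point worth noting is that when you write $\pi_2^{-1}(\supp f)$ and $\pi_2^{-1}(K_0)$ these should be read as preimages under $\pi_2:\supp K\to M$ (not the full projection $M\times M\to M$), so that compactness is automatic — which is what you intend, given your opening convention; and the phrase ``$P^*:C^\infty_{K_1}(M)\to C^\infty_{K_0}(M)$'' is really a statement about germs near $K_1$ mapping to germs near $K_0$ rather than about compactly supported functions, but the underlying support bookkeeping is right.
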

Consequently, properly supported pseudodifferential operators on $M$ are composable with other pseudifferential operators, and we will study their composition soon.
 However, one drawback of considering properly supported pseudodifferential operators is that,
 by definition, their integral kernels are supported close to the diagonal. Hence, unless the manifold is compact, they comprise a strictly smaller subspace of the space of pseudodifferential operators.
\smallskip

Here are some fundamental examples of properly supported pseudodifferential operators:
\begin{ex}\label{ex:prop_supp}
\begin{enumerate}
\item If the manifold $M$ is compact, then any pseudodifferential operator is properly supported. 
    \item   If $P\in \Psi^m(M)$ and if $\varphi,\psi\in C_c^\infty(M)$, then 
    $\psi P \varphi\in \Psi^m(M)$ is  properly supported.
    \item 
    Whether $M$ is compact or not, 
any differential operator is a properly supported pseudodifferential operator by Lemma \ref{lem_exdiffPDO}.
    \item Considering an 
    atlas $\cA= (U_\alpha, \bX_\alpha, \chi_\alpha,\psi_\alpha)_{\alpha\in A}$ for the filtered manifold $M$ and a family of symbols $(\sigma_\alpha)_{\alpha\in A}$ with $\sigma_\alpha\in S^m(\Gh M|_{U_\alpha})$, then the pseudodifferential operator
    $$
    \sum_{\alpha}\psi_\alpha\, \Op^{\bX_\alpha, \chi_\alpha}(\sigma_\alpha) \, \psi_\alpha \in \Psi^m(M)
    $$
    is properly supported.
\end{enumerate}
\end{ex}

As a consequence of the last example and the characterisation of pseudodifferential operators on $M$ in 
Proposition \ref{caracterization_psi(M)}, any pseudodifferential operator on $M$ is properly supported modulo a smoothing operator:

\begin{lemma}
\label{lem_P=ps+smooting}
For any $P\in \Psi^m(M)$, there exists a properly supported pseudodifferential operator $P'\in \Psi^m(M)$ and a smoothing operator $R\in \Psi^{-\infty}(M)$ such that $P=P'+R$.
\end{lemma}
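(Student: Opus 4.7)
The plan is to deduce this lemma directly from the atlas characterisation of pseudodifferential operators established in Proposition \ref{caracterization_psi(M)}, together with the properly supported example recorded as Example \ref{ex:prop_supp} (4). Specifically, I would fix any atlas $\cA = (U_\alpha, \bX_\alpha, \chi_\alpha, \psi_\alpha)_{\alpha\in A}$ of the filtered manifold $M$ and apply Proposition \ref{caracterization_psi(M)} to the given $P\in \Psi^m(M)$ to obtain symbols $\sigma_\alpha\in S^m(\widehat GM|_{U_\alpha})$ and a smoothing operator $R\in\Psi^{-\infty}(M)$ with
\[
P = R + \sum_{\alpha\in A} \psi_\alpha \,\Op^{\bX_\alpha,\chi_\alpha}(\sigma_\alpha)\, \psi_\alpha.
\]
I would then set $P' := \sum_{\alpha} \psi_\alpha\, \Op^{\bX_\alpha,\chi_\alpha}(\sigma_\alpha)\, \psi_\alpha$, which lies in $\Psi^m(M)$ by the forward direction of Proposition \ref{caracterization_psi(M)} (and is a well-defined continuous operator $C_c^\infty(M)\to C^\infty(M)$ since the sum is locally finite by condition (a) in Definition \ref{def:atlas}). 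The decomposition $P = P' + R$ then exhibits the desired splitting, provided I justify that $P'$ is properly supported.

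The only point that requires a short argument is therefore the proper support of $P'$, which is the content of Example \ref{ex:prop_supp} (4). I would verify it by examining the integral kernel: for each $\alpha$, the kernel of $\psi_\alpha\, \Op^{\bX_\alpha,\chi_\alpha}(\sigma_\alpha)\, \psi_\alpha$ is supported in the compact set
\[
\supp\, \chi_\alpha \ \cap \ \supp\bigl( (x,y)\mapsto \psi_\alpha(x)\psi_\alpha(y)\bigr) \ \subset \ \supp\psi_\alpha \times \supp\psi_\alpha \ \subset \ U_\alpha\times U_\alpha,
\]
which is compact since $\psi_\alpha\in C_c^\infty(U_\alpha)$. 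Hence the total kernel of $P'$ is supported in $\bigcup_\alpha \supp\psi_\alpha\times \supp\psi_\alpha$. For any compact $K\subset M$, local finiteness of the covering $\{U_\alpha\}$ implies that only finitely many indices $\alpha$ satisfy $\supp\psi_\alpha\cap K\neq\emptyset$, so the preimage of $K$ under either projection $\supp K_{P'}\to M$ lies in a finite union of compact sets and is compact. This gives proper support of $P'$ and completes the proof.

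There is essentially no hard step here: the substantive content was front-loaded into Proposition \ref{caracterization_psi(M)} and Example \ref{ex:prop_supp} (4). The only care needed is in the bookkeeping of supports to confirm local finiteness yields properness; no symbolic or frame-change analysis is required beyond invoking what has already been established.
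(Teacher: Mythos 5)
Your proposal is correct and follows exactly the same route the paper indicates: invoke Proposition \ref{caracterization_psi(M)} to write $P$ as $R$ plus a locally finite sum of localised quantisations, and observe (as in Example \ref{ex:prop_supp} (4)) that this sum is properly supported. The paper gives no further detail beyond this one-line citation, so your filling-in of the support and local-finiteness bookkeeping is a faithful, slightly more explicit rendition of the same argument.
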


\subsubsection{Composition of pseudodifferential operators}
We can now consider the composition of pseudodifferential operators on the filtered manifold $M$:
\begin{theorem}
\label{thm_compPsiM}
Let $m_1,m_2\in \bR\cup \{-\infty\}$.
Let $P_1\in \Psi^{m_1}(M)$ and $P_2\in \Psi^{m_2}(M)$. 
If one of them is properly supported then their composition $P_1P_2$ makes sense and is a pseudodifferential operator $P_1P_2 \in \Psi^{m_1+m_2}(M)$.
If they are both properly supported then so is $P_1P_2$.
\end{theorem}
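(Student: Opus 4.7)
The plan begins with justifying that $P_1 P_2$ makes sense whenever one factor is properly supported. By Lemma \ref{lem_properlysuppPDOacts}, a properly supported pseudodifferential operator maps $C_c^\infty(M) \to C_c^\infty(M)$ and, by transposition, also $C^\infty(M) \to C^\infty(M)$; in either scenario the composition $P_1 P_2 \colon C_c^\infty(M) \to C^\infty(M)$ is defined. Properness of $P_1 P_2$ when both factors are properly supported is immediate from the fact that the support of the integral kernel of a composition is contained in the set-theoretic composition of the individual supports. The second step is to reduce to the case where both operators are properly supported: using Lemma \ref{lem_P=ps+smooting}, write $P_i = P_i' + R_i$ with $P_i'$ properly supported and $R_i \in \Psi^{-\infty}(M)$ (taking $R_i = 0$ if $P_i$ is already properly supported). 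The composition expands into $P_1' P_2' + P_1' R_2 + R_1 P_2' + R_1 R_2$, and each cross term involving a smoothing factor is again smoothing, since composing a smoothing operator with a properly supported pseudodifferential operator on either side preserves the smoothness of the integral kernel.

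Assume now both $P_1, P_2$ are properly supported. For Part (2) of Definition \ref{def:Q0}, the integral kernel is formally $K(x, y) = \int K_1(x, z) K_2(z, y)\, dz$, which converges in the distributional sense thanks to proper support. For $(x_0, y_0) \notin D_M$, choosing $\theta \in C^\infty(M)$ equal to $1$ near $x_0$ and $0$ near $y_0$ and splitting the integral via $\theta + (1 - \theta)$ in $z$ yields two pieces in which one of $K_1, K_2$ is evaluated off its diagonal, hence smooth by Part (2) for each factor; smoothness of $K$ near $(x_0, y_0)$ then follows. For Part (1), fix an adapted frame $(\bX, U)$ with $\bX$-cutoff $\chi$, and let $\sigma_i \in S^{m_i}(\widehat G M|_U)$ be obtained by applying Part (1) to $P_i$. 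For $\varphi, \psi \in C_c^\infty(U)$ and $f \in C_c^\infty(M)$, pick $\theta \in C_c^\infty(U)$ with $\theta \equiv 1$ on a neighborhood of $\supp \varphi$, and decompose
$$ \varphi P_1 P_2 \psi f = \varphi P_1 \theta P_2 \psi f + \varphi P_1 (1 - \theta) P_2 \psi f.$$
The second term is smoothing: since $\varphi$ and $1 - \theta$ have disjoint supports, Part (2) for $P_1$ makes $\varphi P_1 (1 - \theta)$ a smoothing operator, and composing with the properly supported $P_2 \psi$ preserves this.

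For the main term, Part (1) for $P_2$ (with test functions $\theta, \psi \in C_c^\infty(U)$) yields $\theta P_2 \psi f = \theta \Op^{\bX, \chi}(\sigma_2) \psi f$. By Lemma \ref{lem_actonDistrib_locQ_1}, this lies in $C_c^\infty(U)$, so applying Part (1) for $P_1$ gives $\varphi P_1 (\theta \Op^{\bX, \chi}(\sigma_2) \psi f) = \varphi \Op^{\bX, \chi}(\sigma_1) \theta \Op^{\bX, \chi}(\sigma_2) \psi f$. Noting that multiplication by $\theta$ equals $\Op^{\bX, \chi}(\theta\, \id)$ (a consequence of Proposition \ref{ex_OpT} applied to the element $\theta \cdot 1 \in \Gamma(\sU_0(\fg M|_U))$) and that $\theta\, \id \in S^0(\widehat G M|_U)$ by Example \ref{ex_fId}, Proposition \ref{prop_comploc} assembles the three local quantizations into $\Op^{\bX, \chi}(\sigma_1 \diamondsuit (\theta\, \id) \diamondsuit \sigma_2)$ with composed symbol in $S^{m_1 + m_2}(\widehat G M|_U)$. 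The smoothing residual from the $(1 - \theta)$-term is then absorbed by adding a smoothing symbol $\tau \in S^{-\infty}(\widehat G M|_U)$, which is possible because smoothing symbols correspond to smooth convolution kernels by Proposition \ref{prop_smoothingM}. The main technical obstacle is precisely this bookkeeping: verifying that the smoothing residuals can consistently be interpreted as local quantizations within the fixed cutoff $\chi$, which is where the flexibility of Propositions \ref{prop_Qindepchi} and \ref{prop_smoothingM} is essential.
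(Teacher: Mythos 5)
Your reduction to the case where both operators are properly supported, with the smoothing cross terms $P_1'R_2$, $R_1P_2'$, $R_1R_2$ handled separately, follows the same lines as the paper's reduction via Lemma~\ref{lem_adjPsiMps}. Part~(2) of Definition~\ref{def:Q0} is also verified correctly with the $\theta + (1-\theta)$ split. The divergence — and the gap — is in how you verify Part~(1) of Definition~\ref{def:Q0} in the main case. You try to produce directly, for a fixed $(\bX,U,\chi)$, a symbol $\sigma\in S^{m_1+m_2}(\widehat G M|_U)$ matching $\varphi P_1 P_2\psi$ for all $\varphi,\psi\in C_c^\infty(U)$. But the symbol you exhibit, $\sigma_1\diamondsuit(\theta\,\id)\diamondsuit\sigma_2$, depends on an auxiliary cutoff $\theta$ chosen with $\theta\equiv 1$ on a neighbourhood of $\supp\varphi$; different $\varphi$ require different $\theta$, so you have not produced a single symbol independent of $\varphi,\psi$. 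The plan to absorb the $(1-\theta)$-residual into a smoothing $\tau\in S^{-\infty}(\widehat G M|_U)$ does not repair this: the residual operator $\varphi P_1(1-\theta)P_2\psi$ has a smooth kernel supported in $\supp\varphi\times\supp\psi$ that involves $K_1(x,z)$ for $z\in\supp(1-\theta)$, whereas $\varphi\Op^{\bX,\chi}(\tau)\psi$ always has kernel supported inside $\supp\chi$; moreover the residual itself depends on $\theta$. You flag this as the ``main technical obstacle,'' but the proof does not overcome it, and invoking Propositions~\ref{prop_Qindepchi} and~\ref{prop_smoothingM} does not by itself supply the missing gluing argument.

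The paper circumvents the issue entirely by not attempting to verify Definition~\ref{def:Q0} directly for $P_1P_2$. Instead it uses the atlas characterisation (Proposition~\ref{caracterization_psi(M)} together with Lemma~\ref{lem_adjPsiMps}~(2)) to write each $P_i$ as a properly supported smoothing operator plus a locally finite sum of elementary pieces $\psi_\alpha\Op^{\bX_\alpha,\chi_\alpha}(\sigma_\alpha)\psi_\alpha$, so the composition reduces to the three cases~(a),~(b),~(c). In case~(c), after passing to a common frame and cutoff via Propositions~\ref{prop_Qindepchi} and~\ref{prop_indepframe}, the composition of two elementary pieces is shown to equal another expression of the same elementary form $\psi'\Op^{\bX_2,\chi_2}(\tau)\psi''$; membership in $\Psi^{m_1+m_2}(M)$ then follows from Corollary~\ref{corlem:local-to-global}, where the uniformity of the symbol over all test functions has already been established once and for all. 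If you wish to salvage your direct approach, you would need to invoke Proposition~\ref{caracterization_psi(M)} at the start — as the paper does — so that the test-function bookkeeping is confined to the elementary pieces whose $\Psi^m$-membership is already known.
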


In particular, on a compact filtered manifold $M$,  $\Psi^{m_1}(M)\circ \Psi^{m_2}(M)\subset \Psi^{m_1+m_2}(M)$.

The proof will use the following observations. 
\begin{lemma}
\label{lem_adjPsiMps}
\begin{enumerate}
 \item If $P\in \Psi^m(M)$ with $m\in \bR\cup\{-\infty\}$ is properly supported, then $P^*\in \Psi^m(M)$ is also properly supported. 
 \item Let $P\in \Psi^m(M)$ with   
\[
P=R+ \sum_{\alpha}\psi_\alpha\, \Op^{\bX_\alpha, \chi_\alpha}(\sigma_\alpha) \, \psi_\alpha.
\]  
for $R\in\Psi^{-\infty} (M)$, an atlas $\cA= (U_\alpha, \bX_\alpha, \chi_\alpha,\psi_\alpha)_{\alpha\in A}$ and a family of symbols $(\sigma_\alpha)_{\alpha\in A}$ with  $\sigma_\alpha\in S^m(\Gh M|_{U_\alpha})$. If $P$ is properly supported, so it is for  $R$.
 \end{enumerate}
\end{lemma}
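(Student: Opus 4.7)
For Part (1), the plan is to reduce the claim to a set-theoretic observation on the Schwartz kernel. Recall from the proof of Lemma \ref{lem_adjPsi} that if $K\in \cD'(M\times M)$ is the integral kernel of $P$, then $P^*$ admits the integral kernel $K^*(x,y)=\bar K(y,x)$. Hence
\[
\supp K^* = \{(x,y)\in M\times M : (y,x)\in \supp K\},
\]
that is, $\supp K^*$ is the image of $\supp K$ under the swap diffeomorphism $(y,x)\mapsto (x,y)$. The two projections $\supp K^*\to M$ are therefore respectively the second and the first projection $\supp K\to M$, up to this swap. Properness of these projections is exactly the definition of $P$ being properly supported, and hence of $P^*$ being properly supported.

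For Part (2), the plan is to split $R$ as the difference of two properly supported operators. First I check that the locally finite sum
\[
Q:= \sum_{\alpha}\psi_\alpha\, \Op^{\bX_\alpha, \chi_\alpha}(\sigma_\alpha) \, \psi_\alpha
\]
is itself properly supported. The integral kernel $K_\alpha$ of each summand is supported in the set $\supp\bigl((x,y)\mapsto \psi_\alpha(x)\psi_\alpha(y)\bigr)\subset \supp\psi_\alpha \times \supp\psi_\alpha$, and the sum $\sum_\alpha K_\alpha$ makes sense as a distribution because the covering $(U_\alpha)_\alpha$ (and thus the family $(\supp \psi_\alpha)_\alpha$) is locally finite by Definition \ref{def:atlas}. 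For any compact subset $\cC\subset M$, the first projection of $\supp K_Q\cap (\cC\times M)$ onto $M$ is contained in $\cC$, while the second projection is contained in $\bigcup_{\alpha: \supp\psi_\alpha\cap \cC\neq \emptyset} \supp\psi_\alpha$; local finiteness of the covering reduces this union to a finite union of compact sets, hence a compact set, and similarly for the first projection. Therefore $Q$ is properly supported.

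Since $R=P-Q$ and both $P$ and $Q$ are properly supported, their difference $R$ is also properly supported: the support of its kernel is contained in $\supp K_P\cup \supp K_Q$, whose projections are proper by the previous step. This completes the plan; no serious obstacle is expected, the argument being essentially a diagram chase on supports combined with the local finiteness built into the notion of atlas (Definition \ref{def:atlas}).
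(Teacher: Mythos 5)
Your argument is correct and follows essentially the same route as the paper: Part (1) reduces to the observation (already in the proof of Lemma \ref{lem_adjPsi}) that the Schwartz kernel of $P^*$ is the swap of that of $P$, and Part (2) shows that the locally finite sum $Q=\sum_\alpha \psi_\alpha\,\Op^{\bX_\alpha,\chi_\alpha}(\sigma_\alpha)\,\psi_\alpha$ is properly supported (this is Example \ref{ex:prop_supp} (4), built from (2)) and then concludes for $R=P-Q$ by taking differences. The only minor wrinkle is a small redundancy at the end of your Part (2): having noted that $\supp K_Q\cap(\cC\times M)$ sits inside the compact rectangle $\cC\times\bigcup_{\alpha:\supp\psi_\alpha\cap\cC\neq\emptyset}\supp\psi_\alpha$, the phrase ``and similarly for the first projection'' should really read ``and similarly for $\supp K_Q\cap(M\times\cC)$''; the substance is unaffected.
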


\begin{proof}[Proof of Lemma \ref{lem_adjPsiMps}]
 Part (1)  follows readily from Lemma \ref{lem_adjPsi} and its proof, while Part (2) comes from Example~\ref{ex:prop_supp} (2).   
\end{proof}

\begin{proof}[Proof of Theorem \ref{thm_compPsiM}]
Lemmata \ref{lem_adjPsi} and \ref{lem_adjPsiMps} (1)  allow us to assume $P_2$ properly supported.  
Then $P_1 P_2$ makes sense and is a continuous linear operator $C_c^\infty (M)\to C^\infty (M)$. 
Denoting by $K,K_1,K_2$ the integral kernels of $P_2P_1$, $P_1$ and $P_2$, we have in the sense of distribution:
\begin{equation}
    \label{eq:pfthm_compPsiM_K}
K(x,y)=\int_M K_1(x,z)K_2(z,y) dz.
\end{equation}

 Lemma~\ref{lem_adjPsiMps} (2) and  the vector space structure of $\Psi(M)$ imply that it suffices to consider the  
three following cases: 
\begin{itemize}
    \item[(a)]  $P_2\in \Psi^{-\infty}(M)$ (i.e. $K_2$ smooth), 
    \item[(b)]  $P_1\in \Psi^{-\infty}(M)$ (i.e. $K_1$ smooth) and $P_2$ given by a local quantization, i.e. of the form $P_2 =\psi\, \Op^{\bX, \chi}(\sigma) \, \psi$,   and 
    \item[(c)] when both $P_1$ and $P_2$ are properly supported pseudodifferential operators and given by a local quantization.
\end{itemize}
Let us analyse these three cases.

(a) If $P_2$ is  also smoothing (in addition of being properly supported), then 
$K_2$ is smooth and properly supported, and for fixed $y\in M$, $K$ is the image of the compactly supported function $z\mapsto K(z,y)$ by $P_1$, and thus smooth since $P_1$ is a pseudodifferential operator. 
In this case, we obtain   $P_1P_2\in \Psi^{-\infty}(M)$.

(b) If  $P_1\in \Psi^{-\infty}(M)$ 
and 
\[P_2 =\psi\, \Op^{\bX, \chi}(\sigma) \, \psi,\] 
where $\bX$ is an adapted frame on an open subset $U\subset M$, $\chi$ is a $\bX$-cut-off,  $\psi\in C_c^\infty (U)$ and $\sigma\in S^m(\Gh M|_{U})$.
Denoting by $\tilde K_2$ the integral kernel of $\Op^{\bX, \chi}(\sigma)$, 
we have
in the sense of distribution
$$
K_2(z,y)= \psi(z) \tilde K_2(z,y) \psi(y).
$$
Hence, the expression in $\eqref{eq:pfthm_compPsiM_K}$ shows that $K$ is smooth. 
In this case, again,   $P_1P_2\in \Psi^{-\infty}(M)$.

(c)
We now consider the case of $m_1,m_2$ finite and $P_1,P_2$, given for $j=1,2$ by 
\begin{equation}
\label{eq:pfthm_compPsiM_casePjsymbol}
P_j =\psi_j\, \Op^{\bX_j, \chi_j}(\sigma_j) \, \psi_j,
\end{equation}
where $\bX_j$ is an adapted frame on an open subset $U_j\subset M$, $\chi_j$ is a $\bX_j$-cut-off,  $\psi_j\in C_c^\infty (U_j)$ and $\sigma_j\in S^m(\Gh M|_{U_j})$.
If $\psi_1\psi_2=0$,  then $P_1P_2=0$. 
Hence, we may assume $\psi_1\psi_2\neq 0$
and we fix a function $\tilde \psi \in C_c^\infty (U_1\cap U_2)$ such that $\tilde \psi=1$ on \textcolor{blue}{a neighbourhood of} $\supp\, (\psi_1\psi_2)$.

Let us show that we can write 
\begin{equation}
    \label{eqpfpfthm_compPsiM_sigma1+rho}
    \psi_1
\Op^{\bX_1, \chi_1}(\sigma_1) \tilde \psi^2
 =
  \psi_1\Op^{\bX_2, \chi_2}(\tilde \psi \sigma_1) \tilde \psi \ + \ 
   \psi_1\Op^{\bX_2, \chi_2}(\rho) \tilde \psi, 
\end{equation}
for some $\rho \in S^{m_1-1}(\Gh M)$.
By the symbolic properties of the local quantization (see Proposition \ref{prop_comploc}), 
there exists $\rho_1 \in S^{m_1-1}(\Gh M)$
$$
\psi_1
\Op^{\bX_1, \chi_1}(\sigma_1) \tilde \psi^2
= \psi_1\Op^{\bX_1, \chi_1}(\tilde \psi\sigma_1 + \rho_1) \tilde \psi
$$
By the properties of invariance of the local quantization for  the frame and the cut-off (see Propositions~\ref{prop_indepframe} and~\ref{prop_Qindepchi}), 
there exists $\rho'_1\in S^{m_1-1}(\Gh M)$ such that
$$
 \psi_1 \Op^{\bX_1, \chi_1}(\tilde \psi \sigma_1 ) \tilde \psi
 =
  \psi_1\Op^{\bX_2, \chi_2}(\tilde \psi \sigma_1+\rho'_1) \tilde \psi.
$$
This implies \eqref{eqpfpfthm_compPsiM_sigma1+rho}.

Using  \eqref{eqpfpfthm_compPsiM_sigma1+rho} in the expression for $P_1P_2$, we obtain
\begin{align*}
  P_1P_2&= \psi_1\, \Op^{\bX_1, \chi_1}(\sigma_1) \, \tilde \psi^2\psi_1
\psi_2\, \Op^{\bX_2, \chi_2}(\sigma_2) \, \psi_2
\\ & = \left(\psi_1\Op^{\bX_2, \chi_2}(\tilde \psi \sigma_1) \tilde \psi \ + \ 
   \psi_1\Op^{\bX_2, \chi_2}(\rho) \tilde \psi\right) \psi_1
\psi_2\, \Op^{\bX_2, \chi_2}(\tilde \psi\sigma_2) \, \psi_2 \\
&= \psi_1\, \Op^{\bX_2, \chi_2}( (\tilde \psi\sigma_1) \diamond_{\bX_2,\chi_2}  (\psi_1
\psi_2\sigma_2)) \, \psi_2  
+
\psi_1\, \Op^{\bX_2, \chi_2}(\sigma_1 \diamond_{\bX_2,\chi_2}   \psi_1
\psi_2 \rho) \, \psi_2.
\end{align*}

By the properties of composition in Proposition \ref{prop_comploc} and Lemma \ref{lem:local-to-global}, 
the first term in the sum of the right-hand side is in $\Psi^{m_1+m_2}(M)$ while the second is in $\Psi^{m_1+m_2-1}(M)$.
This implies the case of two operators $P_1,P_2$ of the form \eqref{eq:pfthm_compPsiM_casePjsymbol}.

The case where they are both properly supported follows readily from \eqref{eq:pfthm_compPsiM_K}.
\end{proof}

\subsubsection{The global quantization $\Op^\cA$}
\label{subssubsec_OpA}

\begin{lemma}
\label{lem_OpA}
  We fix  an 
    atlas $\cA= (U_\alpha, \bX_\alpha, \chi_\alpha,\psi_\alpha)_{\alpha\in A}$ for the filtered manifold $M$.
    The operators defined via 
   \begin{equation}
\label{eq:OpAsigma}
\Op^\cA(\sigma):=\sum_{\alpha}\psi_\alpha\, \Op^{\bX_\alpha, \chi_\alpha}(\sigma) \, \psi_\alpha, \quad \sigma\in \cup_{m\in \bR\cup \{-\infty\}} S^m(\Gh M),
\end{equation}
are properly supported pseudodifferential. 
More precisely, we have for any $m\in \bR\cup\{-\infty\}$,
$$
\forall \sigma\in S^m (\Gh M),\qquad 
\Op^\cA(\sigma) \in \Psi^m (M).
$$
Moreover, it satisfies the following two properties:
\begin{enumerate}
    \item If $\sigma\in \cup_{m'\in \bR\cup\{-\infty\}} S^{m'}(\Gh M)$ satisfies $\Op^\cA(\sigma) \in \Psi^m (M)$, then $\sigma\in S^m (\Gh M)$.
    \item If $\sigma$ and $\sigma'$ are two symbols in $S^m (\Gh M)$ such that $\Op^\cA(\sigma)=\Op^\cA(\sigma')$, then $\sigma-\sigma'\in S^{-\infty}(\Gh M)$ is smoothing.
\end{enumerate}
\end{lemma}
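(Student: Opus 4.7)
The first assertion that $\Op^\cA(\sigma)\in\Psi^m(M)$ and is properly supported will follow by assembling already-proven local facts. For each $\alpha$, Corollary~\ref{corlem:local-to-global} gives $\Op^{\bX_\alpha,\chi_\alpha}(\sigma)\psi_\alpha\in\Psi^m(M)$, and left-multiplying by $\psi_\alpha\in C_c^\infty(U_\alpha)\subset C^\infty(M)$ preserves membership in $\Psi^m(M)$. The local finiteness of $\cA$ guarantees that the formal sum in~\eqref{eq:OpAsigma} makes pointwise sense as an element of $\Psi^m(M)$, and properness of its Schwartz kernel is immediate since the kernel of each summand is supported in the compact set $(\supp\psi_\alpha\times\supp\psi_\alpha)\cap\supp\chi_\alpha\subset U_\alpha\times U_\alpha$.

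To prove (1), I will argue locally around an arbitrary point $x_0\in M$ by an induction on the order. Assume $\sigma\in S^{m'}(\Gh M)$ with $m'>m$ (the case $m'=-\infty$ is trivial), and pick a small open neighbourhood $V$ of $x_0$ equipped with an adapted frame $\bX$ and cut-off $\chi$, chosen small enough that the finite set $A(V):=\{\alpha:\psi_\alpha\not\equiv 0 \text{ on } V\}$ consists of indices with $V\subset U_\alpha$ and $\sum_{\alpha\in A(V)}\psi_\alpha^2\equiv 1$ on $V$. Using Propositions~\ref{prop_Qindepchi} and~\ref{prop_indepframe} to change frame and cut-off, for each $\alpha\in A(V)$ there exists $\rho'_\alpha\in S^{m'-1}(\Gh M|_V)$ with $\Op^{\bX_\alpha,\chi_\alpha}(\sigma)=\Op^{\bX,\chi}(\sigma)+\Op^{\bX,\chi}(\rho'_\alpha)$ (modulo smoothing) after composition with $\psi_\alpha$ on both sides. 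The key observation is then that multiplication by $\psi_\alpha$ on the left commutes exactly with the local quantization ($\psi_\alpha\Op^{\bX,\chi}(\sigma)=\Op^{\bX,\chi}(\psi_\alpha\sigma)$) while on the right, Proposition~\ref{prop_comploc} gives $\Op^{\bX,\chi}(\sigma)\psi_\alpha=\Op^{\bX,\chi}(\psi_\alpha\sigma)$ modulo $\Psi^{m'-1}$, because $\psi_\alpha$ acting by multiplication coincides with $\Op^{\bX,\chi}(\psi_\alpha\id)$. Summing over $\alpha\in A(V)$ and invoking $\sum_{\alpha\in A(V)}\psi_\alpha^2=1$ on $V$ yields, on $V$,
\[
\Op^\cA(\sigma)=\Op^{\bX,\chi}(\sigma)+Q,\qquad Q\in\Psi^{m'-1}(M).
\]

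The hypothesis $\Op^\cA(\sigma)\in\Psi^m(M)$ then forces $\Op^{\bX,\chi}(\sigma)\in\Psi^{\max(m,m'-1)}(V)$, and Lemma~\ref{lem:local-to-global}(2) gives $\sigma\in S^{\max(m,m'-1)}(\Gh M|_V)$. Iterating this argument a finite number of times strictly decreases the order from $m'$ down to $m$, producing $\sigma\in S^m(\Gh M|_V)$. Since $x_0$ was arbitrary and the symbol classes on $M$ are defined via local seminorms in adapted frames (Section~\ref{subsec_SmGhM}), this yields $\sigma\in S^m(\Gh M)$. Claim (2) is then an immediate consequence of (1) applied to $\sigma-\sigma'$: since $\Op^\cA(\sigma-\sigma')=0\in\Psi^{m''}(M)$ for every $m''\in\bR$, (1) gives $\sigma-\sigma'\in\bigcap_{m''\in\bR}S^{m''}(\Gh M)=S^{-\infty}(\Gh M)$.

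The main obstacle is the bookkeeping around frame and cut-off changes that turn the local expressions $\psi_\alpha\Op^{\bX_\alpha,\chi_\alpha}(\sigma)\psi_\alpha$ into a single $\Op^{\bX,\chi}(\sigma)$ on $V$ modulo lower order; the orders $m'-1$ produced by Propositions~\ref{prop_comploc} and~\ref{prop_indepframe} must be tracked carefully so that the induction genuinely gains one order at each step, which is where the partition-of-unity identity $\sum\psi_\alpha^2=1$ plays the crucial cancelling role.
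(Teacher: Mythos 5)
Your proof is correct and follows the same strategy the paper intends, although the paper is extremely terse here: for the first assertion it simply cites Example~\ref{ex:prop_supp}~(4), and for Part~(1) it just says to proceed ``as in the proof of Lemma~\ref{lem:local-to-global}~(2)''. Your expansion is a faithful reading of that remark, and in particular you have correctly identified the order-descent iteration that the adaptation requires: because the frame change (Proposition~\ref{prop_indepframe}) and the insertion of the $\psi_\alpha$'s via composition (Proposition~\ref{prop_comploc}) relate $\Op^\cA(\sigma)$ to a single local quantization $\Op^{\bX,\chi}(\sigma)$ only modulo a term of order $m'-1$ (not modulo $S^{-\infty}$, in contrast with the cut-off change of Proposition~\ref{prop_Qindepchi}), a one-shot transposition of the short Schwartz-kernel comparison in Lemma~\ref{lem:local-to-global}~(2) is not available, and an iteration that gains one order per step is genuinely necessary. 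One point worth stating explicitly in a final write-up: after each step you obtain $\sigma|_V\in S^{\max(m,m'-1)}(\Gh M|_V)$ for every sufficiently small $V$, and since membership in $S^{m''}(\Gh M)$ is a local condition (Definition~\ref{def:Sm}), this re-globalises to $\sigma\in S^{\max(m,m'-1)}(\Gh M)$ before you apply the propositions again; your sentence about local seminorms only appears at the very end, whereas it is needed at every step of the iteration. Also, the informal phrase ``after composition with $\psi_\alpha$ on both sides'' is doing real work: it is what ensures that the frame and cut-off comparisons take place over $V$ rather than over all of $U_\alpha$, and a referee might ask you to say this more precisely. These are cosmetic; the argument itself is sound.
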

\begin{proof}
From Example \ref{ex:prop_supp} (4), it follows readily that $\Op^\cA(\sigma) \in \Psi^m (M)$ for any $\sigma\in S^m (\Gh M)$.
Part (1) is proved by proceeding as in the proof of Lemma \ref{lem:local-to-global} (2).
Part (2) follows.
\end{proof}

The class of operators $\Op^\cA (S^m (\Gh M))$, $m\in \bR$, enjoy a symbolic calculus modulo lower order terms as 
a consequence of the proof of Theorem \ref{thm_compPsiM} and the consideration on the adjoint. 

\begin{corollary}
\label{cor:OpA}
Let $\cA= (U_\alpha, \bX_\alpha, \chi_\alpha,\psi_\alpha)_{\alpha\in A}$ be an atlas for $M$ and let $\Op^\cA$ be its associated global quantization given via \eqref{eq:OpAsigma}. Let $m_1,m_2,m\in \bR\cup\{-\infty\}$.
\begin{enumerate}
\item 
If $\sigma\in S^{m}(\Gh M) $
then 
$$
    \left(\Op^\cA (\sigma)\right)^* = 
     \Op^\cA (\sigma^{(*)}) =  \Op^\cA (\sigma^{*}) +R,
    $$
    where $R\in \Psi^{m-1}(M)$.
    \item If $\sigma_1\in S^{m_1}(\Gh M) $ and $\sigma_2\in S^{m_2}(\Gh M)$, then 
    $$
    \Op^\cA (\sigma_1)\Op^\cA (\sigma_2) = 
     \Op^\cA (\sigma_1\sigma_2) +S, 
    $$
    where $S\in \Psi^{m_1+m_2-1}(M)$.
\end{enumerate}
\end{corollary}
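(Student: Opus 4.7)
The approach will be to reduce both statements to local computations, exploiting the local symbolic calculus from Section~\ref{sec:symb_cal} (Propositions~\ref{prop_adjloc} and~\ref{prop_comploc}) together with the invariance properties of the local quantization under changes of frame and cut-off (Propositions~\ref{prop_Qindepchi} and~\ref{prop_indepframe}). The strategy closely parallels the proof of Theorem~\ref{thm_compPsiM}, case~(c). Throughout, the local finiteness of the covering $(U_\alpha)$ lets one treat $\Op^\cA$-sums as finite sums on compacta.

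For Part~(1), I would start from the fact that the $\psi_\alpha$ are real-valued, so
\[
(\Op^\cA(\sigma))^* = \sum_\alpha \psi_\alpha\, \Op^{\bX_\alpha,\chi_\alpha}(\sigma)^*\, \psi_\alpha .
\]
Applying Proposition~\ref{prop_adjloc} chart by chart produces symbols $\sigma^{(*)}_\alpha \in S^m(\Gh M|_{U_\alpha})$ with $\sigma^{(*)}_\alpha - \sigma^* \in S^{m-1}(\Gh M|_{U_\alpha})$ and satisfying $\Op^{\bX_\alpha,\chi_\alpha}(\sigma)^* = \Op^{\bX_\alpha,\chi_\alpha}(\sigma^{(*)}_\alpha)$. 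Splitting off the $\sigma^*$ piece yields
\[
(\Op^\cA(\sigma))^* = \Op^\cA(\sigma^*) + \sum_\alpha \psi_\alpha\, \Op^{\bX_\alpha,\chi_\alpha}\!\bigl(\sigma^{(*)}_\alpha - \sigma^*\bigr)\, \psi_\alpha .
\]
By Corollary~\ref{corlem:local-to-global} each summand of the remainder lies in $\Psi^{m-1}(M)$; local finiteness then promotes the sum to a well-defined element $R \in \Psi^{m-1}(M)$, and the statement $(\Op^\cA(\sigma))^* = \Op^\cA(\sigma^{(*)})$ is read off with $\Op^\cA(\sigma^{(*)})$ interpreted via the local pieces $\sigma^{(*)}_\alpha$ (uniquely determined modulo $\Psi^{-\infty}$ by Lemma~\ref{lem_OpA}).

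For Part~(2), I would expand the composition as a double sum
\[
\Op^\cA(\sigma_1)\Op^\cA(\sigma_2) = \sum_{\alpha,\beta} \psi_\alpha\, \Op^{\bX_\alpha,\chi_\alpha}(\sigma_1)\, (\psi_\alpha\psi_\beta)\, \Op^{\bX_\beta,\chi_\beta}(\sigma_2)\, \psi_\beta ,
\]
discarding pairs with $U_\alpha \cap U_\beta = \emptyset$. For each remaining pair, I would mimic case~(c) of the proof of Theorem~\ref{thm_compPsiM}: insert a cut-off $\widetilde\psi_{\alpha\beta}\in C_c^\infty(U_\alpha\cap U_\beta)$ equal to $1$ on $\supp(\psi_\alpha\psi_\beta)$, then rewrite $\psi_\alpha\,\Op^{\bX_\alpha,\chi_\alpha}(\sigma_1)\,\widetilde\psi_{\alpha\beta}$ in the frame $(\bX_\beta,\chi_\beta)$ modulo a symbol of order $m_1-1$ via Propositions~\ref{prop_Qindepchi} and~\ref{prop_indepframe}, and finally compose the two factors in the common frame $\bX_\beta$ via Proposition~\ref{prop_comploc}. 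The principal local contribution assembles into $\psi_\alpha\,\Op^{\bX_\beta,\chi_\beta}(\sigma_1\sigma_2)\,\psi_\alpha\psi_\beta^2$, and invoking $\sum_\alpha \psi_\alpha^2 = 1$ inside compactly supported expressions collapses the double sum to $\Op^\cA(\sigma_1\sigma_2)$ modulo lower-order terms.

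The main obstacle is the bookkeeping in Part~(2): each local operation (cut-off change, frame change, local composition) produces an error of order $m_1+m_2-1$, but these errors live on different pieces of $M$ and depend on the chart. The cleanest way to conclude that the aggregated remainder lies in $\Psi^{m_1+m_2-1}(M)$ is Proposition~\ref{caracterization_psi(M)}: it suffices to verify chart by chart that $\Op^\cA(\sigma_1)\Op^\cA(\sigma_2) - \Op^\cA(\sigma_1\sigma_2)$ has, locally, the expected form, which the preceding analysis provides. The proper support of the $\Op^\cA(\sigma_j)$ ensures that only finitely many $(\alpha,\beta)$ contribute over any compact set, while off-diagonal smoothness (Corollary~\ref{cor:ker_smoothing}) absorbs any leftover off-diagonal pieces into $\Psi^{-\infty}(M)$ without affecting the order count.
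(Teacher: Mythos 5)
Your proposal is correct and follows essentially the same route as the paper: Part~(1) is handled exactly as the paper intends (the paper only says ``we check readily''), and for Part~(2) your double-sum expansion, frame/cut-off normalisation, local composition via Proposition~\ref{prop_comploc}, and final collapse using $\sum_\alpha\psi_\alpha^2=1$ mirror the paper's argument, which explicitly builds on case~(c) of the proof of Theorem~\ref{thm_compPsiM}. The one point you spell out more carefully than the paper — that $\sigma^{(*)}$ is chart-dependent, so $\Op^\cA(\sigma^{(*)})$ must be read as the chart-by-chart assembly, unique modulo smoothing by Lemma~\ref{lem_OpA} — is a useful clarification but not a different approach.
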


\begin{proof}[Proof of Corollary \ref{cor:OpA}]
We check readily the properties of the adjoint. 
For the composition, we first observe that continuing with the proof of  Theorem \ref{thm_compPsiM} Case (c), 
we have in $S^{m_1+m_2}(\Gh M|_{U_2})$
$$  (\tilde \psi\sigma_1) \diamond_{\bX_2,\chi_2}  (\psi_1
\psi_2\sigma_2) = 
 \psi_1^2 \psi_2 \sigma_1 \sigma_2 + \rho',
$$
for some $\rho'\in S^{m_1+m_2-1}(\Gh M|_{U_2})$.
Therefore, we have obtained:
$$
 \psi_1\, \Op^{\bX_1, \chi_1}(\sigma_1) \, \psi_1
\psi_2\, \Op^{\bX_2, \chi_2}(\sigma_2) \, \psi_2
=
\psi_2 \Op^{\bX_2, \chi_2}(\psi_1^2 \sigma_1\sigma_2 )\psi_2 + R_{\psi_1,\psi_2}
$$
where $R_{\psi_1,\psi_2}\in \Psi^{m_1+m_2-1}(M)$ is given by
$$
R_{\psi_1,\psi_2}=
\psi_1 \Op^{\bX_2, \chi_2}(\rho')\psi_2 + 
\psi_1\, \Op^{\bX_2, \chi_2}(\sigma_1 \diamond_{\bX_2,\chi_2}   \psi_1
\psi_2 \rho) \, \psi_2.
$$

Performing this for every $\psi_{\alpha_1},\psi_{\alpha_2}$, we have
\begin{align*}
   \Op^\cA (\sigma_1)\Op^\cA (\sigma_1) &= 
\sum_{\alpha_1,\alpha_2}
\psi_{\alpha_1}\, \Op^{\bX_{\alpha_1}, \chi_{\alpha_1}}(\sigma) \, \psi_{\alpha_1}
\psi_{\alpha_2}\, \Op^{\bX_{\alpha_2}, \chi_{\alpha_2}}(\sigma) \, \psi_{\alpha_2}
\\&=  \sum_{\alpha_2,\alpha_1} 
\psi_{\alpha_2} \Op^{\bX_2, \chi_2}( \psi_{\alpha_1}^2 \sigma_1\sigma_2 )\psi_{\alpha_2}
\ + \ S, 
\end{align*}
with $\Op^\cA (\sigma_1\sigma_2) $ for the first term since $\sum_{\alpha_1} 
 \psi_{\alpha_1}^2 =1$, and for the second term,
 $$
S:=\sum_{\alpha_1,\alpha_2}R_{\psi_{\alpha_1},\psi_{\alpha_2}} .$$
 We check readily $S\in \Psi^{m_1+m_2-1}(M)$ and 
 this concludes the proof.
\end{proof}

\subsection{Principal Symbol}\label{subsec:princ_symbol}

We start by defining a local notion of principal symbol, attached to open sets small enough to be included into open sets endowed with a local frame. Then, in a second step, we will define a global notion of principal symbol.

\subsubsection{Local principal symbol}

We   introduce the following notation:
\begin{definition}
\label{def:SymbmV0}
   Let $m\in \bR$, let $P\in \Psi^m(M)$ and let $V_0$ be an open subset of $M$ small enough to be included in an open set  equipped with adapted frames. 
   Then  we denote by 
   $$
   {\rm Symb}_{m,V_0}(P)\in S^m(\widehat{G}M|_{V_0})/S^{m-1}(\widehat{G}M|_{V_0})
   $$
the class of symbols modulo $S^{m-1}(\Gh M|_{V_0})$ of $\sigma|_{V_0}$:
   $$
   {\rm Symb}_{m,V_0}(P)
    := \sigma|_{V_0} \ \mbox{mod} \ S^{m-1}(\widehat{G}M|_{V_0}),
   $$
where $\sigma|_{V_0} \in S^{m}(\widehat{G}M|_{V_0})$ is the restriction to $V_0$ of a symbol $\sigma \in S^m (\Gh M)$
  satisfying
   \begin{equation}
    \label{eq_phiPphiOp}
    \phi P\phi=\Op^{\bX, \chi}(\phi \sigma)\phi,
\end{equation}
for some adapted frame $\bX$ on an open set $U$ with $V_0\subset U\subset M$,   $\bX$-cut-off $\chi$ and  function $\phi\in C_c^\infty(U)$ with $\phi=1$ on a neighbourhood of $V_0$.
\end{definition}

For $P\in\Psi^m(M)$, the  existence of symbols $\sigma$ as in Definition~\ref{def:SymbmV0}
 is guaranteed by (1) of Definition~\ref{def:Q0}. 
The fact that it leads to a well-defined and unique class of symbol
follows from the following property:

\begin{lemma}
\label{lem_symbolaboveopenset}
 Let $m\in \bR$ and  $P\in \Psi^m(M)$. 
Let $V$ be an open subset of an open set $U\subset M$ admitting adapted frames.
Let $\phi,\phi'\in C_c^\infty(U)$ be two functions  such that $\phi=\phi'=1$ on $\overline{V}$, let  $\bX,\bX'$  be two adapted frames on $U$, let $\chi,\chi'$ be two cut-offs for $\bX$ and $\bX'$.
If  $\sigma$ and $\sigma'$ are symbols in $S^m(\Gh M)$  satisfying 
\eqref{eq_phiPphiOp} for $\phi,\bX,\chi$ and $\phi',\bX',\chi'$ respectively, then
$(\phi'\phi)^2 \,  (\sigma-\sigma')
     \in S^{m-1}(\Gh M)$.
\end{lemma}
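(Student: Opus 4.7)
The strategy is to align the two representations of $P$ in the common frame $(\bX, \chi)$ modulo symbols in $S^{m-1}$, and then invoke the injectivity of $\Op^{\bX,\chi}$ modulo smoothing symbols.

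First I would multiply $\phi P \phi = \Op^{\bX,\chi}(\phi\sigma)\phi$ on both sides by $\phi'$ and, likewise, multiply $\phi' P \phi' = \Op^{\bX',\chi'}(\phi'\sigma')\phi'$ on both sides by $\phi$. Since the multiplication operators commute, the two resulting operators agree:
\[
\phi'\,\Op^{\bX,\chi}(\phi\sigma)\,\phi\phi' \;=\; \phi\,\Op^{\bX',\chi'}(\phi'\sigma')\,\phi'\phi.
\]
Left multiplication by a smooth function $\eta$ moves inside the quantization via $\eta\,\Op^{\bullet,\bullet}(\tau) = \Op^{\bullet,\bullet}(\eta\tau)$, while right multiplication is composition with $M_\eta = \Op^{\bullet,\bullet}(\eta\,\id)$ by Proposition~\ref{ex_OpT}. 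Applying the composition formula (Proposition~\ref{prop_comploc}), whose leading term in the asymptotic expansion is the product of symbols, yields
\[
\phi'\,\Op^{\bX,\chi}(\phi\sigma)\,\phi\phi' = \Op^{\bX,\chi}\!\bigl((\phi\phi')^2\sigma + \rho_1\bigr)
\]
for some $\rho_1\in S^{m-1}(\widehat{G}M|_U)$, and analogously $\phi\,\Op^{\bX',\chi'}(\phi'\sigma')\,\phi'\phi = \Op^{\bX',\chi'}\bigl((\phi\phi')^2\sigma' + \rho_2\bigr)$ with $\rho_2 \in S^{m-1}(\widehat{G}M|_U)$. A change-of-frame step (Proposition~\ref{prop_indepframe}) then produces $\rho_3 \in S^{m-1}(\widehat{G}M|_U)$ with
\[
\Op^{\bX',\chi'}\!\bigl((\phi\phi')^2\sigma' + \rho_2\bigr) = \Op^{\bX,\chi}\!\bigl((\phi\phi')^2\sigma' + \rho_2 + \rho_3\bigr),
\]
so that equating both representations gives
\[
\Op^{\bX,\chi}\!\bigl((\phi\phi')^2(\sigma-\sigma') + \rho\bigr) = 0, \qquad \rho := \rho_1 - \rho_2 - \rho_3 \in S^{m-1}(\widehat{G}M|_U).
\]

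The final step is to verify that if $\Op^{\bX,\chi}(\tau) = 0$ for some $\tau \in S^{m}(\widehat{G}M|_U)$ then $\tau \in S^{-\infty}$. The integral kernel of $\Op^{\bX,\chi}(\tau)$ is, up to the Jacobian factor, $\kappa^\bX_{\tau,x}(-\ln^\bX_x y)\chi(x,y)$, whose vanishing combined with $\chi \equiv 1$ near the diagonal forces $\kappa^\bX_{\tau,x}(v) = 0$ on a $U$-neighbourhood of $\{v=0\}$. Together with the smoothness of $\kappa^\bX_{\tau,x}$ away from $v=0$ from Theorem~\ref{thm_kernelM}, this makes $\kappa^\bX_{\tau,x}$ smooth on all of $U\times \bR^n$, and Corollary~\ref{cor:ker_smoothing}(1) yields $\tau \in S^{-\infty}(\widehat{G}M|_U)$. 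Applied to our symbol, this gives $(\phi\phi')^2(\sigma-\sigma') \in -\rho + S^{-\infty} \subset S^{m-1}(\widehat{G}M)$, as required (the extension from $U$ to $M$ being trivial since the left-hand side is supported in $U$).

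The hard part is bookkeeping rather than a deep conceptual obstacle: at each of the sandwich manipulations, the change of frame, and the translation between operator and symbol, one must verify that every remainder genuinely drops by one order, which rests on the explicit identification of the leading term in the asymptotic expansions of Propositions~\ref{prop_comploc} and~\ref{prop_indepframe}. The injectivity argument at the end, while short, is what ultimately converts the operator identity into a symbol identity modulo $S^{-\infty}$, and its verification requires invoking the singular kernel estimates that underpin the whole symbol class definition.
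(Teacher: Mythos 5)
Your proof follows essentially the same approach as the paper: symmetrize to $\phi'\phi P\phi\phi' = \phi\phi'P\phi'\phi$, convert both sides to local quantizations in the common frame $(\bX,\chi)$ via Propositions~\ref{prop_comploc} and~\ref{prop_indepframe} at the cost of remainders in $S^{m-1}$, and conclude from the injectivity of $\Op^{\bX,\chi}$ modulo smoothing symbols (the paper cites Lemma~\ref{lem:local-to-global}(2), while you unfold the kernel-vanishing argument behind it; both rest on Corollary~\ref{cor:ker_smoothing}). The argument is correct.
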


The proof of Lemma \ref{lem_symbolaboveopenset}
relies mainly on  the invariances of the local quantization stated in Section~\ref{sec:ind_cut-off}.

\begin{proof}[Proof of Lemma \ref{lem_symbolaboveopenset}]
We keep the notation of the statement and we have
$$
0 = \phi'\phi P \phi \phi' - \phi \phi' P \phi' \phi
=\phi'\phi \,  \Op^{\bX, \chi}( \sigma)\,  \phi \phi' - \phi \phi' \, \Op^{\bX', \chi'}(\sigma') \, \phi' \phi.
$$
By invariance of the local quantization in frames and cut-offs (see Section~\ref{sec:ind_cut-off}), we have 
$$
\Op^{\bX', \chi'}(\sigma')
 = \Op^{\bX, \chi}( \sigma' +\rho) ,
$$
for some $\rho\in S^{m-1}(\Gh M|_U)$.
The equality above implies
$$
 \Op^{\bX, \chi}\left ( \left(\phi'\phi \,  (\sigma-\sigma'-\rho)\right)\diamond_{\bX, \chi} (\phi \phi')\right) =0, 
 $$
 so 
by Lemma \ref{lem:local-to-global} (2) applied to the smoothing  operator 0, 
 $$
 \left(\phi'\phi \,  (\sigma-\sigma'-\rho)\right)\diamond_{\bX, \chi} (\phi \phi') \in S^{-\infty}(\Gh M|_U).
$$
Therefore, 
\begin{align*}
    0&=\left(\phi'\phi \,  (\sigma-\sigma'-\rho)\right)\diamond_{\bX, \chi} (\phi \phi') \ \mbox{mod} \ S^{m-1}(\Gh M|_U)\\
    &= (\phi'\phi)^2 \,  (\sigma-\sigma'-\rho)
     \ \mbox{mod} \ S^{m-1}(\Gh M|_U)\\
    &= (\phi'\phi)^2 \,  (\sigma-\sigma')
     \ \mbox{mod} \ S^{m-1}(\Gh M|_U),
 \end{align*}
by the symbolic properties of the local quantization. This concludes the proof.
\end{proof}

The local principal symbol enjoys the following properties:

\begin{lemma}
\label{lem_SymbmV0}
Let  $V_0$ be an open subset of $M$ small enough to be included in an open set  equipped with adapted frames. 
\begin{enumerate}
    \item Let $m\in \bR$. The  map 
$$
{\rm Symb}_{m,V_0}\colon P\longmapsto {\rm Symb}_{m,V_0}(P), 
\qquad 
\Psi^m (M)\longrightarrow S^{m}(\Gh M|_{V_0})/S^{m-1}(\Gh M|_{V_0}),
$$
is  linear and  vanishes on $\Psi^{m-1}(M)$.
\item Let $\sigma\in S^m(\Gh M)$ with $m\in \bR$. Let $\bX$ be an adapted frame on an open set $U\subset M$, let $\chi$ be a $\bX$-cut-off and let  $\psi\in C_c^\infty (U)$. Then 
$$
{\rm Symb}_{m,V_0} ( \Op^{\bX,\chi}(\sigma)\, \psi) = (\psi \sigma)|_{V_0} \ \mbox{mod} \ S^{m-1}(\widehat{G}M|_{V_0}).
$$
\item 
For any $P\in \Psi(M)$, we have
$$
{\rm Symb}_{m,V_0}(P^*)=\left({\rm Symb}_{m,V_0}(P)\right)^*.
$$
\end{enumerate}
\end{lemma}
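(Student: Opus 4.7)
The plan is to verify each part using Definition~\ref{def:SymbmV0}, the composition and adjoint results Propositions~\ref{prop_comploc} and~\ref{prop_adjloc}, and the observation that the integral formula~\eqref{eq_int_kernel} yields $\Op^{\bX,\chi}(\phi\sigma) = \phi\,\Op^{\bX,\chi}(\sigma)$, so that~\eqref{eq_phiPphiOp} rewrites as $\phi P\phi = \phi\,\Op^{\bX,\chi}(\sigma)\phi$.

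For Part~(1), after fixing a triple $(\bX,\chi,\phi)$ compatible with $V_0$, I use linearity of $\sigma\mapsto\Op^{\bX,\chi}(\sigma)$: if $\sigma_1,\sigma_2\in S^m(\Gh M)$ represent $P_1,P_2\in\Psi^m(M)$ with respect to this common triple, then $\sigma_1+\sigma_2$ represents $P_1+P_2$. Since Lemma~\ref{lem_symbolaboveopenset} makes ${\rm Symb}_{m,V_0}$ independent of the triple, the induced map on classes is linear. For vanishing on $\Psi^{m-1}(M)$, Part~(1) of Definition~\ref{def:Q0} applied at level $m-1$ supplies a representative $\sigma\in S^{m-1}(\Gh M|_U)\subset S^m(\Gh M|_U)$, which is zero in $S^m(\Gh M|_{V_0})/S^{m-1}(\Gh M|_{V_0})$.

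For Part~(2), set $Q = \Op^{\bX,\chi}(\sigma)\,\psi$ and pick $\phi\in C_c^\infty(U)$ equal to $1$ on a neighborhood of $V_0$. Proposition~\ref{ex_OpT} identifies multiplication by $\psi$ with $\Op^{\bX,\chi}(\psi\,\id)$ (with $\psi\,\id \in S^0(\Gh M|_U)$ extended by zero), and Proposition~\ref{prop_comploc} gives
\[
\Op^{\bX,\chi}(\sigma)\,\psi
\;=\; \Op^{\bX,\chi}\bigl(\sigma\,\diamondsuit_{\bX,\chi}(\psi\,\id)\bigr),
\qquad
\sigma\,\diamondsuit_{\bX,\chi}(\psi\,\id)\;=\;\psi\sigma + r,
\]
with $r\in S^{m-1}(\Gh M|_U)$, identified as the remainder after the principal term of the asymptotic expansion. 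Therefore $\phi Q\phi = \phi\,\Op^{\bX,\chi}(\psi\sigma + r)\phi$, so $\psi\sigma + r$ represents ${\rm Symb}_{m,V_0}(Q)$, and reducing modulo $S^{m-1}(\Gh M|_{V_0})$ yields $(\psi\sigma)|_{V_0}$.

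For Part~(3), take $\sigma$ a representative of ${\rm Symb}_{m,V_0}(P)$ with respect to $(\bX,\chi,\phi)$, so $\phi P\phi = \phi\,\Op^{\bX,\chi}(\sigma)\phi$. Taking formal adjoints and using that $\phi$ is real gives $\phi P^*\phi = \phi\,\Op^{\bX,\chi}(\sigma)^*\phi$, which by Proposition~\ref{prop_adjloc} equals $\phi\,\Op^{\bX,\chi}(\sigma^{(*)})\phi$ with $\sigma^{(*)} - \sigma^* \in S^{m-1}(\Gh M|_U)$. Proposition~\ref{prop_comp+adj} guarantees that the adjoint descends to an involution on $S^m/S^{m-1}$, so ${\rm Symb}_{m,V_0}(P^*) = \sigma^{(*)}|_{V_0}\bmod S^{m-1} = \sigma^*|_{V_0}\bmod S^{m-1} = ({\rm Symb}_{m,V_0}(P))^*$. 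I do not foresee any serious obstacle: each part reduces to extracting the principal term from a symbolic expansion already proved in earlier sections.
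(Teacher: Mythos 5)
Your proof is correct and follows the same route as the paper's, which simply asserts Parts~(1) and~(3) hold ``by construction'' and then proves Part~(2) via $\sigma\diamondsuit\psi = \psi\sigma \bmod S^{m-1}$. You have supplied the elaborations the paper leaves implicit: the role of Lemma~\ref{lem_symbolaboveopenset} in making linearity well-defined, the observation that $\Op^{\bX,\chi}(\phi\sigma)=\phi\,\Op^{\bX,\chi}(\sigma)$ so that taking formal adjoints of $\phi P\phi=\phi\,\Op^{\bX,\chi}(\sigma)\phi$ with $\phi$ real immediately transfers the problem to Proposition~\ref{prop_adjloc}, and the identification of the leading term of the composition expansion.
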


\begin{proof}[Proof of Lemma \ref{lem_SymbmV0}]
By construction, 
Parts (1) and (3) hold and we have for Part (2), 
$$
    {\rm Symb}_{m,V_0} ( \Op^{\bX,\chi}(\sigma)\psi) = 
    \sigma \diamond \psi  |_{V_0} \ \mbox{mod} \ S^{m-1}(\widehat{G}M|_{V_0}).
 $$
By the symbolic calculus, we have
$$
\sigma \diamond \psi = \psi \sigma \ \mbox{mod} \ S^{m-1}(\widehat{G}M|_U).
$$
Part (2) follows.
\end{proof}

\subsubsection{Principal symbol} 
We can now define the notion of principal symbol. 
\begin{theorem}\label{thm:princ_symbol}
Let $m\in \bR$.
    Let $P\in \Psi^m(M)$. There exists a unique element  
    $$
    \princ_m(P)\in S^{m}(\Gh M)/S^{m-1}(\Gh M)
    $$ 
such that for any open subset $V_0$ of $M$ small enough to be included in an open set equipped with adapted frames, 
${\rm Symb}_{m,V_0}(P)$ is the class obtained by  restriction to $V_0$ of  any representative of $\princ_m(P)$, i.e.
\begin{align}
 \label{eq_defpincmP}
    {\rm Symb}_{m,V_0}(P) &=
    \sigma|_{V_0} \ \mbox{mod} \ S^{m-1}(\Gh M|_{V_0}) \\
    & \mbox{for any}\ \sigma \in S^m (\Gh M) \ \mbox{satisfying}\ 
\sigma \ \mbox{mod} \ S^{m-1} (\Gh M)=\princ_m(P) .   \nonumber
\end{align}
This defines a linear map 
$$
\princ_m :\Psi^m (M)\to S^{m}(\Gh M)/S^{m-1}(\Gh M) 
$$
with the following properties:
\begin{enumerate}
    \item The map $\princ_m$ is surjective.
    \item $\ker \princ_m = \Psi^{m-1}(M)$.
      \item The adjoint is naturally defined on $S^{m}(\Gh M)/S^{m-1}(\Gh M)$ and we have
$$
\forall P\in \Psi(M),
\qquad
\princ_m(P^*)=\left(\princ_m(P)\right)^*.
$$

\item When $m=N\in \bN_0$, 
    the  restriction of $\princ_m$ to the space $\DO^{\leq N}$ of differential operators of homogeneous order at most $N$ coincides with the map also denoted by $\princ_N$ and  defined in Section \ref{subsec:diff_op}:
    $$
     \princ_N|_{\DO^{\leq N}} =\princ_N:\DO^{\leq N} \longrightarrow \Gamma (\sU_N(\fg M)) 
     $$
\end{enumerate}
\end{theorem}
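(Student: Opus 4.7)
The plan is to construct $\princ_m(P)$ by gluing local data via an atlas, then check the four properties. Fix once and for all an atlas $\cA=(U_\alpha,\bX_\alpha,\chi_\alpha,\psi_\alpha)_{\alpha\in A}$ as in Definition~\ref{def:atlas}. By Proposition~\ref{caracterization_psi(M)} I may write
\[
P = R \;+\; \sum_{\alpha}\psi_\alpha\,\Op^{\bX_\alpha,\chi_\alpha}(\sigma_\alpha)\,\psi_\alpha,
\qquad R\in\Psi^{-\infty}(M),\ \sigma_\alpha\in S^m(\Gh M|_{U_\alpha}).
\]
Since $\psi_\alpha\in C_c^\infty(U_\alpha)$, each $\psi_\alpha^2\sigma_\alpha$ extends by zero to a well-defined element of $S^m(\Gh M)$; the locally finite covering then makes $\widetilde\sigma:=\sum_\alpha\psi_\alpha^2\sigma_\alpha$ into a symbol in $S^m(\Gh M)$, and I set $\princ_m(P):=\widetilde\sigma\bmod S^{m-1}(\Gh M)$.

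Next I would verify the defining property \eqref{eq_defpincmP}. Let $V_0\subset U\subset M$ be small enough that $U$ admits an adapted frame $\bX$ with cut-off $\chi$, and pick $\phi\in C_c^\infty(U)$ with $\phi\equiv 1$ near $V_0$. Writing $\phi P\phi=\phi R\phi+\sum_\alpha \phi\psi_\alpha\,\Op^{\bX_\alpha,\chi_\alpha}(\sigma_\alpha)\,\psi_\alpha\phi$, only finitely many $\alpha$ with $U_\alpha\cap\supp\phi\neq\emptyset$ contribute. For each such $\alpha$, Propositions~\ref{prop_Qindepchi} and~\ref{prop_indepframe} allow me to rewrite $\Op^{\bX_\alpha,\chi_\alpha}(\sigma_\alpha)$ as $\Op^{\bX,\chi}(\sigma_\alpha)$ modulo a symbol in $S^{m-1}(\Gh M|_{U\cap U_\alpha})$, and then the symbolic composition of Proposition~\ref{prop_comploc} (together with the formula $\sigma_1\diamond\sigma_2=\sigma_1\sigma_2\bmod S^{m_1+m_2-1}$) collapses the product to $\Op^{\bX,\chi}(\phi\psi_\alpha^2\sigma_\alpha)\phi$ modulo $S^{m-1}$-terms. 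Summing over $\alpha$ and absorbing $\phi R\phi\in\Psi^{-\infty}$ yields $\phi P\phi=\Op^{\bX,\chi}(\phi\widetilde\sigma)\phi\bmod\Op^{\bX,\chi}(S^{m-1}(\Gh M|_U))$, so ${\rm Symb}_{m,V_0}(P)=\widetilde\sigma|_{V_0}\bmod S^{m-1}(\Gh M|_{V_0})$. Uniqueness of $\princ_m(P)$ in $S^m/S^{m-1}$ now follows because the seminorms \eqref{eq_seminormSmGhMN} that define $S^m(\Gh M)$ are built from compacts $\cC_k\subset U_k$ inside chart domains: any $\tau\in S^m(\Gh M)$ whose restriction to every such small $V_0$ lies in $S^{m-1}$ belongs to $S^{m-1}(\Gh M)$. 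This also shows that $\widetilde\sigma\bmod S^{m-1}$ is independent of the atlas $\cA$ and of the decomposition of $P$, and that the resulting map $\princ_m$ is linear and vanishes on $\Psi^{m-1}(M)$ by Lemma~\ref{lem_SymbmV0}(1).

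For the four properties: (1) surjectivity follows from Lemma~\ref{lem_OpA} applied to any $\sigma\in S^m(\Gh M)$, since Lemma~\ref{lem_SymbmV0}(2) gives $\princ_m(\Op^\cA(\sigma))=\sum_\alpha\psi_\alpha^2\sigma=\sigma\bmod S^{m-1}$. (2) If $\princ_m(P)=0$, then $\widetilde\sigma\in S^{m-1}$, so each $\psi_\alpha\Op^{\bX_\alpha,\chi_\alpha}(\sigma_\alpha)\psi_\alpha$ can be rewritten (using Corollary~\ref{corlem:local-to-global} and Lemma~\ref{lem_OpA}(1)) as the quantization of an $S^{m-1}$-symbol up to smoothing error, so $P\in\Psi^{m-1}(M)$; the reverse inclusion is Lemma~\ref{lem_SymbmV0}(1). (3) The adjoint property reduces, via uniqueness and Lemma~\ref{lem_adjPsi}, to Lemma~\ref{lem_SymbmV0}(3) for each $V_0$. (4) For $P\in\DO^{\leq N}$, Lemma~\ref{lem_exdiffPDO} gives $P\in\Psi^N(M)$; in any adapted frame $(\bX,U)$, Lemma~\ref{lem_PulcXalpha} expresses $P=\sum_{[\alpha]\leq N}c_\alpha\bX^\alpha$ uniquely, and Proposition~\ref{ex_OpT} realizes $P|_U$ as the local quantization of $\widehat T$ with $T=\sum c_\alpha\langle\bX\rangle^\alpha$; projecting on the top-order part $\sum_{[\alpha]=N}c_\alpha\langle\bX\rangle^\alpha=\princ_N(P)(x)$ (in the sense of \eqref{def:princ_symbol}) modulo $S^{N-1}$ identifies the two principal symbol maps.

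The main obstacle is the second paragraph: one must carefully combine the three invariances (change of cut-off, change of frame, and symbolic composition) to convert each local piece $\phi\psi_\alpha\,\Op^{\bX_\alpha,\chi_\alpha}(\sigma_\alpha)\,\psi_\alpha\phi$ into $\Op^{\bX,\chi}(\phi\psi_\alpha^2\sigma_\alpha)\phi$ modulo $S^{m-1}$, keeping track that all the remainder terms lie in $S^{m-1}(\Gh M|_U)$ and not merely in $S^{m-1}(\Gh M|_{U\cap U_\alpha})$; the locally finite atlas together with the cutoffs $\phi$ ensures these restrictions can be consistently extended. Once this is in place, uniqueness and all four properties follow formally from the localisation principle for the seminorms of $S^m(\Gh M)$ and from the lemmas already established in Sections~\ref{sec:symbols} and~\ref{sec:symb_cal}.
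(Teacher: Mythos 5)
Your proposal follows the paper's proof essentially line by line: both fix an atlas, write $P=R+\sum_\alpha\psi_\alpha\Op^{\bX_\alpha,\chi_\alpha}(\sigma_\alpha)\psi_\alpha$ via Proposition~\ref{caracterization_psi(M)}, take $\princ_m(P)$ to be the class of $\widetilde\sigma=\sum_\alpha\psi_\alpha^2\sigma_\alpha$, and verify the defining property~\eqref{eq_defpincmP} together with properties (1)--(4) using the same lemmas. Where you spell out the localisation $\phi P\phi$ explicitly by combining the cut-off, frame and composition invariances, the paper simply quotes Lemma~\ref{lem_SymbmV0}, which already packages those three facts; your more hands-on derivation is harmless but not a different route. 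One small remark on your argument for (2): from $\widetilde\sigma=\sum_\alpha\psi_\alpha^2\sigma_\alpha\in S^{m-1}(\Gh M)$ alone one cannot directly conclude that each individual piece $\psi_\alpha\Op^{\bX_\alpha,\chi_\alpha}(\sigma_\alpha)\psi_\alpha$ lies in $\Psi^{m-1}(M)$, because the summands need not be controlled separately. The cleaner route (which the paper uses) is to bypass $\widetilde\sigma$ entirely: since each $\sigma_\alpha$ is taken from Definition~\ref{def:Q0}(1) for $U_\alpha$, $\sigma_\alpha|_{V_0}$ represents ${\rm Symb}_{m,V_0}(P)$ for every small $V_0\subset U_\alpha$, so $\princ_m(P)=0$ forces $\sigma_\alpha\in S^{m-1}(\Gh M|_{U_\alpha})$ for every $\alpha$ by the local-to-global principle, and then~\eqref{eq:global-char} gives $P\in\Psi^{m-1}(M)$ directly. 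Also, your phrasing in (4) that Proposition~\ref{ex_OpT} "realizes $P|_U$ as the local quantization of $\widehat T$" is slightly imprecise -- it realizes it only modulo a differential operator of homogeneous order $\leq N-1$, which is what the last sentence of your paragraph implicitly uses -- but the argument reaches the correct conclusion.
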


\begin{definition}\label{def:princ_symb_global}
Let $P\in\Psi^m(M)$.   The class of symbols $\princ_m(P)$ is called the {\it principal symbol} of $P$ at order $m$.
\end{definition}

The proof relies on the characterisation of pseudodifferential operators stated in Proposition~\ref{caracterization_psi(M)} and  the  properties of 
${\rm Symb}_{m,V_0}(P)$ given in Lemma~\ref{lem_SymbmV0}.

\begin{proof}[Proof of Theorem \ref{thm:princ_symbol}]
We fix an atlas $\cA= (U_\alpha, \bX_\alpha, \chi_\alpha,\psi_\alpha)_{\alpha\in A} $ on $M$.
Let $P\in \Psi^m(M)$.
By Proposition~\ref{caracterization_psi(M)},
 there exist
 a family of symbols $(\sigma_\alpha)_{\alpha\in A}$ with $\sigma\in S^m(\Gh M|_{U_\alpha})$, and a smoothing operator~$R\in\Psi^{-\infty}(M)$  such that \eqref{eq:global-char} holds.
 We check readily that the symbol given via
 $$
 \sigma_P := \sum_\alpha \psi_\alpha^2 \sigma_\alpha 
 $$
is well defined and in $S^m (\Gh M)$. Moreover, by Lemma \ref{lem_SymbmV0} (1) and (2), it satisfies for any open subset $V_0\subset M$ small enough, 
$$
{\rm Symb}_{m,V_0} (P) = \sigma_P|_{V_0}  \  \mbox{mod} \ S^{m-1}(\Gh M|_{V_0}).
$$
Consequently, $\sigma_P$ represents a class satisfying \eqref{eq_defpincmP} for $P$.

 By definition,    
any two symbols in 
$S^{m}(\Gh M)$ satisfying \eqref{eq_defpincmP} for $P$ must coincide modulo $S^{m-1}(\Gh M|_{V_0})$ on any open subset $V_0\subset M$ small enough;
this shows the uniqueness of the class satisfying \eqref{eq_defpincmP}.
Hence $\princ_m$ is a well defined map.
Lemma \ref{lem_SymbmV0}  also implies that it is linear, with kernel containing $\Psi^{m-1}(M)$
and respecting the adjoint, i.e. Part~(3) holds.

Recall that for the above 
family $(\sigma_\alpha)_{\alpha\in A}$, $\sigma_\alpha$ may be obtained in the proof of Proposition \ref{caracterization_psi(M)}  from Part (1) of Definition  \ref{def:Q0} for the open set $U_\alpha$. 
Hence, if $P\in \ker \princ_m$,  
 then  by \eqref{eq_defpincmP}, $\sigma_\alpha\in S^{m-1}(\Gh M|_{U_\alpha})$, 
and \eqref{eq:global-char} implies  $P\in \Psi^{m-1}(M)$. This shows $\ker \princ_m = \Psi^{m-1}(M)$, i.e. Part (2) holds.

If $\sigma \in S^m(\Gh M)$, we check readily that 
$$
\princ_m (\Op^\cA(\sigma)) = \sigma \ \mbox{mod} \ S^{m-1}(\Gh M)
$$
where $\Op^\cA(\sigma)\in \Psi^m(M)$ is defined via \eqref{eq:OpAsigma}.
This shows the surjectivity of $\princ_m$, i.e. Part (1) holds.

Let us prove Part (4).
Let $P$ be a differential operator on $M$. 
Denote by $N$ its homogeneous degree.
Choose a symbol $\sigma$  representing the class in $S^N(\Gh M)$ for the principal part of $P$ in the sense of Section \ref{subsec:diff_op}.
The considerations in that section
(in fact, the very definition of principal part in Section \ref{subsec:diff_op}, see also the proof of  Lemma \ref{lem_exdiffPDO}) imply for any open subset $V_0\subset M$ small enough
$$
{\rm Symb}_{N,V_0}(P)
= \sigma|_{V_0}
\  \mbox{mod} \ S^{N-1}(\Gh M|_{V_0}).
$$
Consequently, 
$$
\princ_N P = \sigma  \ \mbox{mod} \ S^{N-1}(\Gh M).
$$
This shows Part (4), and concludes the proof of Theorem \ref{thm:princ_symbol}.
\end{proof}

Theorem \ref{thm:princ_symbol} and its proof imply readily the following relation between principal symbols and global quantization (see Section \ref{subssubsec_OpA} for the latter):
\begin{corollary}
\label{corthm:princ_symbol}
We fix an atlas 
$\cA= (U_\alpha, \bX_\alpha, \chi_\alpha,\psi_\alpha)_{\alpha\in A} $ for $M$.
 For any $P\in \Psi^m (M)$, 
if $\sigma\in S^m (\Gh M)$ is a representative of $\princ_m(P)$, then 
$$
P= \Op^\cA(\sigma) \ \mbox{mod} \ \Psi^{m-1}(M).
$$
\end{corollary}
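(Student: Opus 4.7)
The plan is to deduce the corollary directly from Theorem \ref{thm:princ_symbol}, using in an essential way the fact that the map $\princ_m$ has kernel exactly $\Psi^{m-1}(M)$ (Part (2) of that theorem). The two key ingredients are: (i) the computation of $\princ_m(\Op^\cA(\sigma))$, and (ii) the linearity of $\princ_m$.

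First, I would establish that for any $\sigma\in S^m(\Gh M)$,
\[
\princ_m(\Op^\cA(\sigma)) = \sigma \ \mbox{mod} \ S^{m-1}(\Gh M).
\]
This identity is in fact already verified inside the proof of Theorem \ref{thm:princ_symbol} (where it is used to obtain the surjectivity of $\princ_m$), but for clarity I would reproduce the argument: pick any open subset $V_0\subset M$ small enough to be contained in some $U_\beta$ where an adapted frame is available, and fix $\phi\in C_c^\infty(U_\beta)$ with $\phi=1$ on a neighbourhood of $V_0$ and $\phi\psi_\alpha=0$ for every $\alpha$ with $U_\alpha\cap V_0=\emptyset$. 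Then by the definition \eqref{eq:OpAsigma} of $\Op^\cA$ and Lemma \ref{lem_SymbmV0}(1)--(2), combined with the symbolic calculus for the local quantization (Propositions \ref{prop_comploc} and \ref{prop_indepframe}), one gets modulo $S^{m-1}(\Gh M|_{V_0})$:
\[
{\rm Symb}_{m,V_0}(\Op^\cA(\sigma)) = \sum_{\alpha:\,U_\alpha\cap V_0\neq\emptyset} \psi_\alpha^2 \sigma|_{V_0} = \sigma|_{V_0},
\]
where we used $\sum_\alpha \psi_\alpha^2 = 1$. By the characterisation \eqref{eq_defpincmP} of $\princ_m$, this means $\princ_m(\Op^\cA(\sigma)) = \sigma \ \mbox{mod} \ S^{m-1}(\Gh M)$, as claimed.

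Now let $P\in \Psi^m(M)$ and let $\sigma\in S^m(\Gh M)$ represent $\princ_m(P)$. By the linearity of $\princ_m$ (Theorem \ref{thm:princ_symbol}) and the identity just established,
\[
\princ_m\bigl(P - \Op^\cA(\sigma)\bigr) = \princ_m(P) - \sigma \ \mbox{mod} \ S^{m-1}(\Gh M) = 0.
\]
Since $\ker \princ_m = \Psi^{m-1}(M)$ by Theorem \ref{thm:princ_symbol}(2), we conclude that $P - \Op^\cA(\sigma) \in \Psi^{m-1}(M)$, which is the desired statement.

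There is essentially no serious obstacle: the corollary is a packaging of the surjectivity argument inside the proof of Theorem \ref{thm:princ_symbol} together with the fact that $\ker\princ_m = \Psi^{m-1}(M)$. The only point requiring a little care is checking the identity $\princ_m(\Op^\cA(\sigma))=\sigma \bmod S^{m-1}(\Gh M)$, which requires handling the cutoffs $\psi_\alpha$ on both sides of $\Op^{\bX_\alpha,\chi_\alpha}(\sigma)$; this is dispatched using the symbolic calculus together with the partition of unity identity $\sum_\alpha \psi_\alpha^2=1$.
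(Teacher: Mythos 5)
Your proposal is correct and takes essentially the same approach the paper intends: the paper declares the corollary to follow "readily" from Theorem \ref{thm:princ_symbol} and its proof, and your argument is precisely the expected one, combining the identity $\princ_m(\Op^\cA(\sigma)) = \sigma \bmod S^{m-1}(\Gh M)$ (which the paper itself establishes inside the proof of the theorem to obtain surjectivity) with linearity of $\princ_m$ and the fact that $\ker\princ_m = \Psi^{m-1}(M)$.
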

Hence,  when considering principal symbols, that is, when working mod $\Psi^{m-1}(M)$, we may assume that the pseudodifferential operators are given by the global quantization $\Op^\cA(\sigma)$ of a symbol $\sigma\in S^m(\Gh M)$ on $M$.

When considering pseudodifferential operators modulo smoothing operators, we may assume that the pseudodifferential operators are given by the global quantization:
\begin{corollary}
\label{corcorthm:princ_symbol}
We fix an atlas 
$\cA= (U_\alpha, \bX_\alpha, \chi_\alpha,\psi_\alpha)_{\alpha\in A} $ for $M$.
Let $m\in \bR$.
 For any $P\in \Psi^m (M)$, there exist a symbol $\sigma\in S^m (\Gh M)$ and a smoothing operator $R\in \Psi^{-\infty}(M)$ such that 
$$
P= \Op^\cA(\sigma) +R.
$$
Moreover, the symbol $\sigma\in S^m (\Gh M) $ is unique modulo $S^{-\infty}(\Gh M).$
\end{corollary}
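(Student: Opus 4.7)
The plan is to iterate Corollary \ref{corthm:princ_symbol} to produce a sequence of homogeneous-in-order corrections, then assemble them by a Borel-type construction into a single symbol in $S^m(\Gh M)$, and finally identify the remainder as smoothing by intersecting the resulting family of symbol classes.

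More concretely, given $P\in\Psi^m(M)$, Corollary \ref{corthm:princ_symbol} provides a symbol $\sigma_0\in S^m(\Gh M)$ representing $\princ_m(P)$ such that $P_1:=P-\Op^\cA(\sigma_0)\in \Psi^{m-1}(M)$. Applying the same result to $P_1\in\Psi^{m-1}(M)$ produces $\sigma_1\in S^{m-1}(\Gh M)$ with $P_2:=P_1-\Op^\cA(\sigma_1)\in\Psi^{m-2}(M)$, and so on. Recursively, I obtain a sequence $(\sigma_j)_{j\in\bN_0}$ with $\sigma_j\in S^{m-j}(\Gh M)$ and, for each $N\in\bN_0$,
\[
P-\sum_{j=0}^{N}\Op^\cA(\sigma_j)\in \Psi^{m-N-1}(M).
\]
By the Borel-type construction for inhomogeneous symbol classes on $\Gh M$ (stated after Definition \ref{def:asymptotics}, itself lifted from the group case \cite[Section 5.5.1]{R+F_monograph} via Proposition \ref{prop_smoothingM}), there exists $\sigma\in S^m(\Gh M)$ such that $\sigma\sim\sum_j \sigma_j$, i.e. $\sigma-\sum_{j=0}^N\sigma_j\in S^{m-N-1}(\Gh M)$ for every $N$.

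It then remains to verify that $R:=P-\Op^\cA(\sigma)$ is smoothing. For any $N\in\bN_0$, I write
\[
R=\Bigl(P-\sum_{j=0}^{N}\Op^\cA(\sigma_j)\Bigr)-\Op^\cA\Bigl(\sigma-\sum_{j=0}^{N}\sigma_j\Bigr).
\]
The first bracket lies in $\Psi^{m-N-1}(M)$ by construction, while the second lies in $\Psi^{m-N-1}(M)$ by Lemma \ref{lem_OpA}, since $\sigma-\sum_{j=0}^N\sigma_j\in S^{m-N-1}(\Gh M)$. Hence $R\in\bigcap_{N}\Psi^{m-N-1}(M)=\Psi^{-\infty}(M)$, the last equality following from the definition of smoothing operators and the observation (made in Section~\ref{subsubsec_defPDOM}) that $\Psi^{-\infty}(M)=\cap_{m\in\bR}\Psi^m(M)$. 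For the uniqueness modulo $S^{-\infty}(\Gh M)$, suppose $P=\Op^\cA(\sigma)+R=\Op^\cA(\sigma')+R'$ with $R,R'\in\Psi^{-\infty}(M)$. Then $\Op^\cA(\sigma-\sigma')=R'-R\in\Psi^{-\infty}(M)$, and Lemma \ref{lem_OpA}(1) applied with any arbitrarily small order forces $\sigma-\sigma'\in S^{-\infty}(\Gh M)$ (alternatively, Lemma \ref{lem_OpA}(2) applied to the pair $(\sigma,\sigma')$ together with the smoothing difference).

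The only subtle point is checking that $\bigcap_N\Psi^{m-N-1}(M)$ indeed coincides with $\Psi^{-\infty}(M)$; this was already observed in Section~\ref{subsubsec_defPDOM} but in practice hinges on the fact that an operator whose Schwartz kernel is smooth off the diagonal (Part (2) of Definition \ref{def:Q0}) and which in every adapted frame is locally the quantization of a symbol of arbitrarily low order must have globally smooth Schwartz kernel. This follows from Corollary \ref{cor:ker_smoothing}(2), which guarantees that symbols in $S^{-\infty}(\Gh M|_U)$ have smooth convolution kernels in $\bX$-coordinates, patched together by a partition-of-unity argument driven by the atlas $\cA$. Once this observation is in place the proof is essentially the standard Borel summation argument adapted to the filtered-manifold calculus; no genuinely new estimate is required.
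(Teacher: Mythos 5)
Your proof is correct and follows essentially the same route as the paper: iterate Corollary \ref{corthm:princ_symbol} to produce corrections $\sigma_j\in S^{m-j}(\Gh M)$, Borel-sum them into $\sigma\in S^m(\Gh M)$, and use Lemma \ref{lem_OpA} for the smoothing remainder and uniqueness. The only difference is cosmetic: you spell out the verification that $R\in\bigcap_N\Psi^{m-N-1}(M)=\Psi^{-\infty}(M)$ (which the paper compresses into "by construction") and invoke Lemma \ref{lem_OpA}(1) rather than (2) for uniqueness, which is actually the more careful reading since the hypothesis of Part (2) literally requires equality of the operators rather than equality modulo $\Psi^{-\infty}(M)$.
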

\begin{proof}
Using Corollary \ref{corthm:princ_symbol},
 we construct inductively $\sigma_0,\ldots,\sigma_N$ 
 in $S^m(\Gh M),\ldots,S^{m-N}(\Gh M)$
 such that 
 $$
 P - \Op^\cA(\sigma_0+\ldots+\sigma_N) \in \Psi^{m-N-1}(M),
 $$
 with $\sigma_{N+1}\in S^{m-(N+1)}(\Gh M)$ being the representative of the principal symbol of the  
 above operator.
Let $\sigma \in S^m(\Gh M)$ be a symbol with 
 asymptotic $\sigma_0+\ldots +\sigma_N+\ldots$.
By construction, 
$ P - \Op^\cA(\sigma)\in \Psi^{-\infty}(M).$
The uniqueness of the symbol modulo smoothing symbols follows from Lemma \ref {lem_OpA} (2).
\end{proof}

\subsubsection{Examples  of principal symbols}

Our main  examples of pseudodifferential operators in Section~\ref{sec:ex_global_pseudo}  were differential operators and operators obtained via local quantization (see 
Corollary \ref{corlem:local-to-global}). 
The principal parts of the former are described in Part (4) above.
For the latter, we have:
\begin{corollary}
\label{cor_princOp}
    For any $\bX$ on an open subset $U\subset M$ and $\bX$-cut-off $\chi$, we have for any $m\in \bR$ and $\sigma\in S^m(\Gh M)$, and for any $\psi\in C_c^\infty (U)$,
    $$
    \princ_m \left(\Op^{\bX,\chi}(\sigma)\, \psi\right) = \psi \sigma \ \mbox{mod} \ S^{m-1}(\widehat{G}M).
    $$
\end{corollary}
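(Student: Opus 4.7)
The plan is to apply the characterisation of the principal symbol provided by Theorem~\ref{thm:princ_symbol}. By that theorem, for $P\in\Psi^m(M)$, the class $\princ_m(P)\in S^m(\widehat GM)/S^{m-1}(\widehat GM)$ is uniquely determined by the property that for every open $V_0$ small enough to lie in an open set supporting an adapted frame, ${\rm Symb}_{m,V_0}(P)$ coincides with the restriction to $V_0$ of any representative. So it suffices to exhibit a single representative whose restrictions match ${\rm Symb}_{m,V_0}(P)$ on such $V_0$, and then invoke the uniqueness clause.

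The natural candidate is $\psi\sigma$, extended by zero outside $U$. First I would check that this is well-defined as an element of $S^m(\widehat GM)$: since $\psi\in C_c^\infty(U)$ extends to a smooth compactly supported function on all of $M$, and the symbol classes form a $C^\infty(M)$-module (as observed after Proposition~\ref{prop_comp+adj}), the product $\psi\sigma$ lies in $S^m(\widehat GM)$. Also, the operator $P=\Op^{\bX,\chi}(\sigma)\psi$ belongs to $\Psi^m(M)$ by Corollary~\ref{corlem:local-to-global}, so $\princ_m(P)$ makes sense.

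The core computation is already supplied by Lemma~\ref{lem_SymbmV0}(2), which asserts exactly
\[
{\rm Symb}_{m,V_0}(\Op^{\bX,\chi}(\sigma)\psi) = (\psi\sigma)|_{V_0} \mod S^{m-1}(\widehat GM|_{V_0})
\]
for every small open $V_0\subset M$. Matching this with the characterisation in Theorem~\ref{thm:princ_symbol} identifies $\psi\sigma$ as a representative of $\princ_m(P)$, yielding the corollary. No substantive obstacle arises: the statement is essentially a globalised reformulation of Lemma~\ref{lem_SymbmV0}(2), obtained by passing from local symbols to the global principal symbol through Theorem~\ref{thm:princ_symbol}. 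The only point requiring minor attention is the consistency of the right-hand side when $V_0\cap U=\emptyset$, but there both ${\rm Symb}_{m,V_0}(P)$ and $(\psi\sigma)|_{V_0}$ vanish trivially, so the identity holds automatically.
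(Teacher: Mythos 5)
Your proposal is correct and follows essentially the same route as the paper, which derives the corollary by combining Lemma~\ref{lem_SymbmV0}~(2) with the characterisation of $\princ_m$ in Theorem~\ref{thm:princ_symbol}. The extra remarks about $\psi\sigma$ being a well-defined symbol and about the trivial case $V_0\cap U=\emptyset$ are harmless but omitted in the paper's terse proof.
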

\begin{proof}
This follows from  Lemma \ref{lem_SymbmV0} (2)
and Theorem \ref{thm:princ_symbol}.
\end{proof}

\subsubsection{Principal symbols and composition}

The next proposition states that 
principal symbols  respect composition of operators and symbols modulo lower order terms. 
The sense of the composition of symbols modulo lower order terms comes from the following observation:
the properties of composition between symbols (Proposition~\ref{prop_comp+adj}) implies readily that  the following map 
$$
\left(\sigma_1 \ \mbox{mod} \ S^{m_1-1}(\Gh M),  \sigma_2 \ \mbox{mod} \ S^{m_2-1}(\Gh M) \right)
\longmapsto \sigma_1\sigma_2 \ \mbox{mod} \ S^{m_1+m_2-1}(\Gh M),
$$
is well defined and continuous $\frac{S^{m_1}(\Gh M)}{ S^{m_1-1}(\Gh M) }\times \frac{S^{m_2}(\Gh M)}{S^{m_2-1}(\Gh M)} \to \frac{S^{m_1+m_2}(\Gh M)}{ S^{m_1+m_2-1}(\Gh M)}$.
We have:
\begin{proposition}
    Let $m_1,m_2\in \bR.$
For any two pseudodifferential operators $P_1\in \Psi^{m_1}(M)$ and $P_2\in \Psi^{m_2}(M)$, at least one of them being properly supported, we have
$$
\princ_{m_1+m_2}(P_1P_2)=
\princ_{m_1}(P_1)\, \princ_{m_2}(P_2).
$$
\end{proposition}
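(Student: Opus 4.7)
The plan is to reduce the statement to the symbolic calculus already established for the global quantization $\Op^\cA$ associated to a fixed atlas $\cA= (U_\alpha, \bX_\alpha, \chi_\alpha,\psi_\alpha)_{\alpha\in A}$ of the filtered manifold $M$. First, I would choose representatives $\sigma_1\in S^{m_1}(\widehat GM)$ of $\princ_{m_1}(P_1)$ and $\sigma_2\in S^{m_2}(\widehat GM)$ of $\princ_{m_2}(P_2)$. By Corollary~\ref{corthm:princ_symbol}, one has decompositions
$$
P_j=\Op^\cA(\sigma_j)+R_j,\qquad R_j\in \Psi^{m_j-1}(M),\quad j=1,2.
$$
The principal-symbol statement to prove is that $P_1P_2=\Op^\cA(\sigma_1\sigma_2)$ modulo $\Psi^{m_1+m_2-1}(M)$, which, combined with the identity $\princ_{m_1+m_2}(\Op^\cA(\sigma_1\sigma_2))=\sigma_1\sigma_2\mod S^{m_1+m_2-1}(\widehat GM)$ noted in the proof of Theorem~\ref{thm:princ_symbol}(1), yields the result.

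Before expanding the product, I would take care of the proper support issue: suppose without loss of generality that $P_2$ is properly supported, the other case being symmetric. Since $\Op^\cA(\sigma_2)$ is properly supported by Lemma~\ref{lem_OpA} (or Example~\ref{ex:prop_supp}(4)), the difference $R_2=P_2-\Op^\cA(\sigma_2)$ is properly supported as well, so all four compositions appearing below make sense by Theorem~\ref{thm_compPsiM}. I would then compute
$$
P_1P_2=\Op^\cA(\sigma_1)\Op^\cA(\sigma_2)+\Op^\cA(\sigma_1)R_2+R_1\Op^\cA(\sigma_2)+R_1R_2.
$$
By Corollary~\ref{cor:OpA}(2), the first term equals $\Op^\cA(\sigma_1\sigma_2)+S$ with $S\in\Psi^{m_1+m_2-1}(M)$, and by Theorem~\ref{thm_compPsiM} the three remaining terms all belong to $\Psi^{m_1+m_2-1}(M)$ (the last one in fact to $\Psi^{m_1+m_2-2}(M)$). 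Hence
$$
P_1P_2=\Op^\cA(\sigma_1\sigma_2)\quad\mod\ \Psi^{m_1+m_2-1}(M),
$$
and applying $\princ_{m_1+m_2}$ (which vanishes on $\Psi^{m_1+m_2-1}(M)$ by Theorem~\ref{thm:princ_symbol}(2)) gives $\princ_{m_1+m_2}(P_1P_2)=\sigma_1\sigma_2\mod S^{m_1+m_2-1}(\widehat GM)=\princ_{m_1}(P_1)\,\princ_{m_2}(P_2)$, as desired.

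Conceptually the proof is a direct consequence of the composition and principal-symbol theory already developed, and the only point requiring genuine care is the bookkeeping around proper support: it guarantees that the four-term expansion of $P_1P_2$ is meaningful and that each of the three lower-order pieces can legitimately be placed in $\Psi^{m_1+m_2-1}(M)$ via Theorem~\ref{thm_compPsiM}. No further analytic input is needed beyond what has already been built in Sections~\ref{sec:symbols}--\ref{sec:global_sobolev}.
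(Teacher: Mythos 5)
Your proof is correct and takes essentially the same approach as the paper: reduce via Corollary~\ref{corthm:princ_symbol} to the global quantizations $\Op^\cA(\sigma_j)$ of representatives and then use that the composition of two such operators has leading symbol $\sigma_1\sigma_2$ modulo lower order. The paper packages the last step as a reduction to Case (c) in the proof of Theorem~\ref{thm_compPsiM}, which is precisely the content of Corollary~\ref{cor:OpA}(2) that you cite directly; your explicit proper-support bookkeeping in the four-term expansion is a correct filling-in of a step the paper leaves implicit.
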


\begin{proof}
By Corollary \ref{corthm:princ_symbol}, the property of composition of pseudodifferential operators (Theorem \ref{thm_compPsiM}), 
and the properties of $\princ_m$ in Theorem \ref{thm:princ_symbol}, it suffices to consider Case (c) in the proof of Theorem \ref{thm_compPsiM}. With its setting,  we have 
\begin{align*}
\princ_{m_1+m_2}  (P_1P_2)&= 
\princ_{m_1+m_2} \left( \psi_1\, \Op^{\bX_1, \chi_1}(\sigma_1 \diamond  (\psi_1
\psi_2\sigma_2)) \, \psi_2  
\right)\\
&= \psi_1 \psi_2 \left( \sigma_1 \diamond  (\psi_1
\psi_2\sigma_2) \right)\ \mbox{mod} \ S^{m_1+m_2-1}(\Gh M)
\end{align*}
by Corollary \ref{cor_princOp}.
By the properties of the symbolic calculus (Proposition~\ref{prop_comploc}), we have
$$
\psi_1 \psi_2 \left( \sigma_1 \diamond  (\psi_1
\psi_2\sigma_2) \right)
=\psi_1^2 \psi_2^2 \sigma_1 \sigma_2
\ \mbox{mod} \ S^{m_1+m_2-1}(\Gh M),
$$
concluding the proof.
\end{proof}

\subsection{Parametrices}\label{subsec:parametrices}

In this section, we discuss left parametrices,  following mainly the presentation in 
\cite{fischerMikkelsen}.
First let us recall the notion of left parametrix:
\begin{definition}
	An operator $Q$ admits a \emph{left parametrix} in $\Psi^m(M)$ 
	when $Q\in \Psi^m(M)$ and there exists $P\in \Psi^{-m}(M)$ properly supported such that 
	$PQ = \id$ mod $\Psi^{-\infty}(M)$.
\end{definition}
The existence of a left parametrix for an operator $A$ is an important quality as
it readily implies for instance hypoellipticity and subelliptic estimates in terms of the Sobolev spaces defined below.
We will show below that we are able to construct left parametrices  when principal symbols are invertible  in a sense we now define. 
First, let us recall the notion of invertibility for high frequencies on a fixed  graded nilpotent Lie group (see \cite[Section 4.6]{fischerMikkelsen}):
\begin{definition}
\label{def_symbinvG}
 Let $G$ be a graded nilpotent Lie group. A symbol $\sigma\in S^m (\Gh )$
is \emph{invertible  for the  frequencies of a positive  Rockland symbol $\widehat \cR$ higher than $\Lambda\in\bR$}
(or just {\it invertible for high frequencies})
 when 
    for any $\gamma\in \bR$ and almost-every $\pi\in \Gh$,   we have 
\begin{equation}
\label{eqdef_symbinvG}
\forall v\in \cH_{\pi,\cR,\Lambda}\qquad
    \|(\id+\pi(\cR))^{\frac{\gamma}\nu} \sigma(\pi)v
    \|_{\cH_\pi}
\geq C_\gamma \|(\id+\pi(\cR))^{\frac{\gamma+m}\nu} v
\|_{\cH_\pi}
\end{equation}
	with $C_{\gamma}=C_{\gamma,\sigma,G}>0$ a constant independent of $\pi$.
	Above, $\nu$ is the homogeneous degree of $\cR$ and  $\cH_{\pi,\cR,\Lambda}$ is the subspace $\cH_{\pi,\cR,\Lambda}:= E_\pi[\Lambda ,\infty)\cH_\pi$ of $\cH_\pi$ where $E_\pi$ is the  spectral decomposition of $
\pi(\cR) = \int_\bR \lambda dE_\pi(\lambda)$ as in \eqref{eq_spectralmeas}.
\end{definition}

\begin{remark}
\label{remdef_symbinvG}
    \begin{enumerate}
        \item The definition seems to require  a Plancherel measure to have been fixed on $\Gh$. However, the choice of a Plancherel measure is in one-to-one correspondence with the choice of a Haar measure, and two choices would differ by a constant. Hence, if a symbol is invertible for the high frequency of a Rockland operator for one Plancherel measure, it is so for any. 
        \item Another way of formulating the condition in \eqref{eqdef_symbinvG} is the fact that the operator 
$$
(\id+\pi(\cR))^{\frac{\gamma}\nu} 
\sigma(\pi)E_\pi[\Lambda,\infty) (\id+\pi(\cR))^{-\frac{\gamma+m}\nu} 
$$
is invertible with a bounded inverse whose $\cH_\pi$-operator norm is $\leq C_\gamma^{-1}$. In other words, with the notation in~\eqref{def:norm_introduction}
$$
\|E_\pi[\Lambda,\infty)\sigma(\pi)^{-1}\|_{L^\infty_{\gamma,\gamma+m} }(\Gh ) \leq  C_\gamma^{-1}.
$$
\item As explained in \cite[Section 4.6]{fischerMikkelsen}, 
Definition \ref{def_symbinvG} is the same notion but for invariant symbols as \cite[Definition 5.8.1]{R+F_monograph}, that was called ellipticity there. 
However, we prefer to use the vocabulary regarding  ellipticity for polyhomogeneous symbols, as is customary in the traditional case of a trivially filtered manifold. 
In any case, the properties explained in \cite[Section 8.1]{R+F_monograph} hold.
For instance, in Definition \ref{def_symbinvG}, it suffices to prove the estimate for a sequence of $\gamma=\gamma_\ell$, $\ell\in \bZ$,   with $\lim_{\ell\to \pm \infty} \gamma_\ell =\pm \infty$. 
From the examples given in \cite[Section 5.8]{R+F_monograph}, if $\widehat \cR$ is a positive Rockland symbol of  homogeneous degree $\nu$, then $\widehat\cR\in S^\nu(\Gh)$ and $(\id+\widehat\cR)^{m/\nu}\in S^m(\Gh)$, 
 are operators with invertible symbols for the high frequencies of $\widehat \cR$.
 \item Finally, let us point out that the invertibility we discuss here could be improved by localisation along `directions' of the set $\widehat GM$. Following~\cite{FF0}, these directions are defined via the equivalence relation given by the dilations: for $x\in M$, $\pi_1,\pi_2\in G_xM$, $\pi_1\sim\pi_2$ if and only if there exists $r>0$ such that for all $w\in G_xM$ we have $\pi_1(\delta_r w)=\pi_2(w)$. 
 Refined microlocalisations in conical neighbourhoods of the phase space $\widehat GM$ could be pursued with these ideas in mind, at least on concrete examples. 
    \end{enumerate}
\end{remark}

We can now define the invertibility of a symbol on a filtered manifold.

\begin{definition}
\label{def_symbinvM} 
A symbol $\sigma\in S^m (\Gh M)$ 
is \emph{invertible for the  high frequencies of a positive Rockland symbol $\widehat \cR$ }(see Definition~\ref{def:Rockland}) 
when at every $x\in M$, the symbol $\sigma(x)\in S^m(\Gh_x M)$ is invertible for frequencies higher than $\Lambda_x\in \bR$  
of the Rockland symbol $\widehat \cR_x$,
with both $\Lambda_x$ and the constant  $C_\gamma$
in \eqref{eqdef_symbinvG} depending locally uniformly in $x\in M$ (for one and then any smooth Haar system $\mu=\{\mu_x\}_{x\in M}$).
\end{definition}

\begin{ex}
\label{ex_(1+R)inv}
Let $\widehat \cR$ be a positive Rockland symbol. We denote its homogeneous degree $\nu$. For any $m\in \bR$, the symbol 
$(\id+\widehat\cR)^{m/\nu}\in S^m (\Gh M)$ is invertible    for all the frequencies of  $\widehat \cR$.
\end{ex}
We can now adapt the traditional construction of parametrices to our context under the following hypothesis:
\begin{theorem}
\label{thm:parametrix}
Let $Q\in \Psi^m(M)$ and $\sigma\in S^m(\Gh M)$ such that $\princ_m Q=\sigma$ mod $S^{m-1}(\Gh M)$.
If $\sigma$
 is invertible for the high frequencies of a positive Rockland symbol, then   $Q$ admits a left parametrix.
  \end{theorem}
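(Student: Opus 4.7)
My plan is to build the left parametrix by the classical procedure: construct a first-order approximate inverse from the pointwise symbolic inverse of $\sigma$ (cut off to the high-frequency regime where invertibility holds), then iterate symbolically to get a parametrix modulo $\Psi^{-\infty}$, and finally replace it by a properly supported representative using Lemma~\ref{lem_P=ps+smooting}.

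First I would fix a positive Rockland symbol $\widehat \cR$ for which $\sigma$ is invertible (as in Definition~\ref{def_symbinvM}), fix a smooth Haar system, and choose a function $\psi_0 \in C^\infty(M\times \bR)$ with $\psi_0(x,\lambda)=0$ for $\lambda$ near $(-\infty,0)$ and $\psi_0(x,\lambda)=1$ on a neighbourhood of $+\infty$ (both depending locally uniformly on $x$), chosen so that on each fiber $\psi_0(x,\cdot)$ is supported in the high-frequency zone $(\Lambda_x,\infty)$ where the invertibility estimate of Definition~\ref{def_symbinvG} holds. The key algebraic object is the \emph{fiberwise} inverse
\[
\tau_0(x,\pi) := \sigma(x,\pi)^{-1}\, \psi_0(x,\pi(\cR_x)),
\]
which is well defined on the high-frequency spectral subspace and extended by $0$ elsewhere. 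I would then verify that $\tau_0 \in S^{-m}(\widehat G M)$: fiberwise, the invertibility condition~\eqref{eqdef_symbinvG} gives the required $L^\infty_{\gamma,\gamma-m}(\widehat G_x M)$-bounds (cf.\ Remark~\ref{remdef_symbinvG}(2)); stability under the difference operators $\Delta_\bX^\alpha$ and the derivatives $D_\bX^\beta$ follows from the Leibniz rules of Corollary~\ref{cor_GM_Leib} and Remark~\ref{rem:LeibnizD} applied to $\sigma \tau_0 \equiv \psi_0(\widehat\cR)$ (plus Corollary~\ref{corprop:fct_de_R} and Proposition~\ref{prop_homsigmapsi} for the spectral multiplier), combined with the locally uniform control of $\Lambda_x$ and the constants in $x$. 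This is the step I expect to be the main technical obstacle.

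Fix an atlas $\cA$ and let $\Op^\cA$ be the properly supported global quantization of Section~\ref{subssubsec_OpA}. By Corollary~\ref{corcorthm:princ_symbol}, we may write $Q = \Op^\cA(\sigma) + R_Q$ with $R_Q \in \Psi^{-\infty}(M)$. Using Corollary~\ref{cor:OpA}(2) for composition,
\[
\Op^\cA(\tau_0)\, Q \;=\; \Op^\cA(\tau_0 \sigma) + S_0,\qquad S_0\in \Psi^{-1}(M),
\]
and $\tau_0 \sigma = \psi_0(\widehat\cR) = \id - (1-\psi_0)(\widehat\cR)$; since $1-\psi_0$ is $M$-locally compactly supported in $\lambda$, Corollary~\ref{corprop:fct_de_R} gives $(1-\psi_0)(\widehat \cR)\in S^{-\infty}(\widehat G M)$. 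Hence $\Op^\cA(\tau_0) Q = \id + T_1$ with $T_1 \in \Psi^{-1}(M)$. I now iterate: assume inductively that symbols $\tau_0,\dots,\tau_N$ with $\tau_j\in S^{-m-j}(\widehat G M)$ have been found such that $\Op^\cA(\tau_0+\dots+\tau_N)Q = \id + T_{N+1}$ with $T_{N+1}\in \Psi^{-N-1}(M)$. Let $\rho_{N+1}\in S^{-N-1}(\widehat G M)$ be a representative of $\princ_{-N-1}(T_{N+1})$ (Theorem~\ref{thm:princ_symbol}(1)); set $\tau_{N+1} := -\tau_0\, \rho_{N+1}\in S^{-m-N-1}(\widehat G M)$. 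Applying Corollary~\ref{cor:OpA}(2) again,
\[
\Op^\cA(\tau_{N+1})\, Q = -\rho_{N+1} \ \mathrm{mod}\ \Psi^{-N-2}(M),
\]
so $\Op^\cA(\tau_0+\dots+\tau_{N+1})Q = \id + T_{N+2}$ with $T_{N+2}\in \Psi^{-N-2}(M)$, closing the induction.

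Finally, by the Borel-type construction (Proposition~\ref{prop_smoothingM}(2) adapted to symbols of order decreasing to $-\infty$) choose $\tau \in S^{-m}(\widehat G M)$ with asymptotic expansion $\tau \sim \sum_{j\geq 0}\tau_j$. Then for every $N$, $\Op^\cA(\tau)Q - \Op^\cA(\tau_0+\dots+\tau_N)Q \in \Psi^{-N-1}(M)$, hence $\Op^\cA(\tau)Q - \id \in \bigcap_N \Psi^{-N-1}(M) = \Psi^{-\infty}(M)$. Setting $P := \Op^\cA(\tau)$, we have $P\in \Psi^{-m}(M)$ and $P$ is properly supported by construction of $\Op^\cA$ (Lemma~\ref{lem_OpA} and Example~\ref{ex:prop_supp}(4)), with $PQ = \id$ mod $\Psi^{-\infty}(M)$, which is the desired left parametrix.
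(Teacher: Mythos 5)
Your overall strategy is the same as the paper's: build $\tau_0$ from the high-frequency symbolic inverse, verify $\tau_0\in S^{-m}(\Gh M)$ (the heavy lifting, carried out in the paper in Lemma~\ref{lem:psisigmainv}), then iterate through the global quantization $\Op^\cA$ using Corollary~\ref{cor:OpA} to improve the remainder, and finally take an asymptotic sum. The paper's iteration is phrased by multiplying a running symbol product by factors $(1-\rho_{-j})$ on the left, whereas you add correction terms $\tau_j$; these produce the same asymptotic expansion and the difference is purely presentational.

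There are, however, two order-of-multiplication slips that break your computations as written, and they matter because the calculus is noncommutative. First, you set $\tau_0 := \sigma^{-1}\psi_0(\widehat\cR)$ and then claim $\tau_0\sigma = \psi_0(\widehat\cR)$; this would require $\sigma^{-1}\psi_0(\widehat\cR)\sigma = \psi_0(\widehat\cR)$, i.e.\ that $\sigma$ commutes with the spectral cutoff, which is false in general. You need $\tau_0 := \psi_0(\widehat\cR)\sigma^{-1}$ (cutoff on the left), which is what the paper uses, and then $\tau_0\sigma = \psi_0(\widehat\cR)$ falls out for free. (With your order one can still salvage the argument, since $[\psi_0(\widehat\cR),\sigma]=[\id-(1-\psi_0)(\widehat\cR),\sigma]$ is smoothing, but this is an extra step you neither flag nor perform.) Second, the same issue recurs in the induction: you set $\tau_{N+1}:=-\tau_0\,\rho_{N+1}$ and assert $\Op^\cA(\tau_{N+1})Q = -\Op^\cA(\rho_{N+1})$ mod $\Psi^{-N-2}(M)$, but $\tau_0\rho_{N+1}\sigma \ne \rho_{N+1}$ modulo lower order since $\sigma$ and $\rho_{N+1}$ need not commute. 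The correct choice is $\tau_{N+1} := -\rho_{N+1}\,\tau_0$, which gives $\tau_{N+1}\sigma = -\rho_{N+1}\psi_0(\widehat\cR) = -\rho_{N+1}$ mod $S^{-\infty}$.

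Two smaller remarks. Corollary~\ref{corcorthm:princ_symbol} produces $Q = \Op^\cA(\sigma')+R$ for some $\sigma'\in S^m$; you name this symbol $\sigma$ and then silently identify it with the $\sigma$ in the hypothesis. What one actually has is $\sigma'-\sigma\in S^{m-1}(\Gh M)$, so $Q = \Op^\cA(\sigma) + \Psi^{m-1}(M)$, not $+\Psi^{-\infty}(M)$; fortunately the weaker statement is all your computation needs. Lastly, you flag the verification that $\tau_0\in S^{-m}(\Gh M)$ as the main obstacle and merely list ingredients; this is correct, and that verification — stability under $\Delta_\bX^\alpha$ and $D_\bX^\beta$ via an induction driven by the identity $\psi(\widehat\cR)\sigma^{-1}\cdot\sigma = \psi(\widehat\cR)$ and the invertibility estimate — is exactly what the paper's Lemma~\ref{lem:psisigmainv} supplies; without it the proof is incomplete.
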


The proof of Theorem \ref{thm:parametrix} is classical and relies on the following statement:
\begin{lemma}
\label{lem:psisigmainv}
 Let $\sigma\in S^m (\Gh M)$ be invertible for the high frequencies of a positive Rockland symbol $\widehat \cR$. 
 Let us fix $\psi\in C^\infty (M\times \bR^n)$ be such that 
    for every $x\in \bR$, we have with $\psi_x(\lambda):=\psi(x,\lambda)$, 
    \begin{itemize}
        \item $\psi_x(\lambda)=1$ on a neighbourhood of $+\infty$, 
        \item $\psi_x(\lambda)=0$ on a neighbourhood of $[0,\Lambda_x]$ for some $\Lambda_x$ as in Definition \ref{def_symbinvM}.
    \end{itemize} 
    The symbol 
    $$
    \psi(\widehat \cR)\sigma^{-1}=\{\psi_x(\pi(\cR_x)) \, \sigma(x,\pi)^{-1}\colon x\in M, \ \pi\in \Gh_x M\} 
    $$
    makes sense and is in $S^{-m}(\Gh M)$.
    Moreover, it satisfies 
   $$
     \psi(\widehat \cR)\sigma^{-1} \ \sigma = \psi(\widehat \cR) = \id + \rho,
     \quad\mbox{with} \ \rho:=(1-\psi(\widehat \cR))\in S^{-\infty}(\Gh M).
     $$
\end{lemma}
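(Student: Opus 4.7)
My plan proceeds in three main steps.

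\textit{Step 1: the decomposition $\psi(\widehat\cR)=\id+\rho$.} This amounts to showing $\psi(\widehat\cR)-\id\in S^{-\infty}(\Gh M)$. The hypotheses on $\psi$ imply that the function $(x,\lambda)\mapsto \psi_x(\lambda)-1$ vanishes on a neighbourhood of $+\infty$ locally uniformly in $x$, so it belongs to $\cG^{-N}(M\times\bR)$ (Notation~\ref{notation_Gm}) for every $N\in\bN_0$. Corollary~\ref{corprop:fct_de_R} then gives $\psi(\widehat\cR)-\id\in S^{-N\nu}(\Gh M)$ for every $N$, proving smoothness.

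\textit{Step 2: fibrewise construction of $\tau:=\psi(\widehat\cR)\sigma^{-1}$.} For each $(x,\pi)$ I would define $\tau(x,\pi):=\psi_x(\pi(\cR_x))\,\sigma(x,\pi)^{-1}\,E_\pi[\Lambda_x,\infty)$ via spectral calculus, where $E_\pi$ is the spectral resolution of $\pi(\cR_x)$. Since $\psi_x$ vanishes on $[0,\Lambda_x]$ we have $\psi_x(\pi(\cR_x))=\psi_x(\pi(\cR_x))E_\pi[\Lambda_x,\infty)$, and by Definition~\ref{def_symbinvM} the restriction of $\sigma(x,\pi)$ to the range of $E_\pi[\Lambda_x,\infty)$ is boundedly invertible with locally uniform norm bounds. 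This makes $\tau(x,\pi)$ a well-defined bounded operator, and the identity $\tau\sigma=\psi(\widehat\cR)$ is immediate from $\sigma^{-1}\sigma=\id$ on the range of $E_\pi[\Lambda_x,\infty)$ and the vanishing of $\psi_x$ elsewhere. Fibrewise $S^{-m}(\Gh_x M)$-membership, with control of the $L^\infty_{\gamma,\gamma+m}$-norms, follows by combining the spectral multiplier bounds for $\psi_x(\widehat\cR_x)$ from Theorem~\ref{thm_phi(R)} with the defining estimate~\eqref{eqdef_symbinvG}; the local uniformity in $x$ of both $\Lambda_x$ and $C_\gamma$ provides uniformity over compact subsets.

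\textit{Step 3: higher-order seminorms.} To verify the full $S^{-m}(\Gh M)$-membership one must control joint $\Delta_\bX^\alpha D_\bX^\beta$-seminorms for any local adapted frame $\bX$. I would argue by induction on $N=[\alpha]+|\beta|$. Applying $\Delta_\bX^\alpha D_\bX^\beta$ to the identity $\tau\sigma=\psi(\widehat\cR)$ and using the Leibniz rules for $D_\bX$ (Remark~\ref{rem:LeibnizD}) and for the difference operators (Corollary~\ref{cor_GM_Leib}) gives
\[
(\Delta_\bX^\alpha D_\bX^\beta\tau)\,\sigma \;=\; \Delta_\bX^\alpha D_\bX^\beta\psi(\widehat\cR) \;-\;\sum c\,(\Delta_\bX^{\alpha_1}D_\bX^{\beta_1}\tau)\,(\Delta_\bX^{\alpha_2}D_\bX^{\beta_2}\sigma),
\]
where the sum runs over decompositions $\alpha_1+\alpha_2=\alpha$, $\beta_1+\beta_2=\beta$ with $(\alpha_1,\beta_1)\neq(\alpha,\beta)$. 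Right-multiplying by a suitable partial inverse supplied by $\tau$ itself, inserted via an auxiliary cut-off $\psi_1$ of the Rockland symbol with $\psi_1\psi=\psi$, isolates $\Delta_\bX^\alpha D_\bX^\beta\tau$ as a sum of products of strictly lower-order derivatives of $\tau$ (handled by induction), of derivatives of $\sigma$ (which lie in the appropriate $S^{m-[\alpha_2]}(\Gh M)$-classes by Corollary~\ref{cor_contSmTXDelta}), of spectral multipliers treated by Proposition~\ref{prop:fct_de_R}, and of the factor $\psi_1(\widehat\cR)\tau\in S^{-m}(\Gh M)$ already constructed in Step~2. Composition in $\cup_m S^m(\Gh M)$ (Proposition~\ref{prop_comp+adj}) then closes the estimate with the correct order.

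\textit{Main obstacle.} The principal difficulty is the rigorous handling of the ``partial inverse'' of $\sigma$ throughout the recursion: since $\sigma$ is invertible only on the spectral range $E_\pi[\Lambda_x,\infty)$, each right-multiplication must carry an appropriate spectral cut-off, and the induction requires a nested chain of cut-offs $\psi=\psi_0\prec\psi_1\prec\psi_2\prec\cdots$, each equal to $1$ on the support of its predecessor. The bookkeeping must ensure that all auxiliary spectral multipliers introduced along the way contribute smoothing errors via Step~1, that the local uniformity in $x$ of $\Lambda_x$ and $C_\gamma$ from Definition~\ref{def_symbinvM} is preserved, and that the combined $L^\infty_{\gamma,\gamma-m+[\alpha]}$-estimates aggregate to the claimed continuous seminorm bounds of $\tau$ in $S^{-m}(\Gh M)$.
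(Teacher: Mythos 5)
Your proposal follows the same overall route as the paper: Step~1 (smoothness of $\psi(\widehat\cR)-\id$ via Corollary~\ref{corprop:fct_de_R}) and Step~2 (fibrewise construction of $\tau$ and its $L^\infty_{\gamma,\gamma+m}$-bounds from Theorem~\ref{thm_phi(R)} and Remark~\ref{remdef_symbinvG}~(2)) agree with what the paper does, which cites \cite[Theorem~4.28]{fischerMikkelsen} for the fibrewise membership, and Step~3 uses the same Leibniz-plus-induction algebra, derived by differentiating the identity $\tau\sigma=\psi(\widehat\cR)$ and isolating the top-order term.

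The one genuine divergence is in how the ``partial inverse'' is handled in Step~3. You worry about a nested chain of smooth cut-offs $\psi_0\prec\psi_1\prec\cdots$ and call this the principal difficulty, but the paper sidesteps it entirely: it works on an open set $U$ small enough that the threshold $\Lambda_x$ and the constant $C_\gamma$ in Definition~\ref{def_symbinvM} can be taken independent of $x$, and then right-multiplies by the \emph{sharp} spectral projection $E[\Lambda,\infty)$ followed by $\sigma^{-1}$ rather than by a smooth spectral multiplier. Because this step is only used to produce operator-norm estimates in $L^\infty_{\gamma,\gamma+m}$ (not a symbolic composition), it does not matter that $E[\Lambda,\infty)$ is not a symbol; Remark~\ref{remdef_symbinvG}~(2) supplies the required bound $C_\gamma^{-1}$ directly, and the induction on $|\alpha|+|\beta|$ closes without any nesting of cut-offs. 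So your approach would work, but the obstacle you identify is self-inflicted; replacing the smooth cut-offs by the sharp projection, as the paper does, removes the bookkeeping altogether. It is also worth flagging explicitly (as the paper does) the localization to a small $U$ where $\Lambda_x$ is constant; you mention ``locally uniform'' bounds in Step~2 but the induction in Step~3 is cleanest once this reduction is made at the outset.
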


\begin{proof}[Proof of Lemma \ref{lem:psisigmainv}]
  By Corollary \ref{corprop:fct_de_R}, 
$\psi(\widehat \cR)\in S^0(\Gh M)$
and $(1-\psi)(\widehat \cR)\in S^{-\infty}(\Gh M)$.
Moreover, for any adapted frame $(\bX,U)$ and any $\beta\in \bN_0^n$ with $\beta\neq 0$, $D_\bX^\beta\psi(\widehat \cR) = - D_\bX^\beta(1-\psi)(\widehat \cR) \in S^{-\infty}(\Gh M|_U)$.   
By    \cite[Theorem 4.28]{fischerMikkelsen} (see also \cite[Theorem 5.8.7]{R+F_monograph}), for each $x\in M$, $\psi_x (\widehat \cR_x )\sigma(x)^{-1}$ makes sense as a symbol in $S^{-m}(\Gh_x M)$. It remains to analyse its $x$-derivatives. For this, we sketch the adaptation of  the  argument explained in the proof of \cite[Theorem~ 5.8.7]{R+F_monograph} to take into account our manifold setting and the presence of the smoothing terms $D_\bX^\beta \psi(\widehat \cR)$.

We consider an open set $U\subset M$ small enough so  that 
the symbol $\sigma(x)\in S^m(\Gh_x M)$ is invertible for frequencies higher than $\Lambda$ of the Rockland symbol $\widehat \cR_x$,
for some $\Lambda\in \bR$ and~$C_\gamma$ independent of $x\in U$ (see equation~\eqref{eqdef_symbinvG}).  Moreover, we  also assume that $\psi_x(\lambda)=1$ on  $(\Lambda-1,+\infty)$
and that $U$ is equipped with an adapted frame $\bX$. 
By  the Leibniz properties (see Remark~\ref{rem:LeibnizD}), we have (at first in a distributional sense for the $x$-derivatives  on the $\bX$-convolution kernels) with $|\beta|=1$
$$
D_\bX^\beta (\psi (\widehat \cR)\sigma^{-1})\, \sigma = 
D_\bX^\beta (\psi(\widehat \cR) ) - \psi(\widehat \cR) \sigma^{-1} D_\bX^\beta\sigma;
$$
multiplying both side by $E[\Lambda,+\infty)$ and using the invertibility of $\sigma E[\Lambda,+\infty) $, we obtain
$$
D_\bX^\beta (\psi (\widehat \cR)\sigma^{-1}) = 
\left(D_\bX^\beta (\psi (\widehat \cR)) - \psi (\widehat \cR)\sigma^{-1} D_\bX^\beta\sigma\right) E[\Lambda, +\infty)\sigma^{-1}.
$$
By Remark \ref{remdef_symbinvG} (2), we obtain
\begin{align*}
&\|D_\bX^\beta (\psi (\widehat \cR)\sigma^{-1})(x)\|_{L^\infty_{\gamma, m+\gamma} (\Gh_x M)} \\
&\qquad \leq C_{\gamma}^{-1}\left(
\|D_\bX^\beta (\psi(\widehat \cR))\|_{L^\infty_{m+\gamma, m+\gamma} (\Gh_x M)}   +\| \psi\|_{L^\infty} C_\gamma^{-1}
\|D_\bX^\beta\sigma \|_{L^\infty_{m+\gamma, \gamma } (\Gh_x M)} \right).
\end{align*}
More generally, 
by induction on $|\alpha|$ and $|\beta|$, we write 
$\Delta_\bX^\alpha D_\bX^\beta (\psi (\widehat \cR)\sigma^{-1})$ as 
as sum of terms involving $\Delta_\bX^{\alpha_1} D_\bX^{\beta_1} \psi(\widehat \cR)$ and $\Delta_\bX^{\alpha_2} D_\bX^{\beta_2} (\psi (\widehat \cR)\sigma^{-1})$ with $|\alpha_2|<|\alpha|$, $|\beta_2|<|\beta|$ and $|\alpha_2|+|\beta_2|\neq0$. 
A similar analysis as above implies then that $\Delta_\bX^\alpha D_\bX^\beta (\psi (\widehat \cR)\sigma^{-1}) \in L^\infty_{\gamma, m -[\alpha]+\gamma} (\Gh_x M)$ for every $x\in U.$ 

We have obtained $\psi(\widehat \cR)\sigma^{-1}\in S^{-m}(\Gh M)$ and the rest of the statement follows by the symbolic calculus.   
\end{proof}

\begin{proof}[Proof of Theorem \ref{thm:parametrix}]
We keep the notation of the statement and fix an atlas $\cA$ on $M$.
By Corollary \ref{corthm:princ_symbol}, 
we can write 
$$
Q=\Op^\cA(\sigma)+ S,
$$
for some $S\in \Psi^{m-1}(M)$.
The construction of  a function $\psi$ as in Lemma \ref{lem:psisigmainv} follows routine arguments.
By Lemma \ref{lem:psisigmainv}, $\psi (\widehat \cR)\sigma^{-1}\in S^{-m}(\Gh M) $ and we have by the symbolic property of the global quantization (see Corollary \ref{cor:OpA})
\begin{align*}
\Op^\cA(\psi(\widehat \cR)\sigma^{-1}) Q
&=\Op^\cA(\psi(\widehat \cR)\sigma^{-1})\Op^\cA(\sigma)+ \Op^\cA(\psi(\widehat \cR)\sigma^{-1}) S\\
&= \Op^\cA (\id) + R_{-1},  
\end{align*}
for some $R_{-1}\in \Psi^{-1}(M)$. Note that $\Op^\cA(\id)=\id$.
By Corollary~\ref{corthm:princ_symbol}, we may write $R_{-1}$ in the form
$$
R_{-1} = \Op^\cA(\rho_{-1}) + S_{-1},
$$
where $\rho_{-1}\in S^{-1}(\Gh M)$ is a symbol representing $\princ_{-1} R_{-1}$ and $S_{-1}\in \Psi^{-2}(M)$.
We will then have 
$$
\Op^{\cA}\left((1-\rho_{-1}) \ \psi(\widehat \cR)\sigma^{-1}\right) Q
= \id + R_{-2},    
$$
for some $R_{-2}\in \Psi^{-2}(M)$.
Inductively, we construct $\rho_{-1}, \ldots, \rho_{-N}$ in $S^{-1}(\Gh M), \ldots, S^{-N}(\Gh M)$ and $R_{-1}, \ldots R_{-N-1}$
in $\Psi^{-1}(\Gh M), \ldots, \Psi^{-N-1}(\Gh M)$
respectively, such that 
$$
\Op^{\cA}\left((1-\rho_{-N})\ldots (1-\rho_{-1}) \ \psi(\widehat \cR)\sigma^{-1}\right) Q
= \id + R_{-N-1}.   
$$
We observe that the formal expansion of the product 
 $(1-\rho_{-N})\ldots (1-\rho_{-1}) \ \psi (\widehat \cR)\sigma^{-1}$ as $N\to \infty$ provides an asymptotic in $S^{-m}(\Gh M)$, and we consider a symbol $\tau\in S^{-m}(\Gh M)$ with this asymptotic. 
 By construction, we have for any $N\in \bN$
 $$
 \tau = (1-\rho_{-N})\ldots (1-\rho_{-1}) \ \psi\sigma^{-1} \ \mbox{mod} \  S^{-m-N-1}
(\Gh M) .
$$
This implies with $P=\Op^\cA (\tau)$, 
$$
PQ = \id  \ \mbox{mod} \  \Psi^{-m-N-1}
(M) ,
$$
for any $N\in \bN$, concluding the proof.
\end{proof}

The proof above shows how to construct  parametrices locally or globally for the invertible symbol in Example \ref{ex_(1+R)inv}:
\begin{corollary}
\label{cor_parametrix}
Let $m\in \bR$, 
\begin{enumerate}
    \item  
For any atlas $\cA$ on $M$ and for any positive Rockland symbol $\widehat \cR$    on $GM$, 
there exists a symbol $\tau\in S^{-m}(\Gh M)$ such that we have:
$$
  \Op^\cA \left( \tau \right) \Op^\cA \left( (\id+\widehat \cR)^{m/\nu}\right) -\id  \in \Psi^{-\infty}(M),
 $$
 where $\nu$ denotes the homogeneous degree of $\cR$.

\item For any adapted frame $\bX$ on an open subset $U\subset M$, for any $\bX$-cut-off, 
for any positive Rockland symbol $\widehat \cR$  on $GM|_U$, there exists a symbol $\tau\in S^{-m}(\Gh M|_U)$ such that we have:
 $$
  \Op^{\bX,\chi}\left( \tau \right) \Op^{\bX,\chi} \left( (\id+\widehat\cR)^{m/\nu}\right) -\id  \in \Psi^{-\infty}(U),
 $$
 where $\nu$ denotes its homogeneous degree.
 \end{enumerate}
\end{corollary}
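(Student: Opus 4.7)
My plan is to obtain both parts as direct applications (respectively, a local adaptation) of the parametrix construction carried out in the proof of Theorem~\ref{thm:parametrix}, specialised to the symbol $\sigma := (\id+\widehat\cR)^{m/\nu}$, which by Example~\ref{ex_I+RmnuphM} lies in $S^m(\Gh M)$ (resp.\ $S^m(\Gh M|_U)$) and by Example~\ref{ex_(1+R)inv} is invertible for the high frequencies of $\widehat\cR$.

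For Part (1), I set $Q := \Op^\cA\bigl((\id+\widehat\cR)^{m/\nu}\bigr) \in \Psi^m(M)$. By Corollary~\ref{corthm:princ_symbol}, $\princ_m Q = \sigma$ mod $S^{m-1}(\Gh M)$, so Theorem~\ref{thm:parametrix} applies and yields a left parametrix. However, I need to record that the parametrix is itself of the form $\Op^\cA(\tau)$. Tracing through the proof of Theorem~\ref{thm:parametrix} shows exactly this: one picks $\psi$ as in Lemma~\ref{lem:psisigmainv}, sets $\tau_0 := \psi(\widehat\cR)\sigma^{-1} \in S^{-m}(\Gh M)$, and constructs inductively symbols $\rho_{-j} \in S^{-j}(\Gh M)$ so that the partial products
\[
(1-\rho_{-N})\cdots(1-\rho_{-1})\,\psi(\widehat\cR)\sigma^{-1}
\]
form an asymptotic expansion of the desired $\tau \in S^{-m}(\Gh M)$. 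Here the only simplification in our setting is that $\sigma \cdot (\text{lower order from } \Op^\cA(\sigma)-Q)$ is absent because $Q$ is \emph{defined} as $\Op^\cA(\sigma)$, so the inductive step uses the symbolic calculus of Corollary~\ref{cor:OpA} directly.

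For Part (2), I mirror the same argument in the local setting on $U$, replacing the global quantization $\Op^\cA$ by $\Op^{\bX,\chi}$ throughout. The symbolic calculus of the local quantization is supplied by Proposition~\ref{prop_comploc}: if $\sigma_1 \in S^{m_1}$ and $\sigma_2 \in S^{m_2}$ are supported in $U$, then $\Op^{\bX,\chi}(\sigma_1)\Op^{\bX,\chi}(\sigma_2) = \Op^{\bX,\chi}(\sigma_1 \diamondsuit_{\bX,\chi}\sigma_2)$ with $\sigma_1 \diamondsuit_{\bX,\chi}\sigma_2 = \sigma_1\sigma_2$ mod $S^{m_1+m_2-1}(\Gh M|_U)$. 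Using Lemma~\ref{lem:psisigmainv} on $GM|_U$ produces $\psi(\widehat\cR)\sigma^{-1} \in S^{-m}(\Gh M|_U)$ with $\psi(\widehat\cR)\sigma^{-1}\cdot\sigma = \id + (\text{smoothing})$, and the same inductive construction yields symbols $\rho_{-j}\in S^{-j}(\Gh M|_U)$ and an asymptotic limit $\tau \in S^{-m}(\Gh M|_U)$ such that
\[
\Op^{\bX,\chi}(\tau)\,\Op^{\bX,\chi}\bigl((\id+\widehat\cR)^{m/\nu}\bigr) - \id \in \bigcap_{N\geq 0}\Psi^{-N}(U) = \Psi^{-\infty}(U),
\]
the last identification using Lemma~\ref{lem:local-to-global} to translate $S^{-\infty}(\Gh M|_U)$-smoothing symbols into $\Psi^{-\infty}(U)$-operators.

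The only substantive point to verify—and what I anticipate being the most delicate bookkeeping—is that the inductive construction of the $\rho_{-j}$ really stays inside $S^{-j}(\Gh M|_U)$ (resp.\ $S^{-j}(\Gh M)$) and that the Borel-type resummation producing $\tau$ from the finite products $(1-\rho_{-N})\cdots(1-\rho_{-1})\psi(\widehat\cR)\sigma^{-1}$ is legitimate in our classes. Both are however already addressed: the symbolic calculus (Proposition~\ref{prop_comp+adj} and its homogeneous counterpart) keeps track of orders, and the existence of asymptotic sums in $S^m(\Gh M)$ modulo $S^{-\infty}(\Gh M)$ is guaranteed by the Borel construction recorded after Definition~\ref{def:asymptotics}. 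Thus no new analysis is needed beyond Theorem~\ref{thm:parametrix} and the local symbolic calculus, and the statement follows.
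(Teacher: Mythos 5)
Your proposal is correct and takes essentially the same approach as the paper: the paper's proof is the one-line observation "We consider $\tau$ obtained from the proof of Theorem~\ref{thm:parametrix} for $\sigma=(\id+\cR)^{m/\nu}$ and $S=0$," which is precisely the specialisation you trace through for Part~(1). For Part~(2), which the paper leaves implicit, you correctly note that the parametrix construction can be run verbatim with $\Op^{\bX,\chi}$ in place of $\Op^\cA$, using Proposition~\ref{prop_comploc} and a local version of Lemma~\ref{lem:psisigmainv}; this is a useful elaboration (in fact the local case is slightly simpler, since $R_{-1}=\Op^{\bX,\chi}\bigl(\psi(\widehat\cR)\sigma^{-1}\diamondsuit_{\bX,\chi}\sigma-\id\bigr)$ comes with an explicit symbol in $S^{-1}(\Gh M|_U)$, so one does not even need the local analogue of Corollary~\ref{corthm:princ_symbol}).
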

\begin{proof}
We consider $\tau$ obtained from the proof of Theorem \ref{thm:parametrix} 
for $\sigma=(\id+\cR)^{m/\nu}$ and $S=0$.
\end{proof}

\begin{remark}
\label{rem_existenceR}
    For Part (2) above, the existence of such positive Rockland symbol $\widehat \cR$ on $GM|_U$ is given by $\cR_\bX$ constructed in Example \ref{ex_RU}. 
    For Part (1) (with its notation), we can construct such a positive Rockland symbol by writing 
    $$
    \widehat \cR = \sum_{\alpha} \psi_\alpha(x)^2\widehat \cR_{\bX_\alpha}.
    $$
On a regular subRiemannian manifold, 
the natural global positive Rockland symbol is  the subLaplacian symbol, see Remark \ref{rem_subLaplacianSymbol}. 
\end{remark}

\subsection{Local Sobolev spaces}
\label{sec:sobolev}

Following the now classical ideas developed  on nilpotent Lie groups~\cite{FRSob,folland75},
we introduce   Sobolev-type spaces that take into account the filtration of the manifold $M$. Naturally, the spaces we define here will be  local in nature, unless the manifold $M$ is compact. An alternative construction was introduced in \cite{Dave+Haller}.

\subsubsection{Definition and first properties}

\begin{definition}[Local Sobolev spaces]\label{def:Sobolev}
Let $r\in \bR$.
 We denote by $L^2_{r,loc}(M)$ the space of 
     distributions $f\in \cD'(M)$ satisfying 
         $$
    \forall P\in \Psi^r (M),\qquad \forall \theta\in C_c^\infty (M),\qquad
    P (\theta f) \in L^2_{loc}(M).
    $$
 We call it the {\it local Sobolev space of order $r$} on the filtered manifold $M$. 
\end{definition}

This definition makes sense since 
pseudodifferential operators map $\sE'(M)$ to $\cD'(M)$, and $L^2_{loc}(M)$ is a subspace of $\cD'(M)$ (see its definition in \eqref{def:L2loc}).
The properties of $\Psi(M)$ then imply readily:

\begin{proposition}
\label{prop_L2locr1stprop} 
   \begin{enumerate}
        \item For any $r\in \bR$, the vector space $L^2_{loc,r}(M)$ is a $C^\infty(M)$-module: if $f\in L^2_{loc,r}(M)$ and $g\in C^\infty(M)$ then $fg\in L^2_{loc,r}(M)$.
        \item 
        We have $L^2_{loc,0}(M) = L^2_{loc}(M)$ and  the inclusion
$$
r_1\geq r_2 \Longrightarrow
L^2_{loc,r_1}(M) \subset L^2_{loc,r_2}(M).
$$
\item Let $r,m\in \bR$.  We have 
$$
\forall P\in \Psi^m (M),\quad 
\forall f\in L^2_{r,loc} (M),\quad
\forall\theta\in C_c^\infty (M),\qquad 
P(\theta f)\in L^2_{r-m,loc} (M).
$$
\item We have for any $r\in \bR$
$$
C^\infty(M)\subset L^2_{loc, r}(M),
$$
and for $P\in \Psi^{-\infty}(M)$,
$$
\forall f\in L^2_{r,loc} (M),\qquad
\forall\theta\in C_c^\infty (M),\qquad 
P(\theta f)\in C^\infty (M).
$$
\end{enumerate}
\end{proposition}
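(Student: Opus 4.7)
My plan is to prove the four items essentially in the order given, leveraging the properties of $\Psi(M)$ already established, most importantly: the algebra structure under composition for properly supported operators (Theorem~\ref{thm_compPsiM}), the inclusions $\Psi^{m_1}(M)\subset \Psi^{m_2}(M)$ for $m_1\leq m_2$, the $L^2_{loc}$-continuity of operators of non-positive order (Corollary~\ref{cor_charPsiM}), and the fact that multiplication $M_\theta$ by any $\theta\in C^\infty(M)$ is a differential (hence pseudodifferential) operator of order $0$, and is properly supported when $\theta\in C_c^\infty(M)$ (Example~\ref{ex:prop_supp}).

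For item (1), if $f\in L^2_{r,loc}(M)$ and $g\in C^\infty(M)$, then given $P\in\Psi^r(M)$ and $\theta\in C_c^\infty(M)$, I just rewrite $P(\theta g f)=P((\theta g) f)$ with $\theta g\in C_c^\infty(M)$ and apply the definition. For item (2), the inclusion $L^2_{r_1,loc}(M)\subset L^2_{r_2,loc}(M)$ for $r_1\geq r_2$ follows immediately from $\Psi^{r_2}(M)\subset \Psi^{r_1}(M)$. The identification $L^2_{0,loc}(M)=L^2_{loc}(M)$ splits into two inclusions: testing the definition against $P=\id\in\Psi^0(M)$ gives $L^2_{0,loc}(M)\subset L^2_{loc}(M)$; conversely, for $f\in L^2_{loc}(M)$ and $\theta\in C_c^\infty(M)$, $\theta f\in L^2(M)$ has compact support, so for any $P\in\Psi^0(M)$, $P(\theta f)\in L^2_{loc}(M)$ by Corollary~\ref{cor_charPsiM}.

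The key (and slightly less automatic) statement is item (3). Given $P\in\Psi^m(M)$, $f\in L^2_{r,loc}(M)$, and $\theta\in C_c^\infty(M)$, I want to verify that for arbitrary $Q\in\Psi^{r-m}(M)$ and $\theta'\in C_c^\infty(M)$, the distribution $Q(\theta' P(\theta f))$ lies in $L^2_{loc}(M)$. The natural move is to regroup this as $(QM_{\theta'}PM_\theta)(f)$ and apply the composition theorem: $QM_{\theta'}$ is properly supported of order $r-m$ (Example~\ref{ex:prop_supp}(2)), so by Theorem~\ref{thm_compPsiM} it composes with $P\in\Psi^m(M)$ to give an element of $\Psi^r(M)$; composing further with the properly supported $M_\theta$ keeps us in $\Psi^r(M)$. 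Hence $R:=QM_{\theta'}PM_\theta\in\Psi^r(M)$, and although $R$ need not be properly supported itself, the correct way to apply the hypothesis on $f$ is to observe that $Q(\theta' P(\theta f))=(QM_{\theta'}P)(\theta f)$ with $QM_{\theta'}P\in\Psi^r(M)$ and $\theta f$ playing the role of the compactly-supported cut-off times $f$; this is precisely the defining property of $L^2_{r,loc}(M)$.

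For item (4), $C^\infty(M)\subset L^2_{r,loc}(M)$ holds because for $f\in C^\infty(M)$, $\theta f\in C_c^\infty(M)$ and any $P\in\Psi^r(M)$ sends $C_c^\infty(M)$ into $C^\infty(M)\subset L^2_{loc}(M)$. The smoothing statement uses that the integral kernel of $P\in\Psi^{-\infty}(M)$ is a smooth function $K\in C^\infty(M\times M)$, so that $Pg(x)=\langle g,K(x,\cdot)\rangle$ makes sense and is smooth in $x$ for any $g\in\sE'(M)$; it suffices to apply this to $g=\theta f\in\sE'(M)$. The only subtle point to watch throughout is the proper-support bookkeeping when composing operators, which is precisely what the cut-off functions $\theta,\theta'$ and the multiplication operators $M_\theta, M_{\theta'}$ are there to secure.
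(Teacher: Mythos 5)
Items (1), (2), and (4) are correct and line up with the paper's proof, which handles them briefly in the same way you do. The problem is in item (3), and unfortunately it is exactly at the point you flag as the crux: the claim that $QM_{\theta'}\in\Psi^{r-m}(M)$ is properly supported, which you justify by Example~\ref{ex:prop_supp}(2). That example requires compactly supported cut-offs on \emph{both} sides; it says $\psi P\varphi$ with $\psi,\varphi\in C_c^\infty(M)$ is properly supported, not $P\varphi$ alone. Indeed the kernel of $QM_{\theta'}$ is $K_Q(x,y)\theta'(y)$, supported in $M\times\supp\theta'$: the projection onto $x$ is proper, but the projection onto $y$ is not, since for a fixed $y\in\supp\theta'$ the set of $x$ with $(x,y)\in\supp K_Q$ can be all of $M$ (a pseudodifferential kernel need only be smooth off the diagonal, not supported near it). So $QM_{\theta'}$ is not in general properly supported, and Theorem~\ref{thm_compPsiM} does not apply to $(QM_{\theta'})\circ P$ as neither factor need be properly supported. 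The identity $Q(\theta'P(\theta f))=(QM_{\theta'}P)(\theta f)$ is fine as a distributional identity since $\theta f\in\sE'(M)$, but you have not shown that $QM_{\theta'}P\in\Psi^r(M)$, which is what you need to invoke the definition of $L^2_{r,loc}(M)$.

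The gap is real but fixable in two standard ways, and the paper's cited route is one of them. The paper invokes Lemma~\ref{lem_P=ps+smooting} to write $P=P'+R$ with $P'$ properly supported and $R\in\Psi^{-\infty}(M)$. Then $R(\theta f)\in C^\infty(M)\subset L^2_{r-m,loc}(M)$ by your item (4); and for $P'$, the multiplication $M_{\theta'}$ \emph{is} properly supported (its kernel is $\theta'(x)\delta(x-y)$, compactly supported), so $M_{\theta'}P'$ is a composition of two properly supported operators, hence properly supported by the last assertion of Theorem~\ref{thm_compPsiM}, hence $Q(M_{\theta'}P')\in\Psi^r(M)$ and $Q(\theta'P'(\theta f))=(QM_{\theta'}P')(\theta f)\in L^2_{loc}(M)$. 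Alternatively, one can keep your line of argument but insert an auxiliary $\theta''\in C_c^\infty(M)$ with $\theta''\equiv 1$ on $\supp\theta\cup\supp\theta'$: then $\theta'P(\theta f)=\theta'(\theta''P\theta'')(\theta f)$, the operator $\theta''P\theta''$ is properly supported by Example~\ref{ex:prop_supp}(2) (now with cut-offs on both sides), $M_{\theta'}(\theta''P\theta'')$ is properly supported, and $QM_{\theta'}(\theta''P\theta'')\in\Psi^r(M)$ by Theorem~\ref{thm_compPsiM}. Either fix should go into your write-up; as it stands, the stated justification does not hold.
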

\begin{proof}
        Part (1) follows from $\Psi^m (M)$ being a vector space and a $C^\infty(M)$-module.
If $r=0$, taking $P=\id$ in Definition \ref{def:Sobolev}
implies $L^2_{loc,0} (M)\subset L^2_{loc}(M)$, while the reverse inclusion follows from Corollary \ref{cor_charPsiM}.    
    The  inclusion in Part (2) follows from $\Psi^{r_1}(M)\subset \Psi^{r_2}(M)$.
Part~(3) follows from the properties of properly supported and of pseudodifferential operators and of their composition 
(see Lemma \ref{lem_P=ps+smooting}
and Theorem \ref{thm_compPsiM}).
Part (4) follows from the basic properties of 
pseudodifferential operators, see Section \ref{subsubsec_defPDOM}.
\end{proof}

\subsubsection{Characterisation of local Sobolev spaces}
Local Sobolev spaces may be characterised  by the action of local or global pseudodifferential operators, or by the action of the local or global quantization of powers of positive Rockland operators: 

\begin{theorem}
\label{thm:charSob}
    Let $r\in \bR$ and let $f\in \cD'(M)$.
The following are equivalent:
\begin{itemize}
    \item[(i)] $f\in L^2_{r,loc}(M)$.
     \item[(ii)]  For any adapted frame $\bX$ on an open subset $U\subset M$, for any $\bX$-cut-off $\chi$, we have
        $$
        \forall \sigma\in S^r (\Gh M)
        ,\quad \forall \theta\in C_c^\infty (U),\qquad  \Op^{\bX,\chi}(\sigma)(\theta f) \in L^2(U).
        $$  
 \item[(iii)] For one (and then any) atlas $\cA$ for $M$, 
 we have
 $$
 \forall \sigma\in S^r (\Gh M),\quad \forall \theta\in C_c^\infty (M),\qquad 
  \Op^\cA(\sigma)(\theta f) \in L^2(M).
 $$    
        \item[(iv)] 
         For one (and then any) positive Rockland symbol $\widehat \cR$ on $GM$, $\nu$ being its homogeneous degree, for any adapted frame $\bX$ on an open subset $U\subset M$ and  any $\bX$-cut-off $\chi$, we have 
        $$
         \forall \theta\in C_c^\infty (U),\qquad  \Op^{\bX,\chi}\left( (\id+\widehat \cR)^{\frac r \nu}\right)(\theta f) \in L^2(U),
        $$  
        
        \item[(v)]
    For one (and then any) atlas $\cA$ for $M$, for one (and then any) positive Rockland symbol $\widehat \cR$ on $GM$,
  $\nu$ being its homogeneous degree,
  $$
 \forall \theta\in C_c^\infty (M),\qquad 
  \Op^\cA\left( (\id+\widehat \cR)^{\frac r \nu}\right)(\theta f) \in L^2(M).
$$
        \item[(vi)] For any adapted frame $\bX$ on an open subset $U\subset M$, for any $\bX$-cut-off $\chi$, 
        we have
        $$
         \forall \theta\in C_c^\infty (U),\qquad  \Op^{\bX,\chi}\left( (\id+\widehat \cR_U)^{\frac r \nu}\right)(\theta f) \in L^2(U),
        $$  
        for one (and then any) positive Rockland operator $\cR_U$  on $GM|_U$,
        $\nu$ being its homogeneous degree.
        \end{itemize}
\end{theorem}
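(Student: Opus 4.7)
The plan is to organize the six conditions into two parallel chains of equivalences, (i) $\Leftrightarrow$ (ii) $\Leftrightarrow$ (iv) $\Leftrightarrow$ (vi) and (i) $\Leftrightarrow$ (iii) $\Leftrightarrow$ (v), which reduces everything to a small number of core arguments: the atlas characterization of $\Psi^m(M)$, the $L^2$-boundedness of zero-th order operators, and parametrix constructions.

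First, I would prove (i) $\Rightarrow$ (ii) and (i) $\Rightarrow$ (iii). For (ii), given $\sigma \in S^r(\Gh M)$ and $\theta \in C_c^\infty(U)$, Corollary~\ref{corlem:local-to-global} gives $\Op^{\bX,\chi}(\sigma)\,\theta \in \Psi^r(M)$. Picking $\theta' \in C_c^\infty(M)$ with $\theta' \equiv 1$ on $\supp\theta$, the definition of $L^2_{r,loc}(M)$ applied to $P = \Op^{\bX,\chi}(\sigma)\,\theta$ yields $\Op^{\bX,\chi}(\sigma)(\theta f) = P(\theta' f) \in L^2_{loc}(M)$; proper support of $\chi$ forces this distribution to be compactly supported in $U$, hence it lies in $L^2(U)$. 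For (iii) the same argument applies with $\Op^\cA(\sigma)$ playing the role of $\Op^{\bX,\chi}(\sigma)\,\theta$, using Lemma~\ref{lem_OpA} and Example~\ref{ex:prop_supp}(4).

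Second, for (ii) $\Rightarrow$ (i), the atlas characterization (Proposition~\ref{caracterization_psi(M)}) writes any $P \in \Psi^r(M)$ as $R + \sum_\alpha \psi_\alpha\, \Op^{\bX_\alpha,\chi_\alpha}(\sigma_\alpha)\, \psi_\alpha$ with $R\in\Psi^{-\infty}(M)$, and applying (ii) to each summand together with local finiteness of the cover yields $P(\theta f) \in L^2_{loc}(M)$; the smoothing remainder is handled by the fact that $R(\theta f)\in C^\infty(M)\subset L^2_{loc}(M)$ when $\theta f$ has compact support. For (iii) $\Rightarrow$ (i), one uses Corollary~\ref{corcorthm:princ_symbol} to write $P = \Op^\cA(\sigma) + R$ for some $\sigma \in S^r(\Gh M)$ and smoothing $R$, and then applies (iii) and the same smoothing remark.

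Third, the specializations (ii) $\Rightarrow$ (iv) and (iii) $\Rightarrow$ (v) are immediate, since $(\id+\widehat\cR)^{r/\nu}\in S^r(\Gh M)$ by Corollary~\ref{corprop:fct_de_R} (or Example~\ref{ex_I+RmnuphM}). The core content is the converse direction, which also includes the independence in the choice of the Rockland operator, the frame and the cut-off; I would handle this uniformly via parametrices. To illustrate with (vi) $\Rightarrow$ (ii): given $\sigma \in S^r(\Gh M|_U)$, set $A := \Op^{\bX,\chi}(\sigma)$ and $B := \Op^{\bX,\chi}((\id+\widehat\cR_U)^{r/\nu})$. Corollary~\ref{cor_parametrix}(2) yields $C = \Op^{\bX,\chi}(\tau) \in \Psi^{-r}(U)$ such that $CB = \id + S$ with $S \in \Psi^{-\infty}(U)$, so $A = (AC)B - AS$ and
\[
A(\theta f) = (AC)\bigl(B(\theta f)\bigr) - (AS)(\theta f).
\]
By (vi), $B(\theta f)\in L^2(U)$, and it has compact support by proper support of $\chi$; $AC \in \Psi^0(U)$ is properly supported (replacing $C$ by its properly supported representative via Lemma~\ref{lem_P=ps+smooting}), so it maps compactly supported elements of $L^2_{loc}(U)$ into $L^2(U)$ by Corollary~\ref{cor_charPsiM}. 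The term $(AS)(\theta f)$ lies in $C_c^\infty(U)\subset L^2(U)$. The same scheme, exchanging one positive Rockland operator for another, gives (iv) $\Rightarrow$ (vi); using Corollary~\ref{cor_parametrix}(1) it gives (v) $\Rightarrow$ (iii), and also yields the "(one and then any)" clauses in (iv), (v), (vi).

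The main obstacle is the parametrix step: the proof is essentially a bookkeeping of support and order, but one must carefully keep all operators properly supported (applying Lemma~\ref{lem_P=ps+smooting} where necessary) to bridge between local $L^2$-statements on $U$ and the global definition on $M$, and to ensure that the $L^2$-boundedness of zero-th order pseudodifferential operators can actually be applied in the form needed. Once proper support is controlled, the rest of the argument is a direct consequence of the symbolic calculus, the existence of parametrices, and the fact that smoothing operators send compactly supported distributions into $C^\infty$.
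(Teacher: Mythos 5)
Your proof is correct and follows essentially the same route as the paper: the atlas characterization of $\Psi^m(M)$ for the (i)$\Leftrightarrow$(ii) and (i)$\Leftrightarrow$(iii) equivalences, the trivial specializations to the Rockland multiplier, and parametrices (Corollary~\ref{cor_parametrix}) combined with $L^2$-boundedness of order-zero operators (Corollary~\ref{cor_charPsiM}) to close the loop. The only difference is organizational: the paper shows (v)$\Rightarrow$(i) directly via a global parametrix factorization $P=PP_{-m}Q_m-PR$ and handles (iv)$\Leftrightarrow$(vi) separately by exchanging Rockland operators, whereas you chain (ii)$\Rightarrow$(iv)$\Rightarrow$(vi)$\Rightarrow$(ii) and spell out (vi)$\Rightarrow$(ii) in the local form $A=(AC)B-AS$; both rest on the same lemmas and the proper-support bookkeeping you describe is indeed the only delicate point.
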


The  local and global existence of positive Rockland symbols has been discussed  in Remark~\ref{rem_existenceR}.
 
\begin{proof}
 The implication (i) $\Rightarrow$ (ii)
follows from Corollary \ref{corlem:local-to-global}, 
and the converse by  Proposition~\ref{caracterization_psi(M)}.
Similarly,  (i) $\Rightarrow$ (iii)
follows from Corollary \ref{corlem:local-to-global}, 
and the converse from Corollary~\ref{corthm:princ_symbol}
and the inclusion in Proposition \ref{prop_L2locr1stprop}  (2).

 The implication (iii) $\Rightarrow$ (v) is easy. For the converse, we observe that, using the setting and notation of Corollary \ref{cor_parametrix} and writing:
 $$
 Q_m = \Op^\cA \left( (\id+\cR)^{m/\nu}\right)
  \quad\mbox{and}\quad \quad P_{-m}=
  \Op^\cA\left( \tau \right),
  $$
  the operator 
$R:= P_{-m}Q_m -\id  \in \Psi^{-\infty}(M)$ is smoothing  and properly supported.
 Hence, we may write any $P\in \Psi^m (M)$ as 
 $$
 P = PP_{-m} Q_m -PR, \quad \mbox{with}\quad PP_{-m} \in \Psi^0(M) \ \mbox{and}\ PR \in \Psi^{-\infty}(M).
 $$
This readily implies that if $f\in \cD'(M)$ is as in (v), then $P (\theta f) \in L^2_{loc}(M)$ for any $\theta\in C_c^\infty (M)$ by Corollary \ref{cor_charPsiM}.
We conclude that the equivalence (iii) $\Leftrightarrow$ (v) holds. 
Finally, the equivalence (ii) $\Leftrightarrow$ (iv) is proved in a similar way. 

Let us show that the property in (vi) is indeed independent of the positive Rockland operator $\cR_U$ on $GM|_U$. This will readily imply 
the equivalence (iv) $\Leftrightarrow$ (vi) because of the equivalence (ii) $\Leftrightarrow$ (iv) that is already proved.
If $\cR_1$ and $\cR_2$ are two positive Rockland operators on $GM|_U$ of homogeneous degree $\nu_1$ and $\nu_2$, 
then by Corollary \ref{cor_parametrix}, there exists $\tau_1\in S^{-r}(\Gh M|_U)$ and $R_1\in \Psi^{-\infty}(U)$ such that
$$
\Op^{\bX,\chi}(\tau_1)\, \Op^{\bX,\chi}\left( (\id+\cR_1)^{r/\nu_1}\right)=\id +R_1,
$$
so 
$$
\Op^{\bX,\chi}\left( (\id+\cR_2)^{r/\nu_2}\right)
= P_1 \, \Op^{\bX,\chi}\,(\id+\cR_1)^{r/\nu_1}  +  R', 
$$
with
\begin{align*}
P_1&:=     \Op^{\bX,\chi}\left( (\id+\cR_2)^{r/\nu_2}\right) \Op^{\bX,\chi}(\tau_1) \in \Psi^0(U)\\
R'&=
-R_1\Op^{\bX,\chi}\left( (\id+\cR_2)^{r/\nu_2}\right) \in \Psi^{-\infty}(U).
\end{align*}
This together with Proposition \ref{prop_L2locr1stprop}  (3) and (4) imply readily that for any $f\in \cD'(M)$ and $\theta\in C_c^\infty(U)$, if $\theta f$ satisfies the property in (vi) for $\cR_1$, then it will also do so for $\cR_2$. 
This complete the proof of Theorem \ref{thm:charSob}.
\end{proof}

In the case of an integer that is also a multiple of the dilation's weights, we obtain a further characterisation:

\begin{theorem}
\label{thm:Sobrint}
Let  $r\in \bN$ be a  common multiple of the dilation's weights. Let $f\in \cD'(M)$.
The following are equivalent:
\begin{itemize}
\item[(i)] $f\in L^2_{r,loc}(M)$.
    \item[(vii)] $P f\in L_{loc}^2(M)$ for any $P\in \DO^{\leq r}(M)$.
       \item[(viii)] 
For any adapted frame $\bX$ on an open subset $U\subset M$,  we have $\bX^\alpha f\in L^2_{loc}(U)$ for any $\alpha\in \bN_0^n$ with $[\alpha]\leq r$.
\end{itemize}
\end{theorem}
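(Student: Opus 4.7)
The plan is to prove the cycle (i) $\Rightarrow$ (vii) $\Rightarrow$ (viii) $\Rightarrow$ (i). The first two implications are direct consequences of the already established link between differential operators and pseudodifferential operators; the third, which is the substantive direction, uses decisively that $r$ is a common multiple of the weights.

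For (i) $\Rightarrow$ (vii), any $P \in \DO^{\leq r}(M)$ lies in $\Psi^r(M)$ by Lemma~\ref{lem_exdiffPDO}, so Proposition~\ref{prop_L2locr1stprop}(3) gives $P(\theta f) \in L^2_{loc}(M)$ for all $\theta \in C_c^\infty(M)$, and the locality of $P$ lets one choose $\theta \equiv 1$ on a neighbourhood of an arbitrary compact to deduce $Pf \in L^2_{loc}(M)$. For (vii) $\Rightarrow$ (viii), given $\theta \in C_c^\infty(U)$ and $\tilde\theta \in C_c^\infty(U)$ with $\tilde\theta \equiv 1$ on $\supp\theta$, the operator $g \mapsto \theta\bX^\alpha(\tilde\theta g)$ expands via the Leibniz rule into a finite combination of $\bX^\beta$ with $[\beta] \leq [\alpha] \leq r$ and smooth coefficients compactly supported in $U$, hence extends by zero to a global element of $\DO^{\leq r}(M)$; applying (vii) yields $\theta\bX^\alpha f = \theta\bX^\alpha(\tilde\theta f) \in L^2(U)$.

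For (viii) $\Rightarrow$ (i), I will invoke Theorem~\ref{thm:charSob}(ii): it suffices to show $\Op^{\bX,\chi}(\sigma)(\theta f) \in L^2(U)$ for every adapted frame $\bX$ on an open $U \subset M$, every $\bX$-cut-off $\chi$, every $\theta \in C_c^\infty(U)$, and every $\sigma \in S^r(\Gh M)$. The idea is to decompose $\Op^{\bX,\chi}(\sigma)$, modulo smoothing operators, as a sum of compositions $\Op^{\bX,\chi}(\tilde\tau_j) \circ X_j^{r/\upsilon_j}$ with $\tilde\tau_j \in S^0(\Gh M|_U)$, which is a legitimate target since $r/\upsilon_j \in \bN$ by assumption. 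The key algebraic step is to take the positive Rockland element $\cR = \cR_\bX$ of Example~\ref{ex_RU} with $M_0 = r$, of homogeneous degree $\nu = 2r$; using that each $\pi(\langle X_j\rangle_x)$ is skew-adjoint, one has the explicit factorisation $\widehat\cR = \sum_j A_j^* A_j$ with $A_j := \widehat{\langle X_j\rangle}^{r/\upsilon_j} \in S^r(\Gh M|_U)$. Fixing $\psi_0 \in C^\infty(\bR)$ that vanishes near $(-\infty, 0]$ and equals $1$ near $+\infty$, the spectral multiplier $\widehat\cR^{-1}\psi_0(\widehat\cR)$ lies in $S^{-2r}(\Gh M|_U)$ by Corollary~\ref{corprop:fct_de_R}, so $\tau_j := \sigma\,\widehat\cR^{-1}\psi_0(\widehat\cR)\, A_j^* \in S^0(\Gh M|_U)$ and
\[
\sigma = \sum_j \tau_j A_j + \rho_0, \qquad \rho_0 := \sigma(1-\psi_0)(\widehat\cR) \in S^{-\infty}(\Gh M|_U),
\]
the smoothing property of $\rho_0$ coming from Lemma~\ref{lem_prop:fct_de_R}(2) after replacing $1-\psi_0$ by a Schwartz extension agreeing with it on $[0,+\infty) \supset \spec\widehat\cR$.

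Since $\Op^{\bX,\chi}(A_j) = X_j^{r/\upsilon_j}$ by Proposition~\ref{ex_OpT}, the symbolic composition (Proposition~\ref{prop_comploc}) turns the identity above into $\Op^{\bX,\chi}(\sigma) = \sum_j \Op^{\bX,\chi}(\tau_j)\, X_j^{r/\upsilon_j} + R^{(1)}$ with $R^{(1)} \in \Psi^{r-1}(U)$. Iterating the same decomposition on a symbol in $S^{r-1}(\Gh M|_U)$ representing $R^{(1)}$, and so on, produces successive corrections in $S^{-1}, S^{-2}, \ldots$ to the coefficients $\tau_j$; Borel asymptotic summation in $S^0(\Gh M|_U)$ then furnishes $\tilde\tau_j \in S^0(\Gh M|_U)$ and $R \in \Psi^{-\infty}(U)$ with
\[
\Op^{\bX,\chi}(\sigma) = \sum_j \Op^{\bX,\chi}(\tilde\tau_j)\, X_j^{r/\upsilon_j} + R.
\]
Applied to $g = \theta f$, the Leibniz identity $X_j^{r/\upsilon_j}(\theta f) = \sum_{k=0}^{r/\upsilon_j}\binom{r/\upsilon_j}{k}(X_j^k\theta)(X_j^{r/\upsilon_j - k}f)$ shows $X_j^{r/\upsilon_j}(\theta f) \in L^2(U)$ by (viii), since $[(r/\upsilon_j - k)e_j] = r - k\upsilon_j \leq r$. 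The $L^2$-boundedness of $\Op^{\bX,\chi}(\tilde\tau_j) \in \Psi^0(U)$ from Theorem~\ref{prop_L2bddX}, together with the smoothing character of $R$ (and the proper support of all kernels involved, which keeps outputs compactly supported in $U$), yields $\Op^{\bX,\chi}(\sigma)(\theta f) \in L^2(U)$. The main obstacle is the algebraic identity producing $\sigma = \sum_j \tau_j A_j \mod S^{-\infty}$, which depends crucially on $r/\upsilon_j$ being an integer so that $A_j$ is a genuine element of $\Gamma(\sU(\fg M|_U))$ with symbol in $S^r$, together with the Borel argument that controls the iteration modulo $S^{-\infty}$.
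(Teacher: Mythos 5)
Your proof is correct, and the substantive step is handled by a genuinely different route from the paper's. The paper closes the equivalence via the two separate implications (vii) $\Rightarrow$ (i) and (viii) $\Rightarrow$ (i): it computes $\|\Op^{\bX,\chi}((\id+\widehat\cR_\bX)^{1/2})(\theta f)\|_{L^2}^2$ by pairing against the adjoint, reduces the square operator using a parametrix of $\Op^{\bX,\chi}(\id+\widehat\cR_\bX)$ to a term $(P'\Op^{\bX,\chi}(\id+\widehat\cR_\bX)(\theta f),\theta f)$ with $P'\in\Psi^0$ plus a smoothing correction, and argues finiteness from the hypothesis together with the fact that $\Op^{\bX,\chi}(\id+\widehat\cR_\bX)$ is a differential operator whose $2r$ ``derivatives'' can be split symmetrically across the pairing. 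You instead prove (viii) $\Rightarrow$ (i) directly at the symbol level: the factorisation $\widehat\cR_\bX = \sum_j A_j^* A_j$ with $A_j=\widehat{\langle X_j\rangle}^{r/\upsilon_j}\in S^r$, combined with $\tau_j := \sigma\,\widehat\cR_\bX^{-1}\psi_0(\widehat\cR_\bX)A_j^*\in S^0$, splits any $\sigma\in S^r$ as $\sum_j\tau_jA_j$ modulo a Schwartz tail, and iteration plus Borel summation (using Proposition~\ref{prop_comploc} to pass from symbol products to operator compositions) gives $\Op^{\bX,\chi}(\sigma)=\sum_j\Op^{\bX,\chi}(\tilde\tau_j)\,X_j^{r/\upsilon_j}$ modulo $\Psi^{-\infty}$. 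What your route buys is that the $r$ orders of derivative land on $f$ cleanly in a form where (viii) applies directly, while the remaining factor is order zero and handled by Theorem~\ref{prop_L2bddX}; the duality/pairing argument in the paper is shorter on the page but leaves the integration-by-parts bookkeeping implicit. Both approaches hinge on the same point — that $r$ being a common multiple of the weights makes $\langle X_j\rangle^{r/\upsilon_j}$ a genuine element of $\Gamma(\sU(\fg M))$ of homogeneous degree exactly $r$ — and both close via Theorem~\ref{thm:charSob}. Your linear cycle (i) $\Rightarrow$ (vii) $\Rightarrow$ (viii) $\Rightarrow$ (i) is also slightly more economical logically than the paper's pair of implications into (i). One point worth spelling out when writing this up: the exact identity $\Op^{\bX,\chi}(A_j)=X_j^{r/\upsilon_j}$ (rather than merely modulo lower order) relies on $A_j$ being a pure power of a single $\widehat{\langle X_j\rangle}$, for which $L_{\langle X_j\rangle_x}^k(f\circ\exp_x^\bX)(0)=X_j^kf(x)$ by Lemma~\ref{lem_Taylorexpxu}; this would fail for mixed products.
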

    
\begin{proof}
Let $r\in \bN$ be a common multiple of the dilation's weights.
Proposition \ref{prop_L2locr1stprop} (2) \& (3) 
and Definition \ref{def:Sobolev} imply readily 
(i) $\Rightarrow$ (vii) and (viii).
We now prove the converses.

Let $f\in \cD'(M)$ be such that (vii) holds.
Choosing $P=\id$ in (vii), we have $f\in L^2_{loc}(M)$.
We consider an adapted frame  $\bX$ on a subset $U\subset M$ and $\theta\in C_c^\infty(U)$, 
and the Rockland symbol $\widehat \cR_\bX \in S^{2r} (\Gh M)$ given in Example \ref{ex_RU}.
We have
by the properties of the local quantization (Propositions~\ref{prop_adjloc} and~\ref{prop_comploc}), 
$$
\left(\Op^{\bX,\chi} \left((\id+\widehat \cR_\bX)^{1/2}\right)\right)^*\Op^{\bX,\chi} \left((\id+\widehat \cR_\bX)^{1/2}\right) 
 = \Op^{\bX,\chi} \left(\id+\widehat \cR_\bX\right) + \Op^{\bX,\chi} (\rho),
$$
with $\rho\in S^{2r-1}(\Gh M|_U)$, and 
by Corollary \ref{cor_parametrix} (2),
$$
\Op^{\bX,\chi}(\tau)\, \Op^{\bX,\chi}\left( \id+\widehat \cR_\bX\right)=\id +R,
$$
with $\tau\in S^{-2r}(\Gh M|_U)$ and $R\in \Psi^{-\infty}(U)$, 
so 
$$
\left(\Op^{\bX,\chi} \left((\id+\widehat \cR_\bX)^{1/2}\right)\right)^*\Op^{\bX,\chi} \left((\id+\widehat \cR_\bX)^{1/2}\right) 
= P'\Op^{\bX,\chi}\left( \id+\widehat \cR_\bX\right) +R'',
$$
with 
\begin{align*}
    P'&:=\id + \Op^{\bX,\chi} (\rho)\Op^{\bX,\chi}(\tau) \in \Psi^0 (U),
    \\
    R''&:=-\Op^{\bX,\chi} (\rho)R \in \Psi^{-\infty}(U),
\end{align*}
by the properties of the calculus.
Therefore, for any $\theta\in C_c^\infty(U)$, 
\begin{align*}
 &\|\Op^{\bX,\chi} \left((\id+\widehat \cR_\bX)^{1/2}\right) (\theta f)\|_{L^2(U)}^2
\\ &\qquad =    \left( \left(\Op^{\bX,\chi} \left((\id+\widehat \cR_\bX)^{1/2}\right)\right)^*\Op^{\bX,\chi} \left((\id+\widehat \cR_\bX)^{1/2}\right) (\theta f),(\theta f)\right)_{L^2(U)}
\\ &\qquad = 
\left(P'\Op^{\bX,\chi}\left( \id+\widehat \cR_\bX\right)(\theta f),(\theta f)\right)_{L^2(U)}
+
\left(R''(\theta f),(\theta f)\right)_{L^2(U)}.
\end{align*}
As $\Op^{\bX,\chi}\left( \id+\widehat \cR_\bX\right)$ is a differential operator (see Lemma~\ref{lem_exdiffPDO}), the hypothesis on $f$ and Proposition \ref{prop_L2locr1stprop} (3)  imply that the first term on the right hand-side is finite. Moreover, Proposition~\ref{prop_L2locr1stprop} (4) and $f\in L^2_{loc}(M)$ imply that the second term in the right hand side is finite.
We have thus obtained 
$\Op^{\bX,\chi} \left((\id+\widehat \cR_\bX)^{1/2}\right) (\theta f)\in L^2(U)$.
By Theorem \ref{thm:charSob}  (vi), $f\in L^2_{loc,r}(M)$. We have proved   (vii) $\Rightarrow$ (i).

With Lemma \ref{lem_PulcXalpha},
the same arguments as above show 
(viii) $\Rightarrow$ (i).
This concludes the proof.
\end{proof}

The same proof as above readily implies the following characterisation in the case of 
$H_1$ generating $TM$ (for instance on a regular subRiemannian manifolds, see Example~\ref{ex:subrie} and Corollary \ref{corlem_PulcXalphasubRie}).
\begin{corollary}\label{cor:sobol_def}
Assume that $H_1$ generates $TM$.
Then for any $r\in \bN$, $L^2_{loc,r}(M)$ is the $C^\infty(M)$-module  of distributions $f\in \cD'(M)$ such that 
for any adapted frame $\bX$ on an open subset $U\subset M$, we have  $X_{i_1}\ldots X_{i_{r'}} f\in L^2_{loc}(U)$ for any $1\leq i_1,\ldots,i_{r'}\leq d_1=\dim H_1$ and $r'\leq r$.
\end{corollary}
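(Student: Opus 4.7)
The strategy is to adapt the proof of Theorem \ref{thm:Sobrint} to the subRiemannian setting, using the subLaplacian $\mathcal L_\bX := -\sum_{j=1}^{d_1}\langle X_j\rangle^2$ in place of the Rockland operator $\cR_\bX$. The key point is that in the subRiemannian case, $\mathcal L_\bX$ is a positive Rockland element on $GM|_U$ of homogeneous degree $\nu=2$, which is well defined for every integer $r$ (no need for $r$ to be a common multiple of the weights), and, moreover, Corollary \ref{corlem_PulcXalphasubRie} expresses the integer powers $(\id+\mathcal L_\bX)^r$ purely as $C^\infty(U)$-combinations of products of the horizontal fields $X_1,\ldots,X_{d_1}$.

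The forward direction is immediate: if $f\in L^2_{r,\mathrm{loc}}(M)$, then for any adapted frame $\bX$ and any indices $1\leq i_1,\ldots,i_{r'}\leq d_1$ with $r'\leq r$, the product $X_{i_1}\cdots X_{i_{r'}}$ has homogeneous order exactly $r'\leq r$ (each $X_{i_j}$ has weight $1$), hence lies in $\DO^{\leq r}(U)\subset \Psi^r(U)$ by Lemma \ref{lem_exdiffPDO}. Proposition \ref{prop_L2locr1stprop}(3) together with a Leibniz unwinding against any $\theta\in C_c^\infty(U)$ gives $X_{i_1}\cdots X_{i_{r'}}f\in L^2_{\mathrm{loc}}(U)$.

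For the converse, I would argue by induction on $r\in\bN$, the case $r=0$ being $L^2_{0,\mathrm{loc}}(M)=L^2_{\mathrm{loc}}(M)$. For the inductive step, fix an adapted frame $(\bX,U)$ and an $\bX$-cut-off $\chi$, and apply Theorem \ref{thm:charSob}(vi) with $\cR_U = \mathcal L_\bX$: it suffices to show $P(\theta f)\in L^2(U)$ for every $\theta\in C_c^\infty(U)$, where $P:=\Op^{\bX,\chi}((\id+\widehat{\mathcal L}_\bX)^{r/2})$. By the symbolic calculus (Propositions \ref{prop_adjloc}, \ref{prop_comploc}) one has
\[
P^*P = \Op^{\bX,\chi}\bigl((\id+\widehat{\mathcal L}_\bX)^r\bigr) + R, \qquad R\in \Psi^{r-1}(U),
\]
and, since $r\in\bN$, the symbol $(\id+\widehat{\mathcal L}_\bX)^r$ is the symbol of the differential operator $(\id+\mathcal L_\bX)^r\in \DO^{\leq 2r}(U)$, which by Corollary \ref{corlem_PulcXalphasubRie} is a finite $C^\infty(U)$-combination of products $X_{i_1}\cdots X_{i_\ell}$ with $\ell\leq 2r$ and $i_j\leq d_1$. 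Hence
\[
\|P(\theta f)\|_{L^2(U)}^2 = \bigl((\id+\mathcal L_\bX)^r(\theta f),\theta f\bigr)_{L^2(U)} + (R(\theta f),\theta f)_{L^2(U)}.
\]

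The hard part is the bookkeeping of the main term: in each summand $\int_U X_{i_1}\cdots X_{i_\ell}(\theta f)\,\overline{\theta f}\,dx$ with $\ell\leq 2r$, one integrates by parts $\lfloor \ell/2\rfloor$ times, using $X_j^{*}=-X_j+c_j$ with $c_j\in C^\infty(U)$ to redistribute at most $\lceil \ell/2\rceil\leq r$ factors to each side, and then uses Leibniz to absorb $\theta$ into $X_{j_1}\cdots X_{j_{k'}}f$ with $k'\leq r$. All resulting terms are bounded by the hypothesis $X_{j_1}\cdots X_{j_{r'}}f\in L^2_{\mathrm{loc}}(U)$ for $r'\leq r$, modulo commutators that lower the word length and fall into $L^2_{\mathrm{loc}}(U)$ by the inductive hypothesis. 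For the remainder term, the inductive hypothesis applied at order $r-1$ (whose assumption is a subset of that for $r$) yields $f\in L^2_{r-1,\mathrm{loc}}(M)$, so $R(\theta f)\in L^2_{\mathrm{loc}}(U)$ by Proposition \ref{prop_L2locr1stprop}(3), making the pairing finite. The only real technical point is verifying that the splitting $k+k'\leq 2r$ can always be arranged with $k,k'\leq r$ and that the lower-order correction terms from each integration by parts and each Leibniz step remain within the control of the hypothesis and the induction—this is exactly the accounting carried out in the proof of Theorem \ref{thm:Sobrint} and is routine once the indexing is set up.
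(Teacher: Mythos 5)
Your use of the subLaplacian $\mathcal L_\bX$ (homogeneous degree $\nu=2$) in place of the general $\cR_\bX$ is a sensible and accurate adaptation to the case $H_1=TM$, and Corollary~\ref{corlem_PulcXalphasubRie} is indeed the right tool to express $(\id+\mathcal L_\bX)^r$ in horizontal products. The forward direction is correct.

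The converse, however, contains a genuine order-counting error in the remainder term. From $P=\Op^{\bX,\chi}((\id+\widehat{\mathcal L}_\bX)^{r/2})\in\Psi^r(U)$, the symbolic calculus (Propositions~\ref{prop_adjloc}, \ref{prop_comploc}) gives $P^*P=\Op^{\bX,\chi}(\sigma^{(*)}\diamond\sigma)$ with $\sigma^{(*)}\diamond\sigma=\sigma^*\sigma \bmod S^{2r-1}(\Gh M|_U)$; since $\sigma^*\sigma=(\id+\widehat{\mathcal L}_\bX)^r\in S^{2r}$, the remainder satisfies $R\in\Psi^{2r-1}(U)$, not $\Psi^{r-1}(U)$ as you assert. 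Consequently the step ``the inductive hypothesis yields $f\in L^2_{r-1,\mathrm{loc}}(M)$, so $R(\theta f)\in L^2_{\mathrm{loc}}(U)$ by Proposition~\ref{prop_L2locr1stprop}(3)'' fails: with $R\in\Psi^{2r-1}$ and $\theta f\in L^2_{r-1,\mathrm{loc}}$, Proposition~\ref{prop_L2locr1stprop}(3) only places $R(\theta f)$ in $L^2_{-r,\mathrm{loc}}(U)$, which cannot be paired with $\theta f\in L^2_{r-1,\mathrm{loc}}$ — you are one Sobolev order short, and your induction hypothesis gives nothing stronger (applying it to $X_j f$ just reproduces the level-$r$ hypothesis).

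The paper's proof of Theorem~\ref{thm:Sobrint}, which the Corollary inherits verbatim, sidesteps this by a parametrix factorisation rather than an inductive bound on $R$: writing $\Op^{\bX,\chi}(\tau)\,\Op^{\bX,\chi}(\id+\widehat\cR_\bX)=\id+R$ with $\tau\in S^{-2r}$ and $R\in\Psi^{-\infty}(U)$ (Corollary~\ref{cor_parametrix}(2)), one absorbs the order-$(2r-1)$ error into $P'\Op^{\bX,\chi}(\id+\widehat\cR_\bX)+R''$ with $P'\in\Psi^0(U)$ and $R''\in\Psi^{-\infty}(U)$. The smoothing remainder is then harmless, and the whole weight is carried by the differential-operator term, whose finiteness is controlled by the hypothesis via exactly the integration-by-parts bookkeeping you describe for the main term (which is the right idea, even if the paper states it tersely). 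Replacing your remainder treatment by the parametrix trick — or equivalently, running the estimate on a regularisation $f_\varepsilon$ and closing with a Gårding/absorption argument using $\|\theta f\|_{L^2_{r-1}}$ — would repair the proof; as written, the gap is real.
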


For a general  filtered manifold, Theorem \ref{thm:Sobrint} implies readily 
 $$
 \forall r\in M_0 \bN, \qquad 
C^\infty (M)\subset L^2_{loc,r}(M) ,
$$
where $M_0$ is a common multiple of the dilation's weights, 
and furthermore, 
 $$
C^\infty(M)=\bigcap_{r\in M_0 \bN} L^2_{loc, r}(M) .
$$ 
The inclusion in Proposition \ref{prop_L2locr1stprop} (2) then yields 
 $$
\forall r\in \bR,\qquad 
C^\infty (M)\subset L^2_{loc,r}(M),
$$
 and
$$
C^\infty(M)=\bigcap_{r\in\bR} L^2_{loc, r}(M) .
$$

\begin{remark}
    Note that, in this special case where $H_1$ generates $TM$, one can use Corollary~\ref{cor:sobol_def} for defining Sobolev spaces in terms of adapted frames.
\end{remark}

\subsubsection{Topology}
We equip $L^2_{loc,r}(M)$ with the smallest topology ensuring the continuity of 
the map
$$
f\longmapsto P(\theta f), 
\qquad L^2_{loc,r} (M)\longrightarrow L^2_{loc}(M), 
$$
 for every $P\in \Psi^r (M)$
and $\theta\in C_c^\infty (M)$.
The arguments in the proofs of Proposition \ref{prop_L2locr1stprop} and
Theorems \ref{thm:charSob}
and \ref{thm:Sobrint} routinely
imply  the following properties:
\begin{lemma}
\label{lem_topoSob}
\begin{enumerate}
    \item  The topological vector space   $L^2_{loc,r}(M)$ is Fr\'echet. 
  Fixing an atlas $\cA$ for $M$,
  a positive Rockland symbol $\widehat \cR$ on $GM$, $\nu$ denoting its homogeneous degree,
  and a sequence
  $(\theta_j)_{j\in \bN}$ 
 in $C_c^\infty(M :[0,1])$ 
       such that 
       $M=\cup_{j \in \bN}\{\theta_j=1\} $, 
       a generating family of seminorms is given by
$$
f\longmapsto \max_{j\leq N}
\Bigl\|\Op^\cA\left( (\id+\widehat \cR)^{\frac r \nu}\right)(\theta_j f)\Bigr\|_{L^2(M)},
\qquad N\in \bN.
$$
\item  
The inclusion $L^2_{loc,r_1}(M) \subset L^2_{loc,r_2}(M)$ with
$r_1\geq r_2$
is continuous.
\item Let $r,m\in \bR$. Any 
$P\in \Psi^m (M)$ maps 
$L^2_{r,loc} (M)$ to $L^2_{r-m,loc} (M)$ continuously.
\item Assume that $r\in \bN$ is a  common multiple of the dilation's weights.
    Let $(\theta_j)_{j\in \bN}$ be a sequence of functions in $C_c^\infty(M)$ valued in $[0,1]$ such that 
    $\supp\, \theta_j$ is included in an open set~$U_j$ equipped with an adapted frame $\bX_j$
    and such that $M=\cup_{j \in \bN}\{\theta_j=1\} .$ 
Then the maps 
$$
f\longmapsto \|\bX^\alpha (\theta_j f)\|_{L^2(M)},\quad j\in \bN,\quad [\alpha]\leq r,
$$
are continuous on the Fr\'echet $L^2_{loc,r}(M)$, and  provide a countable family of seminorms generating its topology.

When $H_1$ generates $TM$, we may consider any integer $r\in \bN$ with 
the family of seminorms 
$$
f\longmapsto \|X_{i_1}\ldots X_{i_{r'}} (\theta_j f)\|_{L^2(M)},\quad j\in \bN,\quad 
1\leq i_1,\ldots,i_{r'}\leq d_1, \ r'\leq r.
$$
\end{enumerate}
\end{lemma}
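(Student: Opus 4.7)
The strategy is to pivot on the characterisation of $L^2_{loc,r}(M)$ given in Theorem \ref{thm:charSob}, in particular equivalence (i) $\Leftrightarrow$ (v), which reduces membership to a single $L^2$-condition after applying the globally quantised Bessel-type operator $\Lambda_r := \Op^\cA\left((\id+\widehat \cR)^{r/\nu}\right)\in \Psi^r(M)$. The parametrix furnished by Corollary \ref{cor_parametrix} (1) provides $T_{-r}:= \Op^\cA (\tau)\in \Psi^{-r}(M)$ and $R_0\in \Psi^{-\infty}(M)$ with $T_{-r}\Lambda_r = \id +R_0$; this identity, together with Lemma \ref{lem_P=ps+smooting}, will convert any pseudodifferential seminorm back into one involving only $\Lambda_r$ and a $\Psi^0$-operator acting on $L^2_{loc}$, which is handled by Corollary \ref{cor_charPsiM}.

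For Part (1), I would check completeness first: a Cauchy sequence $(f_k)$ in $L^2_{loc,r}(M)$ is in particular Cauchy in $\cD'(M)$ (by Proposition \ref{prop_L2locr1stprop} (2) and completeness of $L^2_{loc}$), hence converges to some $f\in \cD'(M)$, and the $L^2$-limits of $\Lambda_r(\theta_j f_k)$ identify with $\Lambda_r(\theta_j f)$ by continuity of $\Lambda_r$ on $\cD'$. To see the countable family $p_{j,N}$ generates the topology, I take an arbitrary $P\in \Psi^r (M)$ and $\theta\in C_c^\infty(M)$, pick $\phi\in C_c^\infty(M)$ with $\phi\equiv 1$ on $\supp \theta$, and write $\theta f = \theta\phi T_{-r}\Lambda_r f - \theta\phi R_0 f$; applying $P$ and using that $P\theta\phi T_{-r}\in \Psi^0(M)$ and $P\theta\phi R_0\in \Psi^{-\infty}(M)$ are both properly supported and continuous on $L^2_{loc}(M)$, the required bound follows after localising $\Lambda_r f$ via a finite subcover provided by the $\theta_j$'s.

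Parts (2) and (3) are then immediate: for Part (2), continuity is inherited from the inclusion $\Psi^{r_2}(M)\subset \Psi^{r_1}(M)$ (any defining seminorm of $L^2_{loc,r_2}$ is a defining seminorm of $L^2_{loc,r_1}$). For Part (3), given $P\in \Psi^m(M)$, a defining seminorm of $L^2_{loc,r-m}(M)$ is $f\mapsto \|Q(\theta P f)\|_{L^2}$ with $Q\in \Psi^{r-m}(M)$; writing $P = P' + R'$ (Lemma \ref{lem_P=ps+smooting}) with $P'$ properly supported and using the composition Theorem \ref{thm_compPsiM}, one has $Q\theta P' \in \Psi^{r}(M)$, and a localised parametrix argument as above controls $\|Q\theta P'(\phi f)\|_{L^2}$ by a finite combination of $L^2_{loc,r}$-seminorms of $f$; the smoothing remainder is handled directly.

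Part (4) follows from Theorem \ref{thm:Sobrint} (viii): each $\bX_j^\alpha$ with $[\alpha]\leq r$ lies in $\DO^{\leq r}(U_j)\subset \Psi^r(U_j)$ (Lemma \ref{lem_exdiffPDO}), so the seminorms $f\mapsto \|\bX_j^\alpha(\theta_j f)\|_{L^2}$ are continuous by Part (3). To show they generate the topology, I would revisit the estimate at the end of the proof of Theorem \ref{thm:Sobrint}: expanding $\Op^{\bX_j,\chi}\bigl((\id+\widehat \cR_{\bX_j})^{1/2}\bigr)^*\Op^{\bX_j,\chi}\bigl((\id+\widehat \cR_{\bX_j})^{1/2}\bigr)$ via the symbolic calculus and the local parametrix of Corollary \ref{cor_parametrix} (2) expresses $\|\Lambda_r^{loc}(\theta_j f)\|_{L^2}^2$ as a finite combination of $(\bX_j^\alpha(\theta_j f),\bX_j^\beta(\theta_j' f))_{L^2}$ with $[\alpha],[\beta]\leq r$, up to a smoothing remainder handled by the $L^2_{loc}$-continuity. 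The stratified case with $H_1$ generating $TM$ is handled analogously via Corollary \ref{corlem_PulcXalphasubRie}. The main technical obstacle across all parts is the careful bookkeeping of supports and cut-offs needed to justify each composition, since neither $P$ nor $\Lambda_r$ are assumed properly supported a priori; this is why we repeatedly insert cut-offs $\theta,\phi$ and appeal to Lemma \ref{lem_P=ps+smooting}.
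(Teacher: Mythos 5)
Your proposal follows precisely the route the paper has in mind: the paper's stated proof is a one-line remark that Parts (1)--(4) follow "routinely" from Proposition \ref{prop_L2locr1stprop}, Theorem \ref{thm:charSob} and Theorem \ref{thm:Sobrint}, and those are exactly the results you are quantifying via the parametrix $T_{-r}\Lambda_r=\id+R_0$ of Corollary \ref{cor_parametrix}, the $L^2_{loc}$-continuity of $\Psi^0(M)$ (Corollary \ref{cor_charPsiM}), the decomposition of Lemma \ref{lem_P=ps+smooting}, and the composition Theorem \ref{thm_compPsiM}. Parts (2), (3) and the forward direction of (4) are fine as written. Two points deserve a closer look.

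First, a minor inaccuracy: you assert that $P\theta\phi T_{-r}$ and $P\theta\phi R_0$ are properly supported. This is not true in general, since $P$ is an arbitrary element of $\Psi^r(M)$ and need not be properly supported. Fortunately the assertion is inessential to your argument --- all you need is that these belong to $\Psi^0(M)$ and $\Psi^{-\infty}(M)$ respectively and hence act continuously on $L^2_{loc}(M)$, which does not require proper support.

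Second, and more substantively: the claim that the countable family $p_N(f)=\max_{j\le N}\|\Lambda_r(\theta_jf)\|_{L^2}$ controls every seminorm of the original topology requires, after your parametrix step, bounding both $\|\theta' P\theta\phi T_{-r}(\Lambda_r f)\|_{L^2}$ and $\|\theta' P\theta\phi R_0 f\|_{L^2}$ by $p_N(f)$. Your localisation argument handles the first term, but for the second you invoke ``$L^2_{loc}$-continuity of the smoothing operator'', which only gives a bound of the form $\|\phi'' f\|_{L^2}$. For $r\neq 0$ this is not one of the $p_N$-seminorms, and showing $\|\phi''f\|_{L^2}\lesssim p_N(f)$ circles back to the parametrix identity (which again produces an $R_0(\phi'' f)$ term). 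Some additional input is needed at this point --- a G\aa rding-type lower bound for $\Lambda_r$, an iteration tracking the distributional order, or a closed-graph argument between the two Fr\'echet topologies --- to close the loop. This is the one genuinely non-routine step hidden behind the paper's ``routinely imply'', and your sketch does not resolve it; you should flag it explicitly and supply the missing estimate.
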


\begin{corollary}
\label{corlem_topoSob}
\begin{enumerate}
\item 
The natural topology of 
$C^\infty(M)$ coincides with the one obtained as the inductive limit of 
 $$
C^\infty(M) =\bigcap_{r\in \bR} L^2_{loc, r}(M) .
$$   
\item 
For any $r\in \bR$, 
the inclusions
$C^\infty(M) \subset L^2_{loc,r}(M)$
and 
$C_c^\infty(M) \subset L^2_{loc,r}(M)$
are continuous. 
Moreover,  
    $C^\infty(M)$
    and 
    $C_c^\infty(M)$ are dense subspaces of  $L^2_{loc,r}(M)$.
    \end{enumerate}
\end{corollary}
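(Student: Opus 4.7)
The plan is to establish, in order, the continuity of the inclusions in Part (2), the identification of topology in Part (1), and finally the density statements in Part (2). The main obstacle will be the density, which requires the parametrix from Corollary \ref{cor_parametrix}.

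\textbf{Continuity.} By the continuous inclusion $L^2_{loc,r_1}(M)\subset L^2_{loc,r_2}(M)$ for $r_1\geq r_2$ in Lemma \ref{lem_topoSob}(2), it suffices to prove continuity $C^\infty(M)\hookrightarrow L^2_{loc,r'}(M)$ for a single $r'\geq r$ chosen in $M_0\bN_0$, the set of common multiples of the dilation weights. For such $r'$, Lemma \ref{lem_topoSob}(4) gives a generating family of seminorms $f\mapsto \|\bX_j^\alpha(\theta_j f)\|_{L^2(M)}$ with $[\alpha]\leq r'$. By Leibniz, these seminorms are controlled by a finite sum of sup-norms of $\bX^\beta f$ with $|\beta|\leq |\alpha|$ over $\supp\theta_j$, hence by a $C^{r'}$-seminorm of $f$ in the natural topology of $C^\infty(M)$. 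Continuity of $C_c^\infty(M)\hookrightarrow L^2_{loc,r}(M)$ is then immediate since $C_c^\infty(M)\hookrightarrow C^\infty(M)$ continuously.

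\textbf{Topology in (1).} One direction is given by the continuity just established. For the converse, I claim that each $C^k(K)$-seminorm on $C^\infty(M)$ (for $K\subset M$ compact, $k\in \bN_0$) is continuous for the $L^2_{loc,r}(M)$-topology once $r$ is large enough. Cover $K$ by finitely many adapted charts $(U_\alpha,\bX_\alpha)$, each with its positive Rockland operator $\cR_{\bX_\alpha}$ from \eqref{ex_RU}. As in the proof of Lemma \ref{lem_DXYsigma}, on each chart ordinary Euclidean partial derivatives $\partial^\beta$ of order $|\beta|\leq k$ decompose as $C^\infty$-combinations of left-invariant products $L^\gamma_{\langle\bX_\alpha\rangle}$ of homogeneous degree $[\gamma]\leq k\upsilon_n$, plus strictly lower-order terms. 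Via Theorem \ref{thm:charSob}(vi) and the group Sobolev embedding of Proposition \ref{prop_sob}(7), these derivatives are controlled in sup-norm on $K\cap U_\alpha$ by the $L^2_{loc,r}(M)$-seminorm associated with a cut-off in $C_c^\infty(U_\alpha)$, for any $r>k\upsilon_n+Q/2$.

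\textbf{Density.} I show density of $C_c^\infty(M)$; density of $C^\infty(M)$ follows \emph{a fortiori}. A standard truncation reduces the problem to $f\in L^2_{loc,r}(M)$ with compact support: fixing an exhaustion $(\eta_k)\subset C_c^\infty(M:[0,1])$ with $\eta_k=1$ on larger and larger compacta, each $L^2_{loc,r}$-seminorm $p_j$ (whose $\theta_j$ eventually lies in $\{\eta_k=1\}$) satisfies $p_j(f-\eta_k f)=0$ for $k$ large, so $\eta_k f\to f$, and a diagonal argument with smooth approximants of each $\eta_k f$ suffices. For compactly supported $f$, I use the left parametrix of Corollary \ref{cor_parametrix}: set $Q_r=\Op^\cA((\id+\widehat\cR)^{r/\nu})\in\Psi^r(M)$ and let $P_{-r}\in\Psi^{-r}(M)$ be a properly supported left parametrix with $P_{-r}Q_r=\id+R$, $R\in\Psi^{-\infty}(M)$ properly supported (Lemma \ref{lem_P=ps+smooting}). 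Then $Q_rf\in L^2(M)$ (since $f$ has compact support), approximable in $L^2(M)$ by $h_\ell\in C_c^\infty(M)$. Define $f_\ell:=P_{-r}h_\ell-Rf$; by proper support and the smoothing of $R$, $f_\ell\in C_c^\infty(M)$. The identity $f=P_{-r}Q_rf-Rf$ yields $f-f_\ell=P_{-r}(Q_rf-h_\ell)$, so for each seminorm $p_j$,
\[
p_j(f-f_\ell)=\|Q_r\theta_j P_{-r}(Q_rf-h_\ell)\|_{L^2(M)}\leq C\|Q_rf-h_\ell\|_{L^2(M)}\longrightarrow 0,
\]
where the operator $Q_r\theta_jP_{-r}\in\Psi^0(M)$ is bounded on $L^2(M)$ because, by proper support of all three factors, it has compactly supported integral kernel.

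\textbf{Main obstacle.} The density step is the hardest: the key device is the use of the left parametrix of Corollary \ref{cor_parametrix} to express $f-f_\ell$ as $P_{-r}$ applied to an $L^2$-small quantity, bypassing the subtlety that direct mollification by symbols $\chi(\ell^{-1}\widehat\cR)$ would only yield convergence in the weaker space $L^2_{loc,r-\epsilon}(M)$, not in $L^2_{loc,r}(M)$ itself.
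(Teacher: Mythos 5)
Your proof is correct, and it takes a genuinely different route from the paper's for the density step. The paper's proof is very terse: it invokes Lemma \ref{lem_topoSob}(4) to settle Part (2) for $r\in M_0\bN_0$, then claims the general case follows from the continuous inclusions in Lemma \ref{lem_topoSob}(2). The intended density argument for integer $r$ is presumably a local mollification on each chart using the seminorms $\|\bX^\alpha(\theta_j f)\|_{L^2}$, and the passage to arbitrary $r$ via sandwich $L^2_{loc,r_1}\subset L^2_{loc,r}\subset L^2_{loc,r_2}$ is left implicit (and, as you note, density in $L^2_{loc,r_1}$ does not automatically propagate to the larger space $L^2_{loc,r}$ without a further argument). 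Your approach avoids both issues: the truncation by $\eta_k$ reduces to compactly supported $f$, and the parametrix $P_{-r}Q_r=\id+R$ from Corollary \ref{cor_parametrix} converts the problem into $L^2(M)$-approximation of $Q_rf$, with the estimate $p_j(f-f_\ell)=\|Q_r\theta_jP_{-r}(Q_rf-h_\ell)\|_{L^2}\lesssim\|Q_rf-h_\ell\|_{L^2}$ handled by the $L^2$-boundedness of the order-zero operator $Q_r\theta_jP_{-r}$ (whose compactly supported kernel, due to $\theta_j$, reduces this to Corollary \ref{cor_charPsiM}). This is cleaner, works uniformly for all $r\in\bR$, and makes explicit the proper-support bookkeeping (Lemma \ref{lem_properlysuppPDOacts}) that guarantees $f_\ell\in C_c^\infty(M)$. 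For Part (1) your argument is essentially what the paper intends, but you make explicit the group Sobolev embedding (Proposition \ref{prop_sob}(7)) needed to pass from the $L^2$-seminorms of Lemma \ref{lem_topoSob}(4) to the $C^k$-seminorms of $C^\infty(M)$.
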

\begin{proof}
Let $M_0$ be the least common multiple of the dilation's weights.
Part (2) when $r\in M_0\bN_0 $ follows readily from Lemma \ref{lem_topoSob} (4). This also shows that the topology of $C^\infty(M)$ coincides with the inductive limit of 
$\bigcap_{r\in M_0 \bN} L^2_{loc, r}(M) $.
    This  together with the continuous inclusions in Lemma \ref{lem_topoSob} (2) imply Part (1) with the intersection over $r\in \bR$ and Part (2) for any $r\in \bR$.
\end{proof}

\subsubsection{Duality}
 If  $f\in L^2_{loc,r'}(M)$ for some $r'\in \bR$, then it is a distribution, meaning that  the map $\varphi \mapsto (f,\varphi)_{\cD'\times C_c^\infty}$ formally given by the distributional duality
$$
(f,\varphi)_{\cD'\times C_c^\infty}=\int_M f(x)\ \varphi(x) \ dx, \qquad \varphi\in C_c^\infty(M),
$$
is a well defined linear functional  $C_c^\infty (M)\to \bC$ that is continuous.
The following statement allows us to identify the topological dual of $L^2_{loc,r}(M)$ with $L^2_{loc,-r}(M)$:
$$
\left(L^2_{loc,r}(M)\right)' \cong L^2_{loc,-r}(M),
$$
via the natural extension of the distributional duality.

\begin{lemma}
\label{lem_dualitySob}
Let $r\in \bR$.
\begin{enumerate}
    \item If  $f\in L^2_{loc,- r}(M)$, then the linear functional $\varphi \mapsto (f,\varphi)_{\cD'\times C_c^\infty}$ extends uniquely to a continuous linear functional  on $ L^2_{loc,r}(M)$.
    \item Conversely, given a continuous linear functional $\ell$ on $ L^2_{loc,r}(M)$, there exists a unique distribution $f\in L^2_{loc,- r}(M)$ such that 
    $$
    \forall \varphi \in C_c^\infty (M)\qquad 
    \ell (\varphi) = (f,\varphi)_{\cD'\times C_c^\infty}.
    $$
\end{enumerate}    
\end{lemma}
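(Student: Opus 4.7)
The plan is to reduce both parts of the lemma to the $L^{2}$-duality by invoking the parametrix furnished by Corollary~\ref{cor_parametrix} together with the density of $C_c^\infty(M)$ in $L^{2}_{loc,r}(M)$ from Corollary~\ref{corlem_topoSob}. Fix an atlas $\cA$ for $M$ and a positive Rockland symbol $\widehat\cR$ on $GM$ of homogeneous degree $\nu$, and set
$$Q_{r} := \Op^{\cA}\!\bigl((\id+\widehat\cR)^{r/\nu}\bigr) \in \Psi^{r}(M),$$
which is properly supported by Example~\ref{ex:prop_supp}(4). By Corollary~\ref{cor_parametrix}(1), there exist $P_{-r} = \Op^{\cA}(\tau) \in \Psi^{-r}(M)$ and $R \in \Psi^{-\infty}(M)$, both properly supported, such that $P_{-r}Q_{r} = \id + R$. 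By Lemma~\ref{lem_adjPsiMps}(1), the formal adjoints $P_{-r}^{*} \in \Psi^{-r}(M)$ and $R^{*} \in \Psi^{-\infty}(M)$ are also properly supported.

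For Part~(1), given $f \in L^{2}_{loc,-r}(M)$, the idea is to apply the parametrix identity to any test function $\varphi \in C_c^\infty(M)$: using Lemma~\ref{lem_adjPsi} to transfer adjoints,
$$(f,\varphi) = (f, P_{-r}Q_{r}\varphi) - (f, R\varphi) = (P_{-r}^{*} f, Q_{r}\varphi) - (R^{*} f, \varphi).$$
By Proposition~\ref{prop_L2locr1stprop}(3)--(4), $P_{-r}^{*} f \in L^{2}_{loc}(M)$ and $R^{*} f \in C^\infty(M)$. For $\varphi$ supported in a fixed compact $K$, proper support of $Q_{r}$ forces $\supp(Q_{r}\varphi)$ into a compact $K'=K'(K)$; choosing $\theta \in C_c^\infty(M)$ with $\theta \equiv 1$ on $K$, we have $Q_{r}\varphi = Q_{r}(\theta\varphi)$, so
$$\bigl|(P_{-r}^{*} f, Q_{r}\varphi)_{L^{2}}\bigr| \leq \|P_{-r}^{*} f\|_{L^{2}(K')}\,\|Q_{r}(\theta\varphi)\|_{L^{2}(M)},$$
and the last factor is one of the generating seminorms of $L^{2}_{loc,r}(M)$ by Lemma~\ref{lem_topoSob}(1). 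The smoothing term $(R^{*} f, \varphi) = (f, R\varphi)$ is controlled by combining the continuity of $R: L^{2}_{loc,r}(M) \to C^{\infty}(M)$ (immediate from the smoothing property and Proposition~\ref{prop_L2locr1stprop}(4)) with the fact that $R\varphi \in C_c^\infty(M)$ has support in a fixed compact $\widetilde K = \widetilde K(K)$, so that $|(f, R\varphi)|$ is controlled by a $C^\infty$-seminorm of $R\varphi$ on $\widetilde K$, itself dominated by a seminorm of $\varphi$ in $L^{2}_{loc,r}(M)$. Combining these bounds and invoking Corollary~\ref{corlem_topoSob}(2), the functional extends uniquely by density.

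For Part~(2), let $\ell$ be a continuous linear functional on $L^{2}_{loc,r}(M)$. Since $C_c^\infty(M) \hookrightarrow L^{2}_{loc,r}(M)$ is continuous (Corollary~\ref{corlem_topoSob}(2)), the restriction $\ell|_{C_c^\infty(M)}$ is continuous on $C_c^\infty(M)$ with its LF-topology and hence defines a distribution $f \in \cD'(M)$, uniquely determined by density. It remains to show $f \in L^{2}_{loc,-r}(M)$. Take $P \in \Psi^{-r}(M)$ and $\theta \in C_c^\infty(M)$; by Lemma~\ref{lem_P=ps+smooting} we may assume $P$ properly supported (absorbing a smoothing remainder that contributes a $C^\infty$-term to $P(\theta f)$). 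For $\psi \in C_c^\infty(M)$,
$$(P(\theta f), \psi) = (f, \theta P^{*}\psi) = \ell(\theta P^{*}\psi),$$
where $\theta P^{*}\psi \in C_c^\infty(M)$ has compact support depending only on $\theta$, $P^{*}$, and $\supp\psi$. Continuity of $\ell$ bounds this by a finite combination of seminorms $\|Q_{r}(\theta_{j}\theta P^{*}\psi)\|_{L^{2}(M)}$; since $Q_{r}\,\theta_{j}\theta\,P^{*}$ lies in $\Psi^{0}(M)$ by the composition theorem (Theorem~\ref{thm_compPsiM}) and is therefore $L^{2}_{loc}$-bounded by Theorem~\ref{prop_L2bddX}, we obtain $|(P(\theta f), \psi)| \leq C\|\psi\|_{L^{2}(M)}$ for $\psi$ supported in any fixed compact. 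Riesz representation over each compact then gives $P(\theta f) \in L^{2}_{loc}(M)$, so $f \in L^{2}_{loc,-r}(M)$.

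The main technical delicacy will be the careful bookkeeping of supports from the proper-support hypotheses---needed to ensure that every pairing, composition, and adjoint manipulation is legitimate---and verifying that the bounds obtained genuinely correspond to continuous seminorms of $L^{2}_{loc,r}(M)$ as characterised in Lemma~\ref{lem_topoSob}(1); this is handled systematically by localising through the atlas $\cA$ and its partition of unity.
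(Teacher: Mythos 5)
Your proof is correct in substance. For Part~(2) it follows essentially the same route as the paper: restrict $\ell$ to $C_c^\infty(M)$ to produce a distribution $f$, then pair $P(\theta f)$ against test functions, transfer the operator to the other side, use continuity of $\ell$ together with the $L^2$-boundedness of the resulting order-$0$ properly supported operator, and conclude via duality that $P(\theta f)\in L^2_{loc}(M)$. The paper works with the specific global quantization $\Op^\cA(\sigma)$, $\sigma\in S^{-r}(\Gh M)$, and appeals to the equivalence in Theorem~\ref{thm:charSob}, whereas you work directly with general properly supported $P\in\Psi^{-r}(M)$; the two are interchangeable. For the $L^2$-bound of the order-$0$ composition, the correct reference for a general pseudodifferential operator (not only a local quantization) is Corollary~\ref{cor_charPsiM} rather than Theorem~\ref{prop_L2bddX} directly, though the latter is the engine underlying it.

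For Part~(1), you give a full parametrix argument establishing the continuity of $\varphi\mapsto(f,\varphi)_{\cD'\times C_c^\infty}$ for the $L^2_{loc,r}$-topology, whereas the paper dispatches this part with a one-line appeal to Corollary~\ref{corlem_topoSob}~(2). That corollary only supplies the density of $C_c^\infty(M)$ in $L^2_{loc,r}(M)$ (and the continuity of the inclusion); the continuity estimate on the functional is not literally contained in it. Your explicit argument via $P_{-r}Q_r=\id+R$ is a legitimate and arguably more complete way to supply it, and it dovetails with the machinery used for Part~(2).

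Two small points of rigour worth tidying. First, the duality $(\cdot,\cdot)_{\cD'\times C_c^\infty}$ in the paper is bilinear (no conjugate), so transferring an operator to the test-function side produces the transpose $\overline{P^*}$, not the adjoint $P^*$; the paper is careful about this (see the factor $\overline{\Op^\cA(\sigma)^*}$ in its proof of Part~(2)), and you should write $\overline{P_{-r}^*}$, $\overline{R^*}$, $\overline{P^*}$ accordingly. Since $\overline{P^*}$ has the same order and mapping properties as $P^*$, this does not affect the substance. Second, $\|Q_r(\theta\varphi)\|_{L^2}$ for your chosen cutoff $\theta$ is a continuous seminorm on $L^2_{loc,r}(M)$ but is not literally one of the generating seminorms of Lemma~\ref{lem_topoSob}~(1), which use the fixed exhaustion $(\theta_j)_j$; one extra line expressing $\theta$ via finitely many $\theta_j$ (possible by compactness of $\supp\theta$) closes this.
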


\begin{proof}
Part (1)  follows from Corollary \ref{corlem_topoSob} (2).
Conversely, let  $\ell$ be 
a continuous linear functional  on $ L^2_{loc,r}(M)$.
If $r'\in \bN$ is a multiple of the dilation's weights such that $r'\geq r$, then $\ell$ restrict to a continuous linear functional  on $ L^2_{loc,r'}(M)$ by Lemma \ref{lem_topoSob} (2), 
and the seminorms in Lemma \ref{lem_topoSob} (3) shows that $\ell$ also restrict to a continuous linear functional on $C_c^\infty (M)$. 
Therefore, there exists $f\in \cD'(M)$ such that $\ell(\varphi)=(f,\varphi)_{\cD'\times C_c^\infty}$
for any $\varphi\in C_c^\infty (M)$.
Let $\theta\in C_c^\infty (M)$.
We fix an atlas $\cA$.
We have
$\theta f \in \sE'(M)$, 
so $\Op^\cA(\sigma)\theta f\in \sE'(M)$ 
for any $\sigma\in S^{-r} (\Gh M)$, 
with
$$
(\Op^\cA(\sigma)\theta f , \varphi)_{\cD'\times C_c^\infty}
=( f,\theta \overline{\Op^\cA(\sigma)^*} \varphi)_{\cD'\times C_c^\infty} =\ell\left(\theta \overline{\Op^\cA(\sigma)^*}\varphi\right) .
$$
We now use the seminorms for $L^2_{loc,r}(M)$ described in Lemma \ref{lem_topoSob} (1) with its notation.
The continuity of $\ell:L^2_{loc,r}(M)\to \bC$ imply the existence of $C>0$ and $N\in \bN$ such that  
\begin{align*}
 \left|
\ell\left(\theta \overline{\Op^\cA(\sigma)^*}\theta' \varphi\right)\right|
&\leq C 
\max_{j\leq N}
\|P_j (\varphi)\|_{L^2(M)},
\end{align*}
where
$$
P_j:=
\theta_j\Op^\cA\left( (\id+\widehat \cR)^{\frac r \nu}\right) \theta \overline{\Op^\cA(\sigma)^*}.
$$
By the properties  of the global quantization, especially for composition and adjoint (see Corollary~\ref{cor:OpA}), 
$P_j \in \Psi^0(M)$
is a properly supported pseudodifferential operator of order 0.
Therefore, 
$$
\max_{j\leq N}
\|P_j (\varphi)\|_{L^2(M)}
\leq C'
\|\varphi\|_{L^2(M)},
$$
for some $C'>0$ independent of $\varphi$.
This implies that $\Op^\cA(\sigma)\theta f \in L^2(M)$.
By Theorem \ref{thm:charSob}, this shows that $f\in L^2_{loc,-r}(M)$. This concludes the proof of Part (2).
\end{proof}

\subsection{The classical $H$-calculus}\label{subsec:groupoid}
In this section, we adapt to the setting of  the filtered manifold $M$ the traditional lines of approach regarding  polyhomogneous pseudodifferential operators (Section~\ref{sec:polyhom}) and classical calculus (Section~\ref{sec:cla_cal}). We  discuss the link with van Erp and Yuncken calculus~\cite{VeY1,VeY2}
in Section~\ref{sec:groupoid}. 

\subsubsection{Polyhomogeneous operators}\label{sec:polyhom}

\begin{definition}
\label{def_PsiphM}
A pseudodifferential $P\in \Psi^m(M)$ of order $m\in \bR$ on $M$ is \emph{polyhomogeneous} when the  symbol in Part (1) of Definition~\ref{def:Q0} is polyhomogeneous in the sense of Definition~\ref{def:SmphGhM}, 
that is, 
for any adapted frame $\bX$ on an open set $U\subset M$, for any $\bX$-cut-off~$\chi$, 
there exists $\sigma\in S_{ph}^m(\widehat{G}M|_U)$ such that 
$\varphi\, P \,  \psi=\varphi\, \Op^{\bX, \chi}(\sigma) \, \psi$
         for any  
         $\varphi, \psi\in  C_c^\infty (U)$.
  \end{definition}
  
         We denote by $\Psi^m_{ph}(M)$ the space of polyhomogneous pseudodifferential operators of order $m\in \bR$.

\begin{ex}
    \begin{enumerate}
        \item  Any differential operator is polyhomogeneous with a finite asymptotic, see the proof of Lemma \ref{lem_exdiffPDO}: 
  $\DO^{\leq N} \subset \Psi^N_{ph}(M)$.
  \item 
Let $m\in \bR$. 
Let  $(\bX,U)$ be an adapted frame with $\mathbb{X}$-cut-off $\chi$.
If  $\sigma\in S^m_{ph}(\widehat{G}M|_U)$, then $\Op^{\mathbb{X}, \chi}(\sigma)\in \Psi^m_{ph}(U)$ by 
the proof of Lemma \ref{lem:local-to-global}
while 
$\psi\in C_c^\infty (U)$, 
$ \Op^{\bX, \chi}(\sigma)\,  \psi \in \Psi^m_{ph}(M)$ 
 for any $\psi\in C_c^\infty (U)$ by the proof of Corollary \ref{corlem:local-to-global}.
     \end{enumerate}
\end{ex}

The characterisations via atlases of  polyhomogeneous pseudodifferential operators together with the notion of homogeneous principal symbol
are obtained by modifying  the ones in Proposition \ref{caracterization_psi(M)}
and Corollary \ref{corcorthm:princ_symbol}
by considering polyhomogeneous symbols:
 
\begin{proposition}
\label{caracterization_psiph(M)}
Let $P$ be a continuous linear operator  $C_c^\infty(M)\to \mathcal{D}'(M)$.
The following are equivalent. 
\begin{itemize}
    \item[(i)] $P\in \Psi^m_{ph}(M)$.
    \item[(ii)] For one (and then any) atlas $\cA= (U_\alpha, \bX_\alpha, \chi_\alpha,\psi_\alpha)_{\alpha\in A}$ for the filtered manifold $M$, there exists  a family of polyhomogeneous symbols $(\sigma_\alpha)_{\alpha\in A}$ with  $\sigma_\alpha\in S^m_{ph}(\Gh M|_{U_\alpha})$ and a smoothing operator~$R\in\Psi^{-\infty}(M)$  such that 
    $$
    P=R+ \sum_{\alpha}\psi_\alpha\, \Op^{\bX_\alpha, \chi_\alpha}(\sigma_\alpha) \, \psi_\alpha
    $$
    \item[(iii)] For one (and then any) atlas $\cA$ on $M$, there exists  a polyhomogeneous symbol $\sigma\in S^m_{ph}(\Gh M)$ and a smoothing operator~$R\in\Psi^{-\infty}(M)$  such that 
    $P=R+ \Op^{\cA}(\sigma).$
\end{itemize}
Moreover, in this case, there exists a unique symbol $\sigma_m\in \dot S^m (\Gh M)$ such that $\princ_m (P) = \sigma_m \phi (\widehat \cR)$
for some positive Rockland symbol $\widehat\cR$ on $\Gh M$ and
for  some function $\phi\in C^\infty(M\times \bR)$ 
 satisfying $\phi(x,\lambda)=0$ for $\lambda$ in a neighbourhood of $(-\infty,0)$
and $\phi(x,\lambda)=1$ on a neighbourhood of $+\infty$, 
these two $\lambda$-neighbourhoods depending locally uniformly on $x\in M$.
\end{proposition}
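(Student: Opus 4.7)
The plan is to establish the equivalences (i)$\Leftrightarrow$(ii)$\Leftrightarrow$(iii) by closely tracking the proofs of Proposition \ref{caracterization_psi(M)} and Corollary \ref{corcorthm:princ_symbol}, ensuring polyhomogeneity is preserved at every step, and then to derive the uniqueness of $\sigma_m$ from the uniqueness of polyhomogeneous expansions on each fiber (Section \ref{subsubsec_poly_exp}) combined with the fact that $\princ_m(P)$ is well-defined modulo $S^{m-1}(\Gh M)$ (Theorem \ref{thm:princ_symbol}).

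For the implication (i)$\Rightarrow$(ii), given $P\in\Psi^m_{ph}(M)$, I would apply the decomposition from the proof of Proposition \ref{caracterization_psi(M)} to produce symbols $\sigma_\alpha\in S^m(\Gh M|_{U_\alpha})$. The key observation is that these symbols can be chosen polyhomogeneous: Definition \ref{def_PsiphM} provides local polyhomogeneous symbols for each $(\bX_\alpha,U_\alpha,\chi_\alpha)$, and the cutoff localisation $\sigma_{\alpha'}':=\psi_{\alpha'}(\sigma_{\alpha'}\diamond_{\bX_{\alpha'},\chi_{\alpha'}}\psi_\alpha\varphi_{\alpha'})$ remains polyhomogeneous because: multiplication by a $C^\infty(M)$ function preserves polyhomogeneity (cutoffs give symbols in $S^0_{ph}(\Gh M)$), and the local composition $\diamond$ sends $S^{m_1}_{ph}\times S^{m_2}_{ph}$ to $S^{m_1+m_2}_{ph}$ modulo smoothing symbols, which is encoded in the algebraic properties (a) and (b) at the end of Section \ref{subsubsec_SmphGhM} together with Proposition \ref{prop_comploc}. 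The converse (ii)$\Rightarrow$(i) is essentially Lemma \ref{lem:local-to-global} applied fiber by fiber, combined with the invariance Propositions \ref{prop_Qindepchi} and \ref{prop_indepframe}, both of which preserve polyhomogeneity (the remainders they produce are themselves lower-order polyhomogeneous).

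For (ii)$\Leftrightarrow$(iii), direction $\Leftarrow$ is immediate by expanding $\Op^\cA(\sigma)=\sum_\alpha\psi_\alpha\Op^{\bX_\alpha,\chi_\alpha}(\sigma)\psi_\alpha$. For $\Rightarrow$, I would follow the iterative construction in Corollary \ref{corcorthm:princ_symbol}: at each step extract the homogeneous component of the principal symbol of the successive remainders and Borel-sum the resulting homogeneous pieces into a single $\sigma\in S^m_{ph}(\Gh M)$. The essential point is that the principal symbol of an element of $\Psi^{m-j}_{ph}(M)$ is represented (via Definition \ref{def_polyhomexp}) by $\sigma_{m-j}\psi(\widehat\cR)$ with $\sigma_{m-j}\in\dot S^{m-j}(\Gh M)$; summing these via Borel and invoking Proposition \ref{prop_homsigmapsi} yields a genuine polyhomogeneous symbol.

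The main (but modest) obstacle is the uniqueness of $\sigma_m$. I would argue as follows: if $\sigma,\sigma'\in S^m_{ph}(\Gh M)$ both represent $\princ_m(P)$, with homogeneous leading terms $\sigma_m,\sigma'_m\in\dot S^m(\Gh M)$, then $\sigma-\sigma'\in S^{m-1}(\Gh M)$ by Theorem \ref{thm:princ_symbol}, hence $(\sigma_m-\sigma'_m)\psi(\widehat\cR)\in S^{m-1}(\Gh M)$ by polyhomogeneity. Restricting pointwise to each fiber $\Gh_x M$ and invoking the injectivity of the map $\sigma\mapsto\sigma\psi(\widehat\cR)$ from $\dot S^m(\Gh_x M)$ to $S^m(\Gh_x M)/S^{m-1}(\Gh_x M)$ (Proposition \ref{prop_dotS0Gh} and the uniqueness of homogeneous expansions noted after Definition \ref{def_asympexph}) gives $\sigma_m=\sigma'_m$. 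Independence of $\sigma_m$ from the choice of $\phi$ and $\cR$ is then immediate from Lemma \ref{lem_indeppsi0R}.
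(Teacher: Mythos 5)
Your proposal tracks the paper's own proof essentially step by step. For (i)$\Leftrightarrow$(ii) the paper simply says ``we readily modify the proof of Proposition~\ref{caracterization_psi(M)}'', and you fill in exactly what must be checked: that the localisation and composition operations used in that proof preserve polyhomogeneity, which follows from Corollary~\ref{cor_GM_Deltaq}, Propositions~\ref{prop_Qindepchi}, \ref{prop_indepframe}, \ref{prop_comploc} together with the fact that $\Delta_\bX^\alpha$ maps $\dot S^{m}$ to $\dot S^{m-[\alpha]}$. For (i)$\Leftrightarrow$(iii) and the uniqueness claim, the paper interposes the definition of $\princ_{m,h}$ between the two halves of the proof and relies on Example~\ref{ex_pincOpAsigma=sigmam} plus the fiberwise uniqueness of polyhomogeneous expansions; your iterative Borel-sum construction together with the injectivity in Proposition~\ref{prop_dotS0Gh} is the same argument without naming $\princ_{m,h}$. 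Independence from the choice of $\phi$ and $\cR$ via Lemma~\ref{lem_indeppsi0R} is likewise what the paper uses. No gaps.
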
  

Recall that by Corollary \ref{corprop:fct_de_R}, 
 the symbol
$ \phi(\widehat \cR) = \{\phi (x,\pi(\cR_x)):(x,\pi)\in \widehat G M\}$ is 
   in $S^{0}(\widehat G M)$
   while $(1-\phi)(\widehat \cR) \in S^{-\infty}(\widehat G M).$

 \begin{proof}[Beginning of the proof of Proposition~\ref{caracterization_psiph(M)}]
 For the equivalence (i)$\Longleftrightarrow$ (ii), we readily modify the proof of Proposition \ref{caracterization_psi(M)}.
 \end{proof}
 
For the equivalence (i) $\Longleftrightarrow$ (iii), 
the modification in the proof of Corollary \ref{corcorthm:princ_symbol} requires to refine the notion of principal symbols which we start here.
By Definitions \ref{def_PsiphM} and \ref{def:SymbmV0}, for open subset $V_0\subset M$ small enough, we have for some $\sigma\in S^m_{ph}(\Gh M)$:
\begin{align*}
  {\rm Symb}_{m,V_0} 
  &=\sigma |_{V_0} \ \mbox{mod} \ S^{m-1}(\Gh M|_{V_0}) \\
  &=\sigma_{m,V_0} \phi(\widehat \cR) |_{V_0} \ \mbox{mod} \ S^{m-1}(\Gh M|_{V_0}) 
\end{align*}
where $\sigma_{m,V_0}$ is the first term in the expansion of $\sigma$ with $\phi,\widehat \cR$ as in the statement. Moreover, 
by the uniqueness of a polyhomogeneous asymptotic expansion (see Section \ref{subsubsec_SmphGhM}), $\sigma_{m,V_0}$ is uniquely determined on $V_0$.
Proceeding as in  the first part of the proof of Theorem \ref{thm:princ_symbol}, this allows us to construct $\sigma_m \in \dot S^m (\Gh M)$ such that $\sigma_m|_{V_0} = \sigma_{m,V_0}|_{V_0}.$

\begin{definition}
    For $P\in \Psi^m_{ph}(M)$, the unique symbol $\sigma_m\in \dot S^m(\Gh M)$ as in (the above proof of) Proposition~\ref{caracterization_psiph(M)} is denoted by 
  $$
\sigma_{m} =: \princ_{m,h} (P).
$$  
It is called the homogeneous principal symbol of $P$.
\end{definition}

\begin{ex} 
\label{ex_pincOpAsigma=sigmam}
If $\cA$ is an atlas and if $\sigma\in S^m_{ph}(\Gh M)$ with $\sigma\sim_h \sum_{j\geq 0} \sigma_{m-j}$, then 
$$
\princ_{m,h}\left(\Op^\cA(\sigma) \right)= \sigma_m .
$$
\end{ex}

From Theorem \ref{thm:princ_symbol} and its proof
together with Example \ref{ex_pincOpAsigma=sigmam},
we obtain readily that  the map 
$$
\princ_{m,h}:\Psi^m_{ph}(M)\to \dot S^m(\Gh M)
$$
is surjective, with kernel 
 $ \Psi^{m-1}_{ph}(M)$.
 Furthermore, 
$$
\forall P\in \Psi_{ph}(M),
\qquad
\princ_{m,h}(P^*)=\left(\princ_{m,h}(P)\right)^*,
$$
and for any $P_1\in \Psi^{m_1}_{ph}(M),\ P_2\in \Psi^{m_2}_{ph}(M)$ one of them being properly supported, 
    $$
 P_1P_2 \in \Psi^{m_1+m_2}_{cl}(M)
\quad\mbox{with}\quad
\princ_{m,h}(P_1P_2) = \princ_{m,h}(P_1)
\princ_{m,h}(P_2).
    $$
When $m=N\in \bN_0$, 
    the  restriction of $\princ_{m,h}$ to the space $\DO^{\leq N}$ of differential operators of homogeneous order at most $N$ coincides with the map also denoted by $\princ_N$ and  defined in Section \ref{subsec:diff_op} and in Theorem \ref{thm:princ_symbol}.

 \begin{proof}[End of the proof of  Proposition~\ref{caracterization_psiph(M)}]
From  Example \ref{ex_pincOpAsigma=sigmam} and the properties of $\princ_{m,h}$, 
 we obtain the equivalence (i)$\Longleftrightarrow$ (iii).
 \end{proof}
 
 We also obtain  the uniqueness of the polyhomogeneous expansion:
    \begin{corollary}
        \label{cor:caracterization_psiph(M)}
Let $P\in \Psi^m_{ph}(M)$.
We fix an atlas $\cA$, a 
positive Rockland symbol $\widehat\cR$ on $\Gh M$ and a  function $\phi\in C^\infty(M\times \bR)$ 
 as in Proposition \ref{caracterization_psiph(M)}. 
 Then the polyhomogeneous expansion of  $\sigma\in S^m_{ph}(\Gh M)$ in Proposition \ref{caracterization_psiph(M)} (3) is unique.
    \end{corollary}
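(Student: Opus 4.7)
The plan is to reduce the uniqueness on $M$ to the already-established uniqueness of polyhomogeneous expansions at each fiber. Suppose two polyhomogeneous symbols $\sigma^{(1)}, \sigma^{(2)} \in S^m_{ph}(\Gh M)$, with respective polyhomogeneous expansions $\sigma^{(i)} \sim_h \sum_{j\geq 0} \sigma^{(i)}_{m-j}$, and two smoothing operators $R_1, R_2 \in \Psi^{-\infty}(M)$ satisfy
\[
P = R_1 + \Op^{\cA}(\sigma^{(1)}) = R_2 + \Op^{\cA}(\sigma^{(2)}).
\]
Subtracting these two equalities gives $\Op^{\cA}(\sigma^{(1)} - \sigma^{(2)}) = R_2 - R_1 \in \Psi^{-\infty}(M)$, and Lemma~\ref{lem_OpA} (2) yields $\sigma^{(1)} - \sigma^{(2)} \in S^{-\infty}(\Gh M)$.

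The key step is then to invoke the uniqueness of polyhomogeneous expansions fiberwise. For each $x \in M$, the symbol $(\sigma^{(1)} - \sigma^{(2)})(x, \cdot) \in S^{-\infty}(\Gh_x M)$ is simultaneously polyhomogeneous with expansion $\sum_j (\sigma^{(1)}_{m-j} - \sigma^{(2)}_{m-j})(x, \cdot)$ and smoothing (so it admits the trivial polyhomogeneous expansion with all terms equal to zero). As noted in Section~\ref{subsubsec_poly_exp}, the polyhomogeneous expansion on a graded nilpotent Lie group is unique, as an immediate consequence of the injectivity of the maps $\sigma \mapsto \sigma \, \psi(\widehat{\cR})$ in Proposition~\ref{prop_dotS0Gh}. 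Applying this uniqueness at each point $x$ forces $\sigma^{(1)}_{m-j}(x, \cdot) = \sigma^{(2)}_{m-j}(x, \cdot)$ for every $j \geq 0$. Since this holds at every $x \in M$, we conclude $\sigma^{(1)}_{m-j} = \sigma^{(2)}_{m-j}$ as elements of $\dot S^{m-j}(\Gh M)$ for all $j$, which is the desired uniqueness of the expansion.

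No serious obstacle is expected here: the statement is essentially a direct corollary, with Lemma~\ref{lem_OpA} (2) transferring the uniqueness modulo $\Psi^{-\infty}(M)$ on the operator side to uniqueness modulo $S^{-\infty}(\Gh M)$ on the symbol side, after which the fiberwise uniqueness of polyhomogeneous expansions closes the argument. The only delicate point is to recognize that the symbol classes $\dot S^{m-j}(\Gh M)$ are defined intrinsically, so the equality $\sigma^{(1)}_{m-j} = \sigma^{(2)}_{m-j}$ fiberwise indeed propagates to equality as global sections; but this is built into the very definitions recalled in Section~\ref{sebsec:poluhom}.
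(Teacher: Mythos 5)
Your argument is correct, but it takes a genuinely different route from the paper's. The paper proves the corollary as the tail end of the proof of Proposition~\ref{caracterization_psiph(M)}: it constructs the homogeneous terms $\sigma_{m-j}$ inductively via the invariants $\princ_{m-j,h}(P_j)$, where each $P_j$ is built from $P$ alone, so the whole sequence is canonically determined by $P$; specializing to a smoothing $P$ then gives that every $\sigma_{m-j}$ vanishes, which is the uniqueness. Your route is more elementary: take the difference of two candidate symbols, use Lemma~\ref{lem_OpA} to conclude $\sigma^{(1)}-\sigma^{(2)}\in S^{-\infty}(\Gh M)$, and then invoke the uniqueness of a polyhomogeneous expansion of a \emph{fixed} symbol (already recorded in Section~\ref{subsubsec_SmphGhM}, reducing to the group-level Proposition~\ref{prop_dotS0Gh}) to force all $\sigma^{(1)}_{m-j}-\sigma^{(2)}_{m-j}$ to vanish. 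The paper's argument buys more — it shows the expansion coincides with the canonically constructed one and tightly integrates with $\princ_{m-j,h}$, which is needed elsewhere — while yours is shorter and relies only on previously stated facts.

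One small precision: you cite Lemma~\ref{lem_OpA}~(2), but that part concerns the equality $\Op^\cA(\sigma)=\Op^\cA(\sigma')$, whereas in your situation $\Op^\cA(\sigma^{(1)}-\sigma^{(2)})=R_2-R_1$ is smoothing but generally nonzero. What you actually need is Lemma~\ref{lem_OpA}~(1) with $m=-\infty$: an operator $\Op^\cA(\tau)$ in $\Psi^{-\infty}(M)$ forces $\tau\in S^{-\infty}(\Gh M)$. The conclusion is the same, but the reference should be to part~(1). Also, the detour through fiberwise uniqueness at each $x\in M$ is a slight redundancy, since Section~\ref{subsubsec_SmphGhM} already states uniqueness directly at the level of $\Gh M$; you can quote that sentence verbatim rather than re-deriving it pointwise.
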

    \begin{proof}
        By  Example \ref{ex_pincOpAsigma=sigmam},
        $\princ_{m,h}( P -\Op^\cA(\sigma_m \phi(\widehat \cR)))=0$, so $P_1=P -\Op^\cA(\sigma_m \phi(\widehat \cR) )\in 
    \Psi^{m-1}_{ph}(M)$ and we set $\sigma_{m-1}=\princ_{m-1,h}(P_1)$.
    Inductively, we construct 
    $$
    \sigma_m, \ldots, \sigma_{m-N} \  \mbox{in} 
    \ \dot S^m(\Gh M), \ldots, \dot S^{m-N}(\Gh M),
    $$
    such that
    $$
    P-\Op^\cA\left( \left(
    \sigma_m +\ldots + \sigma_{m-N}\right) \phi(\widehat \cR))\right)
    \in \Psi^{m-(N+1)}_{ph}(M).
    $$
   This concludes the proof of Proposition \ref{caracterization_psiph(M)}. 
   If $P$ is smoothing, then every $\sigma_{m-j}$ is zero, and this shows the uniqueness of the polyhomogeneous expansion in Proposition \ref{caracterization_psiph(M)} (3).
    \end{proof}

 We conclude this paragraph with the notion of homogeneous elliptic symbol.  
 
\begin{definition}
    \begin{enumerate}
        \item A $m$-homogeneous symbol $\sigma$ is \emph{elliptic} when 
for one (and then any) positive  Rockland symbol $\widehat \cR$ and
    for any $\gamma\in \bR$, $x\in M$ and almost-every $\pi\in \Gh$,    the inverse of 
   $\pi(\cR_x)^{\frac{\gamma}\nu} \sigma(x,\pi) \pi(\cR_x)^{-\frac{\gamma+m}\nu}$ exists as a bounded operator on $\pi$ with 
    $$
    \sup_{(x,\pi)\in \Gh M}
\|    \left(\pi(\cR_x)^{\frac{\gamma}\nu} \sigma(x,\pi) \pi(\cR_x)^{-\frac{\gamma+m}\nu}\right)^{-1}\|_{\sL(\cH_\pi)} <\infty.
    $$
\item An operator $P\in \Psi^m_{ph}(M)$ is \emph{elliptic} when its homogeneous principal symbol is elliptic.
    \end{enumerate}
\end{definition}

\begin{proposition}
\label{prop_par_ellipticop}
    Any elliptic operator $Q\in \Psi^m_{ph}(M)$  admits a left parametrix $P\in \Psi^{-m}_{ph}(M)$. 
\end{proposition}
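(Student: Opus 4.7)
The plan is to refine the proof of Theorem \ref{thm:parametrix} so that the parametrix inherits polyhomogeneity at every stage of the construction. First, by Proposition \ref{caracterization_psiph(M)} and Corollary \ref{cor:caracterization_psiph(M)}, I would fix an atlas $\cA$ and write $Q=\Op^{\cA}(\sigma)+R$ where $R\in \Psi^{-\infty}(M)$ and $\sigma\in S^m_{ph}(\Gh M)$ admits a unique polyhomogeneous expansion $\sigma\sim_h \sum_{j\ge 0}\sigma_{m-j}$, with $\sigma_m=\princ_{m,h}(Q)$ elliptic. Fix also a positive Rockland symbol $\widehat{\cR}$ on $GM$ of homogeneous degree $\nu$ and a function $\phi$ as in Proposition \ref{caracterization_psiph(M)}.

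Second, I would construct the principal symbol of the parametrix. The ellipticity of $\sigma_m$ supplies, for every $\gamma\in \bR$, the invertibility of $\pi(\cR_x)^{\gamma/\nu}\sigma_m(x,\pi)\pi(\cR_x)^{-(\gamma+m)/\nu}$ with operator norm bounded uniformly on $\Gh M$; this allows one to define $\tau_m(x,\pi):=\sigma_m(x,\pi)^{-1}$, a $(-m)$-homogeneous field of operators. I would then show $\tau_m\in \dot S^{-m}(\Gh M)$ by adapting the argument of Lemma \ref{lem:psisigmainv} to the homogeneous setting: the Leibniz rules for $D_\bX$ and $\Delta_\bX^\alpha$ applied to the identity $\sigma_m\tau_m=\id$ yield, inductively in $|\alpha|+|\beta|$, closed formulas expressing $\Delta_\bX^\alpha D_\bX^\beta\tau_m$ as finite products of $\tau_m$ with $\Delta_\bX^{\alpha'}D_\bX^{\beta'}\sigma_m$ for $|\alpha'|+|\beta'|\le |\alpha|+|\beta|$, whose $\dot L^\infty_{\gamma,\gamma-m+[\alpha]}(\Gh M)$-norms are then controlled by the uniform ellipticity estimates. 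By Proposition \ref{prop_homsigmapsi}, the symbol $\tau_m\phi(\widehat{\cR})$ then belongs to $S^{-m}_{ph}(\Gh M)$.

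Third, I would iterate as in the proof of Theorem \ref{thm:parametrix}, working throughout in the polyhomogeneous class. Inspecting Lemma \ref{lem_compadj2} shows that every term of the local composition $\sigma_1\diamondsuit\sigma_2$ is built from the operations $\Delta_\bX^{\alpha}$, $D_\bX^\beta$ and left/right multiplication by $\widehat{\langle\bX\rangle}^\gamma$ applied to $\sigma_1$ and $\sigma_2^{*}$, each of which preserves polyhomogeneity by the symbolic properties summarised in Section \ref{subsubsec_SmphGhM}. Combined with Corollary \ref{cor:OpA}, this yields $\Op^{\cA}(\tau_m\phi(\widehat{\cR}))\,Q = \id + R_{-1}$ with $R_{-1}\in \Psi^{-1}_{ph}(M)$; writing $R_{-1}=\Op^{\cA}(\rho_{-1})+R'_{-1}$ with $\rho_{-1}\in S^{-1}_{ph}(\Gh M)$ a representative of $\princ_{-1}(R_{-1})$ and $R'_{-1}\in \Psi^{-2}_{ph}(M)$, I would iterate to produce symbols $\rho_{-k}\in S^{-k}_{ph}(\Gh M)$, $k=1,\dots,N$, such that
\[
\Op^{\cA}\bigl((1-\rho_{-N})\cdots (1-\rho_{-1})\,\tau_m\phi(\widehat{\cR})\bigr)\,Q = \id + R_{-N-1},
\qquad R_{-N-1}\in \Psi^{-N-1}_{ph}(M).
\]
A polyhomogeneous asymptotic sum $\tau\in S^{-m}_{ph}(\Gh M)$ of these products then gives $P:=\Op^{\cA}(\tau)\in \Psi^{-m}_{ph}(M)$ with $PQ-\id\in \Psi^{-\infty}(M)$, which is the required parametrix.

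The main obstacle will be the second step, namely the proof that $\sigma_m^{-1}\in \dot S^{-m}(\Gh M)$. Because we work in the regular homogeneous class rather than the inhomogeneous one, each occurrence of the resolvent $(\id+\pi(\cR_x))^{s}$ from the Lemma \ref{lem:psisigmainv} argument must be replaced by the unbounded operator $\pi(\cR_x)^{s}$, so that the spectral projection $E_\pi[\Lambda,\infty)$ is no longer available to absorb low-frequency contributions; the uniform ellipticity estimates must do all the work, and the recursion on difference operators $\Delta_\bX^\alpha$ has to be carried out at the level of convolution kernels on each fiber $G_xM$ with constants that are locally uniform in $x\in M$.
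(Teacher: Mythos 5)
Your proposal is correct and produces the same parametrix modulo smoothing symbols, but it organizes the inversion differently from the paper. The paper notes that $\sigma_m\phi(\widehat\cR)\in S^m_{ph}(\Gh M)$ is invertible for high frequencies in the sense of Definition~\ref{def_symbinvM} (the ellipticity of $\sigma_m$ restricted to $E_\pi[\Lambda,\infty)\cH_\pi$, for $\Lambda$ large enough that $\phi\equiv 1$ there, is exactly the estimate \eqref{eqdef_symbinvG}), applies Theorem~\ref{thm:parametrix} as a black box, and defers the polyhomogeneity of the result to a final verification. You instead invert $\sigma_m$ itself within the homogeneous class, proving $\tau_m:=\sigma_m^{-1}\in\dot S^{-m}(\Gh M)$, and only afterwards attach the cut-off $\phi(\widehat\cR)$. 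The two routes coincide modulo $S^{-\infty}(\Gh M)$ since $\sigma_m\phi(\widehat\cR)$ agrees with $\sigma_m$ on $E_\pi[\Lambda,\infty)\cH_\pi$; the difference is where the real work lands. Your front-loaded verification that $\tau_m\in\dot S^{-m}$ is essentially what the paper hides under ``we check that it is polyhomogeneous,'' and you correctly flag it as the crux. Your worry about the loss of $E_\pi[\Lambda,\infty)$ in the homogeneous setting is, however, unnecessary: the ellipticity definition already delivers uniform operator bounds on $\pi(\cR_x)^{(\gamma+m)/\nu}\tau_m(x,\pi)\pi(\cR_x)^{-\gamma/\nu}$ for every $\gamma\in\bR$, which is precisely the required $\dot L^\infty_{\gamma,\gamma+m}$ bound at order zero, so no spectral projection is needed to control low frequencies; the Leibniz bootstrap on $\sigma_m\tau_m=\id$ then yields the higher difference-operator bounds as you describe. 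Finally, for the iteration in your third step, the polyhomogeneity of the remainders $R_{-k}$ is more economically cited from the stated stability of $\Psi_{ph}$ under composition (Section~\ref{sec:cla_cal}) than re-derived from Lemma~\ref{lem_compadj2}: the generalized difference operators $\Delta_{p_{\alpha_3}}$ appearing there preserve polyhomogeneity only after one invokes their asymptotic expansion in the $\Delta_\bX^\beta$ via Corollary~\ref{cor_GM_Deltaq}, an extra step your sketch elides.
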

\begin{proof}
Let $Q\in \Psi^m_{ph}(M)$ be elliptic and set  $\sigma_m = \princ_{m,h}(Q)\in \dot S^m (\Gh M).$
We fix $\phi$ and $\cR$ as in 
Proposition \ref{caracterization_psiph(M)}.
We check readily that $\sigma_m \phi (\widehat \cR)$ is invertible for the high frequencies of $\widehat \cR$.
We construct a parametrix as in the proof of  Theorem \ref{thm:parametrix},
and we check that it is  polyhomogeneous.
\end{proof}

\subsubsection{Definition and properties of the classical $H$-calculus}
\label{sec:cla_cal}

We denote by $\Psi^m_{cl}(M)$
the space of properly supported polyhomogeneous operator of order $m$, 
and by $\Psi_{cl}(M)=\cup_{m\in \bR}\Psi^m_{cl}(M)$ the space of properly supported polyhomogeneous operators of any order.
Clearly, $\Psi^m_{cl}(M)$ is a left and right $C^\infty(M)$-module. Moreover, 
the space 
$\Psi_{cl}(M)$ is a classical pseudodifferential calculus in the following sense:
\begin{enumerate}
    \item 
    It  is  stable under composition and taking the formal adjoint. More precisely, we have: 
    \begin{align*}
   & \forall P_1\in \Psi^{m_1}_{cl}(M),\ P_2\in \Psi^{m_2}_{cl}(M),
    \quad P_1P_2 \in \Psi^{m_1+m_2}_{cl}(M)\;\;
   \mbox{and}\;\; 
     P^*_1 \in \Psi^{m_1}_{cl}(M).
    \end{align*}
    \item It contains the differential calculus: $\DO(M)\subset \Psi_{cl}(M) $. More precisely, $\DO^{\leq N} \subset \Psi_{cl}^N(M)$ for any $N\in \bN_0$.
    \item It acts on the adapted local Sobolev spaces, and the loss of Sobolev derivatives is bounded by the order. More precisely, any 
    $ P\in \Psi^{m}_{cl}(M) $ acts continuously $L^2_{s}(M)\to L^2_{s-m}(M)$ for any $m,s\in \bR.$
    \item The smoothing operators in the classical $H$-calculus are the operators with smooth integral kernels that are properly supported. 
    Denoting this space by
    $$
    \Psi^{-\infty}_{cl} (M) := \Psi^{-\infty} (M) \cap \Psi_{cl} (M), 
    $$
    it may be equivalently described as 
    $$
    \Psi^{-\infty}_{cl} (M) = \cap_{m\in \bR} \Psi^m_{cl}(M).
    $$   
    \item Any elliptic operator admits a left parametrix. More precisely, if $Q\in \Psi^m_{cl}(M)$ is elliptic, then we can construct $P\in \Psi^{-m}_{cl}(M)$ such that $PQ-\id\in\Psi^{-\infty}_{cl} (M) $.
\end{enumerate}

\begin{ex}
\label{ex_ellipticinHcalculus}
Considering Example \ref{ex_I+RmnuphM}, 
an example of elliptic operator is $\Op^\cA \left((\id+\widehat \cR)^{m/\nu}\right),$
where $\widehat \cR$ is a positive Rockland symbol on $\Gh M$, $\nu$ its homogeneous degree and $\cA$ an atlas for $M$.
By Proposition \ref{prop_par_ellipticop}, it admits a left parametrix in the classical $H$-calculus.
\end{ex}

\subsection{H-calculus and tangent groupoid}\label{sec:groupoid}

\subsubsection{The tangent groupoid and H-pseudodifferential kernels}

Before briefly presenting the tangent groupoid adapted to the filtered manifold $M$ as constructed by Van Erp and Yuncken in \cite{VeY1,VeY2}, let us explain what they mean by a pseudodifferential calculus.
We will allow ourselves to identify densities and functions as we have fixed a measure $dx$ on $M$.

Van Erp and Yuncken
 blur the distinction between a linear continuous operator $P:C_c^\infty(M)\to \cD'(M)$ and its integral kernel given by the distribution $K_P$.
Moreover, they consider pseudodifferential operators modulo smoothing operators, that is, modulo operators with smooth integral kernels:
$$
P \ \mbox{mod} \ \Psi^{-\infty}(M)
\ \longleftrightarrow \
K_P \ \mbox{mod} \ C^\infty (M\times M).
$$
Hence, instead of describing  classes of operators, they describe classes of properly supported integral kernels. 

Let us now recall some elements of their construction, the references to their works being \cite{VeY1,VeY2}.
 The $H$-tangent groupoid (the letter~$H$ refers to the filtration $M$ is equipped with) is the  union of the osculating group bundle $GM$ with $M\times M$ together with a dilation parameter $t\in \bR_+ = [0,+\infty)$:
\[
\bT_HM=GM \times \{0\}\,\bigcup \, (M\times M)\times \bR_+^*
\]
where $\bR_+^*=\bR_+\setminus \{0\}$.
Van Erp and Yuncken prove in~\cite{VeY1} that $\bT_HM$ may be equipped with a natural  smooth structure  that makes it a Lie groupoid.
The description of this smooth structure on  $\bT_H M$   is in terms of exponential maps associated with connections. It is equivalent (and perhaps simpler) to express it in terms of geometric exponential associated with an adapted frame.

On $\bT_HM$ is defined the set of $r$-fibred distribution, $ \sE_r' (\bT_H M)$, which is the set of continuous $C^\infty(M\times \bR_+)$-linear maps $C^\infty(\bT_HM) \to C^\infty(M\times \bR_+)$. 
This space is naturally equipped with a convolution product law (with the standard precautions about the fact that the involved distributions are properly supported), and it is called the {\it convolution algebra} of $\bT_H M$. 

The zoom action $\alpha$  of $\bR^*_+$ on $\bT_HM$  is defined  according to 
\[
\left\{
\begin{array}{ccl}
\alpha_\lambda(x,y,t)&=&(x,y,\lambda^{-1}t)\;\;\mbox{if}\;\; (x,y)\in M\times M\;\;\mbox{and} \;\; t\not=0,\\
\alpha_\lambda(x,v,0)&=&(x, \delta_\lambda v,0)\;\;\mbox{if}\;\; x\in M\;\;\mbox{and}\;\;v\in \mathfrak g _xM.
\end{array}
\right.
\]
The zoom action induces
a one-parameter family of automorphisms $\alpha_{\lambda*}$, $\lambda\in\bR_+^*$ on  the convolution algebra $ \sE_r' (\bT_H M)$, and preserves 
the ideal $C^\infty_p(GM;\Omega_r)$ of properly supported smooth densities.

 Van Erp and Yuncken then define the following two notions \cite[Definitions 18 \& 19]{VeY2}: 

\begin{definition}
\begin{enumerate}
    \item  A properly supported $r$-fibred distribution $\bP\in   \sE_r' (\bT_H M)$ is called {\it essentially homogeneous of weight $m\in\bR$} if
\[
\forall \lambda\in\bR^*_+,\;\;\alpha_{\lambda*} \bP-\lambda^m\bP \in C^\infty_p(\bT_H M;\Omega_r).
\]  
\item A {\it $H$-pseudodifferential kernel of order $\leq m$}
is a distribution  $K\in \sE'_r(M\times M)$ 
satisfying $K=\bP|_{t=1}$ for some 
properly supported $r$-fibred distribution $\bP\in   \sE_r' (\bT_H M)$ which is essentially homogeneous of weight $m\in\bR$. 

\item 
We denote by 
$\Psi^m_{vEY}(M) $ the space of properly supported operators 
$P:C_c^\infty (M) \to C^\infty(M)$ whose integral kernels
are   $H$-pseudodifferential of order $\leq m$.
\end{enumerate}
\end{definition}

Let us show that van Erp and Yuncken's construction
coincides with $\Psi_{cl}^m(M)$ in the following sense:
\begin{theorem}
\label{thmvEY}
For any  $m\in \bR,$
$\displaystyle{
\Psi^m_{cl}(M) = \Psi^m_{vEY}(M).
}$
\end{theorem}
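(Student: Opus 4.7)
The plan is to establish the two inclusions $\Psi^m_{cl}(M) \subset \Psi^m_{vEY}(M)$ and $\Psi^m_{vEY}(M) \subset \Psi^m_{cl}(M)$ separately, with the main conceptual bridge being the identification between \emph{homogeneous} convolution kernel densities on $GM$ (as in Definition~\ref{def_mhomS'GMvert}) and the restriction at $t=0$ of \emph{exactly} homogeneous $r$-fibred distributions on $\bT_H M$ under the zoom action $\alpha_\lambda$.

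First I would prove $\Psi^m_{cl}(M) \subset \Psi^m_{vEY}(M)$. By Proposition~\ref{caracterization_psiph(M)} and Lemma~\ref{lem_P=ps+smooting}, it suffices to produce, for any atlas $\cA$ and any $\sigma \in S^m_{ph}(\Gh M)$ with expansion $\sigma \sim_h \sum_{j\geq 0} \sigma_{m-j}$, an essentially homogeneous $r$-fibred distribution $\bP \in \sE'_r(\bT_H M)$ of weight $m$ whose restriction at $t=1$ coincides with the Schwartz kernel of $\Op^\cA(\sigma)$ modulo $C^\infty$. The key construction is local: in an adapted frame $(\bX,U)$, each homogeneous symbol $\sigma_{m-j}\in \dot S^{m-j}(\Gh M|_U)$ admits a convolution kernel density $\tilde\kappa_{m-j}$ that is $(m-j)$-homogeneous in the sense of Definition~\ref{def_mhomS'GMvert}. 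Setting
\[
\bP_{m-j}(x,y,t) := t^{-Q-(m-j)}\, \kappa^\bX_{\sigma_{m-j},x}\!\bigl(-t^{-1}\ln^\bX_x y\bigr)\, \chi_x(y),\qquad t>0,
\]
and extending by $\tilde\kappa_{m-j}$ at $t=0$ via the smooth structure of $\bT_H M$, yields an $r$-fibred distribution that is \emph{exactly} homogeneous of weight $m-j$ under $\alpha_*$. A Borel-type asymptotic summation on the groupoid side then produces $\bP$ essentially homogeneous of weight $m$ with $\bP|_{t=1}$ equal to the integral kernel of $\Op^{\bX,\chi}(\sigma)$ modulo smoothing. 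Globalization via the atlas $\cA$ and independence of the cutoff (Proposition~\ref{prop_Qindepchi}) finishes this direction.

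Next I would show $\Psi^m_{vEY}(M) \subset \Psi^m_{cl}(M)$ by a symmetric inductive argument. Given $\bP \in \sE'_r(\bT_H M)$ essentially homogeneous of weight $m$, the restriction $\bP|_{t=0}$ is a properly supported distribution on $GM$ that is exactly $m$-homogeneous under the group dilations. Read in an adapted frame $(\bX,U)$, it defines an element of $\cS'(GM;|\Omega|(GM))$ which is the convolution kernel density of a homogeneous symbol $\sigma_m \in \dot S^m(\Gh M|_U)$; covering $M$ by an atlas and patching with the partition of unity attached to $\cA$ gives a globally defined $\sigma_m \in \dot S^m(\Gh M)$. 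Forming the corresponding $\bP_m$ by the forward construction above, the difference $\bP - \bP_m$ is essentially homogeneous of weight $m$ but vanishes at $t=0$, and the restriction to $t=0$ of $\lambda^{-(m-1)}\alpha_{\lambda*}(\bP-\bP_m)$ yields the next term $\sigma_{m-1}\in \dot S^{m-1}(\Gh M)$. Iterating produces a polyhomogeneous asymptotic $\sigma \sim_h \sum_j \sigma_{m-j}$; the Borel summation of Section~\ref{subsubsec_SmphGhM} delivers a polyhomogeneous $\sigma \in S^m_{ph}(\Gh M)$ with $\Op^\cA(\sigma) - \bP|_{t=1} \in \Psi^{-\infty}(M)$, and properness is preserved throughout.

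The main obstacle will be the rigorous identification, near $t=0$, of the smooth structure on $\bT_H M$ (as defined in~\cite{VeY1}) with the one induced by the geometric exponential $\exp^\bX$ and the group exponential $\Exp^\bX$, so that the restriction $\bP|_{t=1}$ can be compared term-by-term to $\Op^{\bX,\chi}(\sigma)$. This is exactly where the analysis of Section~\ref{sec:comp_exp} becomes decisive: Theorem~\ref{lem_Cq1BCH} and Corollary~\ref{cor_lem_Cq1BCH} show that the discrepancy between $\exp^\bX$-coordinates on $M\times M$ and the group law $*_x^\bX$ on $G_xM$ is of higher order. Consequently, the naive rescaled kernel $t^{-Q-(m-j)}\kappa^\bX_{\sigma_{m-j},x}(-t^{-1}\ln^\bX_xy)$ differs from its ``truly zoom-homogeneous'' extension by a term which, after expansion in the higher-order remainder, generates precisely the lower homogeneous components $\sigma_{m-j-1},\sigma_{m-j-2},\ldots$. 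Controlling this via the kernel estimates of Section~\ref{subsec:kernel_est} and Proposition~\ref{prop:kernel_higher_order} is what makes both inductions converge, and invariance under change of adapted frame (Proposition~\ref{prop_indepframe}) ensures the locally defined pieces glue into a well-defined global $\bP$ (respectively $\sigma$).
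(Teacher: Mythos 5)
Your overall strategy coincides with the paper's: for the forward inclusion, build an essentially homogeneous $r$-fibred distribution on $\bT_H M$ by rescaling the Schwartz kernel of $\Op^{\cA}(\sigma)$; for the reverse one, read off a homogeneous co-principal symbol from $\bP|_{t=0}$ and iterate. But there are two spots where your outline omits a step the paper cannot avoid, and one spot where the paper does something simpler than you propose.

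On the forward direction, you build one exactly homogeneous extension $\bP_{m-j}$ per homogeneous term and then Borel-sum on the groupoid side. Two cautions here. First, your $\bP_{m-j}$ is \emph{not} exactly zoom-homogeneous: after inserting the correct Jacobian (which your formula omits) the cutoff becomes $\psi_\alpha\bigl(\exp^{\bX_\alpha}_x(\delta_t\ln^{\bX_\alpha}_x y)\bigr)$, which is $t$-dependent, so one only gets essential homogeneity — harmless, but the word ``exactly'' overstates it. Second, your formula uses the Euclidean rescaling $t^{-1}\ln^{\bX}_x y$; compatibility with the zoom action on $\bT_H M$ requires the graded dilation $\delta_{t^{-1}}\ln^{\bX}_x y$, and the prefactor then needs to be $t^{(m-j)+Q}$ (not $t^{-Q-(m-j)}$) so that the $t\to 0$ limit exists. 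The paper avoids the groupoid-side Borel summation entirely by deforming the full polyhomogeneous kernel $\kappa_\sigma$ in one shot and invoking Lemma~\ref{lem_dilationconvolutionkernel}, which says precisely that $t^Q\kappa_\sigma\circ\delta_t - t^m\kappa_\sigma$ is smooth for $\sigma\in S^m_{ph}(\Gh M)$; this is cleaner because the cutoff and Jacobian bookkeeping is done once and for all.

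On the reverse direction, the genuine gap is that you invoke, for \emph{every} $m\in\bR$, the implication ``$m$-homogeneous vertical density smooth away from the zero section $\Rightarrow$ convolution kernel of an element of $\dot S^m(\Gh M)$.'' The paper establishes this only at $m=0$ (Lemma~\ref{lem_vEYcosymbol}), via the cited group-theoretic results of \cite{FF0}, and then reduces arbitrary $m$ to $m=0$ by composing with $\Op^{\cA}\bigl((\id+\widehat\cR)^{-m/\nu}\bigr)$ and a parametrix, using that $\Psi_{vEY}(M)$ is closed under composition (\cite[Corollary~49]{VeY2}). Without such a reduction you would have to prove the general-$m$ correspondence directly, which the $m=0$ reference does not give you. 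Likewise, the step ``$\bP-\bP_m$ is essentially homogeneous of weight $m$ and vanishes at $t=0$, so its next restriction yields $\sigma_{m-1}$'' silently uses van Erp and Yuncken's structural result that an essentially homogeneous distribution of weight $m$ with vanishing co-principal symbol has weight $\leq m-1$ (\cite[Corollary~38]{VeY2}); this must be cited, as it does not follow from rescaling alone. You are right, though, that the essential geometric input throughout is the higher-order comparison of $\exp^{\bX}$ with the group structure from Section~\ref{sec:comp_exp}.
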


In other words,  our classical calculus $\Psi_{cl}(M)$ and the pseudodifferential calculus constructed  via groupoid coincide in exactly the same way that the H\"ormander classical calculus on a manifold corresponds to the abstract operator classes defined via the groupoid approach (see Section~11 in \cite{VeY2}, see also  Debord and Skandalis~\cite{DS_14}).

\smallskip 

The rest of the section is devoted to the proof of Theorem \ref{thmvEY}, that is, for the two inclusions of the two pseudodifferential calculi. We start with the observation that both calculi may be viewed modulo smoothing operators as:
\begin{remark}
\label{rem_vEYsmoothing}
As in the case of $\Psi_{cl}(M)$, the set $\bigcap_{m\in \bR} \Psi^m_{vEY}(M)$ coincides with the space of smoothing properly supported operators 	
\cite[Corollary 53]{VeY2}:
$$
\bigcap_{m\in \bR} \Psi^m_{vEY}(M) = \Psi^{-\infty}_{cl}(M).
$$
\end{remark}

\subsubsection{The proof of the inclusion $
\Psi^m_{cl}(M) \subset  \Psi^m_{vEY}(M)$}

The main idea behind the proof of the inclusion $
\Psi^m_{cl}(M) \subset  \Psi^m_{vEY}(M)$ is the construction of an $r$-fibred distribution associated with a classical symbol, and this will use  the  following lemma: 

\begin{lemma}
\label{lem_dilationconvolutionkernel}
    Let $\sigma\in S^m_{ph}(\Gh M)$.
    \begin{enumerate}
        \item For any $t>0$, the density 
$ \kappa_{\sigma} \circ \delta_t  
- t^{m}\kappa_{\sigma}$
is smooth.
\item Write the  polyhomogeneous expansion of $\sigma$ as $\sigma\sim_h\sigma_m + \sigma_{m-1} +\ldots$
Then, for any adapted frame $(\bX,U)$,  for every $x\in U$,  we have 
the convergence in the sense of distribution 
    $$
t^{m+Q}\kappa_{\sigma,x}^{\bX} (\delta_tv) \longrightarrow_{t\to 0}  \kappa_{\sigma_m,x}^{\bX} ( v) .
    $$
Furthermore, this convergence holds for any $x$-derivatives  locally uniformly in $x$.
    \end{enumerate}
\end{lemma}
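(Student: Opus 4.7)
First I would recall the structure from Definition~\ref{def_polyhomexp}: a symbol $\sigma \in S^m_{ph}(\Gh M)$ admits the expansion $\sigma \sim \sum_{j \geq 0} \psi(\widehat\cR)\sigma_{m-j}$ in $S^m(\Gh M)$, where $\sigma_{m-j} \in \dot S^{m-j}(\Gh M)$ is $(m-j)$-homogeneous and $\psi \in C^\infty(M \times \bR)$ is a cutoff at the low frequencies of a chosen positive Rockland symbol $\widehat{\cR}$ on $GM$. The cornerstone of both parts is that for any $m$-homogeneous $\sigma_m \in \dot S^m(\Gh M)$, the associated kernel density $\kappa_{\sigma_m}$ is exactly $m$-homogeneous in the sense of Definition~\ref{def_mhomS'GMvert}, i.e., $(\delta_t)_* \kappa_{\sigma_m} = t^m \kappa_{\sigma_m}$ for all $t > 0$.

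For Part (1), the strategy is first to show that $\kappa_{\psi(\widehat\cR)\sigma_m}$ differs from $\kappa_{\sigma_m}$ by a smooth density on $GM$. Since $(\psi - 1)(\widehat\cR) \in S^{-\infty}(\Gh M)$ by Corollary~\ref{corprop:fct_de_R}, its convolution kernel is a vertical Schwartz density by Proposition~\ref{prop_smoothingM}, and convolving this Schwartz density with the tempered homogeneous distribution $\kappa_{\sigma_m}$ (smooth away from the zero section by the analog on $GM$ of Theorem~\ref{thm_kernelG}) yields a smooth density. Consequently $(\delta_t)_*\kappa_{\psi(\widehat\cR)\sigma_m} - t^m \kappa_{\psi(\widehat\cR)\sigma_m}$ is smooth. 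The principal obstacle then lies in propagating this essential $m$-homogeneity from the leading piece to the full kernel $\kappa_\sigma$: the lower-order homogeneous pieces $\sigma_{m-j}$ in the polyhomogeneous expansion scale as $t^{m-j}$, and without care the discrepancy $t^{m-j} - t^m$ introduces non-smooth terms. This is resolved by choosing a Borel-type representative of $\kappa_\sigma$ modulo smooth densities in which all such lower-order scaling discrepancies are absorbed into the smooth error, using the smoothing effect of the low-frequency cutoffs $\psi(\widehat\cR)$.

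For Part (2), I would use the decomposition $\sigma = \psi(\widehat\cR)\sigma_m + \widetilde\sigma$ with $\widetilde\sigma \in S^{m-1}(\Gh M)$. The leading contribution gives $t^{m+Q}\kappa^\bX_{\psi(\widehat\cR)\sigma_m, x}(\delta_t v) \to \kappa^\bX_{\sigma_m, x}(v)$ by homogeneity (since $\kappa^\bX_{\sigma_m, x}$ has function-degree $-(m+Q)$), combined with the Schwartz regularity of $\kappa^\bX_{\psi(\widehat\cR)\sigma_m, x} - \kappa^\bX_{\sigma_m, x}$ established in the previous step. For the remainder, pairing against a test function $\varphi \in C_c^\infty(\bR^n)$ and changing variable $w = \delta_t v$ yields
\[
\int_{\bR^n} t^{m+Q} \kappa^\bX_{\widetilde\sigma, x}(\delta_t v)\,\varphi(v)\, dv \;=\; \int_{\bR^n} t^m \,\kappa^\bX_{\widetilde\sigma, x}(w)\,\varphi(\delta_{t^{-1}}w)\, dw,
\]
whose integrand on the right is supported in $|w|_\bX \lesssim t$; iterating the polyhomogeneous expansion on $\widetilde\sigma$ until the remainder's kernel is sufficiently regular near the origin and applying the kernel estimates of Theorem~\ref{thm_kernelM} delivers an $O(t^\epsilon)$ bound for some $\epsilon > 0$, whence distributional convergence. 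The same argument applied to $D_\bX^\beta \sigma \in S^m(\Gh M)$ (whose kernel in the $\bX$-coordinates is $\bX^\beta_x \kappa^\bX_{\sigma, x}$ by Corollary~\ref{cor_contSmTXDelta}, with leading homogeneous part $D_\bX^\beta \sigma_m$) delivers convergence of $x$-derivatives, and local uniformity in $x$ follows because the constants in Theorem~\ref{thm_kernelM} depend on symbol seminorms that are locally uniform on compact subsets of $U$ by Definition~\ref{def:Sm}.
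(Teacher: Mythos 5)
Your Part~(2) argument is essentially sound and close in spirit to the paper's, which obtains Part~(2) by letting $t\to 0$ in the analysis of Part~(1) and then applying the same to $D_\bX^\beta\sigma$. The issue is in Part~(1), where the crucial step is a hand-wave.

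You correctly identify the real obstruction: the lower-order pieces $\sigma_{m-j}\psi(\widehat\cR)$, $j\geq 1$, have kernels that scale like $t^{\,m-j}$ rather than $t^m$, so the discrepancy $(t^{\,m-j}-t^m)\kappa_{\sigma_{m-j}\psi(\widehat\cR)}$ is a genuinely non-smooth distribution (its singularity at the origin is of order $-Q-(m-j)$). But the proposed fix --- ``choosing a Borel-type representative of $\kappa_\sigma$ modulo smooth densities in which all such lower-order scaling discrepancies are absorbed'' --- does not make sense here: $\sigma$ is given and its convolution kernel $\kappa_\sigma$ is uniquely determined by it. There is no freedom to choose a different representative of $\kappa_\sigma$ ``modulo smooth densities''; the Borel construction only produces a symbol from a prescribed sequence of homogeneous pieces, it cannot alter the singular part of a given $\kappa_\sigma$. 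As written, step (c) of your Part~(1) is a gap, not an argument.

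The paper proceeds differently. It fixes $\phi,\widehat\cR$ and introduces the auxiliary smoothing symbols
$\rho_{\tau,t}=\tau\bigl(\phi(t^{-\nu}\widehat\cR)-\phi(\widehat\cR)\bigr)\in S^{-\infty}(\Gh M)$
for each $m'$-homogeneous $\tau$, using Corollary~\ref{corprop:fct_de_R} to see these are Schwartz. It then truncates the polyhomogeneous expansion at order $N$ and sets
$R_x^N=\kappa_{\sigma,x}-\sum_{j\leq N}\kappa_{\sigma_{m-j}\phi(\widehat\cR),x}$. The decisive point is a quantitative Sobolev bound on the remainder: from the kernel estimates of Theorem~\ref{thm_kernelM}, $R_x^N\in L^2_s(G_xM)$ for all $s<-Q/2-m+(N+1)$. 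The identity from the dilation algebra expresses $t^Q\kappa_{\sigma,x}\circ\delta_t-t^m\kappa_{\sigma,x}$ as a finite sum of the $\kappa_{\rho_{\sigma_{m-j},t},x}$ (Schwartz in the group variable, smooth in $x$) plus $t^QR_x^N\circ\delta_t-t^mR_x^N$, and the Sobolev regularity of the remainder tends to $+\infty$ as $N\to\infty$. Smoothness then follows by Sobolev embedding, and the same argument applied to $D_\bX^\beta\sigma$ gives the $x$-derivatives. That Sobolev ladder is the essential ingredient that is missing from your proposal: it is what makes the finite truncation error go away in the limit, rather than any choice of representative.
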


\begin{proof}[Proof of Lemma \ref{lem_dilationconvolutionkernel}]
We start the proof with some preliminary analysis. For a symbol $\tau$ in $\cup_{m'\in\bR} S^{m'}(\Gh M)$, we denote by $\tau\circ \delta_{t^{-1}}$ the symbol obtained using the dilations in~\eqref{def:dil_pi}:
\[
\forall (x,\pi)\in \widehat GM,\;\;
\forall t>0,\qquad
\tau\circ\delta_{t^{-1}}
(x,\pi)= \tau (x,\pi\circ\delta_{t^{-1}}).\]
We check readily that the convolution kernels 
 satisfy
$$
\kappa_{\tau\circ \delta_{t^{-1}}} = t^{m'}\kappa_{\tau}\circ \delta_t, \qquad \lambda>0,
$$
as densities. If a smooth Haar system $\mu=\{\mu_x\}_{x\in M}$ has been fixed, then this becomes:
$$
\kappa^{\mu}_{\tau \circ \delta_{t^{-1}},x} 
=
t^{m'+Q}\kappa^{\mu}_{\tau ,x} \circ \delta_t.
$$
We fix $\widehat \cR$ and $\phi$ as in Proposition \ref{caracterization_psiph(M)}, that is, $\widehat \cR$ is
a positive Rockland symbol  on $\Gh M$  and  $\phi\in C^\infty(M\times \bR)$ 
 satisfies $\phi(x,\lambda)=0$ for $\lambda$ in a neighbourhood of $(-\infty,0]$
and $\phi(\lambda)=1$ on a neighbourhood of $+\infty$, 
these two $\lambda$-neighbourhoods depending locally uniformly on $x\in M$. 
We denote by $\nu$ the homogeneous degree of $\widehat \cR$. 
Then 
$$
\phi (\widehat \cR) \circ \delta_{t^{-1}} = \phi (t^{-\nu} \widehat \cR).
$$
If now $\tau\in \dot S^{m'}(\Gh)$ is $m'$-homogeneous for some $m'\in \bR$, then 
$$
t^{-m'}(\tau \phi (\widehat \cR)) \circ \delta_{t^{-1}}
= \tau \phi (t^{-\nu} \widehat \cR) 
=  \tau \phi (\widehat \cR)
+\rho_{\tau,t}
$$
where $\rho_{\tau,t}$ is the  symbol given by 
\begin{equation}\label{def:notation_rho_tx}
\rho_{\tau,t}:=
\tau 
\left(
\phi (t^{-\nu} \widehat \cR)  - \phi (\widehat \cR)\right).
\end{equation}
Since $\lambda\mapsto \phi(t^{-\nu}\lambda)-\phi(\lambda)$ is compactly supported, 
Corollary~\ref{corprop:fct_de_R} implies that  
$\rho_{\tau, t}$ is smoothing for all $t>0$.

Let us now prove (1). We consider $\sigma\in S^m_{ph}(\Gh M)$. Writing its homogeneous expansion $\sigma\sim_h\sigma_m + \sigma_{m-1} +\ldots$, it admits the inhomogeneous expansion 
$\sigma\sim \sigma_m \phi (\widehat \cR) +\sigma_{m-1} \phi (\widehat \cR)+\ldots $
By the  kernels estimates of Theorem~\ref{thm_kernelM}, the function 
$$
R_x^N(v):=
\kappa_{\sigma,x}(v) - \sum_{j=0}^N \kappa_{\sigma_{m-j}\phi(\widehat \cR),x}(v)
$$
is continuous in $v$ when $m-(N+1) <-Q$ for each $x\in M$.
In fact, Theorem~\ref{thm_kernelM}  also shows that it is in $L^2_s(G_x M)$ for any $s\in\bR$ such that $2(s+m-(N+1)+Q)<Q$, i.e. for $s<-Q/2-m+(N+1)$.
With these properties, we have
\begin{align*}
    t^Q \kappa_{\sigma,x} \circ \delta_t  
- t^m \kappa_{\sigma,x}
&=\sum_{j=0}^N \left( t^Q \kappa_{\sigma_{m-j}\phi(\widehat \cR),x}\circ \delta_t  
- t^m \kappa_{\sigma_{m-j}\phi(\widehat \cR),x} \right)
\ + \ t^Q R^N_x \circ \delta_t  -t^m R^N_x\\
&=\sum_{j=0}^N  \kappa_{ (\sigma_{m-j}\phi(\widehat \cR))\circ \delta_{t^{-1},x}- t^m \sigma_{m-j}\phi(\widehat \cR)}
\ + \ t^Q R^N_x \circ \delta_t  -t^m R^N_x
\\
&=\sum_{j=0}^N 
t^{j} \kappa_{\rho_{\sigma_{m-j},t},x} \ + \ t^Q R^N_x \circ \delta_t  -t^m R^N_x,
\end{align*}
 using the notation~\eqref{def:notation_rho_tx}. By the preliminary analysis, the kernel $\sum_{j=0}^N 
t^{j} \kappa_{\rho_{\sigma_{m-j},t},x}$ is smoothing. Moreover, 
the kernel $t^Q R^N_x \circ \delta_t  -t^m R^N_x$ 
is in $L^2_s(G_x M)$ for all $s$ such that  $s+m<-Q/2+(N+1)$.
The properties of Sobolev spaces on groups imply that $t^Q \kappa_{\sigma,x} \circ \delta_t  
- t^m \kappa_{\sigma,x}$ is in $C^{r}$ on $G_x M$ with $r= -m-Q+N$, and this for all $N\in\bN$, thus smooth.
One concludes the proof by
applying this to $D_\bX^\beta\sigma$ for any $\beta\in \bN_0^n$, which shows (1).
\smallskip 

Taking $t\to 0$ shows the convergence at every $x\in U$. 
Again, applying this to $D_\bX^\beta\sigma$ for any $\beta\in \bN_0^n$ shows (2).
\end{proof}

\begin{proof}[Proof of the inclusion $
\Psi^m_{cl}(M) \subset  \Psi^m_{vEY}(M)
$ in Theorem \ref{thmvEY}]
We fix an atlas $\cA$ on $M$.
Let $P\in \Psi^m_{cl}(M)$. 
By Proposition~\ref{caracterization_psiph(M)}, we may assume $P=\Op^\cA(\sigma)$ for some $\sigma\in S^m_{ph}(\Gh M)$.
It follows from our construction that  its integral kernel $K_P$ 
is given by 
$$
K_P(x,y)=\sum_{\alpha\in A} 
\kappa_{\sigma,x}^{\bX_\alpha} (-  \ln^{\bX_\alpha}_x y) \ \psi_\alpha(x)\psi_\alpha(y) \jac_{y} (\ln^{\bX_\alpha}_x), \qquad x,y\in M.
$$
It is natural to deform this convolution kernel into the following family of distributions on $M\times M$ parametrised by $t>0$:
$$
\bP (x,y,t):= \sum_{\alpha\in A} 
t^{m}\kappa_{\sigma,x}^{\bX_\alpha} (- \delta_{t} \ln^{\bX_\alpha}_x y) \  
\psi_\alpha(x)
\psi_\alpha\left(\exp^{\bX_\alpha}_x\left(\delta_{t} \ln^{\bX_\alpha}_x y\right)\right)
\jac_{\exp^{\bX_\alpha}_x(\delta_{t} \ln^{\bX_\alpha}_x y)} (\ln^{\bX_\alpha}_x).
$$
We observe for any $v\in \bR^n$,
\begin{align*}
D_{\exp^{\bX_\alpha}_x(\delta_{t} \ln^{\bX_\alpha}_x y)} (\ln^{\bX_\alpha}_x) (v)
 & =  
 \partial_{s=0}
 \ln^{\bX_\alpha}_x \exp^{\bX_\alpha}_x\left((\delta_{t} \ln^{\bX_\alpha}_x \exp_y^{\bX_\alpha} (s v)\right) \\
  & =  
 \delta_{t} \partial_{s=0}
  \ln^{\bX_\alpha}_x \exp_y^{\bX_\alpha} (s v) \\
  & =  
 \delta_{t} D_y \ln_x^{\bX_\alpha} (v), 
 \end{align*}
 so 
 $$
\jac_{\exp^{\bX_\alpha}_x(\delta_{t} \ln^{\bX_\alpha}_x y)} (\ln^{\bX_\alpha}_x)
 =
 t^Q \jac_y \ln_x^{\bX_\alpha}.
 $$
Therefore, we have 
\begin{equation}\label{lien_Pt_sigma}
\bP (x,y,t)= \sum_{\alpha\in A} 
t^{m+Q}\kappa_{\sigma,x}^{\bX_\alpha} (- \delta_{t} \ln^{\bX_\alpha}_x y) \  
J_\alpha(x,y,t),
\end{equation}
where
\begin{equation}\label{lien_P0_sigma}
J_\alpha(x,y,t):= 
\psi_\alpha(x)
\psi_\alpha\left(\exp^{\bX_\alpha}_x\left(\delta_{t} \ln^{\bX_\alpha}_x y\right)\right)
\jac_y (\ln^{\bX_\alpha}_x).
\end{equation}
Hence, we are lead to extend $\bP$  to $\bT_H M$ 
via
$$
\bP(x,v,0) = \sum_{\alpha\in A} \psi_\alpha(x)^2 \kappa_{\sigma_m,x}^{\bX_\alpha}(v).
$$
By Lemma \ref{lem_dilationconvolutionkernel} (2), the equations~\ref{lien_Pt_sigma} and~\ref {lien_P0_sigma} define a distribution $\bP\in \sE_r'(\bT_H M)$.
It remains to show that $\bP$ is essentially homogeneous of weight $m\in \bR$.

We already have  for $t=0$
$$
\alpha_{\lambda*}\bP (x,v,0)
	=\lambda^{m+Q}\bP (x,v,0).
	$$ 
We now  consider for $t,\lambda>0$
\begin{align*}
\nonumber 
\alpha_{\lambda*}\bP (x,y,t)
	&=\lambda^{Q}\bP (x,y,\lambda^{-1} t) 	\\
	&=
	\lambda^m\sum_{\alpha\in A} 
t^{Q+m} \kappa_{\sigma,x}^{\bX_\alpha} (- \delta_{\lambda^{-1} t} \ln^{\bX_\alpha}_x y) 
J_\alpha(x,y,\lambda^{-1}t). 
\end{align*}
We have
\begin{align}
\nonumber
&(\alpha_{\lambda*}\bP - \lambda^{m} \bP)(x,y,t)
\\
\label{somme_sur_alpha}
&\qquad	=R (x,y,t,\lambda)	 +\lambda^m\sum_{\alpha\in A} 
t^{Q+m} 
\left(\kappa_{\sigma,x}^{\bX_\alpha} (- \delta_{\lambda^{-1} t} \ln^{\bX_\alpha}_x y) 
- \kappa_{\sigma,x}^{\bX_\alpha} (- \delta_{ t} \ln^{\bX_\alpha}_x y) \right) 
J_\alpha (x,y,\lambda^{-1}t),
\end{align}
with  
$$
R (x,y,t,\lambda):=
\lambda^m\sum_{\alpha\in A} 
t^{-(Q+m)} \kappa_{\sigma,x}^{\bX_\alpha} (- \delta_{ t} \ln^{\bX_\alpha}_x y)
\left(   J_\alpha (x,y,\lambda^{-1} t)- J_\alpha (x,y, t)\right).
$$
Moreover, the sum over $\alpha\in A$ in~\eqref{somme_sur_alpha} is smooth 
by Lemma \ref{lem_dilationconvolutionkernel} (1). 
 We are left with showing the smoothness of $R(x,y,t)$.
Recall that $\kappa_{\sigma,x}$ is smooth away from 0. Since $J_\alpha (x,y,\lambda^{-1} t)- J_\alpha (x,y, t)=0$ indentically when  $x,y$ are near the diagonal, 
$R$ is smooth on $M\times M\times \bR_+^*\times \bR_+^*$.
We deduce that 
$\alpha_{\lambda*}\bP - \lambda^{m} \bP$ is smooth on $M\times M$.
\end{proof}

\subsubsection{The reverse inclusion}
For the proof of the reverse inclusion, 
we need to discuss the notion of 
{\it co-principal symbol}  in the sense of van Erp and Yuncken  \cite[Section 6]{VeY2}: for any  $m\in \bR$, 
the  co-principal symbol of an $H$-pseudodifferential kernel  $K\in \sE_r'(M\times M)$  of order $\leq m$
 may be represented uniquely by a vertical distributional density  $\kappa \in \cS'(GM;|\Omega|(GM))$ that is $m$-homogeneous in the sense of Definition \ref{def_mhomS'GMvert}; in this paper,  $\kappa$ is called the \emph{homogeneous co-principal symbol} of $K$.
  This follows from choosing  an essentially homogeneous distribution $\bP\in   \sE_r' (\bT_H M)$  of weight $0$ satisfying $K=\bP|_{t=1}$ and that is homogeneous on the nose 
 outside of $(-1,1)$ \cite[Proposition 42]{VeY2}, and then applying \cite[Proposition 43]{VeY2}.
 Moreover, $\kappa$ is smooth away from the 0-section of $GM$.
 \smallskip

The main point of the proof of the reverse inclusion of  Theorem \ref{thmvEY} is the natural correspondence between the 0-homogeneous co-principal symbols in $\Psi_{vEY}(M)$
and the 0-homogeneous symbols in $\Psi_{cl}(M)$:
\begin{lemma}
  \label{lem_vEYcosymbol}
If $\kappa\in \cS'(GM;|\Omega|(GM))$ is $0$-homogeneous and is smooth away from the 0-section of $GM,$
then it is the convolution kernel of a symbol $\sigma\in \dot S^0(\Gh M).$
\end{lemma}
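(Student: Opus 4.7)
The claim is local, so I would fix an adapted frame $(\bX,U)$ and read the density $\kappa$ in $\bX$-coordinates as a function $\kappa_x^\bX(v)$ which, by hypothesis, is smooth on $U\times(\mathbb R^n\setminus\{0\})$. Using Definition~\ref{def_mhomS'GMvert} together with the Fourier formula $\sigma(x,\pi)=\int \kappa_x^\bX(v)\pi(\Exp_x^\bX v)^*dv$ and the change of variables $v\mapsto\delta_{r^{-1}}w$, the condition that $\kappa$ is $0$-homogeneous as a density translates into $\kappa_x^\bX(\delta_r v)=r^{-Q}\kappa_x^\bX(v)$ for every $r>0$ and every fixed $x\in U$.

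For the pointwise step, I fix $x\in U$ and identify $G_xM$ with $(\mathbb R^n,*_x^\bX)$ via $\Exp_x^\bX$ and the adapted basis $\langle\bX\rangle_x$. The distribution $\kappa_x^\bX$ is then a tempered, $(-Q)$-homogeneous kernel smooth away from the origin on the graded nilpotent group $G_xM$; this is the Calder\'on--Zygmund setting on a graded group, and by the single-group theory recalled in Sections~\ref{subsec_InvSymbClasses}--\ref{subsubsec_invdotSm} (see \cite{FF0}) the group Fourier transform $\sigma(x,\cdot):=\cF_{G_xM}\kappa_x^\bX$ belongs to $\dot S^0(\Gh_xM)$. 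The condition $\Delta_\bX^\alpha\sigma(x,\cdot)\in\dot L^\infty_{0,[\alpha]}(\Gh_xM)$ of Definition~\ref{def_reghominvsymbolG} follows from the same argument applied to the kernel $v^\alpha\kappa_x^\bX(v)$, which is $(-Q+[\alpha])$-homogeneous and smooth away from $0$.

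To upgrade this to membership in the manifold class $\dot S^0(\Gh M)$ (the homogeneous analogue of Definition~\ref{def:Sm}), I would next verify the two remaining conditions: existence of $x$-derivatives and local uniformity of seminorms. For the derivatives, since $\kappa$ is smooth on $GM$ away from the zero section, the function $\bX^\beta\kappa_x^\bX(v)$ is smooth in $(x,v)\in U\times(\mathbb R^n\setminus\{0\})$ and remains $(-Q)$-homogeneous in $v$ because differentiation in $x$ commutes with dilation in $v$. Applying the pointwise step to $\bX^\beta\kappa_x^\bX$ gives $D_\bX^\beta\sigma(x,\cdot)\in\dot S^0(\Gh_xM)$ for every $x\in U$ and every $\beta\in\mathbb N_0^n$, in the sense of Notation~\ref{notation_DXbeta}.

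The main technical obstacle -- and final point -- is the local uniformity of the $\dot S^0(\Gh_xM)$-seminorms of $\Delta_\bX^\alpha D_\bX^\beta\sigma(x,\cdot)$ as $x$ ranges over a compact $\cC\Subset U$. The group-case estimates underlying the pointwise step are, by Remark~\ref{rem_thm_SmGh}, Remark~\ref{rem_constant_RS}, and Remark~\ref{rem_constant_theta}, proved with constants depending on the Lie structure only through the structural constant $\|[\cdot,\cdot]_{\fg_xM}\|_{\langle\bX\rangle_x}$ and through the coefficients of the positive Rockland element $\cR_{\bX,x}$ of~\eqref{ex_RU}. Both quantities depend smoothly on $x$ and are therefore bounded on $\cC$. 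Combined with the uniform-on-compacts bounds of $\bX^\beta\kappa_x^\bX$ on $\cC\times K$ for any compact $K\subset\mathbb R^n\setminus\{0\}$, and with the $(-Q)$-homogeneity (which propagates such bounds to all scales, as in the homogeneous counterpart of Corollary~\ref{cor_thm_kernelG}), this furnishes the locally uniform seminorm estimates needed to conclude $\sigma\in\dot S^0(\Gh M)$.
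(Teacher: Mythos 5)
Your proposal is correct and takes essentially the same approach as the paper's: reduce to the single-group result from~\cite{FF0} for the pointwise membership $\sigma(x,\cdot)\in\dot S^0(\Gh_xM)$, then observe that $\bX^\beta\kappa^\bX$ is again $0$-homogeneous and smooth away from the origin (so is the convolution kernel of $D_\bX^\beta\sigma(x,\cdot)$), and secure the locally uniform seminorm bounds from the structural dependence of the group-level constants. The paper simply invokes specific results of~\cite{FF0} (Lemma~2.25 and Corollary~5.4 there) that package the pointwise step together with the uniformity, whereas you re-derive the uniformity via the remarks on structural constants; the net content is the same.
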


We will not need the converse, which is true, and follows   from the properties of symbols in $\dot S^0(\Gh M)$:
if the symbol $\sigma$ is in $S^0(\Gh M)$, then  its convolution kernel $\kappa$ is $0$-homogeneous and is smooth away from the 0-section of $GM.$
 The proof of both implications follow from the study of the group case in \cite{FF0}.

\begin{proof}[Proof of Lemma \ref{lem_vEYcosymbol}]
Let $\kappa \in \cS'(GM;|\Omega|(GM))$ be a  0-homogeneous distribution density  that is smooth away from the 0-section of $GM$.
It follows from Lemma 2.25 and Corollary 5.4 in \cite{FF0}
that we may define a symbol $\sigma_0$ on $M$ with convolution kernel $\kappa$
such that 
 $\sigma_0(x,\cdot)\in \dot S^0(\Gh_x M) $  with seminorms depending locally uniformly in $x\in M$. 
 We then check that, for any adapted frame $(\bX, U)$ and $\beta\in \bN_0^n$, the distribution 
 $\bX^\beta \kappa^\bX\in\cS'(GM|_U;|\Omega|(GM|_U)) $ is still 0-homogeneous and smooth away from the origin, 
 so by uniqueness of the Fourier transform, above each $x\in M$, it is the convolution kernel 
 of  $D_\bX^\beta \sigma_0 (x,\cdot) \in \dot S^0(\Gh_x M)$. This implies $\sigma\in \dot S^0(\Gh M)$.
\end{proof}

The properties of $\Psi_{cl}(M)$ and the inclusion $\Psi_{cl}(M)\subseteq \Psi_{vEY}(M)$ then  imply:
\begin{corollary}
\label{corlem_vEYcosymbol}
	We fix  an atlas $\cA$ on $M$, and  $\widehat \cR$ and $\phi$ as in Proposition \ref{caracterization_psiph(M)}.
	Let $m\in \bR$ and let $P\in \Psi^m_{vEY}(M)$.
Then there exists a symbol $\sigma_m\in \dot S^m(\Gh M)$
such that 
$$
P-\Op^\cA (\sigma_m \phi(\widehat\cR))\in \Psi^{m-1}_{vEY}(M).
$$
\end{corollary}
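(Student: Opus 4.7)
The plan is to match $P$ with an element of $\Psi^m_{cl}(M)$ at the level of co-principal symbols, and then conclude that the difference drops by one order in $\Psi_{vEY}(M)$. More precisely, I would proceed in the following three steps.

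\emph{Step 1: Extract the co-principal symbol of $P$.}
By the discussion preceding the corollary, the $H$-pseudodifferential kernel of $P$ admits a homogeneous co-principal symbol represented uniquely by a vertical distributional density $\kappa_P \in \cS'(GM; |\Omega|(GM))$ that is $m$-homogeneous in the sense of Definition~\ref{def_mhomS'GMvert} and smooth away from the zero-section of $GM$. This density is obtained from an essentially homogeneous extension $\bP \in \sE'_r(\bT_H M)$ of $P$'s kernel, taken to be homogeneous on the nose outside $(-1,1)$.

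\emph{Step 2: Lift $\kappa_P$ to a symbol $\sigma_m \in \dot S^m(\Gh M)$.}
Lemma~\ref{lem_vEYcosymbol} handles the case $m=0$; I would extend it to arbitrary $m\in\bR$ by the standard trick of multiplying by a power of a fixed positive Rockland symbol. Concretely, choose the Rockland symbol $\widehat\cR$ already fixed in the statement, of homogeneous degree $\nu$. The convolution kernel density $\kappa_0$ of the purely homogeneous spectral multiplier $\widehat\cR^{-m/\nu}$ is $(-m)$-homogeneous and smooth off the zero-section (see Sections~\ref{subsubsec_homL2sG} and~\ref{subsubsec_invdotSm}). The fiberwise group convolution $\kappa_P \star_{G_xM} \kappa_0$ defines a $0$-homogeneous density, smooth off the zero-section; its local smoothness and homogeneity follow from the convolution behaviour of homogeneous distributions recalled in the group case (Theorem~\ref{thm_kernelG} combined with the algebraic properties of $\dot S^m$ from \cite{FF0}). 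By Lemma~\ref{lem_vEYcosymbol}, this convolution is the convolution kernel of some $\tau \in \dot S^0(\Gh M)$, and then $\sigma_m := \tau\,\widehat\cR^{m/\nu} \in \dot S^m(\Gh M)$ (using the algebraic properties of $\dot S^\bullet(\Gh M)$ recalled in Section~\ref{sebsec:poluhom}) has convolution kernel density equal to $\kappa_P$.

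\emph{Step 3: Compare with $\Op^\cA(\sigma_m\phi(\widehat\cR))$.}
By Proposition~\ref{prop_homsigmapsi}, $\sigma_m\phi(\widehat\cR)\in S^m(\Gh M)$, and in fact it belongs to $S^m_{ph}(\Gh M)$ with homogeneous leading term $\sigma_m$ in the sense of Definition~\ref{def_polyhomexp}. Hence $P_0 := \Op^\cA(\sigma_m\phi(\widehat\cR)) \in \Psi^m_{cl}(M) \subseteq \Psi^m_{vEY}(M)$, the inclusion being the one established earlier in Theorem~\ref{thmvEY}. Now I would track through the construction used in that inclusion: the distribution $\bP_0 \in \sE'_r(\bT_H M)$ built from $P_0$ satisfies $\bP_0(x,v,0) = \sum_{\alpha} \psi_\alpha(x)^2\,\kappa^{\bX_\alpha}_{\sigma_m,x}(v)$, which is exactly the density $\kappa_P$ in the trivialisations given by the atlas $\cA$ (using $\sum_\alpha \psi_\alpha^2 = 1$). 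Consequently, $P_0$ and $P$ have the same homogeneous co-principal symbol. By the uniqueness clause for co-principal symbols (Propositions~42--43 of~\cite{VeY2} as recalled before the corollary), the difference $P - P_0$ has vanishing co-principal symbol at order $m$, and hence lies in $\Psi^{m-1}_{vEY}(M)$.

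\emph{Main obstacle.}
The step I expect to require the most care is Step~2: extending Lemma~\ref{lem_vEYcosymbol} from $0$-homogeneous to $m$-homogeneous kernels, and in particular verifying that the convolution $\kappa_P \star_{G_xM} \kappa_0$ is well defined on each fiber, depends smoothly on $x$, and indeed produces a convolution kernel of a symbol in $\dot S^0(\Gh M)$. The convolution of two $\pi(\cR)$-homogeneous fiber distributions with singularities at the origin must be controlled uniformly in $x\in M$, for which I would rely on the kernel estimates of Theorem~\ref{thm_kernelG} together with the homogeneity of Proposition~\ref{prop_dotS0Gh} to bootstrap seminorm bounds. The alternative of invoking directly a ``smoothing from the left by $\widehat\cR^{-m/\nu}$'' and then using the algebra structure of $\dot S^\bullet(\Gh M)$ circumvents some of this, at the cost of checking the relevant compositions, which is the technical core of the argument.
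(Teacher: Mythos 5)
Your Steps 1 and 3 follow the paper for the case $m=0$. The divergence is in Step 2, and that is where the gap lies.

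You want to extend Lemma~\ref{lem_vEYcosymbol} from $m=0$ to arbitrary $m$ by convolving $\kappa_P$ with the $(-m)$-homogeneous kernel $\kappa_0$ of $\widehat\cR^{-m/\nu}$. The problem is that at that point $\kappa_P$ is only known to be an $m$-homogeneous vertical distributional density smooth away from the zero-section; you do not yet know it is the convolution kernel of a symbol in $\dot S^m(\Gh M)$ — that is precisely what you are trying to establish. But the tools you invoke to control the convolution (Theorem~\ref{thm_kernelG} and the algebraic properties of $\dot S^m$ from~\cite{FF0}) are statements about convolution kernels of symbols; they do not apply to a homogeneous distribution that has not yet been identified as such a kernel. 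This is circular. Moreover, even setting aside the symbol-class issue, the fiberwise convolution of two distributions that are homogeneous (of complementary degrees summing to $0$, modulo $Q$) and smooth away from $0$, but with neither compactly supported nor rapidly decaying, requires its own convergence argument; this is exactly the ``analytic core'' you flag, and it is not addressed. Similarly, the associativity needed at the end of Step~2 (to recover $\kappa_P$ from $\kappa_\tau$ after multiplying back by $\widehat\cR^{m/\nu}$) requires all intermediate convolutions to be well-defined, which again rests on the missing analysis.

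The paper's strategy avoids this by never attempting to extend Lemma~\ref{lem_vEYcosymbol} beyond $m=0$. Instead, it reduces the general $m$ to $m=0$ at the operator level: compose $P$ with $\Op^\cA((\id+\widehat\cR)^{-m/\nu})$ (which lies in $\Psi^{-m}_{cl}(M)\subset\Psi^{-m}_{vEY}(M)$ by the already-proved inclusion), apply van Erp–Yuncken's composition theorem to land in $\Psi^0_{vEY}(M)$, invoke Lemma~\ref{lem_vEYcosymbol} there, and then undo the reduction using a parametrix of $\Op^\cA((\id+\widehat\cR)^{-m/\nu})$ together with the multiplicativity of the homogeneous principal symbol. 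This keeps all the nontrivial analysis inside results that are already established (van Erp–Yuncken composition and the paper's parametrix construction) rather than requiring a new analytic lemma about convolving homogeneous distributions. If you want to salvage your approach, the correct move would be to prove an $m\neq 0$ version of Lemma~\ref{lem_vEYcosymbol} as a self-standing analytic statement — but that is not cheaper than the paper's operator-level detour.
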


The proof of Corollary  \ref{corlem_vEYcosymbol} follows from Lemma \ref{lem_vEYcosymbol} and standard manipulations of pseudodifferential theory. 
Before giving it, let us conclude the proof of Theorem \ref{thmvEY}.
\begin{proof}[End of the proof of Theorem \ref{thmvEY}]
It remains to show  the inclusion $
\Psi^m_{vEY}(M) \subset  \Psi^m_{cl}(M)$ for any $m\in \bR.$
Let $P=P_m\in \Psi^m_{vEY}(M) $.
Let $\sigma_m$ as in Corollary \ref{corlem_vEYcosymbol}.
Inductively, we construct a sequence of symbols
$\sigma_{m-j}\in \dot S^{m-j},$ $j\in \bN_0,$ such that 
$$
P_{m-(j+1)}=
	P_{m-j}-\Op^\cA (\sigma_{m-j} \phi(\widehat\cR))\in \Psi^{m-(j+1)}_{vEY}(M),
$$
By construction, we have for any $N\in \bN$, 
$$
P=\sum_{j=0}^N \Op^\cA (\sigma_{m-j} \phi(\widehat\cR)) \ + \ P_{m-(N+1)}
$$
Consider
a symbol $\sigma\in S^m(\Gh M)$ with  asymptotic expansion   
$\sigma \sim \sum_j \sigma_{m-j} \phi(\widehat\cR)$. 
Then $\sigma\in S^m_{ph}(\Gh M)$ and for any $N\in \bN$,
the operator
$$
P - \Op^\cA (\sigma) 
=  \Op^\cA \Bigl(\sum_{j=0}^N \sigma_{m-j} \phi(\widehat\cR) \ - \ \sigma\Bigr) \ + \ P_{m-(N+1)} ,
$$
is in $\Psi_{vEY}^{m-(N+1)}(M)$ since $\sum_{j=0}^N \sigma_{m-j} \phi(\widehat\cR) \ - \ \sigma \in S^{m-(N+1)}(\Gh M)$.
By Remark \ref{rem_vEYsmoothing}, this implies that 
$P - \Op^\cA (\sigma)\in \Psi_{cl}^{-\infty}(M).$ 
\end{proof}

It remains to show Corollary \ref{corlem_vEYcosymbol}.
The case $m=0$ is essentially a consequence of Lemma~ \ref{lem_vEYcosymbol}:

\begin{proof}[Proof of Corollary \ref{corlem_vEYcosymbol} when $m=0$.] 
Let $\kappa\in \cS'(GM;|\Omega|(GM))$ be the homogeneous principal co-symbol of the integral kernel $K_P$ of $P$. 
As we are in the  case $m=0$, 
by Lemma \ref{lem_vEYcosymbol}, 
$\kappa$ is the convolution of a symbol
$\sigma_0\in \dot S^0(\Gh M)$.
We now observe that the proof 
  of the implication (2) $\Rightarrow $ (1) in Theorem~\ref{thmvEY} above
shows $\Op^\cA (\sigma_0 \phi(\widehat\cR))\in \Psi^m_{vEY}(M)$, and moreover that the homogeneous co-principal symbol of its integral kernel  is $\kappa_{\sigma_0} = \kappa\in \cS'(GM;|\Omega|(GM)).$
This implies that the  homogeneous
co-principal symbol of the integral kernel of $P-\Op^\cA (\sigma_0 \phi(\widehat\cR))$ 
is zero. 
Therefore, by \cite[Corollary 38]{VeY2},  
 $P-\Op^\cA (\sigma_0 \phi(\widehat\cR))\in \Psi^{-1}_{vEY}(M)$.  
\end{proof}

Traditional manipulations in pseudodifferential theory allow us to pass from the case $m=0$ to the case of general $m.$

\begin{proof}[Proof of Corollary \ref{corlem_vEYcosymbol} for any $m\in \bR$.]
As we have proved $\Psi^{m'}_{cl}(M)\subseteq \Psi_{vEY}^{m'}(M)$ for any $m'\in \bR$, 
$\Op^\cA ((\id+\widehat \cR)^{-m/\nu} )\in \Psi_{vEY}^{-m}(M)$.
By the composition properties of the calculus built by van Erp and Yuncken   	\cite[Corollary 49]{VeY2}, 
$\Op^\cA ((\id+\widehat \cR))^{-m/\nu})P \in \Psi^0_{vEY}(M)$.
Denoting $\widetilde \sigma_0$ the corresponding symbol as in the case $m=0$, we have
$$
\Op^\cA ((\id+\widehat \cR))^{-m/\nu}) P-\Op^\cA (\widetilde \sigma_0 \phi(\widehat\cR))\in \Psi^{-1}_{vEY}(M).
$$
Applying Corollary \ref{cor_parametrix}  to $-m$, we obtain a symbol $\tau \in S^{m}_{ph}(\Gh M)$ satisfying 
$$
\Op^\cA (\tau ) \Op^\cA ((\id+\widehat \cR)^{-m/\nu})-\id \in \Psi_{cl}^{-\infty}(M).
$$
Moreover, an inspection of the construction shows that $\princ_{m,h} (\tau) =\widehat \cR^{m/\nu}$.
Hence, we have
\begin{align*}
\princ_{h,m}\left(\Op^\cA (\tau ) \Op^\cA (\widetilde \sigma_0 \phi(\widehat\cR))\right) 
&=\princ_{h,m}\left(\Op^\cA (\tau )  \right)\princ_{h,m}\left(\Op^\cA (\widetilde \sigma_0 \phi(\widehat\cR))\right) 
= \widehat \cR^{m/\nu} \widetilde \sigma_0.
\end{align*}
Consequently, 
$$
\Op^\cA (\tau ) \Op^\cA (\widetilde \sigma_0 \phi(\widehat\cR)) = \Op^\cA (\widehat \cR^{m/\nu} \widetilde \sigma_0 \phi(\widehat\cR))
\ \mbox{mod}\ \Psi^{m-1}_{cl}(M).
$$
The inclusion $\Psi^{m'}_{cl}(M)\subseteq \Psi_{vEY}^{m'}(M)$ for any $m'\in \bR$ together with  the properties of the calculus built by van Erp and Yuncken, especially its composition  	\cite[Corollary 49]{VeY2} and for the smoothing operators (see 
Remark \ref{rem_vEYsmoothing}), imply
\begin{align*}
	P&=\Op^\cA (\tau ) \Op^\cA ((\id+\widehat \cR)^{-m/\nu})P  \ \mbox{mod} \ \Psi_{cl}^{-\infty}(M)\\
	&=\Op^\cA (\tau ) \Op^\cA (\widetilde \sigma_0 \phi(\widehat\cR))  \ \mbox{mod} \ \Psi_{vEY}^{m-1}(M)\\
	&=\Op^\cA (\widehat \cR^{m/\nu} \widetilde \sigma_0 \phi(\widehat\cR))  \ \mbox{mod} \ \Psi_{vEY}^{m-1}(M),
\end{align*}
which terminates the proof. 
\end{proof}

%%%%%%%%%%%%%%%%%%%%

\appendix

\section{Combinatoric}\label{app:combi}

In this section we prove equation~\eqref{eq:combinatoric} and analyse the form of the terms after reorganisation by the number of elements in the set~$I$, in order to obtain~\eqref{eq:ZNk} with the afferent properties.

\smallskip

The proof of~\eqref{eq:combinatoric} is done by induction. The formula~\eqref{eq:combinatoric} holds for $k=1$ since, in that case, there is only $n$ configuration of indices sets that satisfy the assumptions:
$I=\{j\}$, $I_1=\emptyset$, which
gives the term $z_jX_j$, for $j=1,\cdots, n$.
 It also holds for $k=2$ with the sets of indices: 
 \begin{align*}
&I=\{ j_1,j_2\}, \;\;I_1=I_2=\emptyset,
\;\;\mbox{whence\;\;the \;\;term }\;\;z_{j_1}z_{j_2} X_{j_1}X_{j_2},\\
& I=\{j_1\}, \;I_1=\{j_2\}, \; I_2=\emptyset,
\;\;\mbox{whence}\;\;(X_{j_2}z_{j_1})z_{j_2} X_{j_1},
 \end{align*}
 with $j_1,j_2$ describing $1,\cdots,n$. 
 The configuration
 $I=\{j_1\}, \;I_1=\emptyset, \; I_2=\{j_2\}$ is forbidden.
\smallskip 

 Assume now that the formula~\eqref{eq:combinatoric} holds for some $k\geq 1$, let us prove that the result also holds for $k+1$.  We derive 
 \begin{align*}
(Z^{(N)})^{k+1}  =\sum_{j=1}^n 
\sum_{(j_1,\ldots,j_k)=[I,I_1,\ldots,I_k]}
\Bigl(&
z_j (\bX_{I_1}z_{j_1})\ldots (\bX_{I_k}z_{j_k}) \bX_{ [\{j\}, I]}\\
&+ \sum_{1\leq p\leq k}  z_j (\bX_{I_1}z_{j_1})\ldots (\bX_{[\{j\},I_p]}z_{j_p})\ldots (\bX_{I_k}z_{j_k})
\Bigr).
 \end{align*}
 Denoting by $j_{k+1}$ the new index, we obtain
  \begin{align*}
(Z^{(N)})^{k+1}  =
\sum_{(j_1,\ldots,j_k)=[I,I_1,\ldots,I_k]}\sum_{j_{k+1}=1}^n 
\Bigl(&
(\bX_{I_1}z_{j_1})\ldots (\bX_{I_k}z_{j_k})z_{j_{k+1}}  \bX_{[\{j_{k+1}\}, I]}\\
&+ \sum_{1\leq p\leq k}  (\bX_{I_1}z_{j_1})\ldots (\bX_{[\{j_{k+1}\}, I_p]}z_{j_p})\ldots (\bX_{I_k}z_{j_k}) z_{j_{k+1}}
\bX_{I}
\Bigr).
 \end{align*}
 We then observe the equality of the set of indices $(j_1,\ldots, j_{k+1})=[I,I_1,\ldots, I_{k+1}]$ (equipped with the associated constraints), with the disjoint union 
 \begin{align*}
     \bigcup_{j=1}^n\Bigl\{& (j_1,\ldots, j_{k+1})=[[\{j_{k+1}\},I],I_1,\ldots, I_{k},\emptyset]\Bigr\}\\
     \cup &\; \bigcup_{p=1}^k\bigcup_{j=1}^n \Bigl\{
  (j_1,\ldots, j_{k+1})=[I,I_1,\ldots,I_{p-1}, [\{j_{k+1}\},I_p],I_{p+1},  \ldots, I_{k},\emptyset]\Bigr\}  . 
 \end{align*}
This concludes the proof. 

\medskip 

Let us now  reorganise the terms of $(Z^{(N)})^k$ in terms of the cardinal  of the indices set~$I$ and write 
    \begin{equation}\label{hyp_recurrence}
(Z^{(N)})^k=\sum_{\ell=1}^k\sum_{I=(j_1,\cdots , j_\ell)} a_I^{(\ell)} \bX_I.  
    \end{equation}
    Our aim is to prove that for all $\ell\geq 1$,
the term $\sum_{I=(j_1,\cdots , j_\ell)} a_I^{(\ell)} \bX_I$ writes as the sum of terms having the property~\eqref{def:product_form}, that is such that there exists $v_1^{\{\ell\}},\cdots, v_\ell^{\{\ell\}}$ such that $a_I^{(\ell)}= v_{j_1}^{(\ell)}\cdots v_{j_\ell}^{(\ell)}$ for $I=(j_1,\cdots, j_\ell)$, $1\leq \ell\leq k$. 
We argue by a recursion and starts by observing that the result holds when $k=1$. Let us now assume that the property holds  for $k\in\bN$, with 
\[
a_{j_1,\ldots ,j_\ell}= v_{j_1}^{(\ell)}\ldots v_{j_\ell}^{(\ell)},\;\; 1\leq \ell\leq k.
\]
We want to prove that the same property holds for $(Z^{(N)})^{k+1}$. It is enough to apply $Z^{(N)}$ to one of the terms of the sum and to prove that it has the desired structure. 
We write 
\begin{align*}
&Z^{(N)}\left(\sum_{I=(j_1,\cdots , j_\ell)} a_I^{(\ell)} \bX_I\right)
=I_1+I_2\\
&I_1=
\sum_{j=1}^n \sum_{I=(j_1,\cdots , j_\ell)} z_j^{(N)}a_I^{(\ell)} X_{(\{j\}, I)}\;\;\mbox{ and}\;\;
I_2= \sum_{I=(j_1,\cdots , j_\ell)} (Z^{(N)} a_I ^{(\ell)}) X_I
\end{align*}
where $I_1$ contains  derivatives of length $\ell +1$ and $I_2$ derivatives of length $\ell$.

The term $I_1$ writes 
\[
I_1= \sum_{I=(j_1,\cdots , j_\ell)} (z_1^{(N)} a_I ^{(\ell)}X_1X_I+\ldots + z_n^{(N)} a_I^{(\ell)} X_n X_I)
= \sum_{I'=(j_1,\cdots , j_{\ell+1})} a_{I'}^{(\ell+1)} \bX_{I'}
\]
where the coefficients $a_{I'}^{(\ell+1)}$ are constructed as follows: let $I'=(j_1,\ldots, j_{\ell+1})$, $I'=(\{j_1\}, I)$ 
then $a_{I'}^{(\ell+1)}=v_{j_1}^{(\ell+1)}\ldots v_{j_{\ell+1}}^{(\ell+1)} $ with $v_{j_1}^{(\ell+1)}=z_{j_1}$ and $v_{j_p}^{(\ell+1)}=v_{j_{p-1}}^{(\ell)}$ for $p=2,\cdots ,\ell+1$.
Therefore, $I_1$ has the property~\eqref{def:product_form}. 

Let us now consider the term $I_2$.  It is the sum of $\ell$ terms of the form 
\[
\sum_{I=(j_1,\cdots , j_\ell)} v_{j_1}^{(\ell)}\ldots (Z^{(N)}v_{j_p}^{(\ell)})\ldots v_{j_\ell}^{(\ell)} X_I,\;\; 1\leq p\leq \ell.
\]
We observe 
\[
\sum_{I=(j_1,\cdots , j_\ell)} v_{j_1}^{(\ell)}\ldots (Z^{(N)}v_{j_p}^{(\ell)})\ldots v_{j_\ell}^{(\ell)} X_I=
\sum_{I=(j_1,\cdots , j_\ell)} v_{j_1}^{(\ell+1,p)}\ldots v_{j_\ell}^{(\ell+1,p)} X_I
\]
with $v_{j_{p'}}^{(\ell+1,p)}= v_{j_{p'}}^{(\ell)}$ if $p'\not=p$ and $v_{j_{p}}^{(\ell+1,p)}= Z^{(N)}v_{j_{p}}^{(\ell)}$.
Therefore, $I_2$ is the sum of $k$ terms with the property~\eqref{def:product_form}, which concludes the proof.

\section{Functional spaces on a vector bundle}
\label{secApp_VecBundleS}

Here, we  recall the definitions of  spaces of Schwartz functions and tempered distributions associated with a (smooth, real, finite dimensional) vector bundle $E$ over a (smooth) manifold~$M$.
We denote by $d=\dim E$ the dimension of (the fiber of) the vector bundle $E$.
The functional spaces we want to introduce will use the concept of vertical densities we first recall.

\subsection{Vertical densities on $E$}
\label{subsec_vertdensityE}
We denote by $|\Omega|(E)$ the space of 1-densities on $E$ whose definition we recall:
\begin{definition}
\label{def_1density}
\begin{itemize}
    \item    A {\it 1-density} on $\bR^d$ is a map  $\omega:(\bR^d)^d\to \bC$ 
satisfying 
$$
\forall v_1,\ldots,v_d\in \bR^d, \ \forall A\in {\rm GL}_d(\bR), \qquad 
\omega (A v_1,\ldots,A v_d ) = |\det A| \, \omega (v_1,\ldots,v_d).
$$
\item A {\it 1-density} on $E$ is map  $\omega:E^{\times d}\to \bC$ such that $\phi^* \omega (x,\cdot)$ is a 1-density on $\bR^d$
for every trivialising chart $\phi:U\times \bR^d \to E$ and every $x\in U.$
\end{itemize}
\end{definition}

A 1-density on $E$ is  sometimes called a {\it vertical density} as it yields above each $x\in M$ a density on  the vector space  $E_x$.

\begin{ex}
\label{ex_1density}
\begin{enumerate}
    \item If $\tau \in$ is an $n$-form on $M$ with $n=\dim M$, then $|\tau| \in |\Omega|(\bT M)$ is a density on the tangent bundle $E=\bT M$.
    \item More generally, for any vector bundle $E$, any $\tau \in \Gamma(\bigwedge^d E)$ yields the 1-density $|\tau|\in |\Omega|(E)$ on $E$.
\end{enumerate}
\end{ex}

The space  $|\Omega|(E)$ is a line bundle over $M$. Moreover, if $\omega\in |\Omega|(E)$ never vanishes, then any other 1-density $\omega'\in |\Omega|(E)$ differs by a smooth function $c\in C^\infty(M)$:
\begin{equation}
    \label{eq_omega12c}
\forall (x,v)\in E\qquad     \omega'(x,v) = c(x) \omega(x,v).
\end{equation}

We denote  by $|\Omega|^+(E)$ the set of  positive 1-densities on $E$, i.e. the set of $\omega\in|\Omega|(E) $ with $\omega (x,v)>0$  for any $(x,v)\in E.$
We can readily construct positive 1-densities
using local trivialising charts and subordinated covering of the manifold.
As Example \ref{ex_1density} suggests, 
 a positive 1-density on $E$ may be viewed as an element of volume $|dv|$ of the fiber $E_x$ depending smoothly on $x\in M$.
 In the case of two positive 1-densities, the smooth function $c\in C^\infty(M)$ they differ from in \eqref{eq_omega12c}
 is  positive: $c(x)>0$ for all $x\in M$.

\subsection{Schwartz functions and vertical tempered densities on $E$}

\begin{definition}
A  {\it Schwartz function} $f$ on $E$ is a smooth function $f:E\to \bC$ such that for every trivialising chart $\phi:U\times \bR^d \to E$, 
$\phi^* f$ may be written as $\phi^* f (x,v) =f_\phi(x,v)$ with
\begin{equation}
    \label{eq_fphi}
    f_\phi\in C^\infty (U\times \bR^d)
\qquad\mbox{and}\qquad
x\mapsto \phi^* f(x,\cdot) \in C^\infty (U; \cS(\bR^d)).
\end{equation}  
\end{definition}
 We denote by $\cS(E)$ the set of Schwartz functions on $E$. It is naturally equipped with a structure of topological vector space.

\smallskip

\begin{definition}
A {\it vertical tempered  density} on $E$ is a continuous map $\cS(E)\to C^\infty (M)$ that is $C^\infty(M)$-linear.
  \end{definition}
 A vertical tempered density $\kappa$ is described in  any trivialising chart $\phi:U\times \bR^d \to E$ by 
$\phi^* \kappa (x,v) = \kappa_\phi (x,v) |dv| $ with 
$$
\kappa_\phi\in \cD'(U\times \bR^d)
\qquad\mbox{and}\qquad  
x\mapsto \kappa_\phi (x,\cdot)\in C^\infty (U, \cS'(\bR^d)). $$ 
 We denote by $\cS'(E; |\Omega|(E))$  the set of vertical tempered densities on $E$.
 It is naturally equipped with a structure of topological vector space.
 By definition, 
the  map
$$
\cS(E)\times \cS'(E; |\Omega|(E))
\longrightarrow C^\infty(M), \qquad
(\varphi,\kappa)\longmapsto
(x\mapsto \int_E \varphi(x,v) \kappa(x,v))  ,
$$
is continuous and $C^\infty(M)$-bilinear.

\begin{lemma}
\label{lem_kappaomegax}
We assume that  a positive 1-density $\omega\in |\Omega|^+(E)$ has been fixed on $E$.
\begin{enumerate}
    \item A  vertical tempered density $\kappa\in \cS'(E; |\Omega|(E))$ is described above every $x\in M$ by a unique distribution $\kappa_x^\omega \in \cS'(E_x)$
   satisfying
   $$
   \forall \varphi\in \cS(E)\qquad
   \int_E \varphi(x,v) \kappa(x,v) = \int_{E_x} \varphi(x,v) \kappa_x^\omega(v) d\omega_x(v).
   $$
\item Moreover, if $\omega'\in |\Omega|^+(E)$ is another positive 1-density on $E$, then 
$$
\kappa_x^{\omega} = c(x) \kappa_x^{\omega'} \in \cS'(E_x),
$$
where $c$ is the function defined via \eqref{eq_omega12c}.
\end{enumerate}
\end{lemma}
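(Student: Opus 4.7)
The plan is to work locally via trivialising charts, define the fibrewise distribution $\kappa_x^\omega$ pointwise using the fact that $\omega$ is positive hence nowhere vanishing, and then verify invariance under change of chart and compatibility with the defining integral identity.

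First, I would fix a trivialising chart $\phi : U \times \bR^d \to E$. By hypothesis, $\omega$ is described in this chart by $\phi^*\omega(x,v) = \omega_\phi(x)|dv|$ for some strictly positive function $\omega_\phi \in C^\infty(U;(0,\infty))$, since $\omega$ is a positive $1$-density and trivial $|dv|$ is itself such a density on the fibre. Similarly, the vertical tempered density $\kappa$ is described by $\phi^*\kappa(x,v) = \kappa_\phi(x,v)|dv|$ with $\kappa_\phi \in C^\infty(U,\cS'(\bR^d))$. I would then define, locally,
$$
\kappa_x^\omega := \omega_\phi(x)^{-1}\,\kappa_\phi(x,\cdot)\ \in\ \cS'(\bR^d)\simeq \cS'(E_x),
$$
using the trivialisation to identify $E_x$ with $\bR^d$. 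The positivity of $\omega_\phi(x)$ is essential here to divide, and the smoothness of $\omega_\phi^{-1}$ ensures $x\mapsto \kappa_x^\omega$ remains smooth with values in $\cS'(\bR^d)$.

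The key step is verifying well-definedness under change of trivialising chart. If $\phi' : U \times \bR^d \to E$ is another chart with transition $\phi'(x,v) = \phi(x, A_x v)$ for a smooth map $A: U \to \mathrm{GL}_d(\bR)$, then the transformation rules for $1$-densities give $\omega_{\phi'}(x) = |\det A_x|\,\omega_\phi(x)$ and for the distributions $\kappa_{\phi'}(x,\cdot) = |\det A_x|\, (A_x)^*\kappa_\phi(x,\cdot)$. The Jacobians cancel exactly in the ratio, yielding the same intrinsic element of $\cS'(E_x)$ (up to the canonical identification through $A_x$), which settles uniqueness and the intrinsic meaning. The integral identity of Part (1) is then immediate by unwinding the definition in a chart: writing $\varphi_\phi := \phi^*\varphi$,
$$
\int_E \varphi(x,v)\kappa(x,v) = \langle \kappa_\phi(x,\cdot),\varphi_\phi(x,\cdot)\rangle = \omega_\phi(x)\,\langle \kappa_x^\omega,\varphi_\phi(x,\cdot)\rangle = \int_{E_x}\varphi(x,v)\kappa_x^\omega(v)\,d\omega_x(v).
$$
A partition of unity on $M$ together with the $C^\infty(M)$-linearity of $\kappa$ extends this pointwise reconstruction globally.

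Part (2) is then a direct consequence of the uniqueness statement in Part (1). Indeed, if $\omega' = c\omega$ with $c\in C^\infty(M;(0,\infty))$ by \eqref{eq_omega12c}, then $d\omega'_x = c(x)\,d\omega_x$, so substituting $\kappa_x^{\omega'}$ into the defining identity for $\omega'$ and comparing with that for $\omega$ yields
$$
\int_{E_x}\varphi(x,v)\kappa_x^\omega(v)\,d\omega_x(v) = \int_{E_x}\varphi(x,v)\,c(x)\kappa_x^{\omega'}(v)\,d\omega_x(v),
$$
valid for every $\varphi\in\cS(E)$, whence $\kappa_x^\omega = c(x)\kappa_x^{\omega'}$ in $\cS'(E_x)$ by uniqueness. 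The only mildly delicate point in the whole argument is the bookkeeping of the Jacobian factors in the change-of-chart verification, but this is routine once the convention that $\omega_\phi$ and $\kappa_\phi$ both transform as $1$-densities is fixed.
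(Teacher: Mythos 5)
Your argument is correct and fills in the details that the paper's one-line proof ("defined invariantly via trivialisation charts, showing Part (1); Part (2) follows by uniqueness") leaves implicit; the chartwise definition $\kappa_x^\omega := \omega_\phi(x)^{-1}\kappa_\phi(x,\cdot)$, the Jacobian bookkeeping showing the two coordinate representations are related by pullback $(A_x)^*$, and the deduction of Part (2) from uniqueness all match what the paper's sketch intends.
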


\begin{proof}
The distribution $\kappa_x^\omega \in \cS'(E_x)$ is defined invariantly via trivialisation charts, showing Part (1).
Part (2) follows by uniqueness.
\end{proof}

The following statement is readily checked and defines  the notion of $m$-\textit{homogeneous  vertical distributional density}.
\begin{lemma}
\label{lem_mhomcS'Evert}
Here, we assume that the vector bundle $E$ is graded, i.e.  each fiber $E_x$ admits a gradation in the sense of Definition \ref{def_gradedvs}, and the corresponding dilations $\delta_{E_x}$ yield  a map $\delta :E\to E$ that is smooth (and hence is  a morphism of the vector bundle $E$).

Let $m\in \bR$. The following properties of a vertical tempered density $\kappa\in \cS'(E;|\Omega|(E))$ are equivalent:
\begin{enumerate}
    \item For one (and then any) positive 1-density $\omega\in |\Omega|^+(E)$ on $E$, for every $x\in M$, the distribution  $\kappa_x^\omega\in \cS(E_x)$ is $m$-homogeneous for the dilations $\delta_{E_x}.$
    \item 
    For every trivialising chart $\phi:U\times \bR^d \to E$ and ever $x\in M$, the distribution $\kappa_\phi(x,\cdot)$ is $m$-homogeneous for the dilations for $\phi^*\delta_{E_x}$
\end{enumerate}
  \end{lemma}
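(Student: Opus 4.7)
The statement is essentially a coordinate-invariance check, so the plan is to reduce both (1) and (2) to the same object, namely the fiberwise distribution associated to any one positive vertical density, and then observe that $m$-homogeneity is preserved under the two elementary operations one encounters: rescaling by a positive smooth function of $x$ and pulling back by a linear isomorphism.

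First I would verify that the condition in (1) does not depend on the choice of $\omega\in |\Omega|^+(E)$. If $\omega'\in |\Omega|^+(E)$ is another positive vertical density, then by \eqref{eq_omega12c} there exists $c\in C^\infty(M)$ with $c(x)>0$ such that $\omega'=c\omega$, and Lemma \ref{lem_kappaomegax} (2) gives $\kappa_x^{\omega}=c(x)\kappa_x^{\omega'}$ in $\cS'(E_x)$. Since $c(x)$ is a nonzero scalar, $\kappa_x^{\omega}$ is $m$-homogeneous for $\delta_{E_x}$ if and only if $\kappa_x^{\omega'}$ is, which settles the parenthetical ``for one (and then any)'' in (1).

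Next I would relate the chart description in (2) to the fiberwise distribution used in (1). Given a trivialising chart $\phi:U\times \bR^d\to E$, the Lebesgue measure $|dv|$ on $\bR^d$ determines a positive vertical density $\omega_\phi\in |\Omega|^+(E|_U)$ via $\phi^*\omega_\phi=|dv|$, and by construction of $\kappa_x^{\omega_\phi}$ in Lemma \ref{lem_kappaomegax}, testing both sides of
\[
\int_E \varphi(x,v)\,\kappa(x,v)=\int_{E_x}\varphi(x,v)\,\kappa_x^{\omega_\phi}(v)\,d\omega_\phi(v),\qquad \varphi\in \cS(E),
\]
against pushforwards from $U\times \bR^d$ identifies $\kappa_\phi(x,\cdot)=(\phi_x)^*\kappa_x^{\omega_\phi}$ as distributions on $\bR^d$, where $\phi_x:\bR^d\to E_x$ is the linear isomorphism fiberwise induced by $\phi$. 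The fiberwise dilation $\phi^*\delta_{E_x}$ on $\bR^d$ is by definition the one intertwined by $\phi_x$ with $\delta_{E_x}$ on $E_x$; hence $\kappa_\phi(x,\cdot)$ is $m$-homogeneous for $\phi^*\delta_{E_x}$ if and only if $\kappa_x^{\omega_\phi}$ is $m$-homogeneous for $\delta_{E_x}$. Combined with the first step this yields the equivalence (1)$\Leftrightarrow$(2).

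I do not foresee a substantive obstacle: the only mildly delicate point is bookkeeping the density factors so that the comparison $\kappa_\phi(x,\cdot)=(\phi_x)^*\kappa_x^{\omega_\phi}$ is genuinely the relation between distributions (rather than between densities), and for this it is essential to use the particular $\omega_\phi$ attached to the chart. Once that identification is in place, homogeneity transfers through $\phi_x$ because it is a linear map and homogeneity is preserved under pullback by linear maps intertwining the dilations.
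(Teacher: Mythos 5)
Your proposal is correct, and the paper offers no explicit proof (it merely says the statement is "readily checked"), so there is nothing to compare against beyond agreement with the surrounding definitions. The two reductions you carry out — first using \eqref{eq_omega12c} and Lemma~\ref{lem_kappaomegax}~(2) to show that $m$-homogeneity of $\kappa_x^\omega$ is insensitive to the choice of $\omega\in|\Omega|^+(E)$ since $\kappa_x^\omega=c(x)\kappa_x^{\omega'}$ with $c(x)>0$, and then passing to the chart-adapted density $\omega_\phi$ with $\phi^*\omega_\phi=|dv|$ to obtain the identification $\kappa_\phi(x,\cdot)=(\phi_x)^*\kappa_x^{\omega_\phi}$, under which $m$-homogeneity is preserved because $\phi_x$ is a linear isomorphism intertwining $\phi^*\delta_{E_x}$ with $\delta_{E_x}$ — are exactly the "readily checked" content. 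The one point worth making fully explicit when you write this up is that $\omega_\phi$ is only defined over the chart domain $U$, but this is harmless: the homogeneity assertion in (1) is fiberwise, so one only needs to compare $\kappa_x^{\omega}$ and $\kappa_x^{\omega_\phi}$ for $x\in U$, where \eqref{eq_omega12c} applies on the nose.
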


\subsection{Vertical Schwartz densities}
\begin{definition}
A {\it Schwartz vertical density} on $E$ is a smooth section $f\in C^\infty(E; |\Omega|(E))$ such that for every trivialising chart $\phi:U\times \bR^d \to E$, we may write
    $\phi^* f (x,v) := f_\phi(x,v) |dv|$ with $f_\phi$ as in \eqref{eq_fphi}.  
\end{definition}
The set $\cS(E; |\Omega|(E))$  of  Schwartz vertical  densities on $E$
 is naturally equipped with a structure of topological vector space and may be viewed naturally as a subspace of $\cS'(E; |\Omega|(E))$.
 In fact, it is dense in $\cS'(E; |\Omega|(E))$.

%%%%%%%%%%%%%%%%%%%%%%%%%%%%%%%%%%%%%%%%%%%%%%%%%%%%%%%%

\end{document}